\renewcommand*\env@matrix[1][*\c@MaxMatrixCols c]{%
  \hskip -\arraycolsep
  \let\@ifnextchar\new@ifnextchar
  \array{#1}}
\newcommand{\reals}{\mathbb{R}}
\newcommand{\complexes}{\mathbb{C}}
\newcommand{\ints}{\mathbb{Z}}
\newcommand{\posints}{{\ints^+}}
\newcommand{\rats}{\mathbb{Q}}
\newcommand{\union}{\cup}
\newcommand{\intersect}{\cap}
\newcommand{\map}[3]{{{#1}:{#2}\rightarrow{#3}}}
\newcommand{\floor}[1]{{\lfloor{#1}\rfloor}}
\newcommand{\bigfloor}[1]{{\left\lfloor{#1}\right\rfloor}}
\newcommand{\ceiling}[1]{{\lceil{#1}\rceil}}
\newcommand{\bigceiling}[1]{{\left\lceil{#1}\right\rceil}}
\newcommand{\bigabs}[1]{{\left|{#1}\right|}}
\newcommand{\convex}{\mathcal{C}}
\newcommand{\discrete}{\mathcal{D}}
\newcommand{\p}{\varphi}
\newcommand{\myand}{\;\mathrel{\&}\;}
\newcommand{\myor}{\;\mathrel{\vee}\;}
\newcommand{\eps}{\varepsilon}
\newcommand{\clcl}[1]{{\left[{#1}\right]}}
\newcommand{\clop}[1]{{\left[{#1}\right)}}
\newcommand{\opcl}[1]{{\left({#1}\right]}}
\newcommand{\opop}[1]{{\left({#1}\right)}}
\newcommand{\mat}[1]{{\left[\begin{matrix}#1\end{matrix}\right]}}
\newcommand{\tp}[1]{{{#1}^{\textsf T}}}
\newcommand{\tpp}[2]{{{#1}^{\textsf T}_{#2}}}
\newcommand{\one}{\mathbf{1}}
\newcommand{\bv}[1]{\mathbf{#1}}
\newcommand{\eqdf}{\mathrel{:=}}
\newcommand{\cmpl}{\setminus}
\newcommand{\loopcl}{\textit{lcl}}
\newcommand{\packrad}{{\mu_\textup{p}}}
\newcommand{\coverrad}{{\mu_\textup{c}}}
\newcommand{\mymax}{\textup{max}}
\newcommand{\mymin}{\textup{min}}
\newcommand{\cM}{\mathcal{M}}
\newcommand{\x}{\mathop{\star}}
\newcommand{\xs}[1]{\mathop{\star_{#1}}}
\newcommand{\xbx}{\xs{[x]}}
\newcommand{\xD}{\xs{D}}
\newcommand{\xL}{\xs{\Lambda}}
\newcommand{\xl}{\xs{\lambda}}
\newcommand{\xa}{\xs{\alpha}}
\newcommand{\xb}{\xs{\beta}}
\newcommand{\xsig}{\xs{\sigma}}
\newcommand{\xssig}{\xs{2\sigma}}
\newcommand{\xm}{\xs{\mu}}
\newcommand{\xmi}{\xs{\mu_i}}
\newcommand{\xx}{\xs{x}}
\newcommand{\xz}{\xs{z}}
\newcommand{\xq}{{\textstyle\xs{q}}}
\newcommand{\xainv}{\xs{1/\alpha}}
\newcommand{\xbinv}{\xs{1/\beta}}
\newcommand{\affemb}{\mathrel{\hookrightarrow}}
\newcommand{\affeq}{\mathrel{\leftrightharpoons}}
\newcommand{\gal}{\textup{Gal}}
\newcommand{\stab}{\textup{Stab}}
\newcommand{\bigoh}{\mathcal{O}}
\newcommand{\Pe}{{\sf P}}
\newcommand{\NP}{{\sf NP}}
\renewcommand{\Re}{\textup{Re}}
\renewcommand{\Im}{\textup{Im}}
\newcommand{\divides}{\mid}
\newcommand{\notdiv}{\nmid}
\newcommand{\tuple}[1]{{\langle{#1}\rangle}}
\newcommand{\diag}{\textup{diag}}
\newcommand{\intr}[1]{{{#1}^\textup{int}}}
\newcommand{\characteristic}{\textup{char}}
\newcommand{\ordst}[1]{{{#1}^{\textit{st}}}}
\newcommand{\ordth}[1]{{{#1}^{\textit{th}}}}
\newenvironment{algo}{\begin{tabbing}\hspace{0.25in}\=\hspace{0.25in}\=\hspace{0.25in}\=\hspace{0.25in}\=\hspace{0.25in}\=\kill}{\end{tabbing}}
\newcommand{\assn}{{\ensuremath{\;\mathrel{\leftarrow}}\;}}
\newcommand{\mywhile}{\textbf{while }}
\newcommand{\mydo}{\textbf{ do }}
\newcommand{\myfor}{\textbf{for }}
\newcommand{\myto}{\textbf{ to }}
\newcommand{\mydownto}{\textbf{ downto }}
\newcommand{\myend}{\textbf{end}}
\newenvironment{remark}{\begin{paragraph}{Remark.}}{\hfill$\Box$\end{paragraph}\medskip}
\theoremstyle{definition}
\newtheorem{definition}{Definition}[section]
\newtheorem{notation}[definition]{Notation}
\theoremstyle{plain}
\newtheorem{theorem}[definition]{Theorem}
\newtheorem{lemma}[definition]{Lemma}
\newtheorem{fact}[definition]{Fact}
\newtheorem{proposition}[definition]{Proposition}
\newtheorem{corollary}[definition]{Corollary}
\newtheorem{claim}[definition]{Claim}
\newtheorem{conjecture}[definition]{Conjecture}
\newtheorem{open}[definition]{Open Question}
\newtheorem{research}[definition]{Research Plan}
\title{Fixed-Parameter Extrapolation and Aperiodic Order}
\author{Stephen Fenner\thanks{Computer Science and Engineering Department, Columbia, SC 29208 USA\@.  Partially supported by NSF grant CCF-0915948.}\\
\small University of South Carolina\\[-0.5ex]
\small \texttt{fenner@cse.sc.edu}
\and
Frederic Green\thanks{Department of Mathematics and Computer Science, Clark University, Worcester, MA 01610}\\
\small Clark University\\[-0.5ex]
\small \texttt{fgreen@clarku.edu}
\and
Steven Homer\thanks{Computer Science Department, Boston University, Boston, MA 02215.  Partially supported by NSF grant CCF-1533663.}\\
\small Boston University\\[-0.5ex]
\small \texttt{homer@cs.bu.edu}
}
\begin{document}

\bibliographystyle{plain}

\maketitle

\begin{abstract}
Fix any $\lambda\in\complexes$.  We say that a set $S\subseteq\complexes$ is \emph{$\lambda$-convex} if, whenever $a$ and $b$ are in $S$, the point $(1-\lambda)a+\lambda b$ is also in $S$.  If $S$ is also (topologically) closed, then we say that $S$ is \emph{$\lambda$-clonvex}.  We investigate the properties of $\lambda$-convex and $\lambda$-clonvex sets and prove a number of facts about them.  Letting $R_\lambda\subseteq\complexes$ be the least $\lambda$-clonvex superset of $\{0,1\}$, we show that if $R_\lambda$ is convex in the usual sense, then $R_\lambda$ must be either $[0,1]$ or $\reals$ or $\complexes$, depending on $\lambda$.  We investigate which $\lambda$ make $R_\lambda$ convex, derive a number of conditions equivalent to $R_\lambda$ being convex, and give several conditions sufficient for $R_\lambda$ to be convex or not convex; in particular, we show that $R_\lambda$ is either convex or uniformly discrete.  Letting $\convex := \{\lambda\in\complexes\mid \mbox{$R_\lambda$ is convex}\}$, we show that $\complexes\cmpl\convex$ is closed, discrete and contains only algebraic integers.  We also give a sufficient condition on $\lambda$ for $R_\lambda$ and some other related $\lambda$-convex sets to be discrete by introducing the notion of a strong PV number.  These conditions give rise to a number of periodic and aperiodic Meyer sets (the latter sometimes known as ``quasicrystals'').

The paper is in four parts.  Part~I describes basic properties of $\lambda$-convex and $\lambda$-clonvex sets, including convexity versus uniform discreteness.  Part~II explores the connections between $\lambda$-convex sets and quasicrystals and displays a number of such sets, including several with dihedral symmetry. Part~III generalizes a result from Part~I about the $\lambda$-convex closure of a path, and Part~IV contains our conclusions and open problems.

Our work combines elementary concepts and techniques from algebra and plane geometry.\\

\noindent\textbf{Keywords:} discrete geometry, point set, convex, Meyer set, cut-and-project scheme, quasicrystal, aperiodic order, idempotent medial groupoid, mode, $a$-convex, quasiaddition, quasicrystal addition, $\tau$-inflation

\end{abstract}

\pagebreak

\tableofcontents

\pagebreak

\begin{center}
\noindent{\Large\bf Part I: Introduction and Basic Properties}\addcontentsline{toc}{part}{Part I: Introduction and Basic Properties}
\end{center}

\section{Introduction}

\begin{definition}\label{def:lambda-convex}\rm
Fix a number $\lambda\in\complexes$.  For any $a,b\in\complexes$ define $a \xl b \eqdf (1-\lambda)a + \lambda b$.

Then for any set $S\subseteq\complexes$,
\begin{enumerate}
\item\label{clprop:alg}
we say that $S$ is \emph{$\lambda$-convex} iff for every $a,b\in S$, the point $a\xl b$ is in $S$, and
\item\label{clprop:top}
we say that $S$ is \emph{$\lambda$-convex closed} (or \emph{$\lambda$-clonvex} for short) iff $S$ is $\lambda$-convex and (topologically) closed.
\end{enumerate}
In either case, we say that $S$ is \emph{nontrivial} if $S$ contains at least two distinct elements.  We will informally say, ``$\lambda$-c[l]onvex'' when we want to assert analogous things about both notions, respectively.
\end{definition}

For fixed $\lambda$, we defined $\x \eqdf \xl$ as a two-place operation on $\complexes$.  We call $a\x b$ the \emph{$\lambda$-extrapolant} of $a$ and $b$, and we say that $a\x b$ is obtained from $a$ and $b$ by \emph{$\lambda$-extrapolation}.  Then the first property in Definition~\ref{def:lambda-convex} just says that $S$ is closed under $\lambda$-extrapolation.  Of course, if $0\le\lambda\le 1$, then this might more appropriately be called $\lambda$-interpolation, but as we will see, the case where $\lambda \notin \clcl{0,1}$ is much more interesting. When we are not explicit about $\lambda$, we refer to the operation $\xl$ as
\emph{fixed-parameter extrapolation} or \emph{fixed-parameter affine combination}.

We may drop the subscript and just say $a\x b$ if the value of $\lambda$ is clear from the context.  We may also drop parentheses in an expression involving $\x$ or $\xl$, assuming that this operator binds more tightly than $+$ or $-$ but less tightly than multiplication or division.

By the definition of convexity, a set $S\subseteq\complexes$ is convex if and only if, for all $\lambda \in \opop{0,1}$, $S$ is $\lambda$-convex.  Definition~\ref{def:lambda-convex} above is in part motivated by the following additional observation (Proposition~\ref{prop:convex-iff-lambda-closed}, below): If $S$ is a \emph{closed} set, then for \emph{any fixed} $\lambda\in\opop{0,1}$, we have that $S$ is convex if and only if $S$ is $\lambda$-convex.  We are generally interested in $\lambda$-c[l]onvexity for $\lambda\notin\clcl{0,1}$, and we are particularly interested in minimal nontrivial $\lambda$-c[l]onvex sets.

\begin{definition}\label{def:lambda-convex-closure}\rm
For any $\lambda\in\complexes$ and any set $S\subseteq\complexes$,
\begin{enumerate}
\item
We define the \emph{$\lambda$-convex closure} of $S$, denoted $Q_\lambda(S)$, to be the $\subseteq$-minimum $\lambda$-convex superset of $S$.  We let $Q_\lambda$ be shorthand for $Q_\lambda(\{0,1\})$, the $\lambda$-convex closure of $\{0,1\}$.
\item
We define the \emph{$\lambda$-clonvex closure} of $S$, denoted $R_\lambda(S)$, to be the $\subseteq$-minimum $\lambda$-clonvex superset of $S$.  We let $R_\lambda$ be shorthand for $R_\lambda(\{0,1\})$, the $\lambda$-clonvex closure of $\{0,1\}$.
\end{enumerate}
\end{definition}

$R_\lambda$ is a minimal nontrivial $\lambda$-clonvex set because it is generated by just two distinct points.  We choose the points $0$ and $1$ for convenience, but since $\lambda$-c[l]onvexity is invariant under orientation-preserving similarity transformations (i.e., $\complexes$-affine transformations, i.e., polynomials of degree $1$; see Definition~\ref{def:rho}, below), any two initial points would yield a set with the same essential properties.  One of our main goals, then, is to characterize $R_\lambda$ for as many $\lambda$ as we can.

\bigskip



We conclude this introduction with some historical background and motivation.
The notion of fixed-parameter extrapolation was first investigated for its
own intrinsic interest by Calvert~\cite{Calvert} and, some years later, by  Pinch~\cite{Pinch},
under the name of ``$a$-convexity." Berman \& Moody~\cite{BeMo} were the first
to notice its application to discrete $Q_\lambda$
in the context of {\it quasicrystals}, where they call
it ``quasicrystal addition,"
specifically in the case of $\lambda = 1+\p$ (where $\p$
 is the golden ratio), which we cover in this paper as well.
   This value of $\lambda$ is significant because it gives the simplest example where $Q_\lambda$ is discrete and aperiodic.
Other authors have investigated fixed-parameter extrapolation under
various other names, including quasiaddition and $\tau$-inflation \cite{MPP:selfsimilar}, \cite{MPP:sconvexjournal}, \cite{MPS}.

 The comparatively young field of {\it aperiodic order} \cite{BG:aperiodic-order} arose, in part, to explain
such mathematical phenomena as aperiodic tilings and natural phenomena such as quasicrystals. Traditionally, aperiodic order has been studied in terms
of paradigms such as local substitution, inflation tilings, and model sets
(i.e., cut-and-project sets). 
As Berman and Moody \cite{BeMo} have pointed out, and as investigated further
by other authors
\cite{MPP:selfsimilar}, \cite{MPP:sconvexjournal}, and \cite{MPS},
fixed-parameter extrapolation offers an alternative
approach. Indeed, as is hinted in \cite{BeMo} and stated explicitly
in \cite{MPP:sconvexjournal}, one may view the binary extrapolation operation
as a mathematical model of aperiodic crystal growth.
The discrete sets generated by fixed-parameter extrapolation
 share many properties
with {\it Meyer sets} \cite{Meyer:sets72}, widely regarded as the mathematical counterpart of quasicrystals.  Indeed, we can establish in many cases that these sets \emph{are} Meyer sets.

The goals of this paper are threefold. 
First, we work towards a systematic and unified theory of fixed-parameter
extrapolation and the point sets closed under that operation.
We characterize convex sets closed under the operation, 
although many open questions remain in this regard.
 Secondly, we extend aspects of the theory
  already accomplished in previous work (notably \cite{BeMo}, \cite{Calvert}, \cite{Pinch},
\cite{MPP:selfsimilar}, \cite{MPP:sconvexjournal}, and \cite{MPS})
from the case of \emph{real} $\lambda$ to \emph{complex} $\lambda$.
To our knowledge, our work is the first to examine 
 $\lambda \in \complexes \setminus \reals$. In this context, we
 study precise connections between
fixed-parameter extrapolation and other constructions such as inflation tilings and model sets.
For example, we give a partial characterization of parameters
that lead to discrete sets, in terms of a refinement of Pisot-Vijayaraghavan (PV) numbers, which we call
{\it strong PV} numbers.
These generalize
the subset of real irrationals considered in \cite{Pinch}.
 All of the sets we find to be discrete are in fact subsets of cut-and-project sets.
 It remains to be
established if they are also Meyers sets, although we have been able to establish
this in many cases.  Finally, and along
 somewhat different lines, although the sets we construct are manifestly hierarchical (with
the fixed parameter playing a role analogous to the inflation scale), the relationship
between our approach and inflation tilings is at present unclear. Ultimately, we would like to
understand what added insight our particular approach offers to the theory of quasicrystals.
 For that purpose, we strive in this paper to classify these sets according to a number of their properties, including but not limited to aperiodicity, uniform discreteness, relative density, finite local complexity, and repetivity.\\


The techniques of this paper
draw on diverse fields, including complex analysis, algebra, algebraic number theory,
topology, combinatorics, and computer science. In support of theoretical studies, 
 computation (including symbolic computation)
 and computer graphics have been used to guide our work and
 determine future problems and directions.

This paper is divided into four parts. Part~I treats the basic definitions and properties
of $\lambda$-convexity and fixed-parameter extrapolation. Here we include various
characterizations of $\lambda$-clonvex sets and criteria for determining when
$Q_\lambda$ is or is not discrete. Part~II investigates the connection between
fixed-parameter extrapolation and aperiodic order. The central result of Part~II
is a sufficient condition for discreteness of $Q_\lambda(S)$ for certain finite $S\subseteq\complexes$. 
We also determine the $\lambda$-convex closure of various regular shapes,
in both two and three dimensions, and explore
particular values of $\lambda$ that yield discrete sets that are also relatively dense.
Part~III generalizes one of the characterizations of Part~I, Section~\ref{sec:equivalences}, from differentiable
paths to what we call ``bent paths,'' which are not required to be differentiable.
Finally, in Part~IV we present concluding remarks and open problems.

\section{Basics}

We start with a few basic facts and definitions.  In this paper, we call a theorem a ``Fact'' when it is either immediately obvious or has a routine, straightforward proof.  We omit the proofs of Facts.

For $z\in\complexes$, we let $\Re(z)$ and $\Im(z)$ denote the real and imaginary parts of $z$, respectively, and we let $z^*$ denote the complex conjugate of $z$.

If $\map{f}{X}{Y}$ is some function with domain $X$, and $A$ is any subset of $X$, then we let $f|A$ denote the function $f$ restricted to domain $A$.

Any topological references assume the usual topology on $\complexes \cong \reals^2$.  For $A\subseteq\complexes$, we let $\overline A$ denote the topological closure of $A$.

We use the symbol $:=$ to mean, ``equals by definition.''  We set $\tau := 2\pi$ throughout.  For any $x\in\reals$, let $x \bmod \tau$ denote the unique $y\in\clop{0,\tau}$ such that $(x-y)/\tau$ is an integer.

We let $\posints$ denote the set of positive integers.

Whenever a ring is mentioned, it will be assumed to be unital, that is, possessing a multiplicative identity.

Operations on numbers lift to operations on sets of numbers in the usual way.  This includes subtraction, and so $S-T = \{x-y \mid x\in S \myand y\in T\}$, i.e., the Minkowski difference of $S$ and $T$.  We use $S \cmpl T$ to denote the relative complement of $T$ in $S$.\\

We note the following simple property of the fixed-parameter extrapolation operator.

\begin{fact}\label{fact:E-lambda}
For all $a,b,c,d,x,y,\lambda\in\complexes$,
\begin{align*}
a\xl a &= a\;, \\
(xa+yb)\xl(xc+yd) &= x(a\xl c) + y(b\xl d)\;.
\end{align*}
In particular, setting $x := 1-\mu$ and $y:=\mu$ gives $(a\xm b)\xl (c\xm d) = (a\xl c)\xm (b\xl d)$ for any $\mu\in\complexes$.  If $\mu = \lambda$, then we have the \emph{entropic law}
\[ (a\xl b)\xl (c\xl d) = (a\xl c)\xl (b\xl d)\;. \]
\end{fact}

\begin{remark}
A groupoid satisfying the identity $(ab)(cd) \approx (ac)(bd)$ is known as a \emph{medial} groupoid.  If the operation is also idempotent ($aa\approx a$), then the groupoid is sometimes called a \emph{medial band}, \emph{(groupoid) mode}, or \emph{idempotent medial groupoid}, as well as other names.  These structures have been studied extensively in the literature.  See, for example, \cite{JK:medial-groupoids,Dudek:medial1,Dudek:medial2,CD:medial3}.  These two identities are not the only ones universally satisfied by $\xl$.  For example, $(u \xl (v \xl w)) \xl ((x \xl y) \xl z) = (u \xl (x \xl w)) \xl ((v \xl y) \xl z)$ for all $u,v,w,x,y,z,\lambda\in\complexes$, and this identity does not follow from idempotence and the entropic law, above.
\end{remark}

\begin{notation}
For the rest of this section, we use $\x$ with no subscript to mean $\xl$.
\end{notation}

We can stratify the set $Q_\lambda(S)$ as follows:

\begin{definition}\label{def:stratify-Q-lambda}\rm
For any $\lambda\in\complexes$ and $S\subseteq\complexes$, we define $Q_\lambda^{(0)}(S) := S$, and for all integers $n\ge 0$ we inductively define $Q_\lambda^{(n+1)}(S) := \{ a\x b \mid a,b \in Q_\lambda^{(n)}(S) \}$.  We use $Q_\lambda^{(n)}$ to denote $Q_\lambda^{(n)}(\{0,1\})$.
\end{definition}

\begin{fact}\label{fact:stratify-Q-lambda}
For any $\lambda\in\complexes$ and $S\subseteq\complexes$,
\begin{itemize}
\item
$Q_\lambda^{(n)}(S) \subseteq Q_\lambda^{(n+1)}(S)$ for all integers $n\ge 0$ (noticing that $\x$ is idempotent), and
\item
$\bigcup_{n=0}^\infty Q_\lambda^{(n)}(S) = Q_\lambda(S)$.
\item
If $S$ is countable, then $Q_\lambda(S)$ is countable.
\end{itemize}
\end{fact}

\begin{definition}\label{def:lambda-S-rank}\rm
For  any $\lambda\in\complexes$, any $S\subseteq\complexes$, and any $z\in Q_\lambda(S)$, we define the \emph{$(\lambda,S)$-rank} of $z$ to be the least $n$ such that $z \in Q_\lambda^{(n)}(S)$.
\end{definition}

Some of our proofs will use induction on the $(\lambda,S)$-rank of a point.

In the expression $(1-\lambda)a+\lambda b$, it will sometimes be useful to treat $\lambda$ as the variable.

\begin{definition}\label{def:rho}\rm
For any $a,b\in\complexes$, define the function $\map{\rho_{a,b}}{\complexes}{\complexes}$ by
\[ \rho_{a,b}(z) := a\xz b = (1-z)a + zb \]
for all $z\in\complexes$.
\end{definition}

\begin{fact}\label{fact:rho-basic}
For all $a,b\in\complexes$,
\begin{enumerate}
\item
$\rho_{a,b}$ is the unique $\complexes$-affine map (polynomial of degree $\le 1$ or orientation-preserving similarity transformation) that maps $0\mapsto a$ and $1\mapsto b$.
\item
$\rho_{a,b}$ is continuous.
\item
If $a\ne b$, then $\rho_{a,b}$ is a bijection (a homeomorphism, in fact), and for all $z\in\complexes$,
\[ (\rho_{a,b})^{-1}(z) = \frac{z-a}{b-a}\;. \]
It follows that $(\rho_{a,b})^{-1} = \rho_{x,y}$, where
\begin{align*}
x &= \frac{a}{a-b}\;,  & y &= \frac{a-1}{a-b}\;.
\end{align*}
\item\label{item:rho-composition}
For all $x,y\in\complexes$,
\[ \rho_{a,b} \circ \rho_{x,y} = \rho_{\rho_{a,b}(x),\rho_{a,b}(y)}\;. \]
Equivalently, we have the following distributive law: for all $z\in\complexes$,
\[ \rho_{a,b}(x\xz y) = \rho_{a,b}(x)\xz \rho_{a,b}(y)\;. \]
\end{enumerate}
\end{fact}

\begin{lemma}\label{lem:rho-preserves-convexity}
For any $a,b,\lambda\in\complexes$ and $S\subseteq\complexes$, if $S$ is $\lambda$-convex (respectively, $\lambda$-clonvex), then $\rho_{a,b}(S)$ is $\lambda$-convex (respectively, $\lambda$-clonvex).
\end{lemma}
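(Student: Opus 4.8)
The plan is to reduce everything to the algebraic identity in Fact~\ref{fact:rho-basic}(4), which says that $\rho_{a,b}$ commutes with $\lambda$-extrapolation. Suppose $S$ is $\lambda$-convex, and let $a', b' \in \rho_{a,b}(S)$ be arbitrary; write $a' = \rho_{a,b}(x)$ and $b' = \rho_{a,b}(y)$ for some $x, y \in S$. We must show that the $\lambda$-extrapolant $(1-\lambda)a' + \lambda b'$ lies in $\rho_{a,b}(S)$. By the distributive law in Fact~\ref{fact:rho-basic}(4) (with $z = \lambda$), we have $(1-\lambda)\rho_{a,b}(x) + \lambda\rho_{a,b}(y) = \rho_{a,b}\bigl((1-\lambda)x + \lambda y\bigr)$. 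Since $S$ is $\lambda$-convex and $x, y \in S$, the point $(1-\lambda)x + \lambda y$ is in $S$, so its image under $\rho_{a,b}$ is in $\rho_{a,b}(S)$. This is exactly the point we wanted, so $\rho_{a,b}(S)$ is $\lambda$-convex.

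For the $\lambda$-clonvex case, it remains to check that $\rho_{a,b}(S)$ is topologically closed whenever $S$ is. This would be automatic if $\rho_{a,b}$ were a homeomorphism, which holds when $a \ne b$ by Fact~\ref{fact:rho-basic}(3); in that case $\rho_{a,b}(S)$ is the continuous preimage of the closed set $S$ under $(\rho_{a,b})^{-1}$, hence closed. The one degenerate case is $a = b$, where $\rho_{a,b}$ is the constant map sending everything to $a$; then $\rho_{a,b}(S)$ is either empty (if $S = \emptyset$) or the singleton $\{a\}$, and in both cases it is closed (and trivially $\lambda$-convex). So the closedness claim holds in all cases.

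The only mild subtlety — and the closest thing to an obstacle — is the $a = b$ boundary case, since the lemma as stated does not assume $a \ne b$ and $\rho_{a,b}$ then fails to be injective; but as noted above the image is just a point (or empty), so there is nothing to prove there. Putting the two parts together gives the lemma: $\rho_{a,b}(S)$ inherits $\lambda$-convexity from the algebraic identity, and inherits closedness from the fact that $\rho_{a,b}$ is either a homeomorphism or constant. I would present the $\lambda$-convex argument first, then append the one-line topological remark for the $\lambda$-clonvex strengthening.
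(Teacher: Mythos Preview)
Your proof is correct and essentially the same as the paper's: both pick preimages in $S$, invoke Fact~\ref{fact:rho-basic}(4) (you use the distributive-law form, the paper uses the equivalent composition form $\rho_{a,b}\circ\rho_{u,v}=\rho_{\rho_{a,b}(u),\rho_{a,b}(v)}$), and then handle closedness via the homeomorphism property, with the $a=b$ case dispatched as trivial.
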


\begin{proof}
Suppose $S$ is $\lambda$-convex.  If $a=b$, then the statement is trivial, so we assume $a\ne b$.  Fix any $x,y\in\rho_{a,b}(S)$ and let $u,v\in S$ be such that $x=\rho_{a,b}(u)$ and $y=\rho_{a,b}(v)$.  Then
\[ x\x y = \rho_{a,b}(u)\x \rho_{a,b}(v) = \rho_{a,b}(u\x v) \]
by the distributive law above.  We have $u\x v \in S$ because $S$ is $\lambda$-convex; thus $x\x y \in \rho_{a,b}(S)$.  This proves that $\rho_{a,b}(S)$ is $\lambda$-convex.

If, in addition, $S$ is closed, then so is $\rho_{a,b}(S)$, because $\rho_{a,b}$ is a homeomorphism.  This proves that $\rho_{a,b}$ preserves $\lambda$-clonvexity as well.
\end{proof}

\begin{fact}
$Q_\lambda(S) \subseteq R_\lambda(S)$ for all $\lambda\in\complexes$ and $S\subseteq\complexes$.
\end{fact}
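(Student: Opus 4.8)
The plan is to observe that the statement is essentially immediate from the two definitions, once one has checked that the closure operations $Q_\lambda(\cdot)$ and $R_\lambda(\cdot)$ are actually well-defined. So the first step is to verify existence. The collection of $\lambda$-convex supersets of $S$ is nonempty, since it contains $\complexes$, and an arbitrary intersection of $\lambda$-convex sets is again $\lambda$-convex: if $a$ and $b$ lie in every member of such a family, then $\rho_{a,b}(\lambda) = (1-\lambda)a+\lambda b$ does too. Hence the intersection of \emph{all} $\lambda$-convex supersets of $S$ is itself a $\lambda$-convex superset of $S$, and it is plainly the $\subseteq$-minimum one; this is $Q_\lambda(S)$. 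The identical argument gives $R_\lambda(S)$, using in addition that an arbitrary intersection of closed sets is closed, so that an arbitrary intersection of $\lambda$-clonvex sets is $\lambda$-clonvex (and again $\complexes$ witnesses nonemptiness of the family).

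With well-definedness in hand, the inclusion follows purely from minimality. By definition $R_\lambda(S)$ is $\lambda$-clonvex, hence in particular $\lambda$-convex, and it contains $S$. Therefore $R_\lambda(S)$ is one of the $\lambda$-convex supersets of $S$ over which $Q_\lambda(S)$ is taken to be the $\subseteq$-minimum, and so $Q_\lambda(S)\subseteq R_\lambda(S)$.

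There is no genuine obstacle here; the only point needing any care is the existence of the two closures, and that is the standard ``closure operator arising from an intersection-closed family'' argument. If one preferred a more hands-on justification of existence, one could instead build $Q_\lambda(S)$ as the union of an increasing (transfinite, though in fact countable) sequence of sets obtained by iteratively adjoining all $\lambda$-extrapolants, and obtain $R_\lambda(S)$ from an analogous construction followed by topological closure; but the intersection definition is cleaner and is all that this Fact requires.
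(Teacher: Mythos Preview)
Your argument is correct. The paper itself labels this statement a ``Fact'' and, per its stated convention, omits the proof entirely as immediately obvious; so there is no proof in the paper to compare against. Your write-up supplies exactly the routine justification the authors had in mind: check that arbitrary intersections of $\lambda$-convex (respectively $\lambda$-clonvex) sets are again $\lambda$-convex (respectively $\lambda$-clonvex), so the minimum exists, and then observe that $R_\lambda(S)$ is in particular a $\lambda$-convex superset of $S$, hence contains $Q_\lambda(S)$ by minimality. The aside about an alternative iterative construction is unnecessary for this Fact but not incorrect.
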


The next lemma gives a basic relationship between $Q_\lambda$ and $R_\lambda$.  Recall that $\overline A$ denotes the topological closure of set $A$.

\begin{lemma}\label{lem:topo-closure}
For any $\lambda\in\complexes$ and $S\subseteq\complexes$, \ $\overline{Q_\lambda(S)} = R_\lambda(S)$.
\end{lemma}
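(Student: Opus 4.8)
The plan is to prove the two inclusions $\overline{Q_\lambda(S)} \subseteq R_\lambda(S)$ and $R_\lambda(S) \subseteq \overline{Q_\lambda(S)}$ separately. For the first inclusion, I would note that $R_\lambda(S)$ is by definition $\lambda$-clonvex, hence in particular topologically closed, and it contains $Q_\lambda(S)$ by the preceding Fact; since $\overline{Q_\lambda(S)}$ is the smallest closed set containing $Q_\lambda(S)$, it follows that $\overline{Q_\lambda(S)} \subseteq R_\lambda(S)$.

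For the reverse inclusion, the strategy is to show that $\overline{Q_\lambda(S)}$ is itself a $\lambda$-clonvex superset of $S$; then, since $R_\lambda(S)$ is the $\subseteq$-minimum such set, we get $R_\lambda(S) \subseteq \overline{Q_\lambda(S)}$. It clearly contains $S$ (as $S \subseteq Q_\lambda(S) \subseteq \overline{Q_\lambda(S)}$) and it is closed by construction, so the only thing to verify is that $\overline{Q_\lambda(S)}$ is $\lambda$-convex. To that end, fix $a, b \in \overline{Q_\lambda(S)}$ and take sequences $(a_n), (b_n)$ in $Q_\lambda(S)$ with $a_n \to a$ and $b_n \to b$. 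Since $Q_\lambda(S)$ is $\lambda$-convex, each point $\rho_{a_n,b_n}(\lambda) = (1-\lambda)a_n + \lambda b_n$ lies in $Q_\lambda(S)$. The map $(x,y) \mapsto (1-\lambda)x + \lambda y$ is continuous on $\complexes^2$ (it is a polynomial in the two coordinates, or one can invoke continuity of $\rho_{a,b}$ in Fact~\ref{fact:rho-basic} together with joint continuity), so $\rho_{a_n,b_n}(\lambda) \to (1-\lambda)a + \lambda b$. Hence $(1-\lambda)a + \lambda b$ is a limit of points of $Q_\lambda(S)$, i.e., it lies in $\overline{Q_\lambda(S)}$. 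This establishes $\lambda$-convexity, completing the argument.

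There is no serious obstacle here; the proof is essentially the standard fact that the closure of a set closed under a jointly continuous operation is again closed under that operation. The only point requiring a small amount of care is the joint continuity of $(x,y) \mapsto (1-\lambda)x + \lambda y$ (as opposed to continuity in each variable separately, which is what Fact~\ref{fact:rho-basic}(2) directly gives); this is immediate since the map is a polynomial in $x$ and $y$, but it should be stated explicitly. One should also make sure that $Q_\lambda(S)$ and $R_\lambda(S)$ are well-defined, i.e., that minimum $\lambda$-convex (resp.\ $\lambda$-clonvex) supersets exist — this follows because arbitrary intersections of $\lambda$-convex sets are $\lambda$-convex and arbitrary intersections of closed sets are closed, so one can take the intersection of all such supersets; presumably this is addressed when those closures are first defined.
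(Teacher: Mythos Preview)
Your proposal is correct and follows essentially the same approach as the paper: both directions are handled identically, with the $\supseteq$ inclusion established by showing $\overline{Q_\lambda(S)}$ is $\lambda$-convex via continuity of $(x,y)\mapsto(1-\lambda)x+\lambda y$. The only cosmetic difference is that the paper spells out the continuity step with an explicit $\eps$-estimate (using $M=\max(|\lambda|,|1-\lambda|)$) where you invoke sequential continuity directly.
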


\begin{proof}
The $\subseteq$-containment is obvious because $R_\lambda(S)$ is closed and contains $Q_\lambda(S)$.  For the $\supseteq$-containment, we just need to show that $\overline{Q_\lambda(S)}$ is $\lambda$-convex.  This just follows from the continuity of $\map{\x}{\complexes\times\complexes}{\complexes}$: for all $A,B\subseteq\complexes$, we have $\overline A\x \overline B = \x\left(\overline A\times\overline B\right) = \x\left(\overline{A\times B}\right) \subseteq \overline{\x(A\times B)}= \overline{A\x B}$.  Setting $A \eqdf B \eqdf Q_\lambda(S)$ gives $\overline{Q_\lambda(S)}\x \overline{Q_\lambda(S)} \subseteq \overline{Q_\lambda(S)\x Q_\lambda(S)} \subseteq \overline{Q_\lambda(S)}$.
\end{proof}

Next we give a general lemma from which many of the results of this section follow easily.  This lemma will also be used in Part~II.

\begin{lemma}\label{lem:distribute}
For all $x,y,\lambda\in\complexes$ and all $S,T\subseteq\complexes$,
\begin{align*}
Q_\lambda(xS+yT) &= xQ_\lambda(S) + yQ_\lambda(T)\;, \\
R_\lambda(xS+yT) &= \overline{xR_\lambda(S) + yR_\lambda(T)}\;.
\end{align*}
\end{lemma}

\begin{proof}
For the first equation of the lemma, we get $\subseteq$ by noticing that the right-hand side includes $xS+yT$ and is $\lambda$-convex.  To show this latter fact, start with any $a,b\in xQ_\lambda(S) + yQ_\lambda(T)$, let $a_s\in Q_\lambda(S)$ and $a_t\in Q_\lambda(T)$ be such that $a = xa_s + ya_t$, and choose $b_s\in Q_\lambda(S)$ and $b_t\in Q_\lambda(T)$ similarly for $b$.  Then
\[ a\x b = (xa_s+ya_t)\x(xb_s+yb_t) = x(a_s\x b_s) + y(a_t\x b_t) \in xQ_\lambda(S) + yQ_\lambda(T)\;, \]
the second equation above using Fact~\ref{fact:E-lambda}.

To prove $\supseteq$ in the first equation of the lemma, let $a\in Q_\lambda(S)$ and $b\in Q_\lambda(T)$ be arbitrary.  Let $s$ be the $(\lambda,S)$-rank of $a$ and let $t$ be the $(\lambda,T)$-rank of $b$ (cf.\ Definition~\ref{def:lambda-S-rank}).  We show that $xa+yb\in Q_\lambda(xS+yT)$ by induction on $s+t$.  If $s+t=0$, then $a\in S$ and $b\in T$, so $xa+yb \in xS+yT \subseteq Q_\lambda(xS+yT)$.  Now suppose $s+t>0$ and the inclusion holds for all rank sums less than $s+t$.  We prove the case where $s>0$, the case where $t>0$ being similar.  Since $s>0$, we have $a = a_1\x a_2$ for some $a_1,a_2\in Q_\lambda(S)$, both with $(\lambda,S)$-rank less than $s$.  Then by Fact~\ref{fact:E-lambda} again,
\[ xa+yb = x(a_1\x a_2) + y(b\x b) = (xa_1+yb)\x(xa_2+yb)\;. \]
By the inductive hypothesis, $xa_1+yb$ and $xa_2+yb$ are both in $Q_\lambda(xS+yT)$, and so by $\lambda$-convexity, $xa+yb \in Q_\lambda(xS+yT)$.

The second equation follows from the first by taking the closure of both sides and using Lemma~\ref{lem:topo-closure} and the fact that $xC+yD \subseteq x\overline{C} + y\overline{D} \subseteq \overline{xC+yD}$ for any $C,D\subseteq\complexes$.
\end{proof}

The next lemma helps to justify our arbitrary choice of $0$ and $1$ in the definitions of $Q_\lambda$ and $R_\lambda$.

\begin{lemma}\label{lem:translate-Q-R}
For any $a,b,\lambda\in\complexes$ and any set $S\subseteq\complexes$,
\begin{align*}
\rho_{a,b}(Q_\lambda(S)) &= Q_\lambda(\rho_{a,b}(S))\;, \\
\rho_{a,b}(R_\lambda(S)) &= R_\lambda(\rho_{a,b}(S))\;.
\end{align*}
In particular, $\rho_{a,b}(R_\lambda)$ is the $\lambda$-clonvex closure of $\{a,b\}$.
\end{lemma}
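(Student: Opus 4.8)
The plan is to leverage Lemma~\ref{lem:rho-preserves-convexity} together with Fact~\ref{fact:rho-basic}: when $a\ne b$, $\rho:=\rho_{a,b}$ is a homeomorphism whose inverse is again a map of the form $\rho_{x,y}$, so \emph{both} $\rho$ and $\rho^{-1}$ preserve $\lambda$-convexity and $\lambda$-clonvexity. I would first dispose of the degenerate case $a=b$: there $\rho$ is the constant map with value $a$, so $\rho(S)=\{a\}$ when $S\ne\emptyset$ and $\rho(S)=\emptyset$ when $S=\emptyset$; since $\rho_{a,a}(\lambda)=a$, the singleton $\{a\}$ is itself $\lambda$-clonvex, and both sides of both equations collapse to $\{a\}$ (respectively $\emptyset$).

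For the main case $a\ne b$, I would prove the $Q_\lambda$ identity by two inclusions. For ``$\supseteq$'': by Lemma~\ref{lem:rho-preserves-convexity}, $\rho(Q_\lambda(S))$ is $\lambda$-convex, and it obviously contains $\rho(S)$; hence by $\subseteq$-minimality of the $\lambda$-convex closure, $Q_\lambda(\rho(S))\subseteq\rho(Q_\lambda(S))$. For ``$\subseteq$'': apply the same reasoning to $\rho^{-1}$, which by Fact~\ref{fact:rho-basic}(3) equals $\rho_{x,y}$ for suitable $x,y$ and therefore also preserves $\lambda$-convexity; thus $\rho^{-1}(Q_\lambda(\rho(S)))$ is $\lambda$-convex and contains $\rho^{-1}(\rho(S))=S$ (here we use that $\rho$ is a bijection), so $Q_\lambda(S)\subseteq\rho^{-1}(Q_\lambda(\rho(S)))$, and applying the bijection $\rho$ to both sides gives $\rho(Q_\lambda(S))\subseteq Q_\lambda(\rho(S))$. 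Combining the two inclusions yields the first equation. The $R_\lambda$ identity is proved verbatim, replacing ``$\lambda$-convex'' by ``$\lambda$-clonvex'' throughout and invoking the $\lambda$-clonvex half of Lemma~\ref{lem:rho-preserves-convexity}; since $\rho$ and $\rho^{-1}$ are both homeomorphisms, topological closedness transfers in both directions, so no extra work is needed.

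For the final ``in particular'' clause, I would simply observe that $\rho_{a,b}(\{0,1\})=\{\rho_{a,b}(0),\rho_{a,b}(1)\}=\{a,b\}$ by Fact~\ref{fact:rho-basic}(1), and then specialize the $R_\lambda$ identity to $S=\{0,1\}$ to conclude $\rho_{a,b}(R_\lambda)=R_\lambda(\{a,b\})$, i.e.\ $\rho_{a,b}(R_\lambda)$ is the $\lambda$-clonvex closure of $\{a,b\}$. I do not expect any serious obstacle here; the only point requiring care is that the ``$\subseteq$'' direction genuinely relies on $\rho$ being invertible (hence the separate handling of $a=b$) and on its inverse still falling under the scope of Lemma~\ref{lem:rho-preserves-convexity}, which Fact~\ref{fact:rho-basic}(3) guarantees.
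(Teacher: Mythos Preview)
Your proof is correct and follows essentially the same strategy as the paper's: dispose of $a=b$ trivially, then for $a\ne b$ obtain both inclusions by applying Lemma~\ref{lem:rho-preserves-convexity} to $\rho_{a,b}$ and to its inverse (which by Fact~\ref{fact:rho-basic}(3) is again of the form $\rho_{x,y}$). The only minor difference is that the paper derives the $R_\lambda$ identity from the already-established $Q_\lambda$ identity via Lemma~\ref{lem:topo-closure} and the fact that the homeomorphism $\rho_{a,b}$ commutes with topological closure, whereas you simply re-run the two-inclusion argument with the $\lambda$-clonvex half of Lemma~\ref{lem:rho-preserves-convexity}; both routes are valid and equally short.
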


\begin{proof}
Note that $\rho_{a,b}(z) = a + (b-a)z$ for any $z\in\complexes$.  So applying Lemma~\ref{lem:distribute}, we get
\begin{align*}
\rho_{a,b}(Q_\lambda(S)) &= a + (b-a)Q_\lambda(S) = a\{1\} + (b-a)Q_\lambda(S) = aQ_\lambda(\{1\}) + (b-a)Q_\lambda(S) \\
&= Q_\lambda(a\{1\} + (b-a)S) = Q_\lambda(a+(b-a)S) = Q_\lambda(\rho_{a,b}(S))\;,
\end{align*}
which establishes the first equation.  The second equation is obtained by taking the closure of both sides of the first, observing that $\overline{\rho_{a,b}(Q_\lambda(S))} = \rho_{a,b}(\overline{Q_\lambda(S)})$, and using Lemma~\ref{lem:topo-closure}.
\end{proof}

The next fact can be seen by noticing that $a\xl b = b\xm a$ for all $a,b,\lambda\in\complexes$, where $\mu = 1-\lambda$.

\begin{fact}\label{fact:inner-dual}
A set is $\lambda$-c[l]onvex if and only if it is $(1-\lambda)$-c[l]onvex.  Thus $Q_\lambda(S) = Q_{1-\lambda}(S)$ and $R_\lambda(S) = R_{1-\lambda}(S)$ for any $S\subseteq\complexes$ and $\lambda\in\complexes$.
\end{fact}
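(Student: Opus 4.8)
The plan is to reduce everything to the single algebraic identity $\rho_{a,b}(\lambda) = \rho_{b,a}(1-\lambda)$, valid for all $a,b,\lambda\in\complexes$, which the excerpt already flags as the crux. First I would simply expand both sides: $\rho_{a,b}(\lambda) = (1-\lambda)a + \lambda b$, whereas $\rho_{b,a}(1-\lambda) = \bigl(1-(1-\lambda)\bigr)b + (1-\lambda)a = \lambda b + (1-\lambda)a$, and these agree by commutativity of addition in $\complexes$. (No case analysis is needed; the identity holds even when $a=b$, since then both sides equal $a$.)

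Next I would use this identity to show that the class of $\lambda$-convex subsets of $\complexes$ is literally the same collection of sets as the class of $(1-\lambda)$-convex subsets. Suppose $S$ is $\lambda$-convex. To check that $S$ is $(1-\lambda)$-convex, fix arbitrary $a,b\in S$; by the identity $\rho_{a,b}(1-\lambda) = \rho_{b,a}(\lambda)$, and since $b,a\in S$ and $S$ is $\lambda$-convex, the point $\rho_{b,a}(\lambda)$ lies in $S$, hence so does $\rho_{a,b}(1-\lambda)$. Because $1-(1-\lambda)=\lambda$, the converse implication is the very same statement with the roles of $\lambda$ and $1-\lambda$ exchanged, so $\lambda$-convexity and $(1-\lambda)$-convexity are equivalent notions. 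Topological closedness is a property of $S$ alone, independent of $\lambda$, so the equivalence immediately upgrades to $\lambda$-clonvexity versus $(1-\lambda)$-clonvexity.

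Finally, the closure identities follow formally. Since ``$\lambda$-convex'' and ``$(1-\lambda)$-convex'' name exactly the same family of subsets of $\complexes$, the $\subseteq$-minimum member of that family containing $S$ is one and the same object whether we denote it $Q_\lambda(S)$ or $Q_{1-\lambda}(S)$; the identical argument with ``clonvex'' in place of ``convex'' gives $R_\lambda(S) = R_{1-\lambda}(S)$.

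I do not anticipate any genuine obstacle here — this is a routine ``Fact.'' The only point requiring a moment's care is the observation that the definitions of $Q_\lambda(S)$ and $R_\lambda(S)$ depend on $\lambda$ only through the induced class of $\lambda$-c[l]onvex sets, which the first part shows is invariant under $\lambda\leftrightarrow 1-\lambda$; once that is said, the closure equalities are automatic.
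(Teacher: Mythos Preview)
Your proposal is correct and follows exactly the approach the paper indicates: the paper merely remarks that the fact ``can be seen by noticing that $\rho_{a,b}(\lambda) = \rho_{b,a}(1-\lambda)$'' and omits the details, which you have filled in straightforwardly.
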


\begin{fact}\label{fact:conjugate}
For any $\lambda\in\complexes$,
\begin{itemize}
\item
$Q_\lambda(S)^* = Q_{\lambda^*}(S^*)$ and $R_\lambda(S)^* = R_{\lambda^*}(S^*)$ for any $S\subseteq\complexes$.
\item
In particular, $(Q_\lambda)^* = Q_{\lambda^*}$ and $(R_\lambda)^* = R_{\lambda^*}$.
\item
Thus $R_\lambda$ is convex if and only if $R_{\lambda^*}$ is convex.
\end{itemize}
\end{fact}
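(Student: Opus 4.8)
The plan is to exploit the single algebraic identity
\[ (\rho_{a,b}(z))^* = ((1-z)a + zb)^* = (1-z^*)a^* + z^*b^* = \rho_{a^*,b^*}(z^*), \]
valid for all $a,b,z\in\complexes$, together with the elementary fact that complex conjugation is a homeomorphism of $\complexes$ that is its own inverse. Taking $z=\lambda$ in the identity shows that whenever $T\subseteq\complexes$ is $\lambda$-convex, $T^*$ is $\lambda^*$-convex: given $a^*,b^*\in T^*$ with $a,b\in T$, the point $\rho_{a^*,b^*}(\lambda^*) = (\rho_{a,b}(\lambda))^*$ lies in $T^*$ because $\rho_{a,b}(\lambda)\in T$. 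Since conjugation preserves closedness, $T$ being $\lambda$-clonvex implies $T^*$ is $\lambda^*$-clonvex. This is the exact analogue of Lemma~\ref{lem:rho-preserves-convexity}, with conjugation in place of $\rho_{a,b}$.

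Next I would obtain the first displayed equality by the standard minimality argument, mirroring the proof of Lemma~\ref{lem:translate-Q-R}. Since $Q_\lambda(S)\supseteq S$, we have $Q_\lambda(S)^*\supseteq S^*$, and by the previous paragraph $Q_\lambda(S)^*$ is $\lambda^*$-convex; minimality of $Q_{\lambda^*}(S^*)$ then gives $Q_{\lambda^*}(S^*)\subseteq Q_\lambda(S)^*$. Applying this containment with $\lambda^*$ and $S^*$ in place of $\lambda$ and $S$, and using that conjugation is an involution, yields the reverse containment, hence $Q_\lambda(S)^* = Q_{\lambda^*}(S^*)$. The statement for $R_\lambda$ then follows immediately, either by running the identical argument with ``$\lambda$-clonvex'' in place of ``$\lambda$-convex'' (using also that $R_\lambda(S)^*$ is closed, since conjugation is a homeomorphism), or from the $Q$ statement via the chain
\[ R_\lambda(S)^* = \overline{Q_\lambda(S)}^* = \overline{Q_\lambda(S)^*} = \overline{Q_{\lambda^*}(S^*)} = R_{\lambda^*}(S^*), \]
where the first and last equalities use Lemma~\ref{lem:topo-closure} and the second uses that conjugation is a homeomorphism.

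The ``in particular'' clause is immediate upon taking $S=\{0,1\}$, since $\{0,1\}^*=\{0,1\}$. For the ``thus'' clause, conjugation is reflection across the real axis, hence an $\reals$-affine bijection of the plane, so it carries convex sets to convex sets and back; thus $R_\lambda$ is convex if and only if $(R_\lambda)^* = R_{\lambda^*}$ is convex. There is no genuine obstacle here: the only subtlety is the ``twist'' in the commuting identity --- conjugation sends $\lambda$ to $\lambda^*$, not to $\lambda$ --- and once that is in hand, the rest is a routine repackaging of the arguments already used for $\rho_{a,b}$.
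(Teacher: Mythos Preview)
Your argument is correct. The paper itself labels this statement a ``Fact'' and, per its convention, omits the proof entirely as routine; your write-up is exactly the straightforward verification the paper has in mind, hinging on the identity $(\rho_{a,b}(\lambda))^* = \rho_{a^*,b^*}(\lambda^*)$ and the involutive, homeomorphic nature of conjugation.
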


The following geometric picture of $a, b$ and $a\x b$ is especially useful for constructions involving $\lambda \in \complexes \cmpl \reals$. See Figure~\ref{fig:similar}.

\begin{figure}[htbp]
\begin{center}
\input{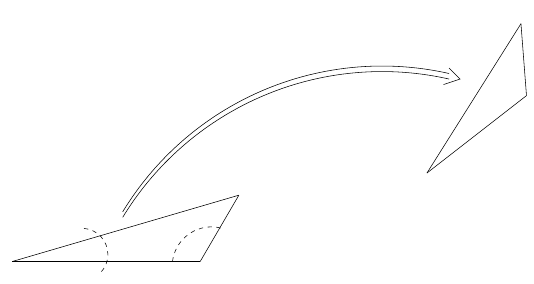_t}
\caption{The $\complexes$-affine transformation that takes $(0,1,\lambda) \mapsto (a, b, a\xl b)$ preserves angles, so that the triangles $(0,1,\lambda)$ and $(a, b, a\xl b))$ are similar.}\label{fig:similar}
\end{center}
\end{figure}

\begin{fact}\label{fact:similar-triangles}
By Fact~\ref{fact:rho-basic}(1), for any $a, b, \lambda \in \complexes$, the points $a$, $b$, and $a\x b$ form a triangle that is similar to the one formed by $0$, $1$, and $\lambda$.
\end{fact}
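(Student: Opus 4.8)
The plan is to exploit the fact, already recorded in Fact~\ref{fact:rho-basic}.1 and~\ref{fact:rho-basic}.3, that when $a\ne b$ the map $\rho_{a,b}$ is an orientation-preserving similarity transformation of the plane, together with the elementary observation that any similarity transformation carries every triangle to a similar triangle. Since $\rho_{a,b}(0)=a$, $\rho_{a,b}(1)=b$, and $\rho_{a,b}(\lambda)=\rho_{a,b}(\lambda)$, applying $\rho_{a,b}$ to the triangle with vertices $0,1,\lambda$ produces the triangle with vertices $a,b,\rho_{a,b}(\lambda)$, and the two are similar under the vertex correspondence $0\leftrightarrow a$, $1\leftrightarrow b$, $\lambda\leftrightarrow\rho_{a,b}(\lambda)$.

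If one prefers a self-contained argument, one can instead verify the SSS similarity criterion directly from $\rho_{a,b}(\lambda)=(1-\lambda)a+\lambda b$. The three side lengths of the image triangle are
\[ |\rho_{a,b}(\lambda)-a| = |\lambda|\,|b-a|, \qquad |\rho_{a,b}(\lambda)-b| = |1-\lambda|\,|b-a|, \qquad |b-a| = |1-0|\,|b-a|, \]
so each equals the corresponding side length of the triangle $0,1,\lambda$ scaled by the common factor $|b-a|$; by SSS the triangles are similar. That $\rho_{a,b}$ is orientation-preserving (it is $\complexes$-affine with nonzero linear coefficient $b-a$) shows moreover that the similarity is direct, not a reflection. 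A picture makes all of this transparent; see Figure~\ref{fig:similar}.

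There is essentially no real obstacle here --- this is precisely why the statement is labeled a Fact --- but one should be mindful of degeneracies and careful to pin down the vertex correspondence so that ``similar'' is well defined. If $a=b$, then $\rho_{a,b}$ is constant and the image ``triangle'' collapses to the single point $a$; if $\lambda\in\reals$ (in particular if $\lambda\in\{0,1\}$), then already $0,1,\lambda$ are collinear and both triangles are degenerate. In every case, however, the side-length identities above hold verbatim, so the conclusion --- read as ``corresponding side ratios are equal,'' with the stated correspondence of vertices --- remains correct, which is all that is needed in the sequel.
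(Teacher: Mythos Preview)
Your proposal is correct and matches the paper's own justification: the paper labels this a Fact (proofs of Facts are omitted) and simply cites Fact~\ref{fact:rho-basic}.1 in the statement itself, which is exactly the similarity-transformation reasoning you spell out. Your additional SSS computation and your remarks on the degenerate cases are correct and go a bit beyond what the paper records, but there is no discrepancy in approach.
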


\begin{definition}\label{def:char-angles}\rm
For $\lambda\in\complexes\cmpl\reals$, we call the angles $\theta$ (formed by $(1,0,\lambda)$) and $\p$ (formed by $(\lambda, 1, 0)$), as indicated in Figure~\ref{fig:similar}, the {\it characteristic angles} of $\lambda$.  We assume $0 < \theta,\p < \pi$.
\end{definition}


\begin{proposition}\label{prop:convex-iff-lambda-closed}
If $S\subseteq \complexes$ is closed, then for any fixed $\lambda\in\opop{0,1}$, we have that $S$ is convex if and only if $S$ is $\lambda$-convex. 
\end{proposition}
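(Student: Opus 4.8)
The forward implication requires no topology at all: if $S$ is convex and $\lambda\in\clcl{0,1}$, then for any $a,b\in S$ the point $\rho_{a,b}(\lambda)=(1-\lambda)a+\lambda b$ lies on the segment joining $a$ to $b$, hence in $S$, so $S$ is $\lambda$-convex. The content is the converse, which I would prove in the equivalent and sharper form: \emph{if $\lambda\in\opop{0,1}$, then $R_\lambda=\clcl{0,1}$}. Granting this, suppose $S$ is closed and $\lambda$-convex, and pick any $a,b\in S$. Then $S$ is a $\lambda$-clonvex superset of $\{a,b\}$, so by minimality $R_\lambda(\{a,b\})\subseteq S$; but by Lemma~\ref{lem:translate-Q-R} we have $R_\lambda(\{a,b\})=\rho_{a,b}(R_\lambda)=\rho_{a,b}(\clcl{0,1})$, which is precisely the line segment from $a$ to $b$. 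Thus $S$ contains the segment between any two of its points, i.e.\ $S$ is convex.

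So everything reduces to showing $R_\lambda=\clcl{0,1}$ for real $\lambda\in\opop{0,1}$. One inclusion is easy: $\clcl{0,1}$ is convex, hence $\lambda$-convex, and contains $\{0,1\}$, so $Q_\lambda\subseteq\clcl{0,1}$, and therefore $R_\lambda=\overline{Q_\lambda}\subseteq\clcl{0,1}$ by Lemma~\ref{lem:topo-closure}. For the reverse inclusion it suffices, again by Lemma~\ref{lem:topo-closure}, to prove that $Q_\lambda$ is dense in $\clcl{0,1}$.

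For density, set $\mu:=\min(\lambda,1-\lambda)\in\opcl{0,1/2}$ and use a subdivision estimate. If $x<y$ are points of $Q_\lambda$, then $\rho_{x,y}(\lambda)$ and $\rho_{y,x}(\lambda)=\rho_{x,y}(1-\lambda)$ both lie in $Q_\lambda$ (the latter because $x,y\in Q_\lambda$ and $Q_\lambda$ is $\lambda$-convex), and these two points partition $\clcl{x,y}$ into subintervals of lengths $\mu(y-x)$, $(1-2\mu)(y-x)$, and $\mu(y-x)$ — each at most $(1-\mu)(y-x)$, since $\max(\mu,\,1-2\mu)\le 1-\mu$ when $0<\mu\le 1/2$. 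Starting from the partition $\{0,1\}$ of $\clcl{0,1}$ and applying this to every consecutive pair, one obtains after $n$ rounds a finite subset of $Q_\lambda$ containing $0$ and $1$ all of whose consecutive gaps are at most $(1-\mu)^n$. Since $(1-\mu)^n\to 0$, every point of $\clcl{0,1}$ is a limit of points of $Q_\lambda$, so $\overline{Q_\lambda}=\clcl{0,1}$, completing the argument.

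The only genuine work lies in the density estimate, and even that is routine once one tracks the single parameter $\mu=\min(\lambda,1-\lambda)$ and the contraction bound $\max(\mu,1-2\mu)\le 1-\mu$. The minor nuisances are the degenerate case $\lambda=\tfrac12$ (where the two inserted points coincide and the middle subinterval has length $0$) and keeping straight which of $\rho_{x,y}(\lambda)$, $\rho_{x,y}(1-\lambda)$ is the left one depending on whether $\lambda$ is below or above $\tfrac12$; neither affects the length bookkeeping, so I expect no serious obstacle.
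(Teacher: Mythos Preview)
Your proof is correct, but it takes a genuinely different route from the paper's. The paper argues directly on an arbitrary closed $\lambda$-convex set $S$: given $a,b\in S$, it supposes the segment $\rho_{a,b}(\clcl{0,1})$ is not contained in $S$, takes a \emph{maximal} open gap $J=\opop{\lambda_1,\lambda_2}$ in its preimage, observes that the images of $\lambda_1,\lambda_2$ lie in $S$ by maximality, and then applies a single $\lambda$-extrapolation to land inside the gap --- contradiction. No iteration, no rate of contraction, and no appeal to Lemmas~\ref{lem:topo-closure} or~\ref{lem:translate-Q-R}.

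Your approach instead reduces everything to the special case $R_\lambda=\clcl{0,1}$, using the machinery of Lemmas~\ref{lem:topo-closure} and~\ref{lem:translate-Q-R} to transfer this back to arbitrary $S$, and then proves the density of $Q_\lambda$ in $\clcl{0,1}$ \emph{constructively} by iterated subdivision with the explicit contraction factor $1-\mu$. What your approach buys: it isolates the clean statement $R_\lambda=\clcl{0,1}$ (which the paper only obtains later, as Fact~\ref{fact:trivial}(2), by combining this very proposition with Theorem~\ref{thm:convex-is-easy}), and the explicit rate could be useful elsewhere. What the paper's approach buys: it is entirely self-contained, needing nothing beyond the definition of $\lambda$-convexity and the fact that $S$ is closed --- in particular it does not depend on the earlier lemmas, so it could stand at the very beginning of the paper.
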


\begin{proof}
Clearly, if $S$ is convex, it is $\lambda$-convex for any fixed $\lambda\in\opop{0,1}$.

Now suppose $S$ is $\lambda$-convex for some fixed $\lambda\in\opop{0,1}$.  Let $I := \clcl{0,1}$.  Then consider two points $a, b \in S$, and the line segment $L := \rho_{a,b}(I) = \{x \mid x = (1-\ell)a + \ell b$ for some $\ell \in I\}$, which connects $a$ and $b$. We now show that $S$ is dense in the set $L$.  This is sufficient for the proposition: Since $S$ is closed by hypothesis, this implies that in fact $L \subseteq S$, from which it follows that $S$ is convex.

Suppose, then, that $S$ is not dense in $L$.  That is, there is some nonempty open\footnote{with respect to the induced topology on $L$} subset $O\subseteq L$ that does not intersect with $S$.  $O$ is the unique union of disjoint nonempty open intervals.  Let $J := \opop{x,y}\subseteq O$ be one of these intervals, for some $x<y$.  Then $x\in S$ and $y\in S$, and by $\lambda$-convexity, $x\x y\in S$.  But $x<x\x y<y$, and so $x\x y\in S\cap J$, a contradiction.
%
\end{proof}


\begin{corollary}\label{cor:convex-hull}
If $T\subseteq\complexes$ and $0<\lambda<1$, then $R_\lambda(T)$ is the closure of the convex hull of $T$.
\end{corollary}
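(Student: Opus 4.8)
The plan is to deduce this immediately from Lemma~\ref{lem:topo-closure} and Proposition~\ref{prop:convex-iff-lambda-closed}, by establishing two inclusions between $R_\lambda(T)$ and $\overline{\operatorname{conv}(T)}$, the closure of the convex hull of $T$.

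First I would show $R_\lambda(T) \subseteq \overline{\operatorname{conv}(T)}$. By definition $R_\lambda(T)$ is the $\subseteq$-minimum $\lambda$-clonvex superset of $T$, so it suffices to observe that $\overline{\operatorname{conv}(T)}$ is itself a $\lambda$-clonvex superset of $T$: it contains $T$, it is closed by construction, and it is convex (the closure of a convex set is convex), hence in particular $\lambda$-convex for every $\lambda\in\opop{0,1}$, including the fixed one in the statement. Therefore $\overline{\operatorname{conv}(T)}$ is $\lambda$-clonvex and contains $T$, so minimality of $R_\lambda(T)$ gives the inclusion.

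For the reverse inclusion $\overline{\operatorname{conv}(T)} \subseteq R_\lambda(T)$, I would note that $R_\lambda(T)$ is closed and $\lambda$-convex (it is $\lambda$-clonvex by definition), and $0<\lambda<1$, so Proposition~\ref{prop:convex-iff-lambda-closed} applies and tells us $R_\lambda(T)$ is convex in the usual sense. A convex set containing $T$ contains $\operatorname{conv}(T)$, and being also closed, it contains $\overline{\operatorname{conv}(T)}$. Combining the two inclusions yields equality. (Alternatively, one could invoke Lemma~\ref{lem:topo-closure} to write $R_\lambda(T)=\overline{Q_\lambda(T)}$ and argue directly that $Q_\lambda(T)$ is dense in the convex hull, but routing through Proposition~\ref{prop:convex-iff-lambda-closed} is cleaner.)

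There is essentially no real obstacle here; the only thing to be careful about is making sure the hypotheses of Proposition~\ref{prop:convex-iff-lambda-closed} are met (namely that $R_\lambda(T)$ is genuinely closed, which is immediate from it being $\lambda$-clonvex) and that $\overline{\operatorname{conv}(T)}$ is legitimately closed and convex so that it qualifies as a competitor in the minimality defining $R_\lambda(T)$. The argument works uniformly for all $T\subseteq\complexes$, including when $\operatorname{conv}(T)$ is unbounded or when $T$ is empty or a singleton.
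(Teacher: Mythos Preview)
Your proof is correct and follows essentially the same approach as the paper's own proof: both establish the two inclusions by (i) noting that $\overline{\operatorname{conv}(T)}$ is a closed, convex (hence $\lambda$-convex) superset of $T$, whence minimality of $R_\lambda(T)$ gives one inclusion, and (ii) applying Proposition~\ref{prop:convex-iff-lambda-closed} to conclude $R_\lambda(T)$ is convex, whence it contains $\overline{\operatorname{conv}(T)}$. The only cosmetic difference is the order in which you present the two inclusions.
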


\begin{proof}
Let $S := R_\lambda(T)$, and let $S'$ be the (topological) closure of the convex hull of $T$.  We have $T\subseteq S$ and $S$ is closed and $\lambda$-convex, whence it follows that $S$ is convex by Proposition~\ref{prop:convex-iff-lambda-closed}, and thus $S' \subseteq S$.  Conversely, the closure of the convex hull of any set is also convex.  Thus $S'$ is $\lambda$-convex by the same proposition, and this together with the inclusion $T\subseteq S'$ imply $S\subseteq S'$.
\end{proof}

Now we consider the minimal nontrivial $\lambda$-clonvex set $R_\lambda = R_\lambda(\{0,1\})$.  If $R_\lambda$ happens to be convex, then characterizing $R_\lambda$ is easy.

\begin{theorem}\label{thm:convex-is-easy}
Suppose $R_\lambda$ is convex.
\begin{enumerate}
\item
If $\lambda \in \clcl{0,1}$, then $R_\lambda = \clcl{0,1}$.
\item
If $\lambda \in \reals \cmpl \clcl{0,1}$, then $R_\lambda = \reals$.
\item
If $\lambda \in \complexes \cmpl \reals$, then $R_\lambda = \complexes$.
\end{enumerate}
\end{theorem}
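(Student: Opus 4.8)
The plan is to dispose of part~1 at once and then treat parts~2 and~3 by a single ``ball-growing'' argument. For part~1, if $\lambda\in\opop{0,1}$ then the preceding Corollary, applied with $T=\{0,1\}$, already gives that $R_\lambda$ is the closure of the convex hull of $\{0,1\}$, i.e.\ $R_\lambda=\clcl{0,1}$, with no hypothesis needed; if instead $\lambda\in\{0,1\}$ then $\rho_{a,b}(\lambda)\in\{a,b\}$ for all $a,b$, so every subset of $\complexes$ is $\lambda$-convex and hence $R_\lambda=\overline{\{0,1\}}=\{0,1\}$, which is not convex, making the hypothesis vacuous in that case. Either way, if $R_\lambda$ is convex and $\lambda\in\clcl{0,1}$ then $R_\lambda=\clcl{0,1}$.

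For the rest, write $B(m,r)$ for the open disk of radius $r$ about $m$ and set $\kappa:=|\lambda|+|1-\lambda|$. By the triangle inequality $\kappa\ge 1$, with equality exactly when $\lambda\in\clcl{0,1}$, so in parts~2 and~3 we have $\kappa>1$. The crux is the following observation: if $A\subseteq\complexes$ is $\lambda$-convex, then reading the defining condition Minkowski-additively gives $(1-\lambda)A+\lambda A\subseteq A$. Since multiplication by a nonzero $\mu\in\complexes$ carries $B(m,r)$ to $B(\mu m,|\mu|r)$, and since $B(c_1,r_1)+B(c_2,r_2)=B(c_1+c_2,r_1+r_2)$, it follows that $B(m,r)\subseteq A$ implies $B(m,\kappa r)=(1-\lambda)B(m,r)+\lambda B(m,r)\subseteq A$. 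Iterating, $B(m,\kappa^n r)\subseteq A$ for every $n\in\posints$, and since $\kappa>1$ this forces $A=\complexes$. The identical statement holds with $\complexes$ replaced by $\reals$ and ``disk'' by ``interval'', since for $\lambda\in\reals$ all of this arithmetic stays within $\reals$.

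Now part~2: here $\lambda\in\reals$, so $\reals$ is itself $\lambda$-clonvex and contains $\{0,1\}$, whence $R_\lambda\subseteq\reals$; being in addition closed and convex and containing $0$ and $1$, $R_\lambda$ is a nondegenerate interval, so it contains $\clcl{0,1}$, and the one-dimensional form of the argument above gives $R_\lambda=\reals$. For part~3, $\lambda\notin\reals$ makes $0$, $1$, and $\lambda=\rho_{0,1}(\lambda)$ three non-collinear points of $R_\lambda$ (the last lies in $R_\lambda$ because $R_\lambda$ is $\lambda$-convex and contains $\{0,1\}$); since $R_\lambda$ is convex it contains the solid triangle they span, hence some disk $B(m,r)$ with $r>0$, and the argument above gives $R_\lambda=\complexes$.

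There is essentially one idea here — the self-similar scaling $(1-\lambda)A+\lambda A\subseteq A$ — and once it is in hand the proof is routine. The obstacle I would expect to waste time on is the more pedestrian route for part~3: assuming $R_\lambda$ omits a point, separating that point off by a supporting line, and trying to $\lambda$-extrapolate two nearby points of $R_\lambda$ across the line. That can be made to work, but it requires choosing a smooth boundary point so that the tangent cone there is a full half-plane, which is fussier than simply inflating a disk.
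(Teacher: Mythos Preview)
Your argument is correct, and for parts~2 and~3 it is genuinely different from the paper's. The paper treats part~(2) by exhibiting the explicit sequences $\lambda^n$ and $1-\lambda^n$ inside $R_\lambda$ and invoking convexity to fill in the half-lines they span; for part~(3) it uses McNulty's trick of showing that $R_\lambda$ is \emph{open} (by placing each point $a\in R_\lambda$ in the interior of the convex hull of a fan $b_0,b_1,\ldots,b_k$ obtained by repeated $\lambda$-extrapolation about $a$), and then concluding from ``nonempty, open, and closed'' that $R_\lambda=\complexes$. Your single Minkowski-sum inflation $(1-\lambda)B(m,r)+\lambda B(m,r)=B(m,\kappa r)$ with $\kappa=|\lambda|+|1-\lambda|>1$ handles both cases at once and is shorter; in fact the paper arrives at precisely this expansion ratio later, in the geometric disk-growing discussion preceding Proposition~\ref{prop:open-set} (see the Remark there), but via a more laborious tangent-line construction rather than your one-line Minkowski identity. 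What the paper's route for part~(3) buys is that it never needs to first locate an interior disk: McNulty's argument shows openness directly at every point, so one does not need the preliminary step of finding the triangle $0,1,\lambda$ and a disk inside it. Conversely, your route buys uniformity across the real and complex cases and avoids the angular bookkeeping of the fan construction. For part~(1) your treatment of the endpoints $\lambda\in\{0,1\}$ via vacuity is fine; the paper instead just observes that $\clcl{0,1}$ is $\lambda$-clonvex for every $\lambda\in\clcl{0,1}$ and uses the convexity hypothesis only for the inclusion $\clcl{0,1}\subseteq R_\lambda$, which is marginally cleaner but equivalent.
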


It will be convenient later to define the following:

\begin{definition}\label{def:F-lambda}
For any $\lambda\in\complexes$, define
\[ F_\lambda = \left\{ \begin{array}{ll}
\clcl{0,1} & \mbox{if $\lambda\in\clcl{0,1}$,} \\
\reals & \mbox{if $\lambda \in \reals \cmpl \clcl{0,1}$,} \\
\complexes & \mbox{if $\lambda \in \complexes \cmpl \reals$.}
\end{array} \right. \]
\end{definition}

Then Theorem~\ref{thm:convex-is-easy} states simply that if $R_\lambda$ is convex, then $R_\lambda = F_\lambda$.

\begin{proof}
For (1), we have $\clcl{0,1}\subseteq R_\lambda$ by convexity, and if $\lambda\in\clcl{0,1}$, then it is obvious that $\clcl{0,1}$ is $\lambda$-clonvex, since $(1-\lambda)a + \lambda b$ always lies on the line segment connecting $a$ and $b$.  Thus $R_\lambda \subseteq \clcl{0,1}$ by the minimality of $R_\lambda$.

For (2), we can assume WLOG that $\lambda > 1$ (otherwise consider $1-\lambda$ and use Fact~\ref{fact:inner-dual}).  Certainly, $\lambda^0 = 1 \in R_\lambda$, and if $\lambda^n \in R_\lambda$ for some integer $n\ge 0$, then $\lambda^{n+1} = 0\x\lambda^n \in R_\lambda$ as well.  Thus by induction, $\lambda^n \in R_\lambda$ for all integers $n\ge 0$.  Since the sequence $1,\lambda,\lambda^2,\lambda^3,\ldots$ increases without bound, we have $\clop{1,\infty} \subseteq R_\lambda$ by convexity.  Similarly, the sequence $1, 1-\lambda, 1-\lambda^2,1-\lambda^3,\ldots$ lies entirely within $R_\lambda$ (by induction, if $1-\lambda^n$ is in $R_\lambda$, then so is $1-\lambda^{n+1} = 1\x(1-\lambda^n)$).  This latter sequence decreases without bound, and thus $\opcl{-\infty,1}\subseteq R_\lambda$ by convexity.

For (3), we use a trick suggested by George McNulty: we show that $R_\lambda$ is open, and thus, since $R_\lambda$ is nonempty and also closed, we must have $R_\lambda = \complexes$.  Since $\lambda \notin \reals$, we can represent $\lambda$ in polar form as $\lambda = r e^{i\theta}$, where $r = |\lambda| > 0$, and $\theta = \arg\lambda\in\reals$ is not a multiple of $\pi$.  The value of $\theta$ is determined modulo $\tau$, and so we take $\theta$ to have the least possible absolute value, giving $0 < |\theta| < \pi$.  Now consider any point $a\in R_\lambda$.  Since $R_\lambda$ has at least two points, there is some other point $b\in R_\lambda \cmpl \{a\}$.  Now define the following sequence of points, all of which are in $R_\lambda$:
\begin{align*}
b_0 &:= b\;, \\
b_1 &:= a\x b_0\;, \\
&\;\;\vdots \\
b_{i+1} &:= a\x b_i\;, \\
&\;\;\vdots
\end{align*}
Set
\[ k := \bigfloor{\frac{\pi}{|\theta|}} + 1\;, \]
the least integer such that $k|\theta| > \pi$.  Then $a$ lies in the interior of the convex hull of $\{b_0,b_1,\ldots,b_k\}$, as illustrated in Figure~\ref{fig:convex-hull}.
\begin{figure}[htbp]
\begin{center}
\input{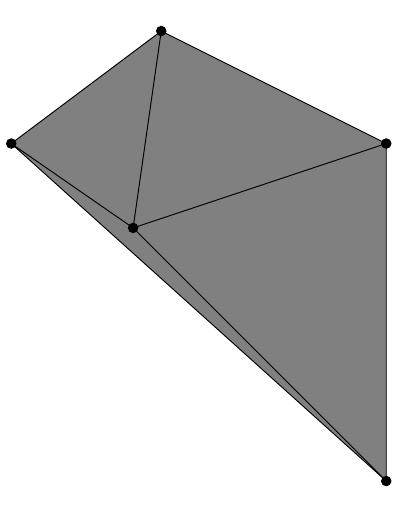_t}
\caption{In this example where $\lambda = (1+2i)/3$ and $k = 3$, the point $a$ lies in the interior of the convex hull of $\{b_1,b_1,b_2,b_3\}$.}\label{fig:convex-hull}
\end{center}
\end{figure}

Since $R_\lambda$ is convex, it contains this convex hull, whence $a$ lies in the interior of $R_\lambda$.  Since $a\in R_\lambda$ was chosen arbitrarily, it follows that $R_\lambda$ is open.
\end{proof}

In light of Theorem~\ref{thm:convex-is-easy}, most of the rest of the paper concentrates on determining, for various $\lambda\in\complexes$, whether or not $R_\lambda$ is convex, and if not, characterizing $R_\lambda$.  We start with a basic definition followed by a trivial observation.

\begin{definition}\label{def:convex}\rm
Let $\convex := \{ \lambda\in\complexes : \mbox{$R_\lambda$ is convex} \}$.  Let $\discrete := \complexes \cmpl \convex$.
\end{definition}

\begin{fact}\label{fact:trivial}\ 
\begin{enumerate}
\item
$R_0 = R_1 = Q_0 = Q_1 = \{0,1\}$, hence $0\notin\convex$ and $1\notin\convex$.
\item
If $0<\lambda<1$, then $R_\lambda = \clcl{0,1}$ by Proposition~\ref{prop:convex-iff-lambda-closed} and Theorem~\ref{thm:convex-is-easy}, hence $\opop{0,1} \subseteq \convex$.
\end{enumerate}
\end{fact}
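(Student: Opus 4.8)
The plan is to verify both parts directly from the definitions, invoking only results already proved.

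For part~(1), I would first evaluate the extrapolant in the two degenerate cases. By Definition~\ref{def:rho}, $\rho_{a,b}(0) = (1-0)a + 0\cdot b = a$ and $\rho_{a,b}(1) = b$ for all $a,b\in\complexes$. Hence \emph{every} set $S\subseteq\complexes$ is both $0$-convex and $1$-convex, since the $0$-extrapolant and $1$-extrapolant of any pair from $S$ are among the two points we started with. In particular $\{0,1\}$ is $0$-convex and $1$-convex, and being finite it is topologically closed, hence $0$-clonvex and $1$-clonvex. Combining the minimality in Definition~\ref{def:lambda-convex-closure} with the containments $\{0,1\}\subseteq Q_\lambda\subseteq R_\lambda$ (the latter from Fact~\ref{fact:trivial}'s predecessor fact) then forces $Q_0 = R_0 = Q_1 = R_1 = \{0,1\}$. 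Finally, $\{0,1\}$ is not convex in the usual sense (e.g.\ $1/2\notin\{0,1\}$), so $0\notin\convex$ and $1\notin\convex$.

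For part~(2), fix $\lambda$ with $0<\lambda<1$. By construction $R_\lambda$ is closed and $\lambda$-convex, and $\lambda\in\opop{0,1}$, so Proposition~\ref{prop:convex-iff-lambda-closed} applies and yields that $R_\lambda$ is convex. Since $\lambda\in\clcl{0,1}$, Theorem~\ref{thm:convex-is-easy}(1) then gives $R_\lambda = \clcl{0,1}$, which is convex; hence $\lambda\in\convex$. As $\lambda$ was an arbitrary element of $\opop{0,1}$, this shows $\opop{0,1}\subseteq\convex$.

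There is no real obstacle here — this is why the statement is labeled a Fact. The only points requiring (minimal) care are that the degenerate evaluations of $\rho_{a,b}$ must cover all pairs, including $a=b$ (immediate), and that the hypotheses of Proposition~\ref{prop:convex-iff-lambda-closed} and Theorem~\ref{thm:convex-is-easy} are literally met by $R_\lambda$, which they are since $R_\lambda$ is closed and $\lambda$-convex by definition.
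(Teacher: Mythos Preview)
Your proposal is correct and is exactly the routine verification the paper intends; the paper labels this a ``Fact'' and, per its convention, omits the proof entirely. Your argument fills in precisely the obvious steps (evaluating $\rho_{a,b}$ at $0$ and $1$, checking that $\{0,1\}$ is closed, and invoking Proposition~\ref{prop:convex-iff-lambda-closed} and Theorem~\ref{thm:convex-is-easy} as the statement itself indicates).
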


\begin{lemma}\label{lem:basic-translate}
For any $a,b,\lambda\in\complexes$ and any $S\subseteq\complexes$,
\begin{enumerate}
\item
if $\rho_{a,b}(S) \subseteq Q_\lambda(S)$, then $\rho_{a,b}(Q_\lambda(S)) \subseteq Q_\lambda(S)$;
\item
if $\rho_{a,b}(S) \subseteq R_\lambda(S)$, then $\rho_{a,b}(R_\lambda(S)) \subseteq R_\lambda(S)$.
\end{enumerate}
\end{lemma}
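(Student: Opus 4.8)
The plan is to prove both statements by the same argument, working with $Q_\lambda(S)$ for part (1) and $R_\lambda(S)$ for part (2); I will phrase the idea for the $Q$-case, since the $R$-case is identical (and in fact can be deduced from the $Q$-case using Lemma~\ref{lem:topo-closure} if one prefers). The key idea is that the hypothesis $\rho_{a,b}(S) \subseteq Q_\lambda(S)$, together with the composition law of Fact~\ref{fact:rho-basic}.4, lets us ``push'' the map $\rho_{a,b}$ through the inductive construction of $Q_\lambda(S)$.

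First I would make explicit the inductive structure of $Q_\lambda(S)$: it is the union $\bigcup_{n\ge 0} S_n$, where $S_0 := S$ and $S_{n+1} := S_n \cup \{\rho_{x,y}(\lambda) : x,y\in S_n\}$. (This is the standard bottom-up construction of a closure under a binary operation, and one checks routinely that its union is $\lambda$-convex and is contained in every $\lambda$-convex superset of $S$, hence equals $Q_\lambda(S)$.) Then I would prove, by induction on $n$, the claim that $\rho_{a,b}(S_n) \subseteq Q_\lambda(S)$ for every $n\ge 0$. The base case $n=0$ is exactly the hypothesis. For the inductive step, suppose $\rho_{a,b}(S_n)\subseteq Q_\lambda(S)$ and take any point of $S_{n+1}$; if it already lies in $S_n$ we are done by the inductive hypothesis, so assume it has the form $\rho_{x,y}(\lambda)$ with $x,y\in S_n$. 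By Fact~\ref{fact:rho-basic}.4,
\[ \rho_{a,b}(\rho_{x,y}(\lambda)) = \rho_{\rho_{a,b}(x),\,\rho_{a,b}(y)}(\lambda). \]
Now $\rho_{a,b}(x)$ and $\rho_{a,b}(y)$ both lie in $\rho_{a,b}(S_n)\subseteq Q_\lambda(S)$ by the inductive hypothesis, and $Q_\lambda(S)$ is $\lambda$-convex, so the right-hand side lies in $Q_\lambda(S)$. This completes the induction, and taking the union over $n$ gives $\rho_{a,b}(Q_\lambda(S)) = \bigcup_n \rho_{a,b}(S_n) \subseteq Q_\lambda(S)$, which is part (1).

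For part (2), I would run the same argument with $R_\lambda(S)$ in place of $Q_\lambda(S)$ throughout: the hypothesis gives the base case, Fact~\ref{fact:rho-basic}.4 handles the step since $R_\lambda(S)$ is $\lambda$-convex, and one concludes $\rho_{a,b}(Q_\lambda(S))\subseteq R_\lambda(S)$; then since $R_\lambda(S) = \overline{Q_\lambda(S)}$ by Lemma~\ref{lem:topo-closure} and $\rho_{a,b}$ is continuous, $\rho_{a,b}(R_\lambda(S)) = \rho_{a,b}(\overline{Q_\lambda(S)}) \subseteq \overline{\rho_{a,b}(Q_\lambda(S))} \subseteq \overline{R_\lambda(S)} = R_\lambda(S)$. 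I do not anticipate a serious obstacle here; the only point requiring a word of care is the inductive characterization of $Q_\lambda(S)$ as $\bigcup_n S_n$ — specifically that this union really is the minimum $\lambda$-convex superset — but that is routine, and if the paper has already established it implicitly elsewhere one can simply cite it. The conceptual heart is just the identity $\rho_{a,b}\circ\rho_{x,y}(\lambda) = \rho_{\rho_{a,b}(x),\rho_{a,b}(y)}(\lambda)$ from Fact~\ref{fact:rho-basic}.4, which says that conjugating $\lambda$-extrapolation by an affine map is again $\lambda$-extrapolation of the images.
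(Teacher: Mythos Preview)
Your proof is correct, but it takes a different route from the paper's. The paper gives a one-line argument using Lemma~\ref{lem:translate-Q-R}: from $\rho_{a,b}(S)\subseteq Q_\lambda(S)$ it writes
\[
\rho_{a,b}(Q_\lambda(S)) = Q_\lambda(\rho_{a,b}(S)) \subseteq Q_\lambda(Q_\lambda(S)) = Q_\lambda(S),
\]
using that $\rho_{a,b}$ commutes with the closure operator, then monotonicity and idempotence of $Q_\lambda$; the $R_\lambda$ case is identical. Your approach instead unfolds the bottom-up construction $Q_\lambda(S)=\bigcup_n S_n$ and inducts directly on $n$ via Fact~\ref{fact:rho-basic}.4, then for part~(2) passes to the closure using Lemma~\ref{lem:topo-closure} and continuity. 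Both arguments ultimately rest on the distributive law of Fact~\ref{fact:rho-basic}.4; the paper's version is slicker because it has already packaged that law into Lemma~\ref{lem:translate-Q-R}, while yours is more self-contained and makes the inductive mechanism explicit. One small wording point: in part~(2) you say you ``run the same argument with $R_\lambda(S)$ in place of $Q_\lambda(S)$,'' but what you actually (correctly) do is keep the stages $S_n$ of $Q_\lambda(S)$ and prove $\rho_{a,b}(S_n)\subseteq R_\lambda(S)$; that is fine, just worth stating precisely.
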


\begin{proof}
Using the first assumption and Lemma~\ref{lem:translate-Q-R}, we get
\[ \rho_{a,b}(Q_\lambda(S)) = Q_\lambda(\rho_{a,b}(S)) \subseteq Q_\lambda(Q_\lambda(S)) = Q_\lambda(S)\;. \]
Using the second assumption and Lemma~\ref{lem:translate-Q-R}, we get
\[ \rho_{a,b}(R_\lambda(S)) = R_\lambda(\rho_{a,b}(S)) \subseteq R_\lambda(R_\lambda(S)) = R_\lambda(S)\;. \]
\end{proof}

Lemma~\ref{lem:basic-translate} and Lemma~\ref{lem:extend} (below) have some useful corollaries.

\begin{corollary}\label{cor:translate}
For any $a,b,\lambda\in\complexes$,
\begin{enumerate}
\item
if $a\in Q_\lambda$ and $b\in Q_\lambda$, then $\rho_{a,b}(Q_\lambda) \subseteq Q_\lambda$;
\item
if $a\in R_\lambda$ and $b\in R_\lambda$, then $\rho_{a,b}(R_\lambda) \subseteq R_\lambda$.
\end{enumerate}
\end{corollary}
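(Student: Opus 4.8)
The plan is to derive this immediately from Lemma~\ref{lem:basic-translate} by specializing $S$ to $\{0,1\}$. Recall that $Q_\lambda = Q_\lambda(\{0,1\})$ and $R_\lambda = R_\lambda(\{0,1\})$ by definition, so both parts of the corollary are exactly the two parts of Lemma~\ref{lem:basic-translate} applied with this choice of $S$ — provided we can verify that the respective hypothesis, namely $\rho_{a,b}(\{0,1\}) \subseteq Q_\lambda$ (respectively $\subseteq R_\lambda$), holds.

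The key observation is that $\rho_{a,b}(\{0,1\}) = \{a,b\}$. This is immediate from the definition $\rho_{a,b}(z) = (1-z)a + zb$, which gives $\rho_{a,b}(0) = a$ and $\rho_{a,b}(1) = b$ (this is also part of Fact~\ref{fact:rho-basic}.1). So first I would record this equality. Then, for part~(1): assuming $a\in Q_\lambda$ and $b\in Q_\lambda$, we get $\rho_{a,b}(\{0,1\}) = \{a,b\} \subseteq Q_\lambda = Q_\lambda(\{0,1\})$, so Lemma~\ref{lem:basic-translate}.1 with $S = \{0,1\}$ yields $\rho_{a,b}(Q_\lambda) = \rho_{a,b}(Q_\lambda(\{0,1\})) \subseteq Q_\lambda(\{0,1\}) = Q_\lambda$. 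Part~(2) is identical, using Lemma~\ref{lem:basic-translate}.2 and the fact that $R_\lambda$ contains $a$ and $b$.

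There is essentially no obstacle here: the corollary is a direct instantiation, and the only thing worth stating explicitly is the trivial computation $\rho_{a,b}(\{0,1\}) = \{a,b\}$ that lets the hypothesis of Lemma~\ref{lem:basic-translate} be read off from the assumptions $a,b\in Q_\lambda$ (resp.\ $a,b\in R_\lambda$). The proof will therefore be only a few lines long.
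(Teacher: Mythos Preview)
Your proposal is correct and takes exactly the same approach as the paper: the paper's proof is simply ``Set $S = \{0,1\}$ and use Lemma~\ref{lem:basic-translate}.'' You have filled in the (trivial) verification that $\rho_{a,b}(\{0,1\}) = \{a,b\}$ needed to check the hypothesis, which the paper leaves implicit.
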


\begin{proof}
Set $S = \{0,1\}$ and use Lemma~\ref{lem:basic-translate}.
\end{proof}

Part~(1.) of the next lemma will be used in Section~\ref{sec:bent-path}.

\begin{lemma}\label{lem:extend}
For any $\lambda,\mu\in\complexes$ and $S\subseteq\complexes$,
\begin{enumerate}
\item
if $\mu\in Q_\lambda$, then $Q_\lambda(S)$ is $\mu$-convex, and consequently, $Q_\mu(S) \subseteq Q_\lambda(S)$;
\item
if $\mu\in R_\lambda$, then $R_\lambda(S)$ is $\mu$-clonvex, and consequently, $R_\mu(S) \subseteq R_\lambda(S)$.
\end{enumerate}
\end{lemma}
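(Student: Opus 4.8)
The plan is to reduce both parts to Lemma~\ref{lem:translate-Q-R} applied to the two-point set $\{0,1\}$, combined with the minimality built into the definitions of $Q_\lambda(\cdot)$ and $R_\lambda(\cdot)$. For part~(1), I would fix arbitrary $a,b\in Q_\lambda(S)$ and show $\rho_{a,b}(\mu)\in Q_\lambda(S)$. Since $\mu\in Q_\lambda = Q_\lambda(\{0,1\})$, applying $\rho_{a,b}$ and invoking Lemma~\ref{lem:translate-Q-R} (with the set $\{0,1\}$, noting $\rho_{a,b}(\{0,1\}) = \{a,b\}$ by Fact~\ref{fact:rho-basic}.1) gives
\[ \rho_{a,b}(\mu) \in \rho_{a,b}(Q_\lambda(\{0,1\})) = Q_\lambda(\rho_{a,b}(\{0,1\})) = Q_\lambda(\{a,b\})\;. \]
Now $Q_\lambda(S)$ is itself a $\lambda$-convex set containing both $a$ and $b$, so by the $\subseteq$-minimality of $Q_\lambda(\{a,b\})$ we have $Q_\lambda(\{a,b\}) \subseteq Q_\lambda(S)$, hence $\rho_{a,b}(\mu)\in Q_\lambda(S)$. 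As $a,b$ were arbitrary, $Q_\lambda(S)$ is $\mu$-convex; the ``consequently'' clause is then immediate, since $Q_\lambda(S)$ is a $\mu$-convex superset of $S$ and $Q_\mu(S)$ is the least such.

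For part~(2), I would run the identical argument with $R$ in place of $Q$ throughout: fixing $a,b\in R_\lambda(S)$ and using $\mu\in R_\lambda = R_\lambda(\{0,1\})$ together with Lemma~\ref{lem:translate-Q-R} yields $\rho_{a,b}(\mu)\in R_\lambda(\{a,b\}) \subseteq R_\lambda(S)$, the last inclusion because $R_\lambda(S)$ is a $\lambda$-clonvex set containing $a$ and $b$. Thus $R_\lambda(S)$ is $\mu$-convex, and since it is closed by definition it is $\mu$-clonvex; the containment $R_\mu(S)\subseteq R_\lambda(S)$ follows from the minimality of $R_\mu(S)$.

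The only minor points needing care are that when $a=b$ the map $\rho_{a,b}$ degenerates to a constant, but Lemma~\ref{lem:translate-Q-R} is already stated to cover that case (and in any event $\rho_{a,b}(\mu)=a\in Q_\lambda(S)$ trivially then), and that the existence of the minimal closures is used implicitly. I do not expect a genuine obstacle here: all the content is in packaging Lemma~\ref{lem:translate-Q-R} correctly, and the one thing to get exactly right is that the displayed chain of equalities is an application of that lemma to $\{0,1\}$ — exploiting $\mu\in Q_\lambda$ rather than $\mu\in Q_\lambda(S)$ — and not to $S$ itself.
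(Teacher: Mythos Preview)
Your proposal is correct and follows essentially the same approach as the paper: both arguments fix $a,b$ in $Q_\lambda(S)$ (resp.\ $R_\lambda(S)$), apply Lemma~\ref{lem:translate-Q-R} with the two-point set $\{0,1\}$ to get $\rho_{a,b}(\mu)\in Q_\lambda(\{a,b\})$, and then use that $Q_\lambda(\{a,b\})\subseteq Q_\lambda(S)$. The paper phrases this last step as $Q_\lambda(\{a,b\})\subseteq Q_\lambda(Q_\lambda(S)) = Q_\lambda(S)$ via monotonicity and idempotence, whereas you invoke minimality directly, but these are the same observation.
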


\begin{proof}
For part~(1.), suppose $\mu\in Q_\lambda$.  Then for any $a,b\in Q_\lambda(S)$,
\[ a\xm b = \rho_{a,b}(\mu) \in \rho_{a,b}(Q_\lambda) = Q_\lambda(\{a,b\}) \subseteq Q_\lambda(Q_\lambda(S)) = Q_\lambda(S)\;, \]
where the second equation follows from Lemma~\ref{lem:translate-Q-R} with $S = \{0,1\}$.  This shows that $Q_\lambda(S)$ is $\mu$-convex.  A similar argument holds for part~(2.).
\end{proof}

\begin{corollary}\label{cor:extend}
For any $\lambda,\mu\in\complexes$,
\begin{enumerate}
\item
if $\mu\in Q_\lambda$, then $Q_\lambda$ is $\mu$-convex, and consequently, $Q_\mu \subseteq Q_\lambda$;
\item
if $\mu\in R_\lambda$, then $R_\lambda$ is $\mu$-clonvex, and consequently, $R_\mu \subseteq R_\lambda$.
\end{enumerate}
\end{corollary}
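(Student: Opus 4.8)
The plan is to deduce Corollary~\ref{cor:extend} directly from Lemma~\ref{lem:extend} by specializing $S=\{0,1\}$. Recall that $Q_\lambda$ and $R_\lambda$ are by definition $Q_\lambda(\{0,1\})$ and $R_\lambda(\{0,1\})$, so both parts of the corollary are literally instances of the corresponding parts of the lemma with this particular $S$; no new argument should be needed.

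For part~(1.), I would proceed as follows. Assume $\mu\in Q_\lambda = Q_\lambda(\{0,1\})$. Applying Lemma~\ref{lem:extend}.1 with $S=\{0,1\}$ immediately gives that $Q_\lambda(\{0,1\}) = Q_\lambda$ is $\mu$-convex, which is the first assertion. For the consequence, note that $Q_\lambda$ is a $\mu$-convex set containing $\{0,1\}$; since $Q_\mu = Q_\mu(\{0,1\})$ is by Definition~\ref{def:lambda-convex-closure} the $\subseteq$-minimum $\mu$-convex superset of $\{0,1\}$, minimality yields $Q_\mu \subseteq Q_\lambda$. Part~(2.) is entirely analogous: assuming $\mu\in R_\lambda$, Lemma~\ref{lem:extend}.2 with $S=\{0,1\}$ shows $R_\lambda$ is $\mu$-clonvex, and since $R_\lambda$ is then a $\mu$-clonvex (i.e., $\mu$-convex and topologically closed) superset of $\{0,1\}$, the minimality of $R_\mu = R_\mu(\{0,1\})$ gives $R_\mu \subseteq R_\lambda$.

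There is no real obstacle here; the only thing to be careful about is the implicit appeal to minimality of the closures, which is exactly what ``consequently'' is packaging. If one wanted to be fully self-contained, one could instead quote Lemma~\ref{lem:extend} verbatim with $S=\{0,1\}$ and observe that $Q_\lambda(\{0,1\})=Q_\lambda$ and $R_\lambda(\{0,1\})=R_\lambda$ by the abbreviations fixed in Definition~\ref{def:lambda-convex-closure}. Thus the proof is a one-line specialization, and I would simply write: ``Apply Lemma~\ref{lem:extend} with $S=\{0,1\}$.''
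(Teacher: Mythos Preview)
Your proposal is correct and matches the paper's intended approach: the corollary is stated immediately after Lemma~\ref{lem:extend} with no separate proof, so the paper likewise treats it as the specialization $S=\{0,1\}$. Your one-line summary ``Apply Lemma~\ref{lem:extend} with $S=\{0,1\}$'' is exactly what is wanted.
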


\begin{corollary}
For any $\lambda\in\complexes$, the sets $Q_\lambda$ and $R_\lambda$ are both closed under the ternary operation $(\mu,a,b)\mapsto a\xm b$.  In particular, $Q_\lambda$ and $R_\lambda$ are both closed under multiplication.
\end{corollary}

\begin{proof}
Given any $\mu,a,b\in Q_\lambda$, we have $a\xm b \in Q_\mu(\{a,b\}) \subseteq Q_\lambda(\{a,b\}) \subseteq Q_\lambda(Q_\lambda) = Q_\lambda$, the first inclusion using part~(1.)\ of Lemma~\ref{lem:extend}.  Similarly for $R_\lambda$, using part~(2.)\ of Lemma~\ref{lem:extend}.  For closure under multiplication, we notice that $\mu\nu = 0\xm\nu$ for any $\mu,\nu\in\complexes$.
\end{proof}

\begin{definition}
For any $S\subseteq\complexes$, define $1-S := \{ 1-x \mid x\in S\}$ as usual.
\end{definition}

Note that $1-S = \rho_{1,0}(S)$, for any $S\subseteq\complexes$.

\begin{corollary}\label{cor:outer-dual}
$R_\lambda = 1 - R_\lambda$ for any $\lambda\in\complexes$.
\end{corollary}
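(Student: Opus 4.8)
The plan is to derive $R_\lambda = 1 - R_\lambda$ as an immediate consequence of the duality facts already established, namely Fact~\ref{fact:inner-dual} and Lemma~\ref{lem:translate-Q-R}. The key observation is that $1 - R_\lambda = \rho_{1,0}(R_\lambda)$, as noted just before the statement, so it suffices to show $\rho_{1,0}(R_\lambda) = R_\lambda$.

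First I would apply Lemma~\ref{lem:translate-Q-R} with $a = 1$, $b = 0$, and $S = \{0,1\}$ to obtain $\rho_{1,0}(R_\lambda) = R_\lambda(\rho_{1,0}(\{0,1\}))$. Since $\rho_{1,0}(0) = 1$ and $\rho_{1,0}(1) = 0$, we have $\rho_{1,0}(\{0,1\}) = \{0,1\}$, so this says $\rho_{1,0}(R_\lambda) = R_\lambda(\{0,1\}) = R_\lambda$. That alone already gives the result, so in fact Fact~\ref{fact:inner-dual} is not even needed here — the symmetry comes purely from the fact that the generating set $\{0,1\}$ is itself invariant under the flip $\rho_{1,0}$.

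For completeness I would note the alternative route: by Lemma~\ref{lem:translate-Q-R}, $\rho_{1,0}(R_\lambda)$ is the $\lambda$-clonvex closure of $\{1,0\} = \{0,1\}$, which is by definition $R_\lambda$. Either phrasing is a one-liner.

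There is essentially no obstacle here; the only thing to be careful about is not to conflate this ``outer'' duality (reflecting the ambient plane through the point $1/2$, which fixes $R_\lambda$ setwise) with the ``inner'' duality of Fact~\ref{fact:inner-dual} (replacing the parameter $\lambda$ by $1-\lambda$, which fixes $R_\lambda$ because the closure operator is the same). Both are worth stating, but the present corollary only requires the former, and it follows directly from the invariance of $\{0,1\}$ under $z \mapsto 1-z$ together with Lemma~\ref{lem:translate-Q-R}.
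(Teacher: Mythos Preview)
Your proof is correct. It differs slightly from the paper's in the choice of tool: the paper uses Corollary~\ref{cor:translate} (since $0,1\in R_\lambda$, we get $\rho_{1,0}(R_\lambda)\subseteq R_\lambda$, and then applies $\rho_{1,0}$ again---it being an involution---to obtain the reverse inclusion), whereas you go straight to Lemma~\ref{lem:translate-Q-R} and observe that $\rho_{1,0}(\{0,1\})=\{0,1\}$, giving equality in a single stroke. Your route is marginally more direct and avoids the two-inclusion detour; the paper's route has the small advantage of illustrating Corollary~\ref{cor:translate}, which is the result immediately preceding it. Either way the content is the same: the generating set $\{0,1\}$ is fixed by the flip $z\mapsto 1-z$, so the clonvex closure is fixed as well. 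Your remark that Fact~\ref{fact:inner-dual} is not needed here is also correct.
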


\begin{proof}
We have
\[ 1-R_\lambda = \rho_{1,0}(R_\lambda) \subseteq R_\lambda = \rho_{1,0}(\rho_{1,0}(R_\lambda)) \subseteq \rho_{1,0}(R_\lambda) = 1 - R_\lambda\;. \]
Both $\subseteq$-steps follow from Corollary~\ref{cor:translate}.
\end{proof}

\begin{corollary}\label{cor:convex-extend}
For any $\lambda,\mu\in\complexes$, if $\mu\in R_\lambda$ and $R_\mu$ is convex, then $R_\lambda$ is convex.
\end{corollary}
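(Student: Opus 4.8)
The plan is to chain together Corollary~\ref{cor:extend} and Theorem~\ref{thm:convex-is-easy} via the containment hierarchy they give. Suppose $\mu\in R_\lambda$ and $R_\mu$ is convex. By Corollary~\ref{cor:extend}(2.), since $\mu\in R_\lambda$, we have $R_\mu \subseteq R_\lambda$. On the other hand, since $R_\mu$ is convex, Theorem~\ref{thm:convex-is-easy} (equivalently, the restatement via Definition~\ref{def:F-lambda}) tells us exactly what $R_\mu$ is: it equals $F_\mu$, which is one of $\clcl{0,1}$, $\reals$, or $\complexes$. The goal is then to show that $R_\lambda$ must itself be one of these three sets — hence convex.

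First I would observe that $0,1\in R_\mu$, so $\{0,1\}\subseteq R_\mu \subseteq R_\lambda$. Now I want to run the inclusion the other way: I would like to conclude $R_\lambda \subseteq R_\mu$ as well, which forces $R_\lambda = R_\mu = F_\mu$ and we are done. For this it suffices to show $R_\mu$ is $\lambda$-clonvex, because then by minimality of $R_\lambda$ (it is the least $\lambda$-clonvex superset of $\{0,1\}$) we get $R_\lambda \subseteq R_\mu$. So the crux is: \emph{an $F_\mu$-type set that contains $\lambda$ is $\lambda$-clonvex.} Each of $\clcl{0,1}$, $\reals$, and $\complexes$ is closed and convex, hence $\nu$-convex for every $\nu\in\opop{0,1}$; but we need $\lambda$-convexity for our specific $\lambda$, which need not lie in $\opop{0,1}$. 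The point is that $\lambda\in R_\lambda$ — wait, that is not given directly. What \emph{is} available is that $\mu\in R_\lambda$ and $R_\mu\subseteq R_\lambda$; I need to locate $\lambda$ relative to $R_\mu$.

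Here is the key step, and the one I expect to be the main obstacle: showing $\lambda\in R_\mu$. Actually the cleaner route avoids this. Recall $\lambda\in R_\lambda$ is immediate: $\lambda = \rho_{0,1}(\lambda)$ and $0,1\in R_\lambda$, so $\lambda\in R_\lambda$ by $\lambda$-convexity. Combined with $R_\mu\subseteq R_\lambda$, this does not yet put $\lambda$ in $R_\mu$. Instead, I would argue directly about $F_\mu$: if $\mu\in\clcl{0,1}$ then $R_\mu=\clcl{0,1}$, and by Corollary~\ref{cor:extend}(2.) applied with the roles reversed we cannot conclude much — so let me reconsider. The robust argument is: $R_\mu$ is convex and contains $0,1$, so it contains $F_\mu$ as computed, and in particular $R_\mu\in\{\clcl{0,1},\reals,\complexes\}$ exactly. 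If $R_\mu=\complexes$ then trivially $R_\lambda=\complexes$ is convex. If $R_\mu=\reals$, then $R_\lambda\supseteq\reals$; but also $\lambda\in R_\lambda$, and $R_\lambda$ is $\lambda$-convex and closed containing $\{0,1\}$ — I claim $R_\lambda$ being convex follows because $R_\lambda\supseteq\reals$ forces, via Corollary~\ref{cor:convex-extend}'s own logic applied to $\mu'\in\opop{0,1}\subseteq\reals\subseteq R_\lambda$ with $R_{\mu'}$ convex, that $R_\lambda$ is convex. Indeed, pick any $\mu'\in\opop{0,1}$; then $\mu'\in\reals= R_\mu\subseteq R_\lambda$, and $R_{\mu'}=\clcl{0,1}$ is convex, so by Corollary~\ref{cor:extend}(2.) $R_{\mu'}\subseteq R_\lambda$ and $R_\lambda$ is $\mu'$-clonvex; being closed and $\mu'$-convex with $\mu'\in\opop{0,1}$, Proposition~\ref{prop:convex-iff-lambda-closed} makes $R_\lambda$ convex. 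The same maneuver handles $R_\mu=\complexes$ and $R_\mu=\clcl{0,1}$ (in the last case $\mu\in\clcl{0,1}$; if $\mu\in\opop{0,1}$ apply the Proposition directly, and if $\mu\in\{0,1\}$ then $\mu\in R_\lambda$ is automatic and gives nothing new, but then $R_\mu=\{0,1\}$ is not all of $\clcl{0,1}$, so this sub-case needs $\mu\in\opop{0,1}$, which we may assume since $\mu\in\{0,1\}$ makes the hypothesis "$\mu\in R_\lambda$" vacuous). Thus in every case $R_\lambda$ is convex. The delicate part is organizing this case split cleanly and making sure the degenerate $\mu\in\{0,1\}$ possibility is correctly dispatched; once that bookkeeping is done, each case reduces to a one-line invocation of Proposition~\ref{prop:convex-iff-lambda-closed} together with Corollary~\ref{cor:extend}.
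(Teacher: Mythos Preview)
Your argument is correct once the dust settles, but it is considerably more tangled than it needs to be. The detour through Theorem~\ref{thm:convex-is-easy} and the case split on $F_\mu\in\{\clcl{0,1},\reals,\complexes\}$ are unnecessary: all you actually use from that split is that $\opop{0,1}\subseteq R_\mu$, and this follows immediately from ``$R_\mu$ is convex and contains $0,1$'' without ever identifying $R_\mu$ precisely. Once you have some $\mu'\in\opop{0,1}\cap R_\lambda$, your invocation of Corollary~\ref{cor:extend}(2.) followed by Proposition~\ref{prop:convex-iff-lambda-closed} is a clean finish. (One small correction in your bookkeeping: the degenerate case $\mu\in\{0,1\}$ is excluded not because ``$\mu\in R_\lambda$'' becomes vacuous---that hypothesis is trivially satisfied---but because $R_\mu=\{0,1\}$ is then \emph{not} convex, so the other hypothesis fails.)

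The paper's proof shares your key step $\clcl{0,1}\subseteq R_\mu\subseteq R_\lambda$ but closes differently: rather than picking a single $\mu'\in\opop{0,1}$ and appealing to Proposition~\ref{prop:convex-iff-lambda-closed}, it uses Corollary~\ref{cor:translate} directly to show that for any $a,b\in R_\lambda$ the whole segment $\rho_{a,b}(\clcl{0,1})\subseteq\rho_{a,b}(R_\lambda)\subseteq R_\lambda$. Both endings are short; the paper's avoids re-invoking the topological closure implicitly sitting inside Proposition~\ref{prop:convex-iff-lambda-closed}, while yours has the mild advantage of reusing a proposition already proved rather than Corollary~\ref{cor:translate}.
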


\begin{proof}
Assume $\mu\in R_\lambda$ and $R_\mu$ is convex.  To show that $R_\lambda$ is convex, it suffices to show that for any $a,b\in R_\lambda$ and $x\in \clcl{0,1}$, the point $a\xx b$ is in $R_\lambda$.  We have $0,1\in R_\mu$, and so by Corollary~\ref{cor:extend} and the convexity of $R_\mu$, we have
\[ \clcl{0,1} \subseteq R_\mu \subseteq R_\lambda\;. \]
Thus, for any $x\in \clcl{0,1}$, we have
\[ a\xx b = \rho_{a,b}(x) \in \rho_{a,b}(\clcl{0,1}) \subseteq \rho_{a,b}(R_\lambda) \subseteq R_\lambda\;. \]
\end{proof}

\begin{corollary}
For any $\lambda\in\complexes$, \ $\lambda\in \convex$ if and only if $R_\lambda \intersect \convex \ne \emptyset$.
\end{corollary}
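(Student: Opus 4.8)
The plan is to prove both directions using only the trivial fact that $\lambda$ always belongs to $R_\lambda$, together with Corollary~\ref{cor:convex-extend}. So the first step I would carry out is to record that $\lambda\in R_\lambda$ for every $\lambda\in\complexes$: since $0,1\in R_\lambda$ and $R_\lambda$ is $\lambda$-convex, the point $\rho_{0,1}(\lambda)=(1-\lambda)\cdot 0+\lambda\cdot 1=\lambda$ lies in $R_\lambda$.

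With that in hand, the forward direction is immediate: if $\lambda\in\convex$, then $\lambda$ is an element of $R_\lambda$ that also lies in $\convex$, so $R_\lambda\intersect\convex\ne\emptyset$. For the converse, I would suppose $R_\lambda\intersect\convex\ne\emptyset$ and pick some $\mu\in R_\lambda\intersect\convex$; then $\mu\in R_\lambda$ and $R_\mu$ is convex, so Corollary~\ref{cor:convex-extend} yields that $R_\lambda$ is convex, i.e., $\lambda\in\convex$.

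There is essentially no obstacle in this argument: the only point that needs verification is the containment $\lambda\in R_\lambda$ (the first step), and the rest is a direct invocation of Corollary~\ref{cor:convex-extend}. If one wanted to avoid even that small verification, one could alternatively note $0\in R_\lambda$ and $0\in\convex$ is false (by Fact~\ref{fact:trivial}), so the nontrivial content really does rest on exhibiting a point of $R_\lambda$ lying in $\convex$, and $\lambda$ itself is the natural candidate.
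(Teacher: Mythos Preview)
Your proof is correct and is exactly the natural argument the paper has in mind: the corollary is stated without proof immediately after Corollary~\ref{cor:convex-extend}, and your backward direction is a direct invocation of that result, while the forward direction uses the obvious fact $\lambda=\rho_{0,1}(\lambda)\in R_\lambda$. Nothing more is needed.
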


\begin{proposition}\label{prop:convex-implies-convex}
If $R_\lambda$ is convex, then all $\lambda$-clonvex sets are convex.
\end{proposition}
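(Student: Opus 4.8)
The plan is to reduce the convexity of an arbitrary $\lambda$-clonvex set $S$ to the convexity of $R_\lambda$ by transporting $R_\lambda$ into $S$ via an affine map $\rho_{a,b}$. Assume $R_\lambda$ is convex and let $S$ be any $\lambda$-clonvex set. First I would dispose of the trivial case: if $S$ has fewer than two points, it is vacuously convex. So assume $S$ is nontrivial and fix arbitrary distinct $a,b\in S$; it suffices to show that the segment joining $a$ and $b$ is contained in $S$, since then $S$ is convex.

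The key step is the observation that $\rho_{a,b}(R_\lambda)$ is exactly the $\lambda$-clonvex closure $R_\lambda(\{a,b\})$ of $\{a,b\}$ (this is the ``in particular'' clause of Lemma~\ref{lem:translate-Q-R}, applied with $S=\{0,1\}$). Since $S$ is itself a $\lambda$-clonvex superset of $\{a,b\}$, minimality of $R_\lambda(\{a,b\})$ gives
\[ \rho_{a,b}(R_\lambda) = R_\lambda(\{a,b\}) \subseteq S\;. \]

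Finally I would invoke the hypothesis. Since $0,1\in R_\lambda$ and $R_\lambda$ is convex, we have $\clcl{0,1}\subseteq R_\lambda$ (equivalently, by Theorem~\ref{thm:convex-is-easy}, $R_\lambda = F_\lambda$, and each of the three possibilities for $F_\lambda$ contains $\clcl{0,1}$). Applying the affine map $\rho_{a,b}$ and using the previous display,
\[ \rho_{a,b}(\clcl{0,1}) \subseteq \rho_{a,b}(R_\lambda) \subseteq S\;. \]
But $\rho_{a,b}(\clcl{0,1}) = \{(1-t)a + tb : t\in\clcl{0,1}\}$ is precisely the line segment connecting $a=\rho_{a,b}(0)$ and $b=\rho_{a,b}(1)$. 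As $a,b\in S$ were arbitrary, $S$ is convex.

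\textbf{Main obstacle.} There is no genuinely hard step here; the proof is a short chase through the already-established machinery. The only points requiring care are (i) quoting Lemma~\ref{lem:translate-Q-R} correctly to identify $\rho_{a,b}(R_\lambda)$ with the $\lambda$-clonvex closure of $\{a,b\}$ rather than with $\rho_{a,b}$ applied to some other closure, and (ii) handling the degenerate case $|S|\le 1$ separately so that the phrase ``arbitrary distinct $a,b\in S$'' is legitimate.
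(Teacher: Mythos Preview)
Your proposal is correct and follows essentially the same route as the paper: both arguments use Lemma~\ref{lem:translate-Q-R} to identify $\rho_{a,b}(R_\lambda)$ with $R_\lambda(\{a,b\})$, observe that this sits inside $S$ by minimality (the paper writes it as $R_\lambda(\{a,b\}) \subseteq R_\lambda(A) = A$), and then use $\clcl{0,1}\subseteq R_\lambda$ to push the segment $\rho_{a,b}(\clcl{0,1})$ into $S$. Your extra handling of the degenerate case $|S|\le 1$ is harmless but unnecessary, since for $a=b$ the chain of inclusions still goes through verbatim.
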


\begin{proof}
Suppose $R_\lambda$ is convex, and let $A$ be any $\lambda$-clonvex set.  For any $a,b\in A$, the line segment connecting $a$ and $b$ is $\rho_{a,b}(\clcl{0,1})$.  Since $R_\lambda$ is convex, we have $\clcl{0,1}\subseteq R_\lambda$, and thus
\[ \rho_{a,b}(\clcl{0,1}) \subseteq \rho_{a,b}(R_\lambda) = R_\lambda(\rho_{a,b}(\{0,1\})) = R_\lambda(\{a,b\}) \subseteq R_\lambda(A) = A\;. \]
The first equality follows from Lemma~\ref{lem:translate-Q-R}; the last equality holds because $A$ is $\lambda$-clonvex.
\end{proof}

\section{Equivalent characterizations of convexity for $\lambda$-clonvex sets}
\label{sec:equivalences}

Throughout this section, we continue to use $\x$ without a subscript to denote $\xl$.

For any $a, b \in \complexes$, a \emph{path} from $a$ to $b$ is a continuous function $\map{\sigma}{\clcl{0,1}}{\complexes}$ such that $\sigma(0) = a$ and $\sigma(1) = b$.  If $a=b$, then $\sigma$ is a \emph{loop}.  A set $S \subseteq \complexes$ is said to be \emph{path-connected} if it contains a path between any two of its points.\footnote{Strictly speaking, as we identify the path with the function $\sigma$, it is more accurate to say that $S$ contains all the points in the image of some path connecting the two points. However, we will assume that the meaning will be clear from the context.}

In this section we consider five possible properties of a $\lambda$-clonvex set and the implications between them.  Throughout this section, we will adopt the convention that $\lambda$ denotes an arbitrary complex number and that $A$ denotes an arbitrary $\lambda$-clonvex set containing at least two distinct points.  Here are the five properties we will consider:
\begin{enumerate}
\item\label{item:convex} $A$ is convex.
\item\label{item:connected} $A$ is path-connected.
\item\label{item:path} $A$ contains a nontrivial (i.e., nonconstant) path.
\item\label{item:accumulation} $A$ has an accumulation point.
\item\label{item:exists-a-and-b} There exist $a, b\in\complexes$ such that $0 < |a - b| < 1$ and $\rho_{a,b}(A) \subseteq A$ (i.e., $A$ is self-similar).
\end{enumerate}
In particular, we show (Corollary~\ref{cor:general-equivalences}, below) that these five properties are all equivalent when $A = R_\lambda$, while some implications do not hold for all $\lambda$-clonvex sets.  Results similar to some of these below were shown in the case of $\lambda\in\reals$ by Pinch~\cite{Pinch}.

We refer to the above properties by their numbers in parentheses.

\begin{fact}\label{fact:1-to-4}
For all $\lambda$ and $A$ subject to this section's convention, (\ref{item:convex}) $\Rightarrow$ (\ref{item:connected}) $\Rightarrow$ (\ref{item:path}) $\Rightarrow$ (\ref{item:accumulation}).
\end{fact}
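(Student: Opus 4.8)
The plan is to prove the three implications in sequence, each by a direct construction of the required object from the one guaranteed by the previous property. Recall the convention: $\lambda\in\complexes$ is arbitrary and $A$ is a $\lambda$-clonvex set with at least two distinct points.

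First, $(\ref{item:convex}) \Rightarrow (\ref{item:connected})$. This is immediate: in a convex set, for any $a,b\in A$ the straight-line path $\sigma(t) := \rho_{a,b}(t) = (1-t)a + tb$ has its entire image $\rho_{a,b}(\clcl{0,1})$ contained in $A$ by convexity, and $\sigma$ is continuous by Fact~\ref{fact:rho-basic}.2, with $\sigma(0)=a$ and $\sigma(1)=b$. So $A$ is path-connected.

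Second, $(\ref{item:connected}) \Rightarrow (\ref{item:path})$. Since $A$ contains at least two distinct points $a\ne b$, path-connectedness hands us a path from $a$ to $b$, whose image lies in $A$; this is exactly a path contained in $A$. (One should note the path is nonconstant, since its endpoints differ, though the bare statement of (\ref{item:path}) does not even require that.)

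Third, $(\ref{item:path}) \Rightarrow (\ref{item:accumulation})$. Let $\sigma:\clcl{0,1}\to\complexes$ be a path with image contained in $A$; since a path connects two \emph{distinct} points (here $\sigma(0)\ne\sigma(1)$, as $A$ has at least two points — or more carefully, we should take $\sigma$ to be such a path, which path-connectedness or the hypothesis of (\ref{item:path}) as used in this paper provides), the image $C := \sigma(\clcl{0,1})$ is a nonconstant continuous image of a connected set, hence connected and containing more than one point. Any connected subset of $\complexes$ with more than one point is infinite and indeed has a limit point within itself: pick two distinct points $p,q\in C$; the continuous real function $t\mapsto |\sigma(t)-p|$ takes the value $0$ and the value $|q-p|>0$, so by the intermediate value theorem it takes every value in $\clcl{0,|q-p|}$, giving uncountably many distinct points of $C$, which therefore has an accumulation point; since $C$ is compact (continuous image of $\clcl{0,1}$) that accumulation point lies in $C\subseteq A$. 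The main thing to get right here is the harmless subtlety that ``$A$ contains a path'' should be read as containing the image of a path between two of its (distinct) points, so that the path is genuinely nonconstant; with that reading the argument is routine. Chaining the three implications gives the Fact.
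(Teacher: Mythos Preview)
Your argument is correct. The paper explicitly omits proofs of statements labeled ``Fact,'' deeming them ``immediately obvious or \ldots\ routine,'' so there is no proof in the paper to compare against; your direct chain of implications is exactly the intended routine verification.

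One small remark: you rightly flag the subtlety in $(\ref{item:path}) \Rightarrow (\ref{item:accumulation})$ that ``$A$ contains a path'' must be read as a \emph{nonconstant} path, since otherwise any nonempty set contains a constant path and the implication would fail (e.g., for a two-point discrete set). The paper never makes this explicit, but the intended reading is clear from context, and your compactness/IVT argument handles the nonconstant case cleanly.
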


\begin{theorem}
For all $\lambda$ and $A$ subject to this section's convention, (\ref{item:convex}) $\Rightarrow$ (\ref{item:exists-a-and-b}).
\end{theorem}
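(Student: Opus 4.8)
The plan is to exhibit explicit $a$ and $b$ directly, using nothing about $A$ beyond convexity (closedness and $\lambda$-clonvexity will not be needed). The key observation is that $\rho_{a,b}$ is an affine map, $\rho_{a,b}(z) = a + (b-a)z$, so an inclusion $\rho_{a,b}(A)\subseteq A$ asserts that $A$ contains a scaled (by factor $|b-a|$) and rotated copy of itself. A convex set is closed under contraction toward any one of its own points, and such a contraction is exactly an $\rho_{a,b}$ with $|b-a|<1$.

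Concretely: since $A$ has at least two distinct points it is nonempty, so fix any $p\in A$, and fix any real $t$ with $0<t<1$ (say $t=1/2$). Set $a := (1-t)p$ and $b := (1-t)p + t$, so that $b-a = t$ and hence $0<|a-b| = t<1$, which is the size condition in property~(\ref{item:exists-a-and-b}). Then a one-line computation using Fact~\ref{fact:rho-basic} gives, for every $z\in\complexes$,
\[ \rho_{a,b}(z) = (1-z)a + zb = (1-z)(1-t)p + z\bigl((1-t)p + t\bigr) = (1-t)p + tz\;. \]

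Now for any $z\in A$, the point $\rho_{a,b}(z) = (1-t)p + tz = (1-t)p + tz$ lies on the line segment joining $p$ and $z$; since $p,z\in A$ and $A$ is convex, this point is in $A$. Hence $\rho_{a,b}(A)\subseteq A$, establishing property~(\ref{item:exists-a-and-b}). There is no real obstacle here: the only thing to notice is the algebraic identity above, which recasts ``$A$ is closed under contraction toward $p$'' as a statement of the form $\rho_{a,b}(A)\subseteq A$ with $|b-a|<1$; one could also simply cite Lemma~\ref{lem:rho-preserves-convexity} together with the fact that $[0,1]\subseteq R_t$ for $0<t<1$ (Fact~\ref{fact:trivial}) if one prefers to avoid the explicit computation.
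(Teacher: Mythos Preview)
Your proof is correct and is essentially the same as the paper's: both take a point $p\in A$ and observe that the contraction $z\mapsto (1-t)p+tz$ toward $p$ (with $t=1/2$) maps $A$ into itself by convexity, and that this map equals $\rho_{a,b}$ for $a=p/2$, $b=p/2+1/2$. The paper arrives at the same $a,b$ via the conjugation $\rho_{x,x+1}\circ\rho_{0,1/2}\circ(\rho_{x,x+1})^{-1}$, whereas you write them down directly and verify the formula for $\rho_{a,b}(z)$; your presentation is a bit more streamlined, but the content is identical.
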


\begin{proof}
Choose any point $x\in A$, and consider the map
\[ \psi := \rho_{x,x+1}\circ \rho_{0,1/2}\circ (\rho_{x,x+1})^{-1}\;. \]
It is easy to check that for any $z\in\complexes$, \ $\psi(z)$ is the midpoint $(x+z)/2$ of $x$ and $z$.  Thus $\psi(A)\subseteq A$, because $A$ is assumed to be convex.  Using Fact~\ref{fact:rho-basic}, we get $\psi = \rho_{a,b}$, where
\begin{align*}
a &= \psi(0) = \frac{x}{2}\;, & b &= \psi(1) = \frac{x}{2} + \frac{1}{2}\;.
\end{align*}
We have $|a-b| = 1/2$, which implies (\ref{item:exists-a-and-b}).
\end{proof}

\begin{theorem}\label{thm:exists-a-b-accumulation}
For all $\lambda$ and $A$ subject to this section's convention, (\ref{item:exists-a-and-b}) $\Rightarrow$ (\ref{item:accumulation}).
\end{theorem}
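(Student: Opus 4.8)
The plan is to use the hypothesis $|a-b|<1$ to turn $\rho_{a,b}$ into a strict contraction, and then iterate it on a suitable point of $A$ to manufacture a convergent sequence of distinct points of $A$. First I would record that, by Fact~\ref{fact:rho-basic} (or by the direct computation $\rho_{a,b}(z)=a+z(b-a)$), we have $\rho_{a,b}(z)-\rho_{a,b}(w)=(b-a)(z-w)$ for all $z,w\in\complexes$, so $\rho_{a,b}$ is a similarity with ratio $r:=|a-b|$, and by assumption $0<r<1$. Thus $\rho_{a,b}$ is a strict contraction of the complete metric space $\complexes$, and so it has a unique fixed point $p$; concretely, solving $\rho_{a,b}(p)=p$ gives $p=a/(1-(b-a))$, which is well defined because $|b-a|<1$ forces $b-a\ne 1$.

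Next, since $A$ contains at least two distinct points, I can choose $y\in A$ with $y\ne p$ (if $p\in A$, take the other point; otherwise take any point of $A$). Set $y_0:=y$ and $y_{n+1}:=\rho_{a,b}(y_n)$ for $n\ge 0$. Because $\rho_{a,b}(A)\subseteq A$ by property~(\ref{item:exists-a-and-b}), an immediate induction gives $y_n\in A$ for all $n$. Now $|y_n-p|=|\rho_{a,b}(y_{n-1})-\rho_{a,b}(p)|=r\,|y_{n-1}-p|$, hence $|y_n-p|=r^n|y_0-p|$. Since $y_0\ne p$, this is a strictly decreasing sequence of positive reals tending to $0$; in particular the $y_n$ are pairwise distinct, each different from $p$, and $y_n\to p$. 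Therefore every neighborhood of $p$ contains infinitely many of the $y_n$, all lying in $A$ and all distinct from $p$, so $p$ is an accumulation point of $A$ (and in fact $p\in A$ since $A$ is closed, though this is not needed).

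The only subtlety — and the single place where the standing assumption that $A$ is nontrivial gets used — is that the iteration must be started at a point other than the fixed point $p$; starting at $p$ produces a constant orbit and no accumulation point. Everything else is routine, so I expect no real obstacle here.
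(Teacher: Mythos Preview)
Your proof is correct and follows essentially the same route as the paper: compute that $\rho_{a,b}$ is a strict contraction with ratio $|a-b|<1$, identify its unique fixed point $p=a/(1+a-b)$, iterate from some $y\in A$ with $y\ne p$ (using that $A$ has at least two points), and observe that the orbit lies in $A$ and converges to $p$. The paper makes the same observations in the same order, so there is nothing to add.
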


\begin{proof}
Let $a,b$ be such that $0<|a-b|<1$ and $\rho_{a,b}(A) \subseteq A$.  Letting $\delta := |a-b|$, we see that for any $x,y\in\complexes$,
\[ \rho_{a,b}(x) - \rho_{a,b}(y) = (1-x)a + xb - (1-y)a - yb = (b-a)(x-y)\;, \]
and thus
\begin{equation}\label{eqn:contraction}
\left|\rho_{a,b}(x) - \rho_{a,b}(y)\right| = \delta |x-y|\;.
\end{equation}
It is easy to check that the map $\rho_{a,b}$ on $\complexes$ has the unique fixed point
\[ z := \frac{a}{1+a-b}\;, \]
and a routine induction on $n$ using Equation~(\ref{eqn:contraction}) shows that for any $w\in\complexes$, \ $\left|z - \rho_{a,b}^{(n)}(w)\right| = \delta^n|z-w|$ for $n = 0,1,2,\ldots\,$.  Thus if $z\ne w$, then $z$ is an accumulation point of the sequence
\[ w,\; \rho_{a,b}(w), \;\rho_{a,b}(\rho_{a,b}(w)), \;\ldots, \;\rho_{a,b}^{(n)}(w),\;\ldots. \]
If, in addition, $w\in A$ (and there must exist such a $w$, because $A$ contains at least two points by convention), then all the elements of this sequence are in $A$ by the self-similarity assumption.  We then get $z\in A$ by the fact that $A$ is closed.
\end{proof}

Recall the definition of $F_\lambda$ in Definition~\ref{def:F-lambda}.

\begin{theorem}\label{thm:accum-implies-convex}
For all $\lambda$ and $A$ subject to this section's convention and such that $\lambda \notin \{0,1\}$ and $A\subseteq F_\lambda$, (\ref{item:accumulation}) $\Rightarrow$ (\ref{item:convex}).
\end{theorem}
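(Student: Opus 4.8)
Here is my plan.

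\textbf{Overall strategy and the easy case.} The plan is to split according to which clause of Definition~\ref{def:F-lambda} defines $F_\lambda$. If $\lambda\in\opop{0,1}$ (the only possibility with $\lambda\in\clcl{0,1}$, $\lambda\notin\{0,1\}$), then $A$ is closed and $\lambda$-convex with $\lambda\in\opop{0,1}$, so $A$ is convex by Proposition~\ref{prop:convex-iff-lambda-closed} — and here neither the accumulation point nor the inclusion $A\subseteq F_\lambda$ is needed. So assume $\lambda\notin\clcl{0,1}$; by the strict triangle inequality $\kappa:=|1-\lambda|+|\lambda|>1$, and if $\lambda$ is real we may also assume $\lambda>1$ by Fact~\ref{fact:inner-dual}. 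I will prove the stronger statement $A=F_\lambda$; this suffices since $F_\lambda$ is convex, and it is anyway forced, because the only convex $\lambda$-clonvex subsets of $\reals$ (resp.\ of $\complexes$) with two distinct points are $\reals$ (resp.\ $\complexes$): a bounded one, or a closed half-line, contains points $a\neq b$ with $\rho_{a,b}(\lambda)=a+\lambda(b-a)$ lying outside it.

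\textbf{Reduction: a blow-up lemma.} I claim that it is enough to show $A$ contains a nondegenerate closed ball $B(w,r)$ of $F_\lambda$ (a closed interval when $F_\lambda=\reals$, a closed disk when $F_\lambda=\complexes$). Indeed $\{\rho_{a,b}(\lambda):a,b\in B(w,r)\}=(1-\lambda)B(w,r)+\lambda B(w,r)$, which, computing the Minkowski sum of balls and using $(1-\lambda)w+\lambda w=w$, equals $B(w,\kappa r)\subseteq A$; iterating, $B(w,\kappa^n r)\subseteq A$ for every $n$, and since $\kappa>1$ these balls exhaust $F_\lambda$, giving $A=F_\lambda$. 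Thus it suffices to show that $A$ has nonempty interior in $F_\lambda$, or even just that $A$ is somewhere dense.

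\textbf{The crux: producing a ball, and where the obstacle is.} Fix an accumulation point $a_0\in A$ and assume for contradiction that $A$ has empty interior in $F_\lambda$. Since $A$ has an accumulation point, for every $\eps>0$ there are distinct $p,q\in A$ with $|p-q|<\eps$; as $A$ is $\lambda$-clonvex we have $R_\lambda(A)=A$, and $\rho_{p,q}$ is a similarity of ratio $|p-q|$, so Lemma~\ref{lem:translate-Q-R} gives $\rho_{p,q}(R_\lambda)=R_\lambda(\{p,q\})\subseteq R_\lambda(A)=A$ — that is, $A$ contains geometrically similar copies of $R_\lambda$ at arbitrarily small scale, clustered near $a_0$. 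In the real case one can try to finish concretely with gaps: since $A$ has empty interior, every neighbourhood of $a_0$ meets $\reals\setminus A$, so $A$ has gaps of arbitrarily small length arbitrarily near $a_0$; if $u<u'$ are consecutive points of $A$ bounding such a gap, then $\rho_{u,u'}(\lambda)=u'+(\lambda-1)(u'-u)$ and $\rho_{u',u}(\lambda)=u-(\lambda-1)(u'-u)$ lie in $A$ just outside that gap, hence cannot fall strictly inside a neighbouring gap; comparing lengths forces the lengths of consecutive gaps near $a_0$ to satisfy a multiplicative relation incompatible with their necessarily summing to a finite total as one approaches $a_0$ (they must tend to $0$ yet cannot shrink). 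This incompatibility is transparent for $\lambda$ close to $1$, and I expect the main obstacle to be exactly this step for $\lambda$ far from $1$ — where the relation does permit geometric decay, so one needs finer bookkeeping (or the auxiliary structural fact that a non-convex $R_\lambda$ is discrete with uniformly bounded gaps, so that its rescaled copies inside $A$ genuinely fill a neighbourhood of $a_0$) — and likewise for the two-dimensional case $F_\lambda=\complexes$, where instead one argues that the arbitrarily fine, variously rotated copies $\rho_{p,q}(R_\lambda)\subseteq A$ are jointly dense near $a_0$, which again hinges on $R_\lambda$ itself not being lacunary. Once $A$ contains a ball, the blow-up lemma finishes: $A=F_\lambda$, which is convex.
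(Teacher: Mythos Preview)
Your reduction is sound: the case $\lambda\in\opop{0,1}$ is handled by Proposition~\ref{prop:convex-iff-lambda-closed}, and your blow-up lemma (a ball $B(w,r)\subseteq A$ yields $B(w,\kappa r)\subseteq A$ with $\kappa=|\lambda|+|1-\lambda|>1$, hence $A=F_\lambda$) is correct and clean. But the proof is not complete, and you say so yourself: the step ``produce a ball from an accumulation point'' is left as a heuristic. Your gap-length argument in the real case is admittedly only valid for $\lambda$ near $1$, and the suggested fallback---appealing to $R_\lambda$ being discrete with controlled gaps, or ``not lacunary''---is circular: those facts are established in the paper only \emph{via} this very theorem (through Corollary~\ref{cor:general-equivalences}). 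So as it stands the argument has a genuine hole at its center.

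The paper closes this gap by a different, more direct route that bypasses your reduction entirely. Rather than trying to thicken $A$ to a ball, it works with the closed set $C\subseteq A$ of accumulation points of $A$ and shows $C=F_\lambda$ by a closest-point contradiction: given $x\in F_\lambda$, pick $a\in C$ minimizing $|x-a|$; if $x\ne a$, one manufactures a point of $C$ strictly closer to $x$. The mechanism is that if $b_n\to b$ in $A\setminus\{b\}$ with $b\in C$, then each $c_n:=\rho_{b_n,b}(\lambda)$ is itself in $C$ (being $\lim_m\rho_{b_n,b_m}(\lambda)$) and $c_n\to b$. In the real case this already gives points of $C$ on the correct side of $a$; in the complex case one iterates $c_n^{(i+1)}:=\rho_{c_n^{(i)},b}(\lambda)$ a bounded number of times to rotate $c_n$ about $b$ into the open disk $D$ of radius $|x-b|$ around $x$, using that each step rotates by the fixed characteristic angle $\varphi$ and that the number of steps is bounded independently of $n$, so for $n$ large the rotated point lands in $D\cap C$. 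This avoids any appeal to the structure of $R_\lambda$ and works uniformly for all $\lambda\notin\clcl{0,1}$.
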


\begin{proof}
We consider the case where $\lambda \in \reals\cmpl\clcl{0,1}$ first, which was essentially proved by Pinch~\cite[Proposition~7]{Pinch}.  This case is required, but it also gives a simpler version of the proof for when $\lambda$ is complex.  By Proposition~\ref{prop:convex-iff-lambda-closed}, if $0 < \lambda < 1$, then $A$ is convex, regardless of whether or not $A\subseteq F_\lambda$ (which equals $\clcl{0,1}$ in this case).  Therefore---since $\lambda\notin\{0,1\}$ by assumption---we may assume that $\lambda \not \in \clcl{0,1}$, in which case, $A\subseteq F_\lambda = \reals$.  Since $\lambda$-clonvexity is the same as $(1-\lambda)$-clonvexity by Fact~\ref{fact:inner-dual}, we may further assume that $\lambda > 1$.

Now let $C$ be the set of all accumulation points of $A$, and suppose that $C\ne\emptyset$.  Note that $C\subseteq A$, because $A$ is closed.  We show for any $x \in \reals$ that $x\in C$, from which it follows that $A = \reals$.  Let $a\in C$ be closest to $x$ among all the elements of $C$.  Such a point $a$ exists, because $C$ is closed and nonempty.  If $x=a$, then we are done, so suppose that $x < a$ (there is no essential difference with the case where $a < x$).  Then for some sequence $\{a_n\}\subseteq A \cmpl \{a\}$, \ $a = \lim_{n\rightarrow \infty} a_n$.  Define the sequence $\{b_n\}$ as follows:
\begin{eqnarray*}
 b_n
     =   \left\{ \begin{array}{ll}
                     a\x a_n   & \mbox{if $a_n < a$,} \\
                     a_n\x a   & \mbox{if $a_n > a$.}
                 \end{array}
         \right.
\end{eqnarray*}
Evidently, $b_n < a$ for all $n$, and $\lim_{n\rightarrow\infty} b_n = a$.  Furthermore, for each $n$, we have $b_n \in C$, because
\begin{eqnarray*}
 b_n
     =   \left\{ \begin{array}{ll}
                     \lim_{m\rightarrow\infty} a_m\x a_n   & \mbox{if $a_n < a$,} \\
                     \lim_{m\rightarrow\infty} a_n\x a_m   & \mbox{if $a_n > a$,}
                 \end{array}
         \right.
\end{eqnarray*}
and moreover, for all $m,n$, \ $a_m\x a_n \in A$, and $a_m\x a_n \ne b_n$ (respectively, $a_n\x a_m \ne b_n$) if $a_n < a$ (respectively, $a_n > a$).  Since $b_n$ converges to $a$, we find for sufficiently large $n$ that $x < b_n < a$, so $b_n$ is in $C$ and is closer to $x$ than $a$ is, contradicting the hypothesis that $a$ was the closest.

Now suppose that $\lambda \not \in \reals$, and that $A$ is arbitrary (subject to this section's convention) but contains an accumulation point $a$.  We show that in such a case, $A = \complexes$.  We do this by showing that any point $x \in \complexes$ is an accumulation point of $A$, from which the result follows by the fact that $A$ is closed.  The proof is in the same in spirit as the case where $\lambda \in \reals$, but here there is no division into cases as to whether $x$ is to the ``left'' or ``right'' of $a$.  For ease of illustration, we will assume that $\Im(\lambda) > 0$, as depicted in Fig.~\ref{fig:similar}.  The case where $\Im(\lambda) < 0$ is entirely similar.

The proof is by contradiction.  Again, let $C\subseteq A$ be the set of all accumulation points of $A$, and let $b \in C$ be the closest to $x$ of any point in $C$.  Such a point $b$ exists, because the set $\{z\in C : |z-x| \le |a-x|\}$, as well as being nonempty, is closed and bounded, and hence compact.  Now assume for the sake of contradiction that $x\notin A$.  Then $x\ne b$.  Draw a circle with $x$ as the center and $b$ on the circle.  Let $D$ denote the open disk bounded by the circle.  We will show now that $C \intersect D \ne \emptyset$, which contradicts the hypothesis that $b$ is the closest point in $C$ to $x$.  For ease in visualization, suppose $b$ is at the top of the circle (see Fig.~\ref{fig:accpoint}).

\begin{figure}[htbp]
\begin{center}
\input{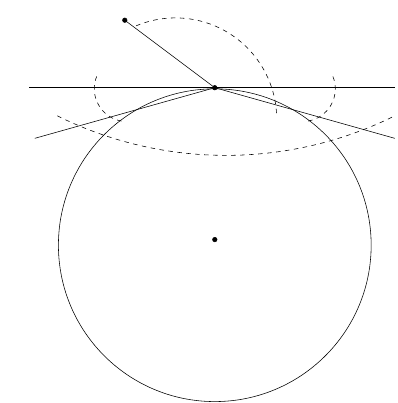_t}
\caption{Theorem~\ref{thm:accum-implies-convex}: Construction of accumulation point $x$.}\label{fig:accpoint}
\end{center}
\end{figure}

Suppose the sequence $\{b_n\}\subseteq A \cmpl \{b\}$ converges to $b$, that is, $\lim_{n\rightarrow \infty} b_n = b$.  Let $\{c_n\}$ denote the sequence defined by $c_n = b_n\x b$ for all $n$.  Note that for each $n$, \ $c_n\in A\cmpl \{b\}$, and moreover, $c_n$ is itself in $C$, because $c_n \ne b_n\x b_m\in A$ for all $m$, and
\begin{eqnarray*}
   \lim_{m\rightarrow \infty} b_n\x b_m = \lim_{m\rightarrow\infty} ((1-\lambda)b_n + \lambda b_m) = (1-\lambda)b_n + \lambda b = c_n\;.
\end{eqnarray*}
Thus if any $c_n \in D$, then we are done, but it is possible that $c_n \not \in D$ for all $n$.  We therefore show how to ``rotate'' the sequence $\{c_n\}$ so that it is contained in $D$ for all sufficiently large $n$.  Let $T$ denote the tangent to the circle at $b$.  With $\theta$ and $\p$ denoting the characteristic angles of $\lambda$ (see Definition~\ref{def:char-angles} and Fig.~\ref{fig:similar}), let $\eta$ be any angle obeying $0 < \eta < \min(\theta,\p,(\pi-\theta-\p)/2)$, and observe that $\eta < \pi/2$.  Form the two rays $L$ and $R$ intersecting at $b$ and making an angle $\eta$ with $T$ as shown in the figure.  Let $\xi_n$ denote the counterclockwise angle formed by $R$ and the line segment $(b, c_n)$ connecting $c_n$ with $b$, also as shown in the figure.  Since $c_n$ could be anywhere except $b$, we have $0\le \xi_n < \tau$, where $\xi_n = 0$ corresponds to the ray $R$.  Let $\psi$ denote the angle subtended by $L$ and $R$; thus $\psi = \pi - 2\eta$.  Note that by the choice of $\eta$, we have $\p < \theta + \p < \pi - 2\eta = \psi$.  Our goal now is to find an accumulation point below the lines $L$ and $R$, and inside $D$.

To do this, let $q_n$ denote the least integer such that $q_n\p > \xi_n$. Then $q_n\p = (q_n - 1)\p + \p \le \xi_n + \p < \xi_n + \psi$.  Thus $\xi_n < q_n\p < \xi_n  + \psi$, so that the angle $q_n\p$ takes us from the line segment $(b,c_n)$ \emph{clockwise} to a ray through $b$, below $T$, and strictly between $L$ and $R$.  Note that since $\xi_n < \tau$, \ $0\le q_n-1 \le \lfloor \tau/\p \rfloor$, so for all $n$, \ $q_n$ can only take on a finite number of values, independent of $n$.

Next, for each $n$, form the finite sequence $c_n^{(0)}, c_n^{(1)},\dots,c_n^{(q_n)}$, where
\begin{align*}
 c_n^{(0)} &= c_n\\
 c_n^{(1)} &= c_n^{(0)}\x b\\
&\;\;\vdots \\
 c_n^{(i+1)} &= c_n^{(i)}\x b \\
&\;\;\vdots
\end{align*}
(This is essentially the same construction as in Theorem~\ref{thm:convex-is-easy}.  Also note Fig.~\ref{fig:convex-hull}, although it is not necessary here to form a convex hull.)
Each $c_n^{(i)}$ is in $A$, similarly to $c_n$. 
By the definition of $\p$, for each $i$, the clockwise angle going from the line segment $(b, c_n^{(i)})$ to the line segment $(b, c_n^{(i+1)})$ is $\p$.  Thus the clockwise angle from $(b, c_n)$ to $(b, c_n^{(q_n)})$ is $q_n\p$.  Hence, the point $c_n^{(q_n)}$ is in the desired wedge-shaped region beneath $L$ and $R$.  However, it may be too far from $b$ to be in the interior of $D$.  Now observe that, by virtue of the fact that $c_n^{(i+1)}$ is always constructed from $b$ and $c_n^{(i)}$ via similar triangles, there is a constant $k$ such that $|b - c_n^{(i+1)}| \le k|b - c_n^{(i)}|$.  Thus $|b - c_n^{(q_n)}| \le k^{q_n}|b - c_n|$.  But since $\{c_n\}$ converges to $b$, for any $\epsilon > 0$, there exists an $m$ such that $|b - c_m| \le \epsilon/k^{\lfloor 2\pi/\p\rfloor + 1} \le \epsilon/k^{q_m}$.  In that case, $|b -  c_m^{(q_m)}| \le \epsilon$, so $\epsilon$ may be chosen sufficiently small that $c_m^{(q_m)}$ is contained in $D$.  (And indeed, the sequence $\{c_n^{(q_n)}\}$ converges to $b$.)
\end{proof}

Some kind of constraint on $\lambda$ and $A$ in Theorem~\ref{thm:accum-implies-convex}---beyond this section's convention---is necessary to obtain the implication (\ref{item:accumulation}) $\Rightarrow$ (\ref{item:convex}).  For example, if $\lambda\in\{0,1\}$, then any closed subset of $\complexes$ is $\lambda$-clonvex, and so we may take $A := \{0\} \union \{ 1/n : n\in\posints \}$, which has $0$ as an accumulation point but is not convex.  If $\lambda\in\reals \cmpl \clcl{0,1}$ but $A\not\subseteq\reals$, then the implication still holds provided either $A$ lies entirely on a single line or $A$ contains a nonempty open set (cf.\ Proposition~\ref{prop:open-set}, below).  Otherwise, the implication may not hold: let $\lambda := 2$ and consider the set $A := R_2(\{0,1,\sqrt 2,i\})$.  Then it is a short exercise to show that
\[ A = \{x+yi \mid x\in\reals \myand y\in\ints \}\;, \]
which has accumulation points (paths, in fact) but is not convex.

Property (\ref{item:exists-a-and-b}) of the next corollary provides a useful shortcut for proving that $R_\lambda$ is convex.  Pinch essentially proved for $\lambda\in\reals$ that $(4)\Rightarrow (1)$~\cite[Propositions~5,7]{Pinch} and that $(5)\Rightarrow (4)$~\cite[Proposition~10]{Pinch}.

\begin{corollary}\label{cor:general-equivalences}
For any $\lambda\in\complexes$, the following are equivalent:
\begin{enumerate}
\item $R_\lambda$ is convex.
\item $R_\lambda$ is path-connected.
\item $R_\lambda$ contains a path.
\item $R_\lambda$ has an accumulation point.
\item There exist $a, b\in R_\lambda$ such that $0 < |a - b| < 1$.
\end{enumerate}
\end{corollary}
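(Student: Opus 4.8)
The plan is to establish the equivalence of the five properties (1)--(5) for $A = R_\lambda$ by assembling the implications already proved in this section, handling the cases $\lambda\in\{0,1\}$ and $\lambda\notin\{0,1\}$ separately, and filling the one gap that the general theorems do not cover, namely the need for the hypothesis $A\subseteq F_\lambda$ in Theorem~\ref{thm:accum-implies-convex}. First I would dispose of the trivial case $\lambda\in\{0,1\}$: by Fact~\ref{fact:trivial}.1 we have $R_0 = R_1 = \{0,1\}$, which is finite, hence not convex, not path-connected, contains no path, has no accumulation point, and contains no two points at distance strictly less than $1$; so all five statements are false and the equivalence holds vacuously.

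For $\lambda\notin\{0,1\}$, the cycle of implications is the main work. From Fact~\ref{fact:1-to-4} we already have (1)$\Rightarrow$(2)$\Rightarrow$(3)$\Rightarrow$(4) for any $\lambda$-clonvex $A$. To close the cycle it suffices to prove (4)$\Rightarrow$(1) for $A = R_\lambda$, and to relate (5) to the others. For (5): if $R_\lambda$ is convex then $\clcl{0,1}\subseteq R_\lambda$, so $0$ and $1/2$ are two points of $R_\lambda$ at distance $1/2\in\opop{0,1}$, giving (1)$\Rightarrow$(5); conversely, if there exist $a,b\in R_\lambda$ with $0<|a-b|<1$, then by Corollary~\ref{cor:translate}.2 we have $\rho_{a,b}(R_\lambda)\subseteq R_\lambda$, which is exactly property (\ref{item:exists-a-and-b}) of the section's numbered list, so Theorem~\ref{thm:exists-a-b-accumulation} gives that $R_\lambda$ has an accumulation point, i.e.\ (5)$\Rightarrow$(4). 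So everything reduces to (4)$\Rightarrow$(1) for $R_\lambda$.

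The step I expect to be the main obstacle is precisely (4)$\Rightarrow$(1) for $R_\lambda$, because Theorem~\ref{thm:accum-implies-convex} only delivers this under the extra hypothesis $R_\lambda\subseteq F_\lambda$, and a priori we do not know that $R_\lambda$ is contained in $F_\lambda$ --- that containment is part of what Theorem~\ref{thm:convex-is-easy} asserts only \emph{after} convexity is known. The way around this is to observe that $F_\lambda$ is itself $\lambda$-clonvex and contains $\{0,1\}$: indeed $\clcl{0,1}$, $\reals$, and $\complexes$ are each closed and closed under the affine combination $(1-\lambda)a+\lambda b$ for the respective ranges of $\lambda$ (for $\lambda\in\clcl{0,1}$ the point lies on the segment $[a,b]$; for $\lambda\in\reals$ it lies on the real line through $a,b$ when $a,b\in\reals$; and $\complexes$ is trivially closed under it). Hence by minimality of the $\lambda$-clonvex closure, $R_\lambda\subseteq F_\lambda$ always. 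Now if $R_\lambda$ has an accumulation point, Theorem~\ref{thm:accum-implies-convex} applies with $A = R_\lambda$ (we have $\lambda\notin\{0,1\}$ and $R_\lambda\subseteq F_\lambda$) and yields that $R_\lambda$ is convex, completing the cycle (1)$\Rightarrow$(2)$\Rightarrow$(3)$\Rightarrow$(4)$\Rightarrow$(1), with (1)$\Leftrightarrow$(5) established above. I would write this up by first stating the observation $R_\lambda\subseteq F_\lambda$ as a one-line consequence of minimality, then citing Fact~\ref{fact:1-to-4}, Theorem~\ref{thm:accum-implies-convex}, Theorem~\ref{thm:exists-a-b-accumulation}, and Corollary~\ref{cor:translate} in turn.
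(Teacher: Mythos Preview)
Your proposal is correct and follows essentially the same route as the paper, but is considerably more careful. The paper's proof consists of a single observation: it shows that condition~(5) of the corollary (existence of $a,b\in R_\lambda$ with $0<|a-b|<1$) is equivalent to the section's condition~(5) (existence of $a,b$ with $0<|a-b|<1$ and $\rho_{a,b}(R_\lambda)\subseteq R_\lambda$), invoking Corollary~\ref{cor:translate} for one direction and the inclusion $\{a,b\}=\rho_{a,b}(\{0,1\})\subseteq\rho_{a,b}(R_\lambda)$ for the other; everything else is left to the already-proved implications (1)$\Rightarrow$(2)$\Rightarrow$(3)$\Rightarrow$(4), (1)$\Rightarrow$(5)$\Rightarrow$(4), and (4)$\Rightarrow$(1).

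What you add, and the paper leaves implicit, is the verification that the hypotheses of Theorem~\ref{thm:accum-implies-convex} actually hold for $A=R_\lambda$: you explicitly argue that $F_\lambda$ is $\lambda$-clonvex and contains $\{0,1\}$, hence $R_\lambda\subseteq F_\lambda$ by minimality, and you separately dispose of the degenerate case $\lambda\in\{0,1\}$. These are genuine (if easy) gaps in the paper's terse write-up, and your filling them is an improvement. Your direct argument for (1)$\Rightarrow$(5) via $0,\tfrac12\in\clcl{0,1}\subseteq R_\lambda$ is also slightly cleaner for this special case than invoking the general midpoint-map theorem.
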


\begin{proof}
If $A = R_\lambda$, then we merely note that the property (\ref{item:exists-a-and-b}) of Corollary~\ref{cor:general-equivalences} is equivalent to the property (\ref{item:exists-a-and-b}) given earlier in this section: if $a,b\in R_\lambda$, then $\rho_{a,b}(R_\lambda) \subseteq R_\lambda$ by Corollary~\ref{cor:translate}.  Conversely, if $\rho_{a,b}(R_\lambda) \subseteq R_\lambda$, then $\{a,b\} = \rho_{a,b}(\{0,1\}) \subseteq \rho_{a,b}(R_\lambda) \subseteq R_\lambda$.
\end{proof}

Corollary~\ref{cor:general-equivalences} presents a nice dichotomy: $R_\lambda$ is either convex (hence either $\clcl{0,1}$, $\reals$, or $\complexes$) or uniformly discrete (with no two points less than unit distance apart).  The former holds when $\lambda\in\convex$; the latter when $\lambda\in\discrete$.

We end this section with some basic facts about $Q_\lambda(S)$ for certain $\lambda$ and $S$.  First we show that, given any disk $D \subseteq \complexes$ and any $\lambda\in\complexes \cmpl \clcl{0,1}$, we can construct a larger concentric disk $D' \subseteq Q_\lambda(D)$ (analogous to the construction of successively larger intervals in the case $\lambda \in \reals$).  From this it follows immediately that $Q_{\lambda}(D) = \complexes$ (Corollary~\ref{cor:disk-enlarge}, below).

\begin{lemma}\label{lem:disk-enlarge}
Fix $\lambda\in\complexes$ and let $\delta := |\lambda| + |1-\lambda|$.  For any $x\in\complexes$ and $r \ge 0$, let $D_{x,r} := \{z\in\complexes : |z-x|\le r\}$ be the closed disk of radius $r$ centered at $x$.  Then $Q_\lambda^{(1)}(D_{x,r}) = D_{x,\delta r}$.
\end{lemma}

\begin{proof}
For any $a,b\in D_{x,r}$, we have $|a\x b - x| = |(a-x)\x(b-x)| \le |1-\lambda||a-x| + |\lambda||b-x| \le \delta r$, and thus $Q_\lambda^{(1)}(D_{x,r}) \subseteq D_{x,\delta r}$.  For the reverse inclusion, pick any $y\in D_{x,\delta r}$ (and so $|y-x|\le \delta r$).  We can assume $\lambda\notin\{0,1\}$, for otherwise the result is trivial.  Define
\begin{align*}
a &:= x + (y-x)\frac{|1-\lambda|}{\delta(1-\lambda)}\;, & b &:= x + (y-x)\frac{|\lambda|}{\delta\lambda}\;.
\end{align*}
One readily checks that $|a-x|\le r$ and $|b-x|\le r$ (so $a,b\in D_{x,r}$) and that $y = a\x b$.
\end{proof}

\begin{corollary}\label{cor:disk-enlarge}
Let $D_{x,r}$ be defined as in Lemma~\ref{lem:disk-enlarge}.  If $r>0$ and $\lambda\notin\clcl{0,1}$, then $Q_\lambda(D_{x,r}) = \complexes$.
\end{corollary}

\begin{proof}
We have $\delta = |1-\lambda| + |\lambda| > 1$ and $Q_\lambda^{(n)} = D_{x,\delta^nr}$ for all $n$.
\end{proof}

Thus we have the following:


\begin{proposition}\label{prop:open-set}
Fix any $\lambda\in\complexes \cmpl \clcl{0,1}$ and $B\subseteq\complexes$.
\begin{enumerate}
\item
If $B$ includes a nonempty open subset of $\complexes$, then $Q_\lambda(B) = R_\lambda(B) = \complexes$.
\item
If $B$ includes a nonempty open subset of $\reals$, then $F_\lambda \subseteq Q_\lambda(B)$.
\end{enumerate}
\end{proposition}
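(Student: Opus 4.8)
The plan is to reduce both parts to the disk-expansion construction carried out immediately before the statement, using monotonicity of $Q_\lambda$ (if $S\subseteq T$ then $Q_\lambda(S)\subseteq Q_\lambda(T)$, since $Q_\lambda(T)$ is a $\lambda$-convex superset of $S$) together with Lemma~\ref{lem:topo-closure} to pass from $Q_\lambda$ to $R_\lambda$. For part~(1): if $A$ contains a nonempty open subset of $\complexes$, it contains a closed disk $D$ of positive radius, and the preceding construction shows $\complexes\subseteq Q_\lambda(D)$ (it is written there for $\Im(\lambda)>0$; the case $\Im(\lambda)<0$ is symmetric, or follows via Fact~\ref{fact:conjugate}, and the case $\lambda\in\reals-\clcl{0,1}$ is the easy one). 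One can in fact treat all cases at once: by the definition of $\rho$, the set $\{\rho_{a,b}(\lambda):a,b\in D\}$ equals the Minkowski sum $(1-\lambda)D+\lambda D$, which for $D$ centered at $p$ with radius $r$ is the disk centered at $p$ with radius $(|1-\lambda|+|\lambda|)r$, and $|1-\lambda|+|\lambda|>1$ since $\lambda\notin\clcl{0,1}$; iterating $\lambda$-extrapolation thus places concentric disks of radius $(|1-\lambda|+|\lambda|)^{n}r\to\infty$ inside $Q_\lambda(D)$, so $Q_\lambda(D)=\complexes$. Either way, $D\subseteq A$ and monotonicity give $Q_\lambda(A)=\complexes$, and then $R_\lambda(A)=\overline{Q_\lambda(A)}=\complexes$ by Lemma~\ref{lem:topo-closure}.

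For part~(2): a nonempty open subset of $\reals$ contains an open interval $\opop{c,d}$ with $c<d$; since $\opop{c,d}\subseteq A$, monotonicity reduces the claim to $F_\lambda\subseteq Q_\lambda(\opop{c,d})$. From the definition of $\rho$, $\{\rho_{a,b}(\lambda):a,b\in\opop{c,d}\}=(1-\lambda)\opop{c,d}+\lambda\opop{c,d}$. If $\lambda\in\reals-\clcl{0,1}$, we may assume $\lambda>1$ by Fact~\ref{fact:inner-dual}; then this Minkowski sum is an open interval with the same midpoint as $\opop{c,d}$ but length multiplied by $2\lambda-1>1$. Since this set lies in $Q_\lambda(\opop{c,d})$, an induction using $\lambda$-convexity at each step produces nested open intervals about $(c+d)/2$ whose lengths tend to infinity, so their union $\reals=F_\lambda$ is contained in $Q_\lambda(\opop{c,d})$. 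If instead $\lambda\in\complexes-\reals$, then $1-\lambda$ and $\lambda$ are linearly independent over $\reals$ (otherwise $\lambda$ would be real), so $(1-\lambda)\opop{c,d}$ and $\lambda\opop{c,d}$ are nondegenerate segments in non-parallel directions and the same Minkowski sum is a nonempty open parallelogram. Hence $B:=Q_\lambda(\opop{c,d})$ contains a nonempty open subset of $\complexes$, so $Q_\lambda(B)=\complexes$ by part~(1); but $Q_\lambda(B)=B$ since $B$ is $\lambda$-convex, whence $B=\complexes=F_\lambda$.

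The genuine content is the disk-growing construction quoted for part~(1) (where the real geometry lies) and, secondarily, the elementary but slightly fiddly verification that one $\lambda$-extrapolation step sends an open interval to a strictly larger co-centered interval when $\lambda>1$, and to a two-dimensional open set when $\lambda\notin\reals$ (which is what lets that case of part~(2) collapse onto part~(1)). I expect both to be immediate once phrased through Minkowski sums of intervals and segments; everything else is bookkeeping with monotonicity of $Q_\lambda$, the identity $Q_\lambda(Q_\lambda(S))=Q_\lambda(S)$, and Lemma~\ref{lem:topo-closure}.
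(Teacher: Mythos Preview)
Your proof is correct and follows the paper's overall strategy: in part~(1), iterate $\lambda$-extrapolation to expand a disk without bound; in part~(2), expand an interval to $\reals$ when $\lambda$ is real, and when $\lambda\notin\reals$ produce a two-dimensional open set from the interval and feed it back into part~(1). The difference lies in execution. The paper's disk-expansion step (the paragraphs preceding the proposition) is carried out geometrically via tangent lines and an auxiliary isosceles triangle, producing an annulus with expansion ratio $1/\sin(\p/2)$; the optimal ratio $|\lambda|+|1-\lambda|$ is only noted afterward in a Remark. Your Minkowski-sum formulation $\{\rho_{a,b}(\lambda):a,b\in D\}=(1-\lambda)D+\lambda D$ bypasses that geometry entirely, yields the optimal ratio directly, and handles real and non-real $\lambda$ uniformly. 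Similarly, for part~(2) case~(ii) the paper fills a triangle from the interval (Figure~\ref{fig:realpath}), whereas your parallelogram $(1-\lambda)\opop{c,d}+\lambda\opop{c,d}$ arises from the same Minkowski-sum viewpoint with the linear-independence check replacing the picture. Both routes are short; yours is tidier and avoids the case split on $\Im(\lambda)$.
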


\begin{proof}
Part (1.)\ follows from Corollary~\ref{cor:disk-enlarge}.  For Part (2.), we have two cases: (i) $\lambda\in\reals$ and $F_\lambda = \reals$; and (ii) $\lambda \notin\reals$ and $F_\lambda = \complexes$.  In case~(i), we apply a one-dimensional version of the disk expansion construction above to expand any interval $\clcl{a,b}\subseteq Q_\lambda(B)$ to a larger interval $\clcl{c,d}\subseteq Q_\lambda(B)$, where (assuming $\lambda >1$ without loss of generality) $c = b\x a$ and $d = a\x b$.  Note that for every point $z\in \clcl{c,d}$ there exist $x,y\in\clcl{a,b}$ such that $z=x\x y$.  The expansion is by a factor of $2\lambda - 1 > 1$.  Applying the expansion repeatedly puts all of $\reals$ into $Q_\lambda(B)$.  For case~(ii), if we start with some interval $\clcl{a,b} \subseteq B$, then the entire triangle formed by $a$, $b$, and $a\x b$ and its interior lies in $Q_\lambda(B)$.  Indeed, for any point $z$ inside this triangle, there exist $x,y\in\clcl{a,b}$ such that $z = x\x y$, as shown in Fig.~\ref{fig:realpath}.  Then applying Part~(1.)\ to $Q_\lambda(B)$ gives $Q_\lambda(Q_\lambda(B)) = Q_\lambda(B) = \complexes$.
\begin{figure}[htbp]
\begin{center}
\input{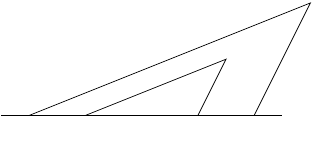_t}
\caption{From the interval $\clcl{a,b}\subseteq B$ we get a triangle in $Q_\lambda(B)$ with a nonempty interior.}\label{fig:realpath}
\end{center}
\end{figure}
\end{proof}

The proof above can easily be generalized to show that if $B$ includes a differentiable path in $\complexes$, then $Q_\lambda(B) = \complexes$ for all $\lambda \in\complexes \cmpl \reals$.  In fact, we can prove something much stronger:

\begin{theorem}\label{thm:bent-path-first}
If $d$ is any path in $\complexes$ that is not contained in a straight line, then $Q_\lambda(d) = R_\lambda(d) = \complexes$ for all $\lambda \in \complexes \cmpl \clcl{0,1}$.
\end{theorem}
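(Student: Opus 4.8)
The plan is to reduce this to Proposition~\ref{prop:open-set}(1.) by showing that $Q_\lambda(d)$ must contain a nonempty open subset of $\complexes$ whenever $d$ is a path not contained in a line. Since $d$ is not contained in a straight line, there must exist three points $p, q, r$ in the image of $d$ that are not collinear. The first step is to observe that, by Corollary~\ref{cor:general-equivalences}-style reasoning (or more directly, since $d$ is path-connected and contains two distinct points), $d$ contains an accumulation point, so the real obstruction is purely to manufacture two-dimensional ``thickness'' out of the non-collinearity.

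First I would handle the case $\lambda \in \complexes - \reals$. Pick three non-collinear points $p, q, r$ on $d$. Consider the triangle they span. Using Fact~\ref{fact:similar-triangles}, the point $\rho_{p,q}(\mu)$ traces out (as $\mu$ ranges over suitable values) copies of the base triangle $(0,1,\lambda)$ attached to the segment $pq$; but here we only have the single value $\lambda$ available, so the cleaner route is this: restrict $d$ to a small subpath near a point where $d$ genuinely ``turns'', so that the image contains two non-parallel short segments $\ell_1, \ell_2$ — or more carefully, contains an arc whose image is not contained in any line. Then the set $\{\rho_{x,y}(\lambda) : x \in \ell_1, y \in \ell_2\}$ is the image of $\ell_1 \times \ell_2$ under the continuous map $(x,y) \mapsto (1-\lambda)x + \lambda y$; when $\ell_1$ and $\ell_2$ are short non-collinear segments, this image contains a nonempty open set of $\complexes$ (the map has nonzero Jacobian in the appropriate sense because $\lambda \notin \reals$ makes the two ``directions'' $(1-\lambda)$-scaled and $\lambda$-scaled independent over $\reals$ as soon as the segment directions are not parallel). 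This open set lies in $Q_\lambda(d)$, so Proposition~\ref{prop:open-set}(1.) finishes this case.

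For the case $\lambda \in \reals - \clcl{0,1}$, a single application of $\lambda$-extrapolation to points on $d$ gives, for each pair $x, y$ on $d$, the point $(1-\lambda)x + \lambda y$ which lies on the line through $x$ and $y$ but outside the segment. Taking $x$ fixed and $y$ ranging over a sub-arc of $d$ that is not contained in the line through $x$, the image is a scaled-and-translated copy of that sub-arc, hence again a path not contained in a line; iterating and combining arcs through different base points $x$, one builds up a set containing a nonempty open set of $\reals^2$. Concretely: choose $p, q, r \in d$ non-collinear; $\rho_{p,q}(\lambda)$ and $\rho_{p,r}(\lambda)$ together with $p$ span a triangle (a short computation shows non-degeneracy since $p,q,r$ are non-collinear and $\lambda \ne 0,1$), and then the segments between such extrapolated points, extrapolated again, sweep out a two-dimensional region; apply Proposition~\ref{prop:open-set}(1.) once more. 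The main obstacle I anticipate is making rigorous the claim that ``extrapolating between two short non-parallel segments yields a nonempty open set'' — this needs a genuine (if elementary) argument via the inverse function theorem or a direct parametrization showing the relevant map is an open map near a non-degenerate pair, and one must be careful that the sub-arcs can indeed be chosen short, non-parallel, and lying in $d$, which uses continuity of $d$ together with the hypothesis that $d$'s image is not contained in a line. Everything after producing one nonempty open subset is immediate from Proposition~\ref{prop:open-set}(1.) and the inclusion $Q_\lambda(d) \subseteq R_\lambda(d) \subseteq \complexes$.
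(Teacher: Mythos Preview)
Your overall plan---reduce to Proposition~\ref{prop:open-set}(1.) by exhibiting a nonempty open subset of $Q_\lambda(d)$---is exactly the paper's strategy. The gap is in how you propose to produce that open set. Your argument hinges on finding ``short non-parallel segments $\ell_1,\ell_2$'' inside $d$ and then invoking a Jacobian/inverse function theorem computation on $(x,y)\mapsto (1-\lambda)x+\lambda y$. But $d$ is only assumed continuous: it may be nowhere differentiable and may contain no line segment whatsoever, so neither ``segment directions'' nor a Jacobian are available. The paper is explicit that the differentiable case is the easy one (see the remark just before the statement of Theorem~\ref{thm:bent-path-first}); the content of the theorem is precisely the general continuous case. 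Your real-$\lambda$ sketch has the same problem, and additionally speaks of ``the segments between such extrapolated points,'' but $Q_\lambda(d)$ is not known to contain any segments at this stage.

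The paper's fix is topological rather than differential. It first extracts (Lemma~\ref{lem:bent-path}) a subpath $d$ lying in a closed half-plane, such that the loop ``follow $d$, then return along the chord'' has nonzero winding number about some point $x$; this uses a decomposition of $\ell$ into finitely many sub-loops crossing a parallel line $L'$, together with additivity of winding number. Then, for real $\lambda>1$ (Proposition~\ref{prop:bent-path-real}), it builds an explicit loop $\ell'\subseteq Q_\lambda(d)$ out of two similar copies of $d$ plus $d$ reversed, shows $\ell'$ has nonzero winding number about a whole ball $B'$, and exhibits a homotopy within $Q_\lambda(d)$ contracting $\ell'$ to a point---forcing $B'\subseteq Q_\lambda(d)$. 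For nonreal $\lambda$ the same scheme is run with an auxiliary $\mu\in Q_\lambda$ chosen (Lemma~\ref{lem:good-mu}) so that $\arg(\mu-1)$ is small enough to keep the geometry of Figure~\ref{fig:path5} intact; then Lemma~\ref{lem:extend}(1.) gives $Q_\mu(d)\subseteq Q_\lambda(d)$. None of this requires any smoothness of $d$. If you want to rescue your Minkowski-sum idea for arbitrary continuous $d$, you would need a degree-theoretic argument that the image of $(s,t)\mapsto(1-\lambda)d(s)+\lambda d(t)$ has interior points, and that is essentially the winding-number argument the paper gives.
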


In Theorem~\ref{thm:bent-path-first}, we do not require $d$ to be differentiable, or even simple; we only require that $d$ be continuous.  We defer the proof of this theorem until Section~\ref{sec:bent-path}.

\section{Finding $\lambda$ such that $R_\lambda$ is convex}

As in previous sections, we continue to use $\x$ without subscript to denote $\xl$.

Corollary~\ref{cor:general-equivalences} itself has two useful corollaries:

\begin{corollary}\label{cor:unit-disk}
$R_\lambda$ is convex for any $\lambda\in\complexes$ such that either $0 < |\lambda| < 1$ or $0 < |1-\lambda| < 1$.
\end{corollary}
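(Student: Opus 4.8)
The plan is to reduce this immediately to the ``shortcut'' criterion in Corollary~\ref{cor:general-equivalences}, namely that $R_\lambda$ is convex as soon as it contains two distinct points at distance strictly less than $1$. So the whole argument amounts to exhibiting such a pair of points inside $R_\lambda$.

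First I would dispose of the second hypothesis by symmetry: since $R_\lambda = R_{1-\lambda}$ by Fact~\ref{fact:inner-dual}, the case $0 < |1-\lambda| < 1$ follows from the case $0 < |\lambda| < 1$ applied to $1-\lambda$ in place of $\lambda$. So assume $0 < |\lambda| < 1$.

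Next, observe that $\lambda$ itself lies in $R_\lambda$: indeed $0,1\in R_\lambda$, and $\rho_{0,1}(\lambda) = (1-\lambda)\cdot 0 + \lambda\cdot 1 = \lambda$, so $\lambda\in R_\lambda$ because $R_\lambda$ is $\lambda$-convex. Now take $a := 0$ and $b := \lambda$, both in $R_\lambda$. Since $|\lambda| > 0$ we have $a\ne b$, and $|a-b| = |\lambda| < 1$ by hypothesis, so $0 < |a-b| < 1$. By the equivalence of (1) and (5) in Corollary~\ref{cor:general-equivalences}, $R_\lambda$ is convex.

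There is essentially no obstacle here; the content was already packed into Corollary~\ref{cor:general-equivalences} (whose property (5) was singled out precisely because it gives this kind of one-line convexity proof). The only thing to be a little careful about is making sure the excluded degenerate cases line up: we do need $\lambda\ne 0$ to get $a\ne b$, and we need the strict inequality $|\lambda|<1$ — both are exactly the stated hypotheses, so nothing is lost.
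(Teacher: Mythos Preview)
Your proof is correct and is exactly the intended argument: the paper presents Corollary~\ref{cor:unit-disk} as an immediate consequence of Corollary~\ref{cor:general-equivalences} (property~(5)), and your choice of the pair $0,\lambda\in R_\lambda$ (together with the symmetry $R_\lambda = R_{1-\lambda}$ for the second case) is precisely how that implication is realized.
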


\begin{corollary}\label{cor:basic-convex}
For any $\lambda\in\complexes$, \ $R_\lambda$ is convex if and only if there exists $\mu \in R_\lambda$ such that either $0 < |\mu| < 1$ or $0 < |1-\mu| < 1$.
\end{corollary}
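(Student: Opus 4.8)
The plan is to read off both directions directly from Corollary~\ref{cor:general-equivalences}, exploiting the one feature of $R_\lambda$ that distinguishes it from a general $\lambda$-clonvex set: the generators $0$ and $1$ always belong to $R_\lambda$. So the whole argument amounts to observing that the existentially quantified pair $a,b$ in property (5) of Corollary~\ref{cor:general-equivalences} may be taken to contain one of these two fixed points.

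For the ``if'' direction, suppose some $\mu\in R_\lambda$ satisfies $0<|\mu|<1$. Since $0\in R_\lambda$ as well, the pair $a:=0$, $b:=\mu$ lies in $R_\lambda$ and has $0<|a-b|=|\mu|<1$, which is exactly property (5) of Corollary~\ref{cor:general-equivalences}; hence that corollary gives that $R_\lambda$ is convex. If instead $0<|1-\mu|<1$, run the same argument with the pair $a:=1$, $b:=\mu$ (using $1\in R_\lambda$). For the ``only if'' direction, suppose $R_\lambda$ is convex. Then $0,1\in R_\lambda$ together with convexity force the whole segment $\clcl{0,1}\subseteq R_\lambda$; in particular $\mu:=1/2\in R_\lambda$ and $0<|\mu|=1/2<1$, which is the witness we need. (An alternative, slightly more indirect route for the ``if'' direction is to invoke Corollary~\ref{cor:unit-disk} to conclude that $R_\mu$ itself is convex, and then Corollary~\ref{cor:convex-extend} with $\mu\in R_\lambda$ to transfer convexity to $R_\lambda$; but the passage through property (5) is shorter.)

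There is essentially no obstacle here once Corollary~\ref{cor:general-equivalences} is in hand — the only thing to keep straight is not to conflate the quantified $a,b$ of property (5) with the fixed generators $0,1$; the content of the corollary is precisely that one of $a,b$ may be chosen to be a generator. It is worth noting that the equivalence also behaves correctly in the degenerate cases (e.g.\ $\lambda\in\{0,1\}$, where $R_\lambda=\{0,1\}$): there no $\mu\in\{0,1\}$ satisfies $0<|\mu|<1$ or $0<|1-\mu|<1$, and $\{0,1\}$ is not convex, so both sides of the biconditional are false.
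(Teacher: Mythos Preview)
Your proof is correct. Your primary route---applying property~(5) of Corollary~\ref{cor:general-equivalences} directly with one of the pair taken to be a generator---is slightly more direct than the paper's, which instead uses exactly the alternative you mention parenthetically (Corollary~\ref{cor:unit-disk} followed by Corollary~\ref{cor:convex-extend}); since both routes ultimately rest on Corollary~\ref{cor:general-equivalences}, the difference is cosmetic.
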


\begin{proof}
The forward implication is obvious, since $\clcl{0,1}\subseteq R_\lambda$ if $R_\lambda$ is convex.  For the converse, we have $R_\mu$ is convex by Corollary~\ref{cor:unit-disk}, whence $R_\lambda$ is convex by Corollary~\ref{cor:convex-extend}.
\end{proof}

The proof of Corollary~\ref{cor:general-equivalences} is the last place where we explicitly use the fact that $R_\lambda$ is closed.  In fact, all convexity arguments for $R_\lambda$ from now on can follow directly or indirectly from Corollaries~\ref{cor:convex-extend} and \ref{cor:basic-convex}, or alternatively from Corollary~\ref{cor:general-equivalences}.  They now allow us to expand our set of $\lambda$ such that $R_\lambda$ is known to be convex.

\begin{proposition}\label{prop:unit-circle}
If $|\lambda| = 1$ and $\lambda$ is neither a fourth nor a sixth root of unity, then $R_\lambda$ is convex.
\end{proposition}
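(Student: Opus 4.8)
The plan is to invoke Corollary~\ref{cor:general-equivalences} (equivalently, Corollary~\ref{cor:basic-convex}): to prove $R_\lambda$ convex it suffices to exhibit two distinct points of $R_\lambda$ at Euclidean distance less than $1$, or a single point $\mu\in R_\lambda$ with $0<|\mu|<1$. The raw material is that all nonnegative powers of $\lambda$ lie in $R_\lambda$: since $0,1\in R_\lambda$ and $\rho_{0,z}(\lambda)=\lambda z$, a trivial induction (the same one as in the proof of Theorem~\ref{thm:convex-is-easy}) gives $\lambda^n\in R_\lambda$ for every integer $n\ge 0$. Also $\lambda\ne 1$ (since $1$ is a fourth root of unity), and by Fact~\ref{fact:conjugate} we may assume $\Im(\lambda)\ge 0$.

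If $\lambda$ is \emph{not} a root of unity, then the powers $\lambda^0,\lambda^1,\lambda^2,\dots$ are pairwise distinct and all lie on the unit circle; partitioning that circle into seven arcs of equal length (each of diameter $2\sin(\pi/7)<1$), the pigeonhole principle yields two distinct powers lying in a common arc, hence two points of $R_\lambda$ at distance strictly between $0$ and $1$, and we are done. If instead $\lambda$ is a primitive $q$th root of unity, then the hypotheses $\lambda^4\ne1$ and $\lambda^6\ne1$ say $q\nmid 4$ and $q\nmid 6$, so $q=5$ or $q\ge 7$. Since $\lambda$ generates the full group of $q$th roots of unity, each such root---in particular $1$ and $e^{2\pi i/q}$---lies in $R_\lambda$. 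When $q\ge 7$ we have $|1-e^{2\pi i/q}|=2\sin(\pi/q)\le 2\sin(\pi/7)<1$, which finishes this subcase.

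The remaining, and only delicate, case is $q=5$. Now the roots of unity alone are insufficient, since the least distance between distinct fifth roots of unity is $2\sin(\pi/5)=2\sin 36^\circ\approx 1.18>1$, so one extrapolation step is needed. Set $\zeta:=e^{2\pi i/5}$. As observed, $\zeta\in R_\lambda$, so by Corollary~\ref{cor:extend} the set $R_\lambda$ is $\zeta$-clonvex; since also $1\in R_\lambda$, we get
\[ \mu := \rho_{1,\zeta}(\zeta) = 1-\zeta+\zeta^2 \in R_\lambda. \]
Using $\mu = (\zeta^3+1)/(\zeta+1)$ (legitimate since $\zeta\ne-1$) together with the identity $|1+e^{i\alpha}|=2|\cos(\alpha/2)|$, a short computation gives $|\mu| = |\cos(3\pi/5)|/|\cos(\pi/5)| = (3-\sqrt5)/2$, which lies strictly between $0$ and $1$. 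Hence $R_\lambda$ is convex by Corollary~\ref{cor:basic-convex}. (Alternatively, the same computation shows $R_\zeta$ itself is convex, and Corollary~\ref{cor:convex-extend} applied with $\mu=\zeta\in R_\lambda$ then gives the claim.)

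The only thing that requires care is the choice of extrapolant in the $q=5$ case: for the other primitive fifth roots of unity---e.g.\ $e^{4\pi i/5}$---the ``obvious'' candidate $1-\lambda+\lambda^2=\rho_{1,\lambda}(\lambda)$ in fact has modulus $(3+\sqrt5)/2>1$, so it is essential to extrapolate with the specific parameter $\zeta=e^{2\pi i/5}$ (which is always present in $R_\lambda$ because $\langle\lambda\rangle$ contains every fifth root of unity) rather than with $\lambda$ itself. Apart from that, everything reduces to the elementary fact that $R_\lambda$ always contains all nonnegative powers of $\lambda$, and to the chord-length estimates for roots of unity above.
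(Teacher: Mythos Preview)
Your proof is correct and takes a genuinely different route from the paper's.

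The paper exploits a single algebraic identity: on the unit circle, $\rho_{1,\lambda}(\lambda)=\lambda^2-\lambda+1=(2\Re(\lambda)-1)\lambda$, so this point of $R_\lambda$ has modulus $|2\Re(\lambda)-1|$, which is strictly between $0$ and $1$ whenever $\Re(\lambda)\in(0,1)\setminus\{1/2\}$. That one line disposes of the entire right half of the circle (except the fourth and sixth roots). For $\Re(\lambda)<0$ the paper observes that some small power $\lambda^2$ or $\lambda^3$ has positive real part and reduces to the previous case, with a short finite check for the handful of angles where that power happens to land on a sixth root.

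Your argument instead separates root-of-unity from non-root-of-unity. The pigeonhole step is clean and conceptual, and for primitive $q$th roots with $q\ge 7$ you get adjacent roots at distance $2\sin(\pi/q)<1$ for free; only $q=5$ needs an extra extrapolation, which you handle with the same computation $1-\zeta+\zeta^2$ the paper uses (for a specific $\zeta$). The trade-off: the paper's identity gives a uniform, quantitative construction covering most $\lambda$ in one stroke but then pays with a small ad hoc case list; your decomposition is structurally tidier and avoids that list, at the price of treating $q=5$ separately. Two minor remarks: the reduction to $\Im(\lambda)\ge 0$ via Fact~\ref{fact:conjugate} is never actually used in your argument and can be dropped; and your invocation of Corollary~\ref{cor:extend} in the $q=5$ case is exactly right---one really does need $R_\lambda$ to be $\zeta$-clonvex (not just $\lambda$-clonvex) to place $\rho_{1,\zeta}(\zeta)$ in $R_\lambda$.
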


\begin{proof}
Let $\lambda\in\complexes$ be any point on the unit circle.  Write $\lambda = x+iy$ for real $x,y$ such that $x^2+y^2=1$.  The following point is evidently in $R_\lambda$:
\[ \mu := 1\x \lambda = \lambda^2 - \lambda + 1 = (x^2 - y^2 -x+1) + (2xy-y)i = (2x^2-x) + y(2x-1)i = (2x-1)\lambda\;. \]
Thus $|\mu| = |2x-1|$.  If $0<x<1$, then $|\mu| < 1$, and moreover, $0<|\mu|$ if $x\ne 1/2$.  Corollary~\ref{cor:unit-disk} then implies that $R_\mu$ is convex (and thus $R_\lambda$ is convex) for all $x \in \opop{0,1} \cmpl \{1/2\}$, which proves the current proposition when $x\ge 0$.  (The cases where $x\in\{0,1/2,1\}$ correspond to $\lambda$ being a fourth or a sixth root of unity.)

Now assume $x<0$.  It is easy to check (on geometric grounds alone) that either $\lambda^2$ or $\lambda^3$ has positive real part, and so, provided $\lambda^2$ (respectively $\lambda^3$) is not a sixth root of unity, we have $R_{\lambda^2}$ (respectively $R_{\lambda^3}$) is convex by the argument in the previous paragraph, and hence $R_\lambda$ is convex.  Thus the only cases left to show are where: (1) $\lambda$ is neither a fourth nor a sixth root of unity; but (2) $\lambda^2$ has nonpositive real part or is a sixth root of unity, and (3) similarly for $\lambda^3$.  There are only four such cases: $\lambda = e^{\pm i\tau(5/12)}$ and $\lambda = e^{\pm i\tau(5/18)}$.  If $\lambda = e^{\pm i\tau(5/12)}$, then $\lambda^5 = e^{\pm i\tau/12}$, which has positive real part and is not a sixth root of unity.  If $\lambda = e^{\pm i\tau(5/18)}$, then the same can be said for $\lambda^4 = e^{\pm i\tau/9}$.  So we can apply the first paragraph argument to $R_{\lambda^5}$ and $R_{\lambda^4}$, respectively.
\end{proof}

The converse of Proposition~\ref{prop:unit-circle} for $\lambda$ on the unit circle follows from the following fact:

\begin{fact}\label{fact:discrete-ring}
If $D$ is any subring of $\complexes$ that is (topologically) closed, and $\lambda\in D$, then $R_\lambda \subseteq D$.
\end{fact}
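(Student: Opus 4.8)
The plan is to show that $D$, being a closed subring of $\complexes$ containing $\lambda$, is itself a $\lambda$-clonvex set containing $\{0,1\}$; then the minimality of $R_\lambda$ forces $R_\lambda \subseteq D$. So the whole argument reduces to verifying the two defining properties of $\lambda$-clonvexity for $D$, plus the fact that $0,1 \in D$.

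First I would note that since $D$ is a subring of $\complexes$, it contains the multiplicative identity $1$ and the additive identity $0$, so $\{0,1\} \subseteq D$. Next, for the $\lambda$-convexity: take any $a,b \in D$. Since $D$ is a ring and $\lambda \in D$, the element $(1-\lambda)a + \lambda b = \rho_{a,b}(\lambda)$ is built from $a$, $b$, $\lambda$, and $1$ using only ring operations (subtraction, addition, multiplication), hence lies in $D$. Thus $D$ is $\lambda$-convex. Finally, $D$ is closed by hypothesis, so $D$ is $\lambda$-clonvex. Since $R_\lambda = R_\lambda(\{0,1\})$ is by Definition~\ref{def:lambda-convex-closure} the $\subseteq$-minimum $\lambda$-clonvex superset of $\{0,1\}$, and $D$ is one such superset, we conclude $R_\lambda \subseteq D$.

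I do not anticipate any real obstacle here: the statement is essentially a packaging observation, and the only thing to be careful about is making explicit that a subring of $\complexes$ (in the unital sense being used) automatically contains both $0$ and $1$, and that the ring axioms are exactly what is needed to keep $\rho_{a,b}(\lambda)$ inside $D$. One could remark that the same argument shows more generally that any closed subring of $\complexes$ is $\lambda$-clonvex for every $\lambda$ it contains, which is how this fact will be applied (e.g.\ to $D = \ints$, $D = \ints[i]$, or $D = \ints[\lambda]$ when the latter is closed) to establish the converse of Proposition~\ref{prop:unit-circle}.
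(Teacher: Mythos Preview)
Your proof is correct and is exactly the routine verification the paper has in mind; the paper labels this a ``Fact'' and explicitly omits proofs of Facts as immediately obvious or straightforward, so there is no alternative argument to compare against. Your observation that $0,1\in D$ because $D$ is a unital subring, that $\rho_{a,b}(\lambda)=(1-\lambda)a+\lambda b$ lies in $D$ by closure under ring operations, and that $D$ is closed by hypothesis, is precisely the intended one-line justification.
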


So in particular, if $\lambda\in\ints$, then $R_\lambda\subseteq\ints$; if $\lambda$ is a Gaussian integer, then $R_\lambda$ consists only of Gaussian integers; if $\lambda$ is an Eisenstein integer,\footnote{i.e., a number of the form $a+be^{i\tau/3}$ for some $a,b\in\ints$} then $R_\lambda$ consists only of Eisenstein integers.  The fourth roots of unity are all Gaussian integers, and the sixth roots of unity are all Eisenstein integers.  None of these choices of $\lambda$ makes $R_\lambda$ convex.

$R_\lambda$ is usually a proper subset of $D$ for the choices of ring $D$ mentioned above.  More on this in Section~\ref{sec:discrete-ring}.

\begin{corollary}\label{cor:unbounded}
If $\lambda \in \complexes \cmpl \clcl{0,1}$, then $R_\lambda$ is unbounded (and thus $Q_\lambda$ is unbounded).
\end{corollary}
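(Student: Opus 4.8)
The plan is to show that if $\lambda\in\complexes-\clcl{0,1}$, then $R_\lambda$ cannot be bounded. The natural route is by contradiction combined with a scaling argument: suppose $R_\lambda$ is bounded, and derive that it must then also be convex, which contradicts boundedness (since a convex $R_\lambda$ equals $F_\lambda\in\{\reals,\complexes\}$ by Theorem~\ref{thm:convex-is-easy}, both unbounded). So the real content is: a bounded $R_\lambda$ (for $\lambda\notin\clcl{0,1}$) is convex.

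To get convexity from boundedness I would invoke Corollary~\ref{cor:basic-convex}: it suffices to exhibit some $\mu\in R_\lambda$ with $0<|\mu|<1$ or $0<|1-\mu|<1$. Here is where boundedness is exploited. Consider the iteration $b_0:=1$, $b_{i+1}:=\rho_{0,b_i}(\lambda)=\lambda b_i$, so that $\lambda^n\in R_\lambda$ for all $n\ge 0$ (this is exactly the computation used in the proof of Theorem~\ref{thm:convex-is-easy}(2)). If $|\lambda|<1$ then $\mu=\lambda\in R_\lambda$ already has $0<|\mu|<1$ and we are done immediately — but of course the interesting case is $|\lambda|\ge 1$. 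If $|\lambda|>1$ the sequence $\lambda^n$ is unbounded, so $R_\lambda$ is already unbounded and there is nothing to prove. Thus the only case requiring work is $|\lambda|=1$ with $\lambda\notin\clcl{0,1}$, i.e.\ $\lambda$ on the unit circle, $\lambda\ne 1$ (and $\lambda=0$ is excluded). For such $\lambda$, the point $\mu:=\rho_{1,\lambda}(\lambda)=\lambda^2-\lambda+1=(2x-1)\lambda$ computed in the proof of Proposition~\ref{prop:unit-circle} lies in $R_\lambda$ and has $|\mu|=|2x-1|$ where $x=\Re(\lambda)\in[-1,1)$; whenever $x\in(0,1)\setminus\{1/2\}$ this gives $0<|\mu|<1$ and Corollary~\ref{cor:basic-convex} finishes it. For the remaining handful of unit-circle values (the fourth and sixth roots of unity and the four exceptional points $e^{\pm i\tau(5/12)},e^{\pm i\tau(5/18)}$) I would follow exactly the case analysis in Proposition~\ref{prop:unit-circle}: pass to a suitable power $\lambda^k\in R_\lambda$ with positive real part not a sixth root of unity, so that $R_{\lambda^k}$ contains a point of modulus in $(0,1)$. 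Combining Corollary~\ref{cor:unit-disk} and Corollary~\ref{cor:convex-extend}, $R_\lambda$ is convex in all these cases, hence unbounded — a contradiction.

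Actually, a cleaner organization avoids re-deriving all of Proposition~\ref{prop:unit-circle}. Split into three cases on $|\lambda|$. If $|\lambda|>1$: the orbit $1,\lambda,\lambda^2,\ldots\subseteq R_\lambda$ is unbounded, so $R_\lambda$ is unbounded directly. If $|\lambda|<1$: then $0<|\lambda|<1$ (note $\lambda\ne 0$ since $\lambda\notin\clcl{0,1}$), so $R_\lambda$ is convex by Corollary~\ref{cor:unit-disk}, hence equals $F_\lambda$ by Theorem~\ref{thm:convex-is-easy}, which is unbounded. If $|\lambda|=1$: then $\lambda$ is not a fourth or sixth root of unity only in the generic subcase, but in every subcase Proposition~\ref{prop:unit-circle} (for the non-roots-of-unity) or Fact~\ref{fact:discrete-ring} reasoning plus the explicit $\lambda^k$ trick handles it — more simply, I can just cite that Proposition~\ref{prop:unit-circle} together with the surrounding discussion shows $R_\lambda$ is convex whenever $|\lambda|=1$ and $\lambda$ is not a fourth or sixth root of unity, and for the fourth/sixth roots of unity one checks directly that the relevant orbit still produces an unbounded set or applies the power trick. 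In each subcase $R_\lambda$ is either shown convex (hence $=F_\lambda$, unbounded) or shown directly to be unbounded.

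The main obstacle is the unit-circle case $|\lambda|=1$: there the simple "iterate $\lambda^n$" argument fails because the orbit stays on the unit circle and is bounded, so one genuinely needs the extrapolation trick $\mu=\lambda^2-\lambda+1\in R_\lambda$ (producing a point strictly inside the unit disk) together with the exceptional-point bookkeeping already carried out in Proposition~\ref{prop:unit-circle}. Everything else — the parenthetical "(and thus $Q_\lambda$ is unbounded)" — is immediate since $Q_\lambda\subseteq R_\lambda$ would make $R_\lambda$ bounded if $Q_\lambda$ were... wait, that inclusion goes the wrong way; instead use $\overline{Q_\lambda}=R_\lambda$ from Lemma~\ref{lem:topo-closure}, so $Q_\lambda$ is dense in $R_\lambda$ and an unbounded closure forces an unbounded $Q_\lambda$.
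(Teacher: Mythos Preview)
Your case split on $|\lambda|$ is sound for $|\lambda|>1$ (the orbit $\lambda^n$ is unbounded) and for $|\lambda|<1$ (Corollary~\ref{cor:unit-disk} gives convexity, hence $R_\lambda=F_\lambda$ is unbounded), and your closing remark about $Q_\lambda$ via $\overline{Q_\lambda}=R_\lambda$ is exactly right.

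The gap is in your treatment of $|\lambda|=1$ at the fourth and sixth roots of unity. Proposition~\ref{prop:unit-circle} explicitly excludes these, and your two proposed fixes both fail for them: the orbit $\lambda^n$ is periodic (hence bounded), and the ``power trick'' only produces further fourth or sixth roots of unity, so you never land in the region where $\mu=\lambda^2-\lambda+1$ has modulus in $(0,1)$. So as written the argument does not close.

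The paper avoids this by symmetrizing: in addition to $|\lambda|$ it also cases on $|1-\lambda|$, using $R_\lambda=R_{1-\lambda}$ (Fact~\ref{fact:inner-dual}). If $|1-\lambda|>1$ then $(1-\lambda)^n\in R_\lambda$ is unbounded; if $|1-\lambda|<1$ then Corollary~\ref{cor:unit-disk} again gives convexity. This immediately disposes of $-1,\pm i$, and the primitive third roots of unity (all have $|1-\lambda|>1$), leaving only the two points with $|\lambda|=|1-\lambda|=1$, namely $\lambda=(1\pm i\sqrt 3)/2$. For these the paper computes $\mu:=\rho_{\lambda,1}(\lambda)=2\lambda-\lambda^2=(3\pm i\sqrt 3)/2$, checks $|\mu|>1$, and then uses $R_\mu\subseteq R_\lambda$ (Corollary~\ref{cor:extend}) together with the $|\mu|>1$ case to conclude. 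Adding the $|1-\lambda|$ branch to your split, plus this single explicit computation, fills the gap cleanly and is shorter than re-deriving Proposition~\ref{prop:unit-circle}.
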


\begin{proof}
Suppose $\lambda \notin \clcl{0,1}$.  If $|\lambda|>1$, then $R_\lambda$ is unbounded, since $\lambda^n\in R_\lambda$ for all integers $n>0$.  Similarly, if $|1-\lambda|>1$, then $R_\lambda$ is unbounded, since $(1-\lambda)^n\in R_{1-\lambda} = R_\lambda$ for all integers $n>0$.  If either $|\lambda| < 1$ or $|1-\lambda| < 1$, then $R_\lambda$ is convex by Corollary~\ref{cor:unit-disk}, and thus $\reals \subseteq R_\lambda$ by Theorem~\ref{thm:convex-is-easy}.  The only case left is when $|\lambda| = |1-\lambda| = 1$.  In this case, $\lambda = (1\pm i\sqrt 3)/2$.  Letting
\[ \mu := \lambda\x 1 = 2\lambda - \lambda^2 = \frac{3 \pm i\sqrt 3}{2}\;, \]
we have $|\mu|>1$, and thus $R_\mu$ is unbounded.  But since $\mu\in R_\lambda$, we have $R_\mu \subseteq R_\lambda$, which makes $R_\lambda$ unbounded.

If $R_\lambda$ is unbounded, then so is $Q_\lambda$, because $R_\lambda = \overline{Q_\lambda}$ by Lemma~\ref{lem:topo-closure}.
\end{proof}

\begin{lemma}\label{lem:wedge}
$R_\lambda$ is convex for all $\lambda = x + iy$ where $0 < x \le 1/2$ and $\sqrt{1-x^2} < y \le 1$.
\end{lemma}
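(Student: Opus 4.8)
The plan is to reuse the point $\mu := \rho_{1,\lambda}(\lambda) = \lambda^2-\lambda+1$ that already did the work in the proof of Proposition~\ref{prop:unit-circle}, and to show that throughout the stated wedge it satisfies $0<|\mu|<1$; then $R_\lambda$ is convex by Corollary~\ref{cor:basic-convex}. That $\mu\in R_\lambda$ is immediate: $0,1\in R_\lambda$, so $\lambda=\rho_{0,1}(\lambda)\in R_\lambda$, and then $\mu=\rho_{1,\lambda}(\lambda)\in R_\lambda$ since $R_\lambda$ is $\lambda$-convex. I would also remark at the outset that the condition $\sqrt{1-x^2}<y$ is just $|\lambda|>1$, so the wedge is $\{x+iy: 0<x\le 1/2,\ |\lambda|>1,\ y\le 1\}$, a thin sliver lying just outside the arc of the unit circle between $e^{i\pi/3}$ and $i$; on the arc itself we already know (Proposition~\ref{prop:unit-circle}) that $R_\lambda$ is convex except at those two endpoints.

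Next I would compute $|\mu|^2$ directly. Writing $\lambda=x+iy$, one finds $\Re\mu=x^2-x+1-y^2$ and $\Im\mu=y(2x-1)$, hence $|\mu|^2=(x^2-x+1-y^2)^2+y^2(2x-1)^2$. It is convenient to substitute $p:=2x-1$ (so $p\in(-1,0]$) and $s:=y^2$; using $x^2-x=(p^2-1)/4$ this becomes $16|\mu|^2=(p^2+3-4s)^2+16sp^2=:g(s)$. As a sanity check, at $s=1-x^2$ (that is, on the unit circle) this evaluates to $16p^2$, recovering the identity $|\mu|=|2x-1|$ used for Proposition~\ref{prop:unit-circle}.

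The crux is the inequality $g(s)<16$ for $p\in(-1,0]$ and $s$ in the range $(1-x^2,1]$. The observation that makes this painless is that, as a function of $s$, $g'(s)=8(p^2+4s-3)$, whose unique zero $s=(3-p^2)/4$ lies at or below the left endpoint $1-x^2=(3-2p-p^2)/4$ of our range (since $p\le 0$). Hence $g$ is nondecreasing on $(1-x^2,1]$, so it suffices to check $s=1$: there $g(1)=(p^2-1)^2+16p^2=p^4+14p^2+1$, and the inequality $p^4+14p^2+1<16$ rearranges to $(p^2-1)(p^2+15)<0$, i.e. $p^2<1$, which holds because $|p|=|2x-1|<1$ (here $x>0$ is used). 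This gives $|\mu|<1$. Finally $\mu\ne 0$, since the only roots of $\lambda^2-\lambda+1$ are $e^{\pm i\pi/3}$, of modulus $1$, hence excluded by $|\lambda|>1$. So $\mu\in R_\lambda$ with $0<|\mu|<1$, and Corollary~\ref{cor:basic-convex} yields the lemma.

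The only real obstacle is bookkeeping for the two-variable inequality $|\mu|<1$, which is delicate because near the corners of the wedge $|\mu|\to 1$ (for instance $\mu=-i$ as $\lambda\to i$); the monotonicity-in-$s$ reduction is exactly what collapses it to the one-line check $p^2<1$. The degenerate boundary values $\lambda=e^{i\pi/3}$ (where this $\mu$ would be $0$) and $\lambda=i$ (where $\mu=-i$ is a fourth root of unity on the unit circle) are precisely the ones the open wedge avoids, so no separate argument for them is needed.
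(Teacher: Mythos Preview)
Your proof is correct and follows the same strategy as the paper: both show that $\mu=\rho_{1,\lambda}(\lambda)=\lambda^2-\lambda+1\in R_\lambda$ satisfies $0<|\mu|<1$ on the wedge, then invoke Corollary~\ref{cor:basic-convex}. The only difference is bookkeeping for the inequality $|\mu|<1$: the paper bounds $\Re\mu$ and $\Im\mu$ separately by $-x$ and $2x-1$ and checks $x^2+(2x-1)^2<1$, whereas you substitute $p=2x-1$, $s=y^2$ and use monotonicity in $s$ to reduce to the endpoint $s=1$; both arrive at the same one-variable check.
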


\begin{proof}
We know that $\mu \in R_\lambda$, where $\mu := 1\x\lambda = 1-\lambda + \lambda^2 = 1-x+x^2-y^2 + y(2x-1)i$.  Letting $\alpha := \Re(\mu) = 1-x+x^2-y^2$ and $\beta := \Im(\mu) = y(2x-1)$, we have, for the values of $x$ and $y$ in question,
\begin{align*}
-x < x(x-1) \le \alpha &< 2x^2-x = x(2x-1) \le 0\;, \\
2x-1 \le \beta &\le 0\;.
\end{align*}
Then $0 < \alpha^2 + \beta^2 < (-x)^2 + (2x-1)^2 = 5x^2-4x+1 < 1$, giving $0<|\mu|<1$.  It follows from Corollary~\ref{cor:basic-convex} that $R_\lambda$ is convex.
\end{proof}

\begin{proposition}\label{prop:wedges}
$R_\lambda$ is convex for all $\lambda = x+iy$ where $0<x<1$ and $-1 \le y \le 1$, except for the two points $e^{i\tau/6}$ and $e^{-i\tau/6}$.
\end{proposition}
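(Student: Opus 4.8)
The plan is to exploit the two symmetries of $R_\lambda$ to shrink the rectangle $\opop{0,1}\times\clcl{-1,1}$ down to a small sub-region, and then cover that sub-region with three results already in hand.

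First I would reduce to the case $0 < x \le 1/2$ and $0 < y \le 1$. On the segment $y = 0$, $0 < x < 1$, Fact~\ref{fact:trivial} already gives $R_\lambda = \clcl{0,1}$, which is convex. For $y < 0$, Fact~\ref{fact:conjugate} says $R_\lambda$ is convex iff $R_{\lambda^*}$ is, so it suffices to handle $0 < y \le 1$. Combining $R_\lambda = R_{1-\lambda}$ (Fact~\ref{fact:inner-dual}) with Fact~\ref{fact:conjugate} shows that $R_{x+iy}$ is convex iff $R_{(1-x)+iy}$ is convex; this reflection across the line $x = 1/2$ lets me further restrict to $0 < x \le 1/2$.

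In the reduced region I would split on $|\lambda|$. If $|\lambda| < 1$, then $0 < |\lambda| < 1$ (since $x > 0$), so $R_\lambda$ is convex by Corollary~\ref{cor:unit-disk}. If $|\lambda| = 1$, then by Proposition~\ref{prop:unit-circle} $R_\lambda$ is convex unless $\lambda$ is a fourth or sixth root of unity; enumerating these, the only one with $0 < x \le 1/2$ and $0 < y \le 1$ is $e^{i\tau/6} = (1+i\sqrt 3)/2$, the excluded point. If $|\lambda| > 1$, then (since $y > 0$) this is equivalent to $y > \sqrt{1-x^2}$, and together with $0 < x \le 1/2$ and $y \le 1$ these are exactly the hypotheses of Lemma~\ref{lem:wedge}, so $R_\lambda$ is convex. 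Undoing the reflection and the conjugation, the excluded point $e^{i\tau/6}$ is fixed by $x \mapsto 1-x$ and is sent to $e^{-i\tau/6}$ by conjugation, so these are the only two exceptions; and they genuinely are exceptions, since $e^{\pm i\tau/6}$ are sixth roots of unity, in fact Eisenstein integers, so by Fact~\ref{fact:discrete-ring} the set $R_{e^{\pm i\tau/6}}$ is contained in the discrete ring of Eisenstein integers and cannot be convex.

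There is no serious obstacle here: all the analytic content lives in Corollary~\ref{cor:unit-disk}, Proposition~\ref{prop:unit-circle}, and Lemma~\ref{lem:wedge}. The only thing that needs care is the bookkeeping --- checking that the three cases $|\lambda| < 1$, $|\lambda| = 1$, $|\lambda| > 1$ partition the reduced sub-region with no gaps (in particular that the last case coincides exactly with the wedge of Lemma~\ref{lem:wedge}), and that the enumeration of fourth and sixth roots of unity lying in that sub-region turns up only $e^{i\tau/6}$.
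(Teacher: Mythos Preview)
Your proof is correct and follows essentially the same approach as the paper's: cover the rectangle using the two open unit disks (Corollary~\ref{cor:unit-disk}), the unit circles (Proposition~\ref{prop:unit-circle}), and the wedge $W$ of Lemma~\ref{lem:wedge} together with its reflections $1-W$, $W^*$, $1-W^*$. The only cosmetic difference is that you reduce by the symmetries first and then cover the reduced region, whereas the paper leaves the region intact and instead applies the symmetries to the covering sets.
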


\begin{proof}
We just need to notice that the rectangular region given in the proposition is included in the union of a handful of other known subregions of $\convex$ (see Definition~\ref{def:convex}).  Let $\lambda$ be as in the proposition.  If $|\lambda|<1$ or $|1-\lambda|<1$, then $R_\lambda$ is convex by Corollary~\ref{cor:unit-disk}.  If $|\lambda|=1$ or $|1-\lambda| = 1$, then $R_\lambda$ is convex by Proposition~\ref{prop:unit-circle} and the fact that $R_\lambda = R_{1-\lambda}$.  Let $W$ be the wedge-shaped region of Lemma~\ref{lem:wedge}.  Then the rest of the possible values of $\lambda$ are covered by either $W$, $1-W$, $W^*$, or $1-W^*$, which all yield convex $R_\lambda$ by Lemma~\ref{lem:wedge} and Facts~\ref{fact:inner-dual} and \ref{fact:conjugate}.
\end{proof}

Figure~\ref{fig:sausage} shows in part what points are in $\convex$ and in $\discrete$, based on the results of this and the next section.
\begin{figure}
\begin{center}
\input{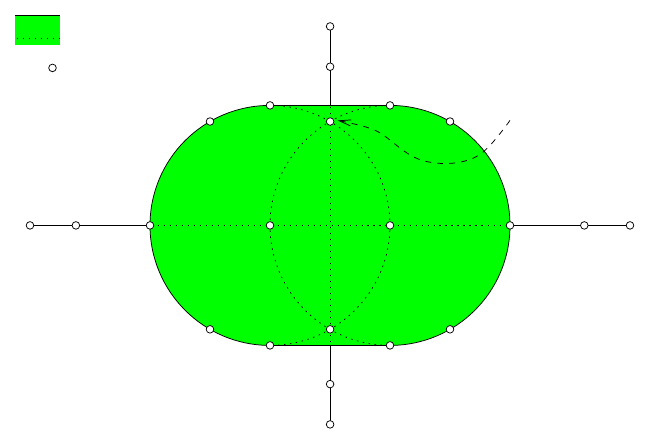_t}
\caption{A portion of the complex plane showing points $\lambda$ such that $R_\lambda$ is convex by our results (green region and lines) and points $\lambda$ where we know that $R_\lambda$ is discrete (white points).}\label{fig:sausage}
\end{center}
\end{figure}


\section{Some $\lambda$ such that $R_\lambda = \reals$}

In this section we establish that $R_\lambda$ is convex (and thus $R_\lambda = \reals$) for various $\lambda \in \reals \cmpl \clcl{0,1}$.  We can assume without loss of generality that $\lambda > 1$, since $R_\lambda = R_{1-\lambda}$.  If $1 < \lambda < 2$, then we already know that $R_\lambda$ is convex by Corollary~\ref{cor:unit-disk}, and if $\lambda\in\ints$, then $R_\lambda \subseteq\ints$ and thus is not convex.  So we investigate the case where $\lambda > 2$ and $\lambda\notin\ints$.  At one point in time, we conjectured that $R_\lambda$ is convex for all $\lambda$ strictly between $2$ and $3$, but this turns out not to be the case, and the \emph{unique} counterexample---where $\lambda = 1+\p \approx 2.618\ldots$ where $\p := (1+\sqrt 5)/2$ is the Golden Ratio---gives a discrete set $R_\lambda$ that is aperiodic.  In fact, $R_{1+\p}$ is an example of an aperiodic Meyer set (see Part~II); although unbounded, it has no infinite arithmetic progressions.

\begin{proposition}\label{prop:between-2-and-3}
If $2 < \lambda < 3$ and $\lambda \ne 1+\p$, then $R_\lambda$ is convex.
\end{proposition}

\begin{proof}
Set $\beta := (\lambda - 1)^2$.  Then one checks that $\beta = 1 - \lambda\x 1 \in 1 - R_\lambda = R_\lambda$.  Further, if $2 < \lambda < 3$, then $-1 < \beta - \lambda < 1$.  One easy way to see this is to note that the function $f(x) := (x-1)^2 - x$ satisfies $f(2) = -1$ and $f(3) = 1$, and $f'(x) = 2x - 3 > 0$ for all $x\in\clcl{2,3}$.  Thus $f$ is strictly monotone increasing on $\clcl{2,3}$, and $f(x) = 0$ only when $x = 1+\p$.  Thus for all the $\lambda$ in question, we have $0 < |\lambda - \beta| < 1$, and so $R_\lambda$ is convex by Corollary~\ref{cor:general-equivalences}, since $\lambda$ and $\beta$ are both in $R_\lambda$.
\end{proof}

\section{$R_{1+\p}$ is not convex}
\label{sec:1-plus-phi}

The next proposition shows that $R_{1+\p}$ is not convex.  It was originally shown by Berman \& Moody~\cite{BeMo}.  This is a special case of a more general theorem (Theorem~\ref{thm:main}) in Part~II.

\begin{proposition}[Berman \& Moody~\cite{BeMo}]\label{prop:1-plus-phi}
\[ R_{1+\p} = \left\{ a + b\p \mathrel{:} a,b\in\ints\myand\frac{b}{\p} \le a \le \frac{b}{\p}+1\right\} = \{1\}\union \left\{ \bigceiling{\frac{b}{\p}} + b\p \mathrel{:} b\in\ints\right\}\;. \]
In particular, $R_{1+\p}$ is discrete, and except for $0$ and $1$, any two adjacent points of $R_{1+\p}$ differ either by $\p$ or by $1+\p$.
\end{proposition}

The set of pairs $(b,a)$ such that $a+b\p \in R_{1+\p}$ is illustrated in Figure~\ref{fig:1-plus-phi}.  Although we give a complete, self-contained proof here, the first inclusion we show below---that $R_{1+\p}$ is a subset of the right-hand side---is actually a special case of a more general result (Theorem~\ref{thm:main}) we prove in Part~II, Section~\ref{sec:alg-int}.  We prove the inclusion here both to make Part~I self-contained and to give a foreshadowing of the more general proof in Part~II.
\begin{figure}[htbp]
\begin{center}
\input{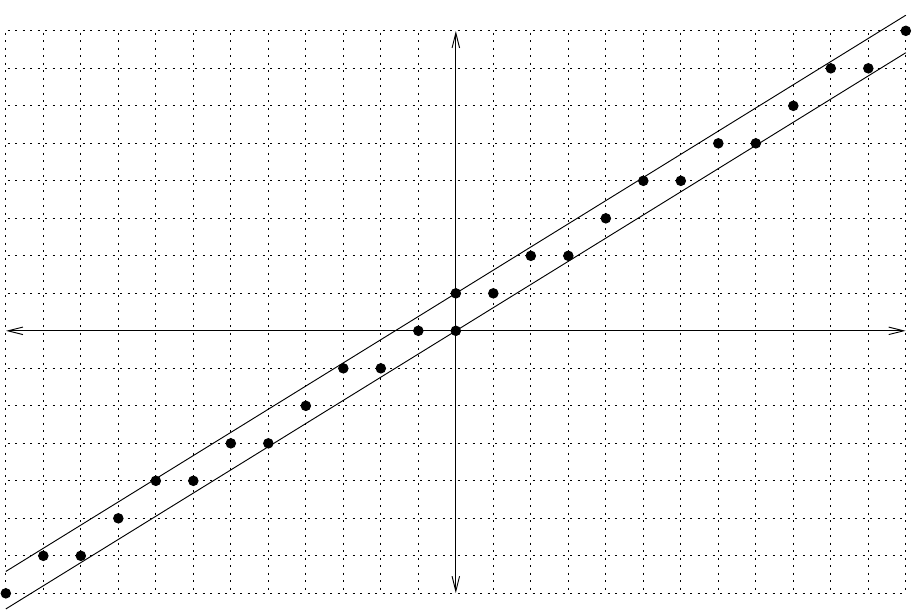_t}
\caption{The points $(b,a)\in\ints\times\ints$ such that $a+b\p\in R_{1+\p}$ are shown.  They are all the lattice points lying in the closed strip bounded by the lines $y = x/\p$ and $y = x/\p + 1$ (also shown).  The figure illustrates the fact that $R_{1+\p}$ contains no infinite arithmetic progressions, because any two points are connected either by the $y$-axis or by a line with rational slope, and this line eventually leaves the strip.  $R_{1+\p}$ is a typical example of an aperiodic model set obtained by a cut-and-project scheme (see Part~II).}\label{fig:1-plus-phi}
\end{center}
\end{figure}

\begin{proof}[Proof of Proposition~\ref{prop:1-plus-phi}]
For this proof, set $\lambda \eqdf 1+\p$.  The second equality is obvious, because $\p$ is irrational.  For the first equality, let
\[ S := \left\{ a + b\p \mathrel{:} a,b\in\ints\myand b/\p \le a \le b/\p+1\right\}\;. \]
We show that $R_\lambda = S$ via two containments.

$R_\lambda \subseteq S$:  It suffices to show that $S$ is $\lambda$-convex, since $\{0,1\}\subseteq S$ and $S$ is closed.  For any $x = a+b\p \in \ints[\p] = \ints + \p\ints$, define $\delta(x) := a - b/\p$.  Then $S = \left\{ x\in\ints[\p] \mathrel{:} \delta(x) \in\clcl{0,1}\right\}$.  For all $x,y\in\ints[\p]$, \ $x\xl y$ is also in $\ints[\p]$, and using the fact that $1/\p = \p-1$, a routine calculation shows that
\[ \delta(x\xl y) = \delta(y)\xm\delta(x)\;, \]
where $\mu \eqdf 1/\p$.  Since $0<\mu<1$, we have $\delta(y)\xm\delta(x) \in \clcl{0,1}$ provided $\delta(x),\delta(y)\in\clcl{0,1}$.  This just means that $x\xl y\in S$ provided $x,y\in S$.  Thus $S$ is $\lambda$-convex, and so $R_\lambda\subseteq S$.

\medskip

$S\subseteq R_\lambda$: It is enough to show that $\ceiling{b/\p} + b\p \in R_\lambda$ for all $b\in\ints$.  We show this by induction on $|b|$.  For $b \in \{-1,0,1\}$ this is easily checked; in particular, $\lambda = 0\xl 1$ and $-\p = 1\xl 0$.  Thus we can start the induction with $|b|\ge 2$.

Notice that for all $x\in\ints\cmpl\{0\}$,
\[ \bigceiling{\frac{-x}{\p}} + (-x)\p = 1 - \left(\bigceiling{\frac{x}{\p}} + x\p\right)\;, \]
which implies that the left-hand side is in $R_\lambda$ if and only if the right-hand side is in $R_\lambda$, which in turn is true if and only if $\ceiling{x/\p} + x\p \in R_\lambda$.  From this fact, we can assume WLOG that $b\ge 2$, the result for $-b$ following immediately.

Assume $b\in\ints$ and $b\ge 2$.  Set $a := \floor{(b+1)/\p}$.  We have $1 \le a < b$, and so by the inductive hypothesis, both $\ceiling{a/\p}+a\p$ and $\ceiling{-a/\p}-a\p$ are in $R_\lambda$.  Then letting $y := \ceiling{-a/\p}-a\p$, the following two values are both elements of $R_\lambda$:
\begin{align*}
y\xl 0 &= -\p y = -\p\left(\bigceiling{\frac{-a}{\p}}-a\p\right) = -\p(\ceiling{-a(\p-1)}-a\p) = -\p(\ceiling{-a\p} + a - a\p) \\
&= -\p(-\floor{a\p}+a-a\p) = \p\floor{a\p}-a\p +a\p^2 = \p\floor{a\p} + a = a + (\ceiling{a\p} - 1)\p\;, \\
y\xl 1 &= -\p y + 1+\p =a + (\ceiling{a\p}-1)\p + 1 + \p = a+1 + \ceiling{a\p}\p\;.
\end{align*}
By the definition of $a$,  we have $b-1 < b+1-\p < a\p < b+1$, and so the following two cases are exhaustive:
\begin{description}
\item[Case 1:] $\ceiling{a\p} = b+1$.  Then $y\xl 0 = a + b\p \in R_\lambda$.  Furthermore, in this case, we have $b < a\p < b+1$, and thus
\[ \frac{b}{\p} < a < \frac{b+1}{\p} < \frac{b}{\p} + 1\;, \]
and so $a = \ceiling{b/\p}$ as desired.
\item[Case 2:] $\ceiling{a\p} = b$.  Then $y\xl 1 = a + 1 + b\p \in R_\lambda$.  Furthermore, in this case, we have $b-1 < a\p < b$, and thus
\[ \frac{b}{\p} - 1 < \frac{b-1}{\p} < a < \frac{b}{\p} \;. \]
Adding $1$ to both sides gives
\[ \frac{b}{\p} < a+1 < \frac{b}{\p} + 1\;, \]
and so $a+1 = \ceiling{b/\p}$ as desired.
\end{description}
The case for $-b$ follows immediately as described above.  This finishes the induction.
\end{proof}

The following corollary implies that $R_{1+\p}$ is aperiodic, that is, it possesses no translational symmetry, and neither does any nonempty subset of $R_{1+\p}$.

\begin{corollary}
$R_{1+\p}$ contains no infinite arithmetic progressions.
\end{corollary}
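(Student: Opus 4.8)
The plan is to combine the explicit description of $R_{1+\p}$ from Proposition~\ref{prop:1-plus-phi} with the irrationality of $\p$. Recall that $R_{1+\p} = \{a+b\p : a,b\in\ints \myand b/\p \le a \le b/\p+1\}$, so the points of $R_{1+\p}$ correspond exactly to the lattice points $(b,a)$ lying in the closed planar strip bounded by the two parallel lines of slope $1/\p$ depicted in Figure~\ref{fig:1-plus-phi}. An infinite arithmetic progression contained in $R_{1+\p}$ would force infinitely many of these lattice points to lie on a single line. But two distinct lattice points determine a line of rational slope (or a vertical line), and such a line is never parallel to the strip, whose boundary lines have irrational slope $1/\p$; hence it meets the strip in a bounded segment and contains only finitely many lattice points. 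This is the geometric heart of the argument, and I would make it precise algebraically as follows.

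First I would note that $R_{1+\p}\subseteq\ints+\ints\p$, and since $\ints+\ints\p$ is closed under subtraction, the common difference $d$ of any arithmetic progression contained in $R_{1+\p}$ must itself lie in $\ints+\ints\p$. Write $d = p+q\p$ and the initial term as $z_0 = a_0+b_0\p$, with $p,q,a_0,b_0\in\ints$. Then the $n$-th term is $z_n = (a_0+np)+(b_0+nq)\p$, and rearranging the defining inequalities of $R_{1+\p}$ shows that $z_n\in R_{1+\p}$ holds if and only if
\[ 0 \;\le\; \left(a_0 - \frac{b_0}{\p}\right) + n\left(p - \frac{q}{\p}\right) \;\le\; 1 . \]

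Second, I would analyze the affine function $n\mapsto A+nB$ where $A := a_0 - b_0/\p$ and $B := p - q/\p$. If $B\neq 0$, this function is unbounded (as $n\to+\infty$, or as $n\to-\infty$), so it cannot lie in $[0,1]$ for infinitely many integers $n$; hence no infinite progression can fit. If $B = 0$, then $p\p = q$ with $p,q\in\ints$, which by the irrationality of $\p$ forces $p=q=0$, i.e.\ $d=0$, contradicting the fact that an arithmetic progression has nonzero common difference. In either case we reach a contradiction, so $R_{1+\p}$ contains no infinite arithmetic progression.

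I do not anticipate a genuine obstacle here. The only points requiring a little care are (i) justifying that $d\in\ints+\ints\p$, which follows from $R_{1+\p}\subseteq\ints+\ints\p$ together with the closure of $\ints+\ints\p$ under subtraction, and (ii) being explicit about the meaning of ``infinite arithmetic progression'': the argument actually shows the stronger statement that there is no nonzero $d$ and no $z_0$ for which $z_0+nd\in R_{1+\p}$ for infinitely many $n\in\ints$, which covers one-sided progressions, two-sided progressions, and indeed any progression having infinitely many terms in $R_{1+\p}$.
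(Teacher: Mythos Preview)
Your proof is correct and follows essentially the same approach as the paper: both arguments write $d=p+q\p$ with $p,q\in\ints$, use the irrationality of $\p$ to rule out $p-q/\p=0$ (unless $d=0$), and then observe that a nonzero linear function of $n$ cannot stay in a bounded interval for infinitely many integers $n$. Your version is in fact slightly cleaner than the paper's, since you work directly with the strip description $b/\p\le a\le b/\p+1$ and thereby avoid the special case $x=1$ that arises when one uses the $\lceil b/\p\rceil$ formulation.
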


\begin{proof}
Suppose $x,x+d,x+2d,x+3d,\ldots \in R_{1+\p}$ for some $x\in\reals$ and $d\in\reals\cmpl\{0\}$.  Then since $R_{1+\p}\subseteq\ints[\p]$, we must have $x,d\in\ints[\p]$ as well.  Defining the function $\delta$ as in the proof of Proposition~\ref{prop:1-plus-phi}, one can easily check that $\delta(x+jd) = \delta(x) + j\delta(d)$ for all $j\in\ints$.  Since $\delta(d) \ne 0$, we have $\delta(x+jd) \notin \clcl{0,1}$---and hence $x+jd\notin R_{1+\p}$---for all sufficiently large $j$, contradicting our assumption.
\end{proof}

\section{$\convex$ is a big set}
\label{sec:c-is-open}

In this section, we show that $\convex$ is open and contains all transcendental numbers, which implies that its complement is countable.  We also show that every element of $\discrete = \complexes \cmpl \convex$ has a deleted neighborhood contained in $\convex$.  From these two facts it follows immediately that $\discrete$ is discrete, with no accumulation points in $\complexes$, and contains only algebraic numbers.  In Section~\ref{sec:algebraic-integer}, below, we prove the stronger result that $\discrete$ contains only algebraic \emph{integers} (Theorem~\ref{thm:general-case}), via a much more difficult proof.  Beforehand, we introduce some new facts and concepts that will also be useful elsewhere, including the set $Q_{[x]}\subseteq \ints[x]$ (Definition~\ref{def:Q-x}) and a characterization of it due to Pinch~\cite{Pinch} (Lemma~\ref{lem:level-characterization}).  (We give another useful characterization of $Q_{[x]}$ in Section~\ref{sec:Q-x}.)

Recall the definition of $Q_\lambda$ in Definition~\ref{def:lambda-convex-closure}.


\begin{definition}\label{def:Q-x}
For any polynomials $S,T\in\ints[x]$, define $S\xbx T := (1-x)S + xT$ (which is clearly also in $\ints[x]$).  Let $Q_{[x]}$ denote the least set of polynomials such that
\begin{enumerate}
\item
The constant polynomials $0$ and $1$ are both in $Q_{[x]}$, and
\item
For every $S,T\in Q_{[x]}$, \ $S\xbx T \in Q_{[x]}$.
\end{enumerate}
\end{definition}

Note that $Q_{[x]} \subseteq \ints[x]$.  The operation $\xbx$ and set $Q_{[x]}$ are completely analogous to the various $\xl$ and $Q_\lambda$, respectively, for $\lambda\in\complexes$.  For example, the analogue of Fact~\ref{fact:E-lambda} holds for $\xbx$, and, similarly to Definitions~\ref{def:stratify-Q-lambda} and \ref{def:lambda-S-rank}, we can define $Q_{[x]}^{(0)} := \{0,1\} \subseteq\ints[x]$ and $Q_{[x]}^{(n+1)} := \{S\xbx T \mid S,T\in Q_{[x]}^{(n)}\}$ for all integers $n\ge 0$.  Then the analogue of Fact~\ref{fact:stratify-Q-lambda} holds for $Q_{[x]}$, which allows us to define the \emph{rank} of a polynomial $P\in Q_{[x]}$ as the least $n$ such that $P\in Q_{[x]}^{(n)}$.

In Section~\ref{sec:Q-x}, we will obtain some further constraints on the elements of $Q_{[x]}$, including upper bounds on the number of elements of $Q_{[x]}$ of degree $\le n$, for $n=0,1,2,\ldots\,$.

\begin{fact}\label{fact:evaluation-map}
For any $\lambda\in\complexes$, the evaluation map $P\mapsto P(\lambda)$ is a ring homomorphism from $\ints[x]$ into $\complexes$, and $(S\xbx T)(\lambda) = S(\lambda)\xl T(\lambda)$ for all $S,T\in\ints[x]$).
\end{fact}

The next lemma says that this map maps $Q_{[x]}$ onto $Q_\lambda$.

\begin{lemma}\label{lem:poly-eval}
For any $\lambda\in\complexes$, \ $Q_\lambda = \{ P(\lambda) \mid P\in Q_{[x]} \}$.  In fact,  $Q_\lambda^{(n)} = \{ P(\lambda) \mid P\in Q_{[x]}^{(n)} \}$ for any integer $n\ge 0$.
\end{lemma}

\begin{proof}
The first statement follows immediately from the second, which is proved by a routine induction on $n$: We clearly have $Q_\lambda^{(0)} = \{0,1\} = \{ P(\lambda) \mid P\in Q_{[x]}^{(0)} \}$.  For any $n\ge 0$, if $Q_\lambda^{(n)} = \{ P(\lambda) \mid P\in Q_{[x]}^{(n)} \}$, then
\begin{align*}
Q_\lambda^{(n+1)} &= \left\{ a\xl b \mid a,b\in Q_\lambda^{(n)} \right\} = \left\{ S(\lambda)\xl T(\lambda) \mid S,T\in Q_{[x]}^{(n)} \right\} \\
&= \left\{ (S\xbx T)(\lambda) \mid S,T\in Q_{[x]}^{(n)} \right\} = \{ P(\lambda) \mid P\in Q_{[x]}^{(n+1)} \}\;.
\end{align*}
\end{proof}


Now we can prove the first of the two main theorems of this section.  Theorem~\ref{thm:c-is-open} was proved in the real case by Pinch~\cite[Proposition~9]{Pinch}.  The complex case is also straightforward.

\begin{theorem}\label{thm:c-is-open}
$\convex$ is open.
\end{theorem}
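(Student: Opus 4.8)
The plan is to show that every $\lambda_0\in\convex$ has a neighborhood contained in $\convex$, by exploiting the shortcut provided by Corollary~\ref{cor:basic-convex}: it suffices to produce, for each $\lambda$ near $\lambda_0$, some point $\mu\in R_\lambda$ with $0<|\mu|<1$ (or $0<|1-\mu|<1$). The key observation is that membership in $R_\lambda$ of points obtained by finitely many $\lambda$-extrapolations starting from $\{0,1\}$ is witnessed by a \emph{polynomial} in $\lambda$: if $\mu=p(\lambda)$ where $p$ is built up from the constants $0,1$ by repeated application of the operation $(a,b)\mapsto (1-\lambda)a+\lambda b$, then $\mu\in Q_\lambda\subseteq R_\lambda$ for \emph{every} $\lambda\in\complexes$, and $p$ is a polynomial in $\lambda$ with integer coefficients (hence continuous in $\lambda$). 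So the strategy is: given $\lambda_0\in\convex$, find such a polynomial witness $p$ with $0<|p(\lambda_0)|<1$ (or the analogous condition with $1-p$), and then use continuity of $p$ to conclude that $0<|p(\lambda)|<1$ for all $\lambda$ in a neighborhood of $\lambda_0$, whence $R_\lambda$ is convex by Corollary~\ref{cor:basic-convex}.

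The first step is therefore to establish: \emph{if $R_{\lambda_0}$ is convex, then there is a polynomial witness $p$ (an iterated $\lambda_0$-extrapolation of $0$ and $1$) with $0<|p(\lambda_0)|<1$.} Since $R_{\lambda_0}$ is convex, by Corollary~\ref{cor:general-equivalences} there exist $a,b\in R_{\lambda_0}$ with $0<|a-b|<1$. Now $R_{\lambda_0}=\overline{Q_{\lambda_0}}$ by Lemma~\ref{lem:topo-closure}, and $Q_{\lambda_0}$ consists exactly of the values at $\lambda_0$ of iterated-extrapolation polynomials; so we can approximate $a$ and $b$ arbitrarily well by such values $p_1(\lambda_0),p_2(\lambda_0)\in Q_{\lambda_0}$, chosen close enough that $0<|p_1(\lambda_0)-p_2(\lambda_0)|<1$. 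Then $\mu:=\rho_{p_1(\lambda_0),p_2(\lambda_0)}(\lambda_0)=(1-\lambda_0)p_1(\lambda_0)+\lambda_0 p_2(\lambda_0)$ lies in $Q_{\lambda_0}$, and $|\,\mu - p_2(\lambda_0)\,| = |\lambda_0|\,|p_1(\lambda_0)-p_2(\lambda_0)|$. Better: taking $\nu := p_1(\lambda_0)$ and $\nu':=\rho_{p_1(\lambda_0),p_2(\lambda_0)}(\lambda_0)\in Q_{\lambda_0}$, we have $0<|\nu-\nu'| = |\lambda_0|\,|p_1(\lambda_0)-p_2(\lambda_0)|$; rescaling the original separation by $1/\max(1,|\lambda_0|)$ we may assume $0<|\nu-\nu'|<1$ outright. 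Finally, apply the translation trick: by Lemma~\ref{lem:translate-Q-R}, $\rho_{-\nu',1-\nu'}$ (an affine map sending $0\mapsto -\nu'$, i.e.\ shifting) carries $Q_{\lambda_0}$ to $Q_{\lambda_0}(\{-\nu',1-\nu'\})$ --- but more simply, note $\rho_{q_1,q_2}(z)-\rho_{q_1,q_2}(w)=(q_2-q_1)(z-w)$, so the \emph{difference} $\nu-\nu'$ is itself expressible: writing $\nu = p_1(\lambda_0)$ and applying the extrapolation $(0,\,\nu-\nu')$—hmm, this needs care since $0\in Q_{\lambda_0}$ always and we want $\nu-\nu'\in Q_{\lambda_0}$. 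Here is the clean route: the polynomial $q(\lambda):=(1-\lambda)p_1(\lambda)+\lambda p_2(\lambda) - p_1(\lambda) = \lambda\,(p_2(\lambda)-p_1(\lambda))$ is \emph{not} directly a witness, so instead observe that $\rho_{p_1(\lambda),\,q(\lambda)+p_1(\lambda)}$-type manipulations stay inside $Q_\lambda$; concretely, since $p_1(\lambda),p_2(\lambda)\in Q_\lambda$ and $0,1\in Q_\lambda$, repeated extrapolation lets us reach the polynomial $r(\lambda)$ with $r(\lambda_0)=\nu-\nu'$ only if $\nu-\nu'$ happens to lie in $Q_{\lambda_0}$, which we do not know.

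Because of this wrinkle, I expect the \textbf{main obstacle} to be passing cleanly from "$a,b\in R_{\lambda_0}$ with $0<|a-b|<1$" to "$0<|p(\lambda_0)|<1$ or $0<|1-p(\lambda_0)|<1$ for a \emph{single} polynomial witness $p$" — i.e.\ getting a point of small modulus near $0$, not merely two points close to each other. The fix is to iterate the midpoint (or contraction) construction: once $R_{\lambda_0}$ is convex we have $\clcl{0,1}\subseteq R_{\lambda_0}$, and then for the open-ness argument we instead argue directly that for $\lambda$ near $\lambda_0$, the finite set of extrapolants of $\{0,1\}$ used to build a convex-hull-filling configuration (as in the proof of Theorem~\ref{thm:convex-is-easy}(3), or the disk-expansion of Proposition~\ref{prop:open-set}) varies continuously and robustly. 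Perhaps the cleanest formulation: since $\lambda_0\in\convex$, pick (using Corollary~\ref{cor:general-equivalences} and $R_{\lambda_0}=\overline{Q_{\lambda_0}}$) iterated-extrapolation polynomials $p,p'$ with $p(\lambda_0)\neq p'(\lambda_0)$ and $|p(\lambda_0)-p'(\lambda_0)|<1$; then consider the iterated-extrapolation polynomial $g_n$ obtained by taking $n$-fold $\rho_{p,p'}$-iteration toward the fixed point $z(\lambda)=\dfrac{p(\lambda)}{1+p(\lambda)-p'(\lambda)}$ started from $0\in Q_\lambda$ — wait, the fixed point need not be in $Q_\lambda$ but its finite approximants are, and by Equation~(\ref{eqn:contraction}) the $n$-th approximant $w_n(\lambda)$ satisfies $|w_n(\lambda)-z(\lambda)| = |p(\lambda)-p'(\lambda)|^n |\,0-z(\lambda)|$. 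For $\lambda=\lambda_0$ the contraction factor is $<1$, so by continuity it stays $<1$ (bounded away from $1$) on a neighborhood $U$ of $\lambda_0$; hence on $U$ the sequence $\{w_n(\lambda)\}$ is Cauchy uniformly, converges to $z(\lambda)\in\overline{Q_\lambda}=R_\lambda$, and for $n$ large we get $w_n(\lambda)\in R_\lambda$ with $w_n(\lambda)$ within $\tfrac14$, say, of $z(\lambda)$. Consolidating all these witnesses: on $U$ the set $R_\lambda$ contains two distinct points $w_n(\lambda), w_{n+1}(\lambda)$ with $|w_n(\lambda)-w_{n+1}(\lambda)| = |p(\lambda)-p'(\lambda)|^n|z(\lambda)|$, which is nonzero (as $z(\lambda_0)\neq 0$ can be arranged by an initial affine normalization, shrinking $U$) and $<1$ for $n$ large, uniformly on $U$. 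By Corollary~\ref{cor:general-equivalences} (property~(5)) this gives $\lambda\in\convex$ for all $\lambda\in U$, so $\convex$ is open. The routine calculations I am skipping are: verifying $g_n,w_n$ are genuine iterated-extrapolation polynomials with integer coefficients (immediate by induction on the construction, each step being $(a,b)\mapsto(1-\lambda)a+\lambda b$); the explicit continuity/uniform-bound bookkeeping on $U$; and the initial affine normalization ensuring $z(\lambda_0)\neq 0$.
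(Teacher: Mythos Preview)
Your core idea is right and matches the paper's: find a ``polynomial witness'' $P$ (an iterated $\lambda$-extrapolation of $0$ and $1$) whose value at $\lambda_0$ certifies convexity, then use continuity of $P$ to extend to a neighborhood. But you take a long detour that the paper avoids entirely.

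The detour starts where you flag the ``main obstacle'': passing from ``two points of $R_{\lambda_0}$ at distance in $(0,1)$'' to ``a single polynomial value in the open set $D=\{z:0<|z|<1\text{ or }0<|1-z|<1\}$.'' You then invoke a contraction-map construction (with a small slip: $|w_{n+1}-w_n|=|p'-p|^n|p|$, not $|p'-p|^n|z|$) to manufacture close pairs uniformly near $\lambda_0$. This works, but it is unnecessary twice over. First, once you have polynomials $p_1,p_2\in Q_{[x]}$ with $0<|p_1(\lambda_0)-p_2(\lambda_0)|<1$, you are already done: $p_1(\lambda),p_2(\lambda)\in Q_\lambda\subseteq R_\lambda$ for every $\lambda$, and by continuity $0<|p_1(\lambda)-p_2(\lambda)|<1$ on a neighborhood, so Corollary~\ref{cor:general-equivalences}(5) applies directly. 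No contraction is needed. Second, even the two-point formulation is a detour: since $D$ is \emph{open} and $R_{\lambda_0}=\overline{Q_{\lambda_0}}$ (Lemma~\ref{lem:topo-closure}), the condition $R_{\lambda_0}\cap D\neq\emptyset$ from Corollary~\ref{cor:basic-convex} immediately gives $Q_{\lambda_0}\cap D\neq\emptyset$ --- a closure meets an open set only if the set itself does. So a single polynomial witness $P$ with $P(\lambda_0)\in D$ exists from the start.

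The paper packages this into two lines: introduce $Q_{[x]}$ (Definition~\ref{def:Q-x}) and the evaluation lemma $Q_\lambda=\{P(\lambda):P\in Q_{[x]}\}$ (Lemma~\ref{lem:poly-eval}); then
\[
\convex \;=\; \{\lambda: R_\lambda\cap D\neq\emptyset\}\;=\;\{\lambda: Q_\lambda\cap D\neq\emptyset\}\;=\;\bigcup_{P\in Q_{[x]}} P^{-1}(D),
\]
a union of open sets. Your argument reaches the same conclusion by the same mechanism, just with extra scaffolding that the openness of $D$ renders superfluous.
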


\begin{proof}
Let $D := \{x\in\complexes \mathrel{:} 0 < |x| < 1 \myor 0<|1-x|<1\}$.  Note that $D$ is open.  For any $\lambda\in\complexes$, we have
\begin{align*}
\mbox{$R_\lambda$ is convex} &\iff  R_\lambda \intersect D \ne \emptyset & \mbox{(Corollary~\ref{cor:basic-convex})} \\
&\iff Q_\lambda \intersect D \ne \emptyset & \mbox{(Lemma~\ref{lem:topo-closure})} \\
&\iff (\exists P\in Q_{[x]}) [ P(\lambda) \in D ] &\mbox{(Lemma~\ref{lem:poly-eval})} \\
&\iff \lambda \in \bigcup_{P\in Q_{[x]}} P^{-1}(D)\;.
\end{align*}
Thus $\convex = \bigcup_{P\in Q_{[x]}} P^{-1}(D)$, which is the union of open sets, because each $P\in Q_{[x]}$ corresponds to a continuous map $\complexes \rightarrow \complexes$.  Thus $\convex$ is open.
\end{proof}

\begin{corollary}\label{cor:D-is-closed}
$\discrete$ is closed.
\end{corollary}

To prove the second main theorem of this section, that $R_\lambda$ is convex for all transcendental $\lambda$, we first give two lemmas, the first is routine, and the second is a key observation made by Stuart Kurtz.

\begin{lemma}\label{lem:lin-indep}
For any natural number $n$, the set $\{ x^k(1-x)^{n-k} \mid k\in \ints \myand 0 \le k \le n \}$ is linearly independent over $\complexes$, and is thus a basis for the space of all polynomials in $\complexes[x]$ of degree $\le n$.
\end{lemma}

\begin{proof}
Let $r_0,\ldots,r_n$ be any complex numbers, not all zero.  Let $j$ be least such that $r_j \ne 0$.  Then letting $P(x) := \sum_{k=0}^n r_k x^k(1-x)^{n-k}$, we have
\[ P(x) = \sum_{k=j}^n r_k x^k(1-x)^{n-k} = x^j\left(r_j (1-x)^{n-j} + x\sum_{k=j+1}^n r_k x^{k-j-1}(1-x)^{n-k}\right)\;. \]
Evaluating the expression in the big parentheses at $x=0$ shows that it is not the zero polynomial, whence $P$ is not the zero polynomial, either.
\end{proof}

\begin{lemma}[Kurtz]\label{lem:kurtz}
For all $\lambda\in\complexes$ and integers $n\ge 0$, let $c_\lambda(n)$ be the cardinality of $Q_\lambda^{(n)}$.  Let $c(n)$ be the cardinality of $Q_{[x]}^{(n)}$.
\begin{enumerate}
\item
For any $\lambda\in\complexes$, if $c_\lambda(n) \notin e^{\bigoh(n)}$ as $n\rightarrow\infty$, then $R_\lambda$ is convex.
\item
If $c(n) \notin e^{\bigoh(n)}$ as $n\rightarrow\infty$, then $R_\lambda$ is convex for all transcendental $\lambda\in\complexes$.
\end{enumerate}
\end{lemma}

\begin{proof}
We first prove Part~(1.).  For any $r\ge 0$, define $D_r\subseteq\complexes$ to be the closed disk of radius $r$ centered at the origin.  Set $\delta := |1-\lambda| + |\lambda|$.  Notice that $Q_\lambda^{(0)} = \{0,1\} \subseteq D_1$.  By Lemma~\ref{lem:disk-enlarge} (and induction on $n$), we have $Q_\lambda^{(n)} \subseteq D_{\delta^n}$ for all $n\ge 0$.  If $R_\lambda$ is not convex, then by Corollary~\ref{cor:general-equivalences}, any two distinct elements of $R_\lambda$ are at least unit distance apart, and so we can draw an open disk around each element of $Q_\lambda^{(n)}$ of radius $1/2$, and these disks are pairwise disjoint, for a total area of $c_\lambda(n)\pi/4$.  These disks must in turn all be included in $D_{\delta^n + 1/2}$, which has area $\pi(\delta^n+1/2)^2$.  Thus we get $c_\lambda(n) \le 4(\delta^n+1/2)^2 \in e^{\bigoh(n)}$ if $R_\lambda$ is not convex.

For Part~(2.), notice that if $\lambda$ is transcendental, then the evaluation map $\ints[x]\rightarrow\complexes$ sending $P$ to $P(\lambda)$ is one-to-one.  By Lemma~\ref{lem:poly-eval}, this means that $c(n) = c_\lambda(n)$ for all $n\ge 0$.  So we get that $R_\lambda$ is convex by Part~(1.) if $\lambda$ is transcendental.
\end{proof}

By the second item of Lemma~\ref{lem:kurtz}, we are done if we can get a good lower bound on $c(n)$.  To this end, we next characterize the level sets $Q_{[x]}^{(n)}$ so as to determine their cardinalities exactly.  The following was proved by Pinch using a straightforward induction~\cite[Proposition~4, Corollary~4.1]{Pinch}.  Here we include an alternate, holistic proof.

\begin{lemma}[Pinch]\label{lem:level-characterization}
Fix any integer $n\ge 0$.  For any polynomial $P\in\ints[x]$, \ $P$ is in $Q_{[x]}^{(n)}$ if and only if there exist integers $b_0,\ldots,b_n$ such that $0 \le b_k \le \binom{n}{k}$ for all $0\le k\le n$ and
\[ P(x) = \sum_{k=0}^n b_k x^k (1-x)^{n-k}\;. \]
\end{lemma}

\begin{proof}
One could prove this formally by induction on $n$, but it is more illustrative to consider the general case all at once.  For convenience, set $y := 1-x$.  Then a typical polynomial $P\in Q_{[x]}^{(n)}$ for $n\ge 2$ is of the form $P_0\xbx P_1 = yP_0 + xP_1$, for some $P_0,P_1\in Q_{[x]}^{(n-1)}$.  Then $P_0$ is of the form $P_{00}\xbx P_{01} = yP_{00} + xP_{01}$ and similarly $P_1$ is of the form $P_{10}\xbx P_{11} = yP_{10} + xP_{11}$ for some $P_{00},P_{01},P_{10},P_{11} \in Q_{[x]}^{(n-2)}$, making $P = y^2P_{00} + yx(P_{01} + P_{10}) + x^2P_{11}$.  Similarly, if $n\ge 3$, then there are eight polynomials $P_{000},\ldots,P_{111}\in Q_{[x]}^{(n-3)}$ such that
\[ P = y^3P_{000} + y^2x(P_{001} + P_{010} + P_{100}) + yx^2(P_{011} + P_{101} + P_{110}) + x^3P_{111}\;. \]
This continues until we get polynomials in $Q_{[x]}^{(0)}$, i.e., $0$ or $1$.  Then the completely expanded expression for $P$ resembles a full binary tree with leaves either $0$ or $1$.  Such a tree is shown below for $n=4$ with leaves chosen arbitrarily:
\begin{center}
\input{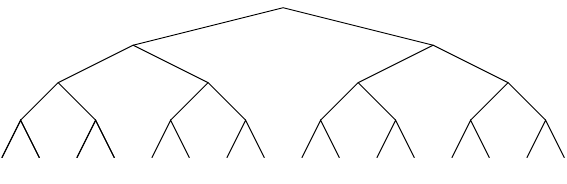_t}
\end{center}
In the expression tree above, each edge represents multiplication by either $y$ or $x$, and each internal node is the sum of its children, weighted by $y$ and $x$, respectively.  The root of the tree yields $P$.  Note that each path in the tree from the root to a leaf contributes one term to $P$ of the form $b x^ky^{n-k}$, where $k$ is the number of right jogs in the path and $b$ is the value at the leaf (either $0$ or $1$).  For each possible $k$, there are exactly $\binom{n}{k}$ many paths with $k$ right jogs, and each of these contributes either $0$ or $x^ky^{n-k}$ to $P$.  The lemma follows.  (The polynomial given by the tree above is $y^4 + y^3x + 3y^2x^2 + 0yx^3 + x^4$.)
\end{proof}

\begin{lemma}[Pinch~{\cite[Corollary~4.2]{Pinch}}]\label{lem:level-cardinality}
For every $n\ge 0$, the cardinality $c(n)$ of $Q_{[x]}^{(n)}$ is exactly $\prod_{k=0}^n\left(1+\binom{n}{k}\right)$.
\end{lemma}

\begin{proof}
Lemma~\ref{lem:level-characterization} immediately gives $\prod_{k=0}^n\left(1+\binom{n}{k}\right)$ as an upper bound on $c(n)$.  For the lower bound, we have that the set of monomials $\{x^k(1-x)^{n-k} \mid 0\le k\le n\}$ is linearly independent by Lemma~\ref{lem:lin-indep}, and thus any two distinct choices of $b_0,\ldots,b_n$ in Lemma~\ref{lem:level-characterization} must give different polynomials.
\end{proof}

\begin{lemma}\label{lem:big-level}
Let $c(n)$ be as in Lemmas~\ref{lem:kurtz} and \ref{lem:level-cardinality}.  Then $c(n) \in e^{\Omega(n\log n)}$ as $n\rightarrow\infty$.
\end{lemma}

\begin{proof}
For $n\ge 2$, we have $c(n) \ge \prod_{k=1}^{n-1} \binom{n}{k} \ge n^{n-1} = e^{(n-1)\log n}$.
\end{proof}

Lemmas~\ref{lem:kurtz} and \ref{lem:big-level} together prove the second main theorem of this section:

\begin{theorem}\label{thm:transcendental-lambda}
$R_\lambda$ is convex for all transcendental $\lambda\in\complexes$.
\end{theorem}

In the next section, we strengthen this result by showing (by a very different proof) that if $Q_\lambda$ is discrete, then $\lambda$ must be an algebraic \emph{integer}.  This fact was proved for real $\lambda$ by Pinch~\cite{Pinch}.  The generalization to all complex $\lambda$ is \emph{not} straightforward.

For now, we prove next that $\convex$ ``surrounds'' all elements of $\discrete$.  The restriction of this theorem to $\reals$ was shown by Pinch~\cite[Proposition~11]{Pinch}.  We give an independent proof for the general case.

\begin{theorem}\label{thm:deleted-neighborhood}
The set $\discrete$ has no accumulation points in $\complexes$.  That is, for any $\lambda\in\complexes$, there exists an open neighborhood $N$ of $\lambda$ such that $N \cmpl \{\lambda\} \subseteq \convex$.
\end{theorem}

\begin{proof}
If $\lambda\in\convex$, then the result is immediate by Theorem~\ref{thm:c-is-open}, so suppose $\lambda\notin\convex$.

We can find distinct polynomials $p,q\in Q_{[x]}$ such that $p(\lambda) = q(\lambda)$.  This can be seen as follows: Let $c(n)$ and $c_\lambda(n)$ be the functions defined in Lemma~\ref{lem:kurtz}.  By part~(1.)\ of that lemma, we have $c_\lambda(n) \in e^{\bigoh(n)}$ as $n\rightarrow\infty$ (because $R_\lambda$ is not convex), but by Lemma~\ref{lem:big-level}, we have $c(n) \in e^{\Omega(n\log n)}$ as $n\rightarrow\infty$.  Therefore, we can choose $n$ such that $c_\lambda(n) < c(n)$, and it follows by the pigeonhole principle that there exist distinct polynomials $p,q\in Q_{[x]}^{(n)}$ such that $p(\lambda) = q(\lambda)$, that is, $\lambda$ is a root of the nonzero polynomial $r \eqdf p-q$.

By the continuity of $r$, there exists a neighborhood $N'$ of $\lambda$ such that $|r(z)| < 1$ for all $z\in N'$.  Since $r$ has only finitely many roots, there exists an $\eps > 0$ such that $0<|r(z)|<1$ for all $z$ such that $0<|z-\lambda|<\eps$.  Letting $N := \{ z\in\complexes : |z-\lambda|<\eps \}$, we have $0 < |r(z)| = |p(z) - q(z)| < 1$ for all $z\in N \cmpl \{\lambda\}$.  For these $z$, since $p(z)$ and $q(z)$ are distinct members of $R_z$ (Lemma~\ref{lem:poly-eval}) that are less than unit distance apart, we know that $R_z$ is convex by Corollary~\ref{cor:general-equivalences}.  Thus $z\in\convex$ for all $z\in N \cmpl \{\lambda\}$.
\end{proof}

%

\subsection{Threshold polynomials}
\label{sec:thresholds}

The next lemma does not apply to $\convex$, but it is an easy consequence of Lemma~\ref{lem:level-characterization} and it will be used in Part~II, so we include it in this section.  First, a definition.

\begin{definition}
For every $n\in\posints$ and $\gamma\in\opop{0,1}$, define the polynomial
\[ t_\gamma^{(n)}(x) := \sum_{i=0}^{\floor{\gamma n}} \binom{n}{i}x^i(1-x)^{n-i}\;. \]
\end{definition}

For large $n$, the polynomial $t_\gamma^{(n)}$ approximates a ``threshold'' function on $\clcl{0,1}$.

\begin{lemma}\label{lem:threshold}
For any $0<\gamma<1$ and $\eps > 0$, there exists a polynomial $T_{\gamma,\eps}\in Q_{[x]}$ such that $1-\eps \le T_{\gamma,\eps}(x) \le 1$ for all $x\in\clcl{0,\gamma-\eps}$ and $0\le T_{\gamma,\eps}(x) \le \eps$ for all $x\in\clcl{\gamma+\eps,1}$.
\end{lemma}

\begin{proof}
All the $t_\gamma^{(n)}$ are in $Q_{[x]}$ by Lemma~\ref{lem:level-characterization}.  Also, for all $x\in\clcl{0,1}$,
\[ 0 \le t_\gamma^{(n)}(x) \le \sum_{i=0}^n \binom{n}{i}x^i(1-x)^{n-i} = 1\;. \]
Taking $T_{\gamma,\eps} := t_\gamma^{(n)}$ for sufficiently large $n$ will satisfy the lemma.  This follows from Hoeffding's inequality \cite{Hoeffding:inequality}, which in the current context states that for all $x$ such that $\gamma + \eps \le x \le 1$,
\[ t_\gamma^{(n)}(x) \le \exp\left(-2n(x-\gamma)^2\right) \le\exp\left(-2n\eps^2\right)\;. \]
The right-hand side is $\le\eps$ provided $n \ge - (\log\eps)/(2\eps^2)$.

By symmetry, we have for all $0\le x \le \gamma - \eps$,
\begin{align*}
1-t_\gamma^{(n)}(x) &= \sum_{i=\floor{\gamma n}+1}^n \binom{n}{i}x^i(1-x)^{n-i} = \sum_{j=0}^{n-\floor{\gamma n}-1}\binom{n}{n-j}x^{n-j}(1-x)^j \\
&= \sum_{j=0}^{n-\floor{\gamma n}-1}\binom{n}{j}x^{n-j}(1-x)^j \le \sum_{j=0}^{\floor{(1-\gamma) n}}\binom{n}{j}x^{n-j}(1-x)^j = t_{1-\gamma}^{(n)}(1-x)\;.
\end{align*}
Since $(1-\gamma)+\eps \le 1-x \le 1$, we apply Hoeffding's inequality again to get
\[ t_\gamma^{(n)}(x) \ge 1 - t_{1-\gamma}^{(n)}(1-x) \ge 1 - \exp(-2n\eps^2) \ge 1-\eps \]
provided $n \ge - (\log\eps)/(2\eps^2)$ as above.

Therefore we can choose $T_{\gamma,\eps} := t_\gamma^{(n)}$, where $n := \ceiling{- (\log\eps)/(2\eps^2)}$.  (We can assume $\eps < 1$ without loss of generality.)
\end{proof}

\section{If $Q_\lambda$ is Discrete, Then $\lambda$ is an Algebraic Integer}
\label{sec:algebraic-integer}

Pinch proved that for $\lambda\in\reals$, if $Q_\lambda$ is discrete, then $\lambda$ is an algebraic integer~\cite{Pinch}.

\begin{theorem}[Pinch~{\cite[Theorem~8]{Pinch}}]\label{thm:real-case}
For any $\lambda\in\reals$, if $Q_\lambda$ is discrete, then $\lambda$ is an algebraic integer.
\end{theorem}

We have the same result for arbitrary complex $\lambda$.

\begin{theorem}\label{thm:general-case}
For any $\lambda\in\complexes$, if $Q_\lambda$ is discrete, then $\lambda$ is an algebraic integer.
\end{theorem}

The rest of this section is devoted to the proof of this theorem.  It adapts Pinch's overall technique to the complex case but is considerably more intricate.  Along the way, we prove a weak relative density result for $Q_\lambda$.  We do this in stages, obtaining stronger and stronger density results for $Q_\lambda$.


\begin{notation}
For any nonzero $z\in\complexes$, we define $\arg z$ to be the unique $\theta$ such that $-\pi \le \theta < \pi$ and $z = |z|e^{i\theta}$.
\end{notation}

The following technical lemma will make our later proofs easier.  We defer the proof until the end of this section.

\begin{lemma}\label{lem:technical}
For all $\lambda\in\complexes\cmpl\reals$, there exists $\nu \in Q_\lambda$ such that $|\nu| > 1$ and $0<\arg\nu<\pi/6$.
\end{lemma}

Now fix $\lambda\in\complexes$ such that $Q_\lambda$ is discrete.  If $\lambda\in\reals$, then $\lambda$ is an algebraic integer by Theorem~\ref{thm:real-case}, so we assume $\lambda\notin\reals$.  We then fix some $\nu\in Q_\lambda$ satisfying Lemma~\ref{lem:technical}, above.


\begin{notation}
We define $\ell$ to be the least positive integer such that, setting $\mu := \nu^\ell$:
\begin{itemize}
\item
$|\mu| > (1+\sqrt 3)/2$,
\item
$0 < \arg(\mu) < \pi/6$, and
\item
for all integers $m$ such that $-6\le m\le 5$, there exists an integer $0 < j <\ell$ such that $\pi m/6 < \arg(\nu^j) < \pi(m+1)/6$.
\end{itemize}
Set $B := |\mu|$ and $q := \mu/(\mu - 1)$.  Define
\[ P := \{ z\in\complexes : 1<|z|<B \myand 0 < \arg z < \pi/6 \}\;. \]
\end{notation}

The first two items guarantee that $\Re(\mu) > 1/2$, and thus $|\mu-1| < |\mu|$, which in turn implies $|q| > 1$.
The idea of the third item is that we have a power of $\nu$ (and thus an element of $Q_\lambda$) in each of the twelve $30^\circ$ ``pie slices'' of $\complexes$ centered at the origin $0$, that is, $Q_\lambda \intersect e^{i\pi m/6} P \ne \emptyset$ for all $m\in\ints$.  Obviously, $\ell > 12$, and $1,\nu,\nu^2,\ldots,\nu^\ell$ are all contained in the closed ball of radius $B$ centered at the origin.

Note that, since $\mu\in Q_\lambda$, \ $Q_\lambda$ is closed under $\xm$.  Also, for all $x,y,z\in\complexes$, we have $x \xm y = z$ if and only if $z \xq y = x$.

\begin{definition}
Define the open region
\[ W := \bigcup_{0\le\theta\le\pi/6} e^{i\theta}P = \{ w\in\complexes : 1<|w|<B \myand 0<\arg w < \pi/3\}\;. \]
For $z\in\complexes$ where $|z|\ge 1$, we will call regions of the form $zW$ \emph{wedges}.
\end{definition}

The four ``corners'' of a wedge $zW$ are $z$, $e^{i\pi/3} z$, $Bz$, and $Be^{i\pi/3}z$.

\begin{lemma}\label{lem:wedges}
Every wedge intersects $Q_\lambda$.
\end{lemma}

\begin{proof}
Given $z$ such that $|z|\ge 1$, let $p\in\ints$ be largest such that $|\nu|^p \le |z|$.  We must have $p\ge 0$ by our condition on $|z|$.  Letting $T :=\{\nu,\nu^2,\ldots,\nu^{\ell-1}\}$, by our choice of $\ell$, we have that $T$ intersects $e^{i\pi m/6}P$ for all $m\in\ints$ (this is established explicitly for $-6 \le m \le 5$ and extends to all $m\in\ints$ by periodicity).  It follows from our choice of $p$ that $|z| < |w| < B|z|$ for all $w\in\nu^p T$, and thus $\nu^p T$ intersects $|z| e^{i(\arg(\nu^p) + \pi m/6)}P$ for all $m\in\ints$.  Choose $m\in\ints$ such that $0 \le \theta_m < \pi/6$, where $\theta_m := \arg(\nu^p) + \pi m/6 - \arg z$.  Then
\[ |z|e^{i(\arg(\nu^p) + \pi m/6)}P = ze^{i\theta_m}P \subseteq zW\;. \]
Thus $zW$ intersects $\nu^p T$, the latter being a subset of $Q_\lambda$, and we are done.
\end{proof}

\begin{notation}
For $z\in\complexes$ and real $k>0$, we define $D(z;k) := \{w\in\complexes : |w-z| < k|z|\}$, that is, the open disk centered at $z$ with radius $k|z|$.
\end{notation}

\begin{definition}
Let $k>0$ be given.  For $x,y\in\complexes$, we say that \emph{$y$ is $k$-close to $x$} iff $y \in D(x;k)$.  If $S\subseteq\complexes$ is some point set and $R\subseteq\complexes$ is some open region, we say that \emph{$S$ is $k$-dense in $R$} if every point in $R$ is $k$-close to a point in $S\intersect R$.
\end{definition}

Notice that $k$-closeness is not a symmetric relation.

By definition, $\{x\}$ is $k$-dense in $D(x;k)$ for all $k>0$ and nonzero $x\in\complexes$.  The next lemma says that we can increase the radius a bit for certain elements of $Q_\lambda$.

\begin{lemma}\label{lem:increase-diameter1}
For every $k$ such that $0<k<1-B^{-1}\sqrt{B^2-\sqrt 3 B + 1}$, there exists $u>k$ such that, for all $x\in Q_\lambda$ with $|x| \ge \frac{B}{k|q|}$, \ $Q_\lambda$ is $k$-dense in $D(\mu x;u)$.
\end{lemma}

\begin{proof}
Given $k$, let
\[ u := \frac{k}{2B}\left(\sqrt 3 + \sqrt{4B^2(1-k)^2-1}\right)\;, \]
that is, the larger of the two solutions to the quadratic equation
\[ u^2 - \frac{k\sqrt 3}{B}\,u + \frac{k^2}{B^2} - k^2(1-k)^2 = 0\;. \]
The upper bound on $k$ guarantees that $u>k$, which can be seen as follows: The inequality $k < u$ is clearly equivalent to
\[ 2B - \sqrt 3 < \sqrt{4B^2(1-k)^2-1}\;. \]
Since $B = |\mu| > (1+\sqrt 3)/2$, both sides are nonnegative, so squaring both sides yields an equivalent inequality:
\[ 4B^2 - 4B\sqrt 3 + 3 < 4B^2(1-k)^2-1\;, \]
or equivalently,
\[ (1-k)^2 > 1 - B^{-1}\sqrt 3 + B^{-2}\;. \]
The lower bound on $B$ above makes both sides nonnegative, so we can take the square root of both sides to get the equivalent statement,
\[ |1-k| > B^{-1}\sqrt{B^2 - \sqrt 3 B + 1}\;, \]
which is implied by our constraint on $k$.

Now let $z$ be any point in $D(\mu x;u)$.  We show that $z$ is $k$-close to an element of $Q_\lambda\intersect D(\mu x;u)$.  In fact, we show that $z$ is $k$-close to an element of $Q_\lambda\intersect D(\mu x;k)$, from which $k$-density follows, because $D(\mu x;k)\subseteq D(\mu x;u)$.  If $z\in D(\mu x;k)$, we are done, so assume otherwise.  Let $z'$ be the point on the line segment connecting $z$ with $\mu x$ that is distance $k|\mu x|/B = k|x|$ away from $\mu x$, as in Figure~\ref{fig:wedges}, and let $y := z' \xq x$.  (Note then that $z' = y \xm x$.)  Using the lower bound on $|x|$ and noting that $|q| = |1-\mu|^{-1}B$, we have
\[ |y| = |z' \xq x| = |(1-q)z'+qx| = |1-\mu|^{-1}|z' - \mu x| = |1-\mu|^{-1}k|x| = |q|k|x|/B \ge 1\;. \]
Set $Y := y\,e^{-i\pi/6}W$.  Note that $Y$ is a wedge, because $|y|\ge 1$, and thus $Y\intersect Q_\lambda \ne \emptyset$ by Lemma~\ref{lem:wedges}.  Letting $Z := Y \xm x = (1-\mu)Y + \mu x$, we have $Z\intersect Q_\lambda \ne \emptyset$ as well.  (Note that $Y$ and $Z$ are similar.)
To finish the proof, we will show that $z$ is $k$-close to every point in $Z$ and that $Z\subseteq D(\mu x;k)$.

Let $w\in Z$ be arbitrary.  Then $Y$ contains the point $w' := w\x_q x = (1-\mu)^{-1}(w-\mu x)$.  We can thus write $w' = r\,e^{i(\theta + \arg y)}$, where $|y| < r < B|y|$ and $-\pi/6 < \theta < \pi/6$.  Translating back, we have
\[ w - \mu x = (1-\mu)\,r\,e^{i(\theta + \arg y)} = (1-\mu)\,\frac{r}{|y|}\,y\,e^{i\theta} = \frac{r}{|y|}\,e^{i\theta}(z'-\mu x) = s\,e^{i\theta}(z'-\mu x)\;, \]
where we set $s := r/|y|$ and thus $1 < s < B$.  It follows that
\[ |w-\mu x| = s|z'-\mu x| = sk|x| < Bk|x| = k|\mu x|\;, \]
which shows that $Z\subseteq D(\mu x;k)$, as $w\in Z$ was chosen arbitrarily.  Let $z''$ be the point on the line connecting $\mu x$ with $z$ such that $z$ is between $\mu x$ and $z''$ and $|z''-\mu x| = u|\mu x| = uB|x|$ ($z''$ is on the boundary of $D(\mu x;u)$).  Figure~\ref{fig:wedges} shows what is going on.
\begin{figure}
\begin{center}
\input{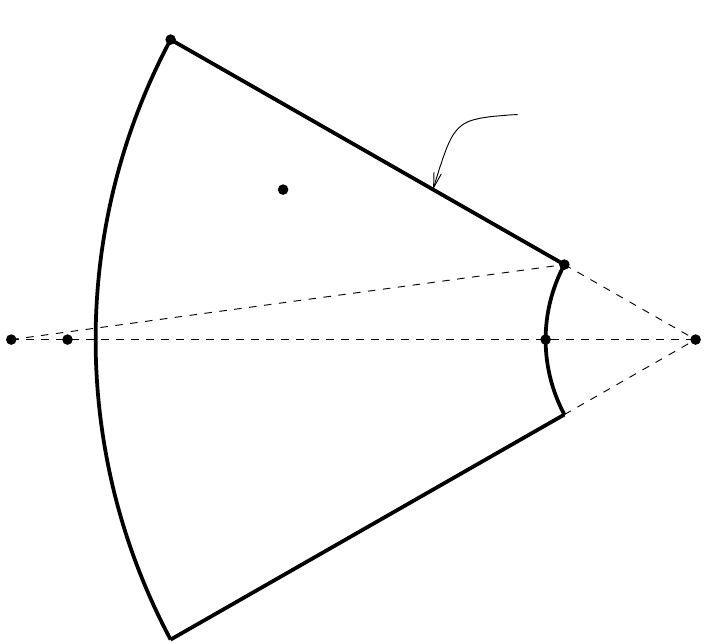_t}
\caption{The $Z$ region is bounded by the thick lines and arcs.  The arcs are concentric about the point $\mu x$ through an angle of $\pi/3$.  The dotted line connecting $\mu x$ with $z$ bisects $Z$ and intersects the inner arc at the point $z'$.  The point $z$ lies outside the outer arc, and $z''$ lies to its left.  Two corners $a$ and $b$ and an arbitrary element $w$ of $Z$ are also labeled.}\label{fig:wedges}
\end{center}
\end{figure}
We also identify two corners of $Z$, namely, $b := \mu x + e^{-i\pi/6}(z'-\mu x)$ and $a := \mu x + B(b-\mu x)$.  By definition, $|b-\mu x| = |z'-\mu x| = k|x|$ and $|a-\mu x| = B|b-\mu x| = Bk|x|$.  Evidently, $|w - \mu x| < |a - \mu x|$, whence by the triangle inequality, $|w| > |\mu x| - Bk|x| = B(1-k)|x|$.  It is also evident from the diagram that the point $b$ is farther away from $z''$ than any point in $Z$ is from $z$, and so $|z-w| < |z''-b|$.  (The point $a$ is closer to $z$ than $b$ is to $z$; this follows from the fact that $B > (1+\sqrt 3)/2$).  Using, say, the Law of Cosines with the triangle $(z'',b,\mu x)$, we can find $|z''-b|$:
\begin{align*}
|z''-b|^2 &= |z''-\mu x|^2 + |b-\mu x|^2 - 2|z''-\mu x|\,|b-\mu x|\cos(\pi/6) \\
&= B^2|x|^2\left(u^2 - \frac{k\sqrt 3}{B}\,u + \frac{k^2}{B^2}\right) = B^2|x|^2k^2(1-k)^2
\end{align*}
by our choice of $u$.  Thus $|z-w| < |z''-b| = kB(1-k)|x| < k|w|$, making $z$ $k$-close to $w$.
\end{proof}


Suppose $x\in\complexes\cmpl\{0\}$ and $r>0$ are such that $Q_\lambda$ is $k$-dense in $D(x;r)$ (for some $k>0$).  It immediately follows, just by multiplying everything by $\mu$, that $\mu Q_\lambda$ is $k$-dense in $D(\mu x;r)$.  As with Lemma~\ref{lem:increase-diameter1}, the next lemma increases the diameter a little bit going from $x$ to $\mu x$.  It is actually a generalization of Lemma~\ref{lem:increase-diameter1}

\begin{lemma}\label{lem:increase-diameter2}
Let $k$ and $u$ be as in Lemma~\ref{lem:increase-diameter1}.  Suppose $0 < r < 1$, and let $v := r + (1-r)(u-k)$.  Then $Q_\lambda$ is $k$-dense in $D(\mu x;v)$ for all $x\in\complexes$ such that $Q_\lambda$ is $k$-dense in $D(x;r)$ and $|x| \ge \frac{B}{(1-r)k|q|}$.
\end{lemma}

\begin{proof}
Let $z\in D(\mu x;v)$ be arbitrary.  Then $z$ is at most $(v-r)|\mu x|$ distance away from some element $y\in D(\mu x;r)$.  Since $Q_\lambda$ is $k$-dense in $D(x;r)$ by assumption, $\mu Q_\lambda$ is $k$-dense in $D(\mu x;r)$, and thus $y$ is $k$-close to $\mu x'$ for some $x' \in Q_\lambda\intersect D(x;r)$.  By the triangle inequality, $|x'| > (1-r)|x| \ge \frac{B}{k|q|}$, and so we can apply Lemma~\ref{lem:increase-diameter1} to $x'$ to get that $Q_\lambda$ is $k$-dense in $D(\mu x';u)$.  It remains to show that $z\in D(\mu x';u)$, thus making $z$ $k$-close to some element of $Q_\lambda$.  By the triangle inequality using the triangle $(\mu x',y,z)$, and noting that $|\mu x'| = B|x'| > B(1-r)|x| = (1-r)|\mu x|$, we have
\begin{align*}
|z-\mu x'| &\le |y-\mu x'| + |z-y| < k|\mu x'| + (v-r)|\mu x| \\
&< (k + (v-r)/(1-r))|\mu x'| = (k + (u-k))|\mu x'| = u|\mu x'|\;,
\end{align*}
and thus $z\in D(\mu x';u)$ as required.
\end{proof}

\begin{remark}
In the last lemma, we needed $|x| \ge \frac{B}{(1-r)k|q|}$ so that every point $x'\in D(x;r)$ satisfies $|x'| \ge \frac{B}{k|q|}$, allowing us to apply Lemma~\ref{lem:increase-diameter1} to $x'$.  The same is true for every point $y \in D(\mu x;v)$, that is, $|y| \ge \frac{B}{k|q|}$.  The latter condition is equivalent to the inequality $1-u+k \ge 1/B$, which can be verified via a rather tedious calculation.\footnote{Using the fact that $B>1$, this inequality can be converted into the equivalent form $p(B) \ge 0$, where $p$ is a real quadratic polynomial with leading term $(1+1/k)^2-(1-k)^2 > 0$ and discriminant $4(1-k)^2-(1+1/k)^2 < 0$.}  We also have $0<v<1$, because $1-v = (1-r)(1-u+k) > 0$.  These facts will be important for the proof of Theorem~\ref{thm:k-dense}, because they allow us to iterate the passage from $D(x;r)$ to $D(\mu x;v)$ while maintaining $k$-density of $Q_\lambda$ throughout.
\end{remark}

Theorem~\ref{thm:k-dense} below, the first main result of this section, asserts that $Q_\lambda$ is close to being relatively dense, at least asympototically.  Before giving it, we present a few technical lemmas.

Recall that $\mu$ was chosen such that $0<\arg\mu<\pi/6$.

\begin{definition}\label{def:n-V-A}
Define $n := \ceiling{2\pi/\arg\mu}$, noting that $n$ is the least positive integer such that $0 \le \arg(\mu^n) < \arg\mu$.  Define the closed region $V := \{ r\,e^{i\p} \mid 1 \le r \le B^n \myand 0 \le \p \le \pi/6 \}$.

Define the closed annulus $A := \{ z\in\complexes : B^{n-1} \le |z| \le B^n \}$.  \end{definition}

Note that $n$ is chosen so that every closed pie slice $S_\theta := \{ r\,e^{i\p} \mid r\ge 0 \myand \theta-\pi/6 \le \p \le \theta\}$ for $\theta\in\reals$ contains $\mu^j$ for some integer $0 \le j < n$.  $V$ resembles the region $P$, but extends out much farther away from the origin and is closed.  The next lemma is routine and stated without proof.

\begin{lemma}\label{lem:V-in-disk}
$V$ is included within the open disk $C := D(B^n;1-B^{-n}/2)$ centered at $B^n$ with radius $B^n-1/2$.
\end{lemma}

\begin{lemma}\label{lem:annulus}
$A \subseteq U$, where $U := \bigcup_{j=0}^{n-1} \mu^j V$.
\end{lemma}

\begin{proof}
Given $z\in A$, let $\theta := \arg z$.  Evidently, $z\in S_\theta$, the pie slice defined above.  Let $0\le j < n$ be such that $S_\theta$ contains $\mu^j$.  Then
\[ \mu^j V = \{ r\,e^{i\p} \mid B^j \le r \le B^{j+n} \myand \arg(\mu^j) \le \p \le \arg(\mu^j)+\pi/6 \}\;, \]
which contains $z$.
\end{proof}

\begin{lemma}\label{lem:V-cover}
$\{z\in\complexes : |z|\ge B^{n-1}\} \subseteq T$, where $T := \bigcup_{j=0}^\infty \mu^j C$ and $C$ is as in Lemma~\ref{lem:V-in-disk}.
\end{lemma}

\begin{proof}
Every $z$ such that $|z|\ge B^{n-1}$ is contained in $\mu^p A$ for some integer $p\ge 0$, and by Lemmas~\ref{lem:annulus} and \ref{lem:V-in-disk} the latter region is included in $\mu^p U = \bigcup_{j=p}^{p+n-1} \mu^j V \subseteq \bigcup_{j=p}^{p+n-1} \mu^j C \subseteq T$.
\end{proof}

\begin{theorem}\label{thm:k-dense}
Given any $k>0$, there exists $R>0$ such that $Q_\lambda$ is $k$-dense in $\{z\in\complexes : |z| > R\}$.
\end{theorem}

\begin{proof}
The idea is that we can increase the sizes of disks in which $Q_\lambda$ is $k$-dense until one of them includes $zC$ for some $z$.  Without loss of generality, we can take $k$ to be as small as we want, so we assume it satisfies the conditions in Lemma~\ref{lem:increase-diameter1}, and we also define $u$ as in that lemma.  Let $n$ be as in Definition~\ref{def:n-V-A} and $C$ be as in Lemma~\ref{lem:V-in-disk}.  Fix some $x\in Q_\lambda$ such that $|x| \ge \frac{B}{(1-k)k|q|}$.  For example, we can take $x$ to be the lowest power of $\mu$ satisfying this norm bound, whence $|x| < \frac{B^2}{(1-k)k|q|}$.  Set $r_0 := k$, and for all integers $j\ge 0$, inductively define $r_{j+1} := r_j + (1-r_j)(u-k)$.  Then by induction, for all $j\ge 0$, we have that $Q_\lambda$ is $k$-dense in $D(\mu^j x;r_j)$.  Also by induction we have $r_j = 1 - (1-u+k)^j(1-k)$ for all $j$.  We know that $0 < 1-u+k < 1$ (see the Remark following Lemma~\ref{lem:increase-diameter2}), so we can choose an $m\ge n$ large enough so that $1 - B^{-n}/2 < r_m < r_{m+1} < \cdots < 1$.  Then for all $p \ge m$, \ $D(\mu^p x;r_p)$ is big enough to include $zC$ for some $z$.  In fact,
\[ \frac{\mu^p x}{B^n}\,C = D(\mu^p x;1-B^{-n}/2) \subseteq D(\mu^p x;r_m) \subseteq D(\mu^p x;r_p) \]
by our choice of $m$.  Finally, letting $E := \{ z\in\complexes : |z| \ge B^{n-1} \}$ and using Lemma~\ref{lem:V-cover},
\[ \{z\in\complexes : |z| \ge B^{m-1}|x| \} = \frac{\mu^m x}{B^n}\, E \subseteq \frac{\mu^m x}{B^n}\, T = \frac{x}{B^n}\bigcup_{p=m}^\infty \mu^p C \subseteq \bigcup_{p\ge m} D(\mu^p x;r_p)\;. \]
$Q_\lambda$ is $k$-dense in the right-hand side, so we can take $R := \frac{B^{m+1}}{(1-k)k|q|} > B^{m-1}|x|$.
\end{proof}

\begin{remark}
Although we were assuming all along that $Q_\lambda$ is discrete, Theorem~\ref{thm:k-dense} actually holds for all $\lambda\in\complexes\cmpl\reals$, for if $Q_\lambda$ is not discrete, then it is \emph{dense} in $\complexes$ and hence trivially $k$-dense in $\complexes$ for all $k>0$.  For real $\lambda$, we have the following situation: if $Q_\lambda$ is not discrete, then we know that $Q_\lambda$ is dense in either $\clcl{0,1}$ or $\reals$ (depending on $\lambda$ being in $\opop{0,1}$ or $\reals\cmpl\clcl{0,1}$, respectively).  In this case, $Q_\lambda$ is again obviously $k$-dense in these respective sets, for all $k>0$.  If $Q_\lambda$ is discrete and $\lambda\notin\{0,1\}$, then given $k>0$, Pinch implicitly proves $k$-density of $Q_\lambda$ in $\reals\cmpl\clcl{-A,A}$ for some $A>0$ (depending on $k$)~\cite{Pinch}.
\end{remark}

%

We now turn to the second main result of this section, showing that $\lambda$ is an algebraic integer.  Pinch's proof for real $\lambda > 1$ works by showing that every sufficiently large $x\in Q_\lambda$ is a $\ints$-linear combination of elements of $Q_\lambda \intersect \opop{0,R}$ for some fixed $R>0$.  In this case, given $x$, he finds elements $u,v\in Q_\lambda$ such that $u < x < v$ and that are ``close enough'' to $x$ so that the three points $r := x\xl u$, $s := v\xl u$, and $t := v\xl x$ are all strictly between $0$ and $x$.  One has $x = r-s+t$, and then he can argue by induction using the discreteness of $Q_\lambda$.


Here, given $x\in Q_\lambda$ (for nonreal $\lambda$) such that $|x|$ is sufficiently large, we follow roughly the same outline as Pinch, using the $k$-density of $Q_\lambda$ to find $u,v\in Q_\lambda$ such that the points $r := x\xl u$, $s := v\xl u$, and $t := v\xl x$ are all smaller than $x$ in norm, allowing a similar inductive argument.  Our situation is complicated by the fact that, not only must $u$ and $v$ be close enough to $x$, they must also be oriented in suitable directions relative to $x$ and to each other.

\begin{lemma}\label{lem:reduce-norm}
For all $x\in Q_\lambda$ with $|x|$ sufficiently large, there exist $u,v\in Q_\lambda$ such that the three points $r := x \xm u$, \ $s := v \xm u$, and $t := v \xm x$ all have norms strictly smaller than $|x|$.
\end{lemma}

\begin{proof}
Recall that $q = \mu/(\mu-1)$, and it follows that $\mu = q/(q-1)$.  Also, $|q|>1$.  Let $a$ be the square root of $q$ with positive real part, i.e, $a^2 = q$ and $\Re(a) > 0$ (we know that $q \not< 0$).  Let $c := |a| = |q|^{1/2}$ and let $d := |a+1|$, noting that $1 < c < d$.  Choose $k$ such that
\begin{equation}\label{eqn:k}
0 < k < \min\left(\frac{d-c}{d-c+Bdc}\,,\;\frac{1}{B(c+c^{-1})+1}\right)\;,
\end{equation}
observing that $k<1$.  From (\ref{eqn:k}) it follows that
\begin{equation}\label{eqn:k-again}
0 < \frac{k}{1-k} < \frac{1}{B}\min\left(c^{-1}-d^{-1}\,,\;(c+c^{-1})^{-1}\right)\;.
\end{equation}

Given $k$ as above, let $R$ be as in Theorem~\ref{thm:k-dense}, and let $x$ be any element of $Q_\lambda$ such that $|x| > cR$.  Let $y := x/a$ and $z := ax$.  We have $R < |y| < |z|$, so by $k$-density we can choose $u,v\in Q_\lambda$ such that $y$ is $k$-close to $u$ and $z$ is $k$-close to $v$.  We have
\begin{align*}
|u| &\le |y| + |u-y| < |x|/c + k|u| &&\implies |u| < \frac{c^{-1}|x|}{1-k}\;, \\
|v| &\le |z| + |v-z| < c|x| + k|v| &&\implies |v| < \frac{c|x|}{1-k}\;.
\end{align*}
Define $r$, $s$, and $t$ as in the lemma.  Observe that $z\xm y = (1-\mu)ax + \mu x/a = (x/a)((1-\mu)q + \mu) = 0$, and thus, using (\ref{eqn:k-again}) for the last step,
\begin{align*}
|s| &= |v\xm u| \le |z\xm u| + |v\xm u - z\xm u| \le |z\xm y| + |z\xm u - z\xm y| + |v\xm u - z\xm u| \\
&= |z\xm u - z\xm y| + |v\xm u - z\xm u| = |\mu(u-y)| + |(1-\mu)(v-z)| < Bk(|u|+|v|) \\
&< B\left(\frac{k}{1-k}\right)(c+c^{-1})|x| < |x|\;.
\end{align*}
Using the fact that $\mu = q/(q-1) = a^2/(a^2-1)$, we get
\[ a\xm 1 = \left(1-\frac{a^2}{a^2-1}\right)a + \frac{a^2}{a^2-1} = \frac{a^2-a}{a^2-1} = \frac{a}{a+1}\;, \]
and we plug this into the following calculation:
\begin{align*}
|t| &= |v\xm x| \le |z\xm x| + |v\xm x - z\xm x| = |ax\xm x| +|(1-\mu)(v-z)| \le |x||a\xm 1| + B|v-z| \\
&< |x||a\xm 1| + Bk|v| < |x|\left(|a\xm 1| + B\frac{k}{1-k}c\right) = |x|\left(\left|\frac{a}{a+1}\right| + B\frac{k}{k-1}c\right) \\
&< |x|\left(\frac{c}{d} + (c^{-1} - d^{-1})c\right) = |x|\;. \\
|r| &= |x\xm u| \le |x\xm y| + |x\xm u - x\xm y| = |x\xm a^{-1}x| + |\mu(u-y)| = c^{-1}|x||a\xm 1| + B|u-y| \\
&< c^{-1}|x||a\xm 1| + Bk|u| < c^{-1}|x|\left(|a\xm 1| + B\frac{k}{1-k}\right) < |x|\left(|a\xm 1| + B\frac{k}{1-k}c\right) < |x|\;.
\end{align*}
(We reused some of the calculation for $|t|$ for the bound on $|r|$.)
\end{proof}


\begin{proof}[Proof of Theorem~\ref{thm:general-case}]
The case where $\lambda\in\reals$ was proved by Pinch~\cite{Pinch}, so we assume (as we have throughout this section) that $\lambda\notin\reals$.  Let $R$ and $c$ be as in the proof of Lemma~\ref{lem:reduce-norm}, and let $D := \{z\in Q_\lambda : |z| \le cR\}$.  We show first that every $x\in Q_\lambda$ is a $\ints$-linear combination of elements of $D$.  This is done by induction on $|x|$, which is possible because $Q_\lambda$ is discrete:  If $|x| \le cR$, then already $x\in D$ and we are done.  Otherwise, by Lemma~\ref{lem:reduce-norm} we have $u,v\in Q_\lambda$ such that $r$, $s$, and $t$ all have norm less than $|x|$, where $r$, $s$, and $t$ are as in Lemma~\ref{lem:reduce-norm}.  Obviously, $r,s,t\in Q_\lambda$, so applying the inductive hypothesis to $r$, $s$, and $t$, each is a $\ints$-linear combination of elements of $D$.  It is straightforward to check that $x = r-s+t$, and thus $x$ is a $\ints$-linear combination of elements of $D$ as well.  This ends the inductive argument.

Note that $D$ is finite, because $Q_\lambda$ is discrete.  Every element of $Q_\lambda$ can be expressed as $p(\lambda)$, where $p\in Q_{[x]}$ is a polynomial with integer coefficients.  Choose some positive integer $N$ large enough so that every $z\in D$ can be written as $p(\lambda)$ where $p\in \ints[x]$ and $\deg(p) < N$.  We have $\lambda^N \in Q_\lambda$.  By our inductive argument, $\lambda^N$ is a $\ints$-linear combination of elements of $D$, each of which is a $\ints$-linear combination of lower powers of $\lambda$.  Thus $\lambda$ is the root of an integer polynomial, and this polynomial is monic, having leading term $\lambda^N$.
\end{proof}

\subsection{Proof of Lemma~\ref{lem:technical}}

\begin{proof}[Proof of Lemma~\ref{lem:technical}]
If $Q_\lambda$ is not discrete, then it is dense in $\complexes$ (by Theorem~\ref{thm:convex-is-easy} and Corollary~\ref{cor:general-equivalences}) and we are done, so we can assume that $Q_\lambda$ is discrete.  We may also assume that $\Im(\lambda) > 0$, for otherwise, we can argue the following with $1-\lambda$ in place of $\lambda$ (recall that $Q_\lambda = Q_{1-\lambda}$).  If $|\lambda| < 1$, then $Q_\lambda$ is not discrete, so we can assume $|\lambda| \ge 1$.  We know from Proposition~\ref{prop:unit-circle} that there are exactly three values of $\lambda$ on the unit circle where $\Im(\lambda) > 0$ and $Q_\lambda$ is discrete:
\begin{align*}
e^{\pi i/3} &= \frac{1+i\sqrt 3}{2}\;, & e^{\pi i/2} &= i\;, & e^{2\pi i/3} &= \frac{-1+i\sqrt 3}{2}\;.
\end{align*}
For the first value, $\lambda = (1 + i\sqrt 3)/2$, one can see that $Q_\lambda = \ints[\lambda]$, the set of Eisenstein integers, and so $\nu$ exists.  (More explicitly, we can set
\[ \nu := (5+i\sqrt 3)/2 = 2+\lambda = (\lambda\xl 1)\xl 2 = (\lambda\xl 1)\xl ((\lambda\xl 1)\xl 1) \in Q_\lambda\;. \]
For the second value, $\lambda = i$, one can see that $Q_\lambda = \ints[i]$, the Gaussian integers, so $\nu$ clearly exists.  Explicitly, we can take
\[ \nu := 2+i = (i\xl 0)\xl 1 \in Q_\lambda\;. \]
For the third value, $\lambda = (-1+i\sqrt 3)/2$, we have $1-\lambda = (3 - i\sqrt 3)/2$, and we can take
\[ \nu := (5+i\sqrt 3)/2 = (1\xl (1-\lambda))\xl 1 \in Q_\lambda\;. \]
Thus from now on, we can assume that $|\lambda| > 1$.

If $\frac{\arg\lambda}{\pi}$ is irrational, then we can take $\nu$ to be some appropriate positive power of $\lambda$, so we can henceforth assume that $\frac{\arg\lambda}{\pi}$ is rational.  Let $n\in\ints$ be least such that $n>1$ and $\lambda^n>0$ (in fact, we must have $n>2$ since $\lambda\notin\reals$).  Then for any nonnegative $k\in\ints$, we have $\arg(\lambda^{kn}) = 0$, and thus $\arg(\lambda^{kn+1}) = \arg\lambda$.  Set $\gamma_k := 1 - \lambda^{kn+1}$.  We have $\gamma_k \in Q_\lambda$ for all integers $k\ge 0$, and $|\gamma_k| \ge |\lambda|^{kn+1} - 1 > 1$ if $k$ is sufficiently large.

Now consider $\arg\gamma_k$ for any $k$ such that $|\gamma_k|>1$.  If $\frac{\arg\gamma_k}{\pi}$ is irrational, then we can let $\nu$ be some appropriate positive power of $\gamma_k$, as we did above with $\lambda$.  Otherwise, let $m\in\ints$ be least such that $m>1$ and $(\gamma_k)^m > 0$.  Thus $\arg\gamma = 2\pi a/m$ for some $a\in\ints$ such that $-m/2\le a < m/2$ and $a$ is coprime with $m$.  Then there exists $b\in\ints$ such that $b>0$ and $m\divides ab-1$ ($b$ is a modular reciprocal of $a$ modulo $m$).  This gives $\arg((\gamma_k)^b) = 2\pi/m$.  If $m>12$, then we can set $\nu := (\gamma_k)^b$, giving $0 < \arg\nu = 2\pi/m < \pi/6$.  Thus the only unresolved case is where $m\le 12$.  Note that there are only finitely many possible values of $\arg\gamma_k$ with $m\le 12$.  Hence we finish by showing that there exists $k$ such that this case does not happen.

It is evident on geometrical grounds that
\[ (\arg\lambda) - \pi = (\arg(\lambda^{kn+1})) - \pi = \arg(-\lambda^{kn+1}) < \arg(1-\lambda^{kn+1}) = \arg\gamma_k < 0\;. \]
Letting $\theta_k := (\arg\gamma_k) - (\arg\lambda) + \pi$, we see that $\theta_k$ is one of the interior angles of the triangle $(0,-\lambda^{kn+1},\gamma_k)$, namely, the angle at the origin.  The interior angle at $-\lambda^{kn+1}$ is $\arg\lambda$.  We have $0<\theta_k<\pi/2$, and by the Law of Sines,
\[ \sin\theta_k = \frac{\sin(\arg\lambda)}{|\gamma_k|}\,. \]
Since $|\gamma_k| \rightarrow \infty$ as $k\rightarrow\infty$, it follows that $\theta_k \rightarrow 0$ as $k\rightarrow\infty$.  From this we see that there are infinitely many values of $\theta_k$, and thus of $\arg(\gamma_k)$, for different $k$, and so for some positive $k\in\ints$ we have $|\gamma_k|>1$ and $\arg((\gamma_k)^m) \not> 0$ for all integers $1\le m\le 12$.
%
\end{proof}

\section{$R_\lambda$ when $\Re(\lambda) = 1/2$}

As in previous sections, we use $\x$ without subscript to mean $\xl$.

Here we look at $R_\lambda$ for some $\lambda$ with real part $1/2$.  For these $\lambda$, we have $1-\lambda = \lambda^*$, and so $R_\lambda$ is closed under complex conjugate.  Furthermore, $R_\lambda$ is convex iff $R_{\lambda^*}$ is convex, and so we can assume throughout this section that $\Im(\lambda) \ge 0$.  We also have in particular, $\lambda\x 0 = (1-\lambda)\lambda = |\lambda|^2$, and so $|\lambda|^2 \in R_\lambda$.

\begin{proposition}\label{prop:weak}
Suppose $\Re(\lambda) = 1/2$ and $|\lambda| \le \sqrt 3$.  Then $R_\lambda$ is convex if and only if $|\lambda| \notin \{1, \sqrt 2,\sqrt 3\}$.
\end{proposition}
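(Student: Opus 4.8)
The plan is to prove the two directions separately. The non-convexity direction (when $|\lambda|\in\{1,\sqrt2\}$) I would obtain by exhibiting, in each case, a topologically closed discrete subring of $\complexes$ containing $\lambda$ and invoking Fact~\ref{fact:discrete-ring}. The convexity direction I would obtain by pushing the real number $|\lambda|^2$ into $R_\lambda$ and then reducing to the real-line results of the previous sections.

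\emph{Non-convexity.} Since $\Re(\lambda)=1/2$, the value of $\lambda$ is determined by $|\lambda|$ up to complex conjugation, and by Fact~\ref{fact:conjugate} it suffices to treat one representative of each conjugate pair. If $|\lambda|=1$, then $\lambda=(1\pm i\sqrt3)/2=e^{\pm i\tau/6}$, a primitive sixth root of unity, hence an Eisenstein integer (indeed $e^{i\tau/6}=1+e^{i\tau/3}$); the Eisenstein integers form the lattice $\ints+\ints e^{i\tau/3}$, which is a discrete, topologically closed subring of $\complexes$, so Fact~\ref{fact:discrete-ring} gives that $R_\lambda$ is a subset of this lattice. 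If $|\lambda|=\sqrt2$, then $\lambda=(1\pm i\sqrt7)/2$, and since $\lambda+\lambda^*=1$ and $\lambda\lambda^*=|\lambda|^2=2$, the number $\lambda$ is a root of the monic integer polynomial $x^2-x+2$, hence an algebraic integer. Then $\ints[\lambda]=\ints+\ints\lambda$ (using $\lambda^2=\lambda-2$), which is a lattice in $\complexes$ because $\lambda\notin\reals$, hence a discrete, topologically closed subring, and Fact~\ref{fact:discrete-ring} gives $R_\lambda\subseteq\ints[\lambda]$. In both cases $R_\lambda$ is a discrete set containing the two distinct points $0$ and $1$, so it cannot be convex, since a convex set containing two distinct points contains the nondegenerate segment joining them and therefore has no isolated points.

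\emph{Convexity.} Now suppose $\Re(\lambda)=1/2$, $|\lambda|<\p$, and $|\lambda|\notin\{1,\sqrt2\}$. Since $|\lambda|^2=1/4+\Im(\lambda)^2\ge1/4$, we have $\lambda\ne0$. If $|\lambda|<1$, then $R_\lambda$ is convex directly by Corollary~\ref{cor:unit-disk}. Otherwise $1<|\lambda|<\p$ and $|\lambda|\ne\sqrt2$. Using $1-\lambda=\lambda^*$, we get $\rho_{\lambda,0}(\lambda)=\lambda(1-\lambda)=\lambda\lambda^*=|\lambda|^2$, so $\mu:=|\lambda|^2\in R_\lambda$; moreover $\mu$ is real with $1<\mu<\p^2=1+\p$ and $\mu\ne2$, so $\mu\in\opop{1,2}\union\opop{2,1+\p}$. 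If $\mu\in\opop{1,2}$, then $0<|1-\mu|<1$, so $R_\mu$ is convex by Corollary~\ref{cor:unit-disk}; if $\mu\in\opop{2,1+\p}$, then $R_\mu$ is convex by the proposition asserting that $R_\nu$ is convex whenever $2<\nu<1+\p$. Either way $R_\mu$ is convex, and since $\mu\in R_\lambda$, Corollary~\ref{cor:convex-extend} yields that $R_\lambda$ is convex.

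\textbf{Main obstacle.} Given the machinery already developed, this is largely an exercise in assembling prior results; the only step that needs real care is the identification of $\lambda$ in the two exceptional cases and the verification that the ambient rings $\ints+\ints e^{i\tau/3}$ and $\ints+\ints\lambda$ are discrete and topologically closed. The conceptual point — and the reason the exceptions are exactly $|\lambda|\in\{1,\sqrt2\}$ — is that $\Re(\lambda)=1/2$ forces $|\lambda|^2\in R_\lambda$, which sends the question back to the real line, where $|\lambda|^2=1$ corresponds to $\lambda$ being a sixth root of unity and $|\lambda|^2=2$ corresponds to the known non-convex point $R_2\subseteq\ints$.
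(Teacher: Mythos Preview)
Your proof is correct and follows essentially the same approach as the paper: for non-convexity you exhibit the discrete subrings (Eisenstein integers when $|\lambda|=1$, $\ints[\lambda]$ when $|\lambda|=\sqrt2$) and invoke Fact~\ref{fact:discrete-ring}; for convexity you use $\rho_{\lambda,0}(\lambda)=|\lambda|^2\in R_\lambda$ and reduce to the real results for $\mu\in\clop{1/4,1+\p}-\{1,2\}$. The only cosmetic difference is that you split off $|\lambda|<1$ and apply Corollary~\ref{cor:unit-disk} directly to $\lambda$, whereas the paper handles all cases uniformly through $|\lambda|^2$.
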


\begin{proof}
If $|\lambda| = \sqrt n$ for any $n\in\posints$, then $\lambda = (1 + i\sqrt{4n-1})/2$.  In this case, $\lambda$ is a nonreal quadratic integer.  (In
fact, $\lambda$ is a root of the monic, quadratic polynomial $x^2 - x
+ n \in \ints[x]$, which is irreducible over $\rats$.)  Thus $R_\lambda
\subseteq \ints[\lambda] = \ints +\lambda\ints$, which is a discrete subring of $\complexes$.

Now suppose $|\lambda| \notin \{1,\sqrt 2,\sqrt 3\}$.  We have $1/4 \le |\lambda|^2 < 3$ but $|\lambda|^2 \notin \{1,2\}$.  If, in addition, $|\lambda|^2\neq 1+\p$, then $R_{|\lambda|^2}$ is convex by previous results.  Since $|\lambda|^2 \in R_\lambda$, we have that $R_\lambda$ is convex for these $\lambda$ by Corollary~\ref{cor:convex-extend}.

Finally, we consider the case where $|\lambda|^2 = 1+\p$, or equivalently, $\lambda = (1+i\sqrt{5+2\sqrt 5})/2$.  We have $|\lambda| = \p$ in this case, and in fact, $\lambda = \p e^{i\tau/5}$.  The points $0,1,\lambda$ form the vertices of an acute Robinson triangle, i.e., a triangle with side lengths $(1,\p,\p)$.  Given what we know about $R_{|\lambda|^2} = R_{1+\p}$, it may come as a surprise that $R_\lambda$ is indeed convex.  We show below via an explicit derivation that the point $\mu := e^{i\tau(7/10)}$ is in $R_\lambda$.  (The derivation below was found by a computer-assisted search.)  It then follows from Corollary~\ref{cor:convex-extend} and Proposition~\ref{prop:unit-circle} that $R_\lambda$ is convex.

We first note that $\lambda$ is an algebraic integer of degree $4$ with minimum polynomial $x^4 - 2x^3 + 4x^2 - 3x + 1$.  Thus
\begin{equation}\label{eqn:reduce}
\lambda^4 = -1 + \lambda - 4\lambda^2 + 2\lambda^3\;.
\end{equation}
It can also be readily checked (on purely geometric grounds, even) that
$\mu = 2\lambda - \lambda^2 + \lambda^3$.  The derivation of $\mu$ follows:
\begin{align*}
x_0 &:= 0 \\
x_1 &:= 1 \\
x_2 &:= x_0\x x_1 = \lambda \\
x_3 &:= x_1\x x_0 = 1 - \lambda \\
x_4 &:= x_0\x x_2 = \lambda^2 \\
x_5 &:= x_2\x x_1 = \lambda(2-\lambda) = 2\lambda - \lambda^2 \\
x_6 &:= x_1\x x_3 = (1+\lambda)(1-\lambda) = 1 - \lambda^2 \\
x_7 &:= x_4\x x_0 = \lambda^2(1-\lambda) = \lambda^2 - \lambda^3 \\
x_8 &:= x_1\x x_6 = (1+\lambda+\lambda^2)(1-\lambda) = 1 - \lambda^3 \\
x_9 &:= x_7\x x_5 = \lambda^2(3-3\lambda+\lambda^2) = -1 + 3\lambda - \lambda^2 - \lambda^3 & \mbox{(using (\ref{eqn:reduce}))} \\
x_{10} &:= x_2\x x_8 = \lambda(1-\lambda)(2+\lambda+\lambda^2) = 1 - \lambda + 3\lambda^2 - 2\lambda^3 & \mbox{(using (\ref{eqn:reduce}))} \\
\mu = x_{11} &:= x_9\x x_{10} = \lambda^2(1-\lambda)(5-2\lambda+2\lambda^2) = 2\lambda - \lambda^2 + \lambda^3 & \mbox{(using (\ref{eqn:reduce}))}
\end{align*}
\end{proof}

\begin{corollary}
Suppose $\Re(\lambda) = 1/2$, and let $y := |\Im(\lambda)|$.   If $y < \sqrt{11}/2$ and $y\notin \left\{\sqrt 3/2, \sqrt 7/2\right\}$, then $R_\lambda$ is convex.
\end{corollary}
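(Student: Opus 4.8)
The plan is to observe that this corollary is nothing more than Proposition~\ref{prop:weak} re-expressed in terms of $y = |\Im(\lambda)|$ instead of $|\lambda|$, using the fact that $\Re(\lambda) = 1/2$ pins down a simple relationship between the two. Concretely, write $\lambda = 1/2 + i\,\Im(\lambda)$, so that
\[ |\lambda|^2 = \frac14 + \Im(\lambda)^2 = \frac14 + y^2\;. \]
First I would translate the two hypotheses of Proposition~\ref{prop:weak}. For the bound: $|\lambda| < \p$ is equivalent to $|\lambda|^2 < \p^2 = 1+\p$, i.e.\ $\frac14 + y^2 < 1+\p$, i.e.\ $y^2 < \frac34 + \p = \frac34 + \frac{1+\sqrt5}{2} = \frac{5+2\sqrt5}{4}$, which is exactly $y < \sqrt{5+2\sqrt5}/2$. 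For the exclusions: $|\lambda| = 1$ iff $\frac14 + y^2 = 1$ iff $y = \sqrt3/2$, and $|\lambda| = \sqrt2$ iff $\frac14 + y^2 = 2$ iff $y = \sqrt7/2$; hence $|\lambda| \notin \{1,\sqrt2\}$ iff $y \notin \{\sqrt3/2,\sqrt7/2\}$.

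Having matched the hypotheses, I would simply invoke Proposition~\ref{prop:weak}: under the stated conditions on $y$ we have both $|\lambda| < \p$ and $|\lambda| \notin \{1,\sqrt2\}$, so $R_\lambda$ is convex. (One should note in passing that the case $\Im(\lambda) = 0$, i.e.\ $\lambda = 1/2$, is harmless and already covered since then $y = 0 < \sqrt{5+2\sqrt5}/2$ and $R_{1/2} = \clcl{0,1}$ is convex by Fact~\ref{fact:trivial}.)

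There is essentially no obstacle here: the entire content is the elementary identity $|\lambda|^2 = \frac14 + y^2$ together with the arithmetic simplification $\p^2 - \frac14 = \frac{5+2\sqrt5}{4}$. The only thing to be slightly careful about is that the corollary's hypothesis is phrased in terms of $y = |\Im(\lambda)|$, which is the quantity that actually determines $|\lambda|$ when $\Re(\lambda)$ is fixed — there is no sign ambiguity because $|\lambda|$ depends only on $\Im(\lambda)^2 = y^2$. So the proof is a two-line reduction to Proposition~\ref{prop:weak}.
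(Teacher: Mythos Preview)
Your proposal is correct and matches the paper's intent: the corollary is stated without proof in the paper precisely because it is an immediate restatement of Proposition~\ref{prop:weak} under the substitution $|\lambda|^2 = \tfrac14 + y^2$. Your arithmetic checks (in particular $\p^2 - \tfrac14 = \tfrac{5+2\sqrt5}{4}$ and the translations of $|\lambda|\in\{1,\sqrt2\}$ to $y\in\{\sqrt3/2,\sqrt7/2\}$) are exactly what is needed.
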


\section{$R_\lambda$ for $\lambda$ contained in a discrete subring of $\complexes$}\label{sec:discrete-ring}

In this section, consider the case (hinted at in the previous section) where $\lambda$ belongs to a discrete subring of $\complexes$.  We start with two standard lemmas that characterize the discrete subrings of $\complexes$.

\begin{lemma}\label{lem:discrete-ring}
Suppose $D$ is a subring of $\complexes$ that is discrete in the induced topology.  Then no two distinct elements of $D$ are less than unit distance apart.  Consequently, $D$ is (topologically) closed, and $D\intersect\reals = \ints$.
\end{lemma}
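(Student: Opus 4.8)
The plan is to prove the three assertions in order, using only the ring and discreteness hypotheses on $D$. First I would establish the core claim: $D$ contains no two distinct points at distance less than $1$. Suppose for contradiction that $u,v\in D$ with $0<|u-v|<1$, and set $w:=u-v\in D$ (using that $D$ is a ring, hence closed under subtraction). Then $0<|w|<1$, and since $D$ is a ring it is closed under multiplication, so $w^n\in D$ for every $n\in\posints$. But $|w^n|=|w|^n\to 0$, so the points $w,w^2,w^3,\ldots$ are infinitely many distinct elements of $D$ (distinct because their moduli are strictly decreasing, as $|w|<1$) all lying in the closed disk of radius $|w|$ about $0$; hence they accumulate at $0$. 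This contradicts the discreteness of $D$ (every point of $D$, in particular $0\in D$, must have a neighborhood meeting $D$ only in $\{0\}$). Therefore no such pair exists.

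Next, $D$ is topologically closed. Let $z\in\overline D$; I want $z\in D$. Pick a sequence $(z_k)$ in $D$ converging to $z$; then $(z_k)$ is Cauchy, so for large $k,\ell$ we have $|z_k-z_\ell|<1$, which by the core claim forces $z_k=z_\ell$. Thus the sequence is eventually constant, equal to some point of $D$, and that point must be its limit $z$; hence $z\in D$ and $D=\overline D$.

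Finally, $D\intersect\reals=\ints$. The inclusion $\ints\subseteq D$ holds because $D$ is a subring of $\complexes$ (it contains $1$, hence all integer multiples of $1$), and $\ints\subseteq\reals$ trivially, so $\ints\subseteq D\intersect\reals$. For the reverse inclusion, suppose $r\in D\intersect\reals$ but $r\notin\ints$; then the nearest integer $n:=\floor{r+1/2}$ satisfies $0<|r-n|<1$, and $r-n\in D$ (since $n\in\ints\subseteq D$ and $D$ is closed under subtraction), again contradicting the core claim. Hence every real element of $D$ is an integer, and $D\intersect\reals=\ints$.

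The only real content is the core claim, and within it the one step that needs care is the observation that the iterates $w,w^2,w^3,\ldots$ are genuinely distinct points of $D$ — this is where I use $|w|<1$ to get strictly decreasing moduli — so that they really do witness an accumulation point at $0$ and contradict discreteness; everything after that (closedness, the description of $D\intersect\reals$) is a routine consequence obtained by reducing to pairs at distance less than $1$.
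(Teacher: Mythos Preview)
Your proof is correct and follows essentially the same approach as the paper: the core claim is proved by considering the powers $(u-v)^n$ and observing that they accumulate at $0$, contradicting discreteness. The paper dismisses the two consequences with ``follow immediately,'' whereas you spell out the (routine) arguments for closedness and for $D\intersect\reals=\ints$; this extra detail is fine and does not constitute a different method.
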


\begin{proof}
Suppose for the sake of contradiction that $a,b\in D$ are such that $0<|a-b|<1$.  Then $(a-b)^n\in D \cmpl \{0\}$ for all integers $n>0$, and $\lim_{n\rightarrow\infty} (a-b)^n = 0$.  This means that $0\in D$ is an accumulation point of $D$, and hence $D$ is not discrete.  The other two consequences follow immediately.
\end{proof}

\begin{lemma}\label{lem:discrete-subring}
Suppose $D$ is a subring of $\complexes$ that is discrete in the induced topology.  Then either $D = \ints$ or $D = \ints[\alpha] = \ints + \alpha\ints$, where $\alpha$ is a nonreal quadratic integer.  Equivalently, either $D = \ints$ or there exists $n\in\posints$ such that $D = \ints + \alpha\ints$, where $\alpha$ is either $i\sqrt n$ or $(1+i\sqrt{4n-1})/2$.
\end{lemma}

\begin{proof}
$\ints$ is the smallest subring of $\complexes$ and is discrete.  If $D \ne \ints$, then choose some $\alpha \in D \cmpl \ints$.  Then $\alpha$ cannot be real by Lemma~\ref{lem:discrete-ring}.  Since $D\cmpl\ints$ is closed, we can choose $\alpha$ to have minimum norm.  This implies that $-1/2 \le \Re(\alpha) \le 1/2$, for otherwise, we could add some appropriate integer to $\alpha$ to reduce its norm.  We also have $|\alpha|\ge 1$ by Lemma~\ref{lem:discrete-ring}.

Since $\alpha\in D$, we have $\ints + \alpha\ints \subseteq D$.  We now show that $D \subseteq \ints + \alpha\ints$, and thus equality holds.  Suppose otherwise, and let $\beta$ be some element of $D \cmpl (\ints + \alpha\ints)$.  By adding some appropriate member of $\ints + \alpha\ints$ to $\beta$, we can assume that $\beta$ lies somewhere in the parallelogram $P$ with corners $(1+\alpha)/2$, $(1-\alpha)/2)$, $(-1+\alpha)/2$, and $(-1-\alpha)/2$, but not the origin.  $P$ is included in the larger parallelogram $P'$ with corners $\pm 1$ and $\pm\alpha$.  The norm of any non-corner point in $P'$ is strictly bounded by the norm of one of the corners of $P'$, which is $|\alpha|$, since $|\alpha| \ge 1$.  The corners of $P'$ are not included in $P$, and so we must have $|\beta| < |\alpha|$, contradicting the minimality of $|\alpha|$.

Thus $D = \ints[\alpha] = \ints + \alpha\ints$, and it follows that $\alpha$ is a quadratic integer.  By the quadratic formula, all quadratic integers are of the form $(m \pm \sqrt{m^2 - 4n})/2$ for some $m,n\in\ints$.  Since $\alpha\notin\reals$, we must have $n > 0$, and without loss of generality, we can assume $-1/2 < \Re(\alpha) \le 1/2$ and so $m \in \{0,1\}$, which gives the result.
\end{proof}

From Fact~\ref{fact:discrete-ring} and Lemma~\ref{lem:discrete-subring} it follows that if $D$ is discrete and $\lambda\in D$, then $R_\lambda = Q_\lambda \subseteq D$ and is discrete as well.  Sometimes equality holds in the inclusion above.  For example,

\begin{fact}
$R_2 = R_{-1} = \ints$.
\end{fact}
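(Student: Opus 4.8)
The plan is to prove both equalities by establishing two containments each, using Fact~\ref{fact:discrete-ring} for one direction and a direct inductive construction for the other. First, I would dispose of $R_2 = R_{-1}$ immediately by invoking Fact~\ref{fact:inner-dual}: since $-1 = 1 - 2$, we have $R_2 = R_{1-2} = R_{-1}$, so it suffices to analyze a single value, say $\lambda = 2$ (or $\lambda = -1$, whichever is more convenient; I will use $\lambda = 2$).

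For the containment $R_2 \subseteq \ints$: the ring $\ints$ is a subring of $\complexes$ that is topologically closed (being discrete, or directly), and $2 \in \ints$, so Fact~\ref{fact:discrete-ring} gives $R_2 \subseteq \ints$ at once. (Alternatively, one notes $\ints$ is trivially $2$-clonvex since $(1-2)a + 2b = 2b - a \in \ints$ whenever $a,b\in\ints$, and $\{0,1\}\subseteq\ints$, so minimality of $R_2$ gives the inclusion.)

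For the reverse containment $\ints \subseteq R_2$: I would show by induction that every integer lies in $R_2$. The base cases $0, 1 \in R_2$ hold by definition. For the inductive step, observe that $\rho_{n-1,n}(2) = (1-2)(n-1) + 2n = n+1$ and $\rho_{n+1,n}(2) = (1-2)(n+1) + 2n = n-1$; thus from any two consecutive integers already in $R_2$ we obtain the next one up and the next one down. Starting from $0,1 \in R_2$, a routine two-directional induction then places every integer in $R_2$. Combining the two containments yields $R_2 = \ints$, and hence $R_{-1} = \ints$ as well.

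There is essentially no obstacle here — the statement is a "Fact" precisely because both directions are immediate, one from the general machinery (Fact~\ref{fact:discrete-ring}) and one from a trivial explicit extrapolation identity. The only point requiring the least care is making sure the inductive step reaches \emph{negative} integers as well as positive ones, which the identity $\rho_{n+1,n}(2) = n-1$ handles symmetrically.
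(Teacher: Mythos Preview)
Your proposal is correct. The paper labels this statement a ``Fact'' and, per its convention (``we omit the proofs of Facts''), gives no proof at all; your argument---invoking Fact~\ref{fact:inner-dual} for $R_2=R_{-1}$, Fact~\ref{fact:discrete-ring} for $R_2\subseteq\ints$, and the obvious two-sided induction via $\rho_{n-1,n}(2)=n+1$ and $\rho_{n+1,n}(2)=n-1$ for $\ints\subseteq R_2$---is exactly the routine verification the paper has in mind.
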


Usually, equality does not hold; $R_2$ is the only case where equality holds for $D := \ints$.  $R_\lambda$ is a proper subset of $\ints$ for all $\lambda\ge 3$, as the next general lemma implies.

\begin{lemma}\label{lem:ideal}
Let $D$ be any subring of $\complexes$.  For any $\lambda\in D$, let $I_\lambda := \lambda(1-\lambda)D = \{a\lambda(1-\lambda) \mid a\in D\}$ be the ideal of $D$ generated by $\lambda(1-\lambda)$.  Then
\[ Q_\lambda \subseteq I_\lambda + \{0,1,\lambda,1-\lambda\}
\;. \]
If $D$ is discrete, then the same inclusion holds for $R_\lambda$.
\end{lemma}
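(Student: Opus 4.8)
The plan is to prove the inclusion by induction on the length of a derivation of a polynomial $P\in Q_{[x]}$, evaluated at $\lambda$. By Lemma~\ref{lem:poly-eval}, every element of $Q_\lambda$ has the form $P(\lambda)$ for some $P\in Q_{[x]}$, so it suffices to show that $P(\lambda)$ lies in $W_\lambda := I_\lambda \cup (I_\lambda+1) \cup (I_\lambda+\lambda) \cup (I_\lambda+(1-\lambda))$ for every $P\in Q_{[x]}$. Equivalently, working in the quotient ring $D/I_\lambda$, I want to show that the image $\bar P(\bar\lambda)$ of every derivable polynomial lands in the four-element set $\{\,\bar 0,\ \bar 1,\ \bar\lambda,\ \overline{1-\lambda}\,\}$. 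The point of passing to $D/I_\lambda$ is that $\lambda(1-\lambda)\equiv 0$, so $\lambda^2\equiv\lambda$ there; hence $\bar\lambda$ is idempotent and the ring $D/I_\lambda$ has a very rigid multiplicative structure on the relevant subset.

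\textbf{Key steps, in order.} First I would set $e:=\bar\lambda\in D/I_\lambda$ and record that $e^2=e$ and $(1-e)^2=1-e$ and $e(1-e)=0$. Second, the base case: the constant polynomials $0$ and $1$ evaluate to $\bar 0$ and $\bar 1$, both in the target set. Third, the inductive step: a derivation step produces $(1-x)S+xT$ from earlier polynomials $S,T$; evaluating at $\lambda$ gives $(1-e)\,s + e\,t$ where $s:=\bar S(\bar\lambda)$ and $t:=\bar T(\bar\lambda)$ are, by the inductive hypothesis, each one of $\bar 0,\bar 1,e,1-e$. So I just need to verify that for every one of the $4\times 4 = 16$ ordered pairs $(s,t)$ drawn from $\{\bar 0,\bar 1,e,1-e\}$, the combination $(1-e)s+et$ again lies in $\{\bar 0,\bar 1,e,1-e\}$. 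This is a finite check using only $e^2=e$, $(1-e)^2=1-e$, $e(1-e)=0$: for instance $(1-e)\cdot 1 + e\cdot 0 = 1-e$; $(1-e)e + e(1-e) = 0$; $(1-e)\cdot 1 + e\cdot 1 = 1$; $(1-e)\cdot 0 + e\cdot(1-e)=0$; $(1-e)(1-e)+e\cdot e = (1-e)+e = 1$; and so on — in every case the result collapses to one of the four allowed values because all "cross terms" involve $e(1-e)=0$. That completes the induction, giving $Q_\lambda\subseteq W_\lambda$.

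\textbf{The final sentence} about $R_\lambda$ is then immediate: if $D$ is discrete, then $D$ is topologically closed by Lemma~\ref{lem:discrete-ring}, so $I_\lambda$ (being a subgroup of the discrete group $D$, hence also discrete) is closed, and a finite union of cosets of a closed set is closed; therefore $W_\lambda$ is a closed $\lambda$-convex set containing $Q_\lambda$, whence $R_\lambda = \overline{Q_\lambda} \subseteq W_\lambda$ as well. (Alternatively, one invokes Fact~\ref{fact:discrete-ring} to get $R_\lambda\subseteq D$ and notes $R_\lambda = Q_\lambda$ in the discrete case, as remarked just before the lemma.)

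\textbf{The main obstacle} is essentially bookkeeping rather than conceptual: one must be careful that the $16$-case verification is done in the quotient ring $D/I_\lambda$ and not in $D$ itself — in $D$ the element $\lambda^2-\lambda$ is generally nonzero, and it is only modulo $I_\lambda$ that the idempotent relations hold and make the target set closed under the operation $(s,t)\mapsto(1-e)s+et$. A secondary point worth stating explicitly is that $I_\lambda$ being an ideal (not merely a subgroup) is what guarantees $s\in D/I_\lambda$ is well-defined and that multiplication behaves; but since $I_\lambda$ is defined as the ideal generated by $\lambda(1-\lambda)$, this is built in. No genuinely hard estimate or geometric argument is needed here.
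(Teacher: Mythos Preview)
Your proof is correct and is essentially the paper's approach: the paper's one-line sketch ``one merely checks that the right-hand side is $\lambda$-convex'' amounts to exactly the $16$-case verification you carry out, and your passage to the quotient $D/I_\lambda$ (where $\bar\lambda$ becomes idempotent) is just a clean way to organize that same check. The induction-on-derivations framing is logically equivalent to invoking minimality of $Q_\lambda$ once the target set is seen to be $\lambda$-convex and to contain $\{0,1\}$.
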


\begin{proof}[Proof sketch]
One merely checks that the right-hand side is $\lambda$-convex.
\end{proof}

The next corollary follows from Lemma~\ref{lem:ideal} by the Chinese Remainder Theorem.

\begin{corollary}\label{cor:ideal}
Let $D$ and $\lambda$ be as in Lemma~\ref{lem:ideal}, above.  Then
\[ Q_\lambda \subseteq (\lambda D + \{0,1\}) \intersect ((1-\lambda)D + \{0,1\})
\;. \]
If $D$ is discrete, then the same inclusion holds for $R_\lambda$.
\end{corollary}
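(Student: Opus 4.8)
The plan is to derive this directly from Lemma~\ref{lem:ideal}. Set
\[ U := I_\lambda \union (I_\lambda+1) \union (I_\lambda+\lambda) \union (I_\lambda+1-\lambda)\;. \]
Lemma~\ref{lem:ideal} gives $Q_\lambda\subseteq U$, and, when $D$ is discrete, $R_\lambda\subseteq U$ as well. So it suffices to prove that $U$ is contained in each of the two sets $A := \{a\lambda+b \mid a\in D \myand b\in\{0,1\}\}$ and $B := \{c(1-\lambda)+d \mid c\in D \myand d\in\{0,1\}\}$ appearing in the asserted intersection; then $Q_\lambda\subseteq U\subseteq A\intersect B$, and likewise with $R_\lambda$ in place of $Q_\lambda$ when $D$ is discrete.

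To see $U\subseteq A$, note that $a(1-\lambda)\in D$ for every $a\in D$ since $D$ is a ring, so each element of $I_\lambda$ can be rewritten as $a\lambda(1-\lambda) = (a(1-\lambda))\lambda$, which lies in $A$ with $b=0$. The remaining three cosets are handled by the same trick of moving the ``extra'' summand into the coefficient of $\lambda$ or into the constant $b$: $a\lambda(1-\lambda)+1 = (a(1-\lambda))\lambda+1 \in A$ with $b=1$; $a\lambda(1-\lambda)+\lambda = (a(1-\lambda)+1)\lambda \in A$ with $b=0$; and, using $1-\lambda = -\lambda+1$, $a\lambda(1-\lambda)+(1-\lambda) = (a(1-\lambda)-1)\lambda+1 \in A$ with $b=1$. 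In each case the coefficient of $\lambda$ lies in $D$ because $D$ is closed under its ring operations.

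The containment $U\subseteq B$ follows by the identical computation with $\lambda$ and $1-\lambda$ interchanged: the generator $\lambda(1-\lambda)$ of $I_\lambda$ is unchanged by this swap, and the identity $\lambda = 1-(1-\lambda)$ plays the role that $1-\lambda = -\lambda+1$ played above. This yields $I_\lambda\subseteq B$ and $I_\lambda+(1-\lambda)\subseteq B$ with $d=0$, and $I_\lambda+1\subseteq B$ and $I_\lambda+\lambda\subseteq B$ with $d=1$. Hence $U\subseteq A\intersect B$, which proves the corollary. There is essentially no obstacle beyond bookkeeping the four cosets against the two factors, and the $\lambda\leftrightarrow 1-\lambda$ symmetry halves even that.
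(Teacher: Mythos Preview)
Your proof is correct and follows exactly the route the paper intends: the corollary is stated immediately after Lemma~\ref{lem:ideal} with no separate proof, so the implicit argument is precisely the coset-by-coset verification you carry out. Your bookkeeping is accurate in all eight cases, and invoking the $\lambda\leftrightarrow 1-\lambda$ symmetry for the second containment is the right shortcut.
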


When applying Lemma~\ref{lem:ideal} with $D := \ints$, it suffices to consider $\lambda > 1$, so in this case, we will assume $\lambda \ge 2$.

\begin{corollary}\label{cor:mod}
For all $\lambda\in\ints$ such that $\lambda \ge 2$ and all $n\in\ints$, if $n\in R_\lambda$, then $n \equiv d\pmod{\lambda(\lambda-1)}$ for some $d\in\{0,1,\lambda,1-\lambda\}$.  Equivalently, if $n$ is in $R_\lambda$ then $n$ is congruent to either $0$ or $1$ modulo both $\lambda$ and $\lambda-1$.  In particular, if $n\in R_\lambda$, then $n\equiv n^2 \pmod{\lambda(\lambda-1)}$.
\end{corollary}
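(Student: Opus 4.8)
The plan is to obtain this as a direct specialization of Lemma~\ref{lem:ideal} to $D := \ints$. Since $\ints$ is discrete in $\complexes$ (Lemma~\ref{lem:discrete-ring}), Lemma~\ref{lem:ideal} gives
\[ R_\lambda \subseteq I_\lambda \union (I_\lambda + 1) \union (I_\lambda + \lambda) \union (I_\lambda + 1-\lambda)\;, \]
where $I_\lambda = \{ a\lambda(1-\lambda) \mid a\in\ints \}$. Now $\lambda(1-\lambda) = -\lambda(\lambda-1)$, so $I_\lambda$ is exactly the set of integer multiples of $\lambda(\lambda-1)$, and hence each coset $I_\lambda + c$ is precisely the residue class of $c$ modulo $\lambda(\lambda-1)$. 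This immediately yields the first, four-way congruence: any $n\in R_\lambda$ is congruent to one of $0$, $1$, $\lambda$, or $1-\lambda$ modulo $\lambda(\lambda-1)$.

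For the equivalent reformulation I would invoke the Chinese Remainder Theorem. Since $\lambda$ and $\lambda-1$ are consecutive integers, they are coprime, so reduction gives a ring isomorphism $\ints/\lambda(\lambda-1)\ints \cong \ints/\lambda\ints \times \ints/(\lambda-1)\ints$. Under this isomorphism, $0\mapsto(0,0)$, $1\mapsto(1,1)$, $\lambda\mapsto(0,1)$ (because $\lambda\equiv 1\pmod{\lambda-1}$), and $1-\lambda\mapsto(1,0)$ (because $1-\lambda\equiv 1\pmod{\lambda}$ and $1-\lambda\equiv 0\pmod{\lambda-1}$). Thus the four residues $0,1,\lambda,1-\lambda$ correspond exactly to the four pairs in $\{0,1\}\times\{0,1\}$, which says precisely that $n$ lies in one of these classes if and only if $n$ is congruent to $0$ or $1$ modulo $\lambda$ and also congruent to $0$ or $1$ modulo $\lambda-1$. (For $\lambda>2$ the four residues are distinct; when $\lambda=2$ they collapse in pairs, but this does not affect the statement.)

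Finally, for the ``in particular'' clause: if $n\equiv 0$ or $1\pmod{\lambda}$ then $\lambda\mid n(n-1)$, and likewise $\lambda-1\mid n(n-1)$; by coprimality $\lambda(\lambda-1)\mid n(n-1)=n^2-n$, i.e.\ $n\equiv n^2\pmod{\lambda(\lambda-1)}$. There is no genuinely hard step here, since all the substance is contained in Lemma~\ref{lem:ideal}; the only things to watch are the sign bookkeeping relating $\lambda(1-\lambda)$ to $\lambda(\lambda-1)$ and the routine CRT verification above.
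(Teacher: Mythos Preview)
Your proposal is correct and matches the paper's intended argument: the paper gives no separate proof for this corollary, treating it as an immediate specialization of Lemma~\ref{lem:ideal} with $D=\ints$, which is exactly what you do. Your CRT verification of the ``equivalently'' clause and the $n\equiv n^2$ consequence are routine details the paper leaves to the reader.
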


One would generally like to know when equality holds in Lemma~\ref{lem:ideal} for discrete $D$.  We show that it holds at least for $D := \ints$ (Theorem~\ref{thm:period}, below), but we currently have no general proof for all discrete $D$.

We can at least prove a sufficient condition for equality (Theorem~\ref{thm:discrete-ring}, below).  First, a definition, which is justified by the lemma that follows it.

\begin{definition}\label{def:translation-point}
We will say that a point $x\in\complexes$ is a \emph{translation point} of $R_\lambda$ iff $\{x,x+1\} \subseteq R_\lambda$.
\end{definition}

\begin{lemma}\label{lem:translation-point}
If $x$ is a translation point for $R_\lambda$, then so is $-x$, and furthermore, $\rho_{x,x+1}(R_\lambda) = \rho_{-x,-x+1}(R_\lambda) = R_\lambda$.
\end{lemma}

\begin{proof}
$R_\lambda = 1 - R_\lambda$ by Corollary~\ref{cor:outer-dual}, so if $x$ is a translation point of $R_\lambda$, then so is $-x$.  We have $\rho_{x,x+1}(R_\lambda) \subseteq R_\lambda$ and $\rho_{-x,-x+1}(R_\lambda) \subseteq R_\lambda$ by Corollary~\ref{cor:translate}.  To get the reverse containments, we observe that $\rho_{x,x+1}$ and $\rho_{-x,-x+1}$ are inverses of each other, and so, applying $\rho_{-x,-x+1}$ to both sides of the first containment, we get
\[ R_\lambda = \rho_{-x,-x+1}(\rho_{x,x+1}(R_\lambda)) \subseteq \rho_{-x,-x+1}(R_\lambda)\;, \]
and applying $\rho_{x,x+1}$ to the second containment similarly yields $R_\lambda \subseteq \rho_{x,x+1}(R_\lambda)$.
\end{proof}

\begin{corollary}\label{cor:translation-point}
For any $\lambda\in\complexes$, the translation points of $R_\lambda$ form a subgroup of $\complexes$ under addition.
\end{corollary}

\begin{theorem}\label{thm:discrete-ring}
Let $D$ be a discrete subring of $\complexes$, and let $S\subseteq D$ generate the additive group of $D$.  Suppose $\lambda\in D$ is such that $a\lambda(1-\lambda)$ is a translation point of $R_\lambda$ for every $a\in S$.  Then
\begin{equation}\label{eqn:discrete-ring}
R_\lambda = I_\lambda + \{0,1,\lambda,1-\lambda\}
\;,
\end{equation}
where $I_\lambda := \lambda(1-\lambda)D \subseteq D$ is the ideal generated by $\lambda(1-\lambda)$.
\end{theorem}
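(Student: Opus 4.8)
The plan is to establish the two inclusions in~(\ref{eqn:discrete-ring}) separately, with essentially all of the work in the $\supseteq$ direction. The inclusion $R_\lambda \subseteq I_\lambda \union (I_\lambda+1)\union(I_\lambda+\lambda)\union(I_\lambda+1-\lambda)$ is immediate: it is exactly the conclusion of Lemma~\ref{lem:ideal} in the discrete case, which applies because $D$ is discrete (and note $R_\lambda = Q_\lambda \subseteq D$ by Fact~\ref{fact:discrete-ring} together with Lemma~\ref{lem:discrete-ring}). So the hypothesis about translation points is there to force the reverse inclusion.

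For the $\supseteq$ direction, I would first record the four ``base'' points: $0,1\in R_\lambda$ trivially, and $\lambda = \rho_{0,1}(\lambda) \in R_\lambda$ and $1-\lambda = \rho_{1,0}(\lambda)\in R_\lambda$ by $\lambda$-convexity. The crucial step is then to show that \emph{every} element of $I_\lambda$ is a translation point of $R_\lambda$. An arbitrary element of $I_\lambda$ has the form $c\lambda(1-\lambda)$ with $c\in D$; since $S$ spans $D$ as a $\ints$-module, write $c = \sum_{a\in S} n_a a$ with $n_a\in\ints$ (all but finitely many zero). Then $c\lambda(1-\lambda) = \sum_{a\in S} n_a\bigl(a\lambda(1-\lambda)\bigr)$ is a $\ints$-linear combination of the points $a\lambda(1-\lambda)$, each of which is a translation point of $R_\lambda$ by hypothesis. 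By the corollary asserting that the translation points of $R_\lambda$ form a $\ints$-submodule of $\complexes$, the combination $c\lambda(1-\lambda)$ is a translation point as well.

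To finish, I would invoke Lemma~\ref{lem:translation-point}. Since $\rho_{t,t+1}(z) = (1-z)t + z(t+1) = t+z$ for all $z$, that lemma says that for any translation point $t$ of $R_\lambda$ we have $t + R_\lambda = \rho_{t,t+1}(R_\lambda) = R_\lambda$. Taking $t := c\lambda(1-\lambda)$ and using $0,1,\lambda,1-\lambda \in R_\lambda$, we conclude $t,\ t+1,\ t+\lambda,\ t+1-\lambda \in R_\lambda$. As $c$ ranges over $D$, the point $t$ ranges over all of $I_\lambda$, so $I_\lambda \union (I_\lambda+1)\union(I_\lambda+\lambda)\union(I_\lambda+1-\lambda)\subseteq R_\lambda$, which completes the argument.

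I expect the only genuinely substantive step to be the middle one: recognizing that the spanning hypothesis, combined with the $\ints$-module structure of the set of translation points, promotes the finitely many assumed translation points $\{\,a\lambda(1-\lambda) : a\in S\,\}$ to the entire ideal $I_\lambda$. The remaining steps are bookkeeping with identities already in hand ($\rho_{0,1}(\lambda)=\lambda$, the fact that $\rho_{t,t+1}$ is translation by $t$, Lemma~\ref{lem:ideal}, and Lemma~\ref{lem:translation-point}), so no further obstacles are anticipated.
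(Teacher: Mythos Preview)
Your proposal is correct and follows essentially the same route as the paper: the $\subseteq$ direction via Lemma~\ref{lem:ideal} (using discreteness of $D$ to get $R_\lambda=Q_\lambda$), then the $\supseteq$ direction by using the corollary that translation points form a $\ints$-submodule to promote the hypothesis to all of $I_\lambda$, and finishing with Lemma~\ref{lem:translation-point} applied to each coset $I_\lambda+b$ with $b\in\{0,1,\lambda,1-\lambda\}$. Your write-up is slightly more explicit (spelling out the $\ints$-linear combination and the identity $\rho_{t,t+1}(z)=t+z$), but there is no substantive difference.
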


\begin{proof}
Since $D$ is discrete, we have $R_\lambda = Q_\lambda$, and so the $\subseteq$-inclusion holds by Lemma~\ref{lem:ideal}.  For the reverse inclusion, we have by assumption and Corollary~\ref{cor:translation-point} that every element of $I_\lambda$ is a translation point of $R_\lambda$.  Now suppose $x \in I_\lambda + b$ for some $b\in \{0,1,\lambda,1-\lambda\}$.  Note that $b\in R_\lambda$.  Writing $x := y + b$ for $y\in I_\lambda$, we have
\[ x = \rho_{y,y+1}(b) \in \rho_{y,y+1}(R_\lambda) = R_\lambda \]
by Lemma~\ref{lem:translation-point}, because $y$ is a translation point of $R_\lambda$.  This proves the $\supseteq$-inclusion.
\end{proof}

Theorem~\ref{thm:discrete-ring} is useful because the rings in question are finitely generated $\ints$-modules, and so Equation~(\ref{eqn:discrete-ring}) can be verified by testing a finite number of points.  For example, Figure~\ref{fig:R_2i} shows $R_{2i}$.  Equation~(\ref{eqn:discrete-ring}) holds for $\lambda := 2i$, because $\ints[2i]$ is spanned by $\{1,2i\}$, and it is evident from the picture that both $4+2i = \lambda(1-\lambda)$ and $-4+8i = 2i\lambda(1-\lambda)$ are both translation points of $R_{2i}$.
\begin{figure}
\centering
\includegraphics[width=0.5\textwidth]{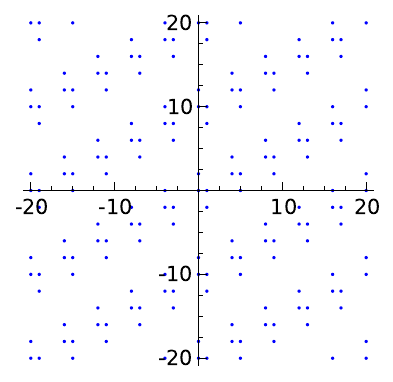}
\caption{A plot of $R_{2i}$.}\label{fig:R_2i}
\end{figure}

We end this section by showing (Lemma~\ref{lem:period} and Theorem~\ref{thm:period}) that Equation~(\ref{eqn:discrete-ring}) holds for all integer $\lambda\ge 2$ (that is, when $D = \ints$  and $I_\lambda = \lambda(1-\lambda)\ints$ in Theorem~\ref{thm:discrete-ring}).  It follows immediately that $R_\lambda$ is periodic for all $\lambda\in\ints\cmpl\{0,1\}$ and that the period is $\lambda(\lambda-1)$ if $\lambda \ge 3$ (Corollary~\ref{cor:periodic}).  Most of the technical difficulty is in proving Lemma~\ref{lem:period}, so we defer that proof until after Theorem~\ref{thm:period}.

\begin{lemma}\label{lem:period}
For every $\lambda\in\ints$ with $\lambda \ge 3$, the value $\lambda(\lambda-1)$ is contained in $R_\lambda$; in fact, it is in $Q_\lambda^{(\lambda-1)}$ unless $\lambda = 3$ or $\lambda = 5$, in which case, it is in $Q_\lambda^{(\lambda)}$.
\end{lemma}

\begin{theorem}\label{thm:period}
For all integers $\lambda \notin \{0,1\}$,
\[ R_\lambda = \lambda(\lambda-1)\ints + \{0,1,\lambda,1-\lambda\}\;. \]
\end{theorem}

\begin{proof}
We can assume WLOG that $\lambda \ge 2$, since both sides of the equation are unchanged by substituting $1-\lambda$ for $\lambda$ everywhere (q.v.\ Fact~\ref{fact:inner-dual}).  The $\lambda = 2$ case is obvious, so assume $\lambda \ge 3$.

Along with $0$ and $1$, the following are all elements of $R_\lambda$:
\begin{align*}
0\x 1 &= \lambda\;, \\
1\x 0 &= 1-\lambda\;, \\
\lambda\x 0 &= \lambda(1-\lambda)\;, \\
1 - \lambda(1-\lambda) &= \lambda(\lambda-1) + 1\;.
\end{align*}
Thus by Lemma~\ref{lem:period}, $\lambda(\lambda-1)$ is a translation point of $R_\lambda$.  Since the additive group of $\ints$ is generated by $\{1\}$, the theorem follows by Theorem~\ref{thm:discrete-ring}.
\end{proof}

\begin{proof}[Proof of Lemma~\ref{lem:period}]
Let $n>1$ be an integer.  By Lemma~\ref{lem:level-characterization}, $\lambda(\lambda-1)\in Q_\lambda^{(n)}$ if and only if there exist integers $b_0,\ldots,b_n$ such that $0\le b_i \le \binom{n}{i}$ for all $0\le i\le n$ and
\[ \lambda(\lambda-1) = \sum_{i=0}^n b_i \lambda^i(1-\lambda)^{n-i}\;. \]
Letting $b_0$ and $b_n$ both be zero\footnote{This is necessary, as can be seen by reducing the above equation modulo $\lambda$ and $\lambda-1$, respectively, and noting that $b_0,b_n\in\{0,1\}$.} and dividing both sides by $\lambda(1-\lambda)$, we get the equivalent equation
\begin{equation}\label{eqn:period-reduced-proof}
-1 = \sum_{i=1}^{n-1} b_i\lambda^{i-1}(1-\lambda)^{n-i-1} = \sum_{j=0}^m c_j \lambda^j\mu^{m-j}\;,
\end{equation}
where we define $m := n-2$ and $c_j := b_{j+1}$ for all $0\le j\le m$, and we set $\mu := 1-\lambda$ for convenience.  The range requirement of each $c_j$ is then
\begin{equation}\label{eqn:range-requirement}
0\le c_j \le \binom{m+2}{j+1}\hspace{0.5in}(\forall j,\;0\le j\le m)\;.
\end{equation}
Thus the theorem is proved for $\lambda$ if we can find an appropriate $m$ and integer values for $c_0,\ldots,c_m$ satisfying both (\ref{eqn:period-reduced-proof}) and (\ref{eqn:range-requirement}).

Here are explicit values for $m$ and $c_0,\ldots,c_m$ satisfying (\ref{eqn:period-reduced-proof}) and (\ref{eqn:range-requirement}) for $3\le\lambda\le 8$:
\[ \begin{array}{|c||c||c|c|c|c|c|c|}\hline
\lambda & m & c_0 & c_1 & c_2 & c_3 & c_4 & c_5 \\\hline\hline
3       & 1 & 2   & 1   &     &     &     &     \\\hline
4       & 1 & 3   & 2   &     &     &     &     \\\hline
5       & 3 & 4   & 1   & 2   & 3   &     &     \\\hline
6       & 3 & 5   & 2   & 3   & 4   &     &     \\\hline
7       & 4 & 6   & 9   & 0   & 3   & 5   &     \\\hline
8       & 5 & 7   & 2   & 6   & 4   & 3   & 6   \\\hline
\end{array} \]
Thus from now on, we can assume that $\lambda \ge 9$.  We then must set $n := \lambda-1$, whence $m = \lambda-3 \ge 6$.

To find $c_0,\ldots,c_m$ satisfying (\ref{eqn:period-reduced-proof}) and (\ref{eqn:range-requirement}), we first find values for the $c_j$ that satisfy (\ref{eqn:period-reduced-proof}) but ignore the range requirements (\ref{eqn:range-requirement}).  We then make a series of adjustments to the $c_j$ in a way that leaves the right-hand side of (\ref{eqn:period-reduced-proof}) unchanged, until all coefficients are in their required ranges.

Initially, we set $c_j := -\binom{m}{j}$ for all $0\le j\le m$.\footnote{It will be convenient in this proof to treat the $c_j$ as variables whose values can change, as in a computer algorithm, rather than choosing new symbols to denote changed values.  Which values of the $c_j$ we are referring to will always be clear from the context.}  These values satisfy (\ref{eqn:period-reduced-proof}) by the Binomial Theorem, using the fact that $\lambda + \mu = 1$.  Now what kind of adjustment to the $c_j$ preserves (\ref{eqn:period-reduced-proof})?  Choose some $j$ with $0\le j<m$.  If we simultaneously add $\lambda$ to $c_j$ and subtract $\mu$ from $c_{j+1}$ (equivalently, add $\lambda-1$ to $c_{j+1}$), then these two changes clearly cancel, and the right-hand side of (\ref{eqn:period-reduced-proof}) is unchanged.  We call this a \emph{$j$-adjustment}:
\begin{algo}
$j$\textbf{-adjust:} \\
\>$c_j \assn c_j+\lambda$ \\
\>$c_{j+1} \assn c_{j+1} + \lambda-1$ \\
\myend
\end{algo}
Note that this adjustment increases the values of both $c_j$ and $c_{j+1}$.  Our strategy is then to choose different values of $j$ in some order and, for each $j$, make just enough $j$-adjustments so that either $c_j$ is positive or $c_{j+1}$ is positive, depending on the value of $j$.

The order of our choices of $j$ is important.  The strictest range requirements are at the ``ends,'' i.e., for $j$ close to $0$ or $m$.  We make those adjustments first, working our way inward, finishing somewhere in the middle.  Recalling that $m\ge 6$ and $\lambda = m+3$, we define the middle index to be
\begin{equation}\label{eqn:p}
p := \bigceiling{\frac{\lambda m-\lambda+1}{2\lambda-1}} = \bigceiling{\frac{m^2+2m-2}{2m+5}} = \bigceiling{\frac{m}{2}}\;.
\end{equation}
Note that $3 \le p \le m-3$.  We choose this value because setting $j:=p$ maximizes the function $t(j)$, defined as
\[ t(j) := \binom{m}{j}\left(\frac{\lambda}{\lambda-1}\right)^j\;, \]
giving us the most leeway where we need it.  One can readily check that the sequence $t(0),\ldots,t(m)$ is bitonic, ascending monotonically from $t(0)$ to $t(p)$, then descending monotonically from $t(p)$ to $t(m)$.

Here is the algorithm to obtain $c_0,\ldots,c_m$ satisfying (\ref{eqn:period-reduced-proof}) and all range requirements (\ref{eqn:range-requirement}), with explanation afterwards:
\begin{algo}
// Initialization \\
$\myfor j\assn 0 \myto m \mydo$ \\
\>$c_j \assn -\binom{m}{j}$ \\
\myend\myfor \\
// Adjusting to the left of index $p$ \\
$\myfor j\assn 0 \myto p-1 \mydo$ \\
\>$\mywhile c_j < 0 \mydo$ \\
\>\>$j$\textbf{-adjust} \\
\>\myend\mywhile \\
\>// $0\le c_j < \lambda$, satisfying (\ref{eqn:range-requirement}), and $c_j$ is not changed subsequently. \\
\myend\myfor \\
// $c_p < \lambda$ \\
// Adjusting to the right of index $p$ \\
$\myfor j\assn m \mydownto p+2 \mydo$ \\
\>$\mywhile c_j < 0 \mydo$ \\
\>\>$(j-1)$\textbf{-adjust} \\
\>\myend\mywhile \\
\>// $0\le c_j < \lambda$, satisfying (\ref{eqn:range-requirement}), and $c_j$ is not changed subsequently. \\
\myend\myfor \\
// $c_{p+1} < \lambda$ \\
// Adjusting at index $p$ \\
$\mywhile c_p < 0 \textbf{ or } c_{p+1} < 0 \mydo$ \\
\>$p$\textbf{-adjust} \\
\myend\mywhile
\end{algo}

To show that this algorithm is correct, we first justify the assertions made in comments after the first two inner while-loops.  First, $c_0$ is changed by a single $0$-adjustment from $-1$ to $\lambda-1$.  For $k=1,2,\ldots,p-1$ in that order, $c_k$ is first increased by a sequence of $(k-1)$-adjustments followed by zero or more $k$-adjustments.  Since $c_{k-1} \ge -\binom{m}{k-1}$ immediately before the $(k-1)$-adjustments, and each $(k-1)$-adjustment increases $c_{k-1}$ by $\lambda$, the number $a$ of $(k-1)$-adjustments satisfies $a \le \bigceiling{\lambda^{-1}\binom{m}{k-1}}$.  Thus after all $(k-1)$-adjustments, we have (recalling that $t(k-1)\le t(k)$)
\begin{align*}
c_k &= -\binom{m}{k} + (\lambda-1)a < -\binom{m}{k} + (\lambda-1)\left(\lambda^{-1}\binom{m}{k-1}+1\right) \\
&= \binom{m}{k}\left(-1+\frac{t(k-1)}{t(k)}\right) + \lambda - 1 \le \lambda-1\;.
\end{align*}
Then the $k$-adjustments (if any) leave $0 \le c_k < \lambda$ as required, and $c_k$ is not changed subsequently.  This justifies the comment after the first inner while-loop.  Note that this reasoning also applies to $c_p$, showing that $c_p < \lambda$ after the second for-loop (although $c_p$ may still be negative at that point).

Justifying the comment after the second inner while-loop is similar.  First, $c_m$ is changed by a single $(m-1)$-adjustment, going from $-1$ to $\lambda - 2$.  Then for $k = m-1,m-2,\ldots,p+2$ in that order, $c_k$ is first increased by a sequence of $k$-adjustments followed by zero or more $(k-1)$-adjustments.  Since $c_{k+1} \ge -\binom{m}{k+1}$ immediately before the $k$-adjustments, and each $k$-adjustment increases $c_{k+1}$ by $\lambda-1$, the number $b$ of $k$-adjustments satisfies $b \le \bigceiling{(\lambda-1)^{-1}\binom{m}{k+1}}$.  Thus after all $k$-adjustments, we have (recalling that $t(k+1)\le t(k)$)
\begin{align*}
c_k &= -\binom{m}{k} + \lambda b < -\binom{m}{k} + \lambda\left((\lambda-1)^{-1}\binom{m}{k+1}+1\right) \\
&= \binom{m}{k}\left(-1+\frac{t(k+1)}{t(k)}\right) + \lambda \le \lambda\;.
\end{align*}
Then the $(k-1)$-adjustments (if any) leave $0 \le c_k < \lambda$ as required, and $c_k$ is not changed subsequently.  This justifies the comment after the second inner while-loop.  Like before, this reasoning also applies to $c_{p+1}$, showing that $c_{p+1} < \lambda$ after the third for-loop (although $c_{p+1}$ may still be negative at that point).

It remains to show that after the last while-loop, $c_p$ and $c_{p+1}$ satisfy (\ref{eqn:range-requirement}).  That loop results in both $c_p$ and $c_{p+1}$ being nonnegative.  Thus we are done if $c_p \le \binom{m+2}{p+1}$ and $c_{p+1} \le \binom{m+2}{p+2}$ in the end.  Let $r$ and $s$ be the number of $p$-adjustments needed to get $c_p \ge 0$ and $c_{p+1} \ge 0$, respectively.  Since $c_p \ge -\binom{m}{p}$ and $c_{p+1} \ge -\binom{m}{p+1}$ just before the final while-loop, we have
\begin{align*}
r &\le \bigceiling{\lambda^{-1}\binom{m}{p}} = \bigceiling{t(p)\frac{(\lambda-1)^p}{\lambda^{p+1}}} \;, & s &\le \bigceiling{(\lambda-1)^{-1}\binom{m}{p+1}} = \bigceiling{t(p+1)\frac{(\lambda-1)^p}{\lambda^{p+1}}}\;.
\end{align*}
Since $t(p) \ge t(p+1)$, the last while-loop runs at most $\bigceiling{\lambda^{-1}\binom{m}{p}}$ times, and since $c_p < \lambda$ and $c_{p+1} < \lambda$ just before this loop, the final values of $c_p$ and $c_{p+1}$ then satisfy
\begin{align*}
c_p &< \lambda + \lambda \bigceiling{\lambda^{-1}\binom{m}{p}} < \lambda + \lambda\left(\lambda^{-1}\binom{m}{p}+1\right) = \binom{m}{p} + 2\lambda\;, \\
c_{p+1} &< \lambda + (\lambda-1) \bigceiling{\lambda^{-1}\binom{m}{p}} \le \lambda + \lambda\bigceiling{\lambda^{-1}\binom{m}{p}} < \lambda + \lambda\left(\lambda^{-1}\binom{m}{p}+1\right) = \binom{m}{p} + 2\lambda\;.
\end{align*}
Since each quantity in the inequalities above is an integer, and there are two strict inequalities in each chain, we have both $c_p$ and $c_{p+1}$ ending up less than or equal to $f := \binom{m}{p} + 2(\lambda-1) = \binom{m}{p} + 2(m+2)$.  To finish the proof, we show that $f \le \binom{m+2}{p+1}$ and $f \le \binom{m+2}{p+2}$.

By a straightforward calculation,
\[ \binom{m+2}{p+2} = \frac{m-p}{p+1}\binom{m+2}{p+1} = \frac{(m+2)(m+1)}{(p+2)(p+1)}\binom{m}{p}\;. \]
By (\ref{eqn:p}), we have $m-p \le p+1$, whence $\binom{m+2}{p+2} \le \binom{m+2}{p+1}$, and so it suffices to show that $f \le \binom{m+2}{p+2}$, or equivalently,
\begin{align*}
\binom{m}{p} + 2(m+2) &\le \frac{(m+2)(m+1)}{(p+2)(p+1)}\binom{m}{p}\;, \\
\intertext{i.e.,}
2 &\le \binom{m}{p}\left(\frac{m+1}{(p+2)(p+1)}-\frac{1}{m+2}\right)\;.
\end{align*}
Finally, to show this last inequality, recalling that $m\ge 6$ and $3\le p\le m-3$, we have
\begin{align*}
\binom{m}{p}\left(\frac{m+1}{(p+2)(p+1)}-\frac{1}{m+2}\right) &= \binom{m}{p}\frac{(m+2)(m+1) - (p+2)(p+1)}{(m+2)(p+2)(p+1)} \\
&\ge \binom{m}{3} \frac{(m+2)(m+1) - (m-1)(m-2)}{(m+2)(m-1)(m-2)} \\
&= \binom{m}{3}\frac{6m}{(m+2)(m-1)(m-2)} = \frac{m^2}{m+2} \ge \frac{9}{2} \ge 2\;.
\end{align*}
\end{proof}

\begin{remark}
The ranks given in Theorem~\ref{thm:period} are tight, i.e., $6\notin Q_3^{(2)}$ and $20\notin Q_5^{(4)}$ (which can be checked by exhaustive search), and $\lambda(\lambda-1)\notin Q_\lambda^{(\lambda-2)}$ for any $\lambda\ge 3$.  The latter can be seen by reducing (\ref{eqn:period-reduced-proof}) modulo $\lambda$, which gives $c_0 \equiv -1 \pmod{\lambda}$.  Since $0\le c_0 \le \binom{m+2}{1} = m+2$, we must have $\lambda - 1 \le c_0 \le m+2$.  In particular, $m \ge \lambda-3$, which implies $n \ge \lambda-1$.  This also shows that no single choice of $m$ can satisfy (\ref{eqn:period-reduced-proof}) for all $\lambda$.
\end{remark}


\begin{corollary}\label{cor:periodic}
$R_\lambda$ is periodic with period $\lambda(\lambda-1)$ for all $\lambda\in\ints$ such that $\lambda\ge 3$.
\end{corollary}


\section{A characterization of $Q_{[x]}$ with some applications}
\label{sec:Q-x}

In this section we prove a simple characterization of $Q_{[x]}$ (see Definition~\ref{def:Q-x}) beyond the characterization given in Lemma~\ref{lem:level-characterization} (and by extension, a new characterization of $Q_\lambda$).  This lets us, among other things, list all the polynomials in $Q_{[x]}$ of degree $\le 2$ and get a finite upper bound on the number of polynomials in $Q_{[x]}$ of any given degree bound.

Recall that $Q_{[x]} \subseteq \ints[x]$.

\begin{theorem}\label{thm:Q-x}
Let $f$ be any polynomial in $\ints[x]$.  Then $f\in Q_{[x]}$ if and only if either $f=0$ or $f=1$ or $0<f(\lambda)<1$ for all $0<\lambda<1$.
\end{theorem}

\begin{corollary}\label{cor:Q-x-mult}
$Q_{[x]}$ is closed under multiplication and the operator $p\mapsto 1-p$.
\end{corollary}

\begin{corollary}\label{cor:Q-lambda-characterization}
For any $\lambda\in\complexes$,
\[ Q_\lambda = \{0,1\} \union \{ p(\lambda) \mid p\in\ints[x] \myand \mbox{$0<p(\mu)<1$ for all $0<\mu<1$} \}\;. \]
\end{corollary}

\begin{corollary}\label{cor:arith-progressions}
For every $\lambda\in\complexes$, \ $Q_\lambda$ contains arbitrarily long finite arithmetic progressions.  In particular, for every integer $n\ge 0$,
\[ \left\{ k\lambda^n(1-\lambda)^n \mid k\in\ints \myand 0\le k < 2^{2n} \right\} \subseteq Q_\lambda\;. \]
\end{corollary}

Before proving Theorem~\ref{thm:Q-x}, we need a definition and a few lemmas.
 We extend the definition of the binomial coefficient $\binom{n}{k}$ in the usual way for all $n,k\in\ints$ with $n\ge 0$, namely, by defining $\binom{n}{k} := 0$ if $k<0$ or $k>n$.  Then the recurrence $\binom{n+1}{k} = \binom{n}{k} + \binom{n}{k-1}$ holds for all such $n$ and $k$.

\begin{definition}\label{def:f-n}
Let $f\in\complexes[x]$ be any polynomial, and let $n$ be any nonnegative integer such that $\deg(f) \le n$.  We let $f^{(n)}_0,f^{(n)}_1,\ldots,f^{(n)}_n \in \complexes$ denote the unique coefficients such that $f(x) = \sum_{k=0}^n f^{(n)}_k x^k(1-x)^{n-k}$ (cf.\ Lemma~\ref{lem:lin-indep}).  We define $f^{(n)}_k := 0$ for all $k<0$ and $k>n$.
\end{definition}

The next two lemmas relate the $f^{(n)}$-coefficients for different $n$.  The first lemma says that the $f^{(n)}$-coefficients satisfy the same ``Pascal's triangle'' recurrence as the binomial coefficients.

\begin{lemma}\label{lem:f-n-recurrence}
Let $f$ and $n$ be as in Definition~\ref{def:f-n}.  Then for any $k\in\ints$, \ $f^{(n+1)}_k = f^{(n)}_k + f^{(n)}_{k-1}$.
\end{lemma}

\begin{proof}
This is clearly true for $k<0$ and $k>n+1$, since both sides are $0$.  Moreover, we have
\begin{align*}
f(x) &= \sum_{k=0}^n f^{(n)}_k x^k(1-x)^{n-k} = (x + (1 - x))\sum_{k=0}^n f^{(n)}_k x^k(1-x)^{n-k} \\
&= \sum_{k=0}^n f^{(n)}_k x^{k+1}(1-x)^{n-k} + \sum_{k=0}^n f^{(n)}_k x^k(1-x)^{n+1-k} \\
&= \sum_{k=1}^{n+1} f^{(n)}_{k-1} x^k(1-x)^{n+1-k} + \sum_{k=0}^n f^{(n)}_k x^k(1-x)^{n+1-k} \\
&= \sum_{k=0}^{n+1} \left(f^{(n)}_{k-1} + f^{(n)}_k\right) x^k(1-x)^{n+1-k}\;.
\end{align*}
Comparing coefficients with the equation $f(x) = \sum_{k=0}^{n+1} f^{(n+1)}_k x^k(1-x)^{n+1-k}$, we see that $f^{(n+1)}_k = f^{(n)}_{k-1} + f^{(n)}_k$ for all $0\le k\le n+1$.
\end{proof}

The next lemma extends the previous one in a natural way.

\begin{lemma}\label{lem:different-n}
Let $f\in\complexes[x]$ be any polynomial, and let $m$ be any natural number such that $m \ge \deg(f)$.  For any integers $n$ and $k$ such that $n\ge m$,
\[ f^{(n)}_k = \sum_{i=0}^m f^{(m)}_i\binom{n-m}{k-i}\;. \]
\end{lemma}

\begin{proof}
We proceed by induction on $n$.  If $n = m$, then for $0\le k\le m$ we have $\sum_{i=0}^m f^{(m)}_i\binom{n-m}{k-i} = f^{(m)}_k\binom{0}{0} = f^{(m)}_k$.  Now suppose the lemma holds for some $n\ge m$.  We have $f^{(n+1)}_k = f^{(n)}_k + f^{(n)}_{k-1}$ for any $k\in\ints$ by Lemma~\ref{lem:f-n-recurrence}, and so by the inductive hypothesis,
\begin{align*}
f^{(n+1)}_k &= f^{(n)}_{k-1} + f^{(n)}_k = \sum_{i=0}^m f^{(m)}_i\binom{n-m}{k-1-i} + \sum_{i=0}^m f^{(m)}_i \binom{n-m}{k-i} \\
&= \sum_i f^{(m)}_i\left[\binom{n-d}{k-i}+\binom{n-d}{k-i-1}\right] = \sum_i f^{(m)}_i \binom{n+1-d}{k-i}\;.
\end{align*}
Thus the lemma holds for $n+1$.
\end{proof}

\begin{lemma}\label{lem:there-is-a-t}
Let $f\in\complexes[x]$ be any polynomial, and let $d := \deg(f)$.  For every $\eps > 0$ there exists a $t\ge 0$ such that, for all natural numbers $n \ge d$ and all integers $k$ such that $t < k < n-t$,
\[ \bigabs{f(k/n) - \frac{f^{(n)}_k(1-k/n)^d}{\binom{n-d}{k}}} \le \eps\;. \]
\end{lemma}

\begin{proof}
We can assume WLOG that $f \ne 0$ (and thus $d\ge 0$).  Set $\lambda := k/n$.  From Lemma~\ref{lem:different-n}, we have that if $d < k < n-d$, then $0<\lambda<1$ and
\begin{align*}
f^{(n)}_k &= \sum_{i=0}^d f^{(d)}_i\binom{n-d}{k-i} \\
&= \binom{n-d}{k}\left(f^{(d)}_0 + f^{(d)}_1\frac{k}{n-d-k+1} + f^{(d)}_2\frac{k(k-1)}{(n-d-k+1)(n-d-k+2)} + \cdots\right) \\
&= \binom{n-d}{k} \sum_{i=0}^d f^{(d)}_i \prod_{j=1}^i \frac{k-j+1}{n-d-k+j} = \binom{n-d}{k} \sum_{i=0}^d f^{(d)}_i \prod_{j=1}^i \frac{\lambda-\frac{j-1}{n}}{1-\lambda-\frac{d-j}{n}}\;.
\end{align*}
We then have
\begin{align}
\bigabs{f(\lambda) - \frac{f^{(n)}_k(1-\lambda)^d}{\binom{n-d}{k}}} &= \bigabs{\sum_{i=0}^d f^{(d)}_i \lambda^i(1-\lambda)^{d-i} -  (1-\lambda)^d\sum_{i=0}^d f^{(d)}_i \prod_{j=1}^i \frac{\lambda-\frac{j-1}{n}}{1-\lambda-\frac{d-j}{n}}} \\
&\le \sum_{i=0}^d\bigabs{f^{(d)}_i}\cdot\bigabs{\lambda^i(1-\lambda)^{d-i} -  (1-\lambda)^d\prod_{j=1}^i \frac{\lambda-\frac{j-1}{n}}{1-\lambda-\frac{d-j}{n}}} \\
&= \sum_{i=0}^d\bigabs{f^{(d)}_i}\cdot\lambda^i(1-\lambda)^{d-i}\cdot\bigabs{1 - \prod_{j=1}^i \left(\frac{\lambda-\frac{j-1}{n}}{\lambda}\right)\left(\frac{1-\lambda}{1-\lambda-\frac{d-j}{n}}\right)} \\
&\le \sum_{i=0}^d\bigabs{f^{(d)}_i}\cdot\bigabs{1 - \prod_{j=1}^i \left(\frac{\lambda-\frac{j-1}{n}}{\lambda}\right)\left(\frac{1-\lambda}{1-\lambda-\frac{d-j}{n}}\right)}\;. \label{eqnitem:rhs}
\end{align}
The product $\prod_{j=1}^i\cdots$ above is positive.  If $k$ and $n-k$ are both large compared to $d$, then it is also close to $1$, but it may be less than or greater than $1$, depending on $\lambda$.  We will bound it from above and below.  Note that, for $1\le j \le d$,
\[ 1- \frac{d}{k} = 1 - \frac{d}{\lambda n} < \frac{\lambda - \frac{j-1}{n}}{\lambda} < 1 \]
and
\[ 1 < \frac{1-\lambda}{1-\lambda-\frac{d-j}{n}} < 1 + \frac{d/n}{1-\lambda - d/n} =  1 + \frac{d}{n-k-d}\;. \]
Letting $M := \sum_{i=0}^d \bigabs{f^{(d)}_i}$, we see that (\ref{eqnitem:rhs}) above is then less than or equal to
\begin{align*}
M\cdot \max&\left\{1 - \left(1 - \frac{d}{k}\right)^d,\;\left(1 + \frac{d}{n - k - d}\right)^d - 1\right\} \\
&\le M \cdot \max\left\{\left(1 + \frac{d}{k}\right)^d - 1,\;\left(1 + \frac{d}{n-k - d}\right)^d - 1\right\} \\
&\le M \cdot \left(\max\left\{\exp(d^2/k),\; \exp(d^2/(n-k-d))\right\} - 1 \right)
\end{align*}
Now we just need to let $t\ge d$ be large enough so that this quantity is at most $\eps$ when $t < k < n-t$.  Letting
\[ t := \bigceiling{\frac{d^2}{\log\left(\frac{\eps}{M} + 1\right)} + d} \]
suffices.  (Note that $M>0$ (because $f\ne 0$) and that $t$ only depends on $f$ and $\eps$ and not on $n$.)
\end{proof}

\begin{lemma}\label{lem:coeffs-ge-zero}
Let $f\in\reals[x]$ be a polynomial such that $f(\lambda) > 0$ for all $0 < \lambda < 1$.  Then for all sufficiently large $n$, \ $f^{(n)}_0, \ldots, f^{(n)}_n \ge 0$.
\end{lemma}

\begin{proof}
Notice that if $f^{(n)}_0, \ldots, f^{(n)}_n \ge 0$ for \emph{some} natural number $n\ge \deg(f)$, then by Lemma~\ref{lem:different-n}, we have $f^{(n')}_0, \ldots, f^{(n')}_n \ge 0$ for all $n' \ge n$ as well.  Thus it suffices to find some $n$ such that $f^{(n)}_0, \ldots, f^{(n)}_n \ge 0$.

We use induction on $d := \deg(f)$.  If $d = 0$, then $f(x)$ is some constant $c>0$.  We then have $f^{(0)}_0 = c > 0$, and so taking $n := 0$ suffices.  Now suppose $d > 0$ and the lemma holds for all polynomials of degree less than $d$.  If $f(0) = 0$, then $f(x) = xg(x)$ for some polynomial $g\in\reals[x]$ of degree $d-1$.  By the inductive hypothesis, there is an $n \ge d-1$ such that $g^{(n)}_0,\ldots,g^{(n)}_n \ge 0$.  Then $f^{(n+1)}_0 = f(0) = 0$, and for all $1\le k\le n+1$, we see that $f^{(n+1)}_k = g^{(n)}_{k-1} \ge 0$.  Thus the lemma holds for $f$ witnessed by $n+1$.

A similar argument applies if $f(1) = 0$: we get $f(x) = (1-x)h(x)$ for some $h$ of degree $d-1$.  Letting $n$ be such that $h^{(n)}_0,\ldots,h^{(n)}_n \ge 0$, we have $f^{(n+1)}_k = h^{(n)}_k \ge 0$ for all $0\le k\le n$, and furthermore, $f^{(n+1)}_{n+1} = f(1) = 0$.

We can now assume (by the continuity of $f$) that $f(0) > 0$ and $f(1) > 0$.  By compactness, there exists $\eps > 0$ such that $f(\lambda) \ge \eps$ for all $0\le \lambda\le 1$.  Now given $f$ and $\eps$, let $t$ be the number obtained from Lemma~\ref{lem:there-is-a-t}.  Then for all $n \ge d$ and all $k$ such that $t < k < n - t$, that lemma implies $f^{(n)}_k \ge 0$, because $f^{(n)}_k(1-k/n)^d/\binom{n-d}{k} \ge 0$.  (The latter quantity is within $\eps$ of $f(k/n)$, which itself is at least $\eps$.)

It remains to show that if $n$ is sufficiently large, then $f^{(n)}_k \ge 0$ and $f^{(n)}_{n-k} \ge 0$ for all $0\le k\le t$.  To this end, it suffices to prove the following statement for all $k$, which we do by induction on $k$:
\begin{quote}
There exists an integer $n_k \ge d$ such that, for all integers $n \ge n_k$, \ $f^{(n)}_k \ge \eps$ and $f^{(n)}_{n-k} \ge \eps$.
\end{quote}
For $k=0$, we have $f^{(n)}_0 = f(0) \ge \eps$ and $f^{(n)}_n = f(1) \ge \eps$ for all $n\ge d$, and so we can set $n_0 := d$.  Now let $k>0$, and assume the statement holds for $k-1$.  Let $y := f^{(n_{k-1})}_k$.  (It could be that $y<0$.)  By Lemma~\ref{lem:f-n-recurrence},
\[ f^{(n_{k-1}+1)}_k = f^{(n_{k-1})}_k + f^{(n_{k-1})}_{k-1} \ge y + \eps\;. \]
Similarly,
\[ f^{(n_{k-1}+2)}_k = f^{(n_{k-1}+1)}_k + f^{(n_{k-1}+1)}_{k-1} \ge y + 2\eps\;, \]
and so on, yielding, for all $\ell \ge n_{k-1}$,
\[ f^{(\ell)}_k \ge y + (\ell - n_{k-1})\eps\;. \]
Thus $f^{(\ell)}_k \ge \eps$ for all $\ell$ large enough.  By a similar argument, letting $z := f^{(n_{k-1})}_{n_{k-1}-k}$, we get
\[ f^{(m)}_{m-k} \ge z + (m-n_{k-1})\eps \]
for all $m \ge n_{k-1}$.  Now setting
\[ n_k := \bigceiling{\max\left\{n_{k-1},\;n_{k-1} + 1 - \frac{y}{\eps},\;n_{k-1} + 1 - \frac{z}{\eps}\right\}}\;, \]
the statement holds for $k$.
\end{proof}

\begin{proof}[Proof of Theorem~\ref{thm:Q-x}]
First we show the ``only if'' part.  If $f\in Q_{[x]}$, then $f\in Q_{[x]}^{(n)}$ for some $n$.  By Lemma~\ref{lem:level-characterization}, there exist  integers $b_0,\ldots,b_n$ such that $0\le b_k\le\binom{n}{k}$ for all $0\le k\le n$ and $f(x) = \sum_{k=0}^n b_k x^k(1-x)^{n-k}$.  (In fact, $b_k = f^{(n)}_k$ by Lemma~\ref{lem:lin-indep}.)  If $b_k = 0$ for all $k$, then $f=0$.  If $b_k = \binom{n}{k}$ for all $k$, then $f = (x+(1-x))^n = 1$ by the Binomial Theorem.  Otherwise, some $b_i>0$, and this clearly implies $f(\lambda) > 0$ for all $0<\lambda<1$; also, some $b_j < \binom{n}{j}$, which similarly implies $f(\lambda) < 1$ for all $0<\lambda<1$.

Now we show the ``if'' part.  If $f=0$ or $f=1$, then $f\in Q_{[x]}^{(0)}$, and we are done.  Otherwise, if $0<f(\lambda)<1$ for all $0<\lambda<1$, then by Lemma~\ref{lem:coeffs-ge-zero}, there exists $n'$ such that for all $n\ge n'$ and $0\le k\le n$, we have $f^{(n)}_k \ge 0$.  Letting $g := 1-f$, we have $0<g(\lambda)$ for all $0<\lambda<1$, and so also by Lemma~\ref{lem:coeffs-ge-zero}, there exists $n''$ such that $g^{(n)}_k \ge 0$ for all  $n\ge n''$ and $0\le k\le n$.  Now let $n := \max\{n',n''\}$.  From the Binomial Theorem,
\[ 1 = (x + (1-x))^n = \sum_{k=0}^n \binom{n}{k} x^k(1-x)^{n-k}\;, \]
viewed as a polynomial in $x$.  Then
\[ g(x) = 1 - f(x) = \sum_{k=0}^n\left[\binom{n}{k}-f^{(n)}_k\right]x^k(1-x)^{n-k}\;. \]
Comparing coefficients, we have $0\le g^{(n)}_k = \binom{n}{k}-f^{(n)}_k$, and thus $0\le f^{(n)}_k \le \binom{n}{k}$, for all $0\le k\le n$.

We can now apply Lemma~\ref{lem:level-characterization} again (letting $b_k := f^{(n)}_k$) to put $f$ into $Q_{[x]}^{(n)}$ and be done, provided $f^{(n)}_0,\ldots,f^{(n)}_n$ are all integers.  This is true, and one way to see it is as follows: Consider the $\complexes$-linear map $\map{\p}{\complexes^{n+1}}{\complexes^{n+1}}$ that maps any vector $(b_0,\ldots,b_n)$ to the unique vector $(c_0,\ldots,c_n)$ such that
\[ \sum_{k=0}^n b_k x^k(1-x)^{n-k} = \sum_{k=0}^n c_k x^k\;. \]
(The left-hand side is a polynomial in $x$ of degree $\le n$, so this map is well-defined and easily seen to be linear.)  Let $M$ be the $(n+1)\times(n+1)$ matrix representing $\p$.  By expanding the left-hand side above for various choices of $(b_0,\ldots,b_n)$, one sees that $M$ is an integer matrix.  Note that if $b_0 = \cdots = b_{i-1} = 0$ for some $i\le n$, then $x^i$ divides the left-hand side, and thus $c_0 = \ldots c_{i-1} = 0$.  This means that $M$ is triangular.  If, in addition, $b_i = 1$, then $c_i = 1$ as well, and this means that all diagonal entries of $M$ are $1$.  Therefore, $\det M = 1$, which implies $M^{-1}$ is an integer matrix.  Now since $f\in\ints[x]$, we have $f(x) = \sum_{k=0}^n c_k x^k$ for integers $c_0,\ldots,c_n$.  If follows that $f^{(n)}_0,\ldots,f^{(n)}_n$ are all integers, since $(f^{(n)}_0,\ldots,f^{(n)}_n) = \p^{-1}(c_0,\ldots,c_n)$.
\end{proof}

\begin{proposition}\label{prop:degree-2}\ 
\begin{enumerate}
\item
There are exactly four elements of $Q_{[x]}$ of degree $\le 1$, namely,
\begin{align*}
P_0 &:= 0\;, & P_1 &:= 1\;, & P_2 &:= x\;, & P_3 &:= 1-x\;.
\end{align*}
\item
There are exactly ten elements of $Q_{[x]}$ of degree $2$, namely,
\begin{align*}
P_4 &:= x^2\;, & P_5 &:= -x^2+1\;, \\
P_6 &:= -x^2+2x\;, & P_7 &:= x^2-2x+1\;, \\
P_8 &:= -x^2+x\;, & P_9 &:= x^2-x+1\;, \\
P_{10} &:= -2x^2+2x\;, & P_{11} &:= 2x^2-2x+1\;, \\
P_{12} &:= -3x^2+3x\;, & P_{13} &:= 3x^2-3x+1\;.
\end{align*}
\end{enumerate}
\end{proposition}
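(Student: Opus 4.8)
The plan is to combine the necessary conditions of Lemma~\ref{lem:Q-x} with a handful of explicit derivations.

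\emph{Degree $\le 1$.} Every $P\in Q_{[x]}$ of degree $\le 1$ has the form $P = a+bx$ with $a,b\in\ints$ (since $Q_{[x]}\subseteq\ints[x]$), and the first part of Lemma~\ref{lem:Q-x} forces $a = P(0)\in\{0,1\}$ and $a+b = P(1)\in\{0,1\}$. A polynomial of degree $\le 1$ is determined by its values at $0$ and $1$, so the four choices of $(P(0),P(1))$ leave at most the four polynomials $P_0 = 0$, $P_1 = 1$, $P_2 = x$, $P_3 = 1-x$. All four are in $Q_{[x]}$: the two constants by definition, $x = (1-x)P_0 + xP_1$, and $1-x = (1-x)P_1 + xP_0$. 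This settles part~(1).

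\emph{Degree $2$: the list of candidates.} For part~(2), write $P = \alpha x^2 + \beta x + \gamma$ with $\alpha,\beta,\gamma\in\ints$ and $\alpha\ne 0$. The first part of Lemma~\ref{lem:Q-x} gives $\gamma = P(0)\in\{0,1\}$ and $\alpha+\beta+\gamma = P(1)\in\{0,1\}$, and since $P$ is nonconstant the second part gives $0 < P(\lambda) < 1$ for every $\lambda\in\opop{0,1}$. I would split into four cases according to $(P(0),P(1))$. In each, the two equalities eliminate $\beta$ and put $P$ into the factored form
\[ (0,0)\colon\ P = -\alpha\,x(1-x); \qquad (1,1)\colon\ P = 1 - \alpha\,x(1-x); \]
\[ (0,1)\colon\ P = x\bigl(1-\alpha(1-x)\bigr) = 1 - (1-x)(1+\alpha x); \qquad (1,0)\colon\ P = (1-x)(1-\alpha x). \]
Since $x(1-x)$ takes all values in $\opcl{0,1/4}$ on $\opop{0,1}$, the requirement $0<P(\lambda)<1$ there forces $\alpha\in\{-1,-2,-3\}$ in the $(0,0)$ case (giving $P_8,P_{10},P_{12}$) and $\alpha\in\{1,2,3\}$ in the $(1,1)$ case (giving $P_9,P_{11},P_{13}$); and in each of the two mixed cases a short sign analysis of the linear factors occurring in the factorizations of $P$ and of $1-P$ forces $\alpha\in\{-1,1\}$ (giving $P_4,P_6$ and $P_5,P_7$, respectively). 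This shows the ten listed polynomials are the only candidates.

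\emph{Degree $2$: exhibiting the derivations.} It remains to verify all ten really belong to $Q_{[x]}$. As a shortcut, a one-line induction on derivation length shows $Q_{[x]}$ is closed under $P\mapsto 1-P$ (using $(1-x)+x=1$). Since $P_5 = 1-P_4$, $P_7 = 1-P_6$, $P_9 = 1-P_8$, $P_{11} = 1-P_{10}$, and $P_{13} = 1-P_{12}$, it then suffices to derive $P_4,P_6,P_8,P_{10},P_{12}$, for which one may take
\[ P_4 = (1-x)P_0 + xP_2,\quad P_8 = (1-x)P_2 + xP_0,\quad P_6 = (1-x)P_2 + xP_1, \]
\[ P_{10} = (1-x)P_6 + xP_8,\quad P_{12} = (1-x)P_6 + xP_5, \]
each checked by a routine expansion ($P_5$ is already in hand by the time $P_{12}$ is reached). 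Splicing these together yields a derivation of every $P_i$, $0\le i\le 13$, completing part~(2).

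\emph{Where the work is.} There is no deep difficulty here; the only real effort is the bookkeeping in the degree-$2$ case split, and the one point demanding care is that $0<P(\lambda)<1$ is imposed on the \emph{open} interval $\opop{0,1}$. This is exactly what eliminates the borderline integers: e.g., $\alpha=-4$ in the $(0,0)$ case is ruled out only because $P(1/2)=1$, and $\alpha=\pm2$ in the mixed cases is ruled out by the behaviour of a linear factor as $\lambda$ approaches an endpoint. Everything else — the degree-$\le1$ part, the enumeration of the ten polynomials, and their derivations — is routine.
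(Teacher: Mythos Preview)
Your argument is correct. The overall shape matches the paper's---use Lemma~\ref{lem:Q-x} to bound the candidates, then exhibit derivations---but the two implementations differ in both halves.

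For narrowing the degree-$2$ candidates, the paper observes that a quadratic is determined by its values at three points and evaluates at $0$, $1/2$, $1$: since $P\in\ints[x]$, the value $P(1/2)$ is a multiple of $1/4$, so $0<P(1/2)<1$ forces $P(1/2)\in\{1/4,1/2,3/4\}$, giving $2\cdot 2\cdot 3 = 12$ nonconstant candidates at one stroke (two of which turn out linear). Your route---splitting on $(P(0),P(1))$, factoring, and reading off the allowed leading coefficients from the inequality $0<P<1$ on all of $\opop{0,1}$---reaches the same ten polynomials but trades the single interpolation count for four small sign analyses. The paper's trick is quicker; yours is more hands-on but makes the borderline exclusions ($\alpha=-4$, $\alpha=\pm 2$) visible as genuine boundary phenomena. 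For the derivations, the paper writes out all ten directly, whereas you first prove $Q_{[x]}$ is closed under $P\mapsto 1-P$ (a pleasant observation the paper does not isolate) and then derive only five. Both approaches are fine.
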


\begin{proof}
For (1.), we note that these are the only four polynomials $P\in\ints[x]$ of degree $\le 1$ satisfying the conclusion of Theorem~\ref{thm:Q-x}.

Any polynomial $P$ of degree $\le 2$ is uniquely determined by its values on three distinct inputs.  We consider $P(0)$, $P(1/2)$, and $P(1)$.  If, in addition, $P\in Q_{[x]}$ and is nonconstant, then by Theorem~\ref{thm:Q-x}, we have: (i) $P(0) \in \{0,1\}$; (ii) $P(1) \in \{0,1\}$; and (iii) $0 < P(1/2) < 1$.  Since $P\in\ints[x]$, \ $P(1/2)$ is a multiple of $1/4$, and thus (iii) implies $P(1/2) \in \{1/4,1/2,3/4\}$.  Taking all possible combinations, there are then at most $2\cdot 2\cdot 3 = 12$ many $P\in Q_{[x]}$ with degree $1$ or $2$ satisfying (i), (ii), and (iii).  Two of these have degree $1$ ($P_2$ and $P_3$, above).  The other ten have degree $2$ and are listed above as $P_4,\ldots,P_{13}$.  We verify that they are all in $Q_{[x]}$ by giving explicit derivations:
\begin{align*}
   P_4 &= (1-x)P_0 + xP_2\;, &    P_5 &= (1-x)P_1 + xP_3\;, \\
   P_6 &= (1-x)P_2 + xP_1\;, &    P_7 &= (1-x)P_3 + xP_0\;, \\
   P_8 &= (1-x)P_0 + xP_3\;, &    P_9 &= (1-x)P_1 + xP_2\;, \\
P_{10} &= (1-x)P_2 + xP_3\;, & P_{11} &= (1-x)P_3 + xP_2\;, \\
P_{12} &= (1-x)P_6 + xP_5\;, & P_{13} &= (1-x)P_7 + xP_4\;.
\end{align*}
\end{proof}

We can use the same technique to get finite upper bounds on the number of elements of $Q_{[x]}$ with any given degree.  If the degree is at least four, then slightly better bounds can be obtained by using a classic theorem of Chebyshev \cite{Chebyshev:oeuvresI} (see \cite[Chapter~21]{AZ:proofs-from-the-book4}) to bound the leading coefficient by $4^{n-1}$ in absolute value.  We also can eliminate some polynomials from $Q_{[x]}$ using the following fact:

\begin{fact}\label{fact:critical-points}
Let $P\in\reals[x]$ be any real polynomial such that $\{P(0),P(1)\} \subseteq \{0,1\}$.  Then $0<P(\lambda)<1$ for all $0<\lambda<1$ if and only if $0<P(r)<1$ for every root $r$ of $P'$ (the derivative of $P$) such that $0<r<1$.
\end{fact}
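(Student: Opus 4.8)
The statement is a biconditional whose forward implication is trivial, so the plan is to dispatch that in a line and put all the effort into the converse, which is a short exercise in elementary real analysis.

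\textbf{Forward direction.} Suppose $P$ satisfies condition~(2.). If $P$ is constant, then $P'$ is the zero polynomial, which I regard as having no roots, so the right-hand condition is vacuously true. If $P$ is non-constant, then $0<P(\lambda)<1$ for \emph{every} $\lambda\in\opop{0,1}$ by condition~(2.), and in particular for every root of $P'$ lying in $\opop{0,1}$.

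\textbf{Converse.} Assume $0<P(r)<1$ for every root $r$ of $P'$ in $\opop{0,1}$. I may assume $P$ is non-constant, since otherwise condition~(2.) holds vacuously. First I would invoke the Extreme Value Theorem: on the compact interval $\clcl{0,1}$ the continuous function $P$ attains a maximum value $M$, say at $x^\ast$. If $x^\ast\in\opop{0,1}$, then $x^\ast$ is an interior local maximum of the differentiable function $P$, so $P'(x^\ast)=0$; hence $x^\ast$ is a root of $P'$ in $\opop{0,1}$, and the hypothesis gives $M=P(x^\ast)<1$. If instead $x^\ast\in\{0,1\}$, then condition~(1.) forces $M=P(x^\ast)\in\{0,1\}$, so again $M\le 1$. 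Applying the same reasoning to the minimum of $P$ on $\clcl{0,1}$ (or to $1-P$, which also satisfies condition~(1.) and has the same critical points) shows that this minimum is $\ge 0$. Hence $0\le P\le 1$ on all of $\clcl{0,1}$.

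\textbf{Upgrading to the open interval.} It remains to strengthen this to the strict inequalities $0<P(\lambda)<1$ on $\opop{0,1}$ that condition~(2.) demands, and I would do this by contradiction: if $P(\lambda_0)=1$ for some $\lambda_0\in\opop{0,1}$, then since $M\le 1$ we get $M=1$ with the maximum attained at the interior point $\lambda_0$; as above, $P'(\lambda_0)=0$, so $\lambda_0$ is a root of $P'$ in $\opop{0,1}$, whence the hypothesis gives $P(\lambda_0)<1$, a contradiction. The symmetric argument excludes $P(\lambda_0)=0$ for $\lambda_0\in\opop{0,1}$. Thus $0<P(\lambda)<1$ throughout $\opop{0,1}$, i.e., $P$ satisfies condition~(2.).

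\textbf{Main obstacle.} There is essentially none; the argument is routine. The only things requiring care are the bookkeeping around constant $P$ (hence the convention about the zero polynomial) and the observation that condition~(2.) asks for \emph{strict} inequalities on the \emph{open} interval rather than merely $0\le P\le 1$ on $\clcl{0,1}$ --- and it is exactly here that condition~(1.), pinning $P(0)$ and $P(1)$ to $\{0,1\}$, combines with the interior-extremum criterion to force the contradiction in the last step.
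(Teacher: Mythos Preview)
Your argument is correct. Note that the paper does not actually supply a proof of this statement: it is labeled a ``Fact,'' and the paper's convention (stated in Section~2) is that Facts have routine, straightforward proofs which are omitted. Your write-up is exactly the kind of routine extreme-value argument the authors presumably had in mind, and your explicit handling of the constant case (adopting the convention that the zero polynomial has no roots) is a sensible way to make the biconditional literally true in that degenerate situation.
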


\newpage

\begin{center}
\noindent{\Large\bf Part II: Aperiodic Order}\addcontentsline{toc}{part}{Part II: Aperiodic Order}
\end{center}

\section{$R_\lambda$ for some algebraic integers $\lambda$}
\label{sec:alg-int}

In this section, we prove a result (Theorem~\ref{thm:main}, below) that gives some sufficient conditions for $R_\lambda$ to be discrete for certain algebraic integers $\lambda$, including some (e.g., $1+\p$) not belonging to any discrete subring of $\complexes$.  In fact, all cases we currently know of where $R_\lambda$ is discrete follow from Theorem~\ref{thm:main}.

A \emph{Pisot-Vijayaraghavan number} (or \emph{PV number} for short) is an algebraic integer $\alpha > 1$ whose Galois conjugates $\alpha'$ (other than $\alpha$) all lie inside the unit disk in $\complexes$, i.e., satisfy $|\alpha'| < 1$.  This notion can be relaxed to allow for non-real $\alpha$ by excluding both $\alpha$ and its complex conjugate $\alpha^*$ from the norm requirement.  We need a stronger definition.

\begin{definition}\label{def:strong-PV-number}
We call an algebraic integer $\alpha\in\complexes$ a \emph{strong PV number} iff its Galois conjugates, other than $\alpha$ and $\alpha^*$, all lie in the unit interval $\opop{0,1}$.  In this case, we also say that \emph{$\alpha$ is sPV\@}.  We say that a strong PV number $\alpha$ is \emph{trivial} if it has no conjugates other than $\alpha$ and $\alpha^*$ (i.e., no conjugates in $\opop{0,1}$).  Otherwise, $\alpha$ is \emph{nontrivial}.
\end{definition}

Nontrivial strong PV numbers include $1+\p$ and $2+\sqrt 2$, and there are infinitely many real, irrational---hence nontrivial---strong PV numbers (Corollary~\ref{cor:degree2}, below).  Every strong PV number greater than $1$ is a PV number, but not conversely; for example, $\p$ and $1+\sqrt 2$ are PV numbers but not sPV\@.  Theorem~\ref{thm:main} implies that $R_\lambda$ is discrete for all strong PV numbers $\lambda$.  This result does not extend to all PV numbers; for example, $R_\p = R_{1+\sqrt 2} = \reals$.

\begin{fact}\label{fact:spv-basic}
If $\alpha$ is sPV, then so are $\alpha^*$ and $1-\alpha$; furthermore, $\alpha \notin\opop{0,1}$.
\end{fact}

The trivial strong PV numbers come in two types:

\begin{fact}\label{fact:trivial-spv}
By Lemma~\ref{lem:discrete-subring}, every trivial sPV number is of one of the following two types:
\begin{enumerate}
\item
The strong PV numbers of degree $1$ coincide with the integers.
\item
All non-real algebraic integers of degree $2$ are (trivial) sPV\@.  These numbers coincide with the non-real members of discrete subrings of $\complexes$ and are of the form either $a+ib\sqrt n$ or $a+b(1+i\sqrt{4n-1})/2$, for integers $a,b,n$ with $b\ne 0$ and $n>0$.
\end{enumerate}
\end{fact}

Pinch proved that if $\lambda$ is a real sPV number, then $R_\lambda$ is discrete~\cite[Proposition~12]{Pinch}.

\begin{proposition}[Pinch]\label{prop:pinch-discrete}
If $\lambda$ is a real sPV number, then $R_\lambda$ is discrete.
\end{proposition}

His proof idea generalizes to complex $\lambda$ is a straightforward way.

\begin{proposition}\label{prop:complex-Pinch}
If $\lambda$ is a nonreal sPV number, then $R_\lambda$ is discrete.
\end{proposition}

\begin{proof}
Suppose $R_\lambda$ is not discrete.  Then $R_\lambda = \complexes$ by Corollary~\ref{cor:general-equivalences} and Theorem~\ref{thm:convex-is-easy}.  Since $Q_\lambda$ is dense in $R_\lambda$, there is a point $z\in Q_\lambda$ such that $0<|z|<1$.  Let polynomial $p\in Q_{[x]}$ be such that $p(\lambda) = z$ (Lemma~\ref{lem:poly-eval}).  Since $\lambda$ is an algebraic integer, so is $z$, and it follows that $z$ has integer norm in the algebraic sense---that is, $N(z)\in\ints$, where $N(z)$ is the product of the conjugates of $z$.  By Corollary~\ref{cor:norm}, there is a positive integer $m$ such that $\prod_\nu p(\nu) = N(z)^m$, where $\nu$ runs over the conjugates of $\lambda$.  These conjugates include $\lambda$ itself and its complex conjugate $\lambda^*$, and the rest of the conjugates (if any) all satisfy $0 < \nu < 1$.  We thus have
\[ N(z)^m = \prod_\nu p(\nu) = p(\lambda)p(\lambda^*)\prod_{\nu\,:\, 0<\nu<1} p(\nu) = |z|^2\prod_{\nu\,:\,0<\nu<1} p(\nu)\;. \]
We have $0 < p(\nu) < 1$ if $0 < \nu < 1$ (Theorem~\ref{thm:Q-x}), and thus $0 < N(z)^m < 1$.
But $N(z)^m$ is an integer.  Contradiction.
\end{proof}


Theorem~\ref{thm:main}, below, substantially strengthens Proposition~\ref{prop:complex-Pinch}, and leads to sharper results about the nature of $R_\lambda$, particularly, the cut-and-project schemes of Meyer (see Sections~\ref{sec:meyer-sets} and \ref{sec:cut-and-project-connection} below).  Its proof was found independently of Pinch's paper~\cite{Pinch} as were the connections to cut-and-project schemes explored in \cite{BeMo} for one specific $\lambda$.  Propositions~\ref{prop:pinch-discrete} and \ref{prop:complex-Pinch}, however, can be combined with results on affine embedding to give a short proof of Theorem~\ref{thm:main}.  (See Theorems~\ref{thm:sub-Meyer} and \ref{thm:sub-meyer-spv} in Section~\ref{sec:concise-proof}.)  The short proof has the disadvantage of not connecting with cut-and-project schemes at all, and for this reason we include the longer, more informative proof of Theorem~\ref{thm:main} in Section~\ref{sec:main}, below, which includes concepts used elsewhere in the paper.

\subsection{Meyer sets and cut-and-project schemes}
\label{sec:meyer-sets}

Before stating and proving Theorem~\ref{thm:main}, we recall some concepts from discrete geometry, particularly concepts relating to ordered but aperiodic point sets in Euclidean space.  The theory of such sets has gained intense interest recently, following the discovery of so-called ``quasicrystals''---materials whose atomic arrangement shares many properties of crystals (e.g., sharp spikes in X-ray diffraction patterns), but---unlike with true crystals---lacks translational symmetry.  Much of the mathematical theory of the corresponding point sets is due to Meyer~\cite{Meyer:sets72,Meyer:quasicrystals}, and it also relates closely to aperiodic tilings of the plane \cite{Penrose:tiling,deBruijn:Penrose-tilings,GS:tilings}.  This section draws somewhat from the recent exposition of Baake \& Grimm \cite{BG:aperiodic-order} as well as papers by Moody \cite{Moody:meyer-sets}.  Also consulted are some related papers by Lagarias \cite{Lagarias:meyer,Lagarias:quasicrystals}.  We do not need to present the concepts in their full generality.

\begin{definition}
Let $X$ be any metric space, and let $S$ be any subset of $X$ with at least two elements.
\begin{itemize}
\item
$S$ is \emph{uniformly discrete} iff there exists an $r>0$ such that $B_r(x) \intersect B_r(y) = \emptyset$ for all distinct $x,y\in S$.  The supremum of the set of such $r$ is the \emph{packing radius} of $S$, denoted $\packrad(S)$.
\item
$S$ is \emph{relatively dense} (in $X$) iff there exists an $R>0$ such that $\bigcup_{x\in S} B_R(x) = X$.  The infimum of the set of such $R$ is the \emph{covering radius} of $S$ (in $X$), denoted $\coverrad(S)$.
\item
$S$ is a \emph{Delone set} (in $X$) iff it is both uniformly discrete and relatively dense (in $X$).
\end{itemize}
\end{definition}

The packing radius is also $1/2$ times the infimum of interpoint distances in $S$.  Thus if $R_\lambda$ is discrete, then it has packing radius $1/2$ by Corollary~\ref{cor:general-equivalences}.  We will use the following lemma a couple of times in the proof of Theorem~\ref{thm:main}.  Stronger statements are possible, but this is all we need.

\begin{lemma}\label{lem:uniformly-discrete-cap-bounded}
If $A\subseteq\reals^n$ is uniformly discrete and $B\subseteq\reals ^n$ is bounded, then $A\cap B$ is finite.
\end{lemma}

\begin{proof}
Let $r>0$ be the packing radius of $A$, and let $R>0$ be such that $B$ is included in some closed ball of radius $R$.  Then
\[ |A\cap B| \le \left(1+\frac{R}{r}\right)^n\;, \]
because the open balls of radius $r$ centered at the points of $A\cap B$ are pairwise disjoint subsets of some fixed ball of radius $r+R$.
\end{proof}

The next definition is not Meyer's original definition, but was shown equivalent to it by Jeffrey Lagarias~\cite{Lagarias:meyer}.

\begin{definition}\label{def:Meyer-set}
A subset $S\subseteq\reals^d$ is a \emph{Meyer~set} iff $S$ is relatively dense in $\reals^d$ and $S-S$ is uniformly discrete.
\end{definition}

There are many equivalent characterizations of Meyer sets; see, for example, Moody~\cite[Theorem~9.1]{Moody:meyer-sets}.  All Meyer sets are Delone sets, but Meyer sets have additional properties, such as finite local complexity, not shared by all Delone sets.  Other properties of interest include repetivity, diffractivity, and aperiodicity.  A Meyer set may or may not possess any of these additional properties.  For definitions, see \cite{Moody:meyer-sets,BG:aperiodic-order}.

One way of producing aperiodic Meyer sets is through a \emph{cut-and-project scheme}.  Our definition here follows Moody \cite{Moody:meyer-sets} with some minor alterations.
Recall that if $\map{f}{X}{Y}$ is some function with domain $X$, and $A$ is any subset of $X$, then we let $f|A$ denote the function $f$ restricted to domain $A$.

\begin{definition}\label{def:cut-and-project-scheme}
A \emph{cut-and-project scheme} is a tuple $(G,\reals^n,L)$, where
\begin{itemize}
\item
$G$ is a locally compact abelian (topological) group,
\item
$n$ is a positive integer,
\item
$L$ is a discrete subgroup of $G\times\reals^n$ such that the quotient group $(G\times\reals^n)/L$ is compact,
\item
letting $\map{\pi_1}{G\times\reals^n}{G}$ and $\map{\pi_2}{G\times\reals^n}{\reals^n}$ be the two canonical projection maps,
\begin{enumerate}
\item
$\pi_1(L)$ is dense in $G$, and
\item
$\pi_2|L$ is one-to-one.
\end{enumerate}
\end{itemize}
$G$ is called the \emph{internal space} and $\reals^n$ the \emph{physical space} of the cut-and-project scheme.
\end{definition}

\begin{definition}\label{def:model-set}
Fix a cut-and-project scheme $\cM = (G,\reals^n,L)$.  A \emph{window} of $\cM$ is any relatively compact\footnote{A subspace $Y$ of a topological space $X$ is \emph{relatively compact} iff the closure of $Y$ is compact.} subset $W\subseteq G$ with nonempty interior.  A set $M\subseteq\reals^n$ is a \emph{model set} (of $\cM$) iff $M = \pi_2(L\intersect\pi_1^{-1}(W))$ for some window $W$.
\end{definition}

All model sets are Delone sets.  Model sets can also be used to characterize Meyer sets.  Citing work of Meyer~\cite{Meyer:sets72} and Lagarias~\cite{Lagarias:meyer} as well as his own work, Moody~\cite{Moody:meyer-sets} gives various characterizations of Meyer sets, two of which are the following:

\begin{fact}[Meyer, Moody~\cite{Moody:meyer-sets}]\label{fact:meyer-set-characterization}
Let $S$ be a relatively dense subset of $\reals^n$.  Then $S$ is a Meyer set if and only if one (or both) of the following equivalent conditions holds:
\begin{enumerate}
\item
$S$ is a subset of a model set.
\item
$S-S$ is uniformly discrete.
\end{enumerate}
\end{fact}

In this paper, $G$ will always be $\reals^m$ (for some positive $m$) with the usual vector addition, and $L$ will always be an integer lattice spanning $\reals^m\times\reals^n \cong \reals^{m+n}$.

\begin{remark}
Another means of generating Delone and Meyer sets is via substitution and inflation (see Lagarias \& Wang~\cite{LW:substitution-delone-sets}).
\end{remark}

We end this subsection with a standard result about cut-and-project schemes used to show aperiodicity of the corresponding model sets.


\begin{lemma}\label{lem:no-ap}
Let $\cM = (\reals^m,\reals^n,L)$ be a cut-and-project scheme with internal space $\reals^m$ and associated projectors $\pi_1$ and $\pi_2$ as in Definition~\ref{def:cut-and-project-scheme}.  If $\pi_1|L$ is one-to-one, then no model set of $\cM$ intersects any arithmetic progression at infinitely many points.
\end{lemma}


\begin{proof}
Let $M$ be a model set with corresponding window $W\subseteq\reals^m$ as in Definition~\ref{def:model-set}.  Let $A := \tuple{a_0, a_1, a_2, \ldots}$ be an infinite arithmetic progression in $\reals^n$ that intersects $\pi_2(L)$ in at least two distinct points, say $a_i$ and $a_j$ for some least $i<j$.  Since $\pi_2(L)$ is an additive subgroup of $\reals^n$, we get that $A\intersect\pi_2(L)$ is an arithmetic subprogression of $A$.  Letting $B := \tuple{b_0,b_1,b_2,\ldots}$ be this subprogression, we see that $b_k = a_{i+k(j-i)} = a_i+k(a_j-a_i)$ for $k=0,1,2,\ldots\,$.  Furthermore, $M\intersect A = M\intersect B$, because $M\subseteq\pi_2(L)$.

Let $\bv b_0,\bv b_1,\bv b_2,\ldots \in L$ be the unique vectors such that $\pi_2(\bv b_k) = b_k$ for $k=0,1,2,\ldots\,$.  For any such $k$ we have by linearity
\[ \pi_2(\bv b_0 + k(\bv b_1 - \bv b_0)) = b_0 + k(b_1 - b_0) = b_k = \pi_2(\bv b_k)\;, \]
and so $\bv b_k = \bv b_0 + k(\bv b_1 - \bv b_0)$ for all $k$, making $\tuple{\bv b_0, \bv b_1,\ldots}$ a proper arithmetic progression in $L$.  Define $c_k := \pi_1(\bv b_k) \in \reals^m$ for $k = 0,1,2,\ldots\,$.  Again, this time by linearity of $\pi_1$, we have $c_k = c_0 + k(c_1 - c_0)$ for all $k$.  Moreover, $c_0 \ne c_1$ because $\pi_1|L$ is one-to-one, making $\tuple{c_0,c_1,c_2,\ldots}$ a proper arithmetic progression.  Since $W$ is bounded, for all but finitely many $k$ we have $c_k \notin W$, which puts $\bv b_k$ out of $\pi_1^{-1}(W)$, and so puts $b_k$ out of $M$.  Thus $M$ intersects $B$ (hence $A$) at only finitely many points.
\end{proof}

We explore the connection between cut-and-project schemes and $\lambda$-convex sets in Section~\ref{sec:cut-and-project-connection}, below.  As a warm-up, we have the following proposition, which is an easy corollary to Corollary~\ref{cor:Q-lambda-characterization}:

\begin{proposition}\label{prop:Q-lambda-cut-and-project}
For any algebraic integer $\lambda$ of degree $d>0$,
\[ Q_\lambda \subseteq \{0,1\} \union \{ p(\lambda) \mid p\in\ints[x] \myand \deg(p) < d \myand \mbox{$0<p(\mu)<1$ for all $0<\mu<1$ conjugate to $\lambda$} \}\;. \]
\end{proposition}

\begin{proof}
Let $m\in\ints[x]$ be the minimal (monic) polynomial of $\lambda$ of degree $d$.  Corollary~\ref{cor:Q-lambda-characterization} says that $Q_\lambda = \{0,1\} \union \{ p(\lambda) \mid p\in\ints[x] \myand \mbox{$0<p(\mu)<1$ for all $0<\mu<1$} \}$.  Since $m$ is monic, we can write any $p\in\ints[x]$ as $p = qm+r$ for some $q,r\in\ints[x]$ with $\deg(r) < d$.  Moreover, if $\mu$ is conjugate to $\lambda$, then $p(\mu) = r(\mu)$, which means that we can restrict the degree of the polynomial in the set-former to be less than $d$ by substituting $r$ for $p$.
\end{proof}

We will see below that if $\lambda$ is an sPV number, then the right-hand side of Proposition~\ref{prop:Q-lambda-cut-and-project} is discrete.  In fact, it is the model set of a cut-and-project scheme.

\subsection{Main results}
\label{sec:main}

Proposition~\ref{prop:pinch-discrete}~\cite{Pinch} is a special case of the following theorem, the main theorem of this section:

\begin{theorem}\label{thm:main}
If $\lambda$ is sPV, then $R_\lambda(S)$ is uniformly discrete for any finite set $S\subseteq \rats(\lambda)$.
\end{theorem}

Let $D$ be a discrete subring of $\complexes$.  $D$ is closed by Lemma~\ref{lem:discrete-ring}.  If $\lambda$ belongs to $D$, then $R_\lambda$ is discrete for an ``easy'' reason: $R_\lambda\subseteq D$.  Proposition~\ref{prop:1-plus-phi} gave us our first example of a discrete $R_\lambda$ that is \emph{not} contained in any discrete subring of $\complexes$.  Theorem~\ref{thm:main} will give us infinitely many other examples, i.e., values of $\lambda$ such that $R_\lambda$ is discrete but $\ints[\lambda]$ is not.

To prove Theorem~\ref{thm:main}, we build up some more machinery using some results of linear algebra---especially the spectral decomposition of an operator.

Some of our proof technique resembles work on generalized Fibonacci sequences done by Kalman \cite{Kalman:generalized-fib}, who found a closed form for the $\ordth{n}$ element $a_n$ of a sequence satisfying the $\ordth{k}$ order linear recurrence $a_n = c_1 a_{n-1} + \cdots + c_k a_{n-k}$, where the $c_i$ are fixed integers.
Both proofs use the spectral decomposition of the companion matrix of an irreducible polynomial.

Throughout this section, for convenience, we start the indexing of vectors and matrices with $0$ instead of with $1$.  If $E$ is some expression of matrix type, we let $[E]_{ij}$ denote the $\ordth{(i,j)}$ entry of $E$, for any appropriate integers $i,j\ge 0$.  We let an expression of the form $(x_0,x_1,x_2,\ldots)$ denote either a row vector or a column vector, depending on the context.

We first extend our definition of $\xl$ and $\rho_{a,b}$ to a more general setting.  As in previous sections, we use $\x$ without a subscript to denote $\xl$.

\begin{definition}\label{def:E-rho-general}\rm
Let $R$ be a commutative ring and let $M$ be an $R$-module.  For any $\lambda\in R$ and $u,v\in M$, we define $u\x v$ and $\rho_{u,v}(\lambda)$ to be the point $(1-\lambda)u + \lambda v \in M$.\footnote{Since $u\x v$ is an affine combination of $u$ and $v$, the operation $\x$ is well-defined on any $R$-affine space.  We will not need this additional generality here, however.}

A set $S\subseteq M$ is \emph{$\lambda$-convex} iff $S$ is closed under the $\x$ operation.  For any $T\subseteq M$, we define $Q_\lambda(T)$ to be the least $\lambda$-convex superset of $T$.
\end{definition}

Part~I of this paper is mostly concerned with the case where $R = M = \complexes$, except in Sections~\ref{sec:c-is-open} and \ref{sec:Q-x}, where $R = M = \ints[x]$.  Many of the basic results of that part carry over to the more general setting.  For example, Lemma~\ref{lem:translate-general} below is analogous to Lemma~\ref{lem:translate-Q-R}.

\begin{fact}\label{fact:E-homomorphism}
Let $R$ be a commutative ring, let $M$ and $N$ be $R$-modules, and let $\map{f}{M}{N}$ be a homomorphism of $R$-modules.  Then for any $\lambda\in R$ and $u,v\in M$,
\[ f(u\x v) = f(u)\x f(v)\;. \]
\end{fact}

\begin{lemma}\label{lem:translate-general}
Let $R$, $M$, $N$, and $f$ be as in Fact~\ref{fact:E-homomorphism}.  Then for any $\lambda\in R$ and $S\subseteq M$,
\[ f(Q_\lambda(S)) = Q_\lambda(f(S))\;. \]
\end{lemma}

\begin{proof}
Given any $x,y\in f(Q_\lambda(S))$, let $u,v\in Q_\lambda(S)$ be such that $x = f(u)$ and $y = f(v)$.  Then $x\x y = f(u)\x f(v) = f(u\x v) \in f(Q_\lambda(S))$.  This shows that $f(Q_\lambda(S))$ is $\lambda$-convex, and, since $f(S)\subseteq f(Q_\lambda(S))$, this proves the $\supseteq$-inclusion of the lemma.  For the $\subseteq$-inclusion, we first show that $T := f^{-1}(Q_\lambda(f(S)))$ is a $\lambda$-convex superset of $S$.  We have $S\subseteq f^{-1}(f(S)) \subseteq T$.  Now let $u,v \in T$ be arbitrary.  Then  because $f(u)$ and $f(v)$ are both in $Q_\lambda(f(S))$, we have $f(u\x v) = f(u)\x f(v) \in Q_\lambda(Q_\lambda(f(S))) = Q_\lambda(f(S))$.  This then puts $u\x v$ into $T$, and so $T$ is $\lambda$-convex.  By minimality, $Q_\lambda(S) \subseteq T$.  Applying $f$ to both sides gives $f(Q_\lambda(S)) \subseteq f(T) \subseteq Q_\lambda(f(S))$.
\end{proof}

We will need to consider another special case of Definition~\ref{def:E-rho-general}: Let $V$ be any vector space over some field $k$, and fix a $k$-linear map $\map{\Lambda}{V}{V}$.  Then $\Lambda$ naturally turns $V$ into a $k[x]$-module, where scalar multiplication is defined for all $g\in k[x]$ and $v\in V$ as $gv := (g(\Lambda))(v)$  (see, for example, Jacobson \cite[Chapter~3]{Jacobson:BasicAlgebraI}).  We denote this $k[x]$-module by $V_\Lambda$.  We will only use the case where $\lambda = x\in k[x]$, and thus we can identify $\lambda$ with $\Lambda$.

\begin{definition}\label{def:rho-extend}
Let $V$ be a vector space over a field $k$, and let $\map{\Lambda}{V}{V}$ be a $k$-linear map.
\begin{enumerate}
\item
For any vectors $u,v\in V$, define $u\xL v := (I-\Lambda)u + \Lambda v = u + \Lambda(v - u)$.
\item
A set $S\subseteq V$ is \emph{$\Lambda$-convex} iff $u\xL v \in S$ whenever $u,v\in S$.
\item
For any set of vectors $S\subseteq V$, we define $Q_\Lambda(S)\subseteq V$ as the least $\Lambda$-convex superset of $S$.
\end{enumerate}
\end{definition}

We could easily define $\Lambda$-clonvexity and $R_\Lambda(S)$ for vector spaces over $\reals$ or $\complexes$, but we will not need this notion here.

\begin{fact}\label{fact:rho-sum}
Let $V$, $k$, and $\Lambda$ be as in Definition~\ref{def:rho-extend}.  For any $u,v,w,x\in V$ and any $a\in k$, we have
\begin{align*}
(u+v)\xL(w+x) = u\xL w + v\xL x && \mbox{ and } && au\xL av = a(u\xL v)\;.
\end{align*}
\end{fact}

\begin{lemma}\label{lem:rho-map}
Let $U$ and $V$ be vector spaces over some field $k$, and let $\map{A}{U}{U}$, $\map{B}{V}{V}$, and $\map{t}{U}{V}$ be $k$-linear maps such that $B \circ t = t \circ A$.  For any $S\subseteq U$, we have
\[ t(Q_A(S)) = Q_B(t(S))\;. \]
\end{lemma}

\begin{proof}
This is a special case of Lemma~\ref{lem:translate-general}.  The combined condition that $t$ is linear and $B \circ t = t \circ A$ is equivalent to $t$ being a homomorphism of $k[x]$-modules $U_A\rightarrow V_B$.
\end{proof}

We use Lemma~\ref{lem:translate-general} one more time to get a generalization of Lemma~\ref{lem:poly-eval}.

\begin{lemma}\label{lem:1-d-module}
Let $R$ be a commutative ring, $\lambda\in R$, and $M$ an $R$-module.  For any $u\in M$,
\[ Q_\lambda(\{0,u\}) = \left\{ P(\lambda)u \mid P\in Q_{[x]} \right\}\;, \]
where $P(\lambda)$ is interpreted in $R$ for each $P\in Q_{[x]}$ (see Definition~\ref{def:Q-x}).
\end{lemma}

\begin{proof}
The lemma follows from Lemma~\ref{lem:translate-general} provided we set things up the right way.  We can view $M$ as a $\ints[x]$-module (which we denote by $M_\lambda$) by defining scalar multiplication of a vector $w\in M$ with a scalar $P\in\ints[x]$ by $Pw := P(\lambda)w$, where $P(\lambda)$ is interpreted in $R$ as usual and $P(\lambda)w$ is scalar multiplication in $M$.  Then $\xl$, interpreted in $M$, is the same map as $\xx$ interpreted in $M_\lambda$.  Consequently, for any $S\subseteq M$, \ $Q_\lambda(S)$ interpreted in $M$ is the same set as $Q_x(S)$ interpreted in $M_\lambda$.  Now given $u\in M$, consider the map $\map{g}{\ints[x]}{M_\lambda}$ that sends $P\in\ints[x]$ to $Pu\in M_\lambda$.  One checks that $g$ is a homomorphism of $\ints[x]$-modules.  Using Lemma~\ref{lem:translate-general}, we now have, working in $M_\lambda$,
\[ Q_x(\{0,u\}) = Q_x(g(\{0,1\})) = g(Q_x(\{0,1\})) = g(Q_{[x]}) = \{ g(P) \mid P\in Q_{[x]} \} = \{ Pu \mid P\in Q_{[x]}\}\;. \]
The lemma follows by interpreting both sides in $M$.
\end{proof}

The first extended proof of this section considers the case where $k = \reals$, \ $V = \reals^d$ for some finite $d>0$, and $\map{\Lambda}{V}{V}$ is an $\reals$-linear map given by some \emph{diagonal} matrix.  It gives a good illustration of the general concepts above.  For $0\le i<d$, let $\mu_i := [\Lambda]_{ii}$ be the diagonal entries of $\Lambda$.  So on the $\ordth{i}$ coordinate, $\Lambda$ acts as scalar multiplication by $\mu_i$.  One can consider Lemma~\ref{lem:cube} below to be a $d$-dimensional generalization of Theorem~\ref{thm:convex-is-easy}(1).

\begin{lemma}\label{lem:cube}
Let $d$, $\Lambda$, and $\mu_0,\ldots,\mu_{d-1}$ be as in the last paragraph, and suppose that all of the $\mu_i$ are in $\clcl{0,1}$.  Let $\one = (1,1,\ldots,1)$ be the vector of $d$ ones, and let $S \eqdf \{0,\one\}$.  Then
\begin{enumerate}
\item
$Q_\Lambda(S) \subseteq \clcl{0,1}^{\times d}$, and
\item
$Q_\Lambda(S)$ is dense in $\clcl{0,1}^{\times d}$ if and only if all the $\mu_i$ are pairwise distinct and lie strictly between $0$ and $1$.
\end{enumerate}
\end{lemma}

\begin{proof}
Let $\map{\pi_i}{\reals^d}{\reals}$ be the projection onto the $\ordth{i}$ coordinate, for all $0\le i<d$.  For part~(1.): by Lemma~\ref{lem:rho-map}, for all $i$, we have $\pi_i(Q_\Lambda(S)) = Q_{\mu_i}(\pi_i(S)) = Q_{\mu_i}$, which is contained in $\clcl{0,1}$ by Theorem~\ref{thm:convex-is-easy}(1).

For part~(2.):  By permuting coordinates if necessary, we can assume without loss of generality that $0\le \mu_0 \le \mu_1 \le \cdots \le \mu_{d-1}\le 1$.  We show the ``only if'' part first.  If $\mu_0 = 0$, then $\pi_0(Q_\Lambda(S)) = Q_0 = \{0,1\}$, and so every point in $Q_\Lambda(S)$ has zeroth coordinate $0$ or $1$.  Similarly for the $\ordst{(d-1)}$ coordinate if $\mu_{d-1} = 1$.  If $\mu_i = \mu_{i+1}$ for some $i$, then every point in $Q_\Lambda(S)$ has equal $\ordth{i}$ and $\ordst{(i+1)}$ coordinates.  This follows from the observation that both points in $S$ have equal $\ordth{i}$ and $\ordst{(i+1)}$ coordinates and that this property is preserved by $\xL$.  In none of these cases is $Q_\Lambda(S)$ dense in $\clcl{0,1}^{\times d}$.

For the ``if'' part, now assume that $0<\mu_0<\mu_1<\cdots<\mu_{d-1}<1$, and let $R \eqdf \overline{Q_\Lambda(S)}$.  We show that $R = \clcl{0,1}^{\times d}$.  By a proof similar to that of Lemma~\ref{lem:topo-closure} in Part~I, we have that $R$ is $\Lambda$-convex.  Our proof now is in two steps: we first show that $R$ is convex and then show that $R$ contains all the corners of $\clcl{0,1}^{\times d}$, i.e., all points in $\{0,1\}^{\times d}$.  This suffices for the ``if'' part.

For convexity, fix any two points $a,b\in R$.
Define $a_0 := a$ and $b_0 := b$, and for every integer $n\ge 0$, inductively define
\begin{align*}
a_{n+1} &\eqdf b_n\xL a_n\;, & b_{n+1} &\eqdf a_n\xL b_n\;.
\end{align*}
All these points are in $R$, because $R$ is $\Lambda$-convex.  Set  $m = (a+b)/2$, the midpoint of $a$ and $b$, and set $\sigma := \max_{0\le i<d}|2\mu_i-1|$, noting that $0\le\sigma<1$.  We then check by induction that for all $n\ge 0$,
\begin{align*}
a_n + b_n &= 2m \;, & \|a_n-b_n\|_\infty &\le \sigma^n \|a-b\|_\infty\;,
\end{align*}
where for all $x\in\reals^d$, \ $\|x\|_\infty \eqdf \max_{0\le i<d}|\pi_i(x)|$ denotes the $\ell_\infty$-norm of $x$.  This certainly holds for $n = 0$.  For the inductive case, given $n\ge 0$, we have
\[ a_{n+1} + b_{n+1} = b_n\xL a_n + a_n\xL b_n = (b_n+a_n)\xL(a_n+b_n) = a_n+b_n = 2m\;. \]
Also,
\[ a_{n+1}-b_{n+1} = b_n\xL a_n - a_n\xL b_n = (b_n-a_n)\xL(a_n-b_n)\;, \]
and thus for all $0\le i<d$, letting $\delta \eqdf a_n-b_n$,
\begin{align*}
|\pi_i(a_{n+1}-b_{n+1})| &= |\pi_i((-\delta)\xL\delta)| = |(-\pi_i(\delta))\xmi\pi_i(\delta))| = |(2\mu_i-1)\pi_i(\delta)| \le \sigma|\pi_i(\delta)| \le \sigma\|\delta\|_\infty \\
\le \sigma^{n+1}\|a-b\|_\infty\;,
\end{align*}
and from this it follows that $\|a_{n+1}-b_{n+1}\|_\infty \le \sigma^{n+1}\|a-b\|_\infty$, which finishes the induction.  Now for all $n$,
\[ 2\|a_n-m\|_\infty = \|2a_n-2m\|_\infty = \|a_n-b_n\|_\infty \le \sigma^n\|a-b\|_\infty\;, \]
and since $|\sigma| < 1$, we then have $m = \lim_{n\rightarrow\infty} a_n \in R$, because $R$ is closed.  All of this shows that $R$ is closed under midpoints, and it follows from the closedness of $R$ that $R$ is convex.

To finish, we show that every corner $\vec b = (b_0,b_1,\ldots,b_{d-1})$---where each $b_i \in \{0,1\}$---is in $R$.  The idea is to use the polynomials $T_{\gamma,\eps}$ of Lemma~\ref{lem:threshold} to construct a polynomial $P\in Q_{[x]}$ such that for all $i$, \ $P(\mu_i)$ approximates $b_i$ as closely as we want.  More precisely, for every $\delta > 0$, we find some $P_\delta\in Q_{[x]}$ such that for all $0\le i<d$, \ $|P_\delta(\mu_i) - b_i| \le \delta$.  Then we invoke Lemma~\ref{lem:1-d-module} to get a point in $Q_\Lambda(S)$ close to $\vec b$.  We set $\mu_{-1} \eqdf 0$ and let $\mu_d \eqdf 1$ for convenience.  For $0\le i \le d$, we let $\gamma_i \eqdf (\mu_{i-1}+\mu_i)/2$ be the midpoint of $\mu_{i-1}$ and $\mu_i$.  Notice that $0 < \gamma_0 < \mu_0 < \gamma_1 < \mu_1 < \cdots < \gamma_{d-1} < \mu_{d-1} < \gamma_d < 1$.  Now let $\delta > 0$ be arbitrary.  For a certain $\eps > 0$ that we choose later, we define
\[ P_\delta \eqdf \prod_{0\le i < d\;:\; b_i = 0}(1 - T_{\gamma_{i+1},\eps}(1-T_{\gamma_i,\eps}))\;, \]
where the $T_{\cdot,\cdot}$ polynomials are given by Lemma~\ref{lem:threshold}.  $P_\delta$ is in $Q_{[x]}$ by Corollary~\ref{cor:Q-x-mult}.  If we choose $\eps$ not to exceed $(\mu_i - \mu_{i-1})/2$ for any $0\le i\le d$, then no interval $\opop{\gamma_i-\eps,\gamma_i+\eps}$ contains any of the $\mu_j$.  This means that we can apply Lemma~\ref{lem:threshold} to $\mu_i$ for each $0\le i< d$: Since $\gamma_i \le \mu_i-\eps$ and $\mu_i+\eps \le \gamma_{i+1}$, we have $T_{\gamma_j,\eps}(\mu_i) \le \eps$ for all $j\le i$ and $T_{\gamma_j,\eps}(\mu_i) \ge 1-\eps$ for all $j>i$.  Hence, $1-T_{\gamma_{i+1},\eps}(\mu_i)(1-T_{\gamma_i,\eps}(\mu_i)) \le 1-(1-\eps)^2 \le 2\eps$, but $1-T_{\gamma_{j+1},\eps}(\mu_i)(1-T_{\gamma_j,\eps}(\mu_i)) \ge 1-\eps$ for all $j\ne i$.  Now considering $P_\delta(\mu_i)$:
\begin{itemize}
\item
If $b_i = 1$, then all terms in the product are $\ge 1-\eps$, and so $P_\delta(\mu_i) \ge (1-\eps)^d \ge 1- d\eps$.
\item
If $b_i = 0$, then one term in the product is $\le 2\eps$, and the rest are $\le 1$, and so $P_\delta(\mu_i) \le 2\eps$.
\end{itemize}
So choosing $\eps$ to be the smaller of $\delta/(d+2)$ and $\min_{0\le i <d}((\mu_i - \mu_{i-1})/2)$, we obtain $|P_\delta(\mu_i) - b_i| \le \delta$ for all $i$.

Finally, we apply Lemma~\ref{lem:1-d-module} with the ring being $\ints[x]$, with $\lambda \eqdf x\in\ints[x]$ as always, with $M \eqdf V_\Lambda = (\reals^d)_\Lambda$ (restricted to being a $\ints[x]$-module), and with $u \eqdf \one$.  That lemma says that $Q_\Lambda(S) = \{ P(\Lambda)\one \mid P\in Q_{[x]}\}$, and so $Q_\Lambda(S)$ contains the points $P_\delta(\Lambda)\one$ for all $\delta>0$.  Now, $P_\delta(\Lambda)$ is the $d\times d$ diagonal matrix with diagonal entries $P_\delta(\mu_0),\ldots,P_\delta(\mu_{d-1})$, and so $P_\delta(\Lambda)\one = (P_\delta(\mu_0),\ldots,P_\delta(\mu_{d-1}))$, which is within distance $\delta$ of $\vec b$ ($\ell_\infty$ distance) by the previous paragraph.  So we have that $\vec b$ is arbitarily close to points in $Q_\Lambda(S)$, which puts $\vec b \in R$ as required.
\end{proof}

\begin{corollary}\label{cor:cube}
Let $d$, $\Lambda$, and $\mu_0,\ldots,\mu_{d-1}$ be as in Lemma~\ref{lem:cube}.  Let $\bv{x} \eqdf (x_0,\ldots,x_{d-1})\in\reals^d$ be a vector.  If $\mu_0,\ldots,\mu_{d-1}$ are pairwise distinct elements of $\opop{0,1}$, then $Q_\Lambda(\{0,\bv{x}\})$ is a dense subset of $\clcl{0,x_0}\times\cdots\times\clcl{0,x_{d-1}}$.
\end{corollary}

\begin{proof}
Let $D$ be the $d\times d$ diagonal matrix with diagonal elements $x_0,\ldots,x_{d-1}$.  Then $\bv{x} = D\one$, and so by Lemma~\ref{lem:rho-map} and the fact that $\Lambda$ and $D$ commute, we get
\[ Q_\Lambda(\{0,\bv{x}\}) = Q_\Lambda(D\{0,\one\}) = D(Q_\Lambda\{0,\one\})\;, \]
which is dense in $D\left(\clcl{0,1}^d\right) = \clcl{0,x_0}\times\cdots\times\clcl{0,x_{d-1}}$ by Lemma~\ref{lem:cube}.
\end{proof}

\begin{definition}\label{def:lambda-mult}\rm
Let $p(x) = x^d + \sum_{j=0}^{d-1} c_jx^j$ be some monic polynomial in $\complexes[x]$ of degree $d>0$ with coefficients $c_0,\ldots,c_{d-1}\in\complexes$.  We let $\Lambda_p$ denote the companion matrix of $p$, that is, the $d\times d$ matrix
\begin{equation}\label{eqn:lambda-mult}
\Lambda_p := \mat{
0 & 0 & 0 & \cdots & 0 & -c_0 \\
1 & 0 & 0 & \cdots & 0 & -c_1 \\
0 & 1 & 0 & \cdots & 0 & -c_2 \\
0 & 0 & 1 & \cdots & 0 & -c_3 \\
\vdots & \vdots & \vdots & \ddots & \vdots & \vdots \\
0 & 0 & 0 & \cdots & 1 & -c_{d-1}}\;.
\end{equation}
\end{definition}

We can view $\Lambda_p$ as representing a linear map $\complexes^d \rightarrow \complexes^d$ relative to the standard basis $\{\bv{e}_0,\ldots,\bv{e}_{d-1}\}$ of $\complexes^d$.  It is well-known that $\Lambda_p$ has characteristic polynomial $\pm p$, and so its eigenvalues are the roots of $p$.

The next lemma is standard.

\begin{lemma}
\label{lem:lambda-spectral-decomp}
Let $p(x)$ and $\Lambda_p$ be as in Definition~\ref{def:lambda-mult} where $p$ has degree $d>0$.  Let $\mu_0,\ldots,\mu_{d-1}\in\complexes$ be the (not necessarily distinct) roots of $p$.  Let
\begin{equation}\label{eqn:vandermonde}
V := V(\vec\mu) := \mat{
1 & \mu_0 & (\mu_0)^2 & \cdots & (\mu_0)^{d-1} \\
1 & \mu_1 & (\mu_1)^2 & \cdots & (\mu_1)^{d-1} \\
\vdots & \vdots & \vdots & \ddots & \vdots \\
1 & \mu_{d-1} & (\mu_{d-1})^2 & \cdots & (\mu_{d-1})^{d-1}
}
\end{equation}
be the $d\times d$ Vandermonde matrix with respect to $\vec\mu := (\mu_0,\ldots,\mu_{d-1})$ (that is, $[V]_{ij} = (\mu_i)^j$ for all $i,j\in\{0,\ldots,d-1\}$), and let $D$ be the $d\times d$ diagonal matrix with diagonal entries $[D]_{ii} := \mu_i$ for all $i\in\{0,\ldots,d-1\}$.  Then
\[ V\Lambda_p = DV\;. \]
\end{lemma}

\begin{proof}
For any $i,k\in\{0,\ldots,d-1\}$, the $\ordth{(i,k)}$ entry of $V\Lambda_p$ is given by
\[ [V\Lambda_p]_{ik} = \sum_{j=0}^{d-1}[V]_{ij}[\Lambda_p]_{jk} = \sum_{j=0}^{d-1}(\mu_i)^j[\Lambda_p]_{jk} = \left\{
\begin{array}{ll}
(\mu_i)^{k+1} & \mbox{if $k<d-1$,} \\
-\sum_{j=0}^{d-1} c_j(\mu_i)^j & \mbox{if $k=d-1$.}
\end{array} \right. \]
Since $\mu_i$ is a root of $p$, we have $-\sum_{j=0}^{d-1} c_j(\mu_i)^j = (\mu_i)^d$, and so in either case we get $[V\Lambda_p]_{ik} = (\mu_i)^{k+1} = \mu_i(\mu_i)^k = [D]_{ii}[V]_{ik} = [DV]_{ik}$.
\end{proof}

\begin{remark}
If $V$ is invertible (which is true when all the $\mu_i$ are pairwise distinct), then Lemma~\ref{lem:lambda-spectral-decomp} says that the columns of $V^{-1}$ are eigenvectors of $\Lambda_p$ with respective eigenvalues $\mu_0,\ldots,\mu_{d-1}$.
\end{remark}

We now turn to the proof of Theorem~\ref{thm:main}, the main result of this section.  We essentially prove that $R_\lambda(S)$ is a subset of a certain model set (this is the content of
Eq.~(\ref{eqn:Q-in-model-set}) below).  Although it would then follow directly from previous results of Meyer~\cite{Meyer:sets72} (also see~\cite{Moody:meyer-sets}) that $R_\lambda(S)$ is uniformly discrete, we give a self-contained proof for the sake of completeness.  We will give more details about the connection with model sets after the proof.  The case where $\lambda$ is not real (and $S = \{0,1\}$) was proved by Rohit Gurjar~\cite{Gurjar:nonreal-lambda}.

To denote the model set containing $R_\lambda(S)$, we use the following
notation 
generalizing that of Berman \& Moody~\cite{BeMo}:

\begin{definition}\label{def:Sigma_P}
Let $\lambda\in\complexes\cmpl\opop{0,1}$ be an algebraic integer and let $\mu_0,\ldots,\mu_{k-1}$ be the conjugates of $\lambda$ that are in $\opop{0,1}$.
For any $x \in \ints[\lambda]$, let $\bv{x}'$ denote the vector of
conjugates $(x'_0,\dots,x'_{k-1}) \in \reals^k$, where $x'_i$ is the $\ordth{i}$ conjugate of $x$, i.e., the image of $x$ under the ring isomorphism $\ints[\lambda]\rightarrow\ints[\mu_i]$ sending $\lambda$ to $\mu_i$, for all $0\le i < k$.
Let $P$ be any interval of $\reals^k$, defined as $P := \clcl{\ell_0,h_0}\times\cdots\times\clcl{\ell_{k-1},h_{k-1}}$,
where for $0 \le i < k$, we have $\ell_i, h_i \in \reals$ and $\ell_i < h_i$. 
 We define
\begin{align}\label{eqn:Sigma_P}
\Sigma_P^{(\lambda)}
:= \{ x\in\ints[\lambda] \mid \bv{x}' \in P\}\;.
\end{align}
If $k=0$, then we take $\bv{x}'$ to be the empty tuple for every $x\in\ints[\lambda]$, and we take $P$ to be the singleton set containing this tuple, whence $\Sigma_P^{(\lambda)} = \ints[\lambda]$.  (We leave off the superscript if $\lambda$ is clear from the context.)
\end{definition}

 For any set $S \subseteq \ints[\lambda]$, let
$S' := \{\bv{s}' \mid s \in S\}$ denote the image of $S$ under
the mapping $\map{\cdot'}{\ints[\lambda]}{\ints[\mu_0]\times\cdots\times\ints[\mu_{k-1}]}$ defined above.


When we use Definition~\ref{def:Sigma_P}, we generally assume
$P$ contains some set $S'$ of conjugates of a finite set $S$.
Furthermore, if $S = \{0,1\}$, it turns out that
 $P = \clcl{0,1}^{\times k}$ is the appropriate choice. The
 reason for this
 should become clear as we use the definition and the result below.
Also, although here we are assuming $S \subseteq \ints[\lambda]$,
we will see in the proof of
Theorem~\ref{thm:main} that some choice for $P$ is sufficient for proving that
$R_\lambda(S)$ is uniformly discrete for any finite $S \subseteq \rats[\lambda]$.

A key result is that %
$Q_\lambda(S)$ is always contained in a set of the
form $\Sigma_P^{(\lambda)}$.

\begin{theorem}\label{thm:new-containment}
For any algebraic integer $\lambda \in\complexes\cmpl\clcl{0,1}$ and any finite $S \subseteq \ints[\lambda]$,
 let $P$ be any interval of $\reals^k$---as
 defined in Definition~\ref{def:Sigma_P}---that includes $S'$. Then
\begin{eqnarray}\label{eqn:Q-in-model-set}
Q_\lambda(S) \subseteq \Sigma_P^{(\lambda)}.
\end{eqnarray}
\end{theorem}
\begin{proof}
For convenience, we fix $\lambda$ and 
write $\Sigma_P$ for $\Sigma_P^{(\lambda)}$.
 First we observe that $S \subseteq \Sigma_P$, since by hypothesis
$S' \subseteq P$, and hence $s \in \Sigma_P$ for each $s \in S$.
We now show that $\Sigma_P$ is closed under $\x_\lambda$.
Let $x, y \in \Sigma_P$. Then the respective
vectors of conjugates $\bv{x}'$ and $\bf{y}'$ are contained in $P$. 
Now let $z := x \xl y$. The $\ordth{i}$ coordinate of $\bv z'$ is
$z'_i = (1-\mu_i)x'_i + \mu_i y'_i$. But by definition,
$\mu_i \in \clcl{0,1}$ for each $i$ between $0$ and $k-1$.
Thus due to the fact that $[\ell_i,h_i]$ is convex, we conclude that
$\ell_i \le (1-\mu_i)x'_i + \mu_i y'_i \le h_i$, so that $z'_i \in [\ell_i,h_i]$ for all $i$. Clearly, $z \in \ints[\lambda]$,
and hence $z \in \Sigma_P$,
so $\Sigma_P$ is closed under $\xl$ as claimed. It therefore contains
the closure of $S$ under $\xl$, i.e.,
$\Sigma_P \supseteq Q_\lambda(S)$. 
\end{proof}

The proof of Theorem~\ref{thm:main} now 
reduces to proving that $\Sigma_P^{(\lambda)}$ is uniformly discrete for sPV $\lambda$.

\begin{proof}[Proof of Theorem~\ref{thm:main}]
We are given a finite set $S\subseteq\rats(\lambda)$, which we can assume contains $0$ and $1$ (so that $Q_\lambda\subseteq Q_\lambda(S)$).  We can also assume without loss of generality that $S\subseteq\ints[\lambda]$.  This can be seen as follows: since $S$ is finite, there exists a positive $\delta\in\ints$ such that $\delta S \subseteq \ints[\lambda]$, but by Lemma~\ref{lem:translate-Q-R}, $R_\lambda(\delta S) = R_\lambda(\rho_{0,\delta}(S)) = \rho_{0,\delta}(R_\lambda(S)) = \delta R_\lambda(S)$, so $R_\lambda(S)$ is uniformly discrete if and only if $R_\lambda(\delta S)$ is uniformly discrete.  We can thus substitute $\delta S$ for $S$ in the theorem.

Now it suffices to show that $Q_\lambda(S)$ is uniformly discrete; in that case, $R_\lambda(S) = Q_\lambda(S)$ by Lemma~\ref{lem:topo-closure}, because $Q_\lambda(S)$ is closed.  We can assume without loss of generality that $\lambda$ is nontrivial (cf.\ Definition~\ref{def:strong-PV-number}), for if $\lambda$ is trivial, then $\ints[\lambda]$ is a (uniformly) discrete subring of $\complexes$ including $Q_\lambda(S)$ (by Fact~\ref{fact:trivial-spv}), and we are done. 

Let $p(x) := x^d + \sum_{i=0}^{d-1}c_ix^i$ be the minimal polynomial of $\lambda$, where $d\ge 2$ and each $c_i$ is an integer.  By Fact~\ref{fact:spv-basic} and $\lambda$ being nontrivial, we have $\lambda\notin\clcl{0,1}$.  Let $\mu_0,\ldots,\mu_{k-1}$ be the conjugates of $\lambda$ that are in $\clcl{0,1}$, as in Definition~\ref{def:Sigma_P}.  In fact, $\mu_i\in\opop{0,1}$ for all $0\le i<k$.  Since $\lambda$ is sPV, we have two cases:
(1) $\lambda\in\reals$, or (2) $\lambda\notin\reals$.  Since $\lambda$ is sPV, we have $k=d-1$ in case~1 and $k = d-2$ in case~2.  In either case, define $\mu_k := \lambda$.  In case~2, also set $\mu_{k+1} := \lambda^*$.  Thus $\mu_0,\ldots,\mu_{d-1}$ are all the conjugates of $\lambda$ in either case.
Since $\lambda\notin\clcl{0,1}$, \ $Q_\lambda(S)$ is unbounded (Corollary~\ref{cor:unbounded}), a fact we will need toward the end of
 this proof.\\

Each $x \in \ints[\lambda]$
is a polynomial $x = \sum_{i=0}^{d-1} a_i\lambda^i$, where $a_i \in \ints$ for each $i$, and $d$ is the degree of $\lambda$. We may represent
the set of coefficients $a_i \in \ints$ as a column vector $\bv{a} := (a_0,\dots,a_{d-1}) \in \ints^d$.
If we regard the powers of $\lambda$ as a row vector
$\pmb{\lambda} := (1,\lambda,\dots,\lambda^{d-1})$, then $x = {\pmb{\lambda}}\bv{a}$. Similarly, for the conjugates $\mu_i$ ($0 \le i < d-1$) of $\lambda$,
we define the row vector $\pmb{\mu}_i := (1,\mu_i,\dots,\mu_i^{d-1})$, and thus
$(\pmb{\lambda}\bv{a})' = ({\pmb{\mu}}_0\bv{a},\dots,{\pmb{\mu}}_{k-1}\bv{a})$, where we understand the last vector to be a column vector
in $\reals^k$.
As $S \subseteq \ints[\lambda]$, any element of
$S$ can be written in the same form ${\pmb{\lambda}}\bv{a}$ for some $\bv{a}\in\ints^d$. Let $\pi_i : \complexes^d \rightarrow \complexes$ denote the linear map projecting an element of $\complexes^d$ onto the 
$\ordth{i}$ coordinate. Thus, for example, $\pi_i((\pmb{\lambda}\bv{a})')
= {\pmb{\mu}}_{i}{\bf a}$. Let $S'_i$ denote the set of projections
of elements of $S'$ onto the $\ordth{i}$ coordinate: $S'_i := \{\pi_i(\bv{s}') \mid s \in S\}$. 
Put $\ell_i := \min (S'_i)$ and $h_i := \max (S'_i)$ and
$P := \clcl{\ell_0,h_0}\times\cdots\times\clcl{\ell_{k-1},h_{k-1}}$
as in Definition~\ref{def:Sigma_P} (note that if $S = \{0,1\}$,
then $P = \clcl{0,1}^{\times k})$. Then clearly $P \supseteq S'$,
and hence
by Theorem~\ref{thm:new-containment}, $Q_{\lambda}(S) \subseteq \Sigma_P$.  (This inclusion also holds when $\lambda$ is trivial, i.e., when $k=0$.)

\bigskip

Thus it now suffices to show that
$\Sigma_P$ is uniformly discrete. By the containment
$Q_{\lambda}(S) \subseteq \Sigma_P$ it then follows
that $Q_{\lambda}(S)$ is uniformly discrete.

\bigskip

 Our first goal is to put the set
$\Sigma_P 
= \{ x\in\ints[\lambda] \mid \bv{x}' \in P\}$ in a form 
(Eq.~(\ref{eq:SigmaP-final-form}) below) convenient for
proving its uniform discreteness. 
By the notation introduced above, $\Sigma_P = \{\pmb{\lambda}\bv{a} \mid \bv{a} \in \ints^d \myand (\pmb{\lambda}\bv{a})' \in  P \}$.
 Now recall
 the Vandermonde matrix $V = V(\mu_0,\ldots,\mu_{d-1})$ defined in 
Eq.~(\ref{eqn:vandermonde}). 
For any vector $\bv{v}\in\complexes^d$, let $\bv{v}_F$ denote the vector consisting of the $F$irst
$k$ elements of $\bv{v}$. Using this notation,
$(\pmb{\lambda}\bv{a})' = (V\bv{a})_F$. Then we have
\begin{equation}\label{eqn:Sigma-vector-form1}
\Sigma_P = \{\pmb{\lambda}\bv{a} \mid  \bv{a} \in \ints^d \myand (\pmb{\lambda}\bv{a})' \in P\} 
= \{\pmb{\lambda}\bv{a} \mid \bv{a} \in \ints^d
\myand (V\bv{a})_F \in P \}\;.
\end{equation}

The first $k$ rows of $V$ can be chopped up into two matrices: a $k\times k$ matrix $W$ consisting of the first $k$ rows and columns of $V$, followed by a $k\times(d-k)$ matrix $X$ consisting of the first $k$ rows and remaining columns of $V$.  (Both $W$ and $X$ are real matrices.)  Thus $V$ looks as follows when $\lambda\in\reals$ (case~1 where $k = d-1$) and $\lambda\notin\reals$ (case~2 where $k = d-2$), respectively:
\begin{align*}
V &= \left[\begin{array}{cccc|c}
& \multicolumn{2}{c}{\vdots} & & \vdots \\
\cdots & \multicolumn{2}{c}{W} & \cdots & X \\
& \multicolumn{2}{c}{\vdots} & & \vdots \\ \hline
1 & \lambda & \lambda^2 & \cdots & \lambda^{d-1}
\end{array}\right]\;, &
V &= \left[\begin{array}{cccc|cc}
&  \multicolumn{2}{c}{\vdots} & & \multicolumn{2}{c}{\vdots} \\
\cdots & \multicolumn{2}{c}{W} & \cdots & \multicolumn{2}{c}{X} \\
& \multicolumn{2}{c}{\vdots} & & \multicolumn{2}{c}{\vdots} \\ \hline
1 & \lambda & \lambda^2 & \cdots & \lambda^{d-2} & \lambda^{d-1} \\
1 & \lambda^* & (\lambda^*)^2 & \cdots & (\lambda^*)^{d-2} & (\lambda^*)^{d-1}
\end{array}\right]\;.
\end{align*}

Decompose the vector $\bv{a}$ as $(\bv{a}_F; \bv{a}_L)$, where
as above $\bv{a}_F$ denotes the first $k$ coordinates of $\bv{a}$,
and $\bv{a}_L$ denotes the $L$ast $d-k$ coordinates of $\bv{a}$.
Thus $(V\bv{a})_F = W\bv{a}_F + X\bv{a}_L$, 
and Eq.~(\ref{eqn:Sigma-vector-form1}) becomes
\begin{eqnarray*}
\Sigma_P  & = &\{\pmb{\lambda}\bv{a} \mid \bv{a}_F \in \ints^k \myand \bv{a}_L \in \ints^{d-k} \myand W\bv{a}_F + X\bv{a}_L \in 
P \} \\
         & = &\{\pmb{\lambda}\bv{a} \mid \bv{a}_F \in \ints^k
         \myand \bv{a}_L \in \ints^{d-k} \myand W\bv{a}_F \in P-X\bv{a}_L
          \}\;.
\end{eqnarray*}

Note that $W = V(\mu_0,\ldots,\mu_{k-1})$ is invertible and $W^{-1}$ is a real matrix. 
Hence, defining $\Omega = W^{-1}P$ and $Y = W^{-1}X$, we have, 
\begin{equation}\label{eqn:Sigma-vector-form2}
\Sigma_P = 
 \{\pmb{\lambda}{\bf a} \mid \bv{a}_F \in \ints^k\cap (\Omega - Y\bv{a}_L) \myand \bv{a}_L \in \ints^{d-k}\}\;.
\end{equation} 
Clearly, $\Omega \subseteq\reals^k$ is a bounded, convex parallelepiped depending only on $S$ and $\mu_0,\ldots,\mu_{k-1}$. 

Letting $\pmb{\lambda}_F := (1,\lambda,\ldots,\lambda^{k-1})$ and $\pmb{\lambda}_L := (\lambda^k,\ldots,\lambda^{d-1})$ (both row vectors which concatenate to
$\pmb{\lambda}$, so that 
$\pmb{\lambda} = (\pmb{\lambda}_F;\pmb{\lambda}_L)$ is the $\ordth{k}$ row of $V$), Equation~(\ref{eqn:Sigma-vector-form2})  establishes that
\begin{equation}\label{eqn:not-done-yet}
\Sigma_P = \bigcup_{\bv a_L \in \ints^{d-k}} \left\{ ({\pmb{\lambda}}_{F};{\pmb{\lambda}}_{L})(\bv a_F;\bv a_L) \mid \bv a_F \in \ints^k \intersect \left( \Omega - Y\bv a_L \right) \right\}\;.
\end{equation}
We will use the $F$- and $L$-subscripts for new, stand-alone vectors we introduce to indicate their types: $F$-subscripts for $k$-dimensional vectors and $L$-subscripts for $(d-k)$-dimensional vectors.  For any $\bv c_L\in\ints^{d-k}$, define
\begin{equation}\label{eqn:Omega-c-L}
\Omega_{\bv c_L} := \left(\ints^k + Y\bv c_L\right) \intersect \Omega\;.
\end{equation}
For any $\bv c = (\bv c_F;\bv c_L)\in\ints^d$, let $\hat{\bv c}_F$ denote $\bv c_F + Y\bv c_L$ (column vector in $\complexes^k$).  Then (\ref{eqn:not-done-yet}) becomes
\begin{eqnarray*}
\Sigma_P &= & 
      \bigcup\limits_{\bv a_L \in \ints^{d-k}} \left\{ \pmb{\lambda}_F \bv a_F + \pmb{\lambda}_L \bv a_L  \mid \bv a_F \in \ints^k \intersect \left(\Omega - Y\bv a_L\right)\right\}\\
 &= & \bigcup\limits_{\bv a_L \in \ints^{d-k}} \left\{ \pmb{\lambda}_F \bv a_F + \pmb{\lambda}_L \bv a_L  \mid \hat{\bv a}_F \in \left(\ints^k + Y\bv a_L\right) \intersect \Omega\right\}\\
 & = & \bigcup\limits_{\bv a_L \in \ints^{d-k}} \left\{\pmb{\lambda}_L \bv a_L + \pmb{\lambda}_F (\hat{\bv a}_F - Y \bv a_L) \mid \hat{\bv a}_F \in \Omega_{\bv a_L} \right\} \\
&=& \bigcup\limits_{\bv a_L \in \ints^{d-k}} \left\{ \left(\pmb{\lambda}_L - \pmb{\lambda}_F Y \right) \bv a_L + \pmb{\lambda}_F \hat{\bv a}_F \mid \hat{\bv a}_F \in \Omega_{\bv a_L} \right\}\\
 & = & \bigcup\limits_{\bv a_L \in \ints^{d-k}} \left[ \left(\pmb{\lambda}_L - \pmb{\lambda}_F Y \right) \bv a_L + \pmb{\lambda}_F \Omega_{\bv a_L} \right].
\end{eqnarray*}
The set $\pmb{\lambda}_F \Omega_{\bv a_L}$ in the right-hand side is bounded, independent of $\bv a_L$, because $\Omega_{\bv a_L}\subseteq\Omega$.  Set
\begin{equation}\label{eqn:Delta}
\Delta := \pmb{\lambda}_L - \pmb{\lambda}_F Y\;,
\end{equation}
noting that $\Delta$ is a $(d-k)$-dimensional row vector.  With these definitions, we have
\begin{equation}\label{eq:SigmaP-final-form}
\Sigma_P = \bigcup_{\bv a_L\in \ints^{d-k}} \left(\Delta \bv a_L +\pmb{\lambda}_F \Omega_{\bv a_L}\right)\;.
\end{equation}

Consider the set $A := \Delta\ints^{d-k} = \left\{ \Delta \bv a_L \mid \bv a_L\in\ints^{d-k} \right\}$.  $A$ is a subgroup of the additive group of $\complexes$, being the image of $\ints^{d-k}$ under the homomorphism given by $\Delta$.

\begin{claim}\label{claim:Delta-one-to-one}
Suppose that (1) $A$ has no accumulation points in $\complexes$ (whence $A$ is uniformly discrete), and (2) the map given by $\Delta$ is one-to-one\footnote{This restriction can be relaxed to ``finite-to-one,'' and the proof of the claim still goes through.  It does not matter here, because every nontrivial subgroup of $\ints^{d-k}$ is infinite, but this relaxation may be useful if the claim is ever generalized.} restricted to $\ints^{d-k}$, i.e., for any point $p \in A$, there is only one $\bv a_L \in \ints^{d-k}$ such that $p = \Delta \bv a_L$.  Then 
$\Sigma_P$ is uniformly discrete.
\end{claim}

\begin{proof}[Proof of the claim]
We show $\Sigma_P$ as expressed in Eq.~(\ref{eq:SigmaP-final-form}) is uniformly discrete. We must find an $r>0$ such that any two distinct points in $\Sigma_P$ are at least $r$ distance apart.  Let $x$ and $y$ be any two distinct elements of $\Sigma_P$, and assume that $|x-y|\le 1$, i.e., $x-y\in \overline D$, where $\overline D$ is the closed unit disk in $\complexes$.  
By Eq.~(\ref{eq:SigmaP-final-form}), we can write
\begin{align*}
x &= \Delta \bv a_L + \pmb{\lambda}_F \bv x_F\;, \\
y &= \Delta \bv b_L + \pmb{\lambda}_F \bv y_F\;,
\end{align*}
for some $\bv a_L,\bv b_L\in\ints^{d-k}$ and some $\bv x_F \in \Omega_{\bv a_L}$ and $\bv y_F \in \Omega_{\bv b_L}$.  By the definition of $\Omega_{\bv c_L}$ (Eq.~(\ref{eqn:Omega-c-L})), we can write $\bv x_F = {\bv a}_F + Y {\bv a}_L$ and $\bv y_F = \bv b_F + Y{\bv b}_L$ for some $\bv a_F,\bv b_F \in \ints^k$.
Thus we find,
\begin{eqnarray}
   x-y & = & \Delta \bv a_L - \Delta \bv b_L + 
             \pmb{\lambda}_F(\bv x_F - \bv y_F)
             \label{eqn:x-yform1}\\
       & = & \Delta\bv a_L - \Delta\bv b_L 
              +\pmb{\lambda}_F\bv a_F-\pmb{\lambda}_F\bv b_F
              +\pmb{\lambda}_FY\bv a_L - \pmb{\lambda}_F Y \bv b_L \\
       & = & \pmb{\lambda}_L\bv a_L -\pmb{\lambda}_L\bv b_L   
             -\pmb{\lambda}_FY\bv a_L + \pmb{\lambda}_FY\bv b_L
             +\pmb{\lambda}_F\bv a_F-\pmb{\lambda}_F\bv b_F
              +\pmb{\lambda}_FY\bv a_L - \pmb{\lambda}_F Y \bv b_L \\
       & = & \pmb{\lambda}_L(\bv a_L -\bv b_L)
             + \pmb{\lambda}_F(\bv a_F-\bv b_F),
             \label{eqn:x-yform2}
\end{eqnarray}
where in the third equality we used the definition of $\Delta$. The 
remainder of the proof 
is essentially to show that $\pmb{\lambda}_L(\bv a_L -\bv b_L)$ and $\pmb{\lambda}_F(\bv a_F-\bv b_F)$ take
on finitely many possible values, from which we can conclude
the same for $x-y$.

To see this, note that by rearranging Eq.~(\ref{eqn:x-yform1}), we find,
$x-y - \pmb{\lambda}_F(\bv x_F - \bv y_F) = \Delta (\bv a_L - \bv b_L)$.
From $\bv x_F - \bv y_F \in \Omega_{\bv a_L} - \Omega_{\bv b_L}
\subseteq \Omega-\Omega$, and since $x - y \in \overline{D}$, we conclude
that $\Delta (\bv a_L - \bv b_L) \in \overline{D} - \pmb{\lambda}_F(\Omega-\Omega)$.
We also have   $\Delta (\bv a_L - \bv b_L) \in A$, so
$\Delta (\bv a_L - \bv b_L) \in A \cap (\overline{D} - \pmb{\lambda}_F(\Omega-\Omega))$.
But $A$ is discrete and $\overline{D} - \pmb{\lambda}_F(\Omega-\Omega)$ is bounded, so
by Lemma~\ref{lem:uniformly-discrete-cap-bounded}, $A \cap (\overline{D} - \pmb{\lambda}_F(\Omega-\Omega))$ is finite. For this reason,
and because $\Delta$ is 1-1, the set $G$ defined as
\[ G := \left\{ \bv a_L\in\ints^{d-k} \mid \Delta \bv a_L \in \overline {D} - {\pmb{\lambda}}_{F}(\Omega - \Omega) \right\} \]
is finite.  Because $\Delta (\bv a_L - \bv b_L) \in  \overline{D} - \pmb{\lambda}_F(\Omega-\Omega)$,
we have that $\bv a_L - \bv b_L \in G$,
and hence $\pmb{\lambda}_L(\bv a_L - \bv b_L)$
is contained in $\pmb{\lambda}_LG$, which
is finite as well.
Now we know $\bv x_F - \bv y_F = \bv a_F - \bv b_F
+ Y(\bv a_L - \bv b_L)$, so that 
$\bv a_F - \bv b_F = \bv x_F - \bv y_F - Y(\bv a_L - \bv b_L)
\in \Omega - \Omega - YG$. However, because $G$ is finite
and $\Omega -\Omega$ is bounded, the set
$H$ defined by $H := \ints^k \cap (\Omega-\Omega - YG)$ is finite,
again by Lemma~\ref{lem:uniformly-discrete-cap-bounded}.
We thus have $\bv a_F - \bv b_F \in H$, and hence $\pmb{\lambda}_F(\bv a_F - \bv b_F)
\in \pmb{\lambda}_F H$, where
$\pmb{\lambda}_F H$ is finite. Then by Eq.~(\ref{eqn:x-yform2}),
$x - y \in \pmb{\lambda}_L G + \pmb{\lambda}_F H$, where, by the
foregoing, the sets $G$ and $H$ are independent of $x$ and $y$. Since
$ \pmb{\lambda}_L G + \pmb{\lambda}_F H$ is finite,
 its nonzero elements are bounded away from $0$ in absolute value, and so we may set $r := \min\left(\{1\} \cup \{ |z| : z\in \pmb{\lambda}_L G + \pmb{\lambda}_F H \mbox{ and } z \ne 0 \}\right)$.  This establishes the Claim.
\end{proof}

Continuing with the proof of Theorem~\ref{thm:main}, we just need to see when the vector $\Delta$ and the set $A = \Delta\ints^{d-k}$ satisfy the assumptions of the Claim.  
Start with the observation that, by Eq.~(\ref{eq:SigmaP-final-form}), we have
\begin{eqnarray}\label{eqn:model-set}
\Sigma_P & = & \bigcup_{\bv a_L\in \ints^{d-k}} \left(\Delta \bv a_L + \pmb{\lambda}_F \Omega_{\bv a_L}\right)\\
 & \subseteq & \bigcup_{\bv a_L\in \ints^{d-k}} \left(\Delta \bv a_L + \pmb{\lambda}_F \Omega\right)\\
 & = & \{\Delta \bv a_L \mid \bv a_L\in\ints^{d-k} \} + \pmb{\lambda}_F\Omega\\
 & = & A + \pmb{\lambda}_F\Omega\;.
 \label{eqn:sigma-containment}
\end{eqnarray}
We now show that the assumptions (1) and (2) of the Claim are satisfied in both of the two cases given at the start of this proof.
 
\begin{enumerate}
\item
When $k=d-1$ (whence $\lambda\in\reals$), $X$ and $Y$ are both $(d-1)$-dimensional column vectors, $\pmb{\lambda}_F$ is a $(d-1)$-dimensional row vector, $\pmb{\lambda}_L$ is the scalar $\lambda^{d-1}$, and $\Delta$ (cf.\ Eq.~(\ref{eqn:Delta})) is the scalar $\lambda^{d-1} - \pmb{\lambda}_F Y$.  The set $A = \Delta \ints$ is clearly discrete.  Also, we must have $\Delta \ne 0$, for otherwise, $A = \{0\}$, and so by Theorem~\ref{thm:new-containment} and Equations~(\ref{eqn:Q-in-model-set}) and (\ref{eqn:model-set}--\ref{eqn:sigma-containment}), $Q_\lambda(S)\subseteq \Sigma_P \subseteq \pmb{\lambda}_F\Omega$, which is bounded, but we know that $Q_\lambda(S)$ is unbounded.  Thus, for any point $p \in A$ there is exactly one $a \in \ints$ such that $p = \Delta a$.
\item
When $k=d-2$ (and $\lambda\notin\reals$), $W$ is a $k\times k$ matrix, $X$ and $Y$ are $k\times 2$ matrices, $\pmb{\lambda}_F = (1,\ldots,\lambda^{d-3})$, and $\pmb{\lambda}_L = (\lambda^{d-2},\lambda^{d-1})$ (both row vectors).  Let $\bv x_{d-2} := ((\mu_0)^{d-2},\ldots,(\mu_{d-3})^{d-2})$ and $\bv x_{d-1} := ((\mu_0)^{d-1},\ldots,(\mu_{d-3})^{d-1})$ be the columns of $X$, and let $\bv y_{d-2} := W^{-1}\bv x_{d-2}$ and $\bv y_{d-1} := W^{-1}\bv x_{d-1}$ be the columns of $Y$.  Then $\Delta = (\alpha,\beta)\in\complexes^2$, where $\alpha := \lambda^{d-2} - \pmb{\lambda}_F \bv y_{d-2}$ and $\beta := \lambda^{d-1} - \pmb{\lambda}_F \bv y_{d-1}$, and this gives $A = \Delta\ints^2 = \alpha \ints + \beta \ints$.  It is easy to see that if $\alpha\ne 0$ and $\beta / \alpha \notin \reals$ (i.e., $\alpha$ and $\beta$ are in different directions), then the set $A$ is uniformly discrete, and also, for any $p \in A$ there is exactly one pair $(a_{d-2}, a_{d-1}) \in \ints^2$ such that $p=\alpha a_{d-2} + \beta a_{d-1}$, satisfying the hypotheses of Claim~\ref{claim:Delta-one-to-one}.  Now, we show that $\alpha \ne 0$ and $\beta / \alpha \notin \reals$, which finishes the proof.

By replacing $\bv y_{d-2}$ and $\bv y_{d-1}$ by their definitions, we can write
\begin{align}\label{eqn:alpha-beta}
\alpha &= \lambda^{d-2} - (\lambda^{0},\ldots , \lambda^{d-3}) W^{-1}\bv x_{d-2} & \mbox{and} && \beta &= \lambda^{d-1} - (\lambda^{0},\ldots , \lambda^{d-3}) W^{-1}\bv x_{d-1}\;.
\end{align}
We know that $W W^{-1} = I$, so for any $0\leq i \leq d-3$, $((\mu_i)^0, (\mu_i)^1, \ldots, (\mu_i)^{d-3}) W^{-1} = \tpp{\bv{e}}{i}$. And hence,
\begin{align}
\label{eqn:roots-alpha}
((\mu_i)^0, (\mu_i)^1, \dots, (\mu_i)^{d-3}) W^{-1} \bv x_{d-2} &= \tpp{\bv{e}}{i}\bv x_{d-2} = (\mu_i)^{d-2}\;, \\
\label{eqn:roots-beta}
((\mu_i)^0, (\mu_i)^1, \dots, (\mu_i)^{d-3}) W^{-1} \bv x_{d-1} &= \tpp{\bv{e}}{i}\bv x_{d-1} = (\mu_i)^{d-1}\;.
\end{align}
Let us now look at the $\alpha$ and $\beta$ of (\ref{eqn:alpha-beta}) as polynomials in $\lambda$.  With some abuse of notation let us define the monic polynomial $\alpha(z) := z^{d-2} - (z^0, z^1, \ldots, z^{d-3}) W^{-1} \bv x_{d-2}$.  Equation~(\ref{eqn:roots-alpha}) tells us that $\alpha(z)$ has $\mu_0, \mu_1, \ldots, \mu_{d-3}$ as its roots.  As its degree is $d-2$, we can write
\[ \alpha(z) = (z-\mu_0)(z-\mu_1) \cdots (z-\mu_{d-3})\;. \]
Similarly, by Equation~(\ref{eqn:roots-beta}), $\mu_0, \ldots, \mu_{d-3}$ are also roots of the monic polynomial $\beta(z) := z^{d-1} - (z^0, \ldots, z^{d-3}) W^{-1} \bv x_{d-1}$.  But $\beta(z)$ has degree $d-1$, so it has one more root.  The sum of all its roots is equal to minus the coefficient on $z^{d-2}$ in $\beta(z)$, which is zero, and so the other root is $(-\mu_0 - \mu_1 - \cdots -\mu_{d-3})$.  Hence we can write the following:
\begin{align*}
\alpha &= \alpha(\lambda) = (\lambda-\mu_0)(\lambda-\mu_1) \cdots (\lambda-\mu_{d-3})\;, \\
\beta &= \beta(\lambda) = (\lambda-\mu_0)(\lambda-\mu_1) \cdots (\lambda-\mu_{d-3}) (\lambda + \mu_0 + \ldots +\mu_{d-3})\;.
\end{align*}
Now it is clear that $\alpha \ne 0$ and $\beta / \alpha = \lambda + \mu_0 + \ldots +\mu_{d-3}$, which is non-real because $\lambda$ is non-real.
\end{enumerate}
Thus in both cases, the assumptions of Claim~\ref{claim:Delta-one-to-one} are satisfied, and the 
proof of Theorem~\ref{thm:main} is complete.
\end{proof}

\begin{remark}\label{rem:no-new-lambda}
Case~1 of the proof of Theorem~\ref{thm:main} (the case where $k=d-1$) would still go through if we allowed $p\in D[x]$ for any discrete subring $D\subseteq\complexes$ (instead of insisting that $p\in\ints[x]$), that is, we could relax the definition of strong PV number to allow algebraic $D$-integers $\lambda$ rather than $\ints$-integers, and we would still get discrete $Q_\lambda$ in Case~1.  This generalization does not yield any new values of $\lambda$ that are not already sPV by our original definition, however.  See Proposition~\ref{prop:no-new-lambda} in the Appendix.
\end{remark}

We summarize the most important findings of
Theorem~\ref{thm:new-containment} and the proof of Theorem~\ref{thm:main}---Equation~(\ref{eqn:Q-in-model-set}) in particular---in the following fact (cf.\ Equation~(\ref{eqn:Sigma-vector-form2})) for any finite $S\subseteq\ints[\lambda]$.

\begin{fact}\label{fact:main}
Let $\lambda\in\complexes\cmpl\clcl{0,1}$ be an algebraic integer of degree $d>0$.  Let $\mu_0,\ldots,\mu_{d-1}\in\complexes$ be the conjugates of $\lambda$, and assume that $\mu_0,\ldots,\mu_{k-1}$ are elements of $\clcl{0,1}$, for some $k \ge 1$, and that $\lambda = \mu_k$.  Let $S\subseteq\ints[\lambda]$ be finite.  Then letting
\begin{itemize}
\item
$W := V(\mu_0,\ldots,\mu_{k-1})$,
\item
$\bv x_j := \left((\mu_0)^j,\ldots,(\mu_{k-1})^j\right)$ (column vector) for all $k\le j < d$,
\item
$\bv y_j  := W^{-1}\bv x_j\in\reals^k$ for all $k\le j < d$,
\item
for all $0\le j <k$, \ $\ell_j := \min S_j$ and $h_j := \max S_j$, where
\[ S_j := \left\{\sum_{i=0}^{d-1} a_i(\mu_j)^i \;:\; a_0,\ldots,a_{d-1}\in\ints \myand \sum_{i=0}^{d-1} a_i\lambda^i \in S \right\}\;, \]
and
\item
$\Omega := W^{-1}P$, where $P := \clcl{\ell_0,h_0}\times\clcl{\ell_1,h_1}\times\cdots\times\clcl{\ell_{k-1},h_{k-1}}$,
\end{itemize}
we have
\begin{align}\label{eqn:main1}
Q_\lambda(S) &\subseteq \Sigma_P
 = \left\{({\pmb{\lambda}}{\bf a}) ~|~ 
{\bf a} \in \ints^d \myand {\bf a}_F \in \ints^k\cap (\Omega - Y{\bf a}_L)\right\}\\
             & = \left\{\left. \sum_{j=0}^{d-1} a_j\lambda^j \;\right|\; (a_0,\ldots,a_{d-1}) \in \ints^d \myand (a_0,\ldots,a_{k-1}) \in  \Omega - \sum_{j=k}^{d-1} a_j\bv y_j \right\} \\ \label{eqn:main2}
&= \left\{\left. \sum_{j=0}^{d-1} a_j\lambda^j \;\right|\; (a_0,\ldots,a_{d-1}) \in \ints^d \myand W(a_0,\ldots,a_{k-1}) + \sum_{j=k}^{d-1} a_j\bv x_j\in P \right\} \\ \label{eqn:main3general}
&= \left\{\left. \sum_{j=0}^{d-1} a_j\lambda^j \;\right|\; (a_0,\ldots,a_{d-1}) \in \ints^d \myand \ell_i \le \sum_{j=0}^{d-1} a_j(\mu_i)^j \le h_i \mbox{ for all $0 \le i < k$} \right\}\;.
\end{align}
In the special case where $S = \{0,1\}$, we have $\ell_0 = \cdots = \ell_{k-1} = 0$ and $h_0 = \cdots = h_{k-1} = 1$, whence
\begin{equation}\label{eqn:main3}
Q_\lambda \subseteq \Sigma_P = \left\{\left. \sum_{j=0}^{d-1} a_j\lambda^j \;\right|\; (a_0,\ldots,a_{d-1}) \in \ints^d \myand 0 \le \sum_{j=0}^{d-1} a_j(\mu_i)^j \le 1 \mbox{ for all $0 \le i < k$} \right\}\;.
\end{equation}
\end{fact}

\subsection{Connection to cut-and-project schemes}
\label{sec:cut-and-project-connection}

Here we show that the set $\Sigma_P^{(\lambda)}$ of Definition~\ref{def:Sigma_P}, used explicitly in Theorem~\ref{thm:main} and Fact~\ref{fact:main} and which includes $Q_\lambda(S)$ ($= R_\lambda(S)$) as a subset
by Theorem~\ref{thm:new-containment}, is always the model set of a particular cut-and-project scheme (see Section~\ref{sec:meyer-sets}).  Let $\lambda$ be sPV with minimal polynomial $p(x)$ of degree $d$, and let $S$ be some finite subset of $\ints[\lambda]$, as in the proof of Theorem~\ref{thm:main}.  As in that proof, we can assume that $\lambda$ is nontrivial with $k\ge 1$ conjugates $\mu_0,\ldots,\mu_{k-1}$ in $\opop{0,1}$.  If $\lambda\in\reals$, then $k=d-1$, and otherwise, $k=d-2$ and we set $\mu_{d-1} \eqdf \lambda^*$.  In either case, we set $\mu_k \eqdf \lambda$.  We define $P$ and $\Sigma_P = \Sigma_P^{(\lambda)}$ accordingly.

In what follows, $V$ will be the $(k+1)\times d$ matrix consisting of the first $k+1$ rows of the Vandermonde matrix $V' \eqdf V(\mu_0,\ldots,\mu_{d-1})$.  Our conventions dictate that if $\lambda\in\reals$, then $V = V'$, and otherwise, $V$ is missing the last row of $V'$---the row corresponding to $\lambda^*$.  In either case, the last row of $V$ contains powers of $\mu_k = \lambda$, and all the other entries of $V$ are real.  We also let $F$ be $\reals$ if $\lambda\in\reals$, and $F \eqdf \complexes$, otherwise.  We identify $\complexes$ with $\reals^2$ via real and imaginary parts, and so the columns of $V$ can naturally be viewed as vectors in $d$ real dimensions.

\begin{definition}\label{def:cut-and-project-with-lambda}
Given $\lambda$, etc.\ as above, define the tuple
\[ \cM \eqdf (\reals^k,F,L)\;, \]
where $L$ is the $d$-(real)-dimensional integer lattice spanned by the columns of $V$:
\[ L \eqdf V\ints^d = \left\{ \sum_{j=0}^{d-1} a_j\bv{u}_j : a_0,\ldots,a_{d-1}\in\ints \right\}\;, \]
where, for all $0\le j < d$, \ $\bv{u}_j \eqdf ((\mu_0)^j,\ldots,(\mu_{k})^j)$ is the $\ordth{j}$ column of $V$.

We also have the corresponding projection maps $\pi_1$ and $\pi_2$, where $\map{\pi_1}{\reals^k\times F}{\reals^k}$ takes a vector $\bv v \in \reals^k\times F$ to the vector of its first $k$ components (all real), and $\map{\pi_2}{\reals^k\times F}{F}$ takes $\bv v$ to its last component (either real or complex, depending on $\lambda$).
\end{definition}

We can give a short, self-contained proof that the tuple $\cM$ is a cut-and-project scheme (see Definition~\ref{def:cut-and-project-scheme}), based on some of our previous results.

\begin{proposition}\label{prop:M-is-cut-and-project}
Let $\lambda\in\complexes$ be sPV of degree $d$, and let $\cM$ be as in Definition~\ref{def:cut-and-project-with-lambda}.  Then $\cM$ is a cut-and-project scheme.  Further, if $\lambda$ is nontrivial, then $\pi_1|L$ is also one-to-one.
\end{proposition}

\begin{proof}
The vectors $\bv{u}_j$ can be seen to be $\reals$-linearly independent.  (This is clearly true when $\lambda\in\reals$ and so $V = V'$, since the conjugates of $\lambda$ are pairwise distinct.  If $\lambda\notin\reals$, then $V$ is missing the last row (powers of $\lambda^*$) of $V'$.  But any $\reals$-linear combination of the columns of $V'$ makes the $\ordst{(k+1)}$ (i.e., the last) component the complex conjugate of the $\ordth{k}$ component, and so the former is nonzero whenever the latter is.)  Since the $\bv u_j$ are linearly independent, $L$ spans $\reals^d \cong \reals^k\times F$, and thus $\reals^d/L$ is compact (the $d$-dimensional torus).

The only other nonobvious things to prove are that: (a) $\pi_1(L)$ is dense in $\reals^k$, (b) $\pi_2|L$ is one-to-one, and (c) $\pi_1|L$ is one-to-one if $\lambda$ is nontrivial. 

For part~(a), let $p(x) = x^d + \sum_{i=0}^{d-1} c_ix^i \in \ints[x]$ be the minimal polynomial of $\lambda$, and let $\Lambda = \Lambda_p$ be the $d\times d$ companion matrix of $p$ as in Definition~\ref{def:lambda-mult}.  Let $D$ be the $(k+1)\times(k+1)$ diagonal matrix where $[D]_{ii} = \mu_i$ for all $0\le i\le k$.  By adapting the proof of Lemma~\ref{lem:lambda-spectral-decomp}, we see that $V\Lambda = DV$, which implies
\[ DL = DV\ints^d = V\Lambda\ints^d \subseteq V\ints^d = L\;, \]
the inclusion following from the fact that $\Lambda$ is an integer matrix.  Thus $D$ maps $L$ into $L$, and hence the same is true for $I-D$.  It follows that $L$ is closed\footnote{
Here is an alternative proof that does not use the companion matrix:
Let $\bv{x}, \bv{y} \in L = V\ints^d$. Then for some $\bv{a},\bv{b} \in \ints^d$,
we have
\begin{align*}
\bv{x} &= \left(\sum_{j=0}^{d-1}a_j\mu_0^j,\ldots,
\sum_{j=0}^{d-1}a_j\mu_{d-1}^j\right)\;, & \bv{y} = \left(\sum_{j=0}^{d-1}b_j\mu_0^j,\ldots, \sum_{j=0}^{d-1}b_j\mu_{d-1}^j\right)\;.
\end{align*}
The $\ordth{i}$ component of
$\bv{x} \x_D \bv{y}$ is then $(1-\mu_i)\sum_{j=1}^{d-1}a_j\mu_i^j
+ \mu_i\sum_{j=1}^{d-1}b_j\mu_i^j = \sum_{j=1}^{d-1}\left(a_j\mu_i^j + (b_j - a_j)\mu_i^{j+1}\right)$. But $\mu_i^d$ is an integer combination of $\mu_i^0,\ldots,\mu_i^{d-1}$, so this is of the form $z_i = \sum_{j=0}^{d-1}c_j\mu_i^j$, where each $c_j\in\ints$. Thus the resulting
vector $\bv{z} = \bv{x}\x_D\bv{y}$ is contained in $L$.}
 under the binary operation $\xD$ (cf.\ Definition~\ref{def:rho-extend}), and so $Q_D(S)\subseteq L$ for any $S\subseteq L$.

Define $\bv{w} \eqdf V\one$, where $\one$ is the $d$-dimensional vector of all ones.  Note that $\bv{w}$ is in $L$.  Set $\bv{x} \eqdf \pi_1(\bv{w})$.  The entries $x_0,\ldots,x_{k-1}$ of $\bv{x}$ (i.e., all but the last entry of $\bv{w}$) are all positive reals; indeed, $x_i = (1-\mu_i^d)/(1-\mu_i)$ for all $0\le i<k$, and each such $\mu_i$ is in $\opop{0,1}$.  Also note that $\pi_1\circ D = D' \circ \pi_1$, where $D'$ is the $k\times k$ diagonal matrix whose diagonal elements are $\mu_0,\ldots,\mu_{k-1}$, i.e., the first $k$ elements of the diagonal of $D$, which are pairwise distinct elements of $\opop{0,1}$ (both compositions above are maps $\reals^k\times F\rightarrow\reals^k$).  Now by Lemma~\ref{lem:rho-map}, we have $\pi_1(Q_D(\{0,\bv{w}\})) = Q_{D'}(\pi_1(\{0,\bv{w}\})) = Q_{D'}(\{0,\bv{x}\})$, which is dense in the $k$-dimensional box $B \eqdf \clcl{0,x_0}\times \cdots \times\clcl{0,x_{k-1}}$ by Corollary~\ref{cor:cube}.  But $\pi_1(Q_D(\{0,\bv{w}\})) \subseteq \pi_1 L$ by the previous paragraph, and thus $\pi_1 L \cap B$ is dense in $B$ as well.  By this fact and the translation invariance of $L$, \ $\pi_1 L$ must be dense in all of $\reals^k$, which proves part~(a).

For part~(b), consider two distinct points $\bv x, \bv y \in L$.  Then $\bv x - \bv y \in L\cmpl\{0\}$, and we can write $\bv x - \bv y = V\bv a$ for some nonzero $\bv a = (a_0,\ldots,a_{d-1})\in\ints^d$.

Not only is $V\bv a$ nonzero, but all of its components are nonzero as well.  Consider any component of $V\bv a$, say, the $\ordth{j}$ component (for some $j$).  This component is $\sum_{i=0}^{d-1} a_i\mu_j^i$, which cannot be $0$, for that would contradict the fact that $\mu_j$ has degree $d$ (being a conjugate of $\lambda$).

We have $\pi_2(\bv x) - \pi_2(\bv y) = \pi_2(\bv x - \bv y) = \pi_2(V\bv a)$.  The $\ordth{k}$ component of $V\bv a$ is $\pi_2(V\bv a)$, which is nonzero, and thus $\pi_2|L$ is one-to-one.

For part~(c), suppose $\lambda$ is nontrivial.  For $\bv x$, $\bv y$, and $\bv a$ as above, $\pi_1(\bv x) - \pi_1(\bv y) = \pi_1(\bv x - \bv y) = \pi_1(V\bv a)$.    Since $\lambda$ is nontrivial, $k\ge 1$.  Thus the vector $\pi_1(V\bv a)$ has positive dimension, i.e., the $\ordth{0}$ component of $V\bv a$ is included in $\pi_1(V\bv a)$ and is nonzero.  From this we conclude that $\pi_1|L$ is one-to-one.
\end{proof}


\begin{proposition}\label{prop:subset-of-model-set}
Let $\lambda$ and $\cM$ be as in Proposition~\ref{prop:M-is-cut-and-project}.  For any finite $S\subseteq \ints[\lambda]$, the set $Q_\lambda(S)$ is a subset of some model set of $\cM$ (cf.\ Definition~\ref{def:model-set}).
\end{proposition}

\begin{proof}
We show that the set $\Sigma_P$ (Definition~\ref{def:Sigma_P}) corresponding to $\lambda$ and $S$  is a model set of $\cM$.  To see how $\Sigma_P$ fits into Definition~\ref{def:cut-and-project-with-lambda}, we essentially perform a change of basis.  First, we recall the notation used in the proof of Theorem~\ref{thm:main} and in Definition~\ref{def:cut-and-project-with-lambda}:
\begin{itemize}
\item
$V$ is the $(k+1)\times d$ matrix of the first $k+1$ rows of the Vandermonde matrix $V(\mu_0,\ldots,\mu_{d-1})$, indexed by $0$ through $k$.  We have no use for the $\ordst{(k+1)}$ row of the Vandermonde matrix (if it exists).
\item
The last (i.e., the $\ordth{k}$) row of $V$ is the row vector $\tp{\bv e_k}V = (1,\lambda,\ldots,\lambda^{d-1})$ of powers of $\lambda = \mu_k$.  (Here, $\bv e_k$ is the $(k+1)$-dimensional column vector $(0,0,\ldots,0,1)$.)
\item
The projection maps $\pi_1$ and $\pi_2$ and the lattice $L = V\ints^d$ are as in Definition~\ref{def:cut-and-project-with-lambda}.
\end{itemize}
Now we can start with Equation~(\ref{eqn:main2}), using the notation of Fact~\ref{fact:main}.  Let $V'$ be the first $k$ rows of $V$ (i.e., all but the last row).  Noting that $V'$ is formed by appending the column vectors $\bv x_k,\ldots,\bv x_{d-1}$ onto the right end of $W$, we have
\begin{align*}
\Sigma_P &= \left\{\left. \sum_{j=0}^{d-1} a_j\lambda^j \;\right|\; (a_0,\ldots,a_{d-1}) \in \ints^d \myand W(a_0,\ldots,a_{k-1}) + \sum_{j=k}^{d-1} a_j\bv x_j\in P \right\} \\
&= \left\{ \tp{\bv{e}_k}V\bv a \mid \bv a\in \ints^d \myand V'\bv a \in P \right\}\;.
\end{align*}
Letting $\bv b := V\bv a$, we finally get
\[ \Sigma_P = \left\{ \tp{\bv{e}_k}\bv b \mid \bv b\in V\ints^d \myand \pi_1(\bv b) \in P \right\} = \left\{ \pi_2(\bv b) \mid \bv b \in L \myand \pi_1(\bv b) \in P \right\} = \pi_2\left( L \intersect \pi_1^{-1}(P) \right)\;. \]
The right-hand side is evidently the model set with window $P$ in the cut-and-project scheme of Definition~\ref{def:cut-and-project-with-lambda}.
\end{proof}

\begin{remark}
Unfortunately, Proposition~\ref{prop:subset-of-model-set} does not show that the various discrete $Q_\lambda(S)$ are Meyer sets, only because we do not know in general whether any particular $Q_\lambda(S)$ is relatively dense.  We address the issue of relative density in Section~\ref{sec:relative-density}, below, where we prove relative density in many cases.
\end{remark}

The next proposition speaks to the aperiodicity of various $Q_\lambda(S)$.  It follows immediately from Lemma~\ref{lem:no-ap} and Proposition~\ref{prop:M-is-cut-and-project}.

\begin{proposition}\label{prop:no-infinite-APs}
Let $\lambda$ be sPV and $S\subseteq\rats(\lambda)$ be finite.  If $\lambda$ is nontrivial (by Definition~\ref{def:strong-PV-number}), then $Q_\lambda(S)$ intersects any arithmetic progression in only finitely many points (and is therefore aperiodic).
\end{proposition}

\begin{proof}
By scaling everything up by some appropriate integer as in the proof of Theorem~\ref{thm:main}, we may assume that $S\subseteq\ints[\lambda]$.

By Proposition~\ref{prop:subset-of-model-set}, $Q_\lambda(S)$ is a subset of $\Sigma_P$, the model set of a cut-and-project scheme $\cM$ as in Proposition~\ref{prop:M-is-cut-and-project}, and since $\lambda$ is nontrivial, the map $\pi_1|L$ is one-to-one.  Then $\Sigma_P$ intersects any arithmetic progression in only finitely many points by Lemma~\ref{lem:no-ap}.
\end{proof}

\section{Further properties of strong PV numbers}

In this section, we collect a some additional facts about strong PV numbers, and we characterize all strong PV numbers of degree $\le 3$.

\subsection{The topology of sPV}

Recall from Part~I that $\convex$ is the set of all $\lambda$ such that $R_\lambda$ is convex.  Theorem~\ref{thm:main} says that no strong PV number is in $\convex$.  Thus Theorems~\ref{thm:deleted-neighborhood} and \ref{thm:main} immediately imply the following topological fact about the set of sPV numbers, which contrasts with the set of PV numbers, which is known to have infinitely many accumulation points~\cite{Vijayaraghavan:fractional-parts-I,Vijayaraghavan:fractional-parts-II}.

\begin{corollary}
The strong PV numbers form a discrete subset of $\complexes$.
\end{corollary}

\subsection{Abundance of strong PV numbers}

In this section, we show how an sPV number gives rise to infinitely many more sPV numbers.

\begin{proposition}\label{prop:Sigma_P-sPV}
Let $\lambda$ be a strong PV number, and let $k\ge 0$ be the number of conjugates of $\lambda$ that are in $\opop{0,1}$.  Every element of $\Sigma_P^{(\lambda)}$ is a strong PV number, where $P := \clcl{0,1}^{\times k}$.
\end{proposition}

\begin{proof}
Let $z$ be any element of $\Sigma_P := \Sigma_P^{(\lambda)}$.  Then $z\in\ints[\lambda]$.  By standard results in algebra (see Corollary~\ref{cor:Z-algebraic} in the Appendix), $z$ is an algebraic integer, and for every conjugate $c$ of $z$ there exists a conjugate $\mu$ of $\lambda$ such that $c$ is the image $h_\mu(z)$ of $z$ under the ring isomorphism $\map{h_\mu}{\ints[\lambda]}{\ints[\mu]}$ that maps $\lambda$ to $\mu$.  If $\mu = \lambda$ or $\mu = \lambda^*$, then $c = z$ or $c = z^*$ accordingly.  Equivalently, if $c\notin\{z,z^*\}$, then $c = h_\mu(z)$ for some $\mu \notin \{\lambda,\lambda^*\}$ conjugate to $\lambda$.  Since $\lambda$ is sPV, this $\mu$ is in $\opop{0,1}$.  This implies that $c$ is one of the coordinates of $\bv z'$ (see Definition~\ref{def:Sigma_P}).  Since $z\in\Sigma_P$, we have $\bv z' \in P$, whence $c\in\clcl{0,1}$.  It follows that $z$ is sPV.
\end{proof}

\begin{corollary}\label{cor:R-lambda-is-sPV}
If $\lambda$ is a strong PV number, then so are all elements of $R_\lambda$.
\end{corollary}

\begin{proof}
By Theorem~\ref{thm:new-containment} with $S := \{0,1\}$, Proposition~\ref{prop:Sigma_P-sPV}, and the fact that $R_\lambda = Q_\lambda$.
\end{proof}

One of our main conjectures is the converse of Theorem~\ref{thm:main}, that is, if $R_\lambda$ is discrete, then $\lambda$ is sPV\@.  All examples of discrete $R_\lambda$ that we currently know of are where $\lambda$ is sPV\@.  We have a natural way of generating new elements of $\complexes\cmpl\convex$ from old ones: by Corollary~\ref{cor:extend}, if $R_\lambda$ is discrete, then $R_\mu$ is discrete for all $\mu\in R_\lambda$.  Corollary~\ref{cor:R-lambda-is-sPV} says that this process cannot be used to disprove the conjecture.

Suppose $\lambda$ is nontrivial sPV and $\Sigma_P$ is as in Proposition~\ref{prop:Sigma_P-sPV}.  Although every element of $\Sigma_P$ is sPV by that proposition, it is not the case that every sPV element of $\ints[\lambda]$ is in $\Sigma_P$.  For example, $2\in\ints[\lambda]$ and is sPV, but $\bv 2' = (2,2,\ldots,2)\notin P$, and so $2\notin\Sigma_P$.  We do have the following, however:

\begin{proposition}
Suppose $\lambda$ is a strong PV number of prime degree, with $k$ many conjugates in $\opop{0,1}$.  Then every sPV number in $\ints[\lambda]\cmpl\ints$ is in $\Sigma_P^{(\lambda)}$, where $P := \clcl{0,1}^{\times k}$.
\end{proposition}

\begin{proof}
Let $d$ be the degree of $\lambda$, let $z$ be any strong PV number in $\ints[\lambda]\cmpl\ints$, and let $n$ be the degree of $z$.  We have $n>1$ and $n|d$, hence $n=d$.  If $\lambda\in\reals$, then so is $z$, and thus $z$ has $k = d-1$ many conjugates in $\opop{0,1}$.  In any case, $z$ has at least $k$ many conjugates in $\opop{0,1}$.\footnote{In fact, it has exactly $k$ many such conjugates.  This can be seen as follows: The only way $z$ can have more conjugates in $\opop{0,1}$ than $\lambda$ is if $z\in\reals$ but $\lambda\notin\reals$.  But then $\rats(z)\subseteq\reals$ and $\rats(\lambda)\not\subseteq\reals$.  Since $\rats(z)$ is properly included in $\rats(\lambda)$, we would thus have $n=[\rats(z):\rats] < [\rats(\lambda):\rats] = d$.  Contradiction.}  Since all of these conjugates are coordinates of $\bv z'$, it follows that all coordinates of $\bv z'$ are in $\opop{0,1}$, whence $\bv z'\in P$, which puts $z$ into $\Sigma_P^{(\lambda)}$.
\end{proof}

%

\subsection{Characterizing nontrivial strong PV numbers of low degree}

The next corollary characterizes the strong PV numbers $\lambda\in\reals$ of degree $2$ and gives bounds on the corresponding $R_\lambda$ sets.  It uses case~1 of Theorem~\ref{thm:main} with $d=2$.  It only applies to negative $\lambda$; for positive numbers, we can use the fact that $R_{1-\lambda} = R_\lambda$ and the fact (Fact~\ref{fact:spv-basic}) that $\lambda$ is sPV if and only if $1-\lambda$ is.

\begin{corollary}\label{cor:degree2}
Let $\lambda$ be any negative (real) number.  Then $\lambda$ is sPV of degree $2$ if and only if $\lambda = (-m - \sqrt{m^2+4n})/2$ for some integers $m$ and $n$ with $0 < n \le m$.  If this is the case, then
\begin{equation}\label{eqn:degree2}
R_\lambda \subseteq \Sigma_P^{(\lambda)} = \{ a - b\lambda \mid a,b\in\ints \myand b\mu \le a \le b\mu + 1\} = \{1\} \cup \left\{ \ceiling{b\mu} - b\lambda \mid b\in\ints\right\}\;,
\end{equation}
where $\mu := (-m + \sqrt{m^2+4n})/2$ is the conjugate of $\lambda$ and $\Sigma_P^{(\lambda)}$ is as in Definition~\ref{def:Sigma_P} with $P := \clcl{0,1}$.  Furthermore, $\lambda < -1$, and except for $0$ and $1$, any two adjacent elements of $R_\lambda$ differ by either $-\lambda$ or at least $1-\lambda$.
\end{corollary}

\begin{proof}
The quadratic polynomial $p(x) := x^2 + mx - n \in \ints[x]$ has roots $(-m\pm\sqrt{m^2+4n})/2$.  If $\lambda$ and $\mu$ are the roots of $p(x)$ as above, then the inequality $0 < n \le m$ is equivalent to $0 < \mu < 1$, and it guarantees that $p(x)$ is irreducible (and so $\lambda$ and $\mu$ are conjugates).  We can thus apply case~1 of Theorem~\ref{thm:main} with $(\mu_0,\mu_1) := (\mu,\lambda)$, which says that $R_\lambda$ is uniformly discrete (and thus $R_\lambda = Q_\lambda$).  Applying Equation~(\ref{eqn:main3}) in Fact~\ref{fact:main} with $d:=2$, \ $k:=1$, and $(\mu_0,\mu_1) := (\mu,\lambda)$, we have (using $(a,b)$ instead of $(a_0,a_1)$ as the index)
\[ Q_\lambda \subseteq \left\{\left. a + b\lambda \;\right|\; a,b\in \ints \myand 0 \le a + b\mu \le 1 \right\}\;. \]
By switching $b$ with $-b$, this set inclusion is seen to be equivalent to the first inclusion of (\ref{eqn:degree2}).  The subsequent set equality in (\ref{eqn:degree2}) follows from the fact that $\mu$ is irrational.

It is clear that $\lambda < -1$.  The quantity $\ceiling{b\mu} - b\lambda$ increases strictly monotonically in $b$, so adjacent points of $R_\lambda\cmpl\{1\}$ correspond (at least) to adjacent integer values of $b$.  When $b$ increases by $1$, \ $\ceiling{b\mu}$ increases by $0$ or $1$, giving a difference of either $-\lambda$ or $1-\lambda$.  Finally, the closest point to $1$ in $R_\lambda$, other than $0$, is $\ceiling{\mu} - \lambda = 1 - \lambda$, which is $-\lambda$ away from $1$.
\end{proof}

The case where $m = n = 1$ was already shown in Proposition~\ref{prop:1-plus-phi}.  In that case, $\lambda = -\p$ and $R_\lambda = R_{-\p} = R_{1+\p}$.  Here are a few other cases:
\begin{description}
\item[$m=2$ and $n=1$:] $\lambda = -1-\sqrt 2$ and $R_\lambda = R_{1-\lambda} = R_{2+\sqrt 2}$.
\item[$m=2$ and $n=2$:] $\lambda = -1-\sqrt 3$ and $R_\lambda = R_{1-\lambda} = R_{2+\sqrt 3}$.
\item[$m=3$ and $n=1$:] $\lambda = -(3+\sqrt{13})/2$ and $R_\lambda = R_{1-\lambda} = R_{(5+\sqrt{13})/2}$.
\end{description}

The next corollary characterizes the non-real strong PV numbers of degree $3$.  It applies case~2 of Theorem~\ref{thm:main} with $d=3$.

\begin{corollary}\label{cor:case2}
A non-real number $\lambda$ is sPV of degree $3$ if and only if $\lambda$ is a root of a polynomial $p(x) := x^3 + ax^2 + bx + c$ for some $a,b,c\in\ints$ such that
\begin{enumerate}
\item $c<0$,
\item $a+b+c \ge 0$, and
\item the discriminant $\Delta < 0$, where $\Delta := a^2b^2 - 4b^3 - 4a^3c - 27c^2 + 18abc$.
\end{enumerate}
If this is the case, then let $\mu$ be the (unique) root of $p$ in $\opop{0,1}$, and let $\lambda$ be one of the non-real roots of $p$.  Then
\begin{align*}
R_\lambda &\subseteq \left\{ a_0 + a_1\lambda + a_2\lambda^2 \mid a_0,a_1,a_2\in\ints \myand a_0 + a_1\mu + a_2\mu^2 \in \clcl{0,1} \right\} \\
&= \left\{ a_0 +a_1\lambda + a_2\lambda^2 \mid a_0,a_1,a_2\in\ints \myand -a_1\mu -a_2\mu^2 \le a_0 \le -a_1\mu - a_2\mu^2 + 1 \right\} \\
&= \left\{1\right\} \union \left\{ m\lambda + n\lambda^2 - \floor{m\mu+n\mu^2} \mid m,n\in\ints\right\}\;.
\end{align*}
\end{corollary}

\begin{proof}[Proof sketch]
The first two conditions give $p(0) = c < 0$ and $p(1) = 1+a+b+c > 0$, respectively, and together they force $p$ to have a root in $\opop{0,1}$.  The negative discriminant implies that the other two roots are non-real.  (The only other way to force a single real root in $\opop{0,1}$ is for $p(0)>0$ and $p(1)<0$, but this makes $p$ have three real roots.)  Thus the conditions of case~2 of Theorem~\ref{thm:main} are satisfied if and only if the conditions of the corollary are satisfied.  If this is the case, we apply Fact~\ref{fact:main}  (Equation~(\ref{eqn:main3})) with $d := 3$, with $k:=1$, and with $\mu_0 := \mu$, with $k := 1$ to get the first inclusion of the corollary.  The next equality is immediate.  The final equality follows from the fact that $p$ must be irreducible, and thus the set $\{1,\mu,\mu^2\}$ is linearly independent over $\rats$, whence $m\mu + n\mu^2\notin\ints$ unless $m=n=0$.
\end{proof}

Figure~\ref{fig:case2} shows two applications of Corollary~\ref{cor:case2}.
\begin{figure}
\centering
\includegraphics[width=0.5\textwidth]{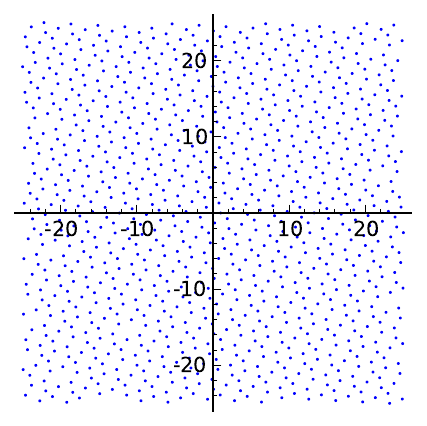}\includegraphics[width=0.5\textwidth]{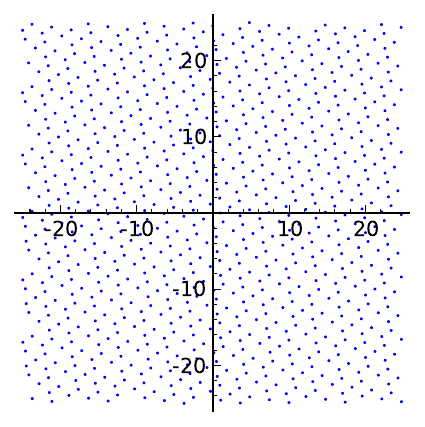}
\caption{Two plots illustrating Corollary~\ref{cor:case2}.  The left plot is of $R_\lambda$, where $\lambda$ is the root of the polynomial $x^3 + x^2 - 1$ closest to $-0.877 + 0.745i$.  The right plot is of $R_\mu$, where $\mu$ is the root of the polynomial $x^3+x-1$ closest to $-0.341+1.162i$.  One can easily show that $R_\lambda = R_{\lambda^2} = R_{\lambda^3}$.}\label{fig:case2}
\end{figure}
Let $r := \lambda - \mu$, let $s := \lambda^2 - \mu^2 = r(\lambda+\mu)$, and consider the lattice $r\ints + s\ints$.  This lattice is discrete, since $r$ and $s$ are $\reals$-linearly independent, and it is interesting to notice that each point in $R_\lambda$ differs from a point in this lattice by some real displacement between $0$ and $1$. 
In Propositions~\ref{prop:cubicomplexcandp1} and \ref{prop:cubicomplexcandp2} we show that equality holds in Corollary~\ref{cor:case2} for the values of $\lambda$ and $\mu$ indicated in Figure~\ref{fig:case2}.  It would be nice to know in general whether or when equality holds.

The next corollary covers the case of strong PV numbers of degree $3$ which are real.  It considers only negative $\lambda$; the positive numbers are then all of the form $1-\lambda$.

\begin{corollary}
A negative real number $\lambda$ is sPV of degree $3$ if and only if $\lambda$ is the least root of a polynomial $p(x) := x^3 + ax^2 + bx + c$ for some $a,b,c\in\ints$ such that
\begin{enumerate}
\item $c>0$,
\item $a+b+c \ge 0$,
\item $-2a-3 < b < 0$, and
\item the discriminant $\Delta > 0$, where $\Delta := a^2b^2 - 4b^3 - 4a^3c - 27c^2 + 18abc$.
\end{enumerate}
If this is the case, then $a>0$ as well, and letting $\mu < \nu$ be the two roots of $p$ in $\opop{0,1}$ and letting $v_\mymax(a_1,a_2)$ and $v_\mymin(a_1,a_2)$ be $\max\{-a_1\mu -a_2\mu^2, -a_1\nu -a_2\nu^2\}$ and $ \min\{-a_1\mu - a_2\mu^2, -a_1\nu - a_2\nu^2\}$, respectively, we have
\begin{align*}
R_\lambda &\subseteq \left\{ a_0 + a_1\lambda + a_2\lambda^2 \mid a_0,a_1,a_2\in\ints \myand a_0 + a_1\mu + a_2\mu^2 \in \clcl{0,1} \myand a_0 + a_1\nu + a_2\nu^2 \in \clcl{0,1} \right\} \\
&= \left\{ a_0 +a_1\lambda + a_2\lambda^2 \mid a_0,a_1,a_2\in\ints \myand v_\mymax(a_1,a_2) \le a_0 \le v_\mymin(a_1,a_2) + 1 \right\} \\
&= \left\{1\right\} \union \left\{ m\lambda + n\lambda^2 + \ceiling{v_\mymax(m,n)} \mid m,n\in\ints \myand \ceiling{v_\mymax(m,n)} = \ceiling{v_\mymin(m,n)} \right\} \\
&= \left\{1\right\} \union \left\{ m\lambda + n\lambda^2 - \floor{m\mu+n\mu^2} \mid m,n\in\ints \myand \floor{m\mu+n\mu^2} = \floor{m\nu + n\nu^2} \right\}\;.
\end{align*}
\end{corollary}

\begin{proof}[Proof sketch]
The discriminant $\Delta$ is positive if and only if $p$ has three distinct real roots: $\lambda<0$ and its conjugates $\mu$ and $\nu$ such that $\mu <\nu$.  As in Corollary~\ref{cor:case2}, conditions~1 and 2 are equivalent to $p(0),p(1) > 0$.  The derivative of $p(x)$ is $p'(x) = 3x^2 +2ax + b$ and has roots $x_\pm = (-a \pm \sqrt{a^2-3b})/3$.  Condition~4 implies $x_+,x_-\in\reals$ and $p(x_+)<0$.
\begin{description}
\item[($\Longrightarrow$):]
Suppose $\lambda$ is sPV of degree $3$ with minimal polynomial $p$ as above (and thus $\lambda < 0 < \mu < \nu < 1$).  Then $\Delta > 0$ (Condition~4), and $\lambda$ is the least root of $p$.  Additionally, $0<\mu < x_+ < \nu < 1$ and $p(0),p(1)>0$, the latter implying Conditions~1 and 2.  From $0<x_+<1$ we get $a < \sqrt{a^2-3b} < a+3$ (and in particular, $a > -3$ and $b\le a^2/3$).  Squaring both sides of the inequality $\sqrt{a^2-3b} < a+3$ yields $-2a-3<b$.  If $a\ge 0$, then squaring both sides of the inequality $a < \sqrt{a^2-3b}$ gives $b<0$, which implies Condition~3.  We have left to consider when $a<0$, and here there are only two possible cases: when $a = -1$ (whence $b=0$, because $-2a-3<b\le a^2/3$) or when $a=-2$.  We cannot have $a=-2$, because this violates $-2a-3<b\le a^2/3$ for any integer $b$.  If $a=-1$ and $b=0$, however, then $\Delta = 4c-27c^2 < 0$ for all integers $c > 0$ (cf.\ Condition~1), so this case cannot happen either.  This establishes the forward direction.  Finally, we also have $a > 0$, for if $a=0$, then $b\in\{-2,-1\}$ by Condition~3, and in either case, $\Delta = -4b^3-27c^2 < 0$ for any integer $c\ge -b$ (cf.\ Condition~2).
\item[($\Longleftarrow$):]
Suppose $\lambda$ is the least root of $p$ as above satisfying Conditions~1--4, with the other real roots $\mu<\nu$ (by Condition~4).  We have $p(0),p(1)>0$ by Conditions~1 and 2, and $0<x_+<1$ by Condition~3.  By Condition~4, $p(x_+)<0$, and thus there are two roots of $p$ strictly between $0$ and $1$.  These are $\mu$ and $\nu$, because $\lambda < 0$.  Finally, $p$ is irreducible (over $\rats$), for otherwise, $p$ has an integral root (which must be $\lambda$), which makes $\mu$ and $\nu$ conjugate roots of some monic quadratic polynomial in $\ints[x]$, but this is impossible.  Thus $\lambda$ is sPV of degree~$3$.
\end{description}
The rest of the corollary follows from Equation~(\ref{eqn:main3}) of Fact~\ref{fact:main} where $d=3$ and $k=2$.
\end{proof}

Notice that the condition $\floor{m\mu+n\mu^2} = \floor{m\nu + n\nu^2}$ in the corollary implies $|m\nu + n\nu^2-m\mu-n\mu^2|<1$, which implies $|m+n(\nu+\mu)| < (\nu-\mu)^{-1}$.  Thus given $m\in\ints$, there are only finitely many $n\in\ints$ such that the condition is satisfied, and vice versa.

\section{$\lambda$-convex closures of some regular shapes}

In this section, we consider the $\lambda$-convex closures of some point sets, specifically, regular polygons and the regular polyhedra (Platonic solids).  We apply Theorem~\ref{thm:main} to find combinations of finite sets $S$ and values $\lambda$ that make $Q_\lambda(S)$ uniformly discrete.  In all cases in this section, $\lambda$ will be a real number.  A challenge for future research is to find familiar point sets in $\complexes$ whose $\lambda$-convex closures are discrete for nonreal $\lambda$.

Recall that if $R_\lambda$ is convex, then so is $R_\lambda(S)$ for any $S$, by Proposition~\ref{prop:convex-implies-convex}.  Thus to find discrete, nontrivially generated $\lambda$-convex sets, we must have $R_\lambda = Q_\lambda$ discrete.  We thus confine ourselves to considering sPV $\lambda$.

\subsection{Regular polygons}
\label{sec:reg-polygons}

Here we will prove facts about discrete sets of the form $Q_\lambda(P_n)$, where $P_n$ is the set of vertices of a regular $n$-gon for $n\ge 3$, and $\lambda$ is chosen appropriately, depending on $n$.  We will also show plots of some of these sets.

We first review the facts about cyclotomic field extensions of $\rats$ that we will need.  Fix an integer $n > 1$, and let
\begin{equation}\label{eqn:zeta}
\zeta := e^{i\tau/n}
\end{equation}
be the principal $\ordth{n}$ root of unity.  Then $\zeta$ is an algebraic integer of degree $\phi(n)$ (where $\phi$ is Euler's totient function) whose minimum polynomial is the $\ordth{n}$ cyclotomic polynomial $\Phi_n$ and whose conjugates are the primitive $\ordth{n}$ roots of $1$.  The field extension $\rats(\zeta)$ of $\rats$ is a Galois extension of degree $[\rats(\zeta):\rats] = \phi(n)$ whose Galois group---isomorphic to $(\ints/n\ints)^\times$---consists of those automorphisms $\eta_a$ that map $\zeta$ to $\zeta^a$ (and thus map $\zeta^j$ to $\zeta^{ja}$ for any $j\in\ints$) for all $0<a<n$ such that $\gcd(a,n)=1$.  Thus the $\eta_a$ transitively permute the primitive $\ordth{n}$ roots of $1$.

We will use the following lemma repeatedly to lift discreteness results on the real line up to the complex plane.

\begin{lemma}\label{lem:rot-symmetry}
Let $\lambda\in\reals$ and $S\subseteq\complexes$.  Suppose that: (1) $Q_\lambda(\Re(S))$ is uniformly discrete; and (2) there exist $a,b\in\complexes$ such that $b-a\notin\reals$ and $S = \rho_{a,b}(S)$.  Then $Q_\lambda(S)$ is uniformly discrete.
\end{lemma}

\begin{proof}
Suppose otherwise.  Let $d := \max(|b-a|,1)$.  For any $\eps > 0$, we show that there exist distinct $u,v\in Q_\lambda(\Re(S))$ such that $|u-v| < \eps$, contradicting the assumption that $Q_\lambda(\Re(S))$ is uniformly discrete.  By assumption, there exist distinct $x,y\in Q_\lambda(S)$ such that $|x-y|<\eps/d$.  We have
\[ \{\Re(x),\Re(y)\} \subseteq \Re(Q_\lambda(S)) = Q_\lambda(\Re(S))\;, \]
the last equality by Lemma~\ref{lem:rho-map}.\footnote{We use Lemma~\ref{lem:rho-map} with $U = \complexes$, \ $V = k = \reals$, \ $t = \Re$, and the $\reals$-linear maps $A$ and $B$ being scalar multiplication by $\lambda$ on $\complexes$ and $\reals$, respectively.}
If $\Re(x) \ne \Re(y)$, then letting $u := \Re(x)$ and $v := \Re(y)$ gives the contradiction: $0<|u-v| \le |x-y|<\eps/d\le \eps$.  If $\Re(x) = \Re(y)$, then $x-y = ir$ for some nonzero $r\in\reals$ with $|r| < \eps/d$.  Letting $u := \Re(\rho_{a,b}(x))$ and $v := \Re(\rho_{a,b}(y))$, we have
\[ u-v = \Re(\rho_{a,b}(x)-\rho_{a,b}(y)) = \Re((x-y)(b-a)) = \Re(ir(b-a)) = -r\Im(b-a) \ne 0\;, \]
and so $0 < |u-v| = |r||\Im(b-a)| \le |r||b-a| < (\eps/d)|b-a| \le \eps$, and
\[ \{u,v\} \subseteq \Re(\rho_{a,b}(Q_\lambda(S))) = \Re(Q_\lambda(\rho_{a,b}(S))) = \Re(Q_\lambda(S)) = Q_\lambda(\Re(S))\;. \]
Contradiction.  The first equality is by Lemma~\ref{lem:translate-Q-R}; the last is by Lemma~\ref{lem:rho-map}.
\end{proof}

The primary question we ask in this section is: For which $n$ and which $\lambda$ is $Q_\lambda(P_n)$ discrete, and if so, what does this set look like?  We also wish to show that, if $Q_\lambda(P_n)$ is discrete, then it is a Meyer set.  For this it suffices to show uniform discreteness of $Q_\lambda(P_n) - Q_\lambda(P_n)$ and relative density of $Q_\lambda(P_n)$ in $\complexes$ (see Fact~\ref{fact:meyer-set-characterization}).  We show the former in this section and the latter in Section~\ref{sec:relative-density}.\footnote{We cannot apply Proposition~\ref{prop:subset-of-model-set} directly here, because $P_n$ is not a subset of $\ints[\lambda]$.}

We will use the next lemma in conjunction with Lemma~\ref{lem:rot-symmetry}:

\begin{lemma}\label{lem:minkowski-difference}
For any $a,b\in\complexes$ and $S\subseteq\complexes$, if $S = \rho_{a,b}(S)$, then $S-S = \rho_{0,b-a}(S-S)$.
\end{lemma}

\begin{proof}
For all $x,y\in\complexes$, we have $\rho_{0,b-a}(x-y) = (x-y)(b-a) = \rho_{a,b}(x) - \rho_{a,b}(y)$.  Therefore,
\[ \rho_{0,b-a}(S-S) = \{\rho_{0,b-a}(x-y) \mid x,y\in S\} = \{\rho_{a,b}(x) - \rho_{a,b}(y) \mid x,y\in S\} = \rho_{a,b}(S) - \rho_{a,b}(S) = S-S\;. \]
\end{proof}

It is most natural to define $P_n\subseteq\complexes$ formally as the set of all $\ordth{n}$ roots of~$1$, which orients the polygon on the unit circle.  Purely for reasons of technical convenience, however (including some relating to the computer code generating the plots shown below), we instead orient $P_n$ so that one of its sides coincides with the unit interval and the rest of the polygon lies in the upper halfplane:
\begin{equation}\label{eqn:P-n}
P_n := \left\{ \sum_{j=0}^{\ell-1} \zeta^j : 0\le \ell < n \right\} = \left\{\frac{\zeta^\ell-1}{\zeta-1} : 0\le \ell < n \right\}\;,
\end{equation}
for $\zeta$ as in (\ref{eqn:zeta}).  This definition also has the advantage that $\{0,1\}\subseteq P_n$.

We first prove a straightforward result for the equilateral triangle, square, and regular hexagon.

\begin{proposition}\label{prop:easy-n-gons}
If $\lambda\in\reals$ is sPV, then $Q_\lambda(S)-Q_\lambda(S)$ is uniformly discrete for all $S\in\{P_3,P_4,P_6\}$.
\end{proposition}

\begin{proof}
For $S\in\{P_3,P_4,P_6\}$, we note that $\rho_{a,b}(S) = S$, where $a = 1$, and either $b = (1+i\sqrt 3)/2$ for $S=P_3$, \ $b = 1+i$ for $S=P_4$, or $b = (3+i\sqrt 3)/2$ for $S=P_6$.  It follows by Lemma~\ref{lem:minkowski-difference} that $\rho_{0,c}(S-S) = S-S$, where $c=(-1+i\sqrt 3)/2$ or $c=i$ or $c=(1+i\sqrt 3)/2$, respectively.  In all cases, $\Re(S-S) = \Re(S)-\Re(S) \subseteq \left\{-2,-\frac{3}{2},-1,-\frac{1}{2},0,\frac{1}{2},1,\frac{3}{2},2\right\} \subseteq \rats \subseteq \rats(\lambda)$, and so by Theorem~\ref{thm:main}, $Q_\lambda(\Re(S-S))$ is uniformly discrete.  It follows that $Q_\lambda(S-S)$ is uniformly discrete by Lemma~\ref{lem:rot-symmetry}, and in addition, $Q_\lambda(S-S) = Q_\lambda(S)-Q_\lambda(S)$ by Lemma~\ref{lem:distribute}.
%
%
%
\end{proof}


We obtain most of the other values of $\lambda$ that we consider here, corresponding to the various $P_n$, by perhaps a somewhat arbitrary type of geometric construction: choose two nonparallel lines $L_1$ and $L_2$ passing through pairs of points of $P_n$, then let $\lambda$ be a certain ratio of distances on $L_1$ between the two points on the polygon and the point of intersection of $L_1$ and $L_2$.  For the moment we will choose pairs of adjacent points on the polygon, and so we can assume without loss of generality that $L_1$ is the real axis.

\subsection{Constructions with odd $n$}

Let $n\ge 3$ be odd, and consider the following construction using $P_n$:
\begin{center}
\input{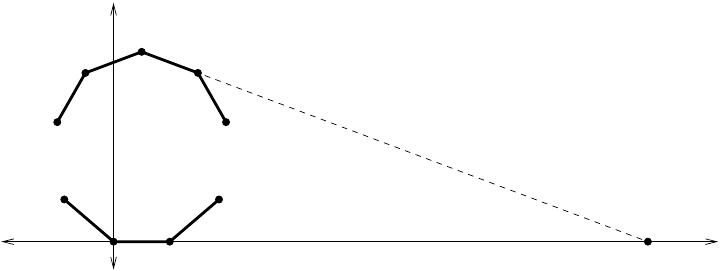_t}
\end{center}
The line through the base of $P_n$ is the real axis, and the other line passes through the apex $A$ of $P_n$ and the point $B$ adjacent to it to the right.  The two lines intersect at the point
\begin{equation}\label{eqn:lambda-n}
\lambda_n := \frac{1}{2(1-\cos(\pi/n))} = \frac{1}{2+\zeta^k+\zeta^{-k}}\;,
\end{equation}
where $k := (n-1)/2$ and $\zeta$ is as in Equation~(\ref{eqn:zeta}).  Note that $\gcd(n,k) = 1$.  (One way to see that this formula for $\lambda_n$ is correct is to drop a vertical line segment from $A$ down to the base at the point $1/2$, forming a right triangle $T$ with the point $\lambda_n$.  The hypotenuse of $T$ has length $\lambda_n$ by symmetry, and the base of $T$ has length $\lambda_n - 1/2$, and finally, the acute angle at $\lambda_n$ is $\pi/n$.  To see the second equation, note that $(\zeta^k + \zeta^{-k})/2 = \cos(2k\pi/n) = \cos(\pi - \pi/n) = -\cos(\pi/n)$.)  Of course, the first equation of (\ref{eqn:lambda-n}) makes sense for all $n\in\posints$, not necessarily odd, and we will on occasion refer to $\lambda_n$ for even $n$, but for the time being, we assume that $n$ is odd.

Now consider the set $Q_{\lambda_n}(P_n)$.  Since $0,1\in P_n$, this set includes $Q_{\lambda_n}$ as a subset, and so it can be discrete only if $Q_{\lambda_n}$ is discrete.  The next proposition comes close to a converse.

\begin{proposition}\label{prop:discrete-reg-n-gon}
For any odd $n\ge 3$, let $\lambda_n$ be given by Equation~(\ref{eqn:lambda-n}).  Suppose $\lambda_n$ is sPV\@.  Then $Q_{\lambda_n}(P_n)-Q_{\lambda_n}(P_n)$ is uniformly discrete; in fact, $Q_{\lambda_n}(P_{2n})-Q_{\lambda_n}(P_{2n})$ is uniformly discrete.
\end{proposition}

\begin{remark}
Note that if $Q_{\lambda_n}(P_{2n})-Q_{\lambda_n}(P_{2n})$ is uniformly discrete, then so is $Q_{\lambda_n}(P_n)-Q_{\lambda_n}(P_n)$: since $P_n$ can be embedded into $P_{2n}$ by a $\complexes$-affine transformation, the same holds for $P_n-P_n$ into $P_{2n}-P_{2n}$, and thus for $Q_{\lambda_n}(P_n)-Q_{\lambda_n}(P_n)$ into $Q_{\lambda_n}(P_{2n})-Q_{\lambda_n}(P_{2n})$ by Lemma~\ref{lem:distribute}.
\end{remark}

\begin{proof}[Proof of Proposition~\ref{prop:discrete-reg-n-gon}]
Assume that $n\ge 3$ is odd and that $\lambda_n$ is sPV\@.  By the previous remark, it suffices to show that $Q_{\lambda_n}(P_{2n})-Q_{\lambda_n}(P_{2n})$ is uniformly discrete.  By symmetry, we have $\rho_{1,1+\eta}(P_{2n}) = P_{2n}$, where $\eta := e^{i\tau/(2n)}$ is the principal $(2n)$-th root of $1$.  Then by Lemmas~\ref{lem:distribute}, \ref{lem:rot-symmetry}, and \ref{lem:minkowski-difference}, it suffices to show that $Q_{\lambda_n}(\Re(P_{2n}-P_{2n}))$ is uniformly discrete.

By Equation~(\ref{eqn:P-n}),
\[ \Re(P_{2n}) = \left\{ \sum_{j=0}^{\ell-1} \cos(j\pi/n) : 0\le \ell < n \right\}\;. \]
All the points in this finite set are elements of $\rats(\lambda_n)$.  This is because $\cos(\pi/n) = 1 - 1/(2\lambda_n)$ is clearly in $\rats(\lambda_n)$, and all the terms in the sum above are of the form $\cos(j\pi/n)$ for integer $j$, and these can in turn be expressed as (Chebychev) polynomials $T_j(\cos(\pi/n))$ of $\cos(\pi/n)$, where $T_j(x)\in\ints[x]$.  It follows that $\Re(P_{2n}-P_{2n}) = \Re(P_{2n})-\Re(P_{2n}) \subseteq \rats(\lambda_n)$ and is finite.  Theorem~\ref{thm:main} then says that $Q_{\lambda_n}(\Re(P_{2n}-P_{2n}))$ is uniformly discrete.
\end{proof}

The ``good'' news regarding Proposition~\ref{prop:discrete-reg-n-gon} is that there exist sVP $\lambda_n$, which yield interesting sets $Q_{\lambda_n}(P_n)$ and $Q_{\lambda_n}(P_{2n})$.  The ``bad'' news is that there are only finitely many such $n$.

\begin{proposition}\label{prop:lambda-n-PV}
Let $n\ge 3$ be odd.  Then $\lambda_n$ is sPV if and only if $n \in\{3,5,7,9,15\}$.
\end{proposition}

We prove Proposition~\ref{prop:lambda-n-PV} via the following three lemmas.  Define $\ints_n^* := \{j\in\ints \mid 0<j<n \myand \gcd(j,n)=1\}$ as is customary.

\begin{lemma}
Let $n \ge 3$ be odd.  Then $\lambda_n$ (see Eq.~(\ref{eqn:lambda-n})) is an algebraic integer of degree $\phi(n)/2$, and its Galois conjugates are of the form
\[ \frac{1}{2\left(1+\cos(2j\pi/n)\right)} \]
for all integers $0 < j < n/2$ such that $\gcd(j,n)=1$.
\end{lemma}

\begin{proof}
From Equation~(\ref{eqn:lambda-n}), letting $k := (n-1)/2$, we see that $1/\lambda_n = 2+\zeta^k+\zeta^{-k}$ is an algebraic integer, since $\zeta^{-k} = \zeta^{n-k}$ and the algebraic integers form a ring.  Let $m$ be the degree of $1/\lambda_n$, and let $p(x) := c_0+c_1x+\cdots+x^m\in\ints[x]$ be its (monic) minimal polynomial.  All numbers of the form $\eta_j(1/\lambda_n)$ for $j\in\ints_n^*$ must be conjugates of $1/\lambda_n$, since the $\eta_j$ are all field automorphisms.  It is more convenient to work with one such conjugate, $\mu := \eta_2(1/\lambda_n) = 2+\zeta^{2k}+\zeta^{-2k}=2+\zeta+\zeta^{-1} = 2(1+\cos(2\pi/n))$, rather than $1/\lambda_n$ itself.  Then, for $j\in\ints_n^*$, the numbers
\[ \eta_j(\mu) = 2+\zeta^j+\zeta^{-j} = 2(1+\cos(2j\pi/n)) \]
are all conjugates of $\mu$ (and hence of $1/\lambda_n$), and these numbers are pairwise distinct for $0<j<n/2$, since $\cos(x)$ is decreasing on $\clcl{0,\pi}$.  There are exactly $\phi(n)/2$ such $j$, and so $\mu$ has at least this many conjugates, and these are also conjugates of $1/\lambda_n$.  It follows that $m\ge \phi(n)/2$.  On the other hand, since $\mu\in\rats(\zeta)\cap\reals$, we have that $\rats(\mu)$ is a proper subfield of $\rats(\zeta)$, and so $[\rats(\zeta):\rats(\mu)] \ge 2$, and because $[\rats(\zeta):\rats(\mu)][\rats(\mu):\rats] = [\rats(\zeta):\rats] = \phi(n)$, it follows that $[\rats(\mu):\rats] \le \phi(n)/2$, which implies $m\le \phi(n)/2$.  Thus $m = \phi(n)/2$ is the degree of $\mu$, which is also the degree of $1/\lambda_n$ and of $\lambda_n$.  Furthermore, $\mu$ and $1/\lambda_n$ share the same set of conjugates $\{\eta_j(\mu) : j\in\ints_n^*\myand j<n/2\} = \{2(1+\cos(2j\pi/n)) : j\in\ints_n^*\myand j<n/2\}$, and hence the conjugates of $\lambda_n$ are exactly the reciprocals of these, being the roots of the polynomial $q(x) := c_0x^m + c_1x^{m-1}+\cdots+1$.

It remains to show that the constant term $c_0$ of $p(x)$ is $\pm 1$, which implies that $\lambda_n$ is an algebraic integer, since $c_0$ is also the leading coefficient of $q(x)$.  Using some trigonometric identities, we have
\[ \frac{1}{\lambda_n} = 2(1-\cos(\pi/n)) = \frac{2\sin^2(\pi/n)}{1+\cos(\pi/n)} = \frac{1-\cos(2\pi/n)}{1+\cos(\pi/n)} = \frac{2-\zeta-\zeta^{-1}}{2 -\zeta^k - \zeta^{-k}} = \frac{\nu}{\eta_k(\nu)}\;, \]
where we have set $\nu := 2 - \zeta - \zeta^{-1}$.  Then (recalling that $k := (n-1)/2$),
\[ \mu = \eta_2\left(\frac{1}{\lambda_n}\right) = \frac{\eta_2(\nu)}{\eta_{2k}(\nu)} = \frac{\eta_2(\nu)}{\eta_{n-1}(\nu)} = \frac{\eta_2(\nu)}{\eta_{-1}(\nu)} = \frac{\eta_2(\nu)}{\nu}\;. \]
Now $c_0$ is, up to a change of sign, the product of all the roots of $p(x)$, i.e., the conjugates of $1/\lambda_n$ (or of $\mu$).  We then have, noticing that $\eta_j(\mu) = \eta_{n-j}(\mu)$ for all $j\in\ints_n^*$,
\begin{align*}
c_0^2 &= \left(\prod_{j\in\ints_n^*\myand j<n/2}\eta_j(\mu)\right)^2 = \prod_{j\in\ints_n^*\myand j<n/2}\eta_j(\mu)\prod_{j\in\ints_n^*\myand j<n/2}\eta_{n-j}(\mu) = \prod_{j\in\ints_n^*} \eta_j(\mu) = \prod_j\frac{\eta_{2j}(\nu)}{\eta_j(\nu)}\;,
\end{align*}
where the index $2j$ is assumed to be reduced modulo $n$.  But the right-hand side is $1$, because the numerators and denominators both run through the same values.  Thus $c_0 = \pm 1$, and we are done.
\end{proof}

\begin{lemma}\label{lem:forbidden-interval}
Let $n\ge 3$ be odd.  Then $\lambda_n$ is sPV if and only if there are no $j\in\ints_n^*$ such that $n/3 \le j \le 2n/3$, except for $(n\pm 1)/2$.
\end{lemma}

\begin{proof}
Set $k:= (n-1)/2$.  By the previous lemma, $\lambda_n$ is an algebraic integer with conjugates $(2+2\cos(2j\pi/n))^{-1} = (2+\zeta^j+\zeta^{-j})^{-1}$ for $j\in\ints_n^*$ with $j<n/2$.  We can drop the requirement that $j<n/2$, because $\cos(2j\pi/n) = \cos(2(n-j)\pi/n)$ for all $j$.  Since $\lambda_n = (2+\zeta^k+\zeta^{-k})^{-1} = (2+\zeta^{k+1}+\zeta^{-(k+1)})^{-1}$, the conjugates of $\lambda_n$ other than $\lambda_n$ itself are of the form $(2+\zeta^j+\zeta^{-j})^{-1} = (2+2\cos(2j\pi/n))^{-1}$ for all $j\in\ints_n^* \cmpl \{k,k+1\}$.  It follows by definition that $\lambda_n$ is sPV if and only if $0 < (2+2\cos(2j\pi/n))^{-1} < 1$, or equivalently,  $\cos(2j\pi/n) > -1/2$ for all such $j$.  This latter inequality is equivalent to $j < n/3$ or $j > 2n/3$, for all $j\in\ints_n^*$ other than $k$ or $k+1$.
\end{proof}

\begin{lemma}\label{lem:n-sufficiently-large}
If $n>21$ and $n$ is odd, then there exists a $j\in\ints_n^*$ such that $n/3 < j < 2n/3$ and $j\notin\{(n-1)/2,(n+1)/2\}$.
\end{lemma}

\begin{proof}
We give an elementary proof using only the Bertrand-Chebyshev theorem, which states that for all integers $m>1$, there exists a prime $p$ with $m<p<2m$.  It follows that this must also be true for all \emph{real} $m>1$, by applying the theorem to $\floor{m}$ if $m\ge 2$.  Set $k := (n-1)/2$ as usual.
\begin{description}
\item[Case 1:] $n = qp$ where $p$ is an odd prime and $q\in\{5,7,9,11\}$.  Choose two numbers $j_1 < j_2$ depending on $p$ and $q$ according to the following table:
\[ \begin{array}{r||c|c}
q & j_1 & j_2 \\ \hline\hline
5 & 2p-1 & 2p+1 \\
7 & 3p-1 & 3p+1 \\
9 & 4p-2 & 4p-1 \\
11 & 5p-2 & 5p-1
\end{array} \]
Since $n$ is sufficiently large, one can readily check that: (i) $n/3 < j_1 < j_2 < k$; (ii) neither $j_1$ nor $j_2$ is a multiple of $p$; and (iii) at least one of $j_1$ and $j_2$ is coprime with $q$ and hence coprime with $n$, satisfying the lemma.
\item[Case 2:] not Case~1 and neither $k$ nor $k+1$ is prime.  By Bertrand-Chebyshev, there exists a prime $j$ such that $n/3 < j < 2n/3$.  Then $j\notdiv n$ and $j$ is neither $k$ nor $k+1$, so $j$ satisfies the lemma.
\item[Case 3:] not Case~1, one of $k$ and $k+1$ is prime, and the other is not of the form $2p$ for any prime $p$.  By Bertrand-Chebyshev, there exists a prime $r$ such that $n/6 < r < n/3$.  We must have $r\notdiv n$, because we are not in Case~1 (and $n$ is odd).  Then setting $j := 2r$ satisfies the lemma.
\item[Case 4:] not Case~1, one of $k$ and $k+1$ is prime, and the other is of the form $2p$ for some prime $p$.  By Bertrand-Chebyshev, choose a prime $s$ such that $n/12 < s < n/6$.  Again, $s\notdiv n$ since we are not in Case~1 (and $n$ is odd).  Then setting $j := 4s$ satisfies the lemma.
\end{description}
Cases~1--4 are clearly exhaustive.
\end{proof}

\begin{proof}[Proof of Proposition~\ref{prop:lambda-n-PV}]
Lemmas~\ref{lem:forbidden-interval} and \ref{lem:n-sufficiently-large} imply that $\lambda_n$ cannot be sPV if $n>21$.  For $3\le n\le 21$, one can check case by case that Lemma~\ref{lem:forbidden-interval} is satisfied if and only if $n\in\{3,5,7,9,15\}$.
\end{proof}

The following table gives basic information about $\lambda_n$ for the values of $n$ we are interested in:
\[ \begin{array}{r|l|r}
n & \mbox{minimal polynomial of $\lambda_n$} & \mbox{approximate value of $\lambda_n$} \\ \hline
 3 & x-1                  &  1.000 \\
 4 & 2x^2-4x+1            &  1.707 \\
 5 & x^2-3x+1             &  2.618 \\
 6 & x^2-4x+1             &  3.732 \\
 7 & x^3-6x^2+5x-1        &  5.049 \\
 9 & x^3-9x^2+6x-1        &  8.291 \\
15 & x^4-24x^3+26x^2-9x+1 & 22.881
\end{array} \]
All are sPV except $\lambda_4$, which is not an algebraic integer.  $2\lambda_4 = 2+\sqrt 2$ is sPV, however.

Figures~\ref{fig:pentagon}--\ref{fig:30gon}
on the following pages show plots of the uniformly discrete sets $Q_{\lambda_n}(P_m)$ for $n=5,7,9,15$ as in Proposition~\ref{prop:lambda-n-PV} and for selected $m \ge 3$ dividing $2n$ (the $n=3$ case is trivial, as $\lambda_3 = 1$).  We order them by increasing $n$ then increasing $m$ for each $n$.  Each displayed set has dihedral $D_m$ symmetry, but none has any translational symmetry.  Each displayed set is a Meyer set by Corollary~\ref{cor:P-n-meyer-set}.
\begin{figure}
\centering
\includegraphics[width=1\textwidth]{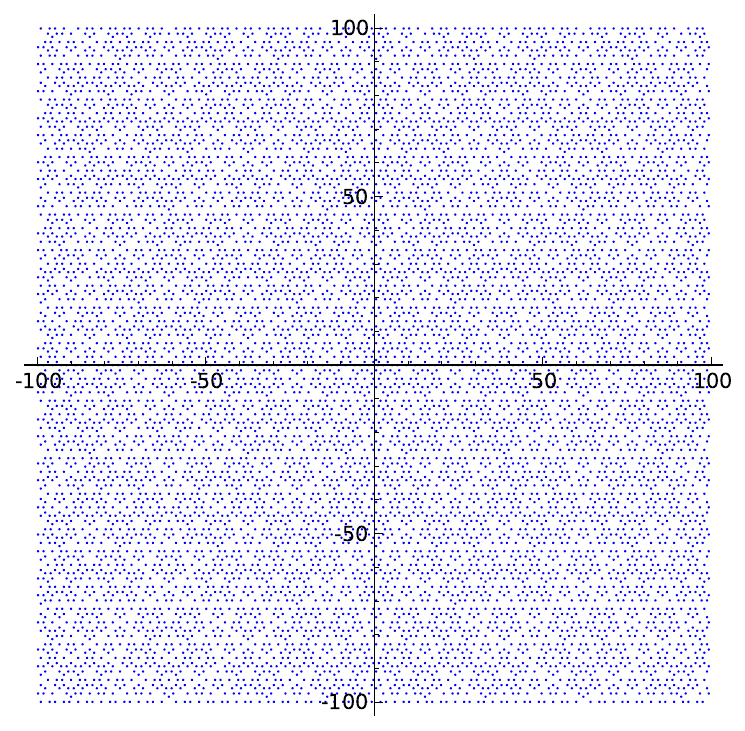}
\caption{$Q_{\lambda_5}(P_5)$, the $\lambda_5$-convex closure of a regular pentagon.  $\lambda_5 = 1+\p$.}\label{fig:pentagon}
\end{figure}
\begin{figure}
\centering
\includegraphics[width=1\textwidth]{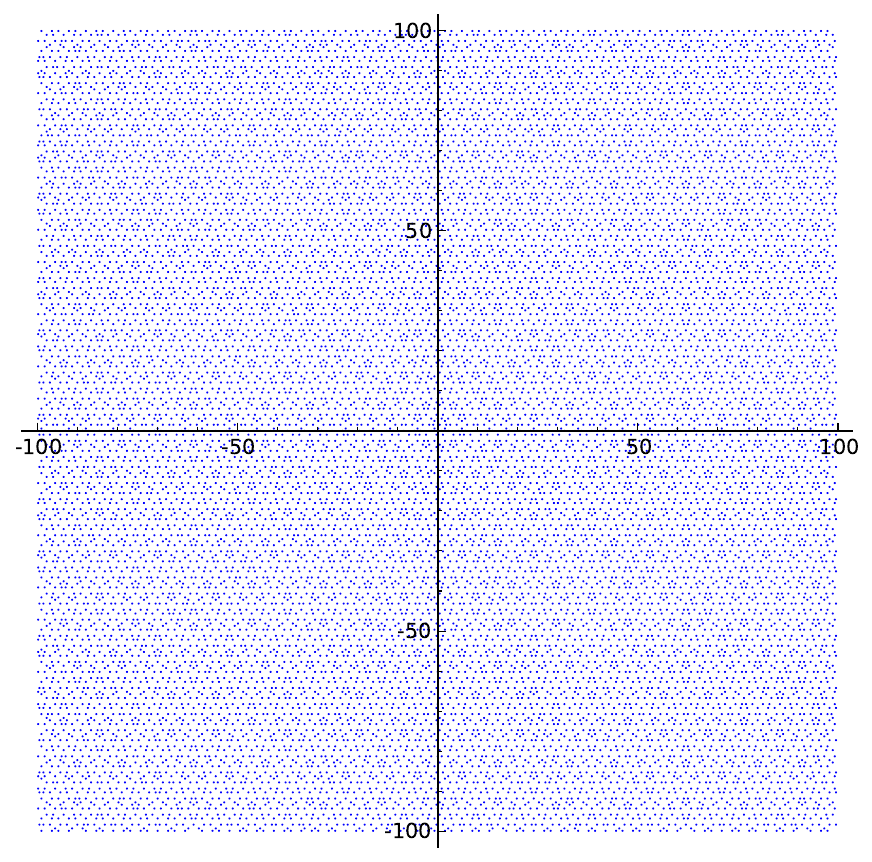}
\caption{$Q_{\lambda_5}(P_{10})$, the $\lambda_5$-convex closure of a regular decagon.}\label{fig:decagon}
\end{figure}
\begin{figure}\label{fig:heptagon-show3}
\centering
\includegraphics[width=1\textwidth]{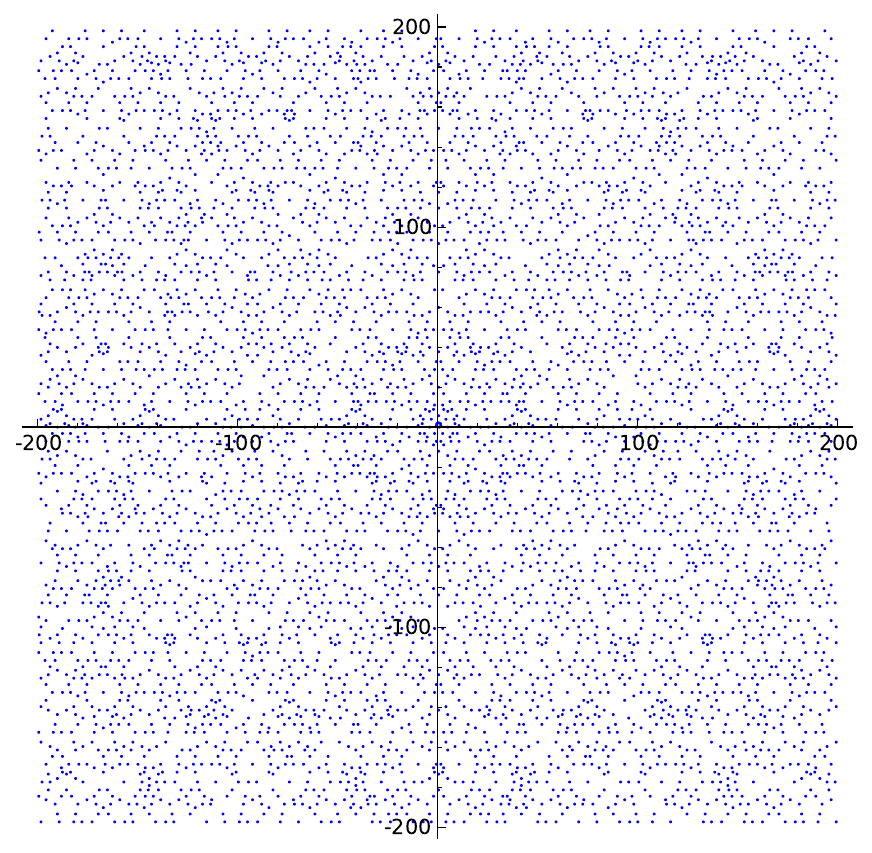}
\caption{$Q_{\lambda_7}(P_7)$, the $\lambda_7$-convex closure of a regular heptagon.  $\lambda_7 \approx 5.049$ and has minimal polynomial $x^3-6x^2+5x-1$.}\label{fig:heptagon}
\end{figure}
\begin{figure}
\centering
\includegraphics[width=1\textwidth]{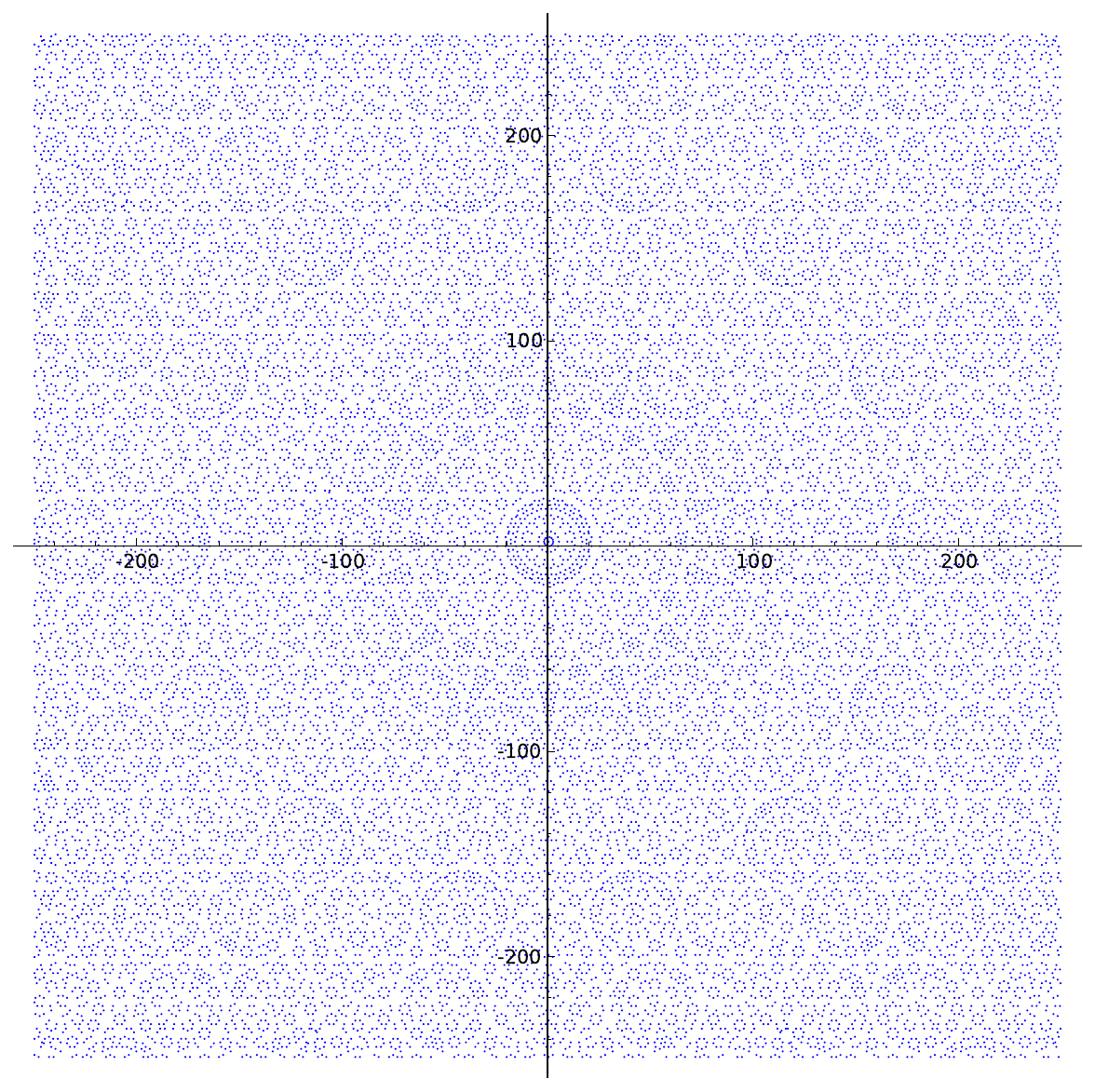}
\caption{$Q_{\lambda_7}(P_{14})$, the $\lambda_7$-convex closure of a regular $14$-gon.}\label{fig:14-gon}
\end{figure}
\begin{figure}
\centering
\includegraphics[width=1\textwidth]{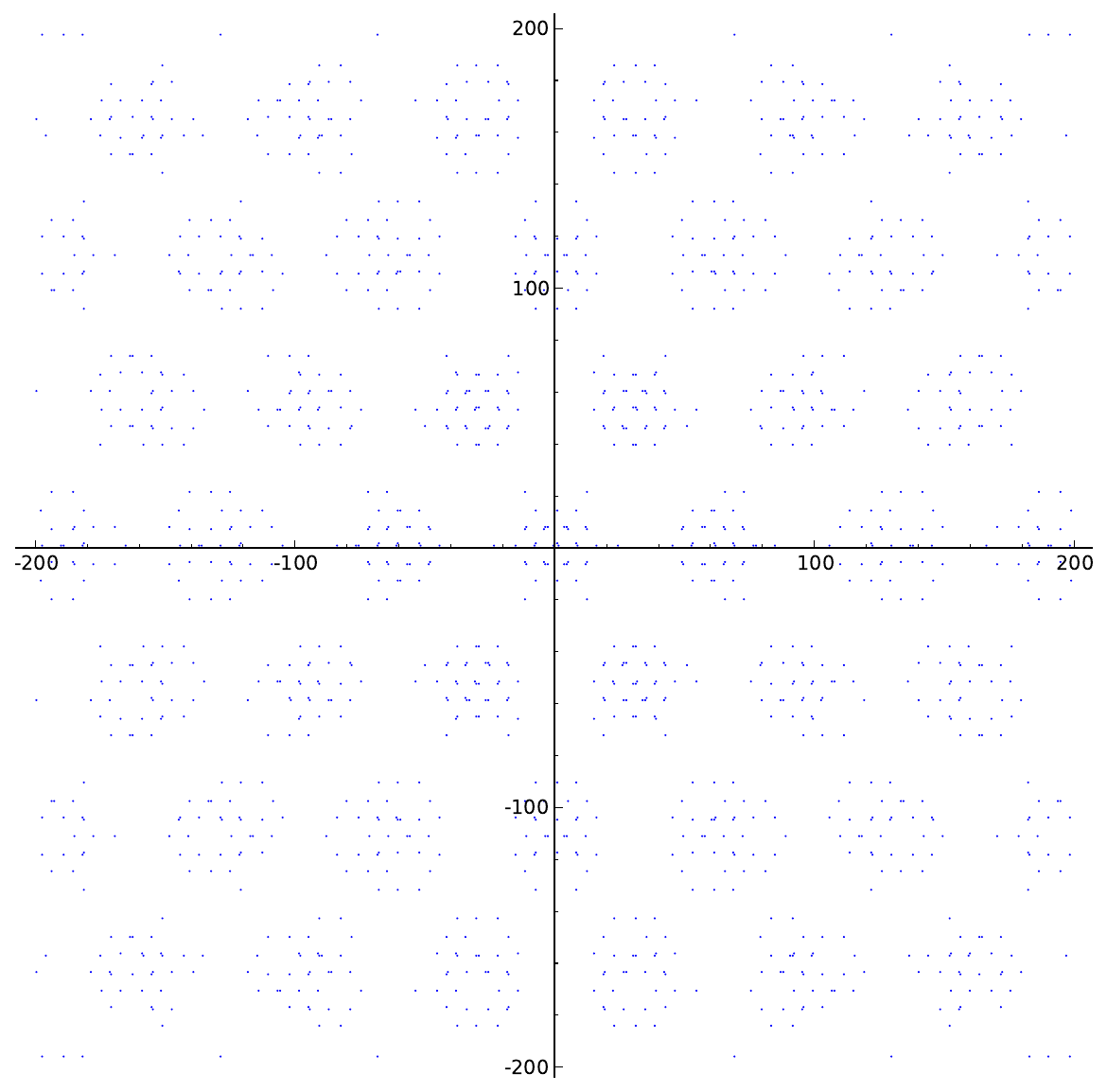}
\caption{$Q_{\lambda_9}(P_6)$, the $\lambda_9$-convex closure of a regular hexagon.}\label{fig:hexagon-lambda9-close}
\end{figure}
\begin{figure}
\centering
\includegraphics[width=1\textwidth]{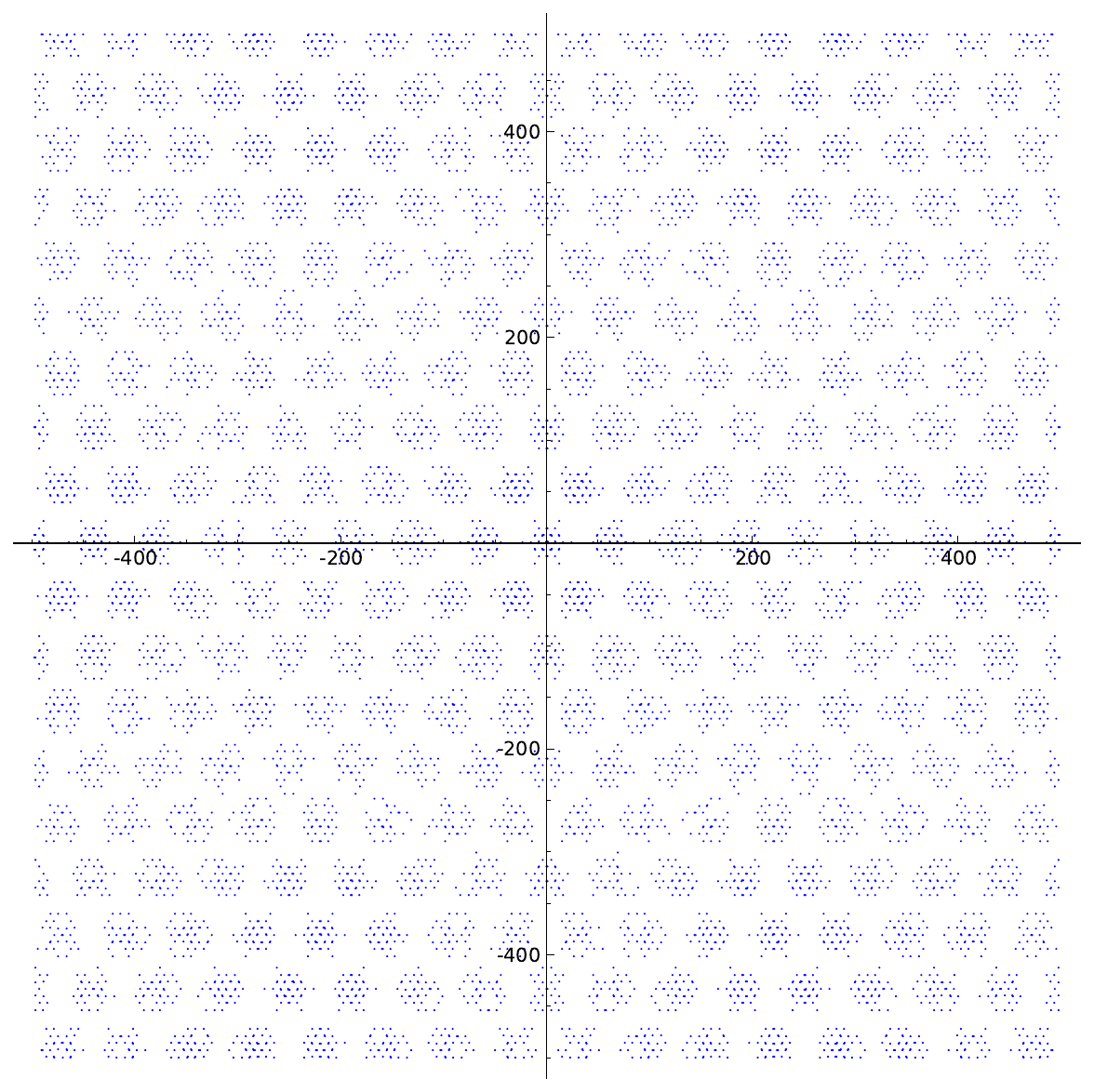}
\caption{A wider view of $Q_{\lambda_9}(P_6)$.}\label{fig:hexagon-lambda9-wide}
\end{figure}
\begin{figure}
\centering
\includegraphics[width=1\textwidth]{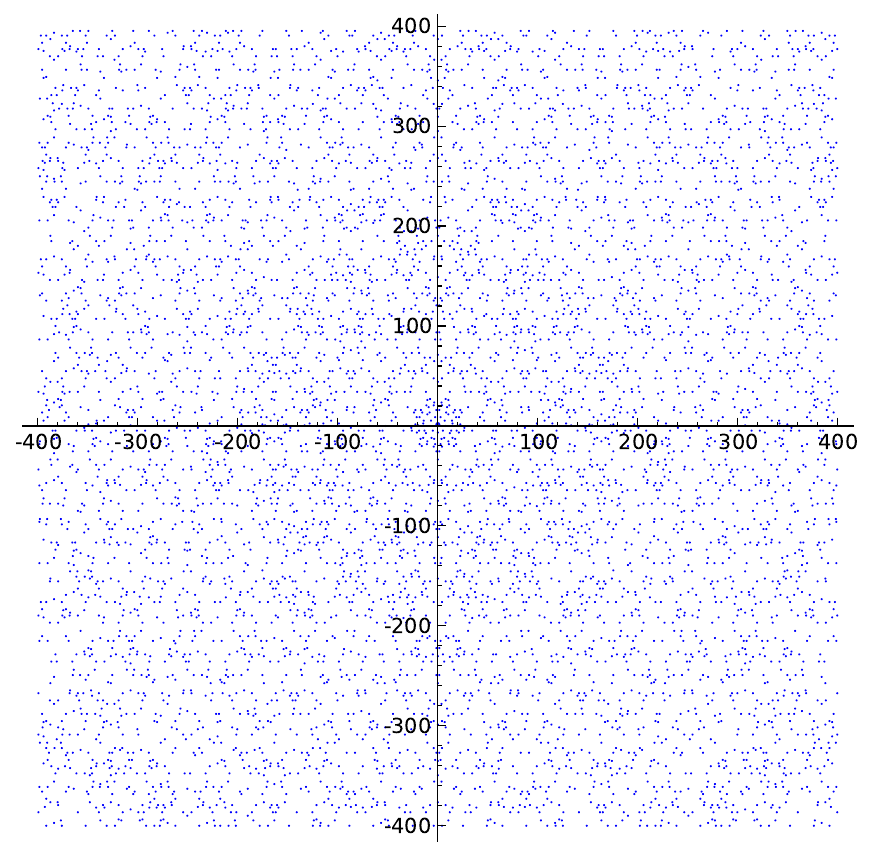}
\caption{$Q_{\lambda_9}(P_9)$, the $\lambda_9$-convex closure of a regular enneagon (nonagon).}\label{fig:enneagon-close}
\end{figure}
\begin{figure}
\centering
\includegraphics[width=1\textwidth]{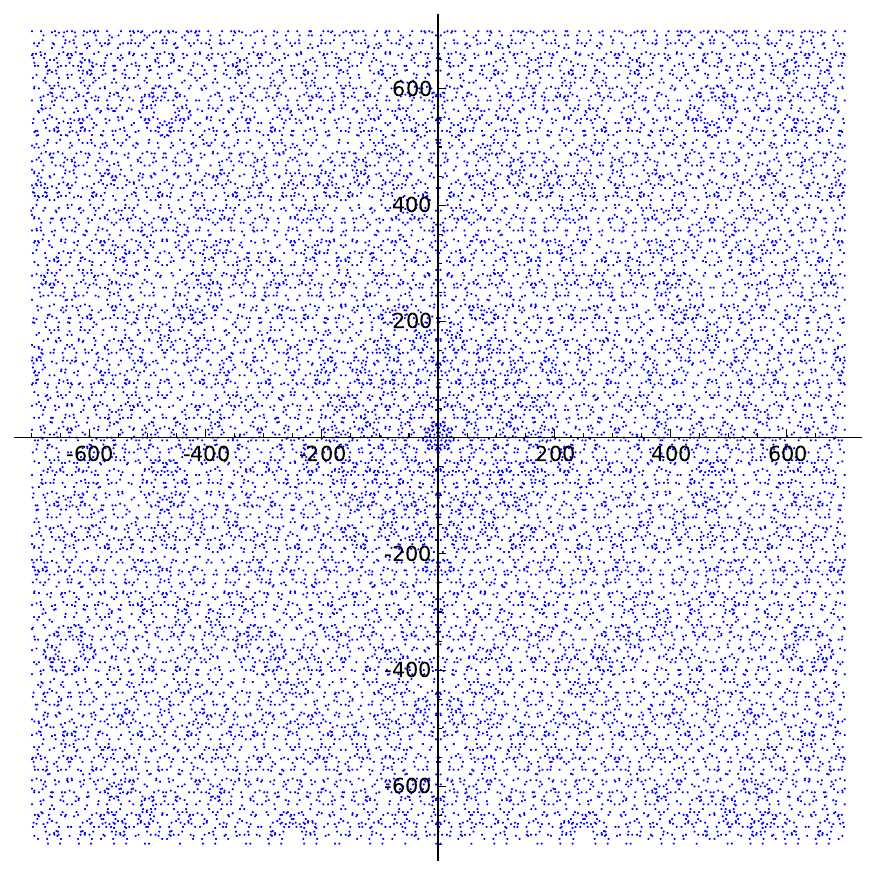}
\caption{A wider view of $Q_{\lambda_9}(P_9)$.}\label{fig:enneagon-wide}
\end{figure}
\begin{figure}
\centering
\includegraphics[width=1\textwidth]{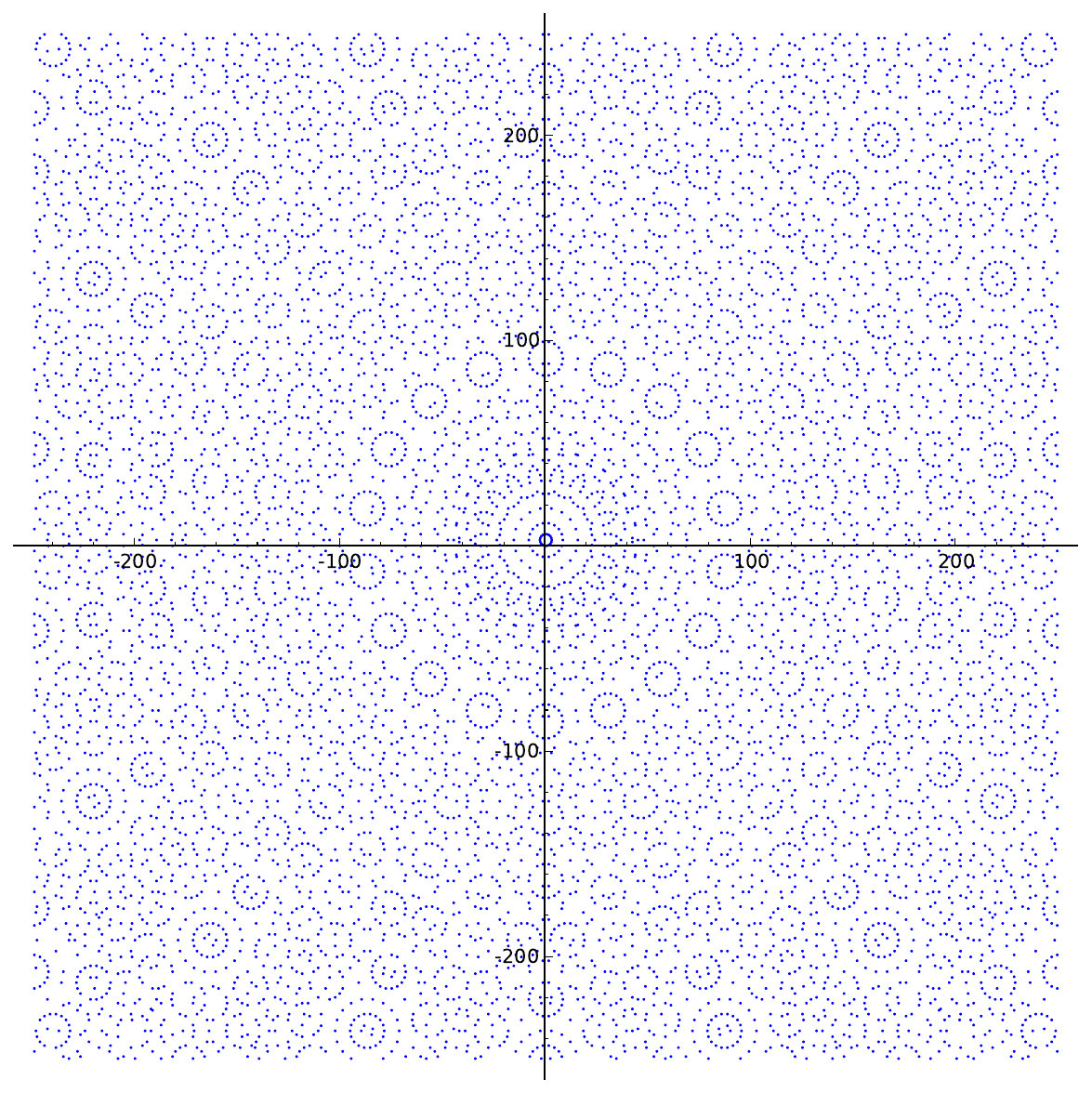}
\caption{$Q_{\lambda_9}(P_{18})$, the $\lambda_9$-convex closure of a regular $18$-gon.}\label{fig:18gon-close}
\end{figure}
\begin{figure}
\centering
\includegraphics[width=1\textwidth]{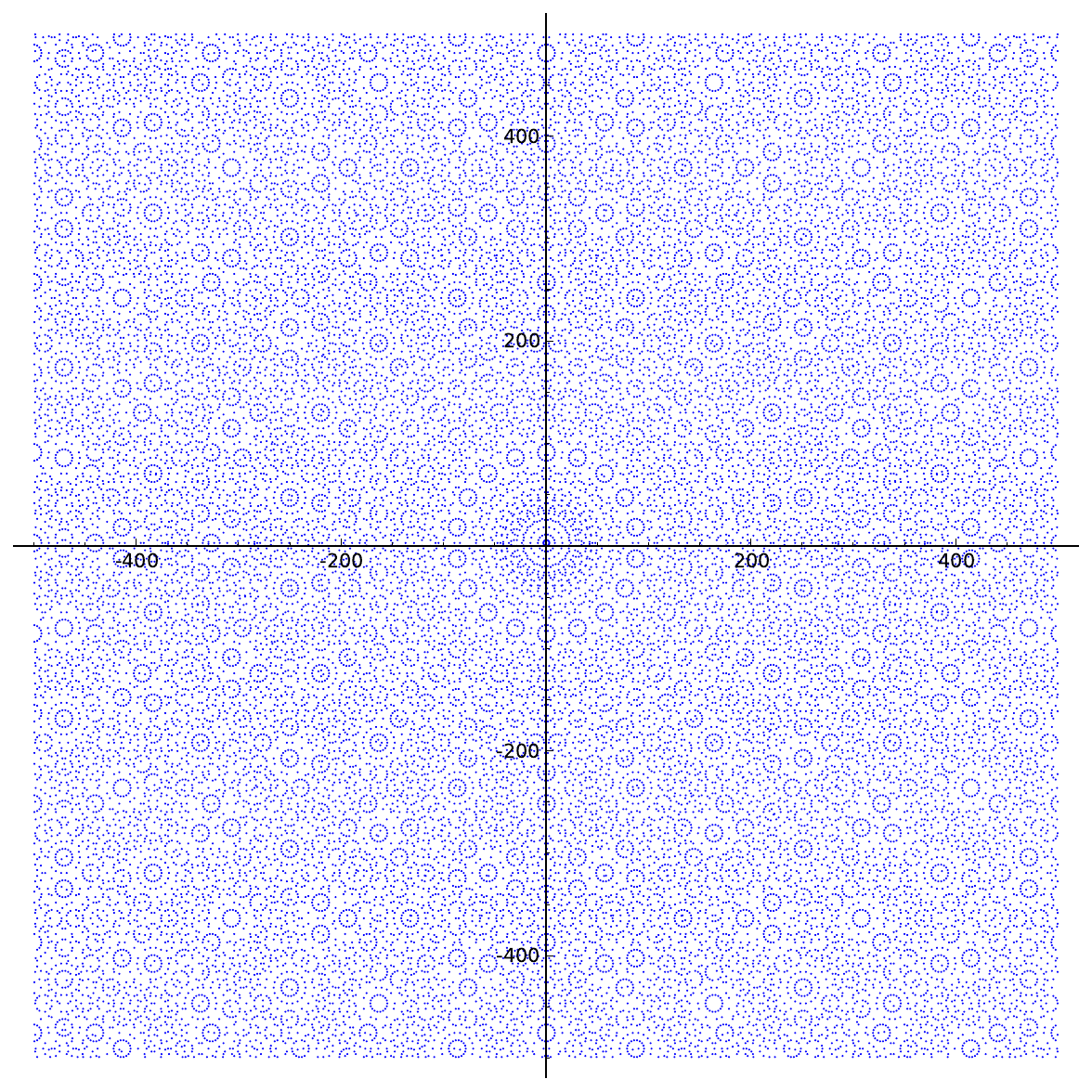}
\caption{A wider view of $Q_{\lambda_9}(P_{18})$.}\label{fig:18gon-wide}
\end{figure}
\begin{figure}
\centering
\includegraphics[width=1\textwidth]{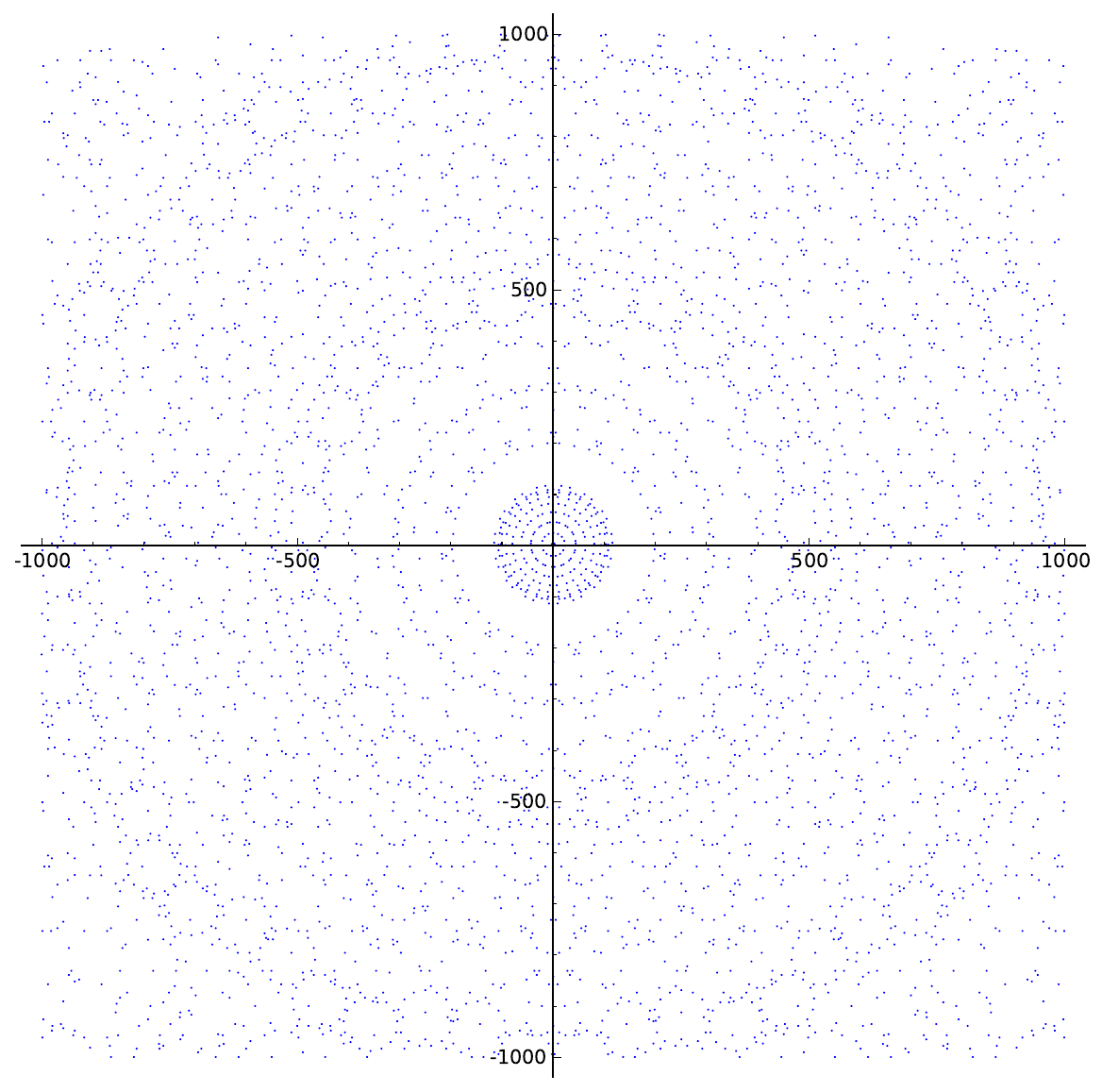}
\caption{$Q_{\lambda_{15}}(P_{15})$, the $\lambda_{15}$-convex closure of a regular $15$-gon.}\label{fig:15gon}
\end{figure}
\begin{figure}
\centering
\includegraphics[width=1\textwidth]{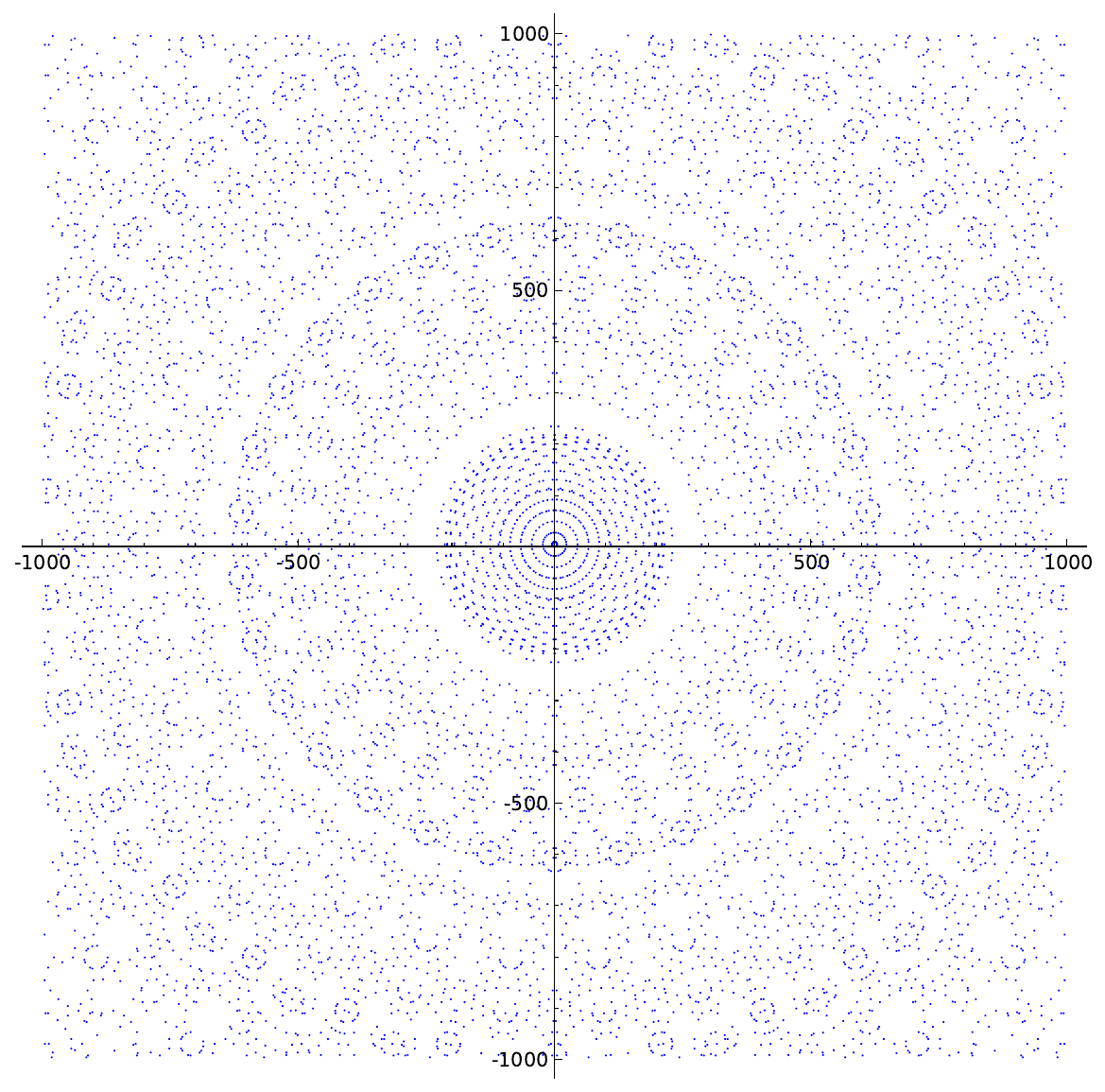}
\caption{$Q_{\lambda_{15}}(P_{30})$, the $\lambda_{15}$-convex closure of a regular $30$-gon.}\label{fig:30gon}
\end{figure}

\subsection{Constructions with even $n$}

Although the first part of Equation~(\ref{eqn:lambda-n}) makes sense when $n$ is even, $\lambda_n$ may or may not be an algebraic integer in this case.  For example, $\lambda_4 = 1 + \sqrt{2}/2$ is not an algebraic integer, but $2\lambda_4 = 2+\sqrt 2$ and $\lambda_6 = 2 + \sqrt{3}$ both are, and are two of the strong PV numbers given after Corollary~\ref{cor:degree2}.

\begin{proposition}\label{prop:octagon}
The sets $Q_{2+\sqrt 2}(P_8)-Q_{2+\sqrt 2}(P_8)$ and $Q_{2+\sqrt 3}(P_{12})-Q_{2+\sqrt 3}(P_{12})$ are both uniformly discrete.
\end{proposition}

\begin{proof}
By symmetry, we have $P_8 = \rho_{1,1+c}(P_8)$ for $c := e^{i\tau/8} = \frac{\sqrt 2}{2}(1 + i)$, and $P_{12} = \rho_{1,1+d}(P_{12})$ for $d := e^{i\tau/12} = \frac{1}{2}(\sqrt 3 + i)$.  We have $\Re(P_8) = \left\{-\frac{\sqrt 2}{2},0,1,1+\frac{\sqrt 2}{2}\right\}\subseteq \rats\left(2+\sqrt 2\right)$, and thus $\Re(P_8-P_8)\subseteq\rats\left(2+\sqrt 2\right)$ as well.  Similarly, $\Re(P_{12}) = \left\{-\frac{1+\sqrt 3}{2},-\frac{\sqrt 3}{2},0,1,1+\frac{\sqrt 3}{2},1+\frac{1+\sqrt 3}{2}\right\}\subseteq \rats\left(2+\sqrt 3\right)$, and thus $\Re(P_{12}-P_{12})\subseteq\rats\left(2+\sqrt 3\right)$ also.

As in the proofs of Propositions~\ref{prop:easy-n-gons} and \ref{prop:discrete-reg-n-gon}, the statement now follows from Theorem~\ref{thm:main} and Lemmas~\ref{lem:distribute}, \ref{lem:rot-symmetry}, and \ref{lem:minkowski-difference}.
\end{proof}

Figure~\ref{fig:hex-oct} shows three point sets that do not fit the ``odd $n$'' pattern---the $2$-convex closure of $P_6$ (upper left), the $(2+\sqrt 2)$-convex closure of $P_8$ (upper right), and the $(2+\sqrt 3)$-convex closure of $P_{12}$.  Note that $2+\sqrt 2 = 2\lambda_4$ and $2+\sqrt 3 = \lambda_6$.
These sets are all Meyer sets by Corollary~\ref{cor:P-n-meyer-set}.
\begin{figure}
\centering
\includegraphics[width=0.4\textwidth]{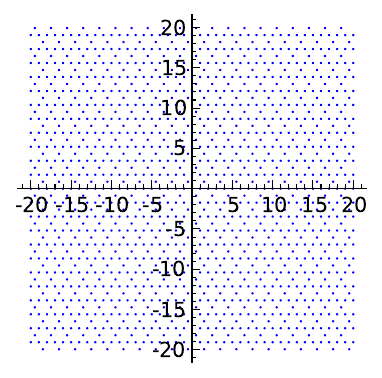}
\includegraphics[width=0.4\textwidth]{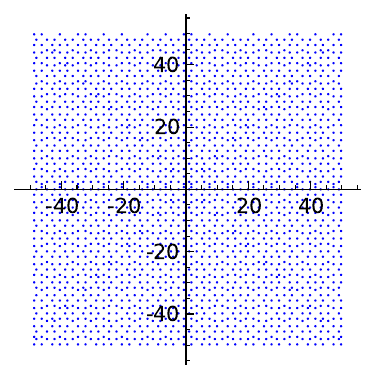}
\includegraphics[width=0.8\textwidth]{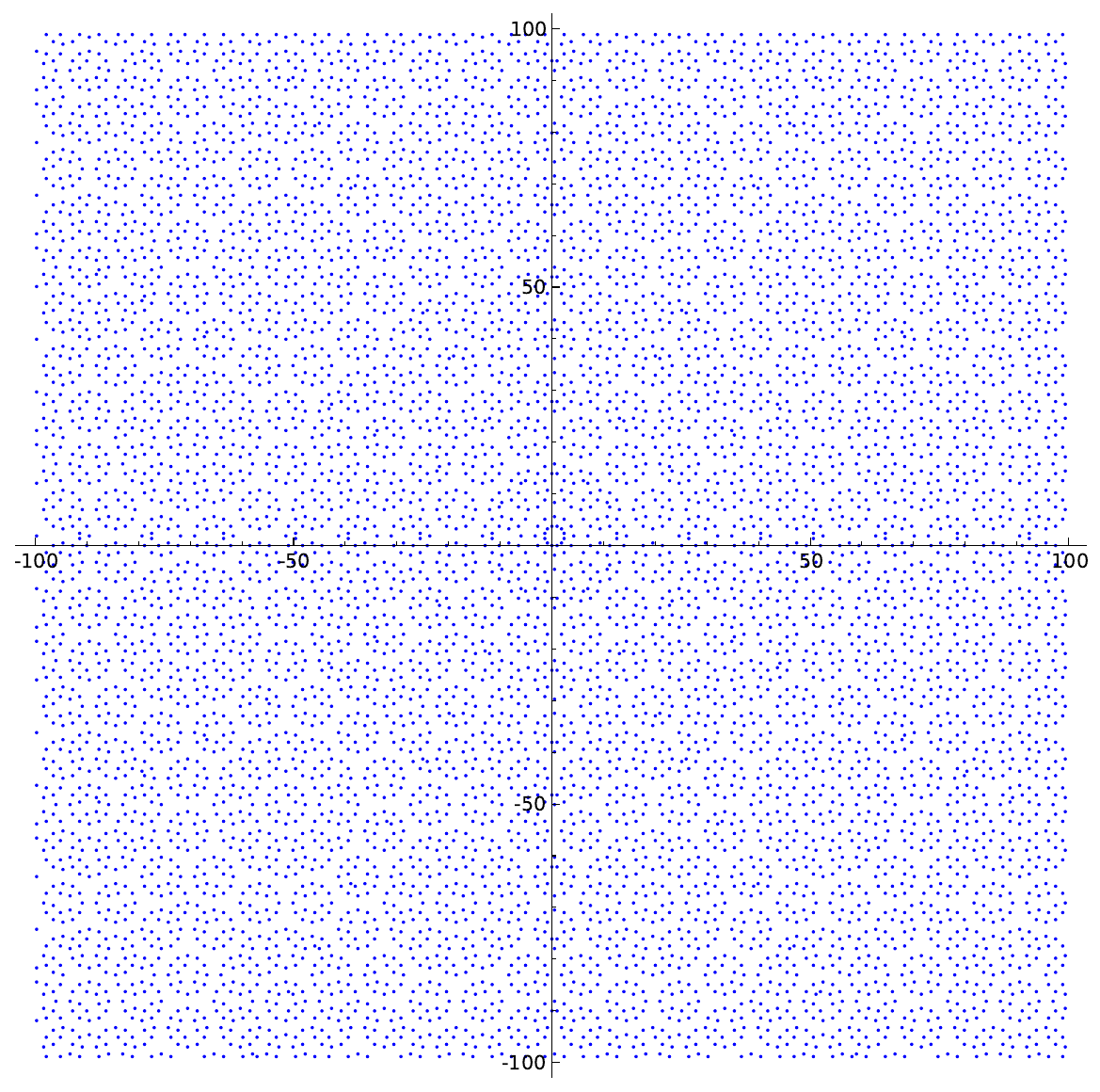}
\caption{$Q_2(P_6)$ (top left), $Q_{2+\sqrt 2}(P_8)$ (top right), and $Q_{2+\sqrt 3}(P_{12})$ (bottom).  $Q_2(P_6) = \{a+b\eta \mid a,b\in\ints \myand ab \equiv 0 \pmod{2} \}$, where $\eta := e^{i\tau/3}$ is the principal third root of unity.  All three sets are uniformly discrete, with any two points of any set at least unit distance apart.  Of these three, only $Q_2(P_6)$ is periodic.}\label{fig:hex-oct}
\end{figure}

\subsection{Regular convex polyhedra}

In this section, we investigate cases where $\lambda$-convex closures of the five regular convex polyhedra (Platonic solids) in $\reals^3$ are uniformly discrete, for real $\lambda$.  We show that for any real sPV $\lambda$, the $\lambda$-convex closure of the tetrahedron, cube, and octahedron are uniformly discrete.  Berman \& Moody showed that the $(1+\p)$-convex closures of the dodecahedron and icosahedron are both uniformly discrete using icosians (a finite subgroup of the multiplicative group of the quaterions)~\cite{BeMo}.  We give an independent, geometry-based proof.  We prove all these results using the projection technique we first used in the proof of Lemma~\ref{lem:rot-symmetry}.  In fact, one can think of the next proposition as a three-dimensional version of Lemma~\ref{lem:rot-symmetry}.

\begin{proposition}
For any sPV $\lambda\in\reals$, the $\lambda$-convex closures of the sets of corners of the regular tetrahedron, cube, and octahedron are all uniformly discrete point sets in $\reals^3$.
\end{proposition}

\begin{proof}
We prove the result for the regular tetrahedron; the other two have similar proofs.  For concreteness---and consistency with how we handled the regular polygons above---we orient the tetrahedron in $\reals^3$ so that
\begin{itemize}
\item
one of its edges coincides with the set $\{(x,0,0) \mid 0\le x\le 1\}$,
\item
one of the faces incident to this edge lies in the half-plane $\{(x,y,0) \mid y \ge 0\}$, and
\item
the tetrahedron itself lies in the half-space $\{(x,y,z) \mid z \ge 0\}$.
\end{itemize}
We let $T\subseteq\reals^3$ be the four corners of this tetrahedron.  The perpendicular projection of $T$ into the $x,y$-plane is
\[ T' := \{(0,0), (1,0), (1/2,\sqrt 3/2), (1/2,\sqrt 3/6)\} \subseteq \reals^2\;, \]
shown below, with edges included:
\begin{center}
\input{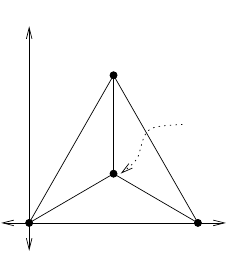_t}
\end{center}
Projecting $T'$ perpendicularly onto the $x$-axis yields the set $T'' = \{0,\frac{1}{2},1\}$.  Since $T'' \subseteq \rats \subseteq \rats(\lambda)$, it follows from Theorem~\ref{thm:main} that $Q_\lambda(T'')$ is uniformly discrete, and from this it follows (using Lemma~\ref{lem:rho-map}) that $Q_\lambda(T')$ is included in the union $U$ of a discrete set of vertical lines with a uniform positive lower bound on interline spacing.  Projecting $T'$ perpendicularly into a line through one of the other outer edges---the line through the origin and $\left(\frac{1}{2},\frac{\sqrt 3}{2}\right)$, say---shows that $Q_\lambda(T')$ is included in a rotated copy $U'$ of $U$ whose lines are not parallel with those of $U$.  Thus $Q_\lambda(T')\subseteq U\intersect U'$, which is uniformly discrete.  But now $Q_\lambda(T')$ is the perpendicular projection of $Q_\lambda(T)$ onto the $x,y$-plane (by Lemma~\ref{lem:rho-map} again), and so $Q_\lambda(T)$ is confined to the union $V$ of a discrete set of lines in $\reals^3$ parallel to the $z$-axis with a uniform positive lower bound on interline spacing.  Projecting $Q_\lambda(T)$ perpendicularly into the plane containing one of $T$'s other faces shows that $Q_\lambda(T)$ is included in a rotated copy $V'$ of $V$.  Thus $Q_\lambda(T)\subseteq V\intersect V'$, which is uniformly discrete.
\end{proof}

We turn now to geometry-based proofs of the cases of the dodecahedron and the icosahedron.

\begin{proposition}[Berman \& Moody~\cite{BeMo}]
The $(1+\p)$-convex closure of the regular dodecahedron and regular icosahedron in $\reals^3$ are both uniformly discrete.
\end{proposition}

\begin{proof}
We set $\lambda := 1+\p = (3+\sqrt 5)/2$.  Let $D$ be the vertices of a regular dodecahedron in $\reals^3$ (its orientation and location are not important).  It turns out, quite fortunately, that if we project $Q_\lambda(D)$ perpendicularly into the plane containing one of $D$'s faces, we get a point-set equal (up to similarity) to $Q_\lambda(P_{10})$, shown in Figure~\ref{fig:decagon}, which is discrete by Propositions~\ref{prop:discrete-reg-n-gon} and \ref{prop:lambda-n-PV}.  (Note that $\lambda = \lambda_5$.)

To see this, consider the perpendicular projection of $D$ into the plane containing one of its faces, shown in Figure~\ref{fig:dodecahedron}.
\begin{figure}
\centering
\input{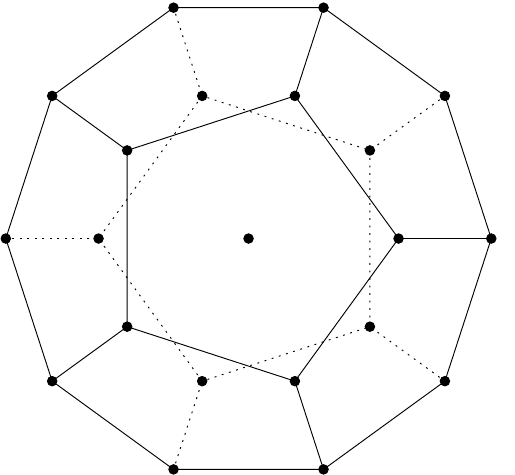_t}
\caption{The projection of a dodecahedron $D$ onto the plane of one of its faces.  The point in the middle is not part of the projection of $D$ itself, but of a point in $Q_\lambda(D)$ lying directly above the top face.  Here, $\x = \x_{1+\p}$.}\label{fig:dodecahedron}
\end{figure}
Call this projection $D' \subseteq \reals^2$.  $D'$ consists of two concentric decagons.  Points in the outer decagon are $\lambda$-extrapolants of pairs of adjacent points of the inner decagon; for example, $d = a \x b$ in the figure.  Letting $S$ be the inner decagon (a copy of $P_{10}$), we see that all the points of $D'$ are therefore in $Q_\lambda(S)$.  Moreover, we know that $Q_\lambda(S)$ is uniformly discrete by Propositions~\ref{prop:discrete-reg-n-gon} and \ref{prop:lambda-n-PV}.  Thus $Q_\lambda(D')$ is uniformly discrete.  Notice that the center point $d\x c$ shown in Figure~\ref{fig:dodecahedron} is also in $Q_\lambda(D')$ (though not in $D'$ itself).

As with previous arguments, we conclude that $Q_\lambda(D)$ is confined to the union $U$ of a discrete set of parallel lines in $\reals^3$ with a uniform positive lower bound on interline spacing.  By symmetry, we can project onto another, nonparallel face to see that $Q_\lambda(D)$ is also contained in a rotated copy of $U$, hence $Q_\lambda(D)$ must be uniformly discrete.  This finishes the case of the dodecahedron.

Consider the top pentagonal face of $D$ whose projection is shown in Figure~\ref{fig:dodecahedron} centered and bounded by solid lines.  Call this face $F$.  The center point $d\x c \in Q_\lambda(D')$, shown in Figure~\ref{fig:dodecahedron}, is actually the projection of a point $e\in\reals^3$ lying directly above $F$.  The point $e$ is the intersection of lines extending the five edges of $D$ that are incident to the vertices of $F$ but are not edges of $F$ itself (their projections appear as the solid radial lines in the figure).  The point $e$ is in $Q_\lambda(D)$, because $e$ is on each of these lines, and its projection $d\x c$ is the $\lambda$-extrapolation of points in $Q_\lambda(D')$.  By symmetry, there are points lying similarly above the other eleven faces of $D$.  These twelve points, all in $Q_\lambda(D)$ are the vertices of a regular icosahedron $I$.  Since $I\subseteq Q_\lambda(D)$, we have $Q_\lambda(I)\subseteq Q_\lambda(D)$, and thus $Q_\lambda(I)$ is uniformly discrete.
\end{proof}


\subsection{Other results}
\label{sec:other-results}

In this section we give some \textit{ad hoc} results about specific point sets.  For example, we get a lower bound of $1/2$ on the packing distances of some of the uniformly discrete sets we considered in the last subsection.  We make no attempt at generalizing our results, although such generalizations are most likely possible.

If $\lambda = 2$, then $R_2(P_6) = Q_2(P_6)$ is a proper subset of the Eisenstein integers (the regular triangular lattice) that has both translational symmetry (in six directions) and $D_6$ rotational symmetry\footnote{$D_n$ is the dihedral group of order $2n$.} about the center of the hexagon.  For another example, if $S = \{0,1,i,1+i\}$ is the set of vertices of a square (similarly positioned), then $R_2(S) = Q_2(S) = \ints[i]$, the set of Gaussian integers.  If $L = \{0,1,i\}$, then $R_2(L)$ is a proper subset of $\ints[i]$, however, as the following lemma shows:

\begin{lemma}\label{lem:Q2-gamma}
Let $\gamma$ be any element of $\complexes \cmpl \rats$.  Then
\begin{equation}\label{eqn:Q2-gamma}
Q_2(\{0,1,\gamma\}) = \{ a + b\gamma \mid a,b\in\ints \myand ab\equiv 0 \pmod{2} \}\;.
\end{equation}
If $\gamma\in\reals$, then $R_2(\{0,1,\gamma\}) = \reals$; otherwise, $R_2(\{0,1,\gamma\}) = Q_2(\{0,1,\gamma\})$, which is a discrete set.
\end{lemma}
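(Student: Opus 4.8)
Write $L$ for the right-hand side of (\ref{eqn:Q2-gamma}), i.e., $L := \{a+b\gamma \mid a,b\in\ints \myand ab\equiv 0 \pmod 2\}$; since $\gamma\notin\rats$, the condition $ab\equiv 0\pmod 2$ simply says that $a$ and $b$ are not both odd. The plan is to prove (\ref{eqn:Q2-gamma}) by two inclusions and then read off the statement about $R_2$ using Lemma~\ref{lem:topo-closure}. For $Q_2(\{0,1,\gamma\})\subseteq L$, I would check that $L$ contains $\{0,1,\gamma\}$ (immediate) and is $2$-convex: recalling $\rho_{x,y}(2) = 2y-x$, if $x = a+b\gamma$ and $y = c+d\gamma$ are in $L$ then $\rho_{x,y}(2) = (2c-a) + (2d-b)\gamma$, and since $2c-a\equiv a$ and $2d-b\equiv b\pmod 2$, we get $(2c-a)(2d-b)\equiv ab\equiv 0\pmod 2$, so $\rho_{x,y}(2)\in L$.

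For the reverse inclusion, I would first observe $\ints = Q_2 = Q_2(\{0,1\})\subseteq Q_2(\{0,1,\gamma\})$ (using the known fact $R_2=\ints$ together with $R_2=Q_2$, since $\ints$ is discrete). Next, applying the affine bijection $\rho_{0,\gamma}\colon z\mapsto z\gamma$ and Lemma~\ref{lem:translate-Q-R}, $\gamma\ints = \rho_{0,\gamma}(Q_2(\{0,1\})) = Q_2(\{0,\gamma\})\subseteq Q_2(\{0,1,\gamma\})$. Now I would apply $2$-convexity of $Q_2(\{0,1,\gamma\})$ to pairs with one entry in $\ints$ and the other in $\gamma\ints$: for all $m,n\in\ints$, both $\rho_{n,m\gamma}(2) = -n+2m\gamma$ and $\rho_{m\gamma,n}(2) = 2n - m\gamma$ lie in $Q_2(\{0,1,\gamma\})$. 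As $m,n$ range over $\ints$, the first family fills out $\{a+b\gamma \mid a\in\ints,\ b\text{ even}\}$ and the second fills out $\{a+b\gamma \mid a\text{ even},\ b\in\ints\}$; their union is precisely $L$. This gives $L\subseteq Q_2(\{0,1,\gamma\})$, completing the proof of (\ref{eqn:Q2-gamma}).

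Finally, by Lemma~\ref{lem:topo-closure}, $R_2(\{0,1,\gamma\}) = \overline{Q_2(\{0,1,\gamma\})} = \overline L$. If $\gamma\in\reals$, then $\gamma$ is irrational, and $L \supseteq \ints + 2\gamma\ints$, which is dense in $\reals$ by the classical fact that $\ints + \alpha\ints$ is dense in $\reals$ for irrational $\alpha$; since $L\subseteq\reals$, we get $\overline L = \reals$. If $\gamma\notin\reals$, then $\{1,\gamma\}$ is $\reals$-linearly independent, so $\ints + \gamma\ints$ is a rank-$2$ lattice, hence uniformly discrete; thus $L\subseteq\ints+\gamma\ints$ is discrete, and a uniformly discrete subset of $\complexes$ is closed, so $\overline L = L = Q_2(\{0,1,\gamma\})$, which is therefore discrete.

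Everything here is routine bookkeeping; the only ingredient external to the paper is the density of $\ints + \alpha\ints$ in $\reals$ for irrational $\alpha$, which I would cite as standard. The one point worth stating carefully is that the union of the two ``half-lattice'' families produced by $2$-convexity is genuinely all of $L$ rather than a proper subset — but this reduces to the elementary remark that a product of two integers is even exactly when one of the factors is even, so I do not expect any real obstacle.
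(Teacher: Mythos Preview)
Your proof is correct and follows essentially the same approach as the paper: both directions of (\ref{eqn:Q2-gamma}) are handled identically (check $L$ is $2$-convex for $\subseteq$; build $\ints$ and $\gamma\ints$ inside $Q_2(\{0,1,\gamma\})$ and then apply one $\rho$-step for $\supseteq$), and the $R_2$ statement is deduced the same way via density or discreteness of the underlying lattice. Your version is slightly more explicit in citing Lemmas~\ref{lem:topo-closure} and~\ref{lem:translate-Q-R}, but the argument is the same.
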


\begin{proof}
It is easy to check, using the fact that $x \x_2 y = 2y - x$ for any $x$ and $y$, that the right-hand side of (\ref{eqn:Q2-gamma}) contains $\{0,1,\gamma\}$ and is $2$-convex.  This proves $\subseteq$.

For the reverse inclusion, we first note that $Q_2(\{0,1\}) = \ints$ and $Q_2(\{0,\gamma\}) = \gamma\ints$, and thus $\ints \union \gamma\ints \subseteq Q_2(\{0,1,\gamma\})$.  Now let $a,b\in\ints$ be arbitrary.  If $b$ is even, then $a + b\gamma = (-a) \x_2 (b\gamma/2)$.  If $a$ is even, then $a + b\gamma = (-b\gamma) \x_2 (a/2)$.  In either case, this shows that $a+b\gamma \in Q_2(\ints\union\gamma\ints) = Q_2(\{0,1,\gamma\})$, which establishes the reverse containment.

If $\gamma$ is a real, irrational number, then (\ref{eqn:Q2-gamma}) implies that $Q_2(\{0,1,\gamma\})$ is a dense subset of $\reals$, and so $R_2(\{0,1,\gamma\}) = \reals$.  If $\gamma\notin\reals$, then $Q_2(\{0,1,\gamma\})$ is a subset of the discrete lattice generated by $1$ and $\gamma$.
\end{proof}

Considering $Q_2(P_6)$ again, one can easily check that $P_6 \subseteq Q(\{0,1,\eta\})$ where $\eta := e^{i\tau/3}$ is the principal third root of unity.  Thus, by Lemma~\ref{lem:Q2-gamma}, $Q_2(P_6) = Q_2(\{0,1,\eta\}) = \{a+b\eta \mid a,b\in\ints \myand ab\equiv 0 \pmod{2} \}$, and this is one of the sets shown in Figure~\ref{fig:hex-oct}.

\bigskip

We now reconsider the set $Q_{1+\p}(P_5)$, illustrated in Figure~\ref{fig:pentagon}.  Like $Q_2(P_6)$, this set is uniformly discrete by Propositions~\ref{prop:discrete-reg-n-gon} and \ref{prop:lambda-n-PV} and has $D_5$ symmetry about the center of $P_5$.  Unlike $Q_2(P_6)$, however, $Q_{1+\p}(P_5)$ has no translational symmetry.

\begin{theorem}\label{thm:pentagon}
Except for adjacent points in $P_5$, all points of $Q_{1+\p}(P_5)$ are farther than unit distance apart.
\end{theorem}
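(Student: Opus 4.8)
The plan is to imitate the structure of the proof of Proposition~\ref{prop:1-plus-phi} and its consequences, but now tracking the extra ``pentagonal'' directions. First I would set $\zeta := e^{i\tau/5}$, so that $\zeta$ is a primitive fifth root of unity with minimal polynomial $x^4 + x^3 + x^2 + x + 1$, and I would express the five vertices of $P$ in the ring $\ints[\zeta]$. Since the base of $P$ coincides with $\clcl{0,1}$, a direct computation gives each vertex of $P$ as a $\ints$-linear combination of $1$ and $\zeta$ (indeed of $1,\zeta,\zeta^2,\zeta^3$), so $P\subseteq\ints[\zeta]$ and hence $\{0,1\}\subseteq P$. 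I would then observe that $\ints[\zeta]$ is \emph{not} discrete (it is dense in $\complexes$, since $\zeta + \zeta^{-1} = 2\cos(\tau/5) = \p - 1$ is a real irrationality in it, so $\ints[\zeta]\cap\reals$ is dense), so Fact~\ref{fact:discrete-ring} does not immediately help; the discreteness must come from a finer invariant, exactly as in the $R_{1+\p}$ case where $R_{1+\p}$ is discrete even though it sits inside the non-discrete set $\ints+\p\ints$.

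The heart of the argument is to produce a ``defining strip'' description of $R_{1+\p}(P)$ analogous to the one in Proposition~\ref{prop:1-plus-phi}. Set $\lambda := 1+\p = \p^2$. Here I would invoke the machinery of Section~\ref{sec:alg-int}: $\lambda$ is a root of $x^2 - 3x + 1$, whose other root is $\p^{-2} = 2-\p \in \opop{0,1}$, so by Fact~\ref{fact:main} (with $D := \ints[\zeta]$... — but $\ints[\zeta]$ is not discrete, so instead) I would work over the discrete ring $D := \ints[\zeta]$'s real subring issue by passing to the right lattice. More carefully: the correct ambient structure is the rank-$2$ $\ints[\zeta]$-module, and the key is that multiplication by $\lambda = \p^2$ has the Galois conjugate $\p^{-2}\in\opop{0,1}$. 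So I would set up the two ring embeddings of $\ints[\p] = \ints[\lambda]$ into $\reals$ (identity and the conjugation $\p\mapsto 1-\p = -\p^{-1}$), tensor with $\ints[\zeta]$, and show, exactly as in Lemma~\ref{lem:key} / Fact~\ref{fact:main}, that every element of $Q_{1+\p}(P)$ has the form $u + v\p$ with $u,v$ in the lattice $\ints[\zeta]$ (a rank-$4$ $\ints$-lattice) subject to a constraint of the form ``the Galois conjugate $u + v(1-\p)$ lies in a bounded convex region $\Omega$'' determined by $P$ — concretely, $\Omega$ is the image of the convex hull of $P$ under the conjugation map on the $\p$-coordinate, a bounded parallelogram-like set. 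This gives $R_{1+\p}(P) = Q_{1+\p}(P) \subseteq \{\, z \in \ints[\zeta] + \p\,\ints[\zeta] : \sigma(z)\in\Omega\,\}$, and since $\ints[\zeta]+\p\,\ints[\zeta]$ is a discrete rank-$8$... no: $\ints[\zeta,\p]$ has $\ints$-rank $8$ and is dense in $\complexes$, but the map $z\mapsto(z,\sigma(z))\in\complexes^2$ embeds it as a \emph{discrete} rank-$8$ lattice in $\reals^4$, and intersecting the second coordinate with the bounded set $\Omega$ leaves a set that is discrete when projected to the first coordinate — this is the standard ``cut and project'' argument, and it is precisely the mechanism already used implicitly in the proof of Theorem~\ref{thm:main}.

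For the second, quantitative claim — that non-adjacent points of $R_{1+\p}(P)$ are more than unit distance apart — I would argue as in Corollary~\ref{cor:degree2}: any difference $\delta$ of two points of $R_{1+\p}(P)$ lies in $\ints[\zeta] + \p\,\ints[\zeta]$ and has $\sigma(\delta)$ confined to the bounded set $\Omega - \Omega$; if $0<|\delta|<1$ then $\delta$ generates an infinite sequence $\delta,\delta^{?}\dots$ — more precisely I would show that a short $\delta$ forces a contradiction either with discreteness (powers or suitable combinations accumulate) or, better, by explicitly enumerating the finitely many lattice vectors $\delta$ with $\sigma(\delta)\in\Omega-\Omega$ and $|\delta|\le 1$ and checking that each such $\delta$ is realized only between two vertices of $P$ that are adjacent in the pentagon. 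The main obstacle I anticipate is exactly this last bookkeeping step: unlike $R_{1+\p}$, where the two ``conjugate'' coordinates made the strip one-dimensional and everything reduced to the single irrationality $\p$, here $\Omega$ lives in (a real slice of) the cyclotomic lattice, so the bounded region $\Omega - \Omega$ must be computed explicitly from the pentagon's geometry and then the finitely many short lattice vectors inside it enumerated and matched against the edges of $P$; getting the constants right (the ``unit distance'' threshold, and confirming the pentagon edges of length $1$ are the unique minimal-distance pairs) is where the real work lies. Everything upstream — membership in $\ints[\zeta,\p]$, the cut-and-project discreteness — follows the template of Section~\ref{sec:alg-int} essentially verbatim.
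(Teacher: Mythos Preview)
Your cut-and-project approach via Galois conjugation is sound in outline and genuinely different from the paper's proof, but there is one algebraic slip you should fix: since $\zeta+\zeta^{-1}=2\cos(\tau/5)=\p-1$, we already have $\p\in\ints[\zeta]$, so $\ints[\zeta,\p]=\ints[\zeta]$ has $\ints$-rank~$4$, not~$8$. With that corrected, your scheme goes through cleanly: the vertices of $P$ are $0,\,1,\,1+\zeta,\,1+\zeta+\zeta^2,\,1+\zeta+\zeta^2+\zeta^3\in\ints[\zeta]$ and $1+\p\in\ints[\zeta]$, so $Q_{1+\p}(P)\subseteq\ints[\zeta]$; the Galois automorphism $\sigma\colon\zeta\mapsto\zeta^2$ sends $1+\p$ to $2-\p\in\opop{0,1}$, whence $\sigma(Q_{1+\p}(P))=Q_{2-\p}(\sigma(P))$ lies in the convex hull of $\sigma(P)$; and the Minkowski embedding $z\mapsto(z,\sigma(z))$ realizes $\ints[\zeta]$ as a rank-$4$ lattice in $\complexes^2\cong\reals^4$, so bounding the second coordinate forces discreteness in the first.

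The paper takes a more elementary route that avoids the Minkowski embedding entirely. Instead of lifting to $\complexes^2$, it projects down to $\reals$: applying Lemma~\ref{lem:rho-map} with the $\reals$-linear map $\Im$ gives $\Im(Q_{1+\p}(P))=Q_{1+\p}(\Im(P))$, and since $\Im(P)$ consists of just three real numbers $\{0,\zeta,\kappa\}$ with $0=\rho_{\kappa,\zeta}(1+\p)$, this is an affine copy of $Q_{1+\p}$ itself, already known discrete by Proposition~\ref{prop:1-plus-phi}. The $D_5$ symmetry then supplies a second, non-parallel real projection with the same property, and two discrete projections in independent directions force discreteness in the plane. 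For the unit-distance claim the paper again leans on the $D_5$ rotations rather than on a lattice enumeration: given two points at distance $\le 1$, a pigeonhole on the five rotates produces two images whose imaginary parts differ by at most $\sin(\tau/10)=\kappa-\zeta$, and the gap structure of $R_{1+\p}$ then pins those imaginary parts to the unique exceptional pair $\{\zeta,\kappa\}$, forcing equality everywhere and identifying the original pair as adjacent vertices of $P$. Your approach buys a direct connection to the model-set formalism and would generalize more readily to other cyclotomic situations; the paper's approach buys a shorter proof that recycles Proposition~\ref{prop:1-plus-phi} wholesale and sidesteps the short-vector enumeration you correctly flag as the laborious step.
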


\begin{proof}
%
Let $\lambda := 1+\p$ and let $\x$ mean $\xl$ as usual.  Let $S := Q_\lambda(P_5)$.  We again use the projection idea from the proof of Proposition~\ref{prop:easy-n-gons}, but for technical convenience, we project horizontally onto the imaginary axis via $\Im$ rather than vertically onto the real axis via $\Re$.  A little elementary geometry shows that $\Im(P_5) = \{0, \zeta, \kappa\}$, where
\begin{align*}
\zeta &:= \sin(\tau/5) = \frac{\sqrt 2}{4}\sqrt{5+\sqrt 5}\approx 0.951\;, & \kappa &:= \p\zeta = \frac{\sqrt{5+2\sqrt 5}}{2}\approx 1.539\;.
\end{align*}
From this we readily get $0 = -\p\kappa + \lambda\zeta = \kappa \x \zeta \in Q_\lambda(\{\kappa,\zeta\})$, and therefore
\[ \Im(S) = Q_\lambda(\Im(P_5)) = Q_\lambda(\{0,\zeta,\kappa\}) = Q_\lambda(\{\kappa,\zeta\}) = \rho_{\kappa,\zeta}(Q_\lambda)\;. \]
The right-hand side is discrete by Proposition~\ref{prop:1-plus-phi}, which shows that $\Im(S)$ is discrete.


Fix any $a,b\in S$ such that $0 < |b-a| \le 1$.  We now show that $|b-a| = 1$ and $a,b\in P$.  Let $C$ be the center of $P_5$, and let $G$ be the group of rotations about $C$ through angles that are multiples of $\tau/5$.  ($G$ is the $5$-element cyclic group generated by $\rho_{\xi,0}$, where $\xi := e^{i3\tau/10}$.)  By symmetry, each element of $G$ leaves $P_5$, and therefore $S$, invariant.

\begin{claim}
There exist distinct $g_1,g_2\in G$ such that
\begin{enumerate}
\item
$\{\Im(g_1(a)),\Im(g_1(b))\} = \{\Im(g_2(a)),\Im(g_2(b))\} = \{\zeta,\kappa\}$,
\item
the slope of the line through $g_1(a)$ and $g_1(b)$ is $\tan(\tau/10)$, and
\item
the slope of the line through $g_2(a)$ and $g_2(b)$ is $-\tan(\tau/10)$.
\end{enumerate}
Furthermore, $|b-a| = 1$.
\end{claim}

\begin{proof}[Proof of the Claim]
Let $\theta$ be the argument of $b-a$, and let $r = |b-a|$.  By assumption, $0<r\le 1$.  The elements of $G$ rotate the line segment connecting $a$ with $b$ to form line segments with length $r$ and with arguments $\theta + k\tau/5$, for $k\in\{0,1,2,3,4\}$.  The vertical displacement of each such line segment (that is, the absolute difference between the imaginary parts of the two endpoints) is thus $r|\sin(\theta + k\tau/5)|$.  A simple geometric argument shows that there exist distinct $k_1,k_2\in\{0,1,2,3,4\}$ such that $0<|\sin(\theta + k_1\tau/5)| \le \sin(\tau/10)$ and $0<|\sin(\theta + k_2\tau/5)| \le \sin(\tau/10)$.  Let $g_1$ and $g_2$ be the corresponding elements of $G$, respectively.  Now we note that
\[ \sin(\tau/10) = \frac{\sqrt 2}{4}\sqrt{5-\sqrt 5} = \kappa - \zeta \approx 0.588\;. \]
We thus have
\begin{align}\label{eqn:vertical-displace}
0 < |\Im(g_1(b)) - \Im(g_1(a))| &\le \kappa - \zeta\;, & 0 < |\Im(g_2(b)) - \Im(g_2(a))| &\le \kappa - \zeta\;.
\end{align}
We know that $g_1(a),g_1(b),g_2(a),g_2(b)\in S$, and we established earlier that $\Im(S) = \rho_{\kappa,\zeta}(Q_\lambda)$.  Thus $\Im(g_j(a))$ and $\Im(g_j(b))$ are both in $\rho_{\kappa,\zeta}(Q_\lambda)$ for $j\in\{1,2\}$.  Now Proposition~\ref{prop:1-plus-phi} implies that, except for $\kappa$ and $\zeta$, any two points of $\rho_{\kappa,\zeta}(Q_\lambda)$ differ by at least $(\kappa - \zeta)\p = \zeta$, which is strictly greater than $\kappa - \zeta$.  This establishes the first item of the Claim, and it also implies that equality must hold for both inequalities in (\ref{eqn:vertical-displace}).  Therefore, for both $j\in\{1,2\}$,
\begin{equation*}
\sin(\tau/10) = \kappa - \zeta = |\Im(g_j(b)) - \Im(g_j(a))| = r|\sin(\theta + k_j\tau/5)| \le |\sin(\theta + k_j\tau/5)| \le \sin(\tau/10)\;.
\end{equation*}
The only way this can occur is when $r = 1$ and $|\sin(\theta+k_j\tau/5)| = \sin(\tau/10)$.  Since $g_1\ne g_2$, it must then be that the line through $g_1(a)$ and $g_1(b)$ and the line through $g_2(a)$ and $g_2(b)$ have oppositely signed slopes, both with absolute value $\tan(\tau/10)$.  By swapping $g_1$ and $g_2$ if necessary, we can assume that the line through $g_1(a)$ and $g_1(b)$ has positive slope.  This establishes the rest of the Claim.
\end{proof}

It remains to show that $a,b\in P_5$, and for this it suffices that $g_1(a)$ and $g_1(b)$ are both in $P_5$.  By swapping $a$ and $b$ if necessary, we can assume that $\Im(g_1(a)) = \zeta$ and $\Im(g_1(b)) = \kappa$.  To get the slope of the line between $g_2(a)$ and $g_2(b)$ to be $-\tan(\tau/10)$, as asserted by the Claim, it must be that $g_2$ results from applying $g_1$ followed by a clockwise rotation around $C$ through angle $\tau/5$.  Thus we have $\Im(g_2(a)) = \kappa$ and $\Im(g_2(b)) = \zeta$, and this is only possible if $g_1(b) = g_2(a)$ is the apex of $P_5$ and $g_1(a)$ is the element of $P_5$ immediately to its left.  This finishes the proof of Theorem~\ref{thm:pentagon}.
\end{proof}

\begin{corollary}
All $\reals$-affine transformations of $\complexes$ that are symmetries of $Q_{1+\p}(P_5)$ are length-preserving and leave $P_5$ invariant.
\end{corollary}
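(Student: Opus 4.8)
The plan is to exploit two features of the situation: that $\lambda:=1+\p$ is \emph{real}, and the rigidity recorded in Theorem~\ref{thm:pentagon}. Since the coefficients $1-\lambda$ and $\lambda$ of $\lambda$-extrapolation are real and sum to $1$, every $\reals$-affine map $T$ of $\complexes$ (orientation-preserving or not) satisfies $T(\rho_{x,y}(\lambda))=\rho_{T(x),T(y)}(\lambda)$, so the proofs of Lemma~\ref{lem:rho-preserves-convexity} and Lemma~\ref{lem:translate-Q-R} carry over with $T$ in place of $\rho_{a,b}$: $T$ preserves $(1+\p)$-clonvexity, and $R_{1+\p}(T(X))=T(R_{1+\p}(X))$ for all $X\subseteq\complexes$. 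Hence, writing $S:=R_{1+\p}(P)$ and $v_0,\dots,v_4$ for the vertices of $P$, any $\reals$-affine symmetry $T$ of $S$ satisfies $R_{1+\p}(T(P))=T(S)=S=R_{1+\p}(P)$ and $T(P)\subseteq S$. Moreover, by Theorem~\ref{thm:pentagon}, the pairs of points of $S$ at distance $\le 1$ are exactly the five edges of $P$, each realizing distance exactly $1$, while every other pair of distinct points of $S$ is at distance strictly greater than $1$.

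Next I would reduce everything to the claim that the linear part $\bar T$ of $T$ is orthogonal. Indeed, if $\bar T\in O(2)$ then $T$ is a Euclidean isometry, so it maps distance-$1$ pairs to distance-$1$ pairs, hence permutes the five edges of $P$; since those edges cover exactly $\{v_0,\dots,v_4\}$, we get $T(P)=P$, and $T$ is length-preserving, which is the full statement.

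To prove $\bar T\in O(2)$ I would split into two cases. If $\bar T$ is a similarity, say $\bar T=cQ$ with $c>0$ and $Q\in O(2)$, then $T$ multiplies all distances by $c$; applying $T$ to the closest pair $\{0,1\}\subseteq S$ forces $c\ge 1$, and applying $T^{-1}$ forces $1/c\ge 1$, so $c=1$. If $\bar T$ is not a similarity, I would invoke the finer structure in the proof of Theorem~\ref{thm:pentagon}: $S$ has $D_5$ rotational symmetry about the center of $P$, and the orthogonal projection of $S$ onto each of the five lines $L_0,\dots,L_4$ perpendicular to the edges of $P$ (these lines lie $36^\circ$ apart) is, up to translation, the discrete aperiodic set $\rho_{\kappa,\zeta}(R_{1+\p})$, where $\zeta=\sin(\tau/5)$ and $\kappa=\p\zeta$. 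For a unit vector $u$, projecting the identity $T(S)=S$ gives $\pi_u(S)=\lvert\tp{\bar T}u\rvert\,\pi_w(S)+(\text{const})$, where $w$ is the unit vector along $\tp{\bar T}u$; hence, if the shadow of $S$ on the line $\reals u$ is a positive-affine copy of $R_{1+\p}$, so is the shadow on $\reals w$. Thus $\tp{\bar T}$ carries the set of ``$R_{1+\p}$-shadow directions'' of $S$ into itself. Granting that this set is exactly $\{L_0,\dots,L_4\}$ (see below), $\tp{\bar T}$ permutes five lines through the origin spaced $36^\circ$ apart; but such a linear map must be a similarity, since it induces a cyclic-order-preserving or -reversing bijection of these five points of the circle of line-directions, so pre- or post-composing with a suitable rotation or reflection yields a linear map fixing all five lines, and a linear self-map of $\reals^2$ fixing three distinct lines through the origin is a scalar. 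This contradicts $\bar T$ not being a similarity and returns us to the first case.

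The main obstacle is the claim that the only $R_{1+\p}$-shadow directions of $S$ are the five edge-perpendiculars $L_0,\dots,L_4$, equivalently that the orthogonal projection of $S$ onto any other line is dense. A proof should use that $S$ contains the five $72^\circ$-rotated copies $\sigma^j\!\big(R_{1+\p}(\{v_j,v_{j+1}\})\big)=\sigma^j(R_{1+\p})$ of $R_{1+\p}$ lying along the five edge-lines, so that a generic shadow of $S$ contains a union of incommensurably scaled translates of $R_{1+\p}$, together with the aperiodicity recorded in and after Proposition~\ref{prop:1-plus-phi} (no infinite arithmetic progression) to conclude such a union is dense and hence not an affine image of the discrete set $R_{1+\p}$. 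An alternative route that avoids the projection analysis would argue directly that $T$ cannot strictly expand any edge direction of $P$: one shows that $\bar T$ sends at least three of the five pairwise non-parallel edge directions to unit vectors (whence $\bar T\in O(2)$, since two distinct ellipses centered at the origin meet in at most two directions), using the distance gap above $1$ from Theorem~\ref{thm:pentagon} to rule out an eccentric $\bar T$ that expands all five edge directions. Everything else in the argument (the extrapolation/affine compatibility, the reduction to ``$\bar T$ orthogonal,'' the scale-factor squeeze, and the $\mathbb{RP}^1$-rigidity of linear maps permuting five equally spaced lines) is routine.
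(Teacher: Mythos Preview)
The paper states this corollary without proof, immediately after Theorem~\ref{thm:pentagon}. The intended one-line argument is: by Theorem~\ref{thm:pentagon} the five edges of $P$ are the \emph{only} pairs of points of $S:=R_{1+\p}(P)$ at distance $\le 1$; any $\reals$-affine symmetry $T$ of $S$ therefore permutes these five pairs, hence permutes $P$, and an affine bijection of the vertex set of a regular pentagon is automatically a Euclidean isometry (three non-collinear vertices determine $T$). Your write-up is far more elaborate than this.

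That said, you are right to be uneasy: the step ``$T$ permutes the five minimum-distance pairs'' is \emph{not} automatic for an affine map, and the paper glosses over it. The bare constraints $|\bar T e_i|\ge 1$ and $|\bar T^{-1}e_i|\ge 1$ for the five edge directions $e_i$ do \emph{not} by themselves force $\bar T\in O(2)$: for instance $\bar T=\mathrm{diag}(10,1/10)$ with the $e_i$ placed at $9^\circ+36^\circ k$ satisfies both. What actually closes the gap is the stronger consequence of Theorem~\ref{thm:pentagon} that every non-edge pair has distance strictly bigger than $1$, so each $|\bar T e_i|$ and each $|\bar T^{-1}e_i|$ must lie in $\{1\}\cup[d_2,\infty)$ for some $d_2>1$; combined with the fact that $\theta\mapsto|\bar T(\cos\theta,\sin\theta)|^2$ is a sinusoid in $2\theta$ (so hits the value $1$ at most twice per period unless constant), one can show $T$ must in fact send every edge to an edge. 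This is essentially your ``alternative route,'' and it is the right idea---but you have not carried it out, and your main projection argument has a genuine gap at the point you flag: you have not shown that the only directions in which $S$ has a discrete (affine-$R_{1+\p}$) shadow are the five edge-perpendiculars. Your sketch for this (``incommensurably scaled translates of $R_{1+\p}$ are dense'') is plausible but is not a proof; as written, your argument is incomplete.
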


We turn to the case of the regular octagon.  The $(2+\sqrt 2)$-clonvex closure of $P_8$ is illustrated in the upper right portion of Figure~\ref{fig:hex-oct}.  As with $Q_{1+\p}(P_5)$, this set is discrete and has no translational symmetry, although it has $D_8$ symmetry about the center of $P_8$.  The same techniques are used to prove the following, whose proof we only sketch:

\begin{theorem}\label{thm:octagon}
All points in $Q_{2+\sqrt 2}(P_8)$ are at least unit distance apart.
\end{theorem}

Unlike $Q_{1+\p}(P_5)$, \ $Q_{2+\sqrt 2}(P_8)$ has infinitely many pairs of points that are unit distance apart; they radiate out from $P_8$ in the eight directions whose angles are multiples of $\tau/8$, as can be seen in Figure~\ref{fig:hex-oct}.

\begin{proof}[Proof sketch of Theorem~\ref{thm:octagon}]
Let $S := S_\lambda(P_8)$.  $\Im(S)$ is a subset of $\rho_{\kappa,1}(Q_\lambda)$, where this time, $\kappa := \sqrt 2/2$.  Similarly, $\Re(S)$ is a subset of $\rho_{0,1-\kappa}(Q_\lambda)$.  $Q_\lambda$ is uniformly discrete by Corollary~\ref{cor:degree2}, where $m := -2$ and $n := 1$.  This makes both the real and imaginary parts of elements of $S$ drawn from discrete sets.  Thus $S$ is discrete.

By Corollary~\ref{cor:degree2} with $\lambda' = 1 - \lambda = -1-\sqrt 2$, any two distinct elements of $\Im(S)$ or of $\Re(S)$ differ by at least $(1-\kappa)(-\lambda') = (2-\sqrt 2)(1+\sqrt 2)/2 = \sqrt 2/2$.  It follows that if distinct $x,y\in S$ have different real parts and different imaginary parts, then $|x-y| \ge 1$.  If $x$ and $y$ have the same real part and different imaginary parts, or vice versa, then by symmetry we can rotate the plane about the center of $P_8$ through angle $\tau/8$ to obtain images $x'$ and $y'$, both in $S$, whose real parts and imaginary parts both differ.  Then we have $|x-y| = |x'-y'| \ge 1$.
\end{proof}

We conjecture that $Q_{2+\sqrt 2}(P_8)$ has no $\reals$-affine symmetries except those that leave $P_8$ invariant.

\begin{theorem}\label{thm:dodecagon}
All points in $Q_{2+\sqrt 3}(P_{12})$ are at least unit distance apart.
\end{theorem}

\begin{proof}
Let $\lambda := -1-\sqrt 3$.  Then $Q_{2+\sqrt 3}(P_{12}) = Q_{1-\lambda}(P_{12}) = Q_\lambda(P_{12})$ by Fact~\ref{fact:inner-dual}.  As in previous proofs we consider the projection $\Re(Q_\lambda(P_{12})) = Q_\lambda(\Re(P_{12}))$ onto the real axis.  We have
\[ \Re(P_{12}) = \{-(1+\sqrt 3)/2,-\sqrt 3/2,0,1,(2+\sqrt 3)/2,(3+\sqrt 3)/2\} = \frac{1}{2} S\;, \]
where $S \eqdf \{\lambda,1+\lambda,0,2,1-\lambda,2-\lambda\} \subseteq \ints[\lambda]$.  Since $\frac{1}{2}S$ is a finite subset of $\rats(\lambda)$, we know that $Q_\lambda(\Re(P_{12})) = Q_\lambda(\Re(P_{12})) = Q_\lambda\left(\frac{1}{2}S\right)$ is uniformly discrete by Theorem~\ref{thm:main}.  We first get a lower bound on the interpoint distances of $Q_\lambda\left(\frac{1}{2}S\right)$, then combine this with symmetry to get a lower bound on the interpoint distances of $Q_\lambda(P_{12})$.


The only conjugate of $\lambda$ is $\mu := \sqrt 3 - 1 \approx 0.732$.  Now we can apply Fact~\ref{fact:main} with $d := 2$, \ $k := 1$, \ $\mu_0 := \mu$, and
\[ S_0 := \left\{ a_0 + a_1\mu : a_0,a_1\in\ints \myand a_0 + a_1\lambda \in S \right\} = \{\mu,1+\mu,0,2,1-\mu,2-\mu\}\;, \]
and this makes $\ell_0 = \min(S_0) = 0$ and $h_0 = \max(S_0) = 2$.  Thus by Equation~(\ref{eqn:main3general}), we have
\begin{equation}\label{eqn:dodecagon}
Q_\lambda(S) \subseteq \{a_0 + a_1\lambda : a_0,a_1\in\ints \myand 0 \le a_0 + a_1\mu \le 2 \}\;.
\end{equation}

Now consider two distinct points of $Q_\lambda(S)$, say $a = a_0 + a_1\lambda$ and $b = b_0 + b_1\lambda$, where $a_0,a_1,b_0,b_1\in\ints$.  We show here that $|b-a| \ge 1$.  Let $\delta := b-a = \delta_0 + \delta_1\lambda$ where $\delta_0 = b_0 - a_0$ and $\delta_1 = b_1 - a_1$ are not both zero.  If $\delta_1 = 0$, then $\delta$ is a nonzero integer and we are done, so we can assume that $\delta_1 \ne 0$.  Also assume without loss of generality that $\delta_0 + \delta_1\mu > 0$ (otherwise swap the roles of $a$ and $b$).  Since $a_0 + a_1\mu$ and $b_0 + b_1\mu$ are both in $\clcl{0,2}$, we then have $0 < \delta_0 + \delta_1\mu < 2$, or equivalently, $-\delta_1\mu < \delta_0 < 2 - \delta_1\mu$.  Adding $\delta_1\lambda$ to all sides gives us
\[ -\delta_1\mu + \delta_1\lambda < \delta_0 + \delta_1\lambda = \delta < 2  - \delta_1\mu + \delta_1\lambda\;, \]
that is,
\[ -2\sqrt 3\, \delta_1 < \delta < 2 - 2\sqrt 3\,\delta_1\;, \]
using the fact that $\lambda - \mu = -2\sqrt 3$.  If $\delta_1 < 0$, then $\delta > 2\sqrt 3 \ge 1$.  If $\delta_1 > 0$, then $\delta < 2 - 2\sqrt 3 \le -1$.  Thus in any case, $|\delta| = |b-a| \ge 1$.

It follows immediately that the distance between any two distinct points of $Q_\lambda\left(\frac{1}{2}S\right) = \frac{1}{2}Q_\lambda(S)$ is at least $1/2$.  Thus $Q_\lambda(P_{12})$ is confined to the union of a discrete set of vertical lines with interline distance at least $1/2$.  Now consider any two distinct points $x, y \in Q_\lambda(P_{12})$ and let $z := y - x$.  Writing $z = re^{i\theta}$ where $r := |z|>0$ and $\theta := \arg z$, it suffices to show that $r\ge 1$.  We see by the symmetry of $P_{12}$ that $Q_\lambda(P_{12})$ is invariant under rotation about the center of $P_{12}$ through angle $\tau/12 = \pi/6$.  Thus for each $k\in\ints$, \ $Q_\lambda(P_{12})$ contains a pair of points that differ by $e^{i k\pi/6}z = r e^{i (\theta + k\pi/6)}$.  Choose $k$ to minimize $|\Re(e^{i k\pi/6}z)| = r|\cos(\theta + k\pi/6)|$, and set $\psi := \theta + k\pi/6$ for our chosen $k$.  Let $x',y'\in Q_\lambda(P_{12})$ be such that $y' - x' = e^{i k\pi/6}z = r e^{i \psi}$ (whence $\Re(y'-x') = r\cos\psi$).  Then either $\cos\psi = 0$ (if $x'$ and $y'$ lie on the same vertical line) or $|\cos\psi| \ge 1/(2r)$ (if $x'$ and $y'$ lie on different vertical lines).  By swapping $x'$ and $y'$ if necessary, we can assume that $0\le \psi < \pi$.  Since $k$---and thus $\psi$---was chosen to minimize $|\cos\psi|$, we get that $5\pi/12 \le \psi \le 7\pi/12$.

We have two cases:
\begin{description}
\item[Case~1:] $\cos\psi \ne 0$.  Then putting the two inequalities above together, we get
\[ \frac{1}{2r} \le |\cos\psi| \le \cos(5\pi/12)\;, \]
and thus
\[ r \ge \frac{1}{2\cos(5\pi/12)} > \frac{1}{2\cos(\pi/3)} = 1\;. \]
\item[Case~2:] $\cos\psi = 0$, that is, $\psi = \pi/2$, and $x'$ and $y'$ lie on the same vertical line.  Using the rotational symmetry of $Q_\lambda(P_{12})$ again, we can find points $x''$ and $y''$ such that $y'' - x'' = e^{-i\pi/6}(y'-x') = r e^{i\pi/3}$.  Thus
\[ \Re(y'' - x'') = r\cos(\pi/3) = \frac{r}{2} \ge \frac{1}{2}\;, \]
because $x''$ and $y''$ lie on different vertical lines.  This also implies $r \ge 1$.
\end{description}
\end{proof}


%


\section{Relative density of discrete $Q_\lambda(S)$}
\label{sec:relative-density}

Proposition~\ref{prop:subset-of-model-set} says that $R_\lambda(S)$ is a subset of the model set of a certain cut-and-project scheme, where $\lambda$ is sPV and $S\subseteq \ints[\lambda]$ is finite.  One of our chief conjectures is that these $R_\lambda(S)$ are all Meyer sets.  To establish the conjecture it suffices by Fact~\ref{fact:meyer-set-characterization} to show that $R_\lambda(S)$ is relatively dense (in either $\reals$ or $\complexes$ as appropriate).  While we have not succeeded in finding a general proof, we have a number of partial results,
as well as a sufficient condition for $R_\lambda$ to be Meyer: This is the case if
$\lambda$ is sPV and
$R_\lambda$ contains a unit of $\ints[\lambda]$ other than $1$ (see Corollary~\ref{cor:meyer-if-unit}).

We begin with the case of real quadratic
sPV numbers (Section~\ref{sec:quadratics}), and, generalizing
techniques of Berman and Moody~\cite{BeMo} and Mas{\'a}kov{\'a} et al.~\cite{MPP:sconvexjournal}, characterize all such numbers for which $R_\lambda$ \emph{equals}
a naturally corresponding model set (which is stronger than the Meyer property). 
 As model sets
are relatively dense, relative density is immediate.
We find
that the model set property can only hold for four values of $\lambda$ (or the corresponding $1-\lambda$): 
$-\p$, $-1-\sqrt{2}$, $-1-\sqrt{3}$, and $\frac{-3-\sqrt{17}}{2}$. We then prove
it holds for each of these values; the proof for $\frac{-3-\sqrt{17}}{2}$ is somewhat
more difficult and appears separately, as outlined in
the next paragraph. We thus obtain a characterization 
that strictly strengthens a previous result
of Mas{\'a}kov{\'a} et al.~\cite{MPP:sconvexjournal}, which
gave a similar characterization for quadratic unitary Pisot numbers (a subset of
quadratic sPV numbers).

Certain aspects of
Berman and Moody's technique, which we call the \emph{covering method}, 
are then progressively strengthened in Section~\ref{sec:coveringmethod},
where we complete the proof of our characterization of Section~\ref{sec:quadratics}
for the remaining case of $\frac{-3-\sqrt{17}}{2}$.
Sufficient conditions for obtaining a model set are
 further extended to higher degree $\lambda$. This allows
us to conclude that $R_{\lambda}$ is relatively dense for
three cubic $\lambda$'s: two non-real values, and $\lambda_7$, the
latter being the appropriate parameter for generating the set
based on the regular heptagon (see Figure~\ref{fig:heptagon}).
These proofs explicitly show
that $R_\lambda$ {\it contains} (but does not necessarily equal) a model set. 

Another notion, which we call {\it affine embedding},
provides a flexible tool for proving both relative density {\it and}
uniform discreteness (Section~\ref{sec:affine}).
Affine embeddings combined with the covering method leads to the
strongest sufficient condition we have for $R_\lambda$ to be a Meyer set (Corollary~\ref{cor:meyer-if-unit},
as mentioned above). It also yields a method to determine if
certain elements of
$\Sigma_P$ are {\it not} contained in $R_\lambda$, for certain $\lambda$
(Section~\ref{sec:case-study-root13}).

 Finally, in Section~\ref{sec:higher-dimensions} we use the 1-dimensional (real) results outlined above to prove relative density results in higher dimensions.
 Thus, for example, the relative density of
 $R_{\lambda_7}$ implies the relative density
  of the set of Figure~\ref{fig:heptagon} derived from the heptagon.

\subsection{Relative Density in $\reals$ of $R_\lambda$ for Real Quadratic sPV Numbers $\lambda$}\label{sec:quadratics}

We assume throughout this subsection that $\lambda$ is a real quadratic sPV number.
These numbers are characterized in Corollary~\ref{cor:degree2}.

In Proposition~\ref{prop:subset-of-model-set}, we showed that $R_\lambda$ is a subset of
a cut-and-project, or model, set. In the quadratic case, according to
Eq.~(\ref{eqn:degree2}) in
Corollary~\ref{cor:degree2} (for $\lambda < 0$ without loss of generality),
we can write this relation as follows:
\begin{eqnarray*}
R_\lambda \subseteq \{ a - b\lambda \mid a,b\in\ints \mbox{\rm ~and~} a-b\mu \in [0,1]\}
\end{eqnarray*}
where $\mu = \lambda'$ is the Galois conjugate of $\lambda$.  (Following Berman \& Moody, we use $x\mapsto x'$ as the field automorphism of $\rats(\lambda) = \rats(\mu)$ that swaps $\lambda$ with $\mu$.)
Thus, more compactly,
\[ R_\lambda \subseteq \{ x \mid x \in \ints[\lambda] \textup{~and~} x' \in [0,1]\}. \]
Following the notation of Definition~\ref{def:Sigma_P}, which is based in turn on the notation of
Berman and Moody~\cite{BeMo}, we denote $P := [0,1]$ and
$\Sigma_P := \{ x \mid x \in \ints[\lambda] \mbox{\rm ~and~} x' \in [0,1]\}$. Then
the above containment is stated simply as,
\[ R_\lambda \subseteq \Sigma_P\;. \]
Our goal in this section is to determine for which $\lambda$ the above containment
is an equality.  This we do in Theorem~\ref{theorem:main-candp}.  When equality holds, given
that $\Sigma_P$ is a model set, it then follows that
$R_\lambda$ is relatively dense. This is one of only two ways we have been able
to prove that $R_\lambda$ is relatively dense; the other is used for Corollary~\ref{cor:13-relatively-dense} later on.

 The development closely follows prior work of
Berman and Moody~\cite{BeMo} and Mas{\'a}kov{\'a} et al.~\cite{MPP:sconvexjournal}.

Recall the concept of the \emph{field norm} $N(x) = xx'$, for $x \in \ints[\lambda]$.\footnote{More generally, the field norm (over $\rats$) of an algebraic number $x$ is the product of the conjugates of $x$ over $\rats$.  If $x$ is an algebraic integer, then $N(x)$ is also an integer.}
More concretely, if $x = a + b\lambda$, then $N(x) = (a + b\lambda)(a + b\mu)$. It is known
that $N$ is multiplicative, and that $x$ is a unit (has
an inverse) in $\ints[\lambda]$
iff $N(x) = \pm 1$. 

\begin{definition}\label{def:unitarypisot}
An algebraic integer $\alpha$ \emph{unitary} iff $\alpha$ is a unit of $\ints[\alpha]$.  A quadratic integer $\alpha \in \reals$ is \emph{unitary Pisot} iff it is a PV number which is a unit in $\ints[\alpha]$. 
\end{definition}

Most quadratic sPV numbers are not unitary Pisot (as they can be irrationals
with non-unit field norm, cf.\ Corollary~\ref{cor:degree2} for $n > 1$).
In \cite{MPP:sconvexjournal}, Mas{\'a}kov{\'a} et al.\ characterize the real quadratic unitary Pisot numbers that lead to model sets.
By a result of Pinch 
 (in \cite{Pinch}, Theorem 15)
for real quadratic integer $\alpha$, $R_\alpha$ is discrete iff $\alpha$ is sPV\@. Thus  
real quadratic unitary Pisots can lead to model sets only if
they are also sPV\@. Our characterization below (Theorem~\ref{theorem:main-candp})
thus strengthens  that of
\cite{MPP:sconvexjournal}, as it deals with 
a strict superset of those unitary Pisots that may
result in model sets.

Let $R'_{\lambda}$ denote the set of all conjugates of elements in $R_{\lambda}$.
It is clear that $R'_{\lambda} = R_{\lambda'} = R_\mu$. 
Similarly, $\Sigma'_P = \{x' \mid  x \in \ints[\lambda] \wedge x' \in P\}$. Since $\mu$
generates the same ring as $\lambda$, $x \in \ints[\lambda]$ iff
$x' \in \ints[\lambda]$, so it is plain that $\Sigma'_P = \ints[\lambda]
\cap P$. 
Because the algebraic structures are the same,
 that implies $R_{\lambda} \subseteq \Sigma_P$ iff $R_\mu \subseteq
 \ints[\lambda]\cap P$.  Further, $R_\lambda = \Sigma_P$
 iff $R_\mu = \ints[\lambda]\cap P = \Sigma'_P$. 

The following result and proof
is identical to an analogous one in~\cite{MPP:sconvexjournal}. 
The only observation we make here is that it's not necessary to assume
that $\lambda$ is unitary Pisot. 

\begin{lemma}\label{lem:dividesy}
If $R_\mu = \Sigma'_P$, then for any $y \in \Sigma'_P$, both of the following hold:
\begin{enumerate}[(i)]
\item $\mu \divides y$ or $\mu \divides (y-1)$.
\item $(1-\mu) \divides y$ or $(1-\mu) \divides (y-1)$. 
\end{enumerate}
(Divisibility is with respect to $\ints[\lambda] = \ints[\mu]$.)
\end{lemma}
\begin{proof}
By the hypothesis, $y \in \Sigma'_P$ implies that $y \in R_\mu$. By Lemma~\ref{lem:level-characterization}, this implies that $y = \sum_{i=0}^n b_i \mu^i(1-\mu)^{n-i}$ for some $n \ge 0$ and
$0 \le b_i \le {n \choose i}$. If $b_0 = 0$, then clearly $\mu \divides y$. If $b_0 = 1$,
then $y = 1 + \mbox{terms divisible by $\mu$}$, so $\mu \divides (y-1)$, which yields
item (i). For item (ii), the proof is the same, but based on the $i= n$ term in
the sum.
\end{proof}

 We recall some well-known facts about
  rings of real quadratic integers (i.e., rings of algebraic integers in
  real quadratic
 fields) that are of the form $\ints[\sqrt{d}]$, where $d > 0 $ is square-free.
 See \cite{Edwards} and \cite{Goldman} for further details and proofs.
It is known that all such rings have
infinitely many units. 
Even for modest values of $d$, the so-called {\it fundamental units}
(those of smallest size and $> 1$)
can already be huge. In fact, all units in $\ints[\sqrt{d}]$
are positive or negative powers $\pm u_d^k$ of the fundamental unit $u_d$.
Now our rings $\ints[\lambda]$ are not quite of this form, since (in the negative
case), $\lambda = \frac{-m-\sqrt{D}}{2}$ where $D = m^2+4n$, and 
$D$ might not be square-free. Clearly, $\ints[\sqrt{D}] \subseteq\ints[\lambda]$,
and any units in $\ints[\sqrt{D}]$ are also units in $\ints[\lambda]$.
Indeed, the rings $\ints[\sqrt{D}]$ also
have infinitely many units. This is because any element $a+b\sqrt{D} \in 
\ints[\sqrt{D}]$ is a unit iff $N(a+b\sqrt{D}) = (a+b\sqrt{D})(a-b\sqrt{D}) = a^2 - Db^2 = 1$.
This is Pell's equation, which is known by a theorem of Lagrange to have infinitely
many integer solutions when $D$ is not a perfect square (as is true when $\lambda$ is
real quadratic sPV). The solution $a_0, b_0\in \ints$ with the smallest
value $a_0 + b_0\sqrt{D} > 1$
corresponds to the fundamental unit $u_D$
in  $\ints[\sqrt{D}]$. Suppose $D = s^2d$ where $d$ is square-free.
 Since $\ints[\sqrt{D}] = \ints[s\sqrt{d}] \subseteq \ints[\sqrt{d}]$, and all units in $\ints[\sqrt{d}]$
 are powers of a fundamental unit $u_d \in \ints[\sqrt{d}]$, it follows that
 the fundamental unit $u_D$ of $\ints[\sqrt{D}]$ is a power of $u_d > 1$: for some
 $k>0$, $u_D = u_d^k$, and the units of $\ints[\sqrt{D}]$
 are powers of $u_D$. This fact is used in the next proof.

   Given these considerations, in the 
   next lemma we need only rely on the existence
of arbitrarily large units in $\ints[\lambda]$, 
which usually coincide with neither $\lambda$ nor $1-\lambda$
(so that $\lambda$ need not be unitary).
Otherwise the result and the proof
are essentially those of a similar lemma
in~\cite{MPP:sconvexjournal}.

\begin{lemma}\label{lem:divides2} Let $\lambda$ be a real quadratic sPV number. 
Suppose that $R_\mu = \Sigma'_P$.
Then in the ring $\ints[\lambda]$, $\mu \divides 2$ and $(1-\mu) \divides 2$.  
\end{lemma}
\begin{proof}
For any unit $z\in \ints[\lambda]$, we know that it has an inverse $z^{-1} \in \ints[\lambda]$
so that $zz^{-1} = 1$. We may take $z$ to be a power of a fundamental unit
 $> 1$, and $z^{-1} > 0$.
Since $z, z^{-1} \in \reals$ and $z > 1$, the equation $zz^{-1} = 1$ then implies
that $z^{-1} \in \opop{0, 1}$. Furthermore, we choose $z$ sufficiently
large that $z^{-1} \in \opop{0, 1/2}$.
Let $u = z^{-1}$ denote this unit. Thus $u, 2u \in \ints[\lambda] \cap P = \Sigma'_P$,
and
Lemma~\ref{lem:dividesy} can be applied to $y = u$ and $y = 2u$.



If $\mu$ is a unit, then $\mu \divides 2$ and we are done. Suppose then that
$\mu$ is not a unit, and that $\mu \notdiv 2$. Since $\mu$ is not a unit,
 we also have $\mu \notdiv u$.
Now if $\mu \divides 2u$, given that $u$ is a unit, 
 we would have $2 = \mu yu^{-1}$, $u^{-1} \in \ints[\lambda]$, for some $y \in \ints[\lambda]$,
 so that
  $\mu \divides 2$, a contradiction.
Thus $\mu \notdiv 2$
 and $\mu \notdiv u$ also imply that
  $\mu \notdiv 2u$.

Since $\mu \notdiv 2u$, by Lemma~\ref{lem:dividesy} part (i),
$\mu \divides (2u-1)$.  From the fact that $\mu \notdiv u$ we conclude from
the same Lemma that
$\mu \divides (u-1)$. Then $\mu \divides (2u-1-u+1)$, i.e., $\mu \divides u$, a contradiction. 
Hence $\mu \divides 2$. 

The exact same argument applies to $1-\mu$, appealing to part (ii) of 
Lemma~\ref{lem:dividesy}. 
\end{proof}

\begin{lemma}\label{lem:pre-characterization}
For any real quadratic sPV number $\lambda$ such that 
in the ring $\ints[\lambda]$, $\mu \divides 2$ and $(1-\mu) \divides 2$ (equivalently, $\lambda \divides 2$ and $(1-\lambda) \divides 2$),
one of the following must hold:
\begin{enumerate}[(a)]
\item $\lambda = -\varphi$ or $\lambda = 1+\varphi$
\item $\lambda = -1-\sqrt{2}$ or $\lambda = 2+\sqrt{2}$
\item $\lambda = -1-\sqrt{3}$ or $\lambda = 2+\sqrt{3}$
\item $\lambda = \frac{-3-\sqrt{17}}{2}$ or $\lambda = \frac{5+\sqrt{17}}{2}$
\end{enumerate}
\end{lemma}
\begin{proof}
We consider $\lambda < 0$; the postive case is similar. Then by Corollary~\ref{cor:degree2}, $\lambda$ and $\mu$ have the minimal polynomial $x^2 + mx - n$, for $0 < n \le m$.
Since $\mu \divides 2$, there are $a, b \in \ints$
such that $(a + b\mu) \mu = 2$. Given that $\mu^2 + m\mu- n = 0$, a little
calculation gives the constraint $nb = 2$. This is only possible if $n = 1$
or $n = 2$.

Since we also have $(1-\mu) \divides 2$, there are $c, d \in \ints$
such that $(c + d\mu) (1-\mu) = 2$. Again, a little
calculation yields $d = \frac{2}{m-n+1}$. Since $d \in \ints$,
this can only hold if $m-n = 0$ or $1$. 

If $n = 1$, we have the unitary Pisot case, which is essentially
treated in~\cite{MPP:sconvexjournal}. For the sake of completeness,
we give the argument here: From $n=1$ and the constraint $m-n = 0$ or $1$,
we conclude that $m = 1$ or $m=2$. In the first case we obtain $\lambda = -\varphi$ (item (a)),
 and in the second, $\lambda=-1-\sqrt{2}$ (item
(b)).

Now suppose $n = 2$. Then $m = 2$ or $m = 3$. The former
case corresponds to $\lambda = -1-\sqrt{3}$, item (c). The latter case corresponds
to $\lambda = \frac{-3-\sqrt{17}}{2}$, item (d). 
\end{proof}

From Lemmas~\ref{lem:divides2} and \ref{lem:pre-characterization} we immediately get the following:

\begin{lemma}\label{lem:characterization}
For any real quadratic sPV number $\lambda$ such that $R_\mu = \Sigma'_P$ (equivalently, $R_\lambda = \Sigma_P$),
one of the items (a), (b), (c), (d) of Lemma~\ref{lem:pre-characterization} must hold.
\end{lemma}

In each of the items (a), (b), and (c) in Lemma~\ref{lem:pre-characterization}, at least one of the two alternative values of $\lambda$ is unitary Pisot.  We next prove that $R_\lambda = \Sigma_P$ for these values of $\lambda$.  
This was first
proved in \cite{MPP:sconvexjournal} using the notion of $\beta$-expansions
due to R{\'e}nyi~\cite{Renyi1957}. Here we include a 
self-contained proof that does not use
$\beta$-expansions. The original method of Berman and
Moody~\cite{BeMo} can only be applied directly to $\lambda = -\varphi$
or $1+\varphi$. We extend the technique for the other two values.

\begin{theorem}[Mas{\'a}kov{\'a} et al.~\cite{MPP:sconvexjournal}]\label{thm:cutandproject} 
If $\lambda$ or $1-\lambda$ is as in Lemma~\ref{lem:pre-characterization} items (a), (b), or (c),
then $R_\lambda = \Sigma_{P}$. 
\end{theorem}
\begin{proof}

We find it convenient to have $\mu > 1/2$. This can always be arranged either by using $\lambda$ or $1-\lambda$, and since $R_\lambda = R_{1-\lambda}$, there is no
loss of generality. Furthermore, whenever we make this choice for the relevant $\lambda$'s
(Lemma~\ref{lem:pre-characterization}), then $1-\lambda$ is a unit. To be concrete,
this dictates that we use $\lambda = 2+\sqrt{2}$ or $\lambda = -1-\sqrt{3}$. In those
cases, $\lambda$ is {\it not} a unit. Only when
 $\lambda = 1+\varphi$ does it hold that {\it both} $\lambda$ and $1-\lambda$ 
are units; that is what makes that case
easier.

For any set $A$, define $x\x A = \{x\x a \mid  a \in A\}$, and similarly
$A\x x = \{a \x x \mid  a \in A\}$. 

Our primary task is to prove that 
\begin{eqnarray}\label{eqn:replicating}
\Sigma_P = 0\x \Sigma_P \cup 1\x \Sigma_P \cup \Sigma_P\x 0\cup \Sigma_P \x 1.
\end{eqnarray}
This is a generalization of Berman and Moody's ``replication" property. It says that any element
in $\Sigma_P$ can be obtained by extrapolation on the left or the right by $0$ or $1$
with another element in $\Sigma_P$. The main result
then follows easily, as explained below.

We compute $0\x \Sigma_P$, $1\x \Sigma_P$, $\Sigma_P\x 0$, and 
$\Sigma_P \x 1$ in turn, starting with $0\x \Sigma_P$:
\begin{eqnarray*}
0\x \Sigma_P & = & \{0 \x x \mid  x \in \ints[\lambda] \wedge x' \in P\}\\
             & = & \{\lambda x \mid  x \in \ints[\lambda] \wedge x' \in P\}\\
             & = & \{\lambda x \mid  \lambda x \in \lambda\ints[\lambda] \wedge 
              (\lambda x)' \in \mu P\}
\end{eqnarray*}  
Note at this point that, because $\lambda$ is not necessarily a unit
(unless $\lambda = -\varphi$ or $1+\varphi$), the ideal
$\lambda\ints[\lambda]$ is not equal to the ring $\ints[\lambda]$. 
We address this issue later. 
After changing variables $\lambda x \mapsto x$,
\begin{eqnarray}\label{eqn:0ontheleft}
0\x \Sigma_P & = & \{x \mid  x \in \lambda\ints[\lambda] \wedge x' \in \mu P\}.
\end{eqnarray}  
Note the important fact that
$\mu P = \clcl{0, \mu}$.

Proceeding similarly,
\begin{eqnarray*}
1\x \Sigma_P & = & \{1 \x x \mid  x \in \ints[\lambda] \wedge x' \in P\}\\
             & = & \{1-\lambda + \lambda x \mid  x \in \ints[\lambda] \wedge x' \in P\}\\
             & = & \{1-\lambda + \lambda x \mid  1-\lambda + \lambda x \in 1-\lambda + 
                      \lambda\ints[\lambda] \wedge (1-\lambda + \lambda x)' \in
                                 1-\mu + \mu P\},
\end{eqnarray*}
and hence,
\begin{eqnarray}\label{eqn:1ontheleft}
     1\x \Sigma_P=  \{x \mid   x \in 1-\lambda + \lambda\ints[\lambda] \wedge 
              x' \in 1-\mu + \mu P\}.
\end{eqnarray} 

Observe that 
$1-\mu + \mu P = \clcl{1-\mu, 1}$. Further note that, because $\mu > 1/2$,
\begin{eqnarray*}
  \mu P \cup (1-\mu + \mu P) = \clcl{0, \mu} \cup \clcl{1-\mu, 1} = \clcl{0,1} = P.
\end{eqnarray*} 
It is crucial that the intervals $\mu P$ and $1-\mu + \mu P$ cover $P$.

Next we extrapolate $\Sigma_P$ on the {\it right} with $0$:
\begin{eqnarray*}
 \Sigma_P \x 0 & = & \{ x \x 0 \mid  x \in \ints[\lambda] \wedge x' \in P\}\\
               & = & \{ (1-\lambda) x \mid  x \in \ints[\lambda] \wedge x' \in P\}\\
               & = & \{ (1-\lambda) x \mid  (1-\lambda) x 
                      \in (1-\lambda)\ints[\lambda] \wedge ((1-\lambda) x)' \in (1-\mu) P\}.
\end{eqnarray*}
Because $1-\lambda$ {\it is} a unit,
we have $(1-\lambda)\ints[\lambda] = \ints[\lambda]$. 
Therefore,
\begin{eqnarray}\label{eqn:0ontheright}
 \Sigma_P \x 0 & = & \{ x \mid  x 
                      \in \ints[\lambda] \wedge x' \in (1-\mu) P\}.
\end{eqnarray}

  Observe that $(1-\mu) P = \clcl{0,1-\mu}$ is a \emph{proper} subset of
  $\clcl{0,1/2}$ because of $\mu > 1/2$.

Finally we extrapolate $\Sigma_P$ on the right with $1$:
\begin{eqnarray*}
 \Sigma_P \x 1 & = & \{ x \x 1 \mid  x \in \ints[\lambda] \wedge x' \in P\}\\
               & = & \{ \lambda + (1-\lambda) x \mid  x \in \ints[\lambda] \wedge x' \in P\}\\
               & = & \{\lambda + (1-\lambda) x \mid  \lambda + (1-\lambda) x 
                      \in \lambda + (1-\lambda)\ints[\lambda] \wedge (\lambda + (1-\lambda) x)' \in \mu + (1-\mu) P\}.
\end{eqnarray*}
Using the fact that $\lambda + (1-\lambda)\ints[\lambda] = \ints[\lambda]$,
\begin{eqnarray}\label{eqn:1ontheright}
  \Sigma_P \x 1  =  \{ x \mid  x 
                      \in \ints[\lambda] \wedge x' \in \mu + (1-\mu) P\},
\end{eqnarray}
Observe that $\mu + (1-\mu) P  = \clcl{\mu, 1}$. 
Further note that, because $\mu > 1/2$,
\begin{eqnarray*}
    (1-\mu) P \cup (\mu + (1-\mu) P) = \clcl{0, 1-\mu} \cup \clcl{\mu, 1} = P\cmpl \opop{1-\mu,\mu}.
\end{eqnarray*}
The fact that $(1-\mu) P$ and $\mu + (1-\mu) P$ fail to cover $P$ in general
necessitates extrapolation on the {\it left} as well as the right.

We now address the nature of the ideal $\lambda\ints[\lambda]$. From Lemma~\ref{lem:divides2},
we know that $\lambda \divides 2$. As mentioned previously, {\it $\lambda$ is not a unit},
 unless $\lambda = 1+\varphi$ or $-\varphi$. When $\lambda = 1+\varphi$ or $-\varphi$,
  $\lambda$ is a unit,
 so $\lambda\ints[\lambda] = \ints[\lambda]$, and the problem goes away (more 
 on this below). 
But in the other cases $N(\lambda) \not = 1$: For
$\lambda = 2+\sqrt{2}$, $N(\lambda) = 2$, and for $\lambda = -1-\sqrt{3}$,
$N(\lambda) = -2$. 
Now for any $x = a + b\lambda \in \ints[\lambda]$, we have 
$x = a + b\lambda 
 \equiv a~\pmod{\lambda}$. That is,
$x \equiv k~\pmod{\lambda}$ where $k$ is an integer\footnote{To clarify,
the mod $\lambda$ notation means equivalence up to $\lambda$-multiples of
elements of $\ints[\lambda]$. Thus a more precise (but
also more cumbersome) notation
would be $x \equiv k~\pmod{\lambda\ints[\lambda]}$.}.
However, because $\lambda \divides 2$, we find that $2 \equiv 0~\pmod{\lambda}$.
 Hence for any
$x \in \ints[\lambda]$, we have either $x \equiv 0~\pmod{\lambda}$
or $x \equiv 1~\pmod{\lambda}$.
Hence the ideal $\lambda\ints[\lambda]$ partitions $\ints[\lambda]$ into
two classes, one equivalent to $0$ mod $\lambda$, one equivalent to $1$ mod
$\lambda$, and preserving addition and multiplication
when $N(\lambda)=\pm 2$. Then $\ints[\lambda]/\lambda\ints[\lambda] \cong \ints_2$. 
As this is a field, $\lambda\ints[\lambda]$ is a maximal ideal; it is also
possible to prove the latter fact directly. However, in this proof we really
only need the partition of $\ints[\lambda]$ into the ``$0$" class and
the ``$1$" class.

The fact that $\lambda\ints[\lambda]$ induces this partitioning
(for $\lambda \not = -\varphi$ or $1+\varphi$) leads to the necessity
of extrapolating on the {\it right} as well as the left.

We now turn to the task of establishing Eq.~(\ref{eqn:replicating}). First, because
each is a subset of $\ints[\lambda]$, and the relevant convex sets (e.g.,
$P$, $\mu P$, etc.) are contained in $P$, it is clear that each of
$0\x \Sigma_P$, $1\x \Sigma_P$, $\Sigma_P\x 0$, $\Sigma_P\x 1$ are contained
in $\Sigma_P$. For the reverse containment, consider any $x \in \Sigma_P$. There are
three cases:
\begin{enumerate}[(i)]
  \item $x' \in \clcl{0, 1-\mu}$: By Eq.~(\ref{eqn:0ontheright}), $\Sigma_P\x 0$ contains all
   $ x \in \ints[\lambda]$ such that $ x' \in \clcl{0,1-\mu}$. Thus this implies that
      $x \in \Sigma_P\x 0$. 
  \item $x' \in \clcl{\mu, 1}$: By Eq.~(\ref{eqn:1ontheright}),  $\Sigma_P\x 1$ contains all
   $ x \in \ints[\lambda]$ such that $ x' \in \clcl{\mu,1}$. Thus this implies that
      $x \in \Sigma_P\x 1$.
  \item $x' \in \opop{1-\mu, \mu}$: By the partition of $\ints[\lambda]$,
       we have that either $x \equiv 0~\pmod{\lambda}$ or $x \equiv 1~\pmod{\lambda}$.
       If the former, then $x\in \lambda\ints[\lambda]$, and hence (since
        $x' \in \mu P = \clcl{0,\mu}$), by
        Eq.~(\ref{eqn:0ontheleft}), we have $x\in 0\x \Sigma_P$. If the latter, then
        $x \in 1-\lambda + \lambda\ints[\lambda]$, and hence (since
          $x' \in 1-\mu + \mu P = \clcl{1-\mu, 1}$), by
          Eq.~(\ref{eqn:1ontheleft}),  we have $x \in 1\x\Sigma_P$.
\end{enumerate} 
This establishes Eq.~(\ref{eqn:replicating}). 
If $\lambda = 1+\varphi$, we can take $\mu < 1/2$, and we would
only have cases (i) and (ii) above. That is, for $\lambda = 1+\varphi$,
we find $\Sigma_P = \Sigma_P\x 0 \cup \Sigma_P \x 1$. 


The proof is completed by an induction to show that $\Sigma_P \subseteq R_\lambda$. We first consider $\lambda = 2+\sqrt{2}$
and $-1-\sqrt{3}$. For these values,
$|\lambda|, |1-\lambda| > 2$.
Let $x$ be any element of $\Sigma_P$.  By Equation~(\ref{eqn:replicating}), there exists $y\in\Sigma_P$ such that $x\in\{0\x y, 1\x y, y\x 0, y\x 1\}$.  Suppose for the moment that $|x| > \max(|\lambda|,|1-\lambda|)$.  There are four cases:
\begin{enumerate} 
\item If $x = 0\x y$,
then $x = \lambda y$ and $|\lambda|\cdot|y| = |x|$, hence $|y| = \frac{|x|}{|\lambda|}
< |x|$.
\item If $x = 1\x y$, then $x = 1-\lambda+\lambda y$, so 
$\lambda y = x - (1-\lambda)$, and $|\lambda|\cdot|y| \le |x| + |1-\lambda| < 2|x|$. 
Thus $|y| < \frac{2}{|\lambda|}|x| < |x|$. 
\item If $x = y\x 0$ then $x = (1-\lambda)y$,
and $|y| = \frac{|x|}{|1-\lambda|} < |x|$. 
\item If $x = y\x 1$, then $x = (1-\lambda)y + \lambda$.
This implies $(1-\lambda) y = x - \lambda$, and hence 
$|y| \le \frac{|x| + |\lambda|}{|1-\lambda|} < \frac{2|x|}{|1-\lambda|} < |x|$.
\end{enumerate} 
Thus for any $x\in\Sigma_P$ with $|x| > \max(|\lambda|,|1-\lambda|)$, we obtain a $y\in\Sigma_P$ with norm less than $|x|$ which yields $x$ under extrapolation with $0$ or $1$ on the left or right (and thus $y\in R_\lambda$ implies $x\in R_\lambda$).
Since $\Sigma_P$ is uniformly discrete, we can repeat this process a finite number of times to reduce to a $y\in\Sigma_P$ with $|y| \le \max(|\lambda|,|1-\lambda|)$.
It only remains to verify that the finite number of $y \in \Sigma_P$ satisfying  $|y| \le \max(|\lambda|,|1-\lambda|)$ are in $R_{\lambda}$. 
But these are exactly the points $0, 1$, $1-\lambda$, and $\lambda$
itself, all of which are in $R_{\lambda}$.

For $\lambda = 1+\varphi$, while $|\lambda| = 1+ \varphi > 2$, we have $1 < |1-\lambda|= \varphi < 2$. Thus
the above argument doesn't work.  This case is still simpler, however, since we only need cases (i) and (ii) above. This is Berman and Moody's
argument, which we review here. We consider $x\in\Sigma_P$ with $|x| > |1-\lambda|^3
=\varphi^3$. If $x = y\x 0$, then $x = (1-\lambda)y$, so 
$|y| = \frac{|x|}{|1-\lambda|} < |x|$. If $x  = y\x 1$, then
$x = (1-\lambda)y + \lambda$ and $|y| \le \frac{|x| + |\lambda|}{|1-\lambda|}
= \frac{|x| + 1+\varphi}{\varphi} < \frac{1}{\varphi}\left(1+\frac{1+\varphi}{\varphi^3}\right)|x|$. Now observe that,
\begin{eqnarray*}
 \frac{1}{\varphi}\left(1+\frac{1+\varphi}{\varphi^3}\right) = \frac{\varphi^3+\varphi+1}{\varphi^4}
          = \frac{\varphi^3+\varphi^2}{\varphi^2(\varphi+1)}=
            \frac{\varphi^2(\varphi+1)}{\varphi^2(\varphi+1)} = 1,
\end{eqnarray*}
 so $|y| < |x|$. As before, we reduce to a $y\in\Sigma_P$ with norm $\le \varphi^3 = 1+2\varphi$.
 The relevant points in $\Sigma_P$ are $0, 1, \lambda, 1-\lambda$ and $-1-\lambda
 = \lambda\x 0$,
 all of which are in $R_\lambda$.
\end{proof}


\vskip 0.2in

The foregoing proof requires either $\lambda$ or $1-\lambda$ to be a unit.
This does not hold for $\lambda = \frac{-3-\sqrt{17}}{2}$,
and hence this requires a different technique (although still not relying on
$\beta$-expansions),
developed in Section~\ref{sec:coveringmethod}. It turns out that $R_\lambda = \Sigma_P$ for 
$\lambda = \frac{-3-\sqrt{17}}{2}$ as well
(see Proposition~\ref{prop:sqrt17} in Section~\ref{sec:sqrt17} below). Thus from Lemma~\ref{lem:characterization}, Theorem~\ref{thm:cutandproject}, and Proposition~\ref{prop:sqrt17},
the following is immediate:

\begin{theorem}\label{theorem:main-candp}
If $\lambda$ is a real quadratic sPV number, then 
$R_\lambda = \Sigma_{P}$ if and only if $\lambda$ is one of the
cases (a), (b), (c), or (d) of Lemma~\ref{lem:pre-characterization}.
\end{theorem}

This can be further strengthened. In \cite[Theorem~15]{Pinch}, Pinch shows that
if $\lambda$ is real and not an integer, and $R_\lambda$ is discrete,
then $\lambda$ has a conjugate in $(0,1)$. Hence
if $\lambda$ is real and quadratic and $R_\lambda$ is discrete,
we can conclude that $\lambda$ is sPV. Conversely, we know that
if $\lambda$ is sPV, then $R_\lambda$ is discrete. Thus for
real, quadratic $\lambda$, $R_\lambda$ is discrete iff 
$\lambda$ is sPV. We thus have the following corollary of
Theorem~\ref{theorem:main-candp} (recall that if $\lambda\in\reals$ and $R_\lambda$ is discrete, then $\lambda$ is an algebraic integer by Theorem~\ref{thm:real-case}):

\begin{corollary}
If $\lambda$ is real and quadratic, and $R_\lambda$ is discrete,
then $R_\lambda = \Sigma_{P}$ if and only if $\lambda$ is one of the
cases (a), (b), (c), or (d) of Lemma~\ref{lem:pre-characterization}.
\end{corollary}

As per the discussion just after Definition~\ref{def:unitarypisot}, 
Theorem~\ref{theorem:main-candp} also implies the following immediately:

\begin{corollary}[Mas{\'a}kov{\'a} et al.~\cite{MPP:sconvexjournal}]\label{cor:cutandproject}
If $\lambda$ is unitary Pisot, then 
$R_\lambda = \Sigma_{P}$ iff $\lambda$ is one of the
 cases (a), (b), and (c) of Lemma~\ref{lem:pre-characterization}.
\end{corollary}

\subsection{The Covering Method and its Applications}\label{sec:coveringmethod}

The underlying strategy
of the proof of Theorem~\ref{thm:cutandproject}
is to find a finite cover of the interval $P = [0,1]$
(in that case, simply the intervals $[0,1-\mu]$,
$[1-\mu,\mu]$, $[\mu, 1]$),
such that for any $x \in \Sigma_P$, depending on which element of
the cover $x'$ is contained in, we can write $x$ as $b\xl y$ or
$y \xl b$, for $b \in \{0,1\}$. Since, in all of these four cases,
$|y| < |x|$, an inductive proof then suffices to show that $\Sigma_P
\subseteq R_\lambda$.

In this section we generalize that proof, which yields further relative
density results,
including for case (d) of Theorem~\ref{theorem:main-candp}, $\lambda = \frac{-3-\sqrt{17}}{2}$. That case
is covered in 
Proposition~\ref{prop:sqrt17} below.
 
In particular, the technique allows us to use a fixed set of ``seed" points other
than $\{0,1\}$. Indeed, it turns out that when $\Sigma_P$ has a unit,
Theorem~\ref{thm:add-points} below shows that 
it is always possible to adjoin finitely many points $Y$ in $\Sigma_P$ 
to $R_\lambda$ such that it equals $\Sigma_P$, i.e., for some $Y \subseteq
\Sigma_P$, 
$\Sigma_P = R_\lambda(Y)$. Thus, if we can explicitly find such
a set $Y$ that is a subset of $R_\lambda$, then $R_\lambda$ itself
is cut and project: $\Sigma_P = R_\lambda$.
However, the result yields more than relative
density results. As we will see later on, even when
$Y \not \subseteq R_\lambda$, the results of this section enable us to prove
that certain elements of $\Sigma_P$ are {\it not} in $R_\lambda$, again in
certain cases. 

The proof of Theorem~\ref{thm:add-points} extends that of Theorem~\ref{thm:cutandproject} in two ways. 
First, the size of the cover of $P$ is larger, but
still finite (depending on the
size of the fundamental unit in $\Sigma_P$).  Secondly, the proof
that $\Sigma_P \subseteq R_\lambda$ proceeds by inducting on $|z|$
as before, but now a larger finite set $Y$ of seed points plays the role that
$\{0,1\}$ played in the proof of
Theorem~\ref{thm:cutandproject}.
In one respect, the induction is simpler, since we find $z$ can be written
as $x\xl y$ where $x \in Y$ and hence (as shown below) $|z| > |y|$. That is,
$z$ is the extrapolant of one of a finite number of seed points on the \emph{left
only} with a smaller point
on the \emph{right}.

\subsubsection{Quadratics and Non-Real Cubics}

We begin by considering quadratic sPV numbers and non-real cubic (degree~$3$) sPV numbers.  In the latter case---as with the real quadratic case---$\lambda$ has a unique conjugate $\mu\in\opop{0,1}$.  It also has $\lambda^*$ as its other conjugate.  In this section, we again let $x \mapsto x'$ be the unique field isomorphism\footnote{If $\lambda\notin\reals$, then $\rats(\lambda) \ne \rats(\mu)$, and thus this map is not an automorphism as in the real quadratic case.} $\rats(\lambda)\rightarrow\rats(\mu)$ that maps $\lambda$ to $\mu$, and we define $P := \clcl{0,1}$ and $\Sigma_P = \Sigma_{\clcl{0,1}} := \{x\in\ints[\lambda] \mid x'\in\clcl{0,1}\}$ as before.  So if $\lambda\notin\reals$, we have
\begin{equation}\label{eqn:cubic-Sigma-P}
\Sigma_P = \{a+b\lambda+c\lambda^2 \mid a,b,c\in\ints \myand 0 \le a+b\mu+c\mu^2 \le 1\}\;.
\end{equation}
Since $\Sigma_P$ is a cut-and-project (model) set, it is relatively dense in $\complexes\cong\reals^2$.

%
%

The next theorem shows that, for any sPV $\lambda$ with a unique conjugate in $\opop{0,1}$, if there exists an $\alpha\in\Sigma_P$ that is a unit of $\ints[\lambda]$ other than $1$, then there exists a finite set $Y\subseteq \Sigma_P$ such that $R_\alpha(Y) = \Sigma_P$.  If, in addition, $\alpha\in R_\lambda$, then $R_\lambda(Y) = \Sigma_P$.  The theorem also gives sufficient conditions on $Y$ such that $R_\alpha(Y) = \Sigma_P$.  These conditions are sometimes not completely necessary, however, as one can often get by with a smaller ``seed'' set $Y$.  This will be true, for example, in the case where $\lambda = -(3+\sqrt{13})/2$ (Proposition~\ref{prop:root-13}, below).  This theorem is our main tool for proving relative density of $R_\lambda$ for such $\lambda$.

\begin{theorem}\label{thm:add-points}
Let $\lambda$ be a strong PV number with a unique conjugate in $\opop{0,1}$ (which implies $\lambda$ is either real quadratic or non-real cubic).  Let $P := \clcl{0,1}$ and $\Sigma_P := \{x\in\ints[\lambda] \mid x'\in\clcl{0,1}\}$, where the map $x\mapsto x'$ is as defined above.  Suppose $\Sigma_P$ contains a unit $\alpha$ of $\ints[\lambda]$ other than $1$.  Let $\beta := \alpha' \in P$.  Then $|\alpha|>1$, \ $0<\beta<1$, and there exists a finite set $X\subseteq\Sigma_P$ such that
\begin{equation}\label{eqn:X-condition}
\clcl{0,1} \subseteq \bigcup_{x\in X} \clcl{(1-\beta)x',(1-\beta)x' + \beta}\;.
\end{equation}
Moreover, for any such $X$, letting
\begin{equation}\label{eqn:seed-upper-bound}
M := \left(\frac{|\alpha-1|}{|\alpha|-1}\right)\max_{x\in X} |x|\;,
\end{equation}
the set
\begin{equation}\label{eqn:seed-set}
Y := \Sigma_P \intersect \clcl{-M,M}
\end{equation}
is finite, and $R_\alpha(Y) = \Sigma_P$.  If, in addition, $\alpha\in R_\lambda$, then $R_\lambda(Y) = \Sigma_P$.
\end{theorem}

\begin{proof}
Observe that $\beta\in\opop{0,1}$, since $\alpha\in\Sigma_P$ and $\alpha \notin \{0,1\}$.  This in turn implies $\alpha\notin\ints$, and thus, since $\lambda$ has prime degree and $\alpha\in\ints[\lambda]$, it must be that $\alpha$ has the same degree as $\lambda$, and by standard facts of algebra, the conjugates of $\alpha$ are $\alpha$, $\beta$, and $\alpha^*$ (see, e.g., Corollary~\ref{cor:Q-conjugates}).  If $\lambda$ is real quadratic, then $\alpha\in\reals$ and so $\alpha = \alpha^*$ making $\alpha$ real quadratic.  If $\lambda$ is non-real qubic, then $\alpha$ is also qubic, whence $\alpha \ne \alpha^*$ and so $\alpha\notin\reals$.  Since $\alpha$ is a unit, we have $N(\alpha) = \pm 1$.  If $\alpha\in\reals$, then $N(\alpha) = \alpha\beta$, and $N(\alpha) = \alpha\alpha^*\beta$ otherwise.  Thus $\beta = 1/|\alpha|$ if $\alpha\in\reals$ and $\beta = 1/|\alpha|^2$ otherwise.  In either case, we have $|\alpha| > 1$.

Also observe that $\Sigma_P$ is $\alpha$-convex: for any $u,v\in\Sigma_P$, \ $u\xa v\in\ints[\lambda]$ and $(u\xa v)' = u'\xb v'\in P$ (because $u',v',\beta\in P$).  Then since $Y\subseteq\Sigma_P$, we have $R_\alpha(Y)\subseteq\Sigma_P$.  It therefore suffices for the first part to show that $\Sigma_P\subseteq R_\alpha(Y)$.

There exists a finite $X$ satisfying Equation~(\ref{eqn:X-condition}), because $\Sigma_P'$ is dense in $\clcl{0,1}$.\footnote{See the discussion following this proof for bounds on how big $X$ needs to be in the case where $\lambda$ is real quadratic.}  (Notice, however, that $X$ must contain both $0$ and $1$.)
From the fact that $|\beta| < 1$ and either $|\alpha||\beta| = 1$ or $|\alpha|^2|\beta| = 1$, we must have $|\alpha|>1$.
Then $M$ is well-defined, and in addition, $X\subseteq Y$ because $|\alpha-1|/(|\alpha|-1) \ge 1$.

We show for any $z\in\Sigma_P$ that $z$ is in $R_\alpha(Y)$.  We do this by induction on $|z|$, which is allowed, because $\Sigma_P$ is uniformly discrete.  If $|z| \le M$, then $z\in Y$ and we are done, so suppose $|z|>M$.  Choose $x\in X$ such that $z' \in \clcl{(1-\beta)x',(1-\beta)x' + \beta}$, and let $y := x \xainv z$.  One can quickly check that $x\xa y = z$.  Moreover, $y\in\ints[\lambda]$, since $\alpha$ is a unit in $\ints[\lambda]$.  We next show that $|y| < |z|$.  The condition $|y| < |z|$ is equivalent to
\[ \left|\left(1-\frac{1}{\alpha}\right)x + \frac{1}{\alpha}z\right| < |z|\;, \]
This holds provided $|(1-1/\alpha)x| + |z/\alpha| < |z|$, or equivalently, $|\alpha-1||x| + |z| < |\alpha||z|$.  Since $x\in X$, we have $|x| \le M(|\alpha|-1)/|\alpha-1|$, and so the inequality is satisfied.

It remains to show that $0 \le y' \le 1$, thus putting $y$ in $\Sigma_P$.  This suffices: applying the inductive hypothesis to $y$ to get $y\in R_\alpha(Y)$, we get $z = x\xa y \in R_\alpha(Y)$ as desired.  We have
\[ y' = (x\xainv z)' = x' \xbinv z' = \frac{1}{\beta}\left((\beta-1)x'+z'\right)\;. \]
Since $z' \in \clcl{(1-\beta)x',(1-\beta)x' + \beta}$, we then have
\[ \frac{1}{\beta}\left((\beta-1)x' + (1-\beta)x'\right) \le y' \le \frac{1}{\beta}\left((\beta-1)x'+(1-\beta)x' + \beta\right)\;; \]
that is, $0\le y' \le 1$.  Thus by induction, we get $\Sigma_P\subseteq R_\alpha(Y)$, and hence, $\Sigma_P = R_\alpha(Y)$.

If $\alpha\in R_\lambda$, then by Lemma~\ref{lem:extend} and the fact that $\Sigma_P$ is $\lambda$-convex,
\[ \Sigma_P = R_\alpha(Y) \subseteq R_\lambda(Y) \subseteq \Sigma_P\;, \]
and thus all sets above are equal.
\end{proof}


\begin{remark}
We have proved something stronger.  Define $Y^{(0)}, Y^{(1)}, Y^{(2)}, \ldots$ inductively as follows: $Y^{(0)} := Y$ and $Y^{(n+1)} := X \xa Y^{(n)}$ for $n\ge 0$.  Then the proof shows that $\Sigma_P = \bigcup_{n=0}^\infty Y^{(n)}$.  This was shown for the case where $\lambda = \alpha = -\p$ by Berman \& Moody~\cite{BeMo}.
\end{remark}

If $\lambda$ 
is real quadratic, we can get sufficient bounds on the size of $X$ based on results in the theory of uniformly distributed distributions (see, for example, Allouche \& Shallit~\cite{AS:auto-sequences}).  The following concepts and most of the following facts are from \cite[Chapter~2]{AS:auto-sequences}.  Let $[a_0,a_1,a_2,\ldots]$ be the continued fraction expansion of $\mu := \lambda'$ (noting that $a_0=0$).\footnote{Generally speaking, $a_0 = \floor{\mu}$ and $\mu = a_0 + 1/[a_1,a_2,\ldots]$, etc.}  For $i\ge 0$, define $A_i := \mat{a_i&1\\1&0}$, and for $k\ge -1$ define
\begin{equation}\label{eqn:rational-approximants}
\mat{p_k\\q_k} := A_0A_1\cdots A_k\mat{1\\0}\;.
\end{equation}
The $q_k$ satisfy the recurrence $q_{k+1} = a_{k+1}q_k + q_{k-1}$ and form a strictly increasing sequence that is bounded below by the Fibonacci sequence; also, $\lim_{k\rightarrow\infty} p_k/q_k = \mu$.  Letting
\begin{equation}\label{eqn:eta-k}
\eta_k := (-1)^k(q_k\mu - p_k)\;,
\end{equation}
one can show that $\eta_k > 0$ for all $k\ge -1$ and that $\eta_{k+1} < \eta_k$ for all $k\ge 1$.  In fact, $1/q_{k+2} < \eta_k < 1/q_{k+1}$ for all $k\ge 1$.

Then we have
\[ \Sigma_P' = \{1\}\union\{\ceiling{b\mu}-b\mu \mid b\in\ints\} = \{1\}\union\{\{a\mu\} \mid a\in\ints\}\;, \]
where $\{a\mu\} := a\mu - \floor{a\mu}$ is the fractional part of $a\mu$.  
(To see this, set $a := -b$.)  For integer $n\ge 0$, let
\[ X_n := \{1\} \union \{ \ceiling{b\mu}-b\lambda \mid b\in\ints \myand -n\le b\le 0\} \subseteq \Sigma_P\;. \]
Then $X_n' = \{1\}\union \{\{a\mu\} \mid a\in\ints \myand 0\le a\le n\}$.  Much is known about sets of this form for $n\ge 0$.  By the three-distance theorem (see~\cite[Section~2.6]{AS:auto-sequences}), there are at most three possible distances between adjacent points of $X_n'$---the smallest distance is $\eta_k$, where $k\ge -1$ is largest such that $q_k \le n$~\cite[Theorem~2.6.2]{AS:auto-sequences}; the largest distance $d_n$ satisfies the bound
\begin{equation}\label{eqn:distance-bound}
d_n \le \eta_k + \eta_{k-1}\;,
\end{equation}
where $k\ge 0$ is largest such that $q_k \le n+1$~\cite[Theorem~2.6.3]{AS:auto-sequences}.

We now consider choosing $X$ to be $X_n$ for $n$ sufficiently large so that Equation~(\ref{eqn:X-condition}) is satisfied.  The intervals on the right cover $\clcl{0,1}$ just when there are no gaps between adjacent intervals.  That is, for any $x,y\in X_n$ such that $x' < y'$ are adjacent in $X_n'$, is suffices that $(1-\beta)x' + \beta \ge (1-\beta)y'$, or equivalently, $y' - x' \le \beta/(1-\beta)$.  Thus it suffices to choose $n$ so that $d_n \le \beta/(1-\beta)$, and by Equation~(\ref{eqn:distance-bound}) this will be true if
\begin{equation}\label{eqn:X-bound}
\eta_k+\eta_{k-1} \le \frac{\beta}{1-\beta}\;,
\end{equation}
where $k\ge 0$ is largest such that $q_k \le n+1$.  Thus we first choose the least $k$ satisfying (\ref{eqn:X-bound}), then we set $n := q_k - 1$.

Letting $X$ be $X_n$, although sufficient, is not an optimal choice, because it does not minimize $\max_{x\in X}|x|$ and thus the value of $M$ in Equation~(\ref{eqn:seed-upper-bound}).  To minimize $M$, notice that any set of the form $W_{m,n} := \{1\} \union \{ \ceiling{b\mu}-b\lambda \mid b\in\ints \myand m\le b\le m+n\}$ for $m\in\ints\intersect\clcl{-n,0}$ works just as well as $X_n$ in covering $\clcl{0,1}$.  This is because $W_{m,n}$ is the image of $X_n$ under the cyclic shift permutation $\map{c}{\clcl{0,1}}{\clcl{0,1}}$ defined for $x\in\clcl{0,1}$ by
\[ c(x) = \left\{ \begin{array}{ll}
1 & \mbox{if $x = 1$,} \\
(x + \{m\mu\}) \bmod 1 & \mbox{if $x < 1$.}
\end{array}\right. \]
which preserves the set of distances between adjacent points in $X_n$ versus those in $W_{m,n}$.  This means that, given our choice of $n$ above, we can set $X := W_{m,n}$ where $m$ minimizes $\max_{x\in X}|x| = \max_{x\in W_{m,n}}|x|$.  If $\lambda > 0$, then one can verify that $m = \ceiling{-n/2}$, and likewise if $\lambda < 0$, then $m = \floor{-n/2}$.  In the former case,
\begin{equation}\label{eqn:max-X-pos-lambda}
\max_{x\in X} |x| = \left\{\begin{array}{ll}
\lambda n/2 - \floor{\mu n/2} & \mbox{if $n$ is even and $n>0$,} \\
\lambda(n+1)/2 - \ceiling{\mu(n+1)/2} & \mbox{if $n$ is odd.}
\end{array}\right.
\end{equation}
In the latter case,
\begin{equation}\label{eqn:max-X-neg-lambda}
\max_{x\in X} |x| = \left\{\begin{array}{ll}
\ceiling{\mu n/2} - \lambda n/2 & \mbox{if $n$ is even and $n>0$,} \\
\floor{\mu(n+1)/2} - \lambda(n+1)/2 & \mbox{if $n$ is odd.}
\end{array}\right.
\end{equation}
(Verifying the above is made easier by noting that $\lambda \notin \clcl{-1,2}$.)  In either case, if $n=0$, then $W_{m,n} = \{0,1\}$, so $\max_{x\in X} |x| = 1$.

We summarize the foregoing in the following proposition.

\begin{proposition}\label{prop:sufficient-X}
In Theorem~\ref{thm:add-points} where $\lambda$ is real quadratic, it suffices to let
\[ X := \{1\} \union \{ \ceiling{b\mu}-b\lambda \mid b\in\ints\myand m \le b \le m+n\}\;, \]
where $n = q_k-1$ for the least $k$ satisfying (\ref{eqn:X-bound}) for $\eta_k$ defined by (\ref{eqn:eta-k}) and $q_k$ defined by (\ref{eqn:rational-approximants}) based on the continued fraction expansion of $\lambda'$, and $m = \ceiling{-n/2}$ if $\lambda > 0$ and $m = \floor{-n/2}$ otherwise.  For this $X$, the value $\max_{x\in X}|x|$ is given by Equation~(\ref{eqn:max-X-pos-lambda}) or (\ref{eqn:max-X-neg-lambda}), provided $n>0$.
\end{proposition}

\subsubsection{Higher Degrees}

Theorem~\ref{thm:add-points} is specific to quadratic sPV's or
non-real sPV's of degree 3. In this section we turn to real sPV's of arbitrary
degree, real or complex.

For any subset $R$ of a topological space, we let $\intr{R}$ denote the interior of $R$.

Fix a strong PV number $\lambda$ with $k$ many conjugates $0 < \mu_0 < \mu_1 < \cdots < \mu_{k-1} < 1$ in $\opop{0,1}$.  Here we assume that $\lambda$ is nontrivial, so that $k\ge 1$.  (We handle the case of trivial sPV $\lambda$ in Proposition~\ref{prop:added-points-trivial-sPV}, below).  For any $z\in\reals^k$ and $0\le i<k$, we let $z_i$ be the $\ordst{(i+1)}$ component of $z$.  Particularly, for any $x\in\ints[\lambda]$ and $0\le i<k$, we let $x'_i$ denote the conjugate of $x$ that is the image of $x$ under the unique ring homomorphism $\ints[\lambda]\rightarrow\ints[\mu_i]$ mapping $\lambda$ to $\mu_i$ (cf.\ Definition~\ref{def:Sigma_P}), and so we let $x' := (x'_0,x'_1,\ldots,x'_{k-1})$, viewed as a column vector.  We also let $\Lambda_x := \diag(x'_0,x'_1,\ldots,x'_{k-1})$, the diagonal matrix with the $x'_i$'s along the diagonal.  Set $P := \clcl{0,1}^{\times k}$ and for any $S\subseteq \reals^k$ let $\Sigma_S$ denote $\Sigma_S^{(\lambda)}$, recalling that this is $\{x\in\ints[\lambda] \mid x'\in S\}$.

The following generalizes Theorem~\ref{thm:add-points},
but with any sub-box $S$ of $P$ that can be arbitrarily close to, but never equals, $P$.

\begin{theorem}\label{thm:added-points}
Suppose $\Sigma_P$ contains an $\alpha \ne 1$ that is a unit of $\ints[\lambda]$.  Fix any $0<\eps<1/2$, and define $S := \clcl{\eps,1-\eps}^{\times k}$.  There exists a finite set $Y\subseteq \Sigma_P$ such that $\Sigma_S \subseteq Q_\alpha(Y)$.  If, in addition, $\alpha\in Q_\lambda$, then $\Sigma_S \subseteq Q_\lambda(Y)$.
\end{theorem}

\begin{proof}
This proof follows the general approach of that of Theorem~\ref{thm:add-points}.  Since $\alpha\in\Sigma_P$, we have $0<\alpha'_i<1$ for all $0\le i<k$.  Since $\alpha$ is a unit, $1/\alpha\in\ints[\lambda]$, and $(1/\alpha)' = (1/\alpha'_0,1/\alpha'_1,\ldots,1/\alpha'_k)$.  Set $\Lambda := \Lambda_{\alpha} = \diag(\alpha'_0,\ldots,\alpha'_{k-1})$.  For any $x\in\reals^k$, define
\[ J_x := x\xL S = \{z\in\reals^k \mid (\exists y\in S)\;z = x\xL y\} = \{z\in\reals^k \mid x\xs{\Lambda^{-1}} z \in S\}\;. \]
For the last equality, one can readily check that for all $x,y,z\in\reals^k$, if $x\xL y = z$ then $y = x \xs{\Lambda^{-1}} z$ and conversely.

\begin{claim}\label{claim:intervals}
For all $x\in\reals^k$, \ $J_x = \clcl{x_0\xs{\alpha'_0}\eps,\;x_0\xs{\alpha'_0}(1-\eps)} \times \cdots \times \clcl{x_{k-1}\xs{\alpha'_{k-1}}\eps,\;x_{k-1}\xs{\alpha'_{k-1}}(1-\eps)}$.
\end{claim}

\begin{proof}
For $z\in\reals^k$, set $y := x\xs{\Lambda^{-1}} z$, so that $z = x\xL y$.  Then we have $z\in J_x$ if and only if $x\xL y \in J_x$, if and only if $y\in S$ (since $\xL$ is one-to-one in its second argument), if and only if $\eps \le y_i \le 1-\eps$ for all $0\le i<k$, if and only if $x_i\xs{\alpha'_i} \eps \le x_i\xs{\alpha'_i} y_i \le x_i\xs{\alpha'_i} (1-\eps)$ for all $i$ (since $\xs{\alpha'_i}$ is monotone increasing in its second argument), if and only if $x_i\xs{\alpha'_i} \eps \le z_i \le x_i\xs{\alpha'_i} (1-\eps)$ for all $i$, if and only if $z$ belongs to the right-hand side.
\end{proof}

Note then that $\intr{J_x} = \opop{x_0\xs{\alpha'_0}\eps,\;x_0\xs{\alpha'_0}(1-\eps)} \times \cdots \times \opop{x_{k-1}\xs{\alpha'_{k-1}}\eps,\;x_{k-1}\xs{\alpha'_{k-1}}(1-\eps)}$.

For $z\in\reals^k$, define
\[ J^{-1}_z := \{ x\in\reals^k \mid z\in \intr{J_x} \}\;. \]
For any $x\in\reals^k$,
we have $x\in J^{-1}_z$ if and only if $z\in \intr{J_x}$, if and only if $x_i\xs{\alpha'_i}\eps < z_i < x_i\xs{\alpha'_i}(1-\eps)$ for all $i$, by the claim.  That is, $x\in J^{-1}_z$ if and only if, for all $0\le i<k$,
\[ (1-\alpha'_i)x_i + \alpha'_i\eps < z_i < (1-\alpha'_i)x_i + \alpha'_i(1-\eps)\;. \]
Solving for $x_i$, this pair of inequalities is seen to be equivalent to
\[ \frac{z_i - \alpha'_i(1-\eps)}{1-\alpha'_i} < x_i < \frac{z_i - \alpha'_i\eps}{1-\alpha'_i} \]
or alternatively,
\[ (1-\eps)\xs{1/(1-\alpha'_i)} z_i < x_i < \eps\xs{1/(1-\alpha'_i)} z_i\;, \]
Thus we obtain
\[ J^{-1}_z = \opop{(1-\eps)\xs{1/(1-\alpha'_0)} z_0,\;\eps\xs{1/(1-\alpha'_0)} z_0}\times \cdots \times\opop{(1-\eps)\xs{1/(1-\alpha'_{k-1})} z_{k-1},\;\eps\xs{1/(1-\alpha'_{k-1})} z_{k-1}}\;. \]

\begin{remark}
Letting $\one = (1,1,\ldots,1)\in\reals^k$, we see that $J^{-1}_z$ is an open box with opposite corners $(1-\eps)\one \xs{(I-\Lambda)^{-1}} z$ and $\eps\one \xs{(I-\Lambda)^{-1}} z$.
\end{remark}

\begin{claim}\label{claim:nonempty}
$J^{-1}_z \intersect \intr{P} \ne \emptyset$ for all $z\in S$.
\end{claim}

\begin{proof}
It suffices to show for every $0\le i<k$ that if $\eps \le z_i \le 1-\eps$, then
\[ (1-\eps)\xs{1/(1-\alpha'_i)} z_i < 1\;\; \mbox{ and }\;\; \eps\xs{1/(1-\alpha'_i)} z_i > 0\;. \]
Fixing $i$ and letting $\beta := \alpha'_i$, we get the following chain of equivalences for the first inequality above:
\begin{align*}
\left(1-\frac{1}{1-\beta}\right)(1-\eps) + \frac{1}{1-\beta} z_i &< 1 \\
z_i - \beta(1-\eps) &< 1-\beta \\
z_i &< 1-\beta\eps\;,
\end{align*}
and the last inequality is certainly true if $z_i \le 1-\eps$ (because $\beta < 1$).  Similarly, for the second inequality:
\begin{align*}
\left(1-\frac{1}{1-\beta}\right)\eps + \frac{1}{1-\beta} z_i &> 0 \\
z_i - \beta\eps &> 0 \\
z_i &> \beta\eps\;,
\end{align*}
which is again true if $z_i\ge\eps$.
\end{proof}

We know that $\Sigma_P'$ is dense in $P$ (and in fact, $Q_\alpha' = Q_\Lambda(\{0,\one\})$ is dense in $P$ by Lemma~\ref{lem:cube}).  For every $z\in S$, choose an element $x_z$ of $J^{-1}_z \intersect \Sigma_P'$ in some standard way.  Such an element exists by density and because $J^{-1}_z \intersect \intr{P}$ is open and nonempty by Claim~\ref{claim:nonempty}.  Then $z\in \intr{J_{x_z}}$ for all $z\in S$, meaning that the family $\{\intr{J_{x_z}} \mid z\in S\}$ is an open cover of $S$.  Since $S$ is compact, we may choose a finite subcover, indexed by a finite set $X\subseteq\Sigma_P$ so that $\{\intr{J_{x'}} \mid x\in X\}$ covers $S$.

\begin{remark}
It is only necessary that $S$ be covered by the $J_{x'}$ themselves, not their interiors.  The interiors were only used to apply the compactness argument.  Thus one may be able to find a smaller set $X$ such that the $J_{x'}$ cover $S$.
\end{remark}

The rest of the proof is very close to that of Theorem~\ref{thm:add-points}, with only some subtle differences.  An argument similar to the one used in the proof of that theorem shows that $|\alpha| > 1$.  Let
\[ Y := X \union\{y\in\Sigma_S : |y| \le M\}\;, \]
where
\[ M := \left(\frac{|\alpha-1|}{|\alpha|-1}\right)\max_{x\in X} |x|\;. \]
$Y$ is certainly finite, because $\Sigma_P$ is uniformly discrete. 
We now apply Lemma~\ref{lem:z-induction}, given below, to obtain the first
part of the theorem.

If $\alpha\in Q_\lambda$, then $Q_\alpha(Y) \subseteq Q_\lambda(Y)$ by Lemma~\ref{lem:extend}, and the rest of the theorem follows.
\end{proof}

\begin{remark}
There is a possible trade-off between the choice of $X$ versus $Y$.  It is not necessary that $X\subseteq Y$, only that $X\subseteq Q_\alpha(Y)$.  So, for example, if we choose $X$ so that $X\subseteq Q_\alpha$ (which is always possible by Lemma~\ref{lem:cube}), then we can just let $Y := \{y\in\Sigma_S : |y| \le M\}$ (albeit with a possibly bigger value of $M$).
\end{remark}

\begin{remark}
Theorems~\ref{thm:cutandproject} and \ref{thm:add-points} use essentially
the same technique to induct on $|z|$. We abstract that process in the following Lemma, which we applied in Theorem~\ref{thm:added-points},
and will also be used in subsequent case studies.
\end{remark}

\begin{lemma}\label{lem:z-induction}
Let $\lambda$, $P$, $S$, $\alpha$, and $k$ be as in Theorem~\ref{thm:added-points}. 
Suppose there exists a finite set $X \subseteq \ints[\lambda]$ such that, setting $\Lambda := \diag(\alpha'_0,\ldots,\alpha'_{k-1})$ and
$J_{x'} := x'\xL S$ for each $x \in X$, it holds that
$S \subseteq \bigcup\limits_{x \in X} J_{x'}$. Let $Y = X \cup \{y \in \Sigma_S : |y| \le M\}$,
where $M = \frac{|\alpha-1|}{|\alpha|-1} \max_{x\in X} |x|$. 
Then $\Sigma_S \subseteq Q_\alpha(Y)$.
\end{lemma}
\begin{proof}
 We show for any $z\in\Sigma_S$ that $z$ is in $Q_\alpha(Y)$.  We do this by induction on $|z|$ as in Theorem~\ref{thm:add-points}. The only difference
 here is that, due to the arbitrary degree of $\lambda$, there are multiple conjugates.  If $|z| \le M$, then $z\in Y \subseteq
 Q_\alpha(Y)$ and we are done, so suppose
 $z \in \Sigma_S$ with $|z|>M$. By definition of $\Sigma_S$, 
 $z' \in S$. Since $\{J_{x'} | x \in X\}$ covers $S$,
for some $x\in X$ we have $z' \in J_{x'}$. Let $y := x \xainv z$, so that $x\xa y = z$.  As before, since $\alpha$ is a unit in $\ints[\lambda]$, we have $y\in\ints[\lambda]$.  We also have that $|y| < |z|$ by an argument identical to the one for Theorem~\ref{thm:add-points}, repeated here in more detail:
\begin{eqnarray*}
   |y| & = & \left| \left(1-\frac{1}{\alpha}\right) x + \frac{1}{\alpha}z\right|\\
       & \le & \left|1-\frac{1}{\alpha}\right| \cdot |x|
                 + \frac{|z|}{|\alpha|}\\
       & = & \frac{1}{|\alpha|}\left(|\alpha-1|\cdot |x| + |z|\right)\\
       & \le & \frac{1}{|\alpha|}\left(|\alpha-1|\cdot\frac{|\alpha|-1}{|\alpha-1|}M + |z|\right)\\
       & = & \frac{1}{|\alpha|}((|\alpha|-1)M + |z|)\\
       & < & \frac{1}{|\alpha|}((|\alpha| - 1)|z| + |z|)\\
       & = & |z|.
\end{eqnarray*}

It remains to show that $y' \in S$, thus putting $y$ in $\Sigma_S$.  This suffices: applying the inductive hypothesis to $y$ to get $y\in Q_\alpha(Y)$,
and given that $x \in Y \subseteq Q_\alpha(Y)$, we get $z = x\xa y \in Q_\alpha(Y)$ as desired.  We have
\[ y' = (x\xainv z)' = x' \xs{\Lambda^{-1}} z'\;. \]
Since $z' \in J_{x'}$, we then have $y' \in x' \xs{\Lambda^{-1}} J_{x'}
= x'\xs{\Lambda^{-1}} (x'\xs{\Lambda} S) = S$ by the definition of $J_{x'}$
and a simple calculation.
We conclude by induction that $\Sigma_S \subseteq Q_\alpha(Y)$.
\end{proof}

In the remainder of this subsection, we apply Theorem~\ref{thm:add-points} to three values of $\lambda$.  We first apply it to the two non-real values of $\lambda$ depicted in Figure~\ref{fig:case2}, showing that these two sets are both model sets and hence relatively dense Meyer sets.  Secondly, we apply the theorem to the one real quadratic case left unresolved by Theorem~\ref{theorem:main-candp}, showing that $Q_\lambda = \Sigma_\clcl{0,1}^{(\lambda)}$ for $\lambda := (-3-\sqrt{17})/2$.  This last case also makes use of Proposition~\ref{prop:sufficient-X}.  Finally, we apply Theorem~\ref{thm:added-points} to one \emph{real} $\lambda$ of degree~$3$. This is more complicated than the non-real complex $\lambda$'s, since there are two conjugates in $\opop{0,1}$.

\subsubsection{Case Study: Two Non-Real Values of $\lambda$}

We first consider the two non-real values of $\lambda$ given in Figure~\ref{fig:case2}.

\begin{proposition}\label{prop:cubicomplexcandp1}
Let $\lambda$ be the root of the polynomial $x^3+x^2-1$ closest to the point $-0.877+0.745i$ (see Figure~\ref{fig:case2}).  Then $R_\lambda = \Sigma_P$.
\end{proposition}

\begin{proof}
We have that $\lambda$ is cubic sPV with conjugate $\mu \approx 0.754878$.  (The exact form of $\mu$ is not important.)  As in other cases, $\lambda$ is itself a unit of $\ints[\lambda]$, so we set $\alpha := \lambda$ whence $\beta = \mu$.  Since $\mu > 1/2$, we may take $X := \{0,1\}$.  Plugging into Equation~(\ref{eqn:seed-upper-bound}), we get $M \approx 13.379361$.  Letting $Y$ be as in Equation~(\ref{eqn:seed-set}), we get $R_\lambda(Y) = \Sigma_P$.  Thus it suffices to show that $Y\subseteq R_\lambda$.

We can rewrite Equation~(\ref{eqn:cubic-Sigma-P}) as
\[ \Sigma_P = \{1\} \union \{ \ceiling{b\mu+c\mu^2} - b\lambda - c\lambda^2 \mid b,c\in\ints \}\;. \]
We check for all $b,c\in\ints$ such that $\left|\ceiling{b\mu+c\mu^2} - b\lambda - c\lambda^2\right| \le M$ that the resulting point is in $R_\lambda$.  Assuming this inequality, we have
\begin{align*}
M &\ge \left|\ceiling{b\mu+c\mu^2} - b\lambda - c\lambda^2\right| \ge \left|b\mu + c\mu^2 - b\lambda-c\lambda^2\right| - 1 = \left|b(\mu-\lambda) + c(\mu^2-\lambda^2)\right| - 1 \\
&= |\mu-\lambda|\left|b+c(\mu+\lambda)\right| - 1 = |\mu-\lambda|\sqrt{(b+xc)^2+(yc)^2} - 1\;,
\end{align*}
where $x := \Re(\mu+\lambda) = \mu + \Re(\lambda)$ and $y := \Im(\mu+\lambda) = \Im(\lambda) > 0$.  Set $r := |\mu+\lambda|$.  Letting $b$ and $c$ be arbitrary real numbers for a moment, a little calculus shows that for fixed $b$ the quantity $(b+xc)^2+(yc)^2$ is minimized by setting $c := -xb/r^2$.  Then we have
\[ \sqrt{(b+xc)^2+(yc)^2} \ge \sqrt{\left(b-\frac{x^2b}{r^2}\right)^2 + \frac{y^2x^2b^2}{r^4}} = \frac{|b|}{r^2}\sqrt{y^4 + y^2x^2} = \frac{|b|y}{r}\;. \]
Thus $|\mu-\lambda|\sqrt{(b+xc)^2+(yc)^2} - 1 \ge |\mu-\lambda||b|y/r - 1$, which means it suffices to consider only those $b\in\ints$ such that $|\mu-\lambda||b|y/r - 1 \le M$, or equivalently,
\[ |b| \le \frac{r(M+1)}{y|\mu-\lambda|} \approx 11.41\;. \]
Similarly, fixing $c$, the quantity $(b+xc)^2+(yc)^2$ is minimized by setting $b := -xc$, which gives $\sqrt{(b+xc)^2+(yc)^2} \ge |c|y$, so it suffices to consider only those $c\in\ints$ such that $|\mu-\lambda||c|y - 1 \le M$, or equivalently,
\[ |c| \le \frac{M+1}{y|\mu-\lambda|} \approx 10.76\;. \]
A check by computer verifies that all values of the form $\ceiling{b\mu+c\mu^2} - b\lambda - c\lambda^2$ are in $R_\lambda$ for integers $-11 \le b \le 11$ and $-10 \le c \le 10$.
\end{proof}

\begin{proposition}\label{prop:cubicomplexcandp2}
Let $\lambda$ be the root of the polynomial $x^3+x-1$ closest to the point $-0.341+1.162i$ (see Figure~\ref{fig:case2}).  Then $R_\lambda = \Sigma_P$.
\end{proposition}

\begin{proof}
This proof proceeds just as the previous proof but with different values.  We have that $\lambda$ is cubic sPV with conjugate $\mu \approx 0.682328$.  Again, we can set $\alpha := \lambda$.  Since $\mu > 1/2$, we may again take $X := \{0,1\}$.  This time we get $M \approx 8.424341$, yielding the upper bound of approximately $7.06$ for $|b|$ and $5.24$ for $|c|$.

As before, check by computer verifies that all values of the form $\ceiling{b\mu+c\mu^2} - b\lambda - c\lambda^2$ are in $R_\lambda$ for integers $-7 \le b \le 7$ and $-5 \le c \le 5$.
\end{proof}

\subsubsection{Case Study: $ -(3+\sqrt{17})/2$}\label{sec:sqrt17}

 Here we prove that
  $R_\lambda = \Sigma_\clcl{0,1}^{(\lambda)}$ for $\lambda := (-3-\sqrt{17})/2$.  
We make use of both Theorem~\ref{thm:add-points} and Proposition~\ref{prop:sufficient-X}.

\begin{proposition}\label{prop:sqrt17}
Let $\lambda := -(3+\sqrt{17})/2$.  Then $Q_\lambda = \Sigma_P$, where $P = \clcl{0,1}$.
\end{proposition}

\begin{proof}
The minimal polynomial of $\lambda$ and its conjugate $\mu := (\sqrt{17}-3)/2$ is $x^2+3x-2$.  In this case, $\lambda$ is not a unit of $\ints[\lambda]$, but $Q_\lambda$ contains the value $\alpha := 9-16\lambda \approx 65.9848$, which \emph{is} a unit of $\ints[\lambda]$ (the fundamental unit, actually).  A derivation showing that $\alpha\in Q_\lambda$ was found by computer and is given in Appendix~\ref{sec:computer-derivations}.  The conjugate of $\alpha$ is then $\beta = 1/\alpha = 9-16\mu \approx 0.015155$.

We will apply Proposition~\ref{prop:sufficient-X} in this case.  We start with the continued fraction expansion of $\mu$, which is $[a_0,a_1,a_2,\ldots] = [0,\overline{1,1,3}]$, where the bar indicates that the pattern $1,1,3$ repeats forever.  The table below gives the values needed to calculate $n$ and $m$ in that proposition.  To get $p_k$ and $q_k$ for $k\ge 0$, we can use the recurrences $p_k = a_kp_{k-1} + p_{k-2}$ and $q_k = a_kq_{k-1} + q_{k-2}$.  The values of $p_k$ and $q_k$ for $k<0$ are given in order to obtain the correct initial values when $k=0$ and $k=1$.  Recall that $\eta_k = (-1)^k(q_k\mu-p_k)$.
\[ \begin{array}{c||r|r|r|r|r|r|r|r|r|r|}
k   & -2 & -1 & 0 & 1 & 2 & 3 & 4 & 5 & 6 & 7 \\\hline\hline
a_k &    &    & 0 & 1 & 1 & 3 & 1 & 1 & 3 & 1 \\ \hline
p_k &  0 &  1 & 0 & 1 & 1 & 4 & 5 & 9 &32 &41 \\ \hline
q_k &  1 &  0 & 1 & 1 & 2 & 7 & 9 &16 &57 &73 \\ \hline
\eta_k & \mu & 1 & \mu & 1-\mu & 2\mu-1 & 4-7\mu & 9\mu-5 & 9-16\mu & 57\mu-32 & 41-73\mu \\ \hline
\end{array} \]
From the table, we get that $\eta_7 + \eta_6 = 9-16\mu = \beta < \beta/(1-\beta)$, so $k=7$ satisfies Equation~(\ref{eqn:X-bound}).  Furthermore, this is the least such $k$-value: $0.023665 \approx\eta_6+\eta_5 = 41\mu-23 > \beta/(1-\beta) = (2\mu-1)/8 \approx 0.015388$.  Thus we set $n := q_7-1 = 72$, whence $m = -36$.  Then, by Equation~(\ref{eqn:max-X-neg-lambda}), we have
\[ \max_{x\in X}|x| = \ceiling{36\mu} - 36\lambda = 20 - 36\lambda \approx 148.215901 \]
Thus we can set $X := \{1\} \union \{ \ceiling{b\mu}-b\lambda \mid b\in\ints \myand -36 \le b \le 36\}$, and this satisfies Equation~(\ref{eqn:X-condition}).\footnote{A computer check reveals that $X$ is minimal, i.e., no proper subset of $X$ satisfies (\ref{eqn:X-condition}).}  Since $\alpha > 1$, we have, from Equation~(\ref{eqn:seed-upper-bound}), $M = \max_{x\in X}|x| = 20-36\lambda$ and thus $Y = X$ in (\ref{eqn:seed-set}).

To summarize, we have $\Sigma_P = Q_\lambda(X)$ by Theorem~\ref{thm:add-points}.  Finally, a computer run shows that in fact, $X\subseteq Q_\lambda$, and thus $\Sigma_P = Q_\lambda$.
\end{proof}


\subsubsection{Case Study: $\lambda_7$}


Recall Proposition~\ref{prop:lambda-n-PV}, in which we characterize
those $\lambda_n$ (Eq.~(\ref{eqn:lambda-n})) that are sPV (thus giving rise to uniformly discrete $Q_{\lambda_n}$), for odd $n$.  We know that $Q_{\lambda_3}$
and $Q_{\lambda_5}$ are also relatively dense. In this section
we use Theorem~\ref{thm:added-points} to prove that $Q_{\lambda_7}$ is also relatively dense.

For the remainder of this
 section let $\lambda$ denote $\lambda_7 \approx 5.0489$, whose minimal polynomial is
$x^3-6x^2+5x-1$.
Denote the conjugates of $\lambda$ by $\mu, \nu \in \clcl{0,1}$, where $\mu \approx 0.30798$ and $\nu \approx 0.64310$.  For $x \in \ints[\lambda]$,
where $x$ is of the form $a_0+a_1\lambda+a_2\lambda^2$ for $a_0, a_1, a_2 \in \ints$, we denote its conjugates
by $x' =  a_0+a_1\mu+a_2\mu^2$ and $x'' = a_0+a_1\nu+a_2\nu^2$.
By Fact~\ref{fact:main}, we have $Q_\lambda \subseteq \Sigma_P$, where
$P = \clcl{0,1}^{\times 2}$. Let $S \subseteq P$ denote the
$(2\nu-1) \times (2\nu-1)$ square with corners at
$(1-\nu,1-\nu)$ and $(\nu, \nu)$. It is convenient to introduce 
the parallelogram $T$ 
with corners at $(0,0), (\mu,\nu), (1-\mu,1-\nu), $ and $(1,1)$.
It is easy to see that $S \subseteq T \subseteq P$
and hence $\Sigma_S \subseteq \Sigma_T \subseteq \Sigma_P$. We have,
\begin{eqnarray*}
   \Sigma_T = \{x \in \ints[\lambda] \mid (x',x'') \in T\}.
\end{eqnarray*}
We can characterize the points of $\Sigma_T$ as follows. Let
$\beta = \mu+\nu-\mu\nu$ and define 
\begin{eqnarray*}
    n(m) & := & \lceil m/\lambda \rceil = \lceil \mu\nu m\rceil\\
    \ell(m) & := & \lfloor \beta m \rfloor + n(m)\\
    p(m) & := & n(m) - \ell(m)\lambda + m \lambda^2.
\end{eqnarray*}

\begin{proposition}
 $ \Sigma_T = \{1,\lambda, 1-\lambda\}\cup\{p(m) \mid m \in \ints\}.$
\end{proposition}
\begin{proof}
  It is easiest to see this by transforming $P$ via the matrix $W$ defined
in Fact~\ref{fact:main}. In this instance, 
\begin{align*}
W &= \left[\begin{array}{cc}
1 &\mu  \\
1 &\nu\\
\end{array}\right]\;,&
W^{-1} &=\frac{1}{\nu-\mu}\left[\begin{array}{cc}
\nu &-\mu  \\
-1 &  1\\
\end{array}\right].
\;
\end{align*}
We interpret the column vector $[x',x'']^{T}$ as the point $(x',x'')$. 
Let $\Omega = W^{-1}T$. It is easy to see that $W^{-1}$ maps
$(0,0) \mapsto (0,0)$, $(\mu, \nu) \mapsto (0,1)$, $(1,1) \mapsto (1,0)$,
and $(1-\mu,1-\nu) \mapsto (1,-1)$. Thus $\Omega$ is
a parallelogram with corners at $(0,0), (0,1), (1,0)$, and $(1,-1)$.
More generally, $W^{-1}$ maps $(x',x'') = (a_0+a_1\mu+a_2\mu^2, a_0+a_1\nu+a_2\nu^2)$
to $(a_0-\mu\nu a_2, a_1+(\mu+\nu)a_2)$. Thus $(x',x'') \in T$
iff the following conditions hold:
\begin{eqnarray}\label{eqn:p(m)-inequalities}
\begin{array}{ccccc}
0    &  \le & a_0-\mu\nu a_2    & \le & 1\\
-a_0+\mu\nu a_2 &  \le & a_1+(\mu+\nu)a_2  & \le & -a_0+\mu\nu a_2+1\;
\end{array}\;
\end{eqnarray}
Note that for $a_2 \not = 0$, all inequalities above are proper because
$\mu$ and $\nu$ are irrational.
The first inequality shows that $a_0 = \lceil \mu\nu a_2\rceil$,
and the second $a_1 = -\lceil \mu\nu a_2\rceil + \lceil \mu\nu-(\mu+\nu)a_2\rceil
= -\lceil \mu\nu a_2\rceil - \lfloor \beta a_2\rfloor$. Letting $m$ denote
$a_2$, this yields $a_0 = n(m)$ and $a_1 = -\ell(m)$. Thus the points in
$\Sigma_T$ are of the form $p(m)$, $m \in \ints$. The exceptions
are the nonzero corners of $T$, namely
 $(1,0)$, $(0,1)$ and $(1,-1)$, which correspond to the values $1$, $\lambda$
and $1-\lambda$.
\end{proof}

\begin{remark}
  The apparent ``outlier" nature of the points $1, \lambda$, and $1-\lambda$ is
 an artifact of the definition of $p(m)$, which has a $\lambda^2$ term
 for any $m \not = 0$.
 In that case, the inequalities in Eq.~(\ref{eqn:p(m)-inequalities}) are proper.
  However, for $m = a_2 = 0$, the equalities can be
 met, which accounts for the points  $1, \lambda$, and $1-\lambda$.
The one value of $p(m)$ that works out for
one of the initial points is $p(0)$, all of whose terms are $0$.
\end{remark}

\vskip 0.1in

We find it is possible to choose
 a finite number of points in $\Sigma_T$  to construct the set 
$Y$ of Theorem~\ref{thm:added-points}. We remark that all the points
we use are in $T\cmpl S$. \\

We now prove the main result of this section. Since
$\lambda$ is a unit, by Theorem~\ref{thm:added-points}, there exists
a finite $Y \subseteq \Sigma_P$ such that $\Sigma_S \subseteq Q_\lambda(Y)$.
Our task here is to explicitly construct such a $Y$ such that
$Y \subseteq Q_\lambda$. That, in turn, requires finding an
explicit finite cover of the set $S$, indexed by a finite set $X$.
Once $X$ is found, the problem reduces to the tedious but routine
verification that $Y \subseteq Q_\lambda$, and finally
appealing to Lemma~\ref{lem:z-induction}.

\begin{lemma}\label{lem:Sigma7-containment}
 $\Sigma_S \subseteq Q_\lambda$.
\end{lemma}
\begin{proof}
   We adopt the notations and conventions of the proof of
Theorem~\ref{thm:added-points}. We construct the set $Y$ and show that $Y
\subseteq Q_\lambda$. Since $Q_\lambda(Y) = Q_\lambda$, the result follows easily
by Theorem~\ref{thm:added-points}.

Let $X = \{\lambda, 1-\lambda\}\cup\{p(m)|m=\pm12, \pm13, \pm16, \pm17\}$. 
We claim that $C=\{J_x | x \in X\}$ forms a cover of $S$. 
Following the notation of Theorem~\ref{thm:added-points}, we set
$\varepsilon = 1-\nu$. Hence the sets in $C$ are of the form,
\begin{eqnarray}\label{eqn:cover-sets}
 J_x = \clcl{x'\xs{\mu}(1-\nu),\;x'\xs{\mu}\nu} \times  \clcl{x''\xs{\nu}(1-\nu),\;x''\xs{\nu}\nu}.
\end{eqnarray}
These are rectangles of width $(2\nu - 1)\mu$ and height $(2\nu-1)\nu$ extrapolated from
$(x',x'')$ with $1-\nu$ and $\nu$. In Figure~\ref{fig:cover},
we show the set $S$ in red and the sets $J_x$ in blue (displaying the index $x$,
which is either $\lambda$, $1-\lambda$ or $m$ for the appropriate points
$p(m)$). We slightly abuse notation and denote $J_{p(m)}$ by $J_m$. Using decimal approximations,
we list the sets $J_x$ in Table~\ref{tab:cover}, each one specified by the coordinates
$(x'_0,x''_0)$ of its lower left-hand corner and $(x'_1,x''_1)$ of its upper right-hand
corner.

\begin{figure}
\centering
\includegraphics[width=0.65\textwidth]{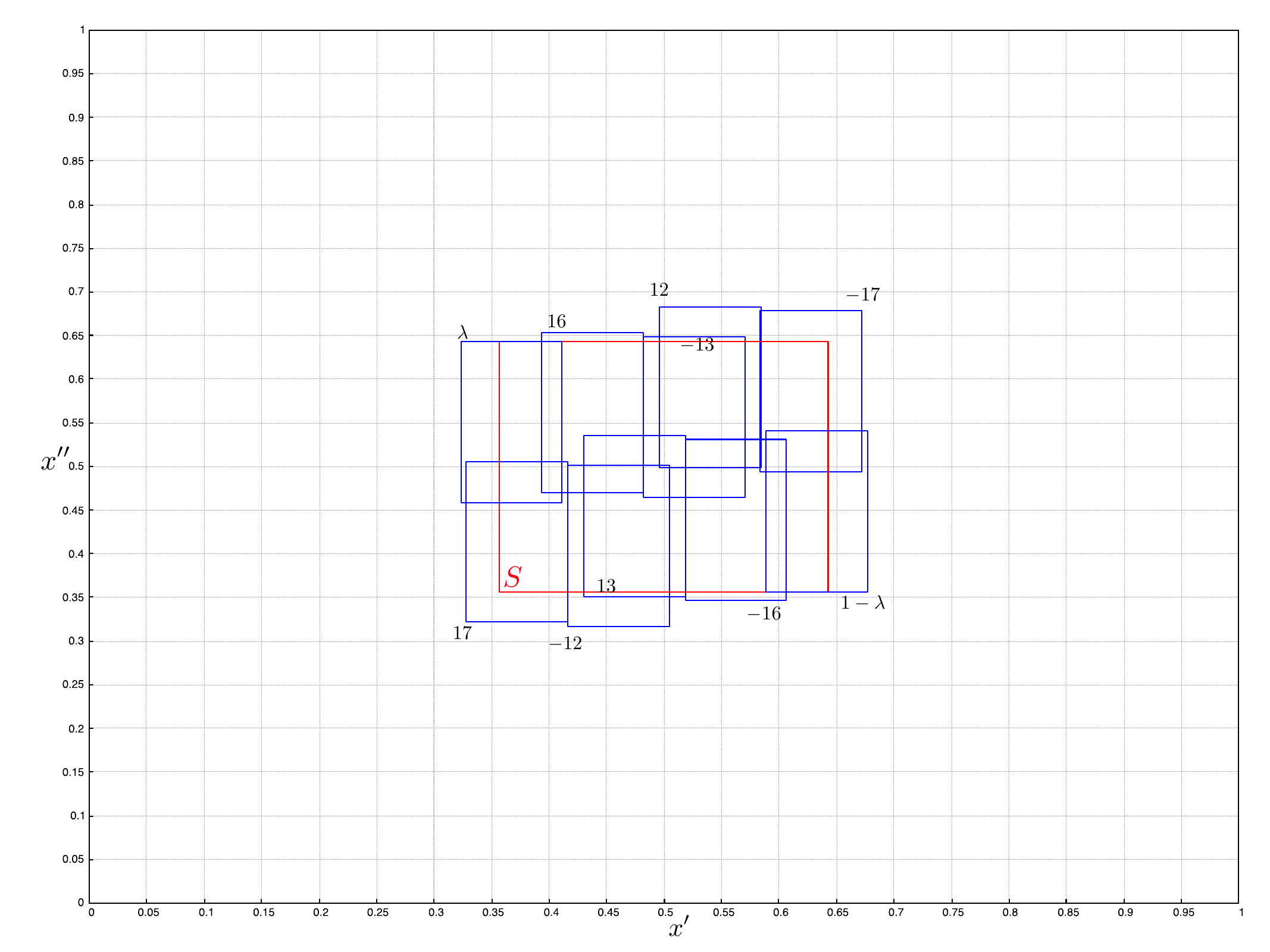}
\caption{A cover of $S$. $S$ is in red, the sets $J_{\lambda}$, 
$J_{1-\lambda}$, and $J_{m}$ for $|m| = 12, 13, 16, 17$
in blue (labelled with the value of $m$ rather than $p(m)$ for the latter sets). The entire graph represents $P$.}
\label{fig:cover}
\end{figure}

\begin{table}[h!]
\centering
\begin{tabular}{|c|c|c|}
\hline
$x$ & $(x'_0,x''_0)$ & $(x'_1,x''_1)$ \\ \hline
$\lambda$  & (0.32304408,0.45904241)  & (0.41119002,0.64310413)\\
$p(16)$ &    (0.39367557,0.46947633)  & (0.48182156,0.65353804)\\
$p(-13)$ &   (0.48175166,0.46483280)  & (0.56989766,0.64889452)\\
$p(12)$  &   (0.49574111,0.49871152)  & (0.58388711,0.68277324)\\
$p(-17)$ &   (0.58381721,0.49406799)  & (0.67196321,0.67812971)\\
$p(17)$ &    (0.32803679,0.32187029)  & (0.41618279,0.50593201)\\
$p(-12)$ &   (0.41611289,0.31722676)  & (0.50425889,0.50128848)\\
$p(13)$  &   (0.43010234,0.35110548)  & (0.51824834,0.53516720)\\
$p(-16)$ &   (0.51817843,0.34646195)  & (0.60632443,0.53052367)\\
$1-\lambda$ & (0.58880998,0.35689587)  & (0.67695598,0.54095759)\\
\hline
\end{tabular}
\caption{Coordinates of the $J_x$ of Figure~\ref{fig:cover}.}\label{tab:cover}
\end{table}

Observe that for each $x$ such that $x''_0 
\le 1-\nu \approx 0.35689587$, the value of $x''_1$ exceeds $1/2$ (these are the points $p(m)$ for $m = 17, -12, 13, -16$ and $1-\lambda$; for that last point, $x''_0 = 1-\nu$ exactly).
Similarly, for each $x$ such that $x''_1 \ge \nu \approx 0.64310413$,
the value of $x''_0$ is less than $1/2$
(these are the reflections $p(m)$ for $m = -17, 12, -13, 16$ and $\lambda$; for that last point, $x''_1 = \nu$ exactly). We also note that those pairs of $J_x$ which appear to touch
along the vertical boundaries actually intersect. For example, for $p(17)$, 
$x'_1 = 0.41618279$, which exceeds $0.41611289$, the value
of $x'_0$ for $p(-12)$. This can be similarly verified for the pairs
$(p(16), p(-13))$, $(p(12),p(-17))$, $(p(13), p(-16))$, so that 
the sets $J_{17}\cup J_{-12}$, $J_{16}\cup J_{-13}$, $J_{12}\cup J_{-17}$, 
and $J_{13}\cup J_{-16}$ are each connected. Thus we
claim that if we divide $S$ into the two subsets $L = \{(x',x'') \in S| 1-\nu \le x' \le \nu ~\mbox{\rm and}~ 1-\nu \le x'' \le 1/2\}$
and $U = \{(x',x'') \in S| 1-\nu \le x' \le \nu ~\mbox{\rm and}~ 1/2 \le x'' \le \nu\}$, we find that,
\begin{eqnarray}
\label{eqn:Lcover}
L & \subseteq & (J_{17}\cup J_{-12}) \cup (J_{13} \cup J_{-16}) \cup J_{1-\lambda},\\
\label{eqn:Ucover}
U & \subseteq & J_{\lambda}\cup ( J_{16} \cup J_{-13})\cup ( J_{12}\cup J_{-17}),
\end{eqnarray}
which shows that $S \subseteq \bigcup C = \bigcup_{x \in X} J_x$. 

To verify Eq.~(\ref{eqn:Lcover}), given that $x''_0 \le 1-\nu$ and $x''_1 > 1/2$ for
each set on the RHS, we only need to verify that each point $(x',x'')$ with
 $1-\nu \le x' \le \nu$ and $1-\nu \le x'' \le 1/2$ is contained in one of $J_{17}\cup J_{-12}$, $J_{13}\cup J_{-16}$, or $J_{1-\lambda}$. Using Table \ref{tab:cover}, we enumerate the cases
 in the following table:

\begin{table}[h!]
\centering
\begin{tabular}{|c|c|}
\hline
$1-\nu \le x'' \le 1/2$ and $x'$ in: & $(x',x'')$ in: \\ \hline
$[1-\nu, 0.50425889]$  &   $J_{17}\cup J_{-12}$\\
$[0.43010234, 0.60632443]$  &   $J_{13}\cup J_{-16} $\\
$[0.58880998, \nu]$  &   $J_{1-\lambda} $\\
\hline
\end{tabular}
\end{table}
\noindent The union of the intervals in the left column is $[1-\nu,\nu]$, so Eq.~(\ref{eqn:Lcover}) follows.

\noindent Eq.~(\ref{eqn:Ucover}) is established similarly, for $x'' \ge 1/2$:
 
 \begin{table}[h!]
\centering
\begin{tabular}{|c|c|}
\hline
$1/2 \le x'' \le \nu$ and $x'$ in: & $(x',x'')$ in: \\ \hline
$[1-\nu, 0.41119002]$  &   $J_{\lambda} $\\
$[0.39367557, 0.56989766]$  &   $ J_{16}\cup J_{-13} $\\
$[0.49574111,\nu]$  &   $J_{12}\cup J_{-17}$\\
\hline
\end{tabular}
\end{table}

\noindent We conclude that $C$ covers $S$.\\

We now determine the set $Y := X \union\{y\in\Sigma_S : |y| \le M\}$.  Let $Y' := X \union\{y\in\Sigma_T : |y| \le M\}$.  In this case $M = \frac{|\lambda-1|}{|\lambda|-1} \max_{x\in X} |x| = \max_{x\in X} |x|$, since $\lambda > 1$.
Observing that the $\lambda^2$ term dominates, it is not hard to see that $|p(m)|$ is monotonically increasing in $|m|$. Hence
$Y' = \{1,\lambda, 1-\lambda\}\cup\{p(m) : |m| \le 17\}$.
A derivation
of each of these points from the starting points $p_0=0$ and $p_1=1$ is given in 
Appendix~\ref{sec:computer-derivations}, Table~\ref{tab:derivation}. This proves that $Y' \subseteq Q_\lambda$, and
hence $Q_\lambda(Y') = Q_\lambda$.  It follows that $Q_\lambda(Y) = Q_\lambda$, since $\{0,1\}\subseteq Y \subseteq Y'$.  Now $Y$ has been constructed to satisfy
the properties of the $Y$ of Lemma~\ref{lem:z-induction} and, furthermore,
$\lambda$ is a unit so we can use it in place of $\alpha$ in that lemma. Hence
 $\Sigma_S \subseteq Q_\lambda$.
\end{proof}

  By Lemma~\ref{lem:Sigma7-containment}, $\Sigma_S \subseteq Q_{\lambda}$,
hence since $\Sigma_S$ is relatively dense, so is $Q_\lambda$.
Moreover, by Fact~\ref{fact:main}, $Q_\lambda \subseteq \Sigma_P$, 
so we have,
  
\begin{theorem}\label{thm:lambda-7-meyer}
$Q_{\lambda_7}$ is a Meyer set.
\end{theorem}

This, and many similar results, can be proved via Corollary~\ref{cor:meyer-if-unit} below, although that does not show explicitly that $Q_\lambda$ contains a model set.



\subsection{Affine Embedding}\label{sec:affine}

\subsubsection{Definitions and General Results}

Recall that for $x,y\in\complexes$, the map $\map{\rho_{x,y}}{\complexes}{\complexes}$ maps $z\in\complexes$ to $x\xz y$ (Definition~\ref{def:rho}).  This definition extends naturally to any module over a commutative ring (Definition~\ref{def:E-rho-general}).

\begin{definition}\label{def:affine-embedding}
Let $R$ be a commutative ring and $A, B$ subsets of $R$. 
If $\rho_{a,b}(A) \subseteq B$ for some $a,b\in R$ such that $x(a-b)\ne 0$ for all $x\in R\cmpl\{0\}$, then
we say $A$ \emph{affinely embeds}
in $B$, and write $A \affemb B$. 
If $A \affemb B$ and $B \affemb A$,
we say $A$ and $B$ are \emph{affine-equivalent}, and write 
$A \affeq B$.
\end{definition}

The condition on $a$ and $b$ in Definition~\ref{def:affine-embedding} is equivalent to $\rho_{a,b}$ being one-to-one, hence the use of the term ``embedding.''  If $R$ has no zero-divisors, then $\rho_{a,b}$ is a one-to-one map just when $a\ne b$.  We will only apply Definition~\ref{def:affine-embedding} when $R$ has no zero-divisors.

The $\affemb$ relation is clearly reflexive and transitive (cf.\ Fact~\ref{fact:rho-basic}(\ref{item:rho-composition}) for $R = \complexes$), making $\affeq$ an equivalence relation.

\begin{fact}\label{fact:affine-embedding}
Let $F$ be either $\reals$ or $\complexes$.  Let $A$ and $B$ be subsets of $F$ such that $A\affemb B$.
\begin{enumerate}
\item
If $B$ is uniformly discrete, then $A$ is uniformly discrete.
\item
If $A$ is relatively dense in $F$, then $B$ is relatively dense in $F$.
\item\label{item:affine-embedding-Q-R}
For any $\lambda\in F$, \ $Q_\lambda(A) \affemb Q_\lambda(B)$ and $R_\lambda(A) \affemb R_\lambda(B)$.  (This follows from Lemma~\ref{lem:translate-Q-R}.)
\end{enumerate}
\end{fact}

\begin{lemma}\label{lem:universal-affine-embedding}
Let $T$ be any finite subset of $\rats[x]$. Then $T \hookrightarrow Q_{[x]}$.  Furthermore, there exist distinct $a,b\in\ints[x]$ such that $\rho_{a,b}(T)\subseteq Q_{[x]}$ and the only irreducible monic polynomials in $\rats[x]$ dividing $b-a$ are $x$ and $x-1$.
\end{lemma}

\begin{proof}
  Suppose $T = \{f_1,\ldots,f_n\}$, where $f_i \in \rats[x]$
  for each $1 \le i \le n$. 
Choose $d,\ell, k \in \ints^+$ sufficiently large that for all $1 \le i \le n$, $df_i \in \ints[x]$ and
$(df_i(\lambda)+\ell)\cdot \lambda^k(1-\lambda)^k \in \opop{0,1}$ for any
$\lambda \in \opop{0,1}$. Then, writing $h(x) := x^k(1-x)^k$ and $g_i := (df_i+\ell)\cdot h$,
we find that $g_i\in\ints[x]$ and $g_i(\lambda) \in \opop{0,1}$ for any $\lambda \in \opop{0,1}$. 
By the characterization given in Theorem~\ref{thm:Q-x}, 
this implies $g_i \in Q_{[x]}$.  Now define the polynomials,
\begin{align*}
   a &:= \ell h\;,\\
   b &:= a+dh = (\ell+d) h\;.
\end{align*} 
Then $a,b\in\ints[x]\subseteq \rats[x]$ and $a\ne b$.  In fact, $b-a = dh$, whose only irreducible monic factors in $\rats[x]$ are $x$ and $x-1$.  Moreover, for $1\le i\le n$,
\[ \rho_{a,b}(f_i) = (1-f_i)a + f_ib = a + f_i (b - a) = \ell h + df_i h = (df_i + \ell) h = g_i\;. \]
Thus $\rho_{a,b}(T) \subseteq Q_{[x]}$, and hence $T \hookrightarrow Q_{[x]}$ via $a$ and $b$ satisfying the additional hypothesis. 
\end{proof}

\begin{remark}
Lemma~\ref{lem:universal-affine-embedding} holds just as well for infinite $T$, provided $d$, $\ell$, and $k$ exist with the requisite properties, i.e., $\bigcup_{f\in T}\{ |f(\lambda)| : \lambda\in\opop{0,1}\}$ is bounded and all coefficients in elements of $T$ share a common denominator.
\end{remark}

\begin{lemma}\label{lem:lambda-embedding}
Let $\lambda \in \complexes\cmpl\{0,1\}$ and $S$ be any finite subset of $\rats[\lambda]$. 
Then $S \hookrightarrow Q_\lambda$.
\end{lemma}

\begin{proof}
Let $f_1,\ldots,f_n\in\rats[x]$ be such that $S = \{f_1(\lambda),\ldots,f_n(\lambda)\}$.  Set $T := \{f_1,\ldots,f_n\}$.  Then let $a,b\in\ints[x]$ be as in Lemma~\ref{lem:universal-affine-embedding}, and set
\begin{align*}
c &:= a(\lambda)\;, & d &:= b(\lambda)\;.
\end{align*}
We claim first that $c\ne d$:  If $\lambda$ is transcendental over $\rats$, then $c \ne d$ follows immediately from the fact that $a \ne b$.  If $\lambda$ is algebraic over $\rats$, then let $p\in\rats[x]$ be the minimal (monic) polynomial of $\lambda$.  Since $\lambda\notin\{0,1\}$, \ $p$ is neither $x$ nor $x-1$, and thus $p \notdiv b-a$.  It follows that $a(\lambda) \ne b(\lambda)$, i.e., $c \ne d$.  This proves the claim.

Finally, we have $\rho_{a,b}(T)\subseteq Q_{[x]}$ by Lemma~\ref{lem:universal-affine-embedding}.  Evaluating both sides at $\lambda$, we then get $\rho_{c,d}(S) \subseteq Q_\lambda$ by Lemma~\ref{lem:poly-eval}. 
Thus $S \hookrightarrow Q_\lambda$.
\end{proof}

\begin{theorem}
For any nontrivial sPV $\lambda$ such that $Q_\lambda$ contains a unit of $\ints[\lambda]$ other than $1$, and for any $S$ defined as in Theorem~\ref{thm:added-points}, $\Sigma_S \hookrightarrow Q_\lambda$. 
\end{theorem}

\begin{proof}
  By Theorem~\ref{thm:added-points}, there exists a finite set $Y \subseteq \Sigma_P 
  \subseteq \ints[\lambda]$ such that $\Sigma_S \subseteq Q_\lambda(Y)$. 
By Lemma~\ref{lem:lambda-embedding}, $Y \hookrightarrow Q_\lambda$. 
By Fact~\ref{fact:affine-embedding}(3), 
$Q_\lambda(Y) \hookrightarrow Q_\lambda(Q_\lambda) = Q_\lambda$. 
Hence $\Sigma_S \hookrightarrow Q_\lambda$.
\end{proof}

\noindent Using Fact~\ref{fact:affine-embedding}(2), 

\begin{corollary}\label{cor:unit-gives-relative-density}
For any nontrivial sPV $\lambda$ such that $Q_\lambda$ contains a unit of $\ints[\lambda]$ other than $1$, \ $Q_\lambda$ is relatively dense (in either $\reals$ or $\complexes$, as $\lambda\in\reals$ or $\lambda\in\complexes\cmpl\reals$, respectively).
\end{corollary}

Since $\{0,1\} \subseteq \Sigma_P$, it follows trivially that
$\{0,1\} \hookrightarrow \Sigma_P$. Therefore $Q_\lambda(\{0,1\}) = Q_\lambda \hookrightarrow
Q_\lambda(\Sigma_P)$. However, $\Sigma_P$ is closed under $\lambda$-extrapolation, i.e.,
$Q_\lambda(\Sigma_P) = \Sigma_P$. Hence $Q_\lambda \hookrightarrow \Sigma_P$.
Since $\Sigma_P$ is uniformly discrete, by Fact~\ref{fact:affine-embedding}(1) 
we can use this as an alternative proof
that $Q_\lambda$ is uniformly discrete.

\begin{corollary}
For any sPV $\lambda$ such that $Q_\lambda$ contains a unit of $\ints[\lambda]$ other than $1$, \ $Q_\lambda$ is a Delone set.
\end{corollary}

Because $Q_\lambda \subseteq \Sigma_P$, and $\Sigma_P$ is a Meyer set, we have,

\begin{corollary}\label{cor:meyer-if-unit}
For any nontrivial sPV $\lambda$ such that $Q_\lambda$ contains a unit of $\ints[\lambda]$ other than $1$, \ $Q_\lambda$ is a Meyer set.
\end{corollary}

\begin{remark}
   Alternatively, we
   note that if $A \affemb B \affemb C$ and $A, C$ are Meyer sets, then
 $B$ is also Meyer.
In this case we have 
$\Sigma_S \hookrightarrow Q_\lambda \hookrightarrow \Sigma_P$,
and hence $Q_\lambda$ is a Meyer set.
\end{remark}

The next lemma (actually its corollary) has the same conclusion as Corollary~\ref{cor:meyer-if-unit} but with different hypotheses; it does not assume that $\lambda$ is sPV.

\begin{lemma}\label{lem:discrete-minkowski-difference}
For any $\lambda\in\complexes$, if $R_\lambda$ is discrete, then $R_\lambda - R_\lambda$ is uniformly discrete.
\end{lemma}

\begin{proof}
The lemma is trivial if $\lambda\in\{0,1\}$, so assume otherwise.  If $R_\lambda$ is discrete, then $R_\lambda$ is uniformly discrete by Corollary~\ref{cor:general-equivalences} (and furthermore, $R_\lambda = Q_\lambda$).  We now have
\[ R_\lambda - R_\lambda = Q_\lambda - Q_\lambda = Q_\lambda(\{0,1\}-\{0,1\}) = Q_\lambda(\{-1,0,1\})\;, \]
the last equality due to Lemma~\ref{lem:distribute}.  $R_\lambda$ contains the three distinct values $0$, $\lambda(1-\lambda)$, and $2\lambda(1-\lambda)$, which form a $3$-arithmetic progression (see Corollary~\ref{cor:arith-progressions}), and thus it is clear that $\{-1,0,1\} \affemb \{0,\lambda(1-\lambda),2\lambda(1-\lambda)\}$.  We therefore get $R_\lambda - R_\lambda \affemb R_\lambda(\{0,\lambda(1-\lambda),2\lambda(1-\lambda)\}) = R_\lambda$ by Fact~\ref{fact:affine-embedding}.  It then follows from the same Fact that $R_\lambda - R_\lambda$ is uniformly discrete.
\end{proof}

\begin{corollary}\label{cor:Meyer-set}
For $F$ being either $\reals$ or $\complexes$, and for any $\lambda\in F$, if $R_\lambda$ is discrete and relatively dense in $F$, then $R_\lambda$ is a Meyer set.
\end{corollary}

\begin{proof}
By Lemma~\ref{lem:discrete-minkowski-difference} and Definition~\ref{def:Meyer-set}.
\end{proof}

\subsubsection{Case Study: $\lambda := -(3+\sqrt{13})/2$}\label{sec:case-study-root13}

In this subsection we use Theorems~\ref{theorem:main-candp} and \ref{thm:add-points} to identify points that are in $\Sigma_P$ but not in $Q_\lambda$, for $\lambda := -(3+\sqrt{13})/2$ and $P := \clcl{0,1}$.
Again making use of
Theorem~\ref{thm:add-points}, as well as affine embedding,
we also show that $Q_\lambda$ is relatively dense in $\reals$ and hence a Meyer set. 

\begin{proposition}\label{prop:root-13}
Letting $\lambda := -(3+\sqrt{13})/2$, we have
\[ Q_\lambda(\{0,1,2\lambda\}) = Q_\lambda(\{0,1,1-2\lambda\}) = \Sigma_P\;, \]
where $\Sigma_P = \Sigma_P^{(\lambda)}$ is the cut-and-project (model) set $\{x\in\ints[\lambda] : x' \in \clcl{0,1}\}$ of Definition~\ref{def:Sigma_P}.
\end{proposition}

\begin{proof}
We let $\mu := \lambda' = (\sqrt{13}-3)/2 \approx 0.302776$ be the conjugate of $\lambda$, and we note that $\mu^2 = 1-3\mu$.  The fact that $\lambda$ is already a unit of $\ints[\lambda]$ means we can set $\alpha := \lambda$ (and thus $\beta = \mu$) in Theorem~\ref{thm:add-points} (and obviously, $\alpha\in Q_\lambda$ in this case).  We have $(2\lambda)' = 2\mu \in \clcl{0,1}$, so $2\lambda\in\Sigma_P$.  In order to apply Theorem~\ref{thm:add-points}, we let $X := \{0,1,\lambda,2\lambda\}$.  One can readily check using decimal approximations that
\[ \clcl{0,1} = \clcl{0,\mu} \union \clcl{1-\mu,1} \union \clcl{4\mu-1,5\mu-1} \union \clcl{8\mu-2,9\mu-2} = \bigcup_{x\in X} \clcl{(1-\mu)x',(1-\mu)x' + \mu}\;, \]
and thus $X$ satisfies Eq.~(\ref{eqn:X-condition}).  Letting $M$ be as in Eq.~(\ref{eqn:seed-upper-bound}), we have $M = \frac{\lambda-1}{\lambda+1}\max_{x\in X}|x| = \frac{2+\sqrt{13}}{3}|2\lambda| = \frac{2+\sqrt{13}}{3}(-2\lambda) = (19+5\sqrt{13})/3 \approx 12.342585$.  Then by Eqs.~(\ref{eqn:degree2}) and (\ref{eqn:seed-set}),
\[ Y = \Sigma_P \intersect \clcl{-M,M} = \{3\lambda,\;2\lambda,\;\lambda,\;0,\;1,\;1-\lambda,\;1-2\lambda,\;1-3\lambda\}\;. \]
From Theorem~\ref{thm:add-points}, we know that $Q_\lambda(Y) = \Sigma_P$, and thus we are done if we can show that $Y \subseteq Q_\lambda(\{0,1,2\lambda\})$.  Clearly, $\{2\lambda,\lambda,0,1,1-\lambda\}\subseteq Q_\lambda(\{0,1,2\lambda\})$.  For the other three elements of $Y$, we have, noting that $\lambda^2 = 1-3\lambda$,
\begin{align*}
3\lambda &= 1-(1-3\lambda) = 1-\lambda^2 = (1-\lambda) + \lambda(1-\lambda) = 1\xl (1-\lambda)\;, \\
1-2\lambda &= \lambda + (1-3\lambda) = \lambda+\lambda^2 = (1-\lambda)\lambda + 2\lambda^2 = \lambda \xl (2\lambda)\;, \\
1-3\lambda &= \lambda^2 = 0\xl \lambda\;.
\end{align*}
Thus $\Sigma_P = Q_\lambda(Y) \subseteq Q_\lambda(Q_\lambda(\{0,1,2\lambda\})) = Q_\lambda(\{0,1,2\lambda\}) \subseteq \Sigma_P$, since $\{0,1,2\lambda\}\subseteq \Sigma_P$ and $\Sigma_P$ is $\lambda$-convex.  Thus $Q_\lambda(\{0,1,2\lambda\}) = \Sigma_P$.  To see that $Q_\lambda(\{0,1,1-2\lambda\}) = \Sigma_P$, observe that $1-2\lambda\in\Sigma_P$ and that $2\lambda \in Q_\lambda(\{0,1,1-2\lambda\})$; indeed,
\[ 2\lambda = 1-\lambda-(1-3\lambda) = 1-\lambda-\lambda^2 = (1-\lambda)(1-\lambda) + \lambda(1-2\lambda) = (1-\lambda)\xl(1-2\lambda)\;. \]
Thus we have $Q_\lambda(\{0,1,1-2\lambda\}) = Q_\lambda(\{0,1,2\lambda\}) = \Sigma_P$.
\end{proof}

We know from Theorem~\ref{theorem:main-candp} that $Q_{-(3+\sqrt{13})/2} \ne \Sigma_P$.  Proposition~\ref{prop:root-13} gives us two specific points that are in $\Sigma_P$ but not in $Q_\lambda$:

\begin{corollary}\label{cor:non-membership}
Let $\lambda := -(3+\sqrt{13})/2$.  Then $\{2\lambda,1-2\lambda\} \subseteq \Sigma_P \cmpl Q_\lambda$.
\end{corollary}

\begin{proof}
One readily checks that $2\lambda$ and $1-2\lambda$ are both in $\Sigma_P$.  But if either one (say, $2\lambda$) is in $Q_\lambda$, then we would have $Q_\lambda = Q_\lambda(\{0,1,2\lambda\}) = \Sigma_P$, and we know by Theorem~\ref{theorem:main-candp} that $Q_\lambda \ne \Sigma_P$.
\end{proof}

\begin{remark}
Another way to see that $2\lambda \notin Q_\lambda$ without using Proposition~\ref{prop:root-13} is to notice that $2\lambda \in (1-\lambda)\ints[\lambda] + 2$.  This implies $2\lambda \notin (1-\lambda)\ints[\lambda] + \{0,1\}$ because $1-\lambda \notdiv 2$.  Then apply Corollary~\ref{cor:ideal} with $D = \ints[\lambda]$.
\end{remark}

Using the idea of affine embedding, it follows easily from Proposition~\ref{prop:root-13} that $Q_{-(3+\sqrt{13})/2}$ is relatively dense in $\reals$. 


\begin{corollary}[to Proposition~\ref{prop:root-13}]\label{cor:13-relatively-dense}
$Q_\lambda$ is relatively dense in $\reals$, where $\lambda := -(3+\sqrt{13})/2$.
\end{corollary}

\begin{proof}
Observe that $\{0,1,2\lambda\} \affemb Q_\lambda$.  In fact,
\[ \rho_{0,1-\lambda}(\{0,1,2\lambda\}) = \{0,1-\lambda,2\lambda(1-\lambda)\} = \{0,1-\lambda,\lambda \xl (1-\lambda)\} \subseteq Q_\lambda\;. \]
Thus by Proposition~\ref{prop:root-13} and Fact~\ref{fact:affine-embedding} (with $F = \reals$), we have $\Sigma_P = Q_\lambda(\{0,1,2\lambda\}) \affemb Q_\lambda(Q_\lambda) = Q_\lambda$, and since $\Sigma_P$ is relatively dense in $\reals$, it follows that $Q_\lambda$ is relatively dense in $\reals$.
\end{proof}

\begin{remark}
The last proof shows that $\Sigma_P \affeq Q_\lambda$ for $\lambda := -(3+\sqrt{13})/2$, because clearly, $Q_\lambda \subseteq \Sigma_P$ and thus $Q_\lambda\affemb\Sigma_P$.
\end{remark}

\subsubsection{A Concise Proof of Theorem~\ref{thm:main}}
\label{sec:concise-proof}

In this subsection, we use affine embedding to give an alternate, shorter proof of Theorem~\ref{thm:main}.  The proof we give here is less informative than the original one, as it completely obscures any connection with cut-and-project sets.

\begin{definition}\label{def:sub-Meyer}
For $n\ge 1$, we will say that a point set $S\subseteq\reals^n$ is \emph{sub-Meyer} if $S-S$ is uniformly discrete.
\end{definition}

Fact~\ref{fact:meyer-set-characterization} implies that if $S\subseteq\reals^n$ is sub-Meyer and relatively dense in $\reals^n$, then $S$ is a Meyer set.  We can identify $\complexes$ with $\reals^2$ in the usual way.  

Recall that $\discrete = \complexes\cmpl\convex = \{ \lambda\in\complexes\mid \mbox{$R_\lambda$ is not convex} \} =  \{ \lambda\in\complexes\mid \mbox{$R_\lambda$ is discrete} \}$ (Definition~\ref{def:convex} and Corollary~\ref{cor:general-equivalences}).

The next theorem is a strengthened version of Lemma~\ref{lem:discrete-minkowski-difference}, and some of the proof is cribbed from there.

\begin{theorem}\label{thm:sub-Meyer}
$Q_\lambda(S)$ is sub-Meyer for any $\lambda\in\discrete$ and finite $S\subseteq\rats(\lambda)$.
\end{theorem}

\begin{proof}
Fix $\lambda\in\discrete$ and finite $S\subseteq\rats(\lambda)$.  We have that $Q_\lambda$ is uniformly discrete by Corollary~\ref{cor:general-equivalences}.  Letting $T := S-S$, we see that $T$ is a finite subset of $\rats(\lambda)$.  We have that $\lambda$ is algebraic by Theorem~\ref{thm:transcendental-lambda} (which is subsumed by Theorem~\ref{thm:general-case}), and thus $\rats(\lambda) = \rats[\lambda]$.  It follows by Lemma~\ref{lem:lambda-embedding} that $T\affemb Q_\lambda$.  Applying $Q_\lambda(\cdot)$ to both sides and using Fact~\ref{fact:affine-embedding}, we get $Q_\lambda(T) \affemb Q_\lambda(Q_\lambda) = Q_\lambda$, and thus $Q_\lambda(T)$ is uniformly discrete (again by Fact~\ref{fact:affine-embedding}).  Finally, using Lemma~\ref{lem:distribute}, we have $Q_\lambda(S)-Q_\lambda(S) = Q_\lambda(S-S) = Q_\lambda(T)$, which proves the theorem.
\end{proof}

\begin{theorem}\label{thm:sub-meyer-spv}
$Q_\lambda(S)$ is sub-Meyer for any sPV $\lambda$ and finite $S\subseteq\rats(\lambda)$.
\end{theorem}

\begin{proof}
Let $\lambda$ be any sPV number.  By Theorem~\ref{thm:sub-Meyer} it suffices to show that $\lambda\in\discrete$.  The case where $\lambda\in\reals$ was proven by Pinch and is Proposition~\ref{prop:pinch-discrete}.  The case where $\lambda\notin\reals$ is by Proposition~\ref{prop:complex-Pinch}.
\end{proof}



\subsection{Higher Dimensions}\label{sec:higher-dimensions}

For certain $\lambda$, we can use relative density results in $\reals$ to prove
relative density in higher dimensions.  We have already shown (Proposition~\ref{prop:1-plus-phi}, and again in Theorem~\ref{theorem:main-candp}) that $Q_{1+\p}$ ($= R_{1+\p}$) is relatively dense, where $\p$ is the golden ratio, and similarly for 
$Q_{2+\sqrt{2}}$ and 
$Q_{2+\sqrt{3}}$ in Theorem~\ref{theorem:main-candp}.


The next two lemmas allow us in some cases to prove relative density in higher dimensions based on relative density in $\reals$.

\begin{lemma}\label{lem:parallel}
For any $\lambda\in\reals$ and any $S\subseteq\reals^n$ (for $n\ge 1$), if $Q_\lambda$ is relatively dense in $\reals$ and $Q_\lambda(S)$ contains the vertices of a parallelepiped with nonzero $n$-dimensional volume, then $Q_\lambda(S)$ is relatively dense in $\reals^n$.
\end{lemma}

\begin{proof}
We first prove the special case where $S = C_n := \{0,1\}^n \subseteq \reals^n$, the corners of the unit hypercube in $\reals^n$.  In this case, we show that $Q_\lambda(C_n) \supseteq Q_\lambda \times \cdots \times Q_\lambda$ ($n$-fold Cartesian product) by induction on $n$.\footnote{In fact, equality holds, but we will not need this.}  Assuming $Q_\lambda$ is relatively dense in $\reals$, the right-hand side is clearly relatively dense in $\reals^n$, which proves the lemma in this special case.  Afterwards, we show how the general case follows easily.

In the special case, the statement above is trivial for $n=1$, where $C_1 = \{0,1\}$ and hence $Q_\lambda(C_1) = Q_\lambda$.  Now assume $n\ge 1$ and that the statement above holds for $n$, i.e., $Q_\lambda(C_n) \supseteq (Q_\lambda)^{\times n}$.  In $\reals^{n+1}$, it is easily checked that $Q_\lambda(C_n\times \{b\}) = Q_\lambda(C_n)\times \{b\}$ for $b\in\{0,1\}$.  Whence
\[ Q_\lambda(C_{n+1}) = Q_\lambda(C_n\times\{0,1\}) \supseteq Q_\lambda(C_n)\times\{0,1\}\;. \]
That is, for every $v \in Q_\lambda(C_n)$, we have $v\times\{0,1\}\subseteq Q_\lambda(C_{n+1})$, and from this we get
\[ Q_\lambda(\{v\}\times\{0,1\}) = \{v\}\times Q_\lambda \subseteq Q_\lambda(C_{n+1})\;. \]
This holds for all $v\in Q_\lambda(C_n)$, and so by the inductive hypothesis, it holds for all $v\in (Q_\lambda)^{\times n}$.  Thus,
\[ (Q_\lambda)^{\times (n+1)} = (Q_\lambda)^{\times n}\times Q_\lambda \subseteq Q_\lambda(C_{n+1})\;, \]
as we wished to show.

Now in the general case, we can assume without loss of generality that $S = \{x_b \mid b\in\{0,1\}^n\} \subseteq \reals^n$, where the $x_b$ form the corners of a nondegenerate parallelepiped.  We can index the points so that there exists an invertible $\reals$-linear map $\map{\ell}{\reals^n}{\reals^n}$ that maps each $b\in\{0,1\}^n$ to $x_b - x_0$.  Letting $S' := \ell^{-1}(S - x_0)$, we see that $S' = \{0,1\}^n$, and thus $Q_\lambda(S')$ is relatively dense in $\reals^n$ by the special case proved above.  By Lemma~\ref{lem:translate-general}\footnote{where $R = \reals$, \ $M = N = \reals^n$, and $f = \ell$} we have
\[ x_0 + \ell(Q_\lambda(S')) = x_0 + Q_\lambda(\ell(S')) = x_0 + Q_\lambda(S-x_0) = Q_\lambda(S)\;. \]
The left-hand side is seen to be relatively dense in $\reals^n$ because this property is preserved under invertible $\reals$-linear maps and under translation.
\end{proof}

\begin{lemma}\label{lem:isosceles}
For any $\lambda \in \reals$ and $S\subseteq\complexes$, if $Q_\lambda$ is relatively dense in $\reals$ and $S$ includes three non-colinear points, then $Q_\lambda(S)$ is relatively dense in $\complexes$.
\end{lemma}

\begin{proof}
For any $z\in\complexes$ such that $\Im(z) > 0$, define $\Delta_z := \{0,1,z\}$.  To say that $S$ contains three non-colinear points is equivalent to saying that $\Delta_z \hookrightarrow S$ for some such $z$.  Thus it suffices by Fact~\ref{fact:affine-embedding} to show that $Q_\lambda(\Delta_z)$ is relatively dense in $\complexes$ for all such $z$.

Fix a $z := x+yi$ for some arbitrary $x,y\in\reals$ with $y>0$.  Since $Q_\lambda$ is relatively dense in $\reals$, we must have $\lambda\notin\clcl{0,1}$.  Let $\sigma := \lambda(1-\lambda)$.  Then $\sigma < 0$, and both $\sigma$ and $2\sigma$ are in $Q_\lambda$:
\begin{align*}
\sigma &= \lambda \xl 0\;, & 2\sigma &= \lambda \xl (1-\lambda)\;.
\end{align*}
Consider the four points
\begin{align*}
a &:= 0\xsig 1 & b &:= 1\xsig 0 \\
c &:= 0\xssig z & d &:= 1\xssig z
\end{align*}
All four points are in $Q_\lambda(\Delta_z)$ by Lemma~\ref{lem:extend}(1): $a$ and $b$ are in $Q_\sigma \subseteq Q_\lambda \subseteq Q_\lambda(\Delta_z)$, and $c$ and $d$ are in $Q_{2\sigma}(\Delta_z) \subseteq Q_\lambda(\Delta_z)$.  A little calculation shows that
\begin{align*}
a &= \sigma & b &= 1 - \sigma \\
c &= 2\sigma z = 2\sigma x + 2\sigma yi & d &= (1-2\sigma) + 2\sigma z = 1-2\sigma + 2\sigma x + 2\sigma yi
\end{align*}
Thus $a$, $b$, $c$, and $d$ form the vertices of a parallelogram in $\complexes$, and this parallelogram is nondegenerate, since $\sigma \ne 1-\sigma$ and $2\sigma y \ne 0$.  It follows that $Q_\lambda(\Delta_z)$ is relatively dense in $\complexes$ by Lemma~\ref{lem:parallel}.
\end{proof}

Recall from Section~\ref{sec:reg-polygons} that for integer $n\ge 3$, the point set $P_n\subseteq\complexes$ is the set of vertices of the regular $n$-gon, located so that $\{0,1\}\subseteq P_n$ and $\Im(z) \ge 0$ for all $z\in P_n$ (and so the unit interval $\clcl{0,1}$ forms the bottom base of the $n$-gon).

\begin{corollary}\label{cor:polygon-rel-dense}
For any $\lambda \in \reals$, if $Q_\lambda$ is relatively dense in $\reals$, then $Q_\lambda(P_n)$ is relatively dense in $\complexes$ for all $n\ge 3$.
\end{corollary}

The following corollary covers all the sets shown in Figures~\ref{fig:pentagon}--\ref{fig:hex-oct}, showing that they are all relatively dense in $\complexes$.

\begin{corollary}\label{cor:P-n-relative-density}
The following sets are all relatively dense in $\complexes$:
\begin{enumerate}
\item
$Q_{\lambda_5}(P_5)$ and $Q_{\lambda_5}(P_{10})$ (recall $\lambda_5 = 1+\p = (3+\sqrt 5)/2$)
\item
$Q_{\lambda_7}(P_7)$ and $Q_{\lambda_7}(P_{14})$ (cf.\ Equation~(\ref{eqn:lambda-n}))
\item
$Q_{2+\sqrt 2}(P_8)$
\item
$Q_{2+\sqrt 3}(P_{12})$
\item
$Q_{\lambda_9}(P_9)$ and $Q_{\lambda_9}(P_{18})$ (cf.\ Equation~(\ref{eqn:lambda-n}))
\item
$Q_{\lambda_{15}}(P_{15})$ and $Q_{\lambda_{15}}(P_{30})$ (cf.\ Equation~(\ref{eqn:lambda-n}))
\end{enumerate}
\end{corollary}

\begin{proof}
Consulting the table in the proof of Proposition~\ref{prop:lambda-n-PV}, we see that $\lambda_n$ has field norm $\pm 1$ for $n\in\{5,7,9,15\}$ and is thus a unit of $\ints[\lambda_n]$.  Applying Corollary~\ref{cor:meyer-if-unit}, we get that $Q_{\lambda_n}$ is relatively dense in $\reals$ for these $n$.\footnote{Isolated cases of these results are proved elsewhere in the paper.  We have that $Q_{1+\p}$ is relatively dense in $\reals$, either by Proposition~\ref{prop:1-plus-phi} or by Theorem~\ref{thm:cutandproject}.  By Theorem~\ref{thm:lambda-7-meyer}, $Q_{\lambda_7}$ is relatively dense in $\reals$.}
Theorem~\ref{thm:cutandproject} also implies $Q_{2+\sqrt 2}$ and $Q_{2+\sqrt 3}$ are relatively dense in $\reals$.  We apply Corollary~\ref{cor:polygon-rel-dense} to all these sets.
%
\end{proof}

\begin{corollary}\label{cor:P-n-meyer-set}
All the sets mentioned in Corollary~\ref{cor:P-n-relative-density} are Meyer sets.
\end{corollary}

\begin{proof}
For $S$ be any of these sets, we have that $S-S$ is uniformly discrete by Propositions~\ref{prop:discrete-reg-n-gon}, \ref{prop:lambda-n-PV}, and \ref{prop:octagon}.  Now we apply Fact~\ref{fact:meyer-set-characterization}(2) to $S$.
\end{proof}

\begin{proposition}
The $(1+\p)$-convex closure of a regular dodecahedron in $\reals^3$ is relatively dense in $\reals^3$.
\end{proposition}

\begin{proof}
Let $\lambda := 1+\p$ and let $D$ be a regular dodecahedron in $\reals^3$.  Choose some pair of opposite pentagonal faces $F$ and $F'$ of $D$.  The set of corners of each of $F$ and $F'$ is congruent to $P_5$, and so $Q_\lambda(D)$ includes a congruent copy of $Q_\lambda(P_5)$ in each of the (parallel) planes containing $F$ and $F'$.  By the previous proposition, each copy contains the corners of a nondegenerate parallelogram (rhombus, actually), and these rhombi can be chosen so that the one in $F'$ is a translation of the one in $F$.  See Figure~\ref{fig:parallelograms}.
\begin{figure}
\centering
\input{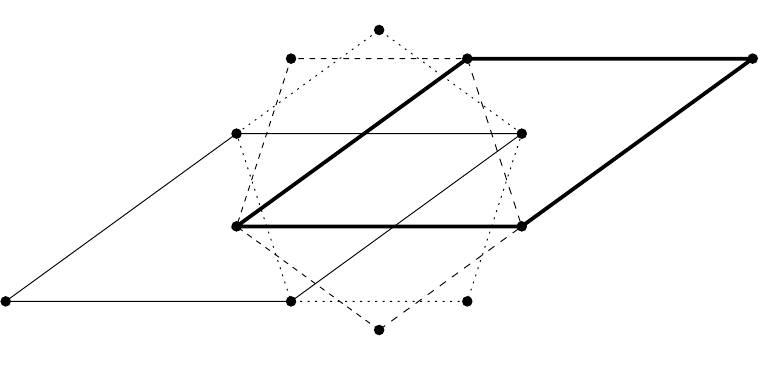_t}
\caption{Two opposite pentagonal faces $F = \{a,b,c,d,e\}$ (dashed lines) and $F'=\{a',b',c',d',e'\}$ (dotted lines) of a regular dodecahedron $D$ are shown together with two rhombi extending out from each in opposite directions.  The rhombus on the right (thick solid lines) lies in the plane of $F$ and is formed from points $a$, $c$, $e$, and $f$, the latter of which is seen to be in $Q_\lambda(F)$.  Similarly, the rhombus on the left (thin solid lines) lies in the (parallel) plane of $F'$ and is formed from the points $a'$, $c'$, $e'$, and $f'\in Q_\lambda(F')$.  (These points are antipodal with respect to $D$ to the points $a$, $c$, $e$, and $f$, respectively.)  Thus the points $\{a,c,e,f,a',c',e',f'\}$ are all in $Q_\lambda(D)$ and form the corners of a nondegenerate parallelepiped in $\reals^3$.  Here, $\lambda = 1+\p$, and $\x$ means $\xl$.}
\label{fig:parallelograms}
\end{figure}
\end{proof}

\subsection{Trivial sPV $\lambda$}
\label{sec:trivial-sPV}

In this subsection, we show relative density of $Q_\lambda$ in some cases for trivial sPV $\lambda$.

\begin{proposition}\label{prop:trivial-sPV}
Let $\lambda\in\complexes$ be a trivial strong PV number.  If $Q_\lambda$ contains an integer other than $0$ or $1$, then $Q_\lambda$ is relatively dense in either $\reals$ or $\complexes$ as $\lambda\in\reals$ or $\lambda\in\complexes\cmpl\reals$, respectively.
\end{proposition}

\begin{proof}
Suppose $\alpha$ is in $Q_\lambda \intersect (\ints\cmpl\{0,1\})$.  Then $Q_\alpha$ is relatively dense in $\reals$ because it is periodic (Theorem~\ref{thm:period}).  Further, $Q_\alpha \subseteq Q_\lambda$ (Corollary~\ref{cor:extend}).  By Fact~\ref{fact:trivial-spv}, $\lambda$ is either a $\ints$-integer or a nonreal quadratic integer.  In the former case, $Q_\lambda$ is clearly relatively dense in $\reals$.  In the latter case, $Q_\lambda$ contains three non-colinear points, and thus $Q_\alpha(Q_\lambda)$ is relatively dense in $\complexes$ by Lemma~\ref{lem:isosceles}.  Since $Q_\alpha(Q_\lambda) \subseteq Q_\lambda(Q_\lambda) = Q_\lambda$ (Lemma~\ref{lem:extend}), the result follows.
\end{proof}

\begin{proposition}\label{prop:small-real-part}
If $\lambda\in\complexes\cmpl\reals$ is trivial sPV and $Q_\lambda$ contains a point $z\ne 0$ such that $\Re(z)\in\{0,1/2\}$, then $Q_\lambda$ is relatively dense in $\complexes$.
\end{proposition}

\begin{proof}
By Fact~\ref{fact:trivial-spv} (q.v.\ Lemma~\ref{lem:discrete-subring}), $z$ is a non-real quadratic integer.  If $\Re(z) = 0$, then $z = i\sqrt n$ for some positive $n\in\ints$.  But then, $0\xz z = z^2 = -n \in Q_z \subseteq Q_\lambda$ (Corollary~\ref{cor:extend}), and hence $Q_\lambda$ is relatively dense in $\complexes$ by Proposition~\ref{prop:trivial-sPV}.  Similarly, if $\Re(z) = 1/2$, then $z = (1 + ib\sqrt{4n-1})/2$ for some integers $b$ and $n$ with $b$ odd and $n>0$.  It follows that $z\xz 0 = (1-z)z = (1+b^2(4n-1))/4 \in Q_z\intersect\ints \subseteq Q_\lambda\intersect\ints$.  If $b\ne\pm 1$ or $n>1$, then $z\xz 0 \notin\{0,1\}$, which implies $Q_\lambda$ is relatively dense in $\complexes$ by Proposition~\ref{prop:trivial-sPV}.  If $n = 1 = \pm b$, then $z = e^{\pm i\tau/6}$, whence $-1 = z^3 = 0 \xz (0\xz z) \in Q_z\subseteq Q_\lambda$, and again by Proposition~\ref{prop:trivial-sPV}, $Q_\lambda$ is relatively dense in $\complexes$.
\end{proof}

The next proposition mirrors Corollary~\ref{cor:unit-gives-relative-density} but for trivial sPV $\lambda$.  (In Theorem~\ref{thm:added-points}, if $\lambda$ is trivial, then $k=0$, and thus $\Sigma_P^{(\lambda)} = \ints[\lambda]$ by Definition~\ref{def:Sigma_P}.)

\begin{proposition}\label{prop:added-points-trivial-sPV}
Let $\lambda\in\complexes$ be a trivial sPV number.  If $\ints[\lambda]$ contains an $\alpha\ne 1$ that is a unit of $\ints[\lambda]$, then $Q_\lambda$ is relatively dense in either $\reals$ or $\complexes$ as $\lambda\in\reals$ or $\lambda\in\complexes\cmpl\reals$, respectively.
\end{proposition}

\begin{proof}
It follows from Fact~\ref{fact:trivial-spv} that $\ints[\lambda]$ contains no points $z\in\complexes$ such that $|z|<1$.  Thus if $\alpha$ is a unit of $\ints[\lambda]$, it must be that $|\alpha| = 1$, and since $\alpha$ is either a $\ints$-integer or non-real quadratic integer, we have $\Re(\alpha)\in \{-1,-1/2,0,1/2\}$.  Proposition~\ref{prop:small-real-part} applies if $\Re(\alpha)\in\{0,1\}$.  Proposition~\ref{prop:trivial-sPV} applies if $\Re(\alpha) = -1$ (so $\alpha = -1$).  If $\Re(\alpha) = -1/2$, then $\alpha = (-1\pm i\sqrt 3)/2$, whence $\alpha \xa 0 = (1-\alpha)\alpha = 4i\sqrt 3 \in Q_\alpha\subseteq Q_\lambda$ (Corollary~\ref{cor:extend}), so Proposition~\ref{prop:small-real-part} applies.
\end{proof}

\pagebreak

\newpage


\begin{center}
\noindent{\Large\bf Part III: Bent Paths}\addcontentsline{toc}{part}{Part III: Bent Paths}
\end{center}

\section{The $\lambda$-convex closure of a bent path}
\label{sec:bent-path}

We continue to use $\x$ without subscript to denote $\xl$.

This part of the paper is dedicated to proving Theorem~\ref{thm:bent-path-first}.  We sequester the proof in this way because it uses some concepts and techniques that are not used anywhere else in the paper, particularly, winding number and some basic homology and homotopy theory.  We only need a few facts about these:
\begin{itemize}
\item
For every loop $\ell$ in $\complexes$ (see the start of Section~\ref{sec:equivalences} for definitions) and every point $x$ not on $\ell$, \ $\ell$ has a well defined \emph{winding number} about $x$, which is an integer indicating the number of times $\ell$ ``wraps around'' $x$---positive for counterclockwise, negative for clockwise.
\item
If two loops are homologous in $\complexes \cmpl \{x\}$ (that is, their difference can be expressed as the sum of boundaries of continuous images of disks in $\complexes \cmpl \{x\}$), then they have the same winding number about $x$.
\item
Any two loops that are homotopic in $\complexes \cmpl \{x\}$ (that is, one can continuously deform one loop into the other, staying within $\complexes \cmpl \{x\}$ and keeping the endpoint fixed) are also homologous in $\complexes \cmpl \{x\}$, and thus have the same winding number about $x$.
\item
The winding number of a finite sum of loops---about some $x$ not on any of the loops---is the sum of the winding numbers of the individual loops about $x$.
\item
If $x,y\in\complexes$ and $\ell$ is a loop in $\complexes \cmpl \{x,y\}$ such that there is a path from $x$ to $y$ that does not intersect $\ell$, then $\ell$ has the same winding number about $y$ as it has about $x$.
\end{itemize}

\begin{definition}
A path in $\complexes$ is \emph{bent} if it does not lie within any single straight line.
\end{definition}

Theorem~\ref{thm:bent-path-first} can then be restated as follows:

\begin{theorem}\label{thm:bent-path}
$Q_\lambda(c) = \complexes$ for any $\lambda\in\complexes\cmpl\clcl{0,1}$ and any bent path $c$.
\end{theorem}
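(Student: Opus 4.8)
The plan is to reduce the statement to the assertion that a single step of $\lambda$-extrapolation applied to $c\times c$ already produces a set with nonempty interior, to detect that interior by a winding-number argument, and then to invoke Proposition~\ref{prop:open-set}.

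\emph{Reductions.} First I would replace $c$ by a convenient sub-path. Since the image of $c$ is path-connected and not contained in a line, it contains a simple arc not contained in a line (an elementary point-set argument: join three non-collinear points of the image by simple arcs inside the image and prune); so we may assume $c=\gamma$ is a simple arc $\clcl{0,1}\to\complexes$ whose image is not contained in a line. Applying $\rho_{a,b}$ to the whole configuration (Lemma~\ref{lem:translate-Q-R}) we may normalize $\gamma(0)=0$, $\gamma(1)=1$, and replacing $\lambda$ by $\lambda^*$ if necessary (Fact~\ref{fact:conjugate}) we may assume $\Im\gamma(u_0)>0$ for some $u_0\in\opop{0,1}$; note $\lambda\notin\clcl{0,1}$ forces $\lambda\ne 0,1$ and $2\lambda-1\ne 0$. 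Since $Q_\lambda$ is idempotent, it suffices by Proposition~\ref{prop:open-set}(1) to exhibit a nonempty open subset of $\complexes$ inside $Q_\lambda(\gamma(\clcl{0,1}))$, for then $Q_\lambda(\gamma(\clcl{0,1}))=\complexes$ and hence $Q_\lambda(c)=\complexes$.

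\emph{Main construction.} Let $f:\clcl{0,1}^2\to\complexes$ be $f(s,t):=\rho_{\gamma(s),\gamma(t)}(\lambda)=(1-\lambda)\gamma(s)+\lambda\gamma(t)$. Since $\gamma(\clcl{0,1})\subseteq Q_\lambda(\gamma(\clcl{0,1}))$ and the latter set is $\lambda$-convex, we have $f(\clcl{0,1}^2)\subseteq Q_\lambda(\gamma(\clcl{0,1}))$, so it is enough to show $f(\clcl{0,1}^2)$ has nonempty interior. Let $\ell$ be the loop $f|_{\partial(\clcl{0,1}^2)}$; explicitly, $\ell$ runs $0\to 1-\lambda\to 1\to\lambda\to 0$ and is the concatenation of the four arcs $(1-\lambda)\gamma(\clcl{0,1})$, $(1-\lambda)+\lambda\gamma(\clcl{0,1})$, $\lambda+(1-\lambda)\gamma(\clcl{0,1})$ and $\lambda\gamma(\clcl{0,1})$ (the last two traversed in reverse), each a similar copy of the simple, non-linear arc $\gamma(\clcl{0,1})$. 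The set $U:=\{x\in\complexes-\ell:\ \ell\text{ has nonzero winding number about }x\}$ is open, since the winding number is locally constant on $\complexes-\ell$; and $U\subseteq f(\clcl{0,1}^2)$, because if $x\notin f(\clcl{0,1}^2)$ then $f$ witnesses that $\ell$ is null-homotopic in $\complexes-\{x\}$, forcing its winding number about $x$ to be $0$. Thus it remains only to prove $U\ne\emptyset$.

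\emph{The crux: $\ell$ has a nonzero winding number.} I would show that $\ell$ has a \emph{simple point}: a point $p^*$ lying on exactly one of the four constituent arcs, in its relative interior, at which that arc is locally embedded. Near such a $p^*$ the loop $\ell$ coincides with a short embedded sub-arc separating a disk $B$ about $p^*$ into two pieces, so a path in $B$ joining the two pieces meets $\ell$ exactly once and transversally; hence the winding number of $\ell$ jumps by $\pm 1$ across it and in particular is nonzero somewhere, giving $U\ne\emptyset$. Then $U$ is the required nonempty open subset of $Q_\lambda(\gamma(\clcl{0,1}))\subseteq Q_\lambda(c)$, and $Q_\lambda(c)=R_\lambda(c)=\complexes$ by Proposition~\ref{prop:open-set}(1). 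The existence of a simple point is precisely where the bentness of $c$ is used, and it is the step I expect to be the main obstacle: one must rule out the degenerate configurations in which the four similar arcs overlap so thoroughly that every point of $\ell$ lies on two of them (or is one of the shared vertices $0,1,\lambda,1-\lambda$), which I would handle by a careful but elementary argument --- e.g.\ by choosing the simple sub-arc $\gamma$ generically, or by inspecting an extreme point of $\ell$ in a direction transverse to all the lines through pairs of those vertices. An equivalent route to the same conclusion is to write the winding number of $\ell$ about $x$ as an alternating sum of four values of the function $w\mapsto(\text{total change of }\arg(\gamma(s)-w)\text{ as }s:0\to 1)$, which is continuous off $\gamma(\clcl{0,1})$ but jumps by $2\pi$ across that arc (the complement of a simple arc being connected), and to note that one such jump survives the alternating sum; both routes come down to the same essential point.
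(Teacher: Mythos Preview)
Your overall strategy is sound and genuinely different from the paper's. You map the square by $f(s,t)=\rho_{\gamma(s),\gamma(t)}(\lambda)$, observe $f(\clcl{0,1}^2)\subseteq Q_\lambda(\gamma)$, and argue that the boundary loop $\ell$ has nonzero winding number somewhere, so that $f$ (as a null-homotopy of $\ell$) forces a nonempty open set into the image. The paper instead first isolates a half-plane subpath $d$ via Lemma~\ref{lem:bent-path}, builds a bespoke three-piece loop $\ell'$ inside $Q_\lambda(d)$, and exhibits an explicit homotopy to a point; for nonreal $\lambda$ it must additionally manufacture an ``almost real'' $\mu\in Q_\lambda$ (Lemma~\ref{lem:good-mu}) to keep the geometry under control. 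Your route is more uniform (no real/nonreal split) and more conceptual, while the paper's is longer but fully worked out.

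The genuine gap is exactly where you flag it: showing $\ell$ has nonzero winding number. Your ``simple point'' heuristic is correct in principle---once some $p^*$ lies on exactly one of the four arcs and in its relative interior, local Jordan separation gives the $\pm 1$ jump---but neither of your proposed mechanisms for producing $p^*$ is yet an argument. The extreme-point idea runs into the difficulty that the four arcs come in two pairs with the \emph{same} scaling factor ($A_1,A_3$ scaled by $1-\lambda$; $A_2,A_4$ by $\lambda$), so the $v$-extreme of $\ell$ is automatically the $v$-extreme of whichever arc it lies on, but nothing yet prevents it from being the $v$-extreme of one arc from each pair simultaneously (the relevant extremes of $\gamma$ are taken in two different rotated directions and can coincide for particular $\gamma$). ``Choosing $\gamma$ generically'' is not available: $\gamma$ is handed to you, and your only freedom is to pass to sub-arcs. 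Likewise, the alternating-sum-of-argument idea is suggestive but you have not shown a jump survives the cancellation. A couple of smaller points: the reduction to a simple sub-arc uses arcwise connectedness of Peano continua, which is fine but not as ``elementary'' as advertised; and your conjugation reduction (to force $\Im\gamma(u_0)>0$) is never used afterward, so it can be dropped.

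In short: the framework is a legitimate alternative to the paper's proof, but as written the crux is a promissory note. To make it a proof you must actually establish that the four-arc boundary loop has a simple point (or otherwise has nonzero winding number), and your current sketches do not yet do this.
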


\begin{definition}
Let $\map{c}{\clcl{0,1}}{\complexes}$ be a path.  A \emph{subpath} of $c$ is any path $\map{d}{\clcl{0,1}}{\complexes}$ that starts at some point $c(a)$, follows $c$, and ends at some point $c(b)$, where $0\le a \le b \le 1$.  That is, there exist $0\le a\le b\le 1$ such that $d(x) = c(\rho_{a,b}(x))$ for all $x\in\clcl{0,1}$.

The \emph{loop closure} of $c$, denoted $\loopcl(c)$, is the loop obtained by first following $c$ (double speed) from $c(0)$ to $c(1)$, then along a straight line from $c(1)$ back to $c(0)$.  It can be parameterized thus:
\[ \loopcl(c)(x) = \left\{ \begin{array}{ll}
c(2x) & \mbox{if $0\le x \le 1/2$,} \\
\rho_{c(1),c(0)}(2x-1) & \mbox{if $1/2 \le x\le 1$.}
\end{array} \right. \]
\end{definition}

The proof of Theorem~\ref{thm:bent-path} uses the following lemma:

\begin{lemma}\label{lem:bent-path}
If $\map{c}{\clcl{0,1}}{\complexes}$ is a bent path that does not include any nonempty open subset of $\complexes$, then $c$ includes a subpath $\map{d}{\clcl{0,1}}{\complexes}$ with the following properties:
\begin{enumerate}
\item
$d$ lies entirely in a closed half-plane whose boundary passes through its endpoints $d(0)$ and $d(1)$, and
\item
there exists a point $x\in\complexes$ such that $\loopcl(d)$ has nonzero winding number about $x$ (which is not on $\loopcl(d)$).
\end{enumerate}
\end{lemma}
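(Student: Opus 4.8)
The plan is to carve out of $c$ a ``bulge'' subpath $d$ that meets the line through its two endpoints only at those endpoints, and then to exhibit a point just above the chord about which the associated cap-loop winds exactly once. The winding-number computation, once a genuine (non-degenerate) bulge is in hand, is short; the delicate part is producing such a bulge, and that is the step I expect to be the main obstacle.

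\emph{Constructing the bulge.} Since $c$ is bent, there are $t_1<t_2<t_3$ with $c(t_1),c(t_2),c(t_3)$ non-collinear, hence pairwise distinct. Let $\ell$ be the line through $c(t_1)$ and $c(t_3)$, and let $g\colon[0,1]\to\reals$ be the signed distance to $\ell$, oriented so that $g(t_2)>0$; then $g(t_1)=g(t_3)=0$. Let $(a,b)$ be the connected component of the open set $\{t\in[t_1,t_3]:g(t)\ne 0\}$ that contains $t_2$. Then $g(a)=g(b)=0$ (so $c(a),c(b)\in\ell$), while $g>0$ on $(a,b)$ by the intermediate value theorem. Put $d:=c|_{[a,b]}$. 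Then $d([a,b])\cap\ell=\{c(a),c(b)\}$ and $d((a,b))$ lies in one open half-plane bounded by $\ell$; let $H$ be its closure. Thus $d$ lies in $H$, and $\partial H=\ell$ passes through $d(0)=c(a)$ and $d(1)=c(b)$ — this is property~(1), \emph{provided} the segment $[c(b),c(a)]$ is nondegenerate, i.e.\ $c(a)\ne c(b)$.

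\emph{The obstacle: ensuring $c(a)\ne c(b)$.} This is the one point requiring care. If $g$ has no zero in $(t_1,t_2)\cup(t_2,t_3)$, then $(a,b)=(t_1,t_3)$ and $c(a)=c(t_1)\ne c(t_3)=c(b)$, and we are done. Otherwise the triple must be refined. Choosing $(t_1,t_2,t_3)$ among all non-collinear triples so as to (nearly) minimize $t_3-t_1$ forces that whenever $a>t_1$ one has $c(a)=c(t_3)$ and whenever $b<t_3$ one has $c(b)=c(t_1)$ (otherwise a non-collinear triple straddling $t_2$ with distinct first and last images on $\ell$, hence with shorter parameter interval, would exist); together with $c(t_1)\ne c(t_3)$ this settles all cases except the ``loop'' case $a=t_1$, $b<t_3$ (and its mirror image), in which $d$ is a loop based at $p:=c(t_1)=c(b)$ meeting $\ell$ only at $p$. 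In that case one treats $d$ on its own: if $d$ is bent one recurses inside $[a,b]$, and if the image of $d$ lies in some line $\ell'\ne\ell$ — a ``spike'' from $p$ — one uses that $c$ is globally bent (its image is not contained in $\ell'$, since $c(t_1)\ne c(t_3)$ both lie on $\ell$) to run the bulge construction instead for a point of $c$ off $\ell'$ together with the tip of the spike, yielding a nondegenerate bulge. A compactness argument on $c([0,1])$ prevents an infinite descent. The conclusion of this step is that we may assume $c(a)\ne c(b)$; I expect most of the technical work to be here.

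\emph{Nonzero winding number (property~(2)).} Apply an orientation-preserving similarity of $\complexes$ (it preserves paths, lines, half-planes, and winding numbers) so that $\ell=\reals$, $H=\{z:\Im z\ge 0\}$, and $c(a)=\alpha$, $c(b)=\beta$ with $\alpha,\beta\in\reals$ distinct; say $\alpha<\beta$. By construction $d([a,b])\cap\reals=\{\alpha,\beta\}$ and $d((a,b))\subseteq\{\Im z>0\}$. Fix a real $\mu\in(\alpha,\beta)$; then $\mu\notin d([a,b])$, and since $d([a,b])$ is compact there is $\epsilon>0$ such that the segment $\{\mu+ti:0\le t\le\epsilon\}$ is disjoint from $d([a,b])$. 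Set $x:=\mu+\epsilon i$ and let $\gamma$ be the loop traversing $d$ from $c(a)$ to $c(b)$ and then the straight segment from $c(b)$ back to $c(a)$. Then $x\notin\gamma$: it is off the chord $[\alpha,\beta]$ since $\Im x>0$, and off $d$ by the choice of $\epsilon$. Finally, the downward ray $\{\mu+ti:t<\epsilon\}$ meets $\gamma$ in exactly one point — the point $\mu$, interior to the chord — and transversally, since it misses $d$ entirely (the part of the ray with $\Im\ge 0$ lies inside the chosen segment, and the part with $\Im<0$ lies outside $H\supseteq d$). Hence the winding number of $\gamma$ about $x$ equals $\pm 1\ne 0$, giving property~(2). (If desired, $x$ may in addition be chosen outside $c([0,1])$, using the hypothesis that $c$ includes no nonempty open set.)
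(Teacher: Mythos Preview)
Your approach is genuinely different from the paper's: you try to secure the half-plane property first (by excising a maximal ``bulge'' above the chord line) and then read off the winding number by a single ray-crossing, whereas the paper does the reverse --- it first secures a subpath $e$ with the winding-number property (by decomposing the boundary of the triangle $\triangle ABC$ homologically into three chord-closed arcs of $c$ and picking one with nonzero contribution), and only then carves out a half-plane subpath of $e$. Your ray-crossing computation for property~(2) is correct and pleasantly short once a nondegenerate bulge is in hand.

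The gap is exactly where you flag it: ensuring $c(a)\ne c(b)$. Neither of your two devices works as stated. The ``near-minimization'' of $t_3-t_1$ over non-collinear triples has no force, because for a generic bent path (e.g.\ any arc with nonvanishing curvature) that infimum is $0$; you cannot choose a triple achieving or approximating it in any useful sense, and so you cannot conclude $c(a)\in\{c(t_1),c(t_3)\}$. The recursion likewise lacks a termination argument: the nested intervals $[a^{(n)},b^{(n)}]$ can shrink to a single point, and ``compactness of $c([0,1])$'' does not by itself prevent this. In the degenerate case $c(a)=c(b)$ the closed loop $d$ may well have winding number $0$ about every point off its image (think of a spike out and back), so you really do need to escape that case rather than hope to verify~(2) directly for it.

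The paper sidesteps the whole difficulty with a finiteness argument you might borrow: once you have \emph{some} subpath $e$ with distinct endpoints and a point $x$ about which its chord-closure winds nontrivially, let $L$ be the chord line and $L'$ the parallel through $x$; among the (possibly infinitely many) open excursions of $e$ into the $x$-side of $L$, only finitely many reach $L'$ (a short continuity/compactness argument), and the remaining ones contribute nothing to the winding number about $x$. One of the finitely many reaching excursions therefore already has nonzero winding number about $x$, and by construction it lies in a closed half-plane with its two (automatically distinct) endpoints on the boundary --- this is the $d$ you want.
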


\begin{proof}
Let $A$, $B$, and $C$ be three noncolinear points along $c$.  We can assume without loss of generality that $A=c(0)$, $B=c(1/2)$, and $C=c(1)$ (otherwise, take an appropriately reparameterized subpath of $c$).  By our assumption about $c$ not filling any space, we can choose a point $x$ in the interior of the triangle $\triangle ABC$ that does not lie on $c$.  We first show that there is a subpath $e$ of $c$ that satisfies the second condition of the lemma with respect to $x$.  Let $t$ be the (oriented) loop formed by tracing the perimeter of $\triangle ABC$, starting at $A$, going to $B$, then to $C$, then back to $A$.  Clearly, $t$ has winding number $\pm 1$ about $x$.  (It is not necessary, but we can assume that $t$ goes counterclockwise, so its winding number about $x$ is $+1$.)  Now, $t$ is evidently homologous to the sum of the following three loops (in fact, the difference is empty):
\begin{itemize}
\item
the loop closure $\ell_1$ of $c$,
\item
the loop closure $\ell_2$ of the path obtained by following $c$ backwards from $B$ to $A$, and
\item
the loop closure $\ell_3$ of the path obtained by following $c$ backwards from $C$ to $B$.
\end{itemize}
Since the winding number around $x$ is invariant under homology of loops in $\complexes \cmpl \{x\}$, and the winding number of a sum is the sum of the winding numbers, it follows that the winding numbers (around $x$) of $\ell_1$, $\ell_2$, and $\ell_3$ sum to $1$.  Thus at least one of these three loops has nonzero winding number around $x$.  If it is $\ell_1$, then we take $e := c$; otherwise, if it is $\ell_2$, then we let $e$ be $c$ restricted to $\clcl{0,1/2}$ (reparameterized); and otherwise (if it is $\ell_3$) we take $e$ to be $c$ restricted to $\clcl{1/2,1}$ (reparameterized).  Then $e$ and $x$ satisfy the second condition of the lemma with $d:=e$ (but not necessarily the first).  See Figure~\ref{fig:path1}.  (Note that the orientation of $e$ does not matter here, because the winding number will be nonzero regardless of orientation.)

\begin{figure}[htbp]
\begin{center}
\input{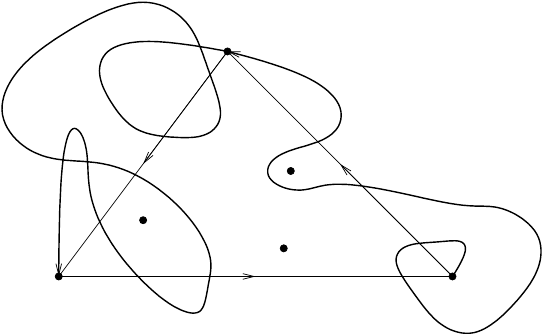_t}
\caption{The curve $c$ and the triangle $\triangle ABC$.  For each $i\in\{1,2,3\}$, the loop $\ell_i$ has nonzero winding number around the point $x_i$.}\label{fig:path1}
\end{center}
\end{figure}

We now find a subpath $d$ of $e$ that satisfies both conditions.  Let $x$ be as above (so that $\loopcl(e)$ has nonzero winding number about $x$), let $L$ be the line through $P:=e(0)$ and $Q:=e(1)$, and let $L'$ be the line through $x$ parallel to $L$.  The situation might look like Figure~\ref{fig:path2}.

\begin{figure}[htbp]
\begin{center}
\input{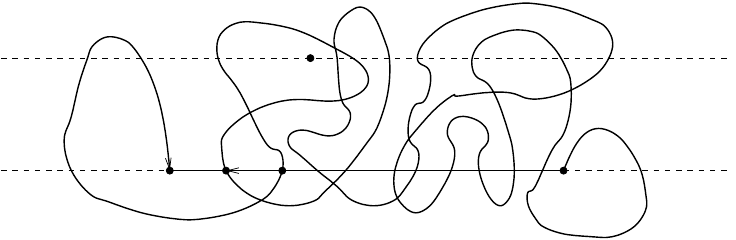_t}
\caption{The curve $e$ and the point $x$.  (The small loop that follows the curve $e$ from $Z$ to $W$ then straight back to $Z$ has nonzero winding number around $x$.)}\label{fig:path2}
\end{center}
\end{figure}

Let $H$ be the open halfplane of $\complexes$ with boundary $L$ and containing $x$.  Then by continuity, $e^{-1}(H)$ is an open subset of $\opop{0,1}$, and hence is the disjoint union of at most countably many open intervals $I_0,I_1,I_2,\ldots\subseteq\opop{0,1}$.  For $i=0,1,2,\ldots\,$, let $e_i$ be the subpath of $e$ restricted to $I_i$.  Now we claim that there can be only finitely many $i$ such that $e_i$ intersects $L'$.  Indeed, suppose there were infinitely many such $i$, say $i_0,i_1,i_2,\ldots\,$.  For all $j\in\{0,1,2\ldots\}$ let $a_j$ be the left boundary of $I_{i_j}$, and pick some $z_j\in I_{i_j}$ such that $e(z_j)\in L'$.  By the continuity of $e$, we must have $e(a_j)\in L$.  The sequence $z_0,z_1,z_2,\ldots$ has some accumulation point $z\in\clcl{0,1}$.  Take some monotone subsequence $z_{j_0},z_{j_1},z_{j_2},\ldots$ converging to $z$, where $j_0<j_1<j_2<\cdots$.  If this sequence is increasing, then we have $z_{j_k} < a_{j_{k+1}} < z_{j_{k+1}}$ for all $k$, and if it is decreasing, then we have $z_{j_{k+1}} < a_{j_k} < z_{j_k}$ for all $k$.  In either case, the sequence $a_{j_0},a_{j_1},a_{j_2},\ldots$ also converges to $z$, but then since $a_j\in e^{-1}(L)$ and $z_j\in e^{-1}(L')$ for all $j$, and both $e^{-1}(L)$ and $e^{-1}(L')$ are closed, we have $z\in e^{-1}(L) \cap e^{-1}(L') = \emptyset$.  Contradiction.  This establishes the claim.

Now by the above claim, we have $e_{i_1},\ldots,e_{i_n}$ intersect $L'$ for some natural number $n$ and some indices $i_1,\ldots,i_n$, and no other $e_i$ intersect $L'$.  Set $\ell := \loopcl(e)$.  For $1\le j\le n$, let $r_j < s_j$ be the boundary points of $I_{i_j}$, and let $\ell_j$ be the loop closure of the subpath $e_{i_j}$ of $e$ that goes from $e(r_j)$ to $e(s_j)$ (note that $e(r_j)$ and $e(s_j)$ both lie on $L$).  We now can express $\ell$ as the \emph{finite} sum $\ell_1 + \ell_2 + \cdots + \ell_n + p$, where $p := \ell - (\ell_1 + \ell_2 + \cdots + \ell_n)$.  This decomposition (for the curve shown in Figure~\ref{fig:path2}) is illustrated in Figure~\ref{fig:path-decomp}.

\begin{figure}[htbp]
\begin{center}
\input{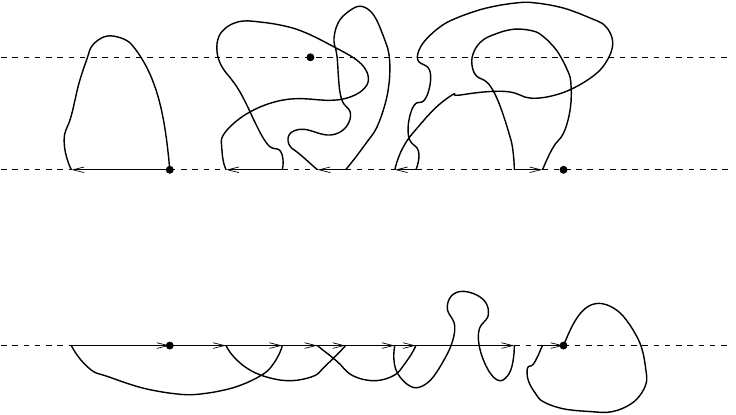_t}
\caption{The decomposition of $\ell$ in the previous figure as a sum $\ell_1+\cdots+\ell_5$ of five loops that intersect $L'$ (top) and those that don't ($p$, bottom).  The ``$2$'' above a segment indicates that it counts double.}\label{fig:path-decomp}
\end{center}
\end{figure}

The winding number of $\ell$ about $x$ is nonzero, because $e$ satisfies the second condition of the lemma with respect to $x$.  This winding number is the sum of the winding numbers (about $x$) of $\ell_1,\ldots,\ell_n$, and $p$.  There is no contribution from $p$ to the winding number, because $p$ stays entirely in the open halfplane bounded by $L'$ and containing $L$ (thus it cannot wrap around $x$).  It follows that there must be some $\ell_j$ that has a nonzero winding number around $x$ (e.g., the small loop through $Z$ and $W$ in Figure~\ref{fig:path2}).  Since $e_{i_j}$ lies entirely in $H$ (except for its endpoints), we can take $d := e_{i_j}$, which then satisfies both conditions of the lemma.
\end{proof}

We first prove the special case of Theorem~\ref{thm:bent-path} where $\lambda$ is real.  This is Proposition~\ref{prop:bent-path-real}, below.  Afterwards, we will explain how to modify the proof for nonreal $\lambda$.

\begin{proposition}\label{prop:bent-path-real}
$Q_\lambda(c) = \complexes$ for any $\lambda\in\reals\cmpl\clcl{0,1}$ and any bent path $c$.
\end{proposition}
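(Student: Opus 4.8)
The plan is to first dispose of the trivial case, then feed $c$ into Lemma~\ref{lem:bent-path} to reduce to a controlled configuration, and finally show that $\lambda$-extrapolation already forces a two-dimensional region. If $c$ includes a nonempty open subset of $\complexes$, then Proposition~\ref{prop:open-set}(1) gives $Q_\lambda(c) = R_\lambda(c) = \complexes$ outright, so assume $c$ includes no nonempty open set. Then $c$ meets the hypothesis of Lemma~\ref{lem:bent-path}, which supplies a subpath $d$ of $c$ lying in a closed half-plane whose bounding line $L$ passes through $d(0)$ and $d(1)$, together with a point $x$ about which the loop $\gamma$ — traverse $d$, then the straight segment from $d(1)$ back to $d(0)$ — has nonzero winding number. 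Since $Q_\lambda(c) \supseteq Q_\lambda(d)$, it suffices to prove $Q_\lambda(d) = \complexes$. Using $Q_\lambda = Q_{1-\lambda}$ (Fact~\ref{fact:inner-dual}) I would first assume $\lambda > 1$, then apply a suitable orientation-preserving similarity $\rho_{a,b}$ — which merely transports the whole picture, by Lemma~\ref{lem:translate-Q-R} and Lemma~\ref{lem:rho-preserves-convexity} — and, if necessary, complex conjugation (Fact~\ref{fact:conjugate}, using that $\lambda$ is real), to arrange $L = \reals$, $d(0) = 0$, $d(1) = t \ge 0$, $d$ contained in the closed upper half-plane $\{\Im z \ge 0\}$, and $\Im x > 0$; the winding number of $\gamma$ about $x$ remains nonzero throughout.

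Next, since $Q_\lambda(d)$ is $\lambda$-convex and contains every point of $d$, it contains $\rho_{d(s),d(u)}(\lambda) = (1-\lambda)d(s) + \lambda d(u)$ for all parameters $s,u$, so
\[ M := (1-\lambda)\,d([0,1]) + \lambda\,d([0,1]) \subseteq Q_\lambda(d)\;. \]
I claim $M$ has nonempty interior. Granting this, Proposition~\ref{prop:open-set}(1) applied to $M$ gives $\complexes = Q_\lambda(M) \subseteq Q_\lambda(Q_\lambda(d)) = Q_\lambda(d)$, hence $Q_\lambda(d) = \complexes$; therefore $Q_\lambda(c) = \complexes$, and $R_\lambda(c) = \overline{Q_\lambda(c)} = \complexes$ by Lemma~\ref{lem:topo-closure}, which finishes the proof.

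It thus remains to show $M$ has nonempty interior, equivalently, after rescaling by $1-\lambda\neq 0$, that $d([0,1]) + \mu\,d([0,1])$ does, where $\mu = \lambda/(1-\lambda) < 0$. This is exactly where the half-plane and winding-number conclusions of Lemma~\ref{lem:bent-path} are needed: because $\gamma$ winds about $x$ with $\Im x > 0$ while its straight part lies on $\reals$, the path $d$ must rise above height $\Im x$, and for heights $y$ just below $\max_s\Im d(s)$ it contains ``bump'' subarcs whose endpoints lie on the horizontal line $\{\Im z = y\}$ with interior strictly above it; the winding hypothesis also forbids $d$ from lying in a line, so such a bump has distinct endpoints and genuinely ascends. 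From a bump one extracts two subarcs of $d$ whose behaviour near the two endpoints points in non-parallel directions (in a purely topological sense), and then the Minkowski-type combination of one with a negative scalar multiple of the other sweeps out a region with nonempty interior — the $\lambda$-extrapolation analogue of the parallelogram produced by two non-parallel segments in Proposition~\ref{prop:open-set}. I expect this last step to be the principal obstacle: because $d$ is only assumed continuous, ``transversality'' and ``the sweep contains a box'' cannot be read from derivatives and must be obtained from connectedness plus the winding number — e.g.\ by choosing the bump height so that the two subarcs project onto overlapping ranges in two independent directions and then filling a small box via the intermediate value theorem. (For nonreal $\lambda$ the same scheme should work once ``non-parallel directions'' is replaced by the characteristic-angle rotation construction used earlier; but that is the content of the extension of Proposition~\ref{prop:bent-path-real} to Theorem~\ref{thm:bent-path}, not of the real case itself.)
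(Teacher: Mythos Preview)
Your reduction via Lemma~\ref{lem:bent-path} and the normalization are correct, and isolating $M = (1-\lambda)\,d([0,1]) + \lambda\,d([0,1]) \subseteq Q_\lambda(d)$ as the set whose interior must be shown nonempty is exactly right --- the paper's construction in fact lives entirely inside your $M$. But the step you yourself flag as ``the principal obstacle'' is a genuine gap, and the route you propose through it (extract bump subarcs near the top, find ``non-parallel directions in a purely topological sense'', fill a box via the intermediate value theorem) does not work for merely continuous $d$. Without derivatives there is no local notion of direction to make transverse, and a continuous curve can oscillate so pathologically that no finite IVT-style argument pins down a box inside the Minkowski combination. Your sketch would be fine for piecewise-smooth $d$; for arbitrary continuous $d$ it is not a proof.

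The paper fills this gap by using the winding-number output of Lemma~\ref{lem:bent-path} directly, not merely as a certificate that $d$ is bent. It builds a three-piece loop $\ell' \subseteq M$ --- concatenating $s \mapsto (1-\lambda)d(s)+\lambda d(0)$, then $s \mapsto (1-\lambda)d(1)+\lambda d(s)$, then $d$ reversed --- and uses the half-plane property of $d$ to split $\ell'$ (via the segment along $L$ from $d(0)$ to $a := \rho_{d(1),d(0)}(\lambda)$) into a loop $\ell_1$ that is a similar copy of $\ell$, hence has nonzero winding number about a small ball $B'$ on the far side of $L$, and a loop $\ell_2$ confined to the near side of $L$, which contributes nothing. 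An explicit homotopy $h$, also valued in $M$, then contracts $\ell'$ to the constant loop $d(0)$; since the winding number about each $y' \in B'$ must drop to zero during the contraction, every such $y'$ lies on some intermediate loop, giving $B' \subseteq h([0,1]^2) \subseteq M$. So the winding number is the engine that manufactures the open set, and it is exactly this engine that your argument is missing.
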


\begin{proof}
We can assume that $\lambda > 1$ by Fact~\ref{fact:inner-dual}.  If $c$ includes a nonempty open subset of $\complexes$, then we are done by Proposition~\ref{prop:open-set}, and so from now on we assume that this is not the case.  Then Lemma~\ref{lem:bent-path} implies that we can take a subpath $d$ of $c$ satisfying the two properties of the lemma with respect to some point $x$, and then it is enough to show that $Q_\lambda(d) = \complexes$.  And for \emph{this} it is enough to show that $Q_\lambda(d)$ contains a nonempty open subset of $\complexes$, thanks again to Proposition~\ref{prop:open-set}.

The first property of Lemma~\ref{lem:bent-path} says that $d$ lies entirely to one side of some line $L$ through $d(0)$ and $d(1)$.  (If $d(0) = d(1)$, then $L$ may not be unique.)  Letting $\ell := \loopcl(d)$, the second property of the lemma says that $\ell$ has nonzero winding number about $x$.

Since $d$ is compact and hence closed, there is some ball $B$ of radius $\eps > 0$ about $x$ that is disjoint from $d \union L$.  Furthermore, $\ell$ has the same nonzero winding number about every point $y\in B$ as it has about $x$.  Now notice that the set $B' := \{y\x d(0) \mid y\in B\}$ is an open neighborhood of the point $x' := x\x d(0)$ (in fact, a ball centered at $x'$ with radius $\eps(\lambda-1)$).  Note that $B'$ lies entirely on the side of $L$ opposite $B$ and $d$.  Figure~\ref{fig:path3} shows a typical situation when $\lambda = 2$.

\begin{figure}[htbp]
\begin{center}
\input{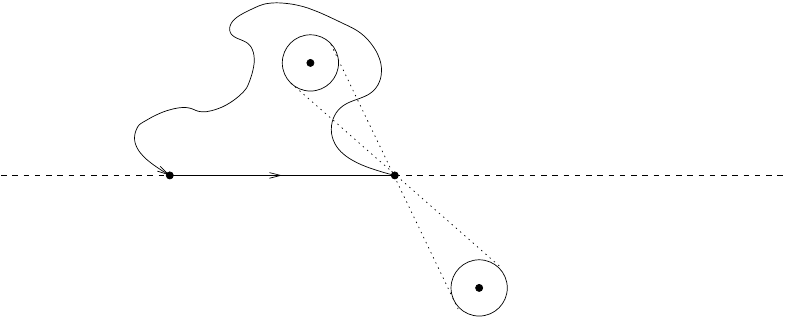_t}
\caption{A typical path $d$ satisfying the two properties of Lemma~\ref{lem:bent-path} when $\lambda = 2$.  The loop $\ell$ has nonzero winding number about every point in the ball $B$ centered at $x$.  The ball $B'$ centered at $x'$ is also shown, and it lies entirely on the opposite side of $L$ from $B$ and $d$.}\label{fig:path3}
\end{center}
\end{figure}

We finish the proof by showing that $B'\subseteq Q_\lambda(d)$, whence $Q_\lambda(d) = \complexes$ by Proposition~\ref{prop:open-set}.  We do this in two steps: (i) we define a loop $\ell'$ entirely included in $Q_\lambda(d)$ that has nonzero winding number about every point $y'\in B'$; and (ii) we exhibit a homotopy $h$ (that stays entirely within $Q_\lambda(d)$) from $\ell'$ to the constant loop $d(0)$.  Assuming for the moment that we can do this, suppose there exists some $y' \in B' \cmpl Q_\lambda(d)$.  Then since $h$ avoids $y'$, it must keep the winding number about $y'$ invariant throughout the deformation of the loop, but this is impossible, because the winding number of $\ell'$ about $y'$ is nonzero whereas the winding number of the constant loop $d(0)$ about $y'$ is zero.  Thus no such $y'$ can exist, and so $B' \subseteq Q_\lambda(d)$ as desired.

The loop $\ell'$ is made up of three segments: the first two are similar to $d$, and the third is $d$ itself in reverse.  We define $\map{\ell'}{\clcl{0,1}}{\complexes}$ formally as follows: for all $s\in\clcl{0,1}$,
\[ \ell'(s) := \left\{ \begin{array}{ll}
d(3s)\x d(0) & \mbox{if $0\le s\le 1/3$,} \\
d(1)\x d(3s-1) & \mbox{if $1/3 \le s \le 2/3$,} \\
d(3-3s) & \mbox{if $2/3 \le s \le 1$.}
\end{array} \right. \]

One readily checks that $\ell'\subseteq Q_\lambda(d)$, since it contains only $\lambda$-extrapolations of points on $d$.  For convenience, we let $a := \ell'(1/3) = d(1)\x d(0)$.  Note that $a$ is colinear with $d(0)$ and $d(1)$, since $\lambda$ is real.  Figure~\ref{fig:path4} shows the $\ell'$ constructed from the path $d$ of Figure~\ref{fig:path3}.

\begin{figure}[htbp]
\begin{center}
\input{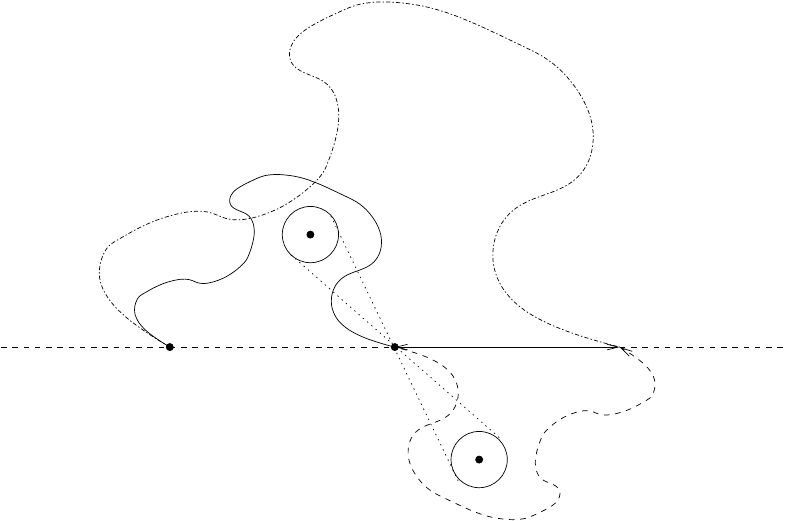_t}
\caption{The loop $\ell'$ constructed from the path $d$ of Figure~\ref{fig:path3}.  The loop starts at $d(0)$, follows the dashed curve to the point $a$, then the dots and dashes to $d(1)$, then the curve $d$ (solid) backwards to $d(0)$.  The sets $B$ and $B'$ are also shown.  The line segment between $d(0)$ and $a$ is used to split $\ell'$ into the sum of two loops: $\ell_1$ lying below $L$ and $\ell_2$ lying above $L$.}\label{fig:path4}
\end{center}
\end{figure}

The loop $\ell'$ is homologous to the sum of two separate loops: $\ell_1$ is the loop closure of the first third of $\ell'$ ($\ell_1$ follows $\ell'$ from $d(0)$ to $a$ (dashed curve in Figure~\ref{fig:path4}) then a straight line segment along $L$ from $a$ back to $d(0)$); $\ell_2$ is the loop closure of the last two thirds of $\ell'$ ($\ell_2$ first follows $\ell'$ from $a$ around through $d(1)$ to $d(0)$, then goes straight from $d(0)$ to $a$ along $L$).  Notice that $B'$ and $\ell_1$ are the images of $B$ and $\ell = \loopcl(d)$, respectively, under the map $z\mapsto z\x d(0)$.  They are both obtained by first rotating by $\pi$ about the point $d(0)$ followed by dilating about $d(0)$ by a factor of $\lambda - 1$.  Thus by similarity, $\ell_1$ has the same nonzero winding number about every point in $B'$ as $\ell$ does about every point in $B$.  Also notice that $\ell_2$ lies entirely to the other side of $L$ from $B'$, because $d$ does, and the middle third of $\ell'$ is just a dilation of $d$ about $d(1)$ by a factor of $\lambda$.  It follows that $\ell_2$ has zero winding number about every point in $B'$, and thus we conclude that $\ell'$ has the same nonzero winding number around $B'$ as $\ell_1$ does.

Finally, we exhibit the promised homotopy $\map{h}{\clcl{0,1}\times\clcl{0,1}}{\complexes}$ from $\ell'$ to the constant loop $d(0)$ and staying inside $Q_\lambda(d)$: for all $s,t\in\clcl{0,1}$, define
\[ h(s,t) := \left\{ \begin{array}{ll}
d(3s(1-t))\x d(0) & \mbox{if $0\le s\le 1/3$,} \\
d(1-t)\x d((3s-1)(1-t)) & \mbox{if $1/3 \le s \le 2/3$,} \\
d((3-3s)(1-t)) & \mbox{if $2/3 \le s \le 1$.}
\end{array} \right. \]

One checks that $h$ is the desired homotopy, that is, $h$ is continuous, and for all $s,t\in\clcl{0,1}$, we have: $h(s,0) = \ell'(s)$; $h(0,t) = h(1,t) = d(0)$; $h(s,1) = d(0)$; and $h(s,t)\in Q_\lambda(d)$.  (Geometrically, for fixed $t$, the loop $h(\cdot,t)$ is defined analogously to the curve $\ell'$, but using only the first $(1-t)$-fraction of $d$.)
\end{proof}

We need one more lemma before we prove Theorem~\ref{thm:bent-path}.  The preceding proof does not quite work as is when $\lambda\notin\reals$, because the point $x' = x\x d(0)$ shown in Figure~\ref{fig:path3} may not lie below the line $L$, which means it may be tangled up with $d$ in such a way that the winding numbers of the two loops $\ell_1$ and $\ell_2$ (see Figure~\ref{fig:path4}) may cancel, leaving a zero winding number of $\ell'$ about $x'$ when we need it to be nonzero.  To fix this, we do not use $\lambda$ but instead use another point $\mu \in Q_\lambda$ that is close enough to being real that the point $x\xm d(0)$ does lie below $L$.  Then the whole proof of Proposition~\ref{prop:bent-path-real} goes through with $\mu$ replacing $\lambda$.

\begin{remark}
It is interesting and a bit frustrating to note that Theorem~\ref{thm:accum-implies-convex} almost suffices to prove Theorem~\ref{thm:bent-path} in the case where $\lambda\notin\reals$, because $Q_\lambda(c)$ clearly has an accumulation point and $Q_\lambda(c) \subseteq F_\lambda = \complexes$.  Unfortunately, Theorem~\ref{thm:accum-implies-convex} only applies to $\lambda$-clonvex sets, and so it only asserts that $R_\lambda(c) = \complexes$.  This is enough to get $Q_\lambda(c)$ dense in $\complexes$ but does not quite show equality.
\end{remark}
  
For any $z\ne 0$, we define $\arg z$ to be the unique $\theta\in\clop{0,\tau}$ such that $z = |z|e^{i\theta}$.

\begin{lemma}\label{lem:good-mu}
For any $\lambda\in\complexes \cmpl \clcl{0,1}$ and any $\eps > 0$, there exists $\mu \in Q_\lambda \cmpl \{1\}$ such that $\arg(\mu-1) < \eps$.
\end{lemma}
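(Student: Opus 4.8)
The plan is to handle three cases and, in each, to pin down an explicit point of $Q_\lambda-\{1\}$ whose difference from $1$ points almost along the positive real axis; we may assume $\eps<\tau$, since otherwise $\arg(\mu-1)<\eps$ is automatic for any $\mu\ne1$. \emph{If $\lambda\in\reals-\clcl{0,1}$}, take $\mu:=\lambda$ when $\lambda>1$ and $\mu:=1-\lambda=\rho_{1,0}(\lambda)\in Q_\lambda$ when $\lambda<0$; either way $\mu\in Q_\lambda$ is real and greater than $1$, so $\arg(\mu-1)=0<\eps$ and $\mu\ne1$. \emph{If $\lambda\notin\reals$ and $R_\lambda$ is convex}, then $R_\lambda=\complexes$ by Theorem~\ref{thm:convex-is-easy}, so $Q_\lambda$ is dense in $\complexes$ by Lemma~\ref{lem:topo-closure}; the set $\{z : |z-1|>1 \text{ and } \eps/4<\arg(z-1)<\eps/2\}$ is open and nonempty, hence meets $Q_\lambda$, and any $\mu$ in the intersection has $\arg(\mu-1)\in\opop{\eps/4,\eps/2}\subseteq\clop{0,\eps}$ and $\mu\ne1$ (since $|\mu-1|>1$).

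\emph{The remaining case is $\lambda\notin\reals$ with $R_\lambda$ not convex.} Here I would first find $\mu_0\in Q_\lambda$ with $|\mu_0|>1$. Since $\lambda\notin\{0,1\}$, the contrapositive of Corollary~\ref{cor:unit-disk} forces $|\lambda|\ge1$ and $|1-\lambda|\ge1$, and I split accordingly: if $|\lambda|>1$ put $\mu_0:=\lambda$; if $|\lambda|=1<|1-\lambda|$ put $\mu_0:=1-\lambda=\rho_{1,0}(\lambda)$; and if $|\lambda|=|1-\lambda|=1$ then $\lambda=(1\pm i\sqrt3)/2$, and exactly as in the proof of Corollary~\ref{cor:unbounded} the point $\mu_0:=\rho_{\lambda,1}(\lambda)=2\lambda-\lambda^2=(3\pm i\sqrt3)/2$ lies in $Q_\lambda$ with $|\mu_0|=\sqrt3>1$. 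In all sub-cases $\mu_0\in Q_\lambda$ and $|\mu_0|>1$.

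Now I would iterate. Since $\mu_0\in Q_\lambda$, Corollary~\ref{cor:extend} gives $Q_{\mu_0}\subseteq Q_\lambda$, and the induction $\mu_0^{k+1}=\rho_{0,\mu_0^k}(\mu_0)$ puts every power $\mu_0^k$ ($k\ge0$) in $Q_{\mu_0}\subseteq Q_\lambda$, with $|\mu_0^k|=|\mu_0|^k\to\infty$. Let $\beta:=\arg\mu_0$. If $\beta$ is a rational multiple of $\tau$, pick $m\ge1$ with $m\beta$ an integer multiple of $\tau$; then $\mu:=\mu_0^m=|\mu_0|^m$ is real and greater than $1$, giving $\arg(\mu-1)=0<\eps$ and $\mu\ne1$. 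If $\beta/\tau$ is irrational, then $(k\beta\bmod\tau)_{k\ge1}$ is dense in $\clop{0,\tau}$, so infinitely many $k$ have $k\beta\bmod\tau\in\opop{\eps/4,\eps/2}$; I choose such a $k$ with $|\mu_0|^k$ large enough that $\arcsin\bigl(1/(|\mu_0|^k-1)\bigr)<\eps/4$. In the triangle with vertices $0$, $\mu_0^k$, $\mu_0^k-1$ the side between the last two has length $1$, so by the law of sines the angle at $0$ is at most $\arcsin\bigl(1/|\mu_0^k-1|\bigr)\le\arcsin\bigl(1/(|\mu_0|^k-1)\bigr)<\eps/4$; hence $\arg(\mu_0^k-1)$ lies within $\eps/4$ of $\arg\mu_0^k=k\beta\bmod\tau$, so $\arg(\mu_0^k-1)\in\opop{0,3\eps/4}\subseteq\clop{0,\eps}$, and $\mu:=\mu_0^k\ne1$ since $|\mu_0^k|>1$. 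In every case $\mu\in Q_\lambda-\{1\}$ with $\arg(\mu-1)<\eps$.

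\emph{Expected difficulty.} The only step requiring care is the irrational sub-case just above: the law-of-sines estimate bounding $|\arg(\mu_0^k-1)-\arg\mu_0^k|$ as $|\mu_0^k|\to\infty$, together with the small check that, because $\arg\mu_0^k$ is bounded away from both $0$ and $\tau$ by at least $\eps/4$, this perturbation keeps $\arg(\mu_0^k-1)$ strictly between $0$ and $\eps$ and so does not wrap around. Everything else is a direct appeal to Theorem~\ref{thm:convex-is-easy}, Lemma~\ref{lem:topo-closure}, and Corollaries~\ref{cor:unit-disk}, \ref{cor:unbounded}, and \ref{cor:extend}; the organizing idea is just to reduce to possessing a point of $Q_\lambda$ of modulus greater than $1$ and then to iterate extrapolation from $0$ to send its powers to infinity.
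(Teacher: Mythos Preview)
Your proof is correct and rests on the same core idea as the paper's: find some $\nu\in Q_\lambda$ with $|\nu|>1$, then use that all positive powers $\nu^k$ lie in $Q_\lambda$ and control $\arg\nu^k$ via the rational/irrational dichotomy on $(\arg\nu)/\tau$. The paper, however, is considerably more streamlined: it invokes Corollary~\ref{cor:unbounded} directly to obtain $\nu\in Q_\lambda$ with $|\nu|>1$ in one line, with no case split at all. Your division into ``$\lambda$ real,'' ``$R_\lambda$ convex,'' and ``$R_\lambda$ not convex'' is unnecessary---Cases~1 and~2 are subsumed by the uniform argument, and your Case~3 construction of $\mu_0$ essentially re-proves the relevant part of Corollary~\ref{cor:unbounded}. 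The paper also replaces your law-of-sines estimate with a cleaner geometric criterion: it suffices that $\mu$ lie in the wedge $\{\Re(\mu)>2,\ 0\le\arg\mu<\arctan((\tan\eps)/2)\}$, which immediately forces $\arg(\mu-1)<\eps$ and avoids the wraparound check you flag at the end.
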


\begin{proof}
Assume, without loss of generality, that $\eps < \pi/2$.  It suffices to find a point $\mu\in Q_\lambda$ such that $\Re(\mu) > 2$ and $\arg \mu < \tan^{-1}((\tan\eps)/2)$.  That is, $\mu$ is somewhere in the shaded region in Figure~\ref{fig:good-mu}.

\begin{figure}[htbp]
\begin{center}
\input{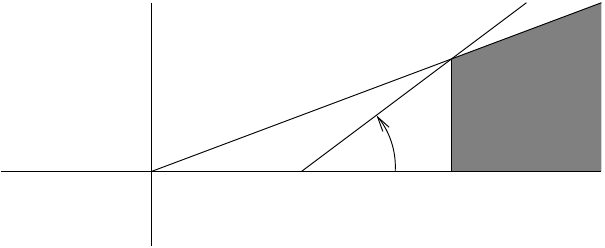_t}
\caption{The shaded region is bounded by the real axis, the vertical line connecting $2$ and $z = 2 + i\tan\eps$, and the line from the origin through $z$.  If $\mu$ is in the closure of this region and $\mu \ne z$, then $\arg (\mu - 1) < \eps$.}\label{fig:good-mu}
\end{center}
\end{figure}

We know that $Q_\lambda$ is unbounded by Corollary~\ref{cor:unbounded}.  Fix some $\nu\in Q_\lambda$ with $|\nu|>1$, and note that all positive powers of $\nu$ are in $Q_\lambda$.  If $(\arg\nu)/\tau$ is rational, then there exists $n_0\in\posints$ such that $\arg(\nu^{kn_0}) = 0$ for all $k\in\posints$; then pick $k$ large enough so that $\mu := \nu^{kn_0}$ has real part $> 2$.  If $(\arg\nu)/\tau$ is irrational, then a standard pigeonhole argument shows that the set $\{(n\arg\nu) \bmod \tau \mid n\in\posints \}$ is dense in $\clop{0,\tau}$, and so contains infinitely many points in $\clop{0,\eps}$.  Thus we can find an $n\in\posints$ such that $\arg(\nu^n) = ((n\arg\nu)\bmod\tau) < \eps$ and $|\nu^n| = |\nu|^n$ is large enough to put $\nu^n$ in the interior of the shaded region.  Set $\mu := \nu^n$.
\end{proof}

Now we prove Theorem~\ref{thm:bent-path} by modifying the proof of Proposition~\ref{prop:bent-path-real} for nonreal $\lambda$.

\begin{proof}[Proof of Theorem~\ref{thm:bent-path}]
Let $c$ be a bent path, and let $\lambda$ be a point in $\complexes \cmpl \reals$.  As in the proof of Proposition~\ref{prop:bent-path-real}, we can assume $c$ includes no nonempty open subset of $\complexes$ and replace $c$ by a subpath $d$ satisfying Lemma~\ref{lem:bent-path}.  As before, let $x$ be given by that Lemma, let $L$ be a straight line through $d(0)$ and $d(1)$ not containing $x$, and let $\ell := \loopcl(d)$.  By extending $d$ a little bit along $L$ if necessary, we can assume that $d(0) \ne d(1)$.  By reversing $d$ if necessary, we can also assume that
\[ \Im\left(\frac{x - d(1)}{d(0) - d(1)}\right) > 0\;, \]
that is, the three points $d(1),d(0),x$ are oriented counterclockwise as they are in Figure~\ref{fig:path3}.

Now let $\eps$ be the angle $\angle x,d(0),d(1)$ formed by rays from $d(0)$ through $x$ and $d(1)$, respectively.  That is,
\[ \eps = \arg\left(\frac{d(1) - d(0)}{x - d(0)}\right)\;. \]
By our choice of orientation, we know that $0 < \eps < \pi$.  By Lemma~\ref{lem:good-mu}, there exists a $\mu \in Q_\lambda$ such that $\arg(\mu - 1) < \eps$.  By part~(1\@.) of Lemma~\ref{lem:extend}, we have $Q_\mu(d) \subseteq Q_\lambda(d)$, and so it suffices to show that $Q_\mu(d)$ contains a nonempty open subset of $\complexes$.  This will be just as we did in the proof of Proposition~\ref{prop:bent-path-real} but with the ``almost real'' $\mu$ replacing the real $\lambda$.  If $\mu$ is in fact real, then $\mu > 1$, and we are done by Proposition~\ref{prop:bent-path-real}, and so we assume $\mu\notin\reals$, whence $\Im(\mu) > 0$.  It follows that for any $z,w\in\complexes$, the three points $z,w,z\xm w$ are oriented counterclockwise.

Set $x' := x\xm d(0)$.  Then---and this is the crucial point---$x'$ lies opposite the line $L$ from $x$, as shown in Figure~\ref{fig:path5}.

\begin{figure}[htbp]
\begin{center}
\input{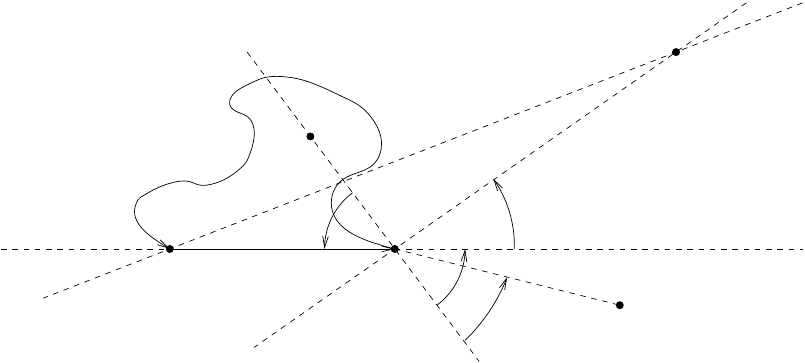_t}
\caption{The point $x' = x\xm d(0)$ (lower right) lies below the line $L$, whereas $x$ lies above $L$.  The point $a = d(1)\xm d(0)$ is also shown, as well as various lines and angles, including the line $L'$ through $d(0)$ and $a$ and the line $L''$ through $d(1)$ and $a$.  The point $x'$ must also lie on the opposite side of $L'$ as $d(1)$, and hence on the same side of $L''$ as $d(0)$.}\label{fig:path5}
\end{center}
\end{figure}

Figure~\ref{fig:path5} is analogous to Figures~\ref{fig:path3} and \ref{fig:path4}.  As before, let $a := d(1)\xm d(0)$.  Let $L'$ be the line through $d(0)$ and $a$, and let $L''$ be the line through $d(1)$ and $a$ (see Figure~\ref{fig:path5}).  Notice that $x'$ must be on the opposite side of $L'$ from $d(1)$, and this together with the position of $x'$ with respect to $L$ implies that $x'$ must be on the same side of $L''$ as $d(0)$.  As before, we can find an open ball $B$ surrounding $x$ such that: (i) $\ell$ has the same nonzero winding number about every $y\in B$ as it has about $x$; and (ii) the open ball $B' = \{y\xm d(0) \mid y\in B\}$ surrounding $x'$ intersects none of the three lines $L$, $L'$, or $L''$.

Now we define the loop $\ell'\subseteq Q_\mu(d)$ similar to the proof of Proposition~\ref{prop:bent-path-real}.
\[ \ell'(s) := \left\{ \begin{array}{ll}
d(3s)\xm d(0) & \mbox{if $0\le s\le 1/3$,} \\
d(1)\xm d(3s-1) & \mbox{if $1/3 \le s \le 2/3$,} \\
d(3-3s) & \mbox{if $2/3 \le s \le 1$.}
\end{array} \right. \]
The first third of $\ell'$ runs from $d(0)$ to $a$ and lies opposite $L'$ from $d(1)$.  As before, its loop closure $\ell_1$ is a rotated, dilated copy of $\ell$ and so has the same nonzero winding number about every point in $B'$ as $\ell$ does about $x$.  The middle third of $\ell'$ runs from $a$ to $d(1)$ and stays on the other side of $L''$ from $d(0)$, and hence also from $B'$.  Finally, our choice of $\mu$ ensures that the last third of $\ell'$, which coincides with $d$, stays on the side of $L$ opposite $B'$.  Thus the loop closure $\ell_2$ of the final two thirds of $\ell'$ cannot contribute to the winding number of $\ell'$ about any point in $B'$.  It follows that $\ell'$ has the same nonzero winding number about every point in $B'$ as $\ell_1$ has.

We define the homotopy $h$ just as before, but with $\xm$ instead of $\xl$:
\[ h(s,t) := \left\{ \begin{array}{ll}
d(3s(1-t))\xm d(0) & \mbox{if $0\le s\le 1/3$,} \\
d(1-t)\xm d((3s-1)(1-t)) & \mbox{if $1/3 \le s \le 2/3$,} \\
d((3-3s)(1-t)) & \mbox{if $2/3 \le s \le 1$.}
\end{array} \right. \]
This homotopy stays within $Q_\mu(d)$ and contracts $\ell'$ to the constant point $d(0)$, whose winding number about any point in $B'$ is zero.  Thus the curve must pass through each point in $B'$ sometime during the deformation, and this puts $B'\subseteq Q_\mu(d)$ as before.  Hence $\complexes = Q_\mu(d) = Q_\lambda(c)$.
\end{proof}

\newpage

\begin{center}
\noindent{\Large\bf Part IV: Concluding Remarks}\addcontentsline{toc}{part}{Part IV: Concluding Remarks}
\end{center}

\section{Conjectures, open problems, and future research}
\label{sec:open}

We have many more questions than we can investigate in any reasonable length of time.  We only give a sampling in this section.  Some may be easy, but we have just not looked at them in depth.

Recall the set $\convex$ (Definition~\ref{def:convex}) and its complement $\discrete := \complexes\cmpl\convex$.  We know that $\discrete$ is closed, discrete, and contains only algebraic integers (Theorem~\ref{thm:general-case}).  We also know that $\discrete$ contains all strong PV numbers (Theorem~\ref{thm:main}), but we know of no other elements of $\discrete$ than these.

\begin{conjecture}\label{conj:CandD}
$Q_\lambda$ is discrete if and only if $\lambda$ is a strong PV number.
\end{conjecture}

To make progress towards this conjecture, we can use various constructions 
we have developed to carve out more territory for $\convex$ in the complex plane.  This approach was started in our paper.  Pinch~\cite[Theorem~15]{Pinch} showed that every real, non-integer element of $\discrete$ must have at least one conjugate in $\opop{0,1}$, but beyond these facts, we know little about $\convex$ and $\discrete$.

There are a number of open questions about $Q_\lambda$ when $\lambda$ belongs to a discrete subring $D$ of $\complexes$.  For example, we conjecture that equality holds in Lemma~\ref{lem:ideal} for every such $D$.  We know that it holds for $D := \ints$ (Theorem~\ref{thm:period}).

\begin{conjecture}\label{conj:ints}
For any discrete subring $D$ of $\complexes$ and for any $\lambda\in D$,
\begin{equation}\label{eqn:mod}
Q_\lambda = \lambda(1-\lambda)D + \{0,1,\lambda,1-\lambda\}\;.
\end{equation}
\end{conjecture}




The next open question (and its generalizations) is one of the most interesting.

\begin{open}\label{open:Qequality}
For which sPV $\lambda$ does set equality hold in Eq.~(\ref{eqn:main3})?
\end{open}

We have settled this question for real quadratic sPV $\lambda$ (Theorem~\ref{theorem:main-candp}), strengthening previous results of Mas{\'a}kov{\'a} et al.~\cite{MPP:sconvexjournal} obtained via different means.  When equality holds, $Q_{\lambda}$ is a model set and relative density of $Q_{\lambda}$ follows immediately.  We thus know $Q_\lambda$ is not a model set of the form $\Sigma_P$
for most real quadratic sPV $\lambda$.  Indeed we have now proved that $4+\sqrt{13}\notin Q_{(5+\sqrt{13})/2}$ (Corollary~\ref{cor:non-membership}), a result hinted at by computer.  Nonetheless, we conjecture that relative density holds in general; see Open Question~\ref{open:reldens} below.  The resulting sets would thus be Meyer sets but not model sets of the form $\Sigma_P$.

\begin{research}\label{research:4plussqrt13}
Generalize the proof that $4+\sqrt{13} \notin Q_{(5+\sqrt{13})/2}$ to prove nonmembership in $Q_\lambda$ of as many specific points as possible for as many $\lambda$ as possible.  Use computation to suggest such points.
\end{research}

All this leads to a general open question:

\begin{open}\label{open:reldens}
Is $Q_\lambda$ relatively dense in $\reals$ for all $\lambda\in \reals\cmpl\clcl{0,1}$?  Is $Q_\lambda$ relatively dense in $\complexes$ for all $\lambda\in \complexes\cmpl\reals$?  (If not, then for which $\lambda$?)
\end{open}

If true, then the discrete $Q_\lambda$ are all Meyer sets by Corollary~\ref{cor:Meyer-set}.  We conjecture that this is indeed true, but so far we can only prove relative density in the restricted cases addressed in Section~\ref{sec:relative-density}, particularly Corollary~\ref{cor:unit-gives-relative-density}.  Proving relative density more generally will apparently require new techniques.
Failing a general proof of relative density, we could at least apply the technique of Corollary~\ref{cor:13-relatively-dense} to show relative density of $Q_\lambda$ for other individual unitary quadratic sPV $\lambda$.



The technique we used to prove Theorem~\ref{thm:add-points} only works when applied to quadratic sPV $\lambda$ such that $\Sigma^{(\lambda)}_P$ contains a non-trivial unit. 
We believe it holds for any quadratic sPV number $\lambda$, either via an alternative proof, or because every $\Sigma^{(\lambda)}_P$ contains a non-trivial unit,

\begin{conjecture}
If $\lambda$ is quadratic sPV, the corresponding cut-and-project set $\Sigma_P$ is finitely generated, that is, there exists a finite $Y\subseteq\Sigma_P$ such that $Q_\lambda(Y) = \Sigma_P$.
\end{conjecture}

Of far greater consequence is the case of arbitrary degree, which is partially answered in Theorem~\ref{thm:added-points}, in that case under the stronger hypothesis that $Q_{\lambda}$ contains a non-trivial unit. Extending the above, we conjecture the following, further generalizing
 Theorem~\ref{thm:added-points} so that it holds for containment
of the entire set $\Sigma_P$ in $Q_\lambda(Y)$.

\begin{conjecture}
  For any sPV $\lambda$, there exists a finite set $Y\subseteq \Sigma_P$ such that $\Sigma_P \subseteq Q_\lambda(Y)$. 
\end{conjecture}

\subsection{$L$-convex, auto-convex, and $\lambda$-semiconvex sets}

\begin{definition}
Let $L$ and $S$ be any subsets of $\complexes$.
\begin{itemize}
\item
$S$ is \emph{$L$-convex} iff $S$ is $\lambda$-convex for all $\lambda\in L$.
\item
Let $Q_L(S)$ be the least $L$-convex superset of $S$.
\item
$S$ is \emph{auto-convex} iff $S$ is $S$-convex.
\end{itemize}
\end{definition}

All the $Q_\lambda$ sets are auto-convex by
the following result:\footnote{Pinch proved a more general result for real $\lambda$: If $S\subseteq\reals$ is auto-convex, then $Q_\lambda(S)$ is auto-convex~\cite{Pinch}.  The proof generalizes trivially to the complex numbers.}
\begin{proposition}\label{prop:extend}
For any $\lambda,\mu\in\complexes$,
if $\mu\in Q_\lambda$, then $Q_\lambda$ is $\mu$-convex, and consequently, $Q_\mu \subseteq Q_\lambda$.
\end{proposition}
The same goes for all $R_\lambda$.  The converse does not hold, however: every subring of $\complexes$ is clearly autoconvex, but there is no $\lambda\in\complexes$ such that $Q_\lambda = \ints[2i]$, for example.  Instead, we hazard a weaker conjecture.

\begin{conjecture}
For any $\lambda\in\complexes$, if $S\subseteq R_\lambda$ is auto-convex, then $S = R_\mu$ for some $\mu \in R_\lambda$.
\end{conjecture}


This conjecture implies that the set $\{R_\lambda \mid \lambda \in \complexes \}$ is closed under arbitrary intersections, because the intersection of any family of closed, auto-convex sets is clearly auto-convex (and closed).

\begin{definition}
Fixing $\lambda\in\complexes$, we will say that a set $S\subseteq\complexes$ is \emph{$\lambda$-semiconvex} iff, for every $a,b\in S$, at least one of the points $a\xl b$ and $b\xl a$ is in $S$.
\end{definition}

Clearly, if $S$ is $\lambda$-semiconvex, then $S$ is also $(1-\lambda)$-semiconvex.  Note that the intersection of two $\lambda$-semiconvex sets need not be $\lambda$-semiconvex.

\begin{research}
For which $\lambda\in\complexes$ do there exist nontrivial, bounded $\lambda$-semiconvex sets?
\end{research}

It is not too hard to see that if $|\lambda - 1/2| \le 1/2$, then any disk (closed or open) is $\lambda$-semiconvex.  There are at least two values of $\lambda$ outside this range where bounded $\lambda$-semiconvex sets exist: if $\lambda = (1\pm i\sqrt 3)/2$, then the vertices of any equilateral triangle form a $\lambda$-semiconvex set.  What other such $\lambda$ are there?

\subsection{Computational Questions}

Questions such as Research Plan~\ref{research:4plussqrt13} lead us to
the following line of inquiry:
If $Q_\lambda$ is discrete, then  $\lambda$ is an algebraic integer, and thus each element of 
$Q_{\lambda}$ can be expressed (encoded) as an integer polynomial in $\lambda$, of degree bounded by the degree of $\lambda$.  $Q_{\lambda}$ is countably infinite.
Furthermore, it is computably enumerable: If
$x \in Q_{\lambda}$, by enumerating all extrapolations
beginning with $\{0,1\}$, we will eventually obtain $x$.  It is
conceivable by similar reasoning that $Q_{\lambda}$ is in $\NP$; however
it is not at all obvious that a succinct proof that $x \in Q_{\lambda}$
(to say nothing of $x \not \in Q_{\lambda}$)
can always
be found, and its computational complexity is
wide open\footnote{See, e.g., \cite{Sipser:theory} for information on the relevant computability
and complexity notions discussed here.}.
For those $\lambda$ that affirmatively answer Open Question~\ref{open:Qequality}, we have an effective procedure to determine if any
$x$ is in $Q_{\lambda}$.
For other $\lambda$, however, especially if $\lambda$ is not sPV but $Q_{\lambda}$ is nevertheless discrete,
we do not know if $Q_{\lambda}$ is decidable (although we do know that $Q_{[x]}$ is decidable).

\begin{open}
 If $Q_{\lambda}$ is discrete, is it decidable?  Uniformly in $\lambda$?
If it is decidable, what is its computational complexity? When is it
true that $Q_{\lambda} \in \NP$,
and how does this depend on $\lambda$?
\end{open}

Note that if Conjecture~\ref{conj:CandD} is false, it is possible
that the answer depends on whether or not $\lambda$ is a strong PV
number. We conjecture that $Q_{\lambda}$ is always decidable,
but have no intuition
regarding its containment in $\NP$, to say nothing of $\Pe$.

\subsection{Miscellaneous Open Problems}

It would be interesting to pin down $\convex \intersect \reals$ and $\convex \intersect \{ z\in\complexes \mid \Re(z) = 1/2 \}$.  These two cases may be easier than the general case, as they present symmetries not shared by all $\lambda$.

\begin{research}\label{research:cintR}
Determine which $\lambda>3$ yield $R_\lambda = \reals$.  Determine which $\lambda$ with $\Re(\lambda) = 1/2$ yield $R_\lambda = \complexes$.
\end{research}

\begin{research}\label{research:Dgraphic}
Get a reasonably good graphical picture of $\discrete$.
\end{research}

Recall that by $Q_{[x]}$ we denote the set of polynomials in $\ints[x]$ generated by the constant polynomials $0$ and $1$, and by repeated applications of $\star_x$; that is, $Q_{[x]}$ is the smallest set of polynomials containing $0,1$ and closed under the binary operation $(p,q)\mapsto (1-x)p+xq$.

$Q_{[x]}$ has some interesting properties.  Recall that any element of $Q_{\lambda}$ can be written as $p(\lambda)$ where $p\in Q_{[x]}$, and conversely.  Pinch showed~\cite[Corollary~4.1]{Pinch} that an integer polynomial $p$ is in $Q_{[x]}$ if and only if there exist $n\ge 0$ and integers $b_0,\ldots,b_n$ such that $p(x) = \sum_{i=0}^n b_i x^i(1-x)^{n-i}$ and $0\le b_i \le \binom{n}{i}$ for all $0\le i\le n$ (see Lemma~\ref{lem:level-characterization}).  We have an alternate characterization of $Q_{[x]}$: an integer polynomial $p$ is in $Q_{[x]}$ if and only if either $p\in\{0,1\}$ or $0<p(\mu)<1$ for all $0<\mu<1$ (see Theorem~\ref{thm:Q-x}).  This latter characterization can be used to computably enumerate the integer polynomials \emph{not} in $Q_{[x]}$, leading to a decision procedure for $Q_{[x]}$.  What, then, is the complexity of deciding $Q_{[x]}$?

There are other interesting (though noncomputational) questions regarding $Q_{[x]}$.  We can list all 14 polynomials in $Q_{[x]}$ of degree $\le 2$, and get a finite upper bound on the number of polynomials in $Q_{[x]}$ of any given degree bound.  However, we don't even know
how many polynomials there are in $Q_{[x]}$ of degree~3.

\begin{open}\label{open:eltsofQ}
How many elements of $Q_{[x]}$ are there of degree $3$?
\end{open}

Our techniques give an upper bound of $717$, and an extensive computer search finds only $90$.  Perhaps Fact~\ref{fact:critical-points} can reduce the upper bound.


Other open problems include the following:\\

Call the triangle with vertices $(0,1,\lambda)$ the \emph{fundamental triangle}.  George McNulty offers the following conjecture~\cite{McNulty:conjecture}:

\begin{conjecture}[McNulty]\label{conj:mcnulty}
If\/ $Q_\lambda$ contains a point in the interior of the fundamental triangle, then $R_\lambda$ is convex.
\end{conjecture}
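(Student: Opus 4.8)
The plan is to prove that, under the hypothesis, $R_\lambda$ contains two points at distance strictly less than $1$; by property~(5) of Corollary~\ref{cor:general-equivalences} this makes $R_\lambda$ convex. Several easy reductions come first. If $\lambda\in\reals$ then the fundamental triangle $\Delta:=(0,1,\lambda)$ is degenerate and has empty interior, so the hypothesis is vacuous; hence assume $\lambda\notin\reals$ and, by Fact~\ref{fact:conjugate}, $\Im\lambda>0$. If $|\lambda|<1$ or $|1-\lambda|<1$ then $R_\lambda$ is already convex by Corollary~\ref{cor:unit-disk}, and if $|\lambda|=1$ or $|1-\lambda|=1$ it is convex by Proposition~\ref{prop:unit-circle} together with $R_\lambda=R_{1-\lambda}$ (Fact~\ref{fact:inner-dual}) --- except when $\lambda$ is a fourth or sixth root of unity, in which case $R_\lambda$ lies in a discrete lattice (the Gaussian or Eisenstein integers) containing no interior point of $\Delta$, so the hypothesis again fails. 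Thus assume $|\lambda|,|1-\lambda|>1$. Let $\mu\in R_\lambda$ be the interior point; it is $\ne 0,1$, and if $|\mu|<1$ or $|1-\mu|<1$ we are done by Corollary~\ref{cor:basic-convex}, so assume $|\mu|,|1-\mu|\ge 1$. Finally, since the interior of $\Delta$ is open and $R_\lambda=\overline{Q_\lambda}$ (Lemma~\ref{lem:topo-closure}), we may assume $\mu\in Q_\lambda$, i.e.\ $\mu=P(\lambda)$ for some explicit $P\in Q_{[x]}$ (Lemma~\ref{lem:poly-eval}).

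The one structural fact I would exploit is a dictionary between interior points of $\Delta$ and real contractions. By the similar-triangle description underlying Proposition~\ref{prop:open-set} (see Figure~\ref{fig:realpath}), a point lies in the interior of $\Delta$ if and only if it equals $\rho_{s,t}(\lambda)$ for a unique pair of reals $(s,t)$ with $0<s<t<1$; uniqueness holds because $(x,y)\mapsto(1-\lambda)x+\lambda y$ is an $\reals$-linear bijection $\reals^2\to\complexes$ when $\lambda\notin\reals$. Writing $c:=t-s\in(0,1)$, the map $\rho_{s,t}\colon z\mapsto s+cz$ is a contraction of ratio $c$ whose unique fixed point $p^\ast=s/(1-t+s)$ satisfies $0<p^\ast<1$, so $\rho_{s,t}^{(n)}(0)\to p^\ast$. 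Were $\rho_{s,t}$ to map $R_\lambda$ into itself, all these iterates would lie in $R_\lambda$, hence so would $p^\ast$ by closure, and $0<p^\ast<1$ would give convexity by Corollary~\ref{cor:basic-convex}. By Lemma~\ref{lem:translate-Q-R} we have $\rho_{s,t}(R_\lambda)=R_\lambda(\{s,t\})$, so what is wanted is $R_\lambda(\{s,t\})\subseteq R_\lambda$.

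Making that inclusion work (or otherwise extracting an accumulation point) is the heart of the proof and the step I expect to be the main obstacle. The naive attempt fails: by Lemma~\ref{lem:basic-translate}, $R_\lambda(\{s,t\})\subseteq R_\lambda$ holds precisely when $s,t\in R_\lambda$, whereas $s$ and $t$ are real numbers that generically are not in $R_\lambda$; the self-similarities of $R_\lambda$ available for free are the maps $\rho_{a,b}$ with $a,b\in R_\lambda$ (Corollary~\ref{cor:translate}), and these contract only when $|a-b|<1$, which is exactly what we are trying to prove. To break this circularity I would build a nested sequence of triangles $\Delta=\Delta_0\supseteq\Delta_1\supseteq\cdots$, each a similar copy of $\Delta$ with all three vertices in $R_\lambda$ and with diameters tending to $0$: given $\Delta_n=(a,b,\rho_{a,b}(\lambda))$ with $a,b\in R_\lambda$, the point $\rho_{a,b}(\mu)\in R_\lambda$ sits strictly inside $\Delta_n$ (Corollary~\ref{cor:translate} plus the similar-triangle picture), and the problem reduces to using $\rho_{a,b}(\mu)$ and the vertices of $\Delta_n$ to carve out $\Delta_{n+1}\subseteq\Delta_n$ of strictly smaller diameter with vertices still in $R_\lambda$ --- effectively performing the $\rho_{s,t}$-contraction one stage at a time while re-anchoring the base of the triangle at legitimate elements of $R_\lambda$. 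A uniform contraction factor would then put infinitely many points of $R_\lambda$ in a bounded region, forcing an accumulation point and hence convexity via Corollary~\ref{cor:general-equivalences}. Should the nested-triangle bookkeeping prove intractable, an alternative is to show that the $\lambda$-clonvex set $R_\lambda(\{0,1,\mu\})\subseteq R_\lambda$ contains a bent path and then invoke Theorem~\ref{thm:bent-path-first}; three suitably placed non-collinear points of a $\lambda$-clonvex set may already force a bent path by a winding-number argument in the spirit of Section~\ref{sec:bent-path}. In any case, turning the purely geometric hypothesis ``there is an interior point'' into an honest contraction, or path, built out of $\lambda$-extrapolations is the single new ingredient needed; the rest of the argument is assembled from Corollaries~\ref{cor:basic-convex}, \ref{cor:general-equivalences}, \ref{cor:translate} and Lemmas~\ref{lem:translate-Q-R}, \ref{lem:basic-translate}.
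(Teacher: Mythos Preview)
This statement is presented in the paper as an \emph{open conjecture} (in Section~\ref{sec:open}), not as a proved theorem; there is no proof in the paper to compare against. So the relevant question is whether your proposal actually closes the gap.

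It does not, and you say so yourself. Your reductions and the $(s,t)$-dictionary are correct and useful, and you correctly diagnose the obstruction: the contraction $\rho_{s,t}$ that you would like to iterate is anchored at real numbers $s,t$ that are not known to lie in $R_\lambda$, so Lemma~\ref{lem:basic-translate} does not apply. Your proposed fix --- a nested sequence of similar triangles $\Delta_0\supseteq\Delta_1\supseteq\cdots$ with vertices in $R_\lambda$ and shrinking diameters --- is exactly the statement to be proved, rephrased. Given $\Delta_n=(a,b,\rho_{a,b}(\lambda))$ and the interior point $\rho_{a,b}(\mu)$, you have four points of $R_\lambda$; any further points you manufacture by $\lambda$-extrapolation are at distances comparable to $|a-b|$ times fixed polynomials in $|\lambda|,|1-\lambda|\ge 1$, and there is no mechanism offered that forces any two of them closer together than $|a-b|$. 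If $R_\lambda$ were discrete with minimum gap $\delta>0$, every triangle you build this way would have side $\ge\delta$, so the ``uniform contraction factor'' you appeal to cannot be obtained without already knowing non-discreteness. The same circularity defeats the bent-path alternative: $R_\lambda(\{0,1,\mu\})\subseteq R_\lambda$, so if $R_\lambda$ is discrete this set is too and contains no path, hence Theorem~\ref{thm:bent-path-first} never fires.

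In short, the ``single new ingredient'' you flag as needed is the entire content of the conjecture; the surrounding scaffolding is sound but does not reduce the problem.
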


%

\begin{definition}\rm
We will call a discrete set $Q_\lambda$ \emph{maximal} if it is not a proper subset of any other discrete $Q_\mu$.
\end{definition}

\begin{open}\label{open:maximal}
Do maximal $Q_\lambda$ exist?  Is there an easy way to characterize the $\lambda$ such that $Q_\lambda$ is maximal?  Is there an interesting notion of minimal $Q_\lambda$?
\end{open}

\begin{open} Is there an easy way to characterize the minimum polynomials of strong PV numbers?  Short of that, find such polynomials of higher and higher degree.  (We currently can characterize all such polynomials of degree $\le 4$.)
\end{open}
\begin{open}
We say that a $\lambda$-convex set $A$ is \emph{cohesive} if $A = Q_\lambda(A\cmpl T)$ for any finite set $T$.  We can show that $\Sigma_P$ (the right-hand side of Eq.~(\ref{eqn:main3})) is cohesive for all unitary quadratic sPV $\lambda$---if you remove $0$ and $1$.  This implies that
there are infinitely many points ``missing'' from $Q_{(5+\sqrt{13})/2}$, that is, $\Sigma_{\clcl{0,1}}^{(\lambda)}\cmpl Q_\lambda$ is infinite for $\lambda = (5+\sqrt{13})/2$.  For which $\lambda$ is $Q_\lambda\cmpl\{0,1\}$ cohesive?  All sPV numbers, perhaps?
\end{open}

\begin{open}
Let $A$ be $\lambda$-convex as in the previous question.  An \emph{essential point} of $A$ is some $x\in A$ such that $A\cmpl\{x\}$ is $\lambda$-convex\footnote{And hence $x \not \in Q_{\lambda}(A\cmpl\{x\})$}.  For example, $0$ and $1$ are both essential points of $Q_\lambda$ for all sPV $\lambda$.  The question is: if $Q_\lambda\cmpl\{0,1\}$ is not cohesive, must it have an essential point?  More generally, what is the smallest size of a set you can remove from a noncohesive set that leaves a $\lambda$-convex set?
\end{open}

\newpage

\section*{Acknowledgments}\addcontentsline{toc}{section}{Acknowledgements}

We wish to thank Rohit Gurjar for proving part of Theorem~\ref{thm:main}---the case of non-real $\lambda$.  We would also like to thank George McNulty for supplying an interesting conjecture related to this work and for help simplifying the proof of Theorem~\ref{thm:convex-is-easy}, as well as other useful discussions and pointers to the literature.  We are grateful to Stuart Kurtz for suggesting Lemma~\ref{lem:kurtz}, which is key to proving that $R_\lambda$ is convex for all transcendental $\lambda$.  We would also like to thank Joshua Cooper, Arpita Korwar, Jochen Me{\ss}ner, Danny Rorabaugh, Heather Smith, and Thomas Thierauf for interesting and helpful discussions.

\newpage

\appendix

\section{Appendix: Supporting Facts}

In this appendix we cover a few propositions that support our results but are not central to them.

The next proposition backs up an assertion made in the remark on page~\pageref{rem:no-new-lambda} following the proof of the main theorem of Section~\ref{sec:alg-int}---Theorem~\ref{thm:main}.

\begin{proposition}\label{prop:no-new-lambda}
Let $D$ be any discrete subring of $\complexes$.  Suppose that $p\in D[x]$ is a monic polynomial of degree $d>0$ such that all but one of its roots lie in the open unit interval $\opop{0,1}$.  Then its remaining root $\lambda \notin \opop{0,1}$ is a strong PV number.
\end{proposition}

\begin{proof}
Let $\mu_0,\ldots,\mu_{d-2} \in \opop{0,1}$ be the roots of $p$ other than $\lambda$.  We have two cases:
\begin{description}
\item[Case~1: $\lambda \in \reals$.] In this case, all the roots of $p$ are real, which implies all the coefficients of $p$ are real.  These coefficients are thus integers by Lemma~\ref{lem:discrete-ring}, and so $p$ is in $\ints[x]$, making $\lambda$ an algebraic integer.  The conjugates of $\lambda$ are among $\mu_0,\ldots,\mu_{d-2}$ (not necessarily all; we are not assuming $p$ is irreducible).  Thus $\lambda$ is a strong PV number in this case.
\item[Case~2: $\lambda \notin\reals$.] Let $p^*$ be the polynomial obtained by complex-conjugating all the coefficients of $p$.  We have that $p^*$ is also in $D[x]$ (this follows from Lemma~\ref{lem:discrete-subring}, for example), and the roots of $p^*$ are $\mu_0,\ldots,\mu_{d-2},\lambda^*$.  Letting $q := pp^*$, we see that $q$ is monic, that $q\in D[x]$, and that
\[ q(x) = (x-\lambda)(x-\lambda^*)(x-\mu_0)^2\cdots(x-\mu_{d-2})^2\;. \]
The product of the first two factors is in $\reals[x]$, as are each of the other factors, and so $q\in\reals[x]$.  But then $q\in\ints[x]$ by Lemma~\ref{lem:discrete-ring}, whence $\lambda$ is an algebraic integer.  The conjugates of $\lambda$ are evidently among $\mu_0,\ldots,\mu_{d-2}$ and $\lambda^*$, and so this again makes $\lambda$ a strong PV number.
\end{description}
\end{proof}

The next theorem is a standard result in algebra, and its corollary justifies an assertion made in the proof of Proposition~\ref{prop:Sigma_P-sPV}.  Background concepts are taken from Chapter~4 of Jacobson~\cite{Jacobson:BasicAlgebraI}.

\begin{theorem}\label{thm:Galois-stuff}
Let $F$ be a field and let polynomial $f\in F[x]$ of degree $d > 0$ be separable\footnote{A polynomial $f\in F[x]$ is \emph{separable} if each of its irreducible factors has distinct roots in any splitting field of $f$ over $F$; equivalently, $f$ is coprime with its formal derivative.  If $\characteristic\, F = 0$, then every nonzero $f\in F[x]$ is separable.} and irreducible over $F$.  Let $E$ be a splitting field of $f$ over $F$, and let $R := \{\mu_0,\ldots,\mu_{d-1}\}$ be the set of roots of $f$ in $E$.  Let $p\in F[x]$ be a polynomial over $F$, and let $z := p(\mu_0) \in E$.  Then $z$ is algebraic over $F$, and its conjugates are $p(\mu_0),\ldots,p(\mu_{d-1})\in E$ (not necessarily distinct).  Furthermore, the mapping $p$ restricted to $R$ is $m$-to-one for some positive integer $m$.
\end{theorem}

\begin{proof}
Note that $|R| = d$ (i.e., the $\mu_i$ are pairwise distinct), because $f$ is separable and irreducible.

$E$ is finite dimensional (as a vector space) over $F$, and so $E$ is an algebraic extension of $F$, making $z$ algebraic over $F$.  (Indeed, $E$ is a Galois extension of $F$.)  Let $G := \gal(E/F)$ be the Galois group of $E/F$.\footnote{That is, the group of field automorphisms of $E$ that leave $F$ pointwise fixed.}  Then $G$ acts on $R$, i.e., each element of $G$ permutes the elements of $R$.  In fact, $G$ is isomorphic to the group of permutations on $R$ induced by $G$.  We also know that $G$ acts transitively on $R$, because $f$ is irreducible~\cite[Theorem~4.14, p.~259]{Jacobson:BasicAlgebraI}.  In particular, $R = \{ \eta(\mu_0) : \eta \in G\}$.  Let $S := \{p(\mu) : \mu \in R\} = \{ p(\mu_i) : 0\le i < d\}$.  Then clearly, $G$ also acts on $S$.  Furthermore, this action is also transitive, which can be seen as follows: We have
\[ S = \{ p(\eta(\mu_0)) : \eta\in G\} = \{\eta(z) : \eta\in G\}\;, \]
and so for any $\mu,\nu\in S$, there exist $\eta,\theta\in G$ such that $\mu = \eta(z)$ and $\nu = \theta(z)$.  Then $\nu = \theta(z) = (\theta\eta^{-1})(\mu)$.

Let $g\in F[x]$ be the minimum (monic) polynomial of $z$.  For every $\eta\in G$ we have
\[ 0 = \eta(0) = \eta(g(z)) = g(\eta(z)) = g(\eta(p(\mu_0))) = g(p(\eta(\mu_0))) = g(p(\mu_i)) \]
for the unique $0\le i < d$ such that $\mu_i = \eta(\mu_0)$.  Thus $p(\mu_i)$ is a conjugate of $z$, and because $G$ acts transitively on $S$, all elements of $S$ are conjugates of $z$.  This shows one direction of the theorem; it remains to show that $z$ has no other conjugates but these.

Let $c>0$ be the degree of $g$ (and of $z$).  We have $F \subseteq F(z) \subseteq F(\mu_0) \subseteq E$, and $[F(z):F] = c$.\footnote{For any field extension $K$ of $F$, \ $[K:F]$ is the \emph{index} of $K$ over $F$, i.e., the dimension of $K$ viewed as a vector space over $F$.}  Set $H := \gal(E/F(z))$.  Then $H$ is a subgroup of $G$ (the automorphisms of $G$ that fix $F(z)$ pointwise), and by the fundamental Galois pairing~\cite[p.~239]{Jacobson:BasicAlgebraI}, we have
\[ \frac{|G|}{|H|} = \frac{[E:F]}{[E:F(z)]} = \frac{[E:F]}{[E:F]/[F(z):F]} = [F(z):F] = c\;. \]
Now consider how $G$ and $H$ act on $S$.  The group $H$ contains exactly those elements of $G$ that fix $z$: since $z$ generates $F(z)$, any automorphism of $E/F$ that fixes $z$ also fixes every element of $F(z)$.  Thus $H = \stab~z$, the stabilizer of $z$ in $G$.  A standard result of group theory (see \cite[Theorem~1.10, p.~75]{Jacobson:BasicAlgebraI} and the text that follows the proof) is that $|S| = |G|/|\stab~z|$, because $G$ acts transitively on $S$.  Thus we have
\[ |S| = \frac{|G|}{|\stab~z|} = \frac{|G|}{|H|} = c\;, \]
and this implies that all the conjugates of $z$ ($c$ many of them) lie in $S$.

To prove the last statement, we observe that
for all $z\in S$ and $\eta\in G$ we have
\[ \eta(\{ \mu\in R : p(\mu) = z \}) = \{\mu \in R : p(\mu) = \eta(z) \}\;, \]
whence the statement follows by the transitivity of $G$ acting on $S$, and for each $z\in S$ the set $\{ \mu\in R : p(\mu) = z \}$ has size $m := |R|/|S| = d/c$.
\end{proof}

Define the \emph{field norm} $N(z)$ of $z\in E$ to be the product of its conjugates.  Up to change of sign, $N(z)$ is the constant term in the minimal (monic) polynomial of $z$.

\begin{corollary}\label{cor:norm}
Let $F$ be a field with characteristic $0$, let $\mu$ be algebraic over $F$, and let $p\in F[x]$ be a polynomial.  Then $\prod_\nu p(\nu) = N(p(\mu))^m$ for some positive integer $m$, where $\nu$ runs through the conjugates of $\mu$.
\end{corollary}

\begin{proof}
Let $f\in F\in F[x]$ be the minimal polynomial of $\mu$, and let $E$ be a splitting field of $f$ over $F$.  Let $d$ be the degree of $f$ (i.e., of $\mu$), let $c$ be the degree of $p(\mu)$, and let $m := d/c$.  Then by Theorem~\ref{thm:Galois-stuff}, each conjugate of $p(\mu)$ has multiplicity $m$ in the multiset $\{ p(\nu) : \mbox{$\nu$ conjugate to $\mu$} \}$ of the conjugates of $p(\mu)$.
\end{proof}

\begin{remark}
The theorem and corollary above both go through for any $F$-rational function $p\in F(x)$, not necessarily a polynomial.
\end{remark}

The corollary below follows by setting $F := \rats$ and recalling that all polynomials are separable in characteristic~$0$.

\begin{corollary}\label{cor:Q-conjugates}
If $\lambda$ is algebraic (over $\rats$) with conjugates $\mu_0,\ldots,\mu_{d-1}$ and $p\in\rats[x]$ is a polynomial, then $p(\lambda)$ is algebraic with conjugates $p(\mu_0),\ldots,p(\mu_{d-1})$.
\end{corollary}

\begin{corollary}\label{cor:Z-algebraic}
If $\lambda$ is an algebraic integer and $z\in\ints[\lambda]$, then $z$ is an algebraic integer, and for every conjugate $c$ of $z$, there exists a conjugate $\mu$ of $\lambda$ such that $c = h_\mu(z)$, where $\map{h_\mu}{\ints[\lambda]}{\ints[\mu]}$ is the unique ring isomorphism mapping $\lambda$ to $\mu$.
\end{corollary}

\begin{proof}
This follows from Corollary~\ref{cor:Q-conjugates} and the fact (see Jacobson~\cite[Theorem~4.23]{Jacobson:BasicAlgebraI} for example) that the algebraic integers form a subring of $\complexes$.
\end{proof}

\newpage

\section{Appendix: Computer-Aided Derivations}
\label{sec:computer-derivations}

The following table gives a derivation in $Q_\lambda$ of $\alpha = 9-16\lambda$, the fundamental unit of $\ints[\lambda]$, where $\lambda = -(3+\sqrt{17})/2$ (minimal polynomial $x^2+3x-2$), used in the proof of Proposition~\ref{prop:sqrt17}.  This may not be the shortest derivation possible; it was found by a program that favors combining points with small absolute value.  Here, $\x$ means $\xl$.
\[ \begin{array}{r|l|l}
\textup{point} & \textup{value} & \textup{equals} \\ \hline
p_1    & =1-\lambda       & =1\x 0\\
p_2    & =3-5\lambda      & =p_1\x 0\\
p_3    & =13-23\lambda    & =p_2\x 0\\
p_4    & =\lambda         & =0\x 1\\
p_5    & =2-3\lambda      & =0\x p_4\\
p_6    & =-3+6\lambda     & =p_1\x p_5\\
p_7    & =12-21\lambda    & =0\x p_6\\
p_8    & =17-30\lambda    & =p_3\x p_7\\
p_9    & =-1+2\lambda     & =p_3\x p_8\\
p_{10} & =-5+9\lambda     & =p_9\x 0\\
p_{11} & =-23+41\lambda   & =p_{10}\x 0\\
p_{12} & =-105+187\lambda & =p_{11}\x 0\\
p_{13} & =4-7\lambda      & =0\x p_9\\
p_{14} & =-14+25\lambda   & =0\x p_{13}\\
p_{15} & =27-48\lambda    & =p_{10}\x p_{14}\\
p_{16} & =-96+171\lambda  & =0\x p_{15}\\
p_{17} & =-137+244\lambda & =p_{12}\x p_{16}\\
\alpha & =9-16\lambda& =p_{12}\x p_{17}
\end{array} \]

\pagebreak
The following table gives the derivation of $Y$
(from $\{0,1\}$, so that $Y \subseteq R_\lambda$) of the set
$Y$ used in Lemma~\ref{lem:Sigma7-containment}.

\begin{table}[h!]
\[ \begin{tabular}{|l|l|l|l|}
\hline
\textup{point} & \textup{element of $Y$} & \textup{value} & \textup{equals} \\ \hline
$p_0$    & $= p(0)$ &  = $0$       		  & ~~~- - \\
$p_1$    & $= 1$  &  = $1$       		  & ~~~- - \\
$p_2$    & = $\lambda$  & $= \lambda$      & = $p_0\x p_1$\\
$p_3$    & = $1-\lambda$  &$= 1-\lambda$    & = $p_1\x p_0$ \\
$p_4$    & = $p(1)$      & $=  1-\lambda+\lambda^2$    & = $p_1\x  p_2$\\
$p_5$    & = $p(-1)$     & $ = \lambda-\lambda^2 $     & = $p_0\x  p_3$\\
$p_6$    & = $p(2)$      & $= 1-2\lambda+2\lambda^2$   & = $p_3\x  p_2$\\
$p_7$    & = $p(-2)$     & $= 2\lambda-2\lambda^2$     & = $p_2\x  p_3$\\
$p_8$    & = $p(3)$      & $= 1-3\lambda+3\lambda^2$   & = $p_5\x  p_3$\\
$p_9$    & = $p(-3)$     & $= 3\lambda-3\lambda^2$     & = $p_4\x  p_2$\\
$p_{10}$   & = $p(4)$    & $= 1-4\lambda+4\lambda^2$   & = $p_5\x  p_0$\\
$p_{11}$   & = $p(-4)$   & $= 4\lambda - 4\lambda^2$   & = $p_4\x  p_1$\\
$p_{12}$   & = $p(5)$    & $= 1-4\lambda+5\lambda^2$   & = $p_0\x  p_4$\\
$p_{13}$   & = $p(-5)$   & $= 4\lambda-5\lambda^2$     & = $p_1\x  p_5$\\
$p_{14}$   & = $p(6)$    & $= 2-6\lambda+6\lambda^2$   & = $p_3\x  p_4$\\
$p_{15}$   & = $p(-6)$   & $= -1+6\lambda-6\lambda^2$  & = $p_2\x  p_5$\\
$p_{16}$   & = $p(7)$    & $= 2-7\lambda+7\lambda^2$   & = $p_7\x  p_3$\\
$p_{17}$   & = $p(-7)$   & $= -1+7\lambda-7\lambda^2$  & = $p_6\x  p_2$\\
$p_{18}$   & = $p(8)$    & $= 2-8\lambda+8\lambda^2$   & = $p_7\x  p_0$\\
$p_{19}$   & = $p(-8)$   & $= -1+8\lambda-8\lambda^2$  & = $p_6\x  p_1$\\
$p_{20}$   & = $p(9)$    & $= 2-8\lambda+9\lambda^2$   & = $p_7\x  p_2$\\
$p_{21}$   & = $p(-9)$   & $= -1+8\lambda-9\lambda^2$  & = $p_6\x  p_3$\\
$p_{22}$   & = $p(10)$   & $= 2-9\lambda+10\lambda^2$  & = $p_0\x  p_6$\\
$p_{23}$   & = $p(-10)$  & $= -1+9\lambda-10\lambda^2$ & = $p_1\x  p_5$\\
$p_{24}$   & = $p(11)$   & $= 3-11\lambda+11\lambda^2$  & = $p_3\x  p_6$\\
$p_{25}$   & = $p(-11)$  & $= -2+11\lambda-11\lambda^2$ & = $p_2\x  p_7$\\
$p_{26}$   & = $p(12)$   & $= 3-12\lambda+12\lambda^2$   & = $p_9\x  p_0$\\
$p_{27}$   & = $p(-12)$  & $= -2+12\lambda-12\lambda^2$  & = $p_8\x  p_1$\\
$p_{28}$   & = $p(13)$   & $= 3-12\lambda+13\lambda^2$   & = $p_9\x  p_2$\\
$p_{29}$   & = $p(-13)$  & $= -2+12\lambda-13\lambda^2$  & = $p_8\x  p_3$\\
$p_{30}$   & = $p(14)$   & $= 3-13\lambda+14\lambda^2$   & = $p_5\x  p_6$\\
$p_{31}$   & = $p(-14)$  & $= -2+13\lambda-14\lambda^2$  & = $p_4\x  p_7$\\
$p_{32}$   & = $p(15)$   & $=  3-14\lambda+15\lambda^2$  & = $p_0\x  p_8$\\
$p_{33}$   & = $p(-15)$  & $= -2+14\lambda-15\lambda^2$  & = $p_1\x  p_9$\\
$p_{34}$   & = $p(16)$   & $=  4-16\lambda+16\lambda^2$  & = $p_3\x  p_8$\\
$p_{35}$   & = $p(-16)$  & $=  -3+16\lambda-16\lambda^2$ & = $p_2\x  p_9$\\
$p_{36}$   & = $p(17)$   & $=  4-16\lambda+17\lambda^2$  & = $p_{11}\x p_2$\\
$p_{37}$   & = $p(-17)$  & $= -3+16\lambda-17\lambda^2$  & = $p_{10}\x p_3$\\
\hline
\end{tabular} \]
\caption{Derivation of $Y$}\label{tab:derivation}
\end{table}

\bibliography{master}\addcontentsline{toc}{part}{References}

\begin{thebibliography}{10}

\bibitem{AZ:proofs-from-the-book4}
Martin Aigner and G\"{u}nter~M. Ziegler.
\newblock {\em Proofs from {THE BOOK}}.
\newblock Springer-Verlag, 4 edition, 2010.

\bibitem{AS:auto-sequences}
Jean-Paul Allouche and Jeffrey Shallit.
\newblock {\em Automatic Sequences: Theory, Applications, Generalizations}.
\newblock Cambridge University Press, 2003.

\bibitem{BG:aperiodic-order}
Michael Baake and Uwe Grimm.
\newblock {\em Aperiodic Order: Volume 1: A Mathematical Invitation}, volume
  149 of {\em Encyclopedia of Mathematics and Its Applications}.
\newblock Cambridge University Press, 2013.

\bibitem{BeMo}
S.~Berman and R.~V. Moody.
\newblock The algebraic theory of quasicrystals with five-fold symmetries.
\newblock {\em J. Phys. A: Math. Gen.}, 27:115--129, 1994.

\bibitem{Calvert}
I.~Calvert.
\newblock On the closure of a class of subsets of the real line.
\newblock {\em Math. Proc. Cambridge Philos. Soc.}, 83 (2):181Ð182, 1978.

\bibitem{Chebyshev:oeuvresI}
P.~L. Chebyshev.
\newblock {\em {\OE}uvres}, volume~1.
\newblock Acad\'{e}mie Imp\'{e}riale des Sciences, St.\ Petersburg, 1899.

\bibitem{CD:medial3}
J.R. Cho and J.~Dudek.
\newblock Medial idempotent groupoids {III}.
\newblock {\em J. Austral.\ Math.\ Soc (Series A)}, 68:312--320, 2000.

\bibitem{deBruijn:Penrose-tilings}
N.~G. de~Bruijn.
\newblock Algebraic theory of {P}enrose's non-periodic tilings of the plane,
  {I}, {II}.
\newblock {\em Proc.\ Koninklijke Nederlandse Akademie van Wetenschapen},
  84(1):39--66, 1981.

\bibitem{Dudek:medial1}
J.~Dudek.
\newblock Medial idempotent groupoids {I}.
\newblock {\em Czechoslovak Math. J.}, 41:249--259, 1991.

\bibitem{Dudek:medial2}
J.~Dudek.
\newblock {\em Medial idempotent groupoids {II}}, volume~9 of {\em
  Contributions to General Algebra}.
\newblock Linz, 1994.

\bibitem{Edwards}
H.~M. Edwards.
\newblock {\em Fermat's Last Theorem: A Genetic Introduction to Algebraic
  Number Theory}.
\newblock Springer, New York, 1977.

\bibitem{Goldman}
J.~R. Goldman.
\newblock {\em The Queen of Mathematics}.
\newblock A K Peters, Wellesley, 1998.

\bibitem{GS:tilings}
B.~Gr\"{u}nbaum and G.~C. Shephard.
\newblock {\em Tilings and Patterns}.
\newblock W.H. Freeman \& Company, 1986.

\bibitem{Gurjar:nonreal-lambda}
R.~Gurjar, 2012.
\newblock Private communication.

\bibitem{Hoeffding:inequality}
W.~Hoeffding.
\newblock Probability inequalities for sums of bounded random variables.
\newblock {\em Journal of the American Statistical Association},
  58(301):13--30, March 1963.

\bibitem{Jacobson:BasicAlgebraI}
N.~Jacobson.
\newblock {\em Basic Algebra {I} (2nd ed.)}.
\newblock W. H. Freeman, 1985.

\bibitem{JK:medial-groupoids}
J.~Je\v{z}ek and T.~Kepka.
\newblock {\em Medial groupoids}.
\newblock Rozpravy \v{C}SAV, \v{R}ada mat.\ a p\v{r}\'\i r. V\v{e}d 93/2.
  Academia, Praha, 1983.

\bibitem{Kalman:generalized-fib}
D.~Kalman.
\newblock Generalized {F}ibonacci numbers by matrix methods.
\newblock {\em Fibonacci Quarterly}, 20(1):73--76, 1982.

\bibitem{Lagarias:meyer}
J.~C. Lagarias.
\newblock Meyer's concept of quasicrystal and quasiregular sets.
\newblock {\em Comm. Math. Phys.}, 179:365--376, 1996.

\bibitem{Lagarias:quasicrystals}
J.~C. Lagarias.
\newblock Mathematical quasicrystals and the problem of diffraction.
\newblock In M.~Baake and R.~V. Moody, editors, {\em Directions in Mathematical
  Quasicrystals}. CRM Monograph series, AMS, Rhode Island, 2000.

\bibitem{LW:substitution-delone-sets}
J.C. Lagarias and Y.~Wang.
\newblock Substitution {D}elone sets.
\newblock {\em Discrete {\&} Computational Geometry}, 29(2):175--209, 2003.

\bibitem{MPP:selfsimilar}
Z.~Mas{\'a}kov{\'a}, J.~Patera, and E.~Pelantov{\'a}.
\newblock Self-similar delone sets and quasicrystals.
\newblock {\em J. Phys. A: Math. Gen.}, 31:4927--4946, 1998.

\bibitem{MPP:sconvexjournal}
Z.~{Mas{\'a}kov{\'a}}, J.~{Patera}, and E.~{Pelantov{\'a}}.
\newblock {Exceptional algebraic properties of the three quadratic
  irrationalities observed in quasicrystals}.
\newblock {\em Canadian Journal of Physics}, 79:687--696, 2001.

\bibitem{MPS}
Z.~Mas{\'a}kov{\'a}, E.~Pelantov{\'a}, and M.~Svobodov{\'a}.
\newblock Characterization of cut-and-project sets using a binary operation.
\newblock {\em Letters in Mathematical Physics}, 54:1--10, 2000.

\bibitem{McNulty:conjecture}
G.~McNulty, 2012.
\newblock Private communication.

\bibitem{Meyer:sets72}
Y.~Meyer.
\newblock {\em Algebraic Numbers and Harmonic Analysis}.
\newblock North Holland, New York, 1972.

\bibitem{Meyer:quasicrystals}
Y.~Meyer.
\newblock Quasicrystals, {D}iophantene approximation, and algebraic numbers.
\newblock In F.~Axel and D.~Gratias, editors, {\em Quasicrystals and Beyond},
  Les Editions de Physique. Springer-Verlag, 1995.

\bibitem{Moody:meyer-sets}
R.~V. Moody.
\newblock Meyer sets and their duals.
\newblock In {\em The mathematics of long-range aperiodic order ({W}aterloo,
  {ON}, 1995)}, volume 489, pages 403--441. Kluwer, 1997.

\bibitem{Penrose:tiling}
R.~Penrose.
\newblock The r\^{o}le of aesthetics in pure and applied mathematical research.
\newblock {\em Bulletin of the Institute of Mathematics and its Applications},
  10(7/8):266--271, 1974.

\bibitem{Pinch}
R.~G.~E. Pinch.
\newblock $a$-convexity.
\newblock {\em Math. Proc. Cambridge Philos. Soc.}, 97(1):63--68, 1985.

\bibitem{Renyi1957}
A.~R{\'e}nyi.
\newblock Representations for real numbers and their ergodic properties.
\newblock {\em Acta Mathematica Academiae Scientiarum Hungarica},
  8(3):477--493, Sep 1957.

\bibitem{Sipser:theory}
M.~Sipser.
\newblock {\em Introduction to the Theory of Computation}.
\newblock PWS, Boston, 1997.

\bibitem{Vijayaraghavan:fractional-parts-I}
T.~Vijayaraghavan.
\newblock On the fractional parts of the powers of a number ({I}).
\newblock {\em Journal of the London Mathematical Society}, s1-15(2):159--160,
  1940.

\bibitem{Vijayaraghavan:fractional-parts-II}
T.~Vijayaraghavan.
\newblock On the fractional parts of the powers of a number ({II}).
\newblock {\em Mathematical Proceedings of the Cambridge Philosophical
  Society}, 37(4):349Ð357, 1941.

\end{thebibliography}

\end{document}